\setlist{nosep}
\setlist[enumerate, 1]{label={\rm (}\emph{\alph*}{\rm )}}
\setlist[enumerate, 2]{label={\rm (}\emph{\alph{enumi}.\arabic*}{\rm )}}
\newcommand\mysubsection{%
 \@startsection{subsection}{1}
    {\z@}%
    {\bigskipamount}%
    {\medskipamount}%
    {\noindent\textbf}}
\else\@footnotetext{\@setkeywords}\fi}
\else\@footnotetext{\@setkeywords}\fi
\else\@footnotetext{\@setaltkeywords}\fi}
\theoremstyle{plain}
\newtheorem{thm}{Th\'eor\`eme}[section]
\newtheorem*{thm*}{Th\'eor\`eme}
\newtheorem{prop}[thm]{Proposition}
\newtheorem{lemme}[thm]{Lemme}
\newtheorem{coro}[thm]{Corollaire}
\theoremstyle{remark}
\theoremstyle{definition}
\newtheorem{paragr}[thm]{}
\theoremstyle{plain}
\numberwithin{equation}{thm}
\newif\ifsection
\preto\section{\sectiontrue}
\preto\subsection{\sectionfalse}
\xapptocmd\@sect{%
  \ifsection
    \numberwithin{thm}{section}
  \else
    \numberwithin{thm}{subsection}
  \fi
  \setcounter{thm}{0}\relax}
  {}{}
\newcommand\letenv[2]{%
\expandafter\expandafter\expandafter\let\expandafter\expandafter
\csname #1\endcsname\csname #2\endcsname
\expandafter\expandafter\expandafter\let\expandafter\expandafter
\csname end#1\endcsname\csname end#2\endcsname
}
\let\forlang\emph
\let\ndef\emph
\let\nbd\nobreakdash
\def\xpoint{\futurelet\@let@token\@xpoint}
\def\@xpoint{%
  \ifx\@let@token.\else
    .%
  \fi
  \xspace}
\newcommand\zbox[1]{\makebox[0pt][l]{#1}}
\newcommand\pbox[1]{\zbox{\quad#1}}
\newcommand\quadtext[1]{\quad\text{#1}\quad}
\newcommand\quadet{\quadtext{et}}
\renewcommand\le\leqslant
\renewcommand\ge\geqslant
\renewcommand\epsilon\varepsilon
\renewcommand\phi\varphi
\let\hookto\hookrightarrow
\let\xto\xrightarrow
\let\xot\xleftarrow
\newcommand\tod\Rightarrow
\newcommand\tot\Rrightarrow
\newcommand\e{\epsilon}
\let\var\bullet
\newcommand\leN{\le_\N}
\newcommand{\sauf}{\mathchoice{\raise 1.8pt\hbox{${\scriptstyle\kern
2.5pt\smallsetminus\kern 2.5pt}$}}{\raise 1.8pt\hbox{${\scriptstyle\kern
2.5pt\smallsetminus\kern 2.5pt}$}}{\raise
1.8pt\hbox{${\scriptscriptstyle\kern 1.5pt\smallsetminus\kern
1.5pt}$}}{\raise 1.8pt\hbox{${\scriptscriptstyle\kern
1.5pt\smallsetminus\kern 1.5pt}$}}}
\newcommand\HomOpLax{\Homi_{\mathrm{oplax}}}
\newcommand\HomLax{\Homi_{\mathrm{lax}}}
\newcommand\HomStr{\Homi_{\mathrm{str}}}
\newcommand\ooCatOpLax{\infty\hbox{\protect\nbd-}\mathcal{C}\mathsf{at}^{}_\mathrm{oplax}}
\newcommand\ooCatLax{\infty\hbox{\protect\nbd-}\mathcal{C}\mathsf{at}^{}_\mathrm{lax}}
\newcommand\ooCatOpLaxGray{\infty\hbox{\protect\nbd-}\mathbb{C}\mathsf{at}^{}_\mathrm{oplax}}
\let\joint\star
\newcommand\vide{\varnothing}
\newcommand\pdfoo{\texorpdfstring{$\infty$}{\unichar{"221E}}}
\newcommand\Z{\mathbb{Z}}
\newcommand\N{\mathbb{N}}
\let\limind\varinjlim
\let\C\relax
\newcommand\C{\mathcal{C}}
\newcommand{\Hom}{\operatorname{\mathsf{Hom}}}
\newcommand{\Homi}{\operatorname{\kern.5truept\underline{\kern-.5truept\mathsf{Hom}\kern-.5truept}\kern1truept}}
\newcommand{\pref}[1]{{\widehat{ #1 }}}
\newcommand\id[1]{1_{#1}}
\renewcommand\o\circ
\newcommand\op{\mathrm{op}}
\newcommand\co{\mathrm{co}}
\newcommand\oloc[1]{{}^t{#1}{}^\o}
\newcommand{\Ob}{\operatorname{\mathsf{Ob}}}
\newcommand{\Ens}{{\mathcal{E}\mspace{-2.mu}\it{ns}}}
\newcommand{\Cat}{{\mathcal{C}\mspace{-2.mu}\it{at}}}
\newcommand{\nCat}[1]{{#1}\hbox{\protect\nbd-}\kern1pt\Cat}
\newcommand{\dCat}{\nCat{2}}
\newcommand{\ooCat}{\nCat{\infty}}
\newcommand{\coCat}{{\it{co}\mathcal{C}\mspace{-2.mu}\it{at}}}
\newcommand{\ncoCat}[1]{{#1}\hbox{\protect\nbd-}\kern1pt\coCat}
\newcommand\oo[1]{$\infty$\=/}
\newcommand\trans[1]{{}^t{#1}}
\let\comp\ast
\newcommand\Dn[1]{\mathrm{D}_{#1}}
\newcommand\cDelta{\mathbf{\Delta}}
\newcommand\cDeltaAug{\mathbf{\Delta}_{+}}
\newcommand\Deltan[1]{\varDelta_{#1}}
\newcommand\EnsSimp{\pref{\cDelta}}
\newcommand\DDelta{D_{\!\cDelta}}
\newcommand\DDeltaAug{D_{\!\cDeltaAug}}
\newcommand\cO{\mathcal{O}}
\newcommand\cOAug{\mathcal{O}_+}
\newcommand\On[1]{\mathcal{O}_{#1}}
\newcommand{\tr}[2]{\mathchoice
  {#1\raise -1.8pt\vbox{\hbox{$\kern -.8pt/#2$}}}
  {#1\raise -1.8pt\vbox{\hbox{$\kern -.8pt/#2$}}\kern .8pt}
  {#1\raise -1.8pt\vbox{\hbox{$\scriptstyle\kern -.8pt /#2$}}}
  {#1\raise -1.8pt\vbox{\hbox{$\scriptscriptstyle\kern -.8pt /#2$}}}}
\newcommand{\trm}[2]{\mathchoice
  {#1\raise -1.8pt\vbox{\hbox{$\kern -.8pt\!\stackrel{\,\rm co}{/}\!\!#2$}}}
  {#1\raise -1.8pt\vbox{\hbox{$\kern -.8pt\!\stackrel{\,\rm co}{/}\!\!#2$}}\kern .8pt}
  {#1\raise -1.8pt\vbox{\hbox{$\scriptstyle\kern -.8pt\!\stackrel{\,\,\rm co}{/}\!\!#2$}}\kern .8pt}
  {TODO}}
\newcommand{\trto}[2]{\mathchoice
  {#1\raise -1.8pt\vbox{\hbox{$\kern -.8pt\!\stackrel{\,\,t \rm o}{/}\!\!\!#2$}}}
  {#1\raise -1.8pt\vbox{\hbox{$\kern -.8pt\!\stackrel{\,\,t \rm o}{/}\!\!\!#2$}}\kern .8pt}
  {#1\raise -1.8pt\vbox{\hbox{$\scriptstyle\kern -.8pt\!\stackrel{\,\,t \rm o}{/}\!\!\!#2$}}\kern .8pt}
  {TODO}}
\newcommand{\cotr}[2]{\mathchoice
  {\raise -1.8pt\vbox{\hbox{$#2\backslash$}}#1}
  {\raise -1.8pt\vbox{\hbox{$#2\backslash$}}#1}
  {\raise -1.8pt\vbox{\hbox{$\scriptstyle#2\backslash$}}#1}
  {\raise -1.8pt\vbox{\hbox{$\scriptscriptstyle#2\backslash$}}#1}}
\newcommand{\cotrm}[2]{\mathchoice
  {\raise -1.8pt\vbox{\hbox{$#2\!\stackrel{\!\rm co}{\backslash}$}}#1}
  {\raise -1.8pt\vbox{\hbox{$#2\!\stackrel{\!\rm co}{\backslash}$}}#1}
  {\raise -1.8pt\vbox{\hbox{$\scriptstyle#2\!\stackrel{\!\rm co}{\backslash}$}}#1}
  {TODO}}
\newcommand\Span[2]{\tr{#1}{#2} \times \trto{#1}{#2}}
\newcommand\Spanoo{\Span{\ooCatOpLaxGray}{Z}}
\newcommand\SpanC{\Span{\ooCatOpLax}{Z}}
\newcommand\Cyl\Gamma
\newcommand\comma{\operatorname{\downarrow}}
\newcommand\commalax{\comma'}
\newcommand\commaCfun{{-} \comma_Z {-}}
\newcommand{\Cda}{\mathcal{C}_{\mathrm{da}}}
\newcommand{\atom}[1]{\langle{#1}\rangle}
\newcommand{\tabld}[2]{\begin{pmatrix}#1^0_0 &\dots &#1^0_{#2-1}
  &#1^0_{#2}\cr\noalign{\vskip 3pt} #1^1_0 &\dots &#1^1_{#2-1}
  &#1^1_{#2}\end{pmatrix}}
\newcommand{\tabll}[2]{\begin{pmatrix}#1^0_0 &#1^0_1 &\dots &#1^0_{#2-1}
  &#1^0_{#2}\cr\noalign{\vskip 3pt} #1^1_0 &#1^1_1 &\dots &#1^1_{#2-1}
  &#1^1_{#2}\end{pmatrix}}
\newcommand\Zdec{\underline{\mathbb{Z'}\kern -2.5pt}\kern 2pt}
\newcommand\cn{\mathsf{c}} 
\newcommand\GrayC{\mathbb{C}}
\newcommand\SesquiC{\mathcal{C}}
\newcommand\SesquiD{\mathcal{D}}
\newcommand\W{\mathcal{W}}
\author{Dimitri Ara}
\address{Aix~Marseille~Univ,~CNRS,~Centrale~Marseille,~I2M,~Marseille,~France}
\email{dimitri.ara@univ-amu.fr}
\urladdr{http://www.i2m.univ-amu.fr/perso/dimitri.ara/}
\author{Georges Maltsiniotis}
\address{%
Institut de Math\'ematiques de Jussieu\\
Universit\'e Paris 7 Denis Diderot\\
Case Postale 7012\\
B\^atiment Sophie Germain\\
75205 Paris Cedex 13\\
France}
\email{georges.maltsiniotis@imj-prg.fr}
\urladdr{http://webusers.imj-prg.fr/%
\raise -3.3pt\vbox{\hbox{$\widetilde{ \ }\,$}}georges.maltsiniotis/}
\title[Un théorème A de Quillen pour les $\infty$-catégories strictes II]{Un
théorème A de Quillen pour\\ les \raise .9pt\hbox{$\infty$-}catégories
strictes II :\\ {la preuve \raise .9pt\hbox{$\infty$-}catégorique}}
\begin{document}

\frontmatter

\begin{abstract}
Cet article est le second d'une série de deux articles consacrés à une
généralisation du théorème A de Quillen aux \oo-catégories strictes. Dans le
premier, nous avons exposé une preuve de nature simpliciale, rapide mais
quelque peu \forlang{ad hoc}, de ce théorème A. Dans le présent article,
nous en donnons une preuve conceptuelle, de nature \oo-catégorique, basée
sur, d'une part, la théorie du joint et des tranches \oo-catégoriques
développée par les auteurs dans un précédent travail et,
d'autre part, une construction comma pour les \oo-catégories strictes qui
généralise les catégories comma classiques et les $2$\nbd-catégories comma
de Gray. Cette construction comma \oo-catégorique est utilisée par le
premier auteur dans un autre article pour démontrer une généralisation du
théorème B de Quillen aux \oo-catégories strictes. L'importance de
cette construction comma en théorie des \oo-catégories nous semble dépasser
largement le cadre de la théorie de l'homotopie.
\end{abstract}

\begin{altabstract}
This paper is the second in a series of two papers about generalizing
Quillen's Theorem A to strict \oo-categories. In the first one, we presented
a proof of this Theorem A of a simplicial nature, direct but somewhat ad
hoc. In the current paper, we give a conceptual proof of an \oo-categorical
nature of the same theorem. This proof is based on the theory of join and
slices for strict \oo-categories developed by the authors in
a previous paper, and on a comma construction for strict \oo-categories
generalizing classical comma categories and Gray's comma $2$\nbd-categories.
This \oo-categorical comma construction is used by the first author in another
paper to prove a generalization of Quillen's Theorem~B to strict
\oo-categories. We believe that the importance of this comma construction in
the theory of \oo-categories goes far beyond the scope of homotopy theory.
\end{altabstract}

\subjclass{18A25, 18D05, 18G30, 18G35, 18G55, 55P15, 55U10, 55U15, 55U35}

\keywords{\oo-catégories comma, \oo-catégories de Gray, \oo-catégories strictes,
complexes dirigés augmentés, ensembles simpliciaux, joint, nerf de Street,
orientaux, produit tensoriel de Gray, sesquicatégories, théorème A,
tranches, transformations oplax}
\altkeywords{comma \oo-categories, Gray \oo-categories, strict
\oo-categories, augmented directed complexes, simplicial sets, join,
Street's nerve, orientals, Gray tensor product, sesqui\-categories, Theorem A,
slices, oplax transformations}

\maketitle

\vfill
\break

\tableofcontents

\mainmatter

\section*{Introduction}

\mysubsection*{Théorie de l'homotopie des \pdfoo-catégories strictes}

Ce texte fait partie d'un projet consacré à la théorie de l'homotopie des
\oo-catégories strictes, projet constitué actuellement des articles et
prépublications \cite{AraMaltsiNThom, Ara2Cat, AraMaltsiCondE,
AraMaltsiJoint, AraMaltsiThmAI, AraThmB} et du texte en
préparation~\cite{AraMaltsiNerfs}. L'objet de ce projet est l'étude des
relations entre les \oo-catégories strictes et les types d'homotopie
\forlang{via} leur espace classifiant. Rappelons, en effet, qu'à toute
\oo-catégorie stricte $C$, on associe selon Street \cite{StreetOrient} un
ensemble simplicial $N(C)$, appelé son \ndef{nerf de Street}, et donc un
type d'homotopie. On dira qu'un \oo-foncteur strict entre \oo-catégories
strictes $u : C \to D$ est une \ndef{équivalence de Thomason} si le
morphisme simplicial qu'il induit entre les nerfs de Street de $C$ et de $D$
est une équivalence d'homotopie faible.  La théorie de l'homotopie des
\oo-catégories strictes est l'étude de la catégorie $\ooCat$ des
\oo-catégories strictes et des \oo-foncteurs stricts munie de cette notion
d'équivalence faible. Gagna a démontré dans \cite{GagnaIllusieQuillen} une
conjecture que nous avions formulée dans~\cite{AraMaltsiCondE} affirmant que
le nerf de Street induit une équivalence de catégories entre la localisation
de $\ooCat$ par les équivalences de Thomason et la catégorie homotopique des
espaces. Ainsi, étudier la théorie de l'homotopie des \oo-catégories
strictes, c'est étudier les espaces sous un nouvel angle.  On renvoie à
l'introduction de~\cite{AraMaltsiThmAI} pour plus de détails sur notre
projet.

Mais revenons en arrière. Notre projet est inspiré de la théorie de
l'homotopie de~$\Cat$, la catégorie des petites catégories, développée
notamment par Quillen~\cite{QuillenHAKTI}, Thomason~\cite{Thomason} et
Grothendieck~\cite{GrothPS} (voir également \cite{Maltsi, Cisinski}). Le
point de départ de cette théorie est l'idée de Quillen de définir les
groupes de K-théorie algébrique supérieurs comme les groupes d'homotopie
de l'espace classifiant d'une catégorie. Afin d'établir les propriétés
importantes de sa K-théorie algébrique, Quillen démontre ses fameux
théorèmes A et B, établissant ainsi les propriétés fondamentales des
\ndef{équivalences de Thomason}, foncteurs dont le nerf est une équivalence
d'homotopie faible simpliciale. Énonçons une variante relative de son
théorème A.

\begin{theorem*}[Quillen]
  Soit
  \[
    \xymatrix@C=1.5pc{
      A \ar[rr]^u \ar[dr]_v & & B \ar[dl]^w \\
      & C
    }
  \]
  un triangle commutatif de foncteurs entre petites catégories. Si pour tout
  objet $c$ de~$C$, le foncteur $\cotr{A}{c} \to \cotr{B}{c}$ induit par $u$
  est une équivalence de Thomason, alors il en est de même du foncteur~$u$.
\end{theorem*}

Dans cet énoncé, $\cotr{A}{c}$ désigne la \ndef{tranche de $A$ au-dessous de
$c$}, catégorie dont les objets sont les couples $(a, f : c \to v(a))$, où
$a$ est un objet de $A$ et $f$ une flèche de~$C$, et dont les morphismes d'un
objet $(a, f)$ vers un objet $(a', f')$ sont les morphismes $g : a \to a'$
de $A$ tels que $v(g)f = f'$.

Le caractère fondamental du théorème A de Quillen a été mis en évidence par
Grothendieck. En effet, celui-ci a défini une notion de \ndef{localisateur
fondamental}, classe de flèches de $\Cat$ satisfaisant à des axiomes
inspirés des propriétés formelles de la classe des équivalences de Thomason,
le plus important de ces axiomes étant le théorème~A, et il a conjecturé que
la classe des équivalences de Thomason forme le plus petit localisateur
fondamental. Cette conjecture a été démontrée par Cisinski
dans~\cite{CisinskiLFM}. Ce résultat peut s'interpréter de la manière
suivante : le théorème A est le seul moyen non trivial dont on dispose pour
démontrer qu'un foncteur est une équivalence de Thomason.

Le théorème A de Quillen a été généralisé aux $2$-catégories et
$2$-foncteurs stricts par Bullejos et Cegarra \cite{BullCegGeom2Cat}, aux
$2$-catégories et foncteurs lax par del Hoyo~\cite{delHoyoThmA, delHoyoLoop}
et aux triangles de foncteurs lax ne commutant qu'à une transformation près
par Chiche~\cite{ChicheThese, ChicheThmA}. On renvoie à l'introduction
de~\cite{AraMaltsiThmAI} pour plus de détails sur l'historique de ces
théorèmes~A.

\mysubsection*{Théorème A de Quillen pour les \oo-catégories strictes}

Le présent article est le second d'une série de deux articles consacrés à
une généralisation du théorème~A de Quillen aux \oo-catégories strictes.
Dans le premier article~\cite{AraMaltsiThmAI}, nous avons établi, par des
techniques simpliciales, le théorème suivant :

\begin{theorem*}
  Soit
  \[
    \shorthandoff{;}
    \xymatrix@C=1.5pc{
      A \ar[rr]^u \ar[dr]_(0.40){v}_(.60){}="g" & & B \ar[dl]^(0.40){w} \\
      & C
      \ar@{}"g";[ur]_(.15){}="gg"
      \ar@{}"g";[ur]_(.55){}="oo"
      \ar@<-0.0ex>@2"gg";"oo"^\alpha
      &
    }
  \]
  un triangle de \oo-foncteurs stricts commutatif à une transformation
  oplax $\alpha$ près. Si pour tout objet $c$ de $C$, le \oo-foncteur
  $\cotr{A}{c} \to \cotr{B}{c}$ induit par $u$ est une équivalence de
  Thomason, alors il en est de même de $u$.
\end{theorem*}

Dans cet énoncé, $\cotr{A}{c}$ désigne une généralisation \oo-catégorique
adéquate des tranches catégoriques qui peut se définir par des formules
explicites. Par ailleurs, l'adjectif « oplax » dans « transformation oplax »
fait référence à un choix d'orientation des cellules associées à $\alpha$.
Rappelons enfin que nous appelons \ndef{équivalence de Thomason} un
\oo-foncteur strict $u$ dont le nerf de Street $N(u)$ est une équivalence
faible simpliciale.

\emph{Dans la suite de cette introduction, toutes les \oo-catégories et tous
les \oo-foncteurs seront supposés stricts.}

Dans le présent article, on présente une nouvelle preuve de ce théorème, de
nature \oo-catégorique. L'intérêt de ce travail par rapport à
\cite{AraMaltsiThmAI} est multiple :
\begin{itemize}
  \item La preuve présentée du théorème A \oo-catégorique est plus
    conceptuelle : elle s'appuie sur des outils \oo-catégoriques comme le
    joint et les fonctorialités des tranches que nous avons développés dans
    \cite{AraMaltsiJoint} et une nouvelle construction comma pour les
    \oo-catégories.
  \item On établit des propriétés de fonctorialité de cette nouvelle
    construction comma, construction qui, nous semble-t-il, est une
    contribution importante à la théorie des \oo-catégories, indépendamment
    des questions d'homotopie qui nous préoccupent dans ce travail. Cette
    construction est également utilisée par le premier auteur dans sa preuve
    d'un théorème B \oo-catégorique~\cite{AraThmB}.
  \item En utilisant cette construction comma, on montre comment le
    théorème A pour les triangles commutatifs à une transformation oplax près peut
    se ramener formellement au cas des triangles commutatifs, non seulement
    pour les équivalences de Thomason mais également pour des classes de
    \oo-foncteurs plus générales inspirées des localisateurs fondamentaux de
    $\Cat$ de Grothendieck.
  \item Enfin, on montre comment associer à toute transformation
    oplax une homotopie simpliciale.
\end{itemize}

Notre preuve s'articule de la manière suivante. On dégage deux théorèmes~A
abstraits pour les triangles \emph{commutatifs}, le premier cosimplicial et
le second monoïdal, exprimant l'essence de l'argument originel de Quillen.
En appliquant le second de ces théorèmes à $\ooCat$ munie du joint
\oo-catégorique, on ramène le théorème A \oo-catégorique pour les triangles
\emph{commutatifs} à l'énoncé suivant : si $u : A \to B$ est un
\oo-foncteur, alors pour tout \oo-foncteur $b : \On{m} \to B$, où $\On{m}$
désigne le $m$-ième oriental de Street, le \oo-foncteur $\cotr{A}{b} \to
\cotr{A}{b_m}$ induit par fonctorialité des tranches est une équivalence de
Thomason. Dans cet énoncé, $b_m$ désigne l'objet correspondant au
\oo-foncteur $\On{0} \to B$ (l'oriental $\On{0}$ étant la \oo-catégorie
terminale) obtenu en précomposant $b : \On{m} \to B$ par le \oo-foncteur $m
: \On{0} \to \On{m}$, analogue \oo-catégorique du morphisme simplicial $m :
\Deltan{0} \to \Deltan{m}$ correspondant au $m$-ième sommet du
$m$\nbd-simplexe standard.  Par ailleurs, $\cotr{A}{b}$ désigne la tranche
de $A$ au-dessous du \oo-foncteur $b$. Cette tranche, contrairement à celles
qui apparaissent dans l'énoncé du théorème~A, n'est donc pas une tranche
au-dessous d'un objet ; sa définition, plus complexe, utilise la théorie du
joint \oo-catégorique. On démontre cet énoncé par des méthodes
\oo-catégoriques basées sur les fonctorialités des tranches que nous avons
établies dans~\cite{AraMaltsiJoint}, obtenant ainsi le théorème~A
\oo-catégorique pour les triangles commutatifs. Enfin, en utilisant notre
nouvelle construction comma mentionnée plus haut et ses propriétés de
fonctorialité, on déduit le théorème \oo-catégorique pour les triangles
commutatifs à transformation oplax près du théorème A \oo-catégorique pour
les triangles commutatifs.

Voici comment cette preuve se compare à la preuve simpliciale que nous
avions donnée dans \cite{AraMaltsiThmAI}. L'argument originel de Quillen
permet de ramener le théorème A \oo-catégorique pour les triangles
\ndef{commutatifs} aux deux assertions suivantes :
\begin{itemize}
  \item Si $C$ est une \oo-catégorie et $c$ est un objet de~$C$, on a un
  isomorphisme naturel entre $N(\cotr{C}{c})$ et la tranche simpliciale
 $\cotr{N(C)}{c}$, l'objet $c$ correspondant à un $0$-simplexe de $N(C)$.
 \item Si $u : A \to B$ est un \oo-foncteur, alors pour tout $m$-simplexe
   $b$ de $N(B)$, le morphisme simplicial $\cotr{N(A)}{b} \to
   \cotr{N(A)}{b_m}$, où $b_m$ désigne le $m$-ième sommet de~$b$, est une
   équivalence d'homotopie faible. (Notons que les tranches apparaissant ici
   sont des tranches simpliciales au-dessous d'un simplexe, tranches dont la
   définition est classique.)
\end{itemize}
Dans \cite{AraMaltsiThmAI}, on démontre le premier énoncé en construisant
un isomorphisme explicite à l'aide de la théorie des complexes dirigés
augmentés de Steiner \cite{Steiner} et le second, toujours en utilisant la
théorie de Steiner, en montrant que le
morphisme en jeu est la rétraction d'un rétracte par déformation fort en
produisant explicitement une section et une homotopie simpliciale par des
formules \forlang{ad hoc}. Dans le présent article, le premier énoncé
résulte formellement des propriétés de monoïdalité du joint et le second
s'obtient par des fonctorialités des tranches, celles-ci permettant d'obtenir un
\oo-foncteur et une transformation oplax de nerf la rétraction et
l'homotopie simpliciale de \cite{AraMaltsiThmAI} respectivement. Enfin, dans
\cite{AraMaltsiThmAI}, pour obtenir le théorème A \oo-catégorique pour les
triangles commutatifs à une transformation oplax près, on est conduit à modifier
de manière non triviale l'argument originel de Quillen en faisant de nouveau
intervenir la théorie de Steiner, alors que dans le présent texte, on déduit
formellement ce cas de celui des triangles commutatifs grâce à la
construction comma \oo-catégorique.

Détaillons maintenant les éléments de notre preuve.

\mysubsection*{Théorème A pour les triangles commutatifs et théorèmes A abstraits}

La stratégie adoptée dans le présent article pour réduire notre théorème A
\oo-catégorique pour les triangles \emph{commutatifs} au fait que le
\oo-foncteur $\cotr{A}{b} \to \cotr{A}{b_m}$ est une équivalence de Thomason
est axiomatique. Comme on vient de l'expliquer, on commence par dégager deux
théorèmes A abstraits : un théorème A cosimplicial et un théorème A
monoïdal.

Le cadre du théorème A cosimplicial est le suivant. On se donne un objet
cosimplicial $\cO : \cDelta \to \C$ dans une catégorie $\C$ (l'exemple
qui nous intéresse étant celui de l'objet cosimplicial $\cO : \cDelta \to
\ooCat$ des orientaux de Street). Ceci permet de définir un foncteur nerf
$N$ de $\C$ vers la catégorie des ensembles simpliciaux et donc, en
utilisant ce nerf, une notion d'équivalence faible dans $\C$. Le théorème A
cosimplicial affirme alors la chose suivante :
si pour tout morphisme $T \to Z$ de $\C$ et tout $m$-simplexe $z$ de
$N(Z)$, le morphisme simplicial $\cotr{N(T)}{z} \to \cotr{N(T)}{z_m}$ est
une équivalence d'homotopie faible, alors un théorème A est valable dans
$\C$ au sens où, pour tout triangle commutatif
  \[
    \shorthandoff{;}
    \xymatrix@C=1.5pc{
      X \ar[rr]^u \ar[dr]_(0.40){v}_(.60){}="g" & & Y \ar[dl]^(0.40){w} \\
      & Z
      &
    }
  \]
dans $\C$, si pour tout $0$-simplexe $z$ de $N(Z)$, le morphisme
$\cotr{N(X)}{z} \to \cotr{N(Y)}{z}$ induit par $u$ est une équivalence
d'homotopie faible simpliciale, alors $u$ est une équivalence faible de $\C$.

Le théorème A monoïdal, qui s'appuie sur le théorème A cosimplicial, permet
de formuler un théorème A en termes de tranches dans $\C$ et non pas dans
les ensembles simpliciaux. Le cadre est le suivant. On se donne une
catégorie monoïdale $(\C, \joint, \vide)$, où $\vide$ est un objet initial
de $\C$, localement bifermée, au sens où, pour tous objets $X$ et $Y$ de
$\C$, les foncteurs
  \[
    \begin{split}
      \C & \to \cotr{\C}{X} \\
      Z & \mapsto (X \joint Z, X \simeq X \joint \vide \to X \joint Z)
    \end{split}
    \quadet
    \begin{split}
      \C & \to \cotr{\C}{Y} \\
      Z & \mapsto (Z \joint Y, Y \simeq \vide \joint Y \to Z \joint Y)
    \end{split}
  \]
admettent des adjoints à droite. Ces adjoints définissent des foncteurs tranches
\[
  \begin{split}
    \cotr{\C}{X} & \to \C \\
    (Z, u : X \to Z) & \mapsto \cotr{Z}{u}
  \end{split}
  \quadet
  \begin{split}
    \cotr{\C}{Y} & \to \C \\
    (Z, v : Y \to Z) & \mapsto \tr{Z}{v}. 
  \end{split}
\]
Ces considérations sont bien sûr inspirées des propriétés de $\ooCat$
munie du joint \oo-catégorique. Si $\C$ admet un objet final $e$, on définit
un objet cosimplicial $\cO : \cDelta \to \C$ dans $\C$ en posant $\On{m} = e
\joint e \joint \cdots \joint e$, où $e$ apparaît $m + 1$ fois. On en déduit
un foncteur nerf et une notion d'équivalence faible dans $\C$. Le théorème A
cosimplicial appliqué à cet objet cosimplicial $\cO$ permet alors d'obtenir
le théorème A monoïdal : si pour tout morphisme $T \to Z$ de $\C$ et tout
morphisme $z : \On{m} \to Z$, le morphisme $\cotr{T}{z} \to \cotr{T}{z_m}$
induit par fonctorialité des tranches, où $z_m : \On{0} \to \C$ désigne le
morphisme obtenu en précomposant $z$ par le morphisme $m : \On{0} \to
\On{m}$ induit par $m : \Deltan{0} \to \Deltan{m}$, est une équivalence
faible de $\C$, alors un théorème A est valable dans $\C$ au sens où, pour
tout triangle commutatif
  \[
    \shorthandoff{;}
    \xymatrix@C=1.5pc{
      X \ar[rr]^u \ar[dr]_(0.40){v}_(.60){}="g" & & Y \ar[dl]^(0.40){w} \\
      & Z
      &
    }
  \]
dans $\C$, si pour tout morphisme $z : e \to Z$, le morphisme $\cotr{X}{z}
\to \cotr{Y}{z}$ induit par $u$ est une équivalence faible de $\C$, alors il
en est de même de $u$.

Voici comment on utilise le théorème A monoïdal pour obtenir notre
théorème~A \oo-catégorique pour les triangles commutatifs. Considérons la
catégorie $\ooCat$ munie du joint \oo-catégorique. On montre
dans~\cite{AraMaltsiJoint} que l'objet cosimplicial associé
à cette catégorie monoïdale par le procédé qu'on vient de décrire n'est
autre que l'objet cosimplicial des orientaux. Ainsi,
en appliquant le théorème A monoïdal à $\ooCat$ munie du joint, on ramène
le théorème A \oo-catégorique pour les triangles commutatifs à l'énoncé
suivant : pour tout \oo-foncteur $u : A \to B$ et tout \oo-foncteur
$b : \On{m} \to B$, le \oo-foncteur $\cotr{A}{b} \to \cotr{A}{b_m}$
est une équivalence de Thomason. Pour démontrer ceci, on utilise les
propriétés de fonctorialité des tranches établies
dans~\cite{AraMaltsiJoint}. En effet, ce \oo-foncteur $\cotr{A}{b} \to
\cotr{A}{b_m}$ est induit par une fonctorialité des tranches appliquée au
\oo-foncteur $m : \On{0} \to \On{m}$. De la même manière que le morphisme
$m : \Deltan{0} \to \Deltan{m}$ est un rétracte par déformation fort, nous
montrons que le \oo-foncteur $m : \On{0} \to \On{m}$ est ce que nous
appelons un rétracte par transformation oplax fort, notion analogue dans
laquelle la notion d'homotopie simpliciale est remplacée par celle de
transformation oplax. Nos résultats de fonctorialité des tranches entraînent
alors que $\cotr{A}{b} \to \cotr{A}{b_m}$ est la rétraction d'un rétracte
par transformation oplax fort. Or, nous montrons que toute transformation
oplax induit une homotopie simpliciale et nous en déduisons que le nerf de
Street du \oo-foncteur $\cotr{A}{b} \to \cotr{A}{b_m}$, qui n'est autre que
le morphisme simplicial $\cotr{N(A)}{b} \to \cotr{N(A)}{b_m}$, est la
rétraction d'un rétracte par déformation fort.  Nous montrons de plus que la
section et l'homotopie simpliciale qu'on obtient ainsi coïncident avec
celles définies par les formules de~\cite{AraMaltsiThmAI}.

\mysubsection*{Théorème A pour les $2$-triangles et construction comma
\oo-catégorique}

Comme mentionné précédemment, dans le présent article, nous montrons que le
théorème A pour les triangles commutatifs à une transformation oplax près
peut se déduire du théorème A pour les triangles commutatifs
par des outils \oo-catégoriques. Pour cela, nous nous inspirons de
l'article~\cite{ChicheThmA} de Chiche dans lequel est démontré l'assertion
analogue pour la catégorie $\dCat$ des $2$-catégories.  La preuve de Chiche
repose sur la théorie de l'intégration de Grothendieck dans $\dCat$. La
théorie de l'intégration dans~$\ooCat$ n'étant pas encore pleinement
développée, nous l'avons contournée en introduisant une construction comma
pour les \oo-catégories (qui permet d'ailleurs de définir la construction de
Grothendieck).

Soient
  \[
    \xymatrix{
      X \ar[r]^f & Z & Y \ar[l]_g
    }
  \]
deux \oo-foncteurs. On montre qu'il existe une \oo-catégorie $f \comma g$,
qu'on appelle \ndef{\oo-catégorie comma}, satisfaisant à la propriété
universelle suivante. Si $T$ est une \oo-catégorie, la donnée d'un
\oo-foncteur
\[ T \to f \comma g \]
correspond à celle d'un diagramme
  \[
    \shorthandoff{;}
    \xymatrix@C=1.3pc@R=1.3pc{
      & T \ar[dl]_x \ar[dr]^y \\
      X \ar[dr]_f \ar@{}[rr]_(.35){}="x"_(.65){}="y"
      \ar@2"x";"y"^{\lambda} 
      & & Y \ar[dl]^g \\
      & Z & \pbox{,}
    }
  \]
où $x$ et $y$ sont des \oo-foncteurs et $\lambda$ est une transformation
oplax. Lorsque $X$, $Y$ et~$Z$ sont des catégories, on retrouve les
catégories comma classiques telles que définies par exemple dans
\cite[chapitre II, section 6]{MacLane}. Lorsque ce sont des $2$-catégories
ou des $3$\nbd-catégories, on retrouve les constructions comma étudiées par
Gray dans \cite{GrayFCT} (au détail près que nous considérons des comma
oplax alors que Gray privilégie les comma lax).

On vérifie que cette construction est fonctorielle par rapport à des
morphismes de la forme
\[
      \shorthandoff{;}
      \xymatrix@R=1pc@C=3pc{
        X \ar[dd]_u \ar[dr]^f_{}="f" & & Y \ar[dl]_g_{}="g" \ar[dd]^v \\
          & Z \\
        X' \ar[ur]_{f'} & & Y' \ar[ul]^{g'}
        \ar@{}[ll];"f"_(0.35){}="sa"_(0.85){}="ta"
        \ar@2"sa";"ta"^{\alpha}
        \ar@{}[];"g"_(0.35){}="tb"_(0.85){}="sb"
        \ar@2"sb";"tb"^{\beta} \pbox{,}
      }
    \]
où $\alpha$ et $\beta$ sont des transformations oplax. Nous conjecturons que le
foncteur construction comma ainsi obtenu s'étend en un \oo-foncteur de Gray,
c'est-à-dire un foncteur enrichi dans $\ooCat$ munie du produit tensoriel de
Gray, et qu'en particulier, il agit sur une notion adéquate de $n$-cellules
pour tout $n \ge 0$. Dans ce texte, nous parvenons uniquement, par une preuve
quelque peu technique et laborieuse, à étendre la construction comma en un
sesquifoncteur par rapport à des $2$-cellules de la forme
      \[
        \shorthandoff{;:}
        \xymatrix@R=1.5pc@C=3.5pc{
          X \ar@/_2ex/[dd]_(0.62){\phantom{u'}u}_{}="u"
          \ar@/^2ex/[dd]_(0.65){u'\!}_{}="u'"
           \ar[dr]^f_{}="f" & &
          Y \ar@/_2ex/[dd]^(0.65){v'}_{}="v"
          \ar@/^2ex/[dd]^(0.65){v}_{}="v'"
           \ar[dl]_g_{}="g" \\
            & Z \\
          X' \ar[ur]_{f'} & & Y' \ar[ul]^{g'}
          \ar@2"u";"u'"^{\gamma\,\,}
          \ar@2"v'";"v"_{\,\,\delta}
          \ar@{}[ll];"f"_(0.40){}="sa"_(0.85){}="ta"
          \ar@<-1.5ex>@/_1ex/@2"sa";"ta"_(.70){\alpha'}_(0.40){}="a'"
          \ar@<0.0ex>@/^1ex/@{:>}"sa";"ta"^(.70){\alpha}_(0.60){}="a"
          \ar@3"a'";"a"_(.60){\Gamma_{}}
          \ar@{}[];"g"_(0.40){}="tb"_(0.85){}="sb"
          \ar@<-1.5ex>@/_1ex/@2"sb";"tb"_(.30){\beta'\!}_(0.40){}="b'"
          \ar@<0.0ex>@/^1ex/@{:>}"sb";"tb"^(.30){\!\beta}_(0.60){}="b"
          \ar@3"b'";"b"^(.40){\Delta_{}}
          \pbox{,}
        }
      \]
où les lettres grecques minuscules désignent des transformations oplax et
les lettres grecques majuscules des $2$-transformations oplax (parfois également
appelées modifications).

Cette sesquifonctorialité est la quantité minimale de fonctorialité de la
construction comma qui nous permette de déduire le théorème A
\oo-catégorique pour les triangles commutatifs à une transformation oplax
près du cas des triangles commutatifs. En effet, elle entraîne que la
construction comma préserve les rétractes par transformation oplax forts, ce
qui fournit un précieux outil pour montrer que certains \oo-foncteurs sont
des équivalences de Thomason. Cette propriété joue également un rôle
central dans la preuve de la généralisation du théorème B de Quillen aux
\oo-catégories strictes par le premier auteur dans \cite{AraThmB}.

La raison pour laquelle on obtient un sesquifoncteur et non pas un
$2$-foncteur est que les $2$-cellules comme ci-dessus ne se composent pas
horizontalement (horizontalement au sens technique mais verticalement sur le
diagramme). Ceci est lié au fait que les \oo-catégories strictes, \oo-foncteurs
stricts et transformations oplax ne forment pas une $2$-catégorie mais
seulement une sesquicatégorie. En effet, si
  \[
    \shorthandoff{;:}
    \xymatrix@C=3pc{
      X \ar@/^2.3ex/[r]_{}="1"
        \ar@/_2.3ex/[r]_{}="0"
      &
      Y \ar@/^2.3ex/[r]_{}="2"
        \ar@/_2.3ex/[r]_{}="3"
      &
      Z
      \ar@2"1";"0"_{\alpha}
      \ar@2"2";"3"_{\beta}
    }
  \]
sont deux transformations oplax, les deux manières de les composer
en utilisant la composition verticale des transformations oplax et la
composition horizontale d'une transformation oplax et d'un \oo-foncteur ne
coïncident pas. Néanmoins, il existe une $2$-transformation oplax canonique
entre ces deux compositions que nous appelons la \ndef{contrainte de Gray}
associée à $\alpha$ et $\beta$. Cette contrainte de Gray fait partie de la
structure de \oo-catégorie de Gray, c'est-à-dire de catégorie enrichie dans
$\ooCat$ munie du produit tensoriel de Gray oplax, dont sont munis les
\oo-catégories strictes, \oo-foncteurs stricts et $i$-transformations oplax
pour tout $i \ge 1$, structure qu'on notera~$\ooCatOpLaxGray$.

Ainsi, l'étude de la sesquifonctorialité de la construction comma nécessite
une compréhension de la structure de \oo-catégorie de Gray. On
établit quelques propriétés de cette structure dans ce texte. On utilise ces
propriétés pour montrer que si $\GrayC$ est une \oo-catégorie de Gray,
alors pour tout objet $c$ de $\GrayC$, on dispose d'une sesquicatégorie
tranche~$\tr{\GrayC}{c}$, établissant ainsi une conséquence en basse
dimension de notre conjecture~C.24 de \cite{AraMaltsiJoint}
affirmant qu'on dispose d'une \oo-catégorie de Gray
tranche~$\tr{\GrayC}{c}$. C'est à partir de cette sesquicatégorie tranche,
appliquée à~$\GrayC = \ooCatOpLaxGray$, qu'on définit la
sesquicatégorie source de la construction comma.

\mysubsection*{Organisation de l'article}

La première section est consacrée à des préliminaires sur les ensembles
simpliciaux. On y définit une construction comma bisimpliciale et les
tranches simpliciales. On démontre, selon l'argument originel de Quillen, un
théorème A simplicial.

Dans la deuxième section, en utilisant ce théorème A simplicial, on démontre
deux théorèmes A abstraits, abstraits signifiant que ces résultats
s'appliquent dans une catégorie $\C$ (qui n'est pas nécessairement $\Cat$ ou
$\ooCat$) munie de structure supplémentaire. Pour le premier, le théorème A
cosimplicial, cette structure supplémentaire est un objet cosimplicial
(vérifiant certains axiomes). Pour le second, le théorème A monoïdal, cette
structure est celle d'une catégorie monoïdale localement bifermée, notion
introduite dans cette section. On montre comment déduire de ce
formalisme le théorème A de Quillen originel.

La troisième section est dédiée à des rappels sur la théorie des
complexes dirigés augmentés de Steiner \cite{Steiner}. On y présente les
résultats fondamentaux de cette théorie, dus à Steiner, ainsi que quelques
compléments issus de~\cite{AraMaltsiJoint}.

La quatrième section est consacrée à des préliminaires \oo-catégoriques. On
commence par rappeler la théorie du produit tensoriel de Gray oplax,
produit introduit par Al-Agl et Steiner dans~\cite{AlAglSteiner} et
généralisant le produit de Gray $2$-catégorique \cite{GrayFCT}, et la
notion de transformation oplax ou, plus généralement, de $i$-transformation
oplax, ainsi que leurs variantes lax. On définit quelques opérations de
composition sur les transformations oplax et on explicite le lien avec les
transformations strictes. On expose ensuite un résumé de la théorie du joint
et des tranches \oo-catégoriques telle que développée
dans~\cite{AraMaltsiJoint}. On rappelle en particulier que $\ooCat$ munie de
ce joint forme une catégorie localement bifermée au sens de la
section~\ref{sec:thmA_abs} et que les adjoints à droite associés à cette
structure sont les tranches \oo-catégoriques. Enfin, on rappelle les
résultats de sesquifonctorialité des tranches obtenus
dans~\cite{AraMaltsiJoint}.

L'objet de la cinquième section est de démontrer notre théorème A
\oo-catégorique dans le cas d'un triangle commutatif. On commence par
montrer comment le joint \oo-catégorique induit l'objet cosimplicial $\cO :
\cDelta \to \ooCat$ des orientaux de Street et donc le nerf de Street et la
notion d'équivalence de Thomason. On introduit la notion de rétracte par
transformation oplax fort, analogue \oo-catégorique de la notion de rétracte par
déformation fort, et on étudie ses propriétés de stabilité par changement de
base. On observe, en utilisant des résultats de
l'appendice~\ref{app:transformation}, que ces rétractes sont des
équivalences de Thomason. On montre que le $m$-ième oriental $\On{m}$ se
rétracte par transformation oplax sur son $m$-ième objet. On en déduit,
en utilisant les résultats de sesquifonctorialité des tranches rappelés dans
la section précédente, que $\ooCat$ munie du joint vérifie les hypothèses du
théorème A monoïdal. On obtient ainsi le théorème~A annoncé. On étudie les
interactions entre ce théorème A et les dualités de $\ooCat$. On montre par
ailleurs que les tranches \oo-catégoriques de la forme $\cotr{C}{c}$ se
rétractent par transformation oplax sur un objet et on en déduit une version
non relative du théorème~A \oo-catégorique. On termine la section par une
application de notre théorème A donnant une condition suffisante pour que le
nerf d'une \oo-catégorie soit faiblement contractile, résultat qui est
utilisé dans~\cite{AraMaltsiCondE}.

Dans la sixième section, on introduit la notion de \oo-catégorie comma,
généralisation \oo-catégorique des catégories comma classiques et des
$2$-catégories (ou $3$\nbd-catégories) comma de Gray. On montre que cette
construction est fonctorielle. On énonce un résultat de préservation des
rétractes par transformation oplax forts par la construction comma qui découlera
de l'appendice~\ref{app:tr_comma} et jouera un rôle central dans la section
suivante.

La septième section est consacrée au théorème A pour les $2$-triangles,
c'est-à-dire les triangles commutatifs à une transformation oplax près. On
montre comment, en utilisant la fonctorialité de la construction comma et le
fait qu'elle préserve les rétractes par transformation oplax forts, on peut
déduire le théorème A \oo-catégorique pour les $2$-triangles du théorème A
\oo-catégorique pour les triangles commutatifs, non seulement pour les
équivalences de Thomason mais également pour des classes de \oo-foncteurs
plus générales vérifiant des axiomes adéquats. On termine la section par une
étude des interactions entre le théorème A pour les $2$-triangles et les
dualités de $\ooCat$.

Dans l'appendice~\ref{app:transformation}, on associe à toute transformation
oplax une homotopie simpliciale. Ainsi, on obtient que les nerfs de deux
\oo-foncteurs source et but d'une transformation oplax sont homotopes et
donc que les rétractes par transformation oplax forts sont des équivalences
de Thomason.
On vérifie que, lorsque la
transformation est stricte, l'homotopie associée n'est autre que le nerf de
Street de la transformation.

L'appendice~\ref{app:tr_comma} est dédié aux propriétés de
sesquifonctorialité de la construction comma \oo-catégorique introduite dans
la section~\ref{sec:comma_1}. On définit la notion de \oo-catégorie de Gray
et la notion de contrainte de Gray associée à deux cellules composables
horizontalement dans une \oo-catégorie de Gray. On étudie les propriétés de
ces contraintes de Gray. Ceci nous permet de définir, pour $\GrayC$ une
\oo-catégorie de Gray et $c$ un objet de~$\GrayC$, une sesquicatégorie
tranche $\tr{\GrayC}{c}$, ainsi que, par dualité, une variante
$\smash{\trto{\GrayC}{c}}$ de cette sesquicatégorie tranche. En appliquant
ces constructions à $\ooCatOpLaxGray$, la \oo-catégorie de Gray
des \oo-catégories strictes, \oo-foncteurs stricts et $i$-transformations
oplax pour~$i \ge 1$, on obtient, pour $Z$ une \oo-catégorie stricte, des
sesqui\-catégories~$\tr{\ooCatOpLaxGray}{Z}$
et~$\smash{\trto{\ooCatOpLaxGray}{Z}}$. On montre
que la construction comma s'étend en un sesquifoncteur
\[ \commaCfun : \Spanoo \to \ooCatOpLax, \]
où $\ooCatOpLax$ désigne la sesquicatégorie des \oo-catégories strictes,
\oo-foncteurs stricts et transformations oplax. On en déduit le résultat de
préservation des rétractes par transformation oplax forts par la construction
comma annoncé dans la section~\ref{sec:comma_1} et utilisé dans la
section~\ref{sec:thmA_2-tri}.

Enfin, dans l'appendice~\ref{app:thmAI}, on fait le lien entre la preuve du
théorème A \oo-catégorique présentée dans ce texte et celle de notre
précédent article~\cite{AraMaltsiThmAI}. Plus précisément, on montre que
la section simpliciale et l'homotopie simpliciale utilisées pour prouver
ce théorème dans ce précédent article proviennent de la section
\oo-catégorique et de la transformation oplax utilisées pour prouver ce même
théorème dans le présent texte.

\mysubsection*{Remerciements}

Les auteurs remercient vivement le rapporteur anonyme pour sa relecture
attentive et ses nombreuses remarques qui ont grandement amélioré la
qualité de ce texte. En particulier, c'est lui qui a suggéré l'utilisation
de la diagonale d'Alexander-Whitney dans
l'appendice~\ref{app:transformation}.

\section{Préliminaires simpliciaux : le théorème A simplicial}

\begin{paragr}
  On notera $\cDelta$ la \ndef{catégorie des simplexes}. Rappelons que ses
  objets sont les ensembles ordonnés
  \[
    \Deltan{m} = \{0, \dots, m\}, \quad{\text{pour $m \ge 0$}},
  \]
  et ses morphismes les applications croissantes (au sens large) entre tels
  ensembles ordonnés. De même, on notera $\cDeltaAug$ la \ndef{catégorie des
  simplexes augmentée}, c'est-à-dire la catégorie obtenue à partir de
  $\cDelta$ en ajoutant l'ensemble ordonné vide $\Deltan{-1}$.

  La catégorie des \ndef{ensembles simpliciaux}, c'est-à-dire des
  préfaisceaux sur $\cDelta$, sera notée $\pref{\cDelta}$. On considérera le
  foncteur de Yoneda $\cDelta \hookto \pref{\cDelta}$ comme une inclusion.
  Si $X$ est un ensemble simplicial, pour $m \ge 0$, on notera $X_m$
  l'ensemble $X(\Deltan{m})$ de ses \ndef{$m$-simplexes}.

  Soit $x$ un $m$-simplexe d'un ensemble simplicial $X$.
  Pour $I = \{i_0 < \dots < i_p\}$ un sous-ensemble de $\Deltan{m}$,
  on notera $x_{i_0, \dots, i_p}$ le $p$-simplexe
  \[
    x_{i_0, \dots, i_p} = X(f^{}_I)(x),
  \]
  où $f^{}_I : \Deltan{p} \to \Deltan{m}$ est l'application qui envoie $k$ sur
  $i_k$.
\end{paragr}

\begin{paragr}
  On appellera \ndef{équivalences faibles d'ensembles simpliciaux} les
  équivalences d'homotopie faibles d'ensembles simpliciaux, c'est-à-dire les
  morphismes dont la réalisation topologique est une équivalence
  d'homotopie. On dira qu'un ensemble simplicial $X$ est \ndef{faiblement
  contractile} si l'unique morphisme de $X$ vers l'ensemble
  simplicial final~$\Deltan{0}$ est une équivalence faible.
\end{paragr}

\begin{paragr}\label{paragr:def_D}
  On rappelle que la catégorie $\cDelta$ admet un unique automorphisme non
  trivial, automorphisme qui se trouve être une involution et que nous
  noterons $\DDelta : \cDelta \to \cDelta$. Explicitement, le foncteur
  $\DDelta$ est l'identité sur les objets et, si $f : \Deltan{m} \to
  \Deltan{n}$ est un morphisme de $\cDelta$, le morphisme $\DDelta(f)$ est
  donné par
  \[
    \DDelta(f)(i) = n - f(m - i), \qquad\text{pour $0 \le i \le m$}.
  \]
  Cet automorphisme s'étend de manière unique en un automorphisme
  $\DDeltaAug$ de $\cDeltaAug$ et celui-ci vérifie $\DDeltaAug(\Deltan{-1}) =
  \Deltan{-1}$.
  
  L'automorphisme $\DDelta$ induit un automorphisme involutif de
  $\pref{\cDelta}$ qui envoie un ensemble simplicial $X$ sur l'ensemble
  simplicial $X^\op = X \circ \DDelta$.

  On rappelle que $f$ est une équivalence faible simpliciale si et seulement si
  il en est de même de $f^\op$. On vérifie en effet immédiatement que les morphismes
  $f$ et $f^\op$ ont même réalisation topologique.
\end{paragr}

\begin{paragr}\label{paragr:bisimpl}
  On rappelle qu'un \emph{ensemble bisimplicial} est un
  préfaisceau sur $\cDelta \times \cDelta$. Si $X$ est un ensemble
  bisimplicial et $m, n$ sont deux entiers positifs, on notera
  $X_{m, n}$ l'ensemble $X(\Deltan{m}, \Deltan{n})$.

  On notera $p_1, p_2 : \cDelta \times \cDelta \to \cDelta$ les deux
  projections. Ces foncteurs induisent par précomposition des foncteurs
  $p_1^\ast, p_2^\ast : \pref{\cDelta} \to \pref{\cDelta \times \cDelta}$.
  Si $X$ est un ensemble simplicial, $p_1^\ast(X)$ et $p_2^\ast(X)$ sont les
  ensembles bisimpliciaux définis, pour $m$ et $n$ deux entiers positifs,
  par
  \[
    \begin{split}
      p_1^\ast(X)_{m, n} & = X_m, \\
      p_2^\ast(X)_{m, n} & = X_n.
    \end{split}
  \]

  On notera $\delta : \cDelta \to \cDelta \times \cDelta$ le foncteur
  diagonal. Celui-ci induit un foncteur $\delta^* : \pref{\cDelta \times
  \cDelta} \to \pref{\cDelta}$ qui envoie un ensemble bisimplicial $X$ sur
  l'ensemble simplicial~$\delta^*(X)$ défini par $\delta^*(X)_m = X_{m, m}$.
  On appellera \ndef{équivalence faible diagonale} un morphisme d'ensembles
  bisimpliciaux tel que $\delta^\ast(f)$ soit une équivalence
  faible d'ensembles simpliciaux.
\end{paragr}

On rappelle le lemme classique suivant :

\begin{lemme}\label{lemme:bisimpl}
  Soit $f : X \to Y$ un morphisme d'ensembles bisimpliciaux. Si pour tout $m
  \ge 0$, le morphisme $f_{m, \bullet} : X_{m, \bullet} \to Y_{m,
  \bullet}$ est une équivalence faible d'ensembles simpliciaux, alors $f$
  est une équivalence faible diagonale.
\end{lemme}

\begin{proof}
  Voir par exemple \cite[Chapitre XII, paragraphe 4.3]{BousKan} ou
  \cite[proposition 2.1.7]{CisinskiLFM} pour une preuve plus moderne.
\end{proof}

\begin{paragr}
  Soient $g : X \to Z$ et $h : Y \to Z$ deux morphismes d'ensembles
  simpliciaux. On définit un ensemble bisimplicial $g \downarrow h$ en posant
  \[
    \begin{split}
      (g \downarrow h)_{m, n} = & \{ (x \in X_m, y \in Y_n, z \in Z_{m + 1 + n})
        \mid \\
        & \phantom{=1}\qquad  z_{0, \dots, m} = g(x) \text{ et }
      z_{m+1, \dots, m + 1 + n} = h(y)\},
    \end{split}
  \]
  les opérations simpliciales étant définies de la manière évidente. On a des
  morphismes canoniques
  \[
    p^\ast_1(X) \leftarrow g \downarrow h \to p^\ast_2(Y)
  \]
  définis par
  \[ x \mapsfrom (x, y, z) \mapsto y. \]

  On notera respectivement $g \downarrow Z$ et~$Z \downarrow h$ les ensembles
  bisimpliciaux $g \downarrow \id{Z}$ et~$\id{Z} \downarrow h$.
  Notons que dans ces cas, les définitions se simplifient (à isomorphisme
  près) en
  \[
    \begin{split}
      (g \downarrow Z)_{m, n} & =  \{(x \in X_m, z \in Z_{m+1+n}) \mid z_{0, \dots,
      m} = g(x) \}, \\
      (Z \downarrow h)_{m, n} & = \{(y \in Y_n, z \in Z_{m+1+n}) \mid z_{m+1, \dots,
      m + 1 + n} = h(y) \}.
    \end{split}
  \]
\end{paragr}

\begin{paragr}\label{paragr:def_tr_simpl}
  Soient $g : X \to Z$ un morphisme d'ensembles simpliciaux et $m \ge 0$ un
  entier. L'application canonique
  \[
    Z \downarrow g  \to p^\ast_1(Z)
  \]
  induit un morphisme d'ensembles simpliciaux
  \[
    (Z \downarrow g)_{m, \bullet} \to p^\ast_1(Z)_{m, \bullet} = Z_m,
  \]
  où $Z_m$ désigne l'ensemble simplicial constant associé à l'ensemble $Z_m$.

  Si $z$ est un $m$-simplexe de $Z$, on notera $\cotr{X}{z}$ la fibre du
  morphisme ci-dessus en~$z$. On appellera $\cotr{X}{z}$ la \ndef{tranche de
  $X$ au-dessous de $z$}. Explicitement, les $n$-simplexes de~$\cotr{X}{z}$
  sont donnés par
  \[
    \begin{split}
      (\cotr{X}{z})_n & = \{(x \in X_n, z' \in Z_{m+1+n}) \mid \\
      & \phantom{=1}\qquad z'_{0, \dots, m} = z \text{ et } z'_{m+1, \dots, m+1+n} = g(x)\}.
    \end{split}
  \]
  Par définition, l'ensemble simplicial $(Z \downarrow g)_{m,
  \bullet}$ se décompose en
  \[
    (Z \downarrow g)_{m, \bullet} = \coprod_{z \in Z_m} \cotr{X}{z}.
  \]
  Par ailleurs, le morphisme $Z \downarrow g \to p_2^\ast(X)$ induit un
  morphisme canonique
  \[ \cotr{X}{z} \to X \]
  donné explicitement par $(x, z') \mapsto x$.

  Un cas particulièrement important de tranche est celui où $X = Z$ et $g$
  est l'identité de $Z$. Ainsi, si $Z$ est un ensemble simplicial et $z$ est
  un $m$-simplexe de~$Z$, on obtient un ensemble simplicial $\cotr{Z}{z}$.
\end{paragr}

\begin{remark}
  L'ensemble simplicial $\cotr{X}{z}$ peut se définir de manière plus
  conceptuelle à partir du joint simplicial (voir le paragraphe
  \ref{paragr:joint_simpl}).
\end{remark}

\begin{paragr}\label{paragr:funct_tr_simpl}
  Soit
  \[
    \xymatrix@C=1.5pc{
      X \ar[rr]^f \ar[dr]_g & & Y \ar[dl]^h \\
      & Z
    }
  \]
  un triangle commutatif de morphismes d'ensembles simpliciaux. On définit un
  morphisme $Z \downarrow f$ d'ensembles bisimpliciaux
  \[
    Z \downarrow f : Z \downarrow g \to Z \downarrow h
  \]
  en envoyant $(x, z)$ sur $(f(x), z)$. (On prendra garde que la notation $Z
  \downarrow f$ est ambiguë dans le cas $Y = Z$ puisqu'elle désigne à la
  fois un ensemble bisimplicial et un morphisme d'ensembles bisimpliciaux.
  Nous avons pris soin de toujours indiquer clairement quel objet la
  notation désigne dans la suite du texte.)

  On obtient un triangle
  \[
    \xymatrix@C=1.5pc{
      Z \downarrow g \ar[rr]^{Z \downarrow f} \ar[dr] & & Z \downarrow h \ar[dl] \\
      & p_1^\ast(Z)
    }
  \]
  de morphismes bisimpliciaux dont on vérifie immédiatement la
  commutativité. Si $z$ est un $m$-simplexe de $Z$, en prenant la fibre
  au-dessus de $z$ du morphisme $Z \downarrow f$,  on obtient un morphisme
  d'ensembles simpliciaux
  \[
    \cotr{f}{z} : \cotr{X}{z} \to \cotr{Y}{z}
  \]
  qui, explicitement, envoie $(x, z')$ sur~$(f(x),
  z')$. Par définition, on a
  \[
    (Z \downarrow f)_{m, \bullet} = \coprod_{z \in Z_m} \cotr{f}{z} :
    \coprod_{z \in Z_m} \cotr{X}{z}
    \to
    \coprod_{z \in Z_m} \cotr{Y}{z}.
  \]
\end{paragr}

\begin{paragr}\label{paragr:tr_simpl_prod_fib}
  Soient $g : X \to Z$ un morphisme d'ensembles simpliciaux et $z$ un
  $m$-simplexe de $Z$. En vertu du paragraphe précédent, en considérant $g$
  comme un morphisme au-dessus de $Z$, on obtient un morphisme $\cotr{g}{z}
  : \cotr{X}{z} \to \cotr{Z}{z}$. On vérifie immédiatement que le carré
  \[
    \xymatrix{
      \cotr{X}{z} \ar[d]_{\cotr{g}{z}} \ar[r] & X \ar[d]^g \\
      \cotr{Z}{z} \ar[r] & Z
    }
  \]
  est cartésien. Autrement dit, on a
  \[
    \cotr{X}{z} = (\cotr{Z}{z}) \times_Z X.
  \]
\end{paragr}

\begin{paragr}\label{paragr:tr_dual_simpl}
  Soient $g : X \to Z$ un morphisme d'ensembles simpliciaux et $z$ un
  $n$-simplexe de $Z$. On peut définir, de manière similaire à la
  définition des tranches au-dessous, un ensemble simplicial
  $\tr{X}{z}$ \ndef{tranche de $X$ au-dessus de $z$} en considérant les fibres
  du morphisme canonique
  \[
    g \downarrow Z  \to p^\ast_2(Z).
  \]
  On peut également définir $\tr{X}{z}$ à partir de $\cotr{X}{z}$ en utilisant
  la dualité $X \mapsto X^\op$. En effet, on a un morphisme $g^\op : X^\op \to
  Z^\op$ et le $n$-simplexe $z$ de $Z$ peut-être vu comme un $n$-simplexe de
  $Z^\op$. On peut donc considérer l'ensemble simplicial $\cotr{X^\op}{z}$.
  On a alors
  \[ \tr{X}{z} = \big(\cotr{X^\op}{z}\big){}^\op. \]
  Explicitement, pour $m \ge 0$, on a
  \[
    \begin{split}
      (\tr{X}{z})_m & = \{(x \in X_m, z' \in Z_{m+1+n}) \mid \\
        & \phantom{=1}\qquad z'_{0, \dots, m} = g(x) \text{ et } z'_{m+1, \dots,
      m+1+n} = z \}.
    \end{split}
  \]
  Si
  \[
    \xymatrix@C=1.5pc{
      X \ar[rr]^f \ar[dr]_g & & Y \ar[dl]^h \\
      & Z
    }
  \]
  est un triangle commutatif de morphismes d'ensembles simpliciaux et si $z$
  est toujours un $n$-simplexe de $Z$, on définit un morphisme
  \[ \tr{f}{z} : \tr{X}{z} \to \tr{Y}{z} \]
  en posant $\tr{f}{z} = \big(\cotr{f^\op}{z}\big){}^\op$. Explicitement,
  ce morphisme envoie $(x, z')$ sur $(f(x), z')$.
\end{paragr}

\begin{prop}\label{prop:tr_contr}
  Soient $X$ un ensemble simplicial et $x$ un $n$-simplexe de $X$. Alors
  l'ensemble simplicial $\tr{X}{x}$ est contractile.
\end{prop}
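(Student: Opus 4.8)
The plan is to exhibit an explicit contracting homotopy on the simplicial set $\tr{X}{x}$, using the extra vertex/simplex coming from $x$ as a cone point. Concretely, let $x$ be an $n$-simplex of $X$. An $m$-simplex of $\tr{X}{x}$ is a pair $(a, z)$ with $a \in X_m$ and $z \in X_{m+1+n}$ such that $z_{0,\dots,m} = a$ and $z_{m+1,\dots,m+1+n} = x$ (here $X$ plays the role of both source and target, $g = \id{X}$). The idea is that such a $z$ is itself, by the simplicial identities, a point of $\tr{X}{x}$ of dimension $m+1$ once one forgets the right-hand block $x$: precisely, $z_{0,\dots,m+1}$ together with $z$ (which has the right length $(m+1)+1+n$) is an $(m+1)$-simplex of $\tr{X}{x}$. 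This suggests that $\tr{X}{x}$ admits a section of its last-vertex-type projection onto a point and that the projection is null-homotopic.

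The main step is to produce the homotopy. I would define, for each $m \ge 0$ and each $0 \le k \le m+1$, a map $h_k \colon (\tr{X}{x})_m \to (\tr{X}{x})_{m+1}$ sending $(a,z)$ to the pair whose $X$-component is obtained from $z \in X_{m+1+n}$ by the monotone map $\Deltan{m+1} \to \Deltan{m+1+n}$ that is the identity on $0,\dots,k$, then jumps to $m+1$ on $k+1,\dots,m+1$ — i.e.\ the "prism" degeneracies of $z$ — and whose $Z$-component is the appropriate face/degeneracy of $z$ so that the compatibility $(\,\cdot\,)_{0,\dots,m+1} = (\text{first component})$ and $(\,\cdot\,)_{m+2,\dots,m+2+n} = x$ holds. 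One checks the simplicial identities for $h_0,\dots,h_{m+1}$ exactly as in the standard proof that a cone is contractible: $h_{m+1}$ composed with the top face recovers the identity of $\tr{X}{x}$, $h_0$ composed with the bottom face is the constant map at the vertex $z_{m+1,\dots,m+1+n}$-side basepoint (a $0$-simplex of $\tr{X}{x}$ built from $x$ alone, namely $(x_n, x)$ suitably interpreted, using that $z_{0}$-through-the-cone gives a canonical $0$-simplex), and the faces/degeneracies interleave correctly. This exhibits a simplicial homotopy from $\id{\tr{X}{x}}$ to a constant map, hence $\tr{X}{x}$ is contractible.

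Alternatively — and this may be cleaner to write — one reduces to the representable case: by the gluing/colimit description, $\tr{X}{x}$ for $x \colon \Deltan{n} \to X$ is compatible with the construction for the universal $n$-simplex, and one has $\tr{X}{x} \cong \tr{\Deltan{n}}{\id{\Deltan{n}}} \times_{?} \cdots$; but in fact the slickest route uses the join: by the remark following \ref{paragr:def_tr_simpl}, $\tr{X}{z}$ is defined via the simplicial join, and $\tr{\Deltan{m}}{(\text{last vertex})}$ is a representable simplex, hence contractible, with the general case following by a colimit argument over the simplices of $X$ since contractibility (being equivalent to weak contractibility here via the anodyne structure of the canonical projection) is detected appropriately. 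The \textbf{main obstacle} is purely bookkeeping: getting the indices in the face map $\Deltan{m} \to \Deltan{m+1+n}$ and in the prism decomposition exactly right so that the homotopy lands in $\tr{X}{x}$ rather than merely in $p_2^\ast(X)$, and verifying the boundary case $m=0$ where the cone point itself must be identified as a genuine $0$-simplex $(a_0, z) \in (\tr{X}{x})_0$ with $z \in X_{1+n}$ satisfying $z_0 = a_0$ and $z_{1,\dots,1+n} = x$ — concretely one takes $z = x_{0,0,1,\dots,n}$ (the first-vertex degeneracy of $x$) and $a_0 = x_0$.
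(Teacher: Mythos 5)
Your main argument is correct, and it is essentially the proof that the paper delegates rather than writes out: the paper's own proof of this proposition is a pure citation (Illusie, and the elementary proof in \cite[lemme 2.4]{AraMaltsiThmAI}), and your explicit cone contraction is precisely that elementary argument. Your operators $h_k$ are the prism operators of the canonical homotopy contracting $\tr{X}{x}$ onto the vertex $(x_0,\,x_{0,0,1,\dots,n})$; the second component of $h_k(a,z)$, which you leave implicit, is forced: it is $z$ composed with the monotone map $\Deltan{m+2+n}\to\Deltan{m+1+n}$ given by your map on $\{0,\dots,m+1\}$ and by $m+2+j\mapsto m+1+j$ on the last block, and with this choice the homotopy identities you invoke do hold (note that your $h_{m+1}$ coincides with $h_m$, so the family is really the usual one indexed by $0\le k\le m$). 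Two slips should be corrected, though neither affects the argument: in your first paragraph $z$ lies in $X_{m+1+n}$, not in $X_{(m+1)+1+n}$, so $(z_{0,\dots,m+1},z)$ is \emph{not} an $(m+1)$-simplex of $\tr{X}{x}$; and the basepoint is not ``$(x_n,x)$'' but the pair $(x_0,\,x_{0,0,1,\dots,n})$ that you state correctly in your last sentence. Finally, I would delete the ``alternative'' route via representables: for a general $X$ the projection $\tr{X}{x}\to\Deltan{0}$ is not a trivial fibration, and neither the fibre condition defining $\tr{X}{x}$ nor contractibility passes through the colimit $X=\varinjlim\Deltan{k}$ without a homotopy-colimit argument, which is exactly the Theorem-A-type machinery this proposition is meant to feed; so that sketch is not a proof as stated, whereas your explicit homotopy is.
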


\begin{proof}
  Cela résulte de \cite[chapitre 6, proposition 1.4]{Illusie}. Voir
  également \cite[lemme 2.4]{AraMaltsiThmAI} pour une preuve
  élémentaire.
\end{proof}

\begin{paragr}
  Si
  \[
    \xymatrix@C=1.5pc{
      X \ar[rr]^f \ar[dr]_g & & Y \ar[dl]^h \\
      & Z
    }
  \]
  est un triangle commutatif de morphismes d'ensembles simpliciaux, on
  dispose d'un carré
  \[
    \xymatrix@C=2.5pc{
      Z \downarrow g \ar[r]^{Z \downarrow f} \ar[d] & Z \downarrow h \ar[d] \\
      p_2^\ast(X) \ar[r]_{p_2^\ast(f)} & p_2^\ast(Y) \\
    }
  \]
  de morphismes bisimpliciaux dont on vérifie immédiatement la commutativité.

  Le lemme suivant affirme que les flèches verticales de ce carré sont
  des équivalences faibles diagonales :
\end{paragr}

\begin{lemme}\label{lemme:morph_can_eq}
  Soit $g : X \to Z$ un morphisme d'ensembles simpliciaux. Alors le morphisme
  canonique $Z \downarrow g \to p^\ast_2(X)$ est une équivalence faible diagonale.
\end{lemme}

\begin{proof}
  En vertu du lemme \ref{lemme:bisimpl}, il suffit de vérifier que, pour tout
  $n \ge 0$, le morphisme d'ensembles simpliciaux
  \[
    (Z \downarrow g)_{\bullet, n} \to p^\ast_2(X)_{\bullet, n} = X_n,
  \]
  où $X_n$ désigne l'ensemble simplicial constant associé à l'ensemble $X_n$,
  est une équivalence faible. Puisque les équivalences faibles simpliciales
  sont stables par somme, il suffit donc de vérifier que les fibres de ce
  morphisme sont faiblement contractiles. Soit donc $x$ un élément de $X_n$.
  L'ensemble des $m$-simplexes de la fibre en $x$ est
  \[
    \begin{split}
      \MoveEqLeft \{(x' \in X_n, z \in Z_{m+1+n}) \mid x' = x \text{ et } z_{m+1, \dots,
      m+1+n} = g(x')  \} \\
      & \simeq \{z \in Z_{m+1+n} \mid z_{m+1, \dots, m+1+n} = g(x)  \}.
    \end{split}
  \]
  Cette fibre s'identifie ainsi à l'ensemble simplicial $\tr{Z}{g(x)}$. Or
  celui-ci est faiblement contractile en vertu de la
  proposition~\ref{prop:tr_contr}, ce qui achève la démonstration.
\end{proof}

\begin{prop}\label{prop:thmA_simpl_pre}
  Si
  \[
    \xymatrix@C=1.5pc{
      X \ar[rr]^f \ar[dr]_g & & Y \ar[dl]^h \\
      & Z
    }
  \]
  est un triangle commutatif de morphismes d'ensembles simpliciaux, alors le
  morphisme $f$ est une équivalence faible si et seulement si
  \hbox{$Z \downarrow f : Z \downarrow g \to Z \downarrow h$} est une
  équivalence faible diagonale.
\end{prop}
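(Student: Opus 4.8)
The plan is to read off the result directly from the commutative square of bisimplicial sets
\[
  \xymatrix@C=2.5pc{
    Z \downarrow g \ar[r]^{Z \downarrow f} \ar[d] & Z \downarrow h \ar[d] \\
    p_2^\ast(X) \ar[r]_{p_2^\ast(f)} & p_2^\ast(Y)
  }
\]
displayed just before the statement, whose vertical arrows are the canonical morphisms and whose commutativity has already been checked. By Lemma~\ref{lemme:morph_can_eq}, both vertical arrows are diagonal weak equivalences; that is, applying $\delta^\ast$ turns them into weak equivalences of simplicial sets.

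Next I would observe that $p_2 \o \delta = \id{\cDelta}$, so that $\delta^\ast \o p_2^\ast$ is the identity endofunctor of $\pref{\cDelta}$. In particular $\delta^\ast(p_2^\ast(X)) = X$, $\delta^\ast(p_2^\ast(Y)) = Y$ and $\delta^\ast(p_2^\ast(f)) = f$. Applying the functor $\delta^\ast$ to the square above therefore yields a commutative square of simplicial sets whose bottom horizontal arrow is $f$ itself and whose two vertical arrows are weak equivalences.

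It then suffices to invoke the usual closure properties of weak equivalences of simplicial sets: in a commutative square two of whose parallel sides are weak equivalences, one of the two remaining sides is a weak equivalence if and only if the other is (a standard consequence of the $2$-out-of-$3$ property, using that the composite $\delta^\ast(Z\downarrow g) \to \delta^\ast(Z \downarrow h) \to Y$ equals $\delta^\ast(Z \downarrow g) \to X \xrightarrow{f} Y$). Applied to the square at hand, this says precisely that $\delta^\ast(Z \downarrow f)$ is a weak equivalence of simplicial sets — i.e.\ $Z \downarrow f$ is a diagonal weak equivalence — if and only if $f$ is a weak equivalence, which is the assertion of the proposition.

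I do not expect any genuine obstacle here: once Lemma~\ref{lemme:morph_can_eq} is available the argument is purely formal. The only points deserving a line of care are the elementary identity $\delta^\ast \o p_2^\ast = \mathrm{id}$ (equivalently $p_2 \o \delta = \id{\cDelta}$), the already-verified commutativity of the square, and the elementary $2$-out-of-$3$ stability of simplicial weak equivalences.
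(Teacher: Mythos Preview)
Your proof is correct and follows essentially the same approach as the paper: both use the commutative square with vertical arrows $Z\downarrow g \to p_2^\ast(X)$ and $Z\downarrow h \to p_2^\ast(Y)$, invoke Lemma~\ref{lemme:morph_can_eq} to see these are diagonal weak equivalences, observe that $\delta^\ast(p_2^\ast(f)) = f$, and conclude by two-out-of-three. The only cosmetic difference is that the paper phrases the two-out-of-three argument at the level of diagonal weak equivalences of bisimplicial sets before unwinding, whereas you apply $\delta^\ast$ first; these are equivalent by definition.
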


\begin{proof}
  En vertu du lemme précédent, les flèches verticales du carré
  commutatif
  \[
    \xymatrix@C=2.5pc{
      Z \downarrow g \ar[r]^{Z \downarrow f} \ar[d] & Z \downarrow h \ar[d] \\
      p_2^\ast(X) \ar[r]_{p^\ast_2(f)} & p_2^\ast(Y) \\
    }
  \]
  sont des équivalences faibles diagonales. Par deux sur trois, on en déduit
  que le morphisme $p_2^\ast(f) : p_2^\ast(X) \to p_2^\ast(Y)$ est une
  équivalence faible diagonale si et seulement si il en est de même de $Z
  \downarrow f : Z \downarrow g \to Z \downarrow h$. Or
  $\delta^\ast(p_2^\ast(f))$ n'est autre que $f$ et dire que $p_2^\ast(f)$
  est une équivalence faible diagonale signifie précisément que $f$ est une
  équivalence faible, ce qui achève la démonstration.
\end{proof}

\begin{thm}[Théorème A simplicial]\label{thm:thmA_simpl}
  Soit
  \[
    \xymatrix@C=1.5pc{
      X \ar[rr]^f \ar[dr]_g & & Y \ar[dl]^h \\
      & Z
    }
  \]
  un triangle commutatif de morphismes d'ensembles simpliciaux. Si pour tout
  $m \ge 0$ et tout $m$-simplexe $z$ de $Z$, le morphisme $\cotr{f}{z} :
  \cotr{X}{z} \to \cotr{Y}{z}$ est une équivalence faible, alors $f$ est une
  équivalence faible.
\end{thm}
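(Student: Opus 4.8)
The plan is to reduce, by Proposition~\ref{prop:thmA_simpl_pre}, the statement about $f$ to a statement about the bisimplicial morphism $Z \downarrow f$, and then to verify the hypothesis of Lemma~\ref{lemme:bisimpl} levelwise, using the fact that $(Z \downarrow f)_{m, \bullet}$ decomposes as a coproduct of the slice morphisms $\cotr{f}{z}$.

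First I would invoke Proposition~\ref{prop:thmA_simpl_pre}: the morphism $f$ is a weak equivalence if and only if $Z \downarrow f : Z \downarrow g \to Z \downarrow h$ is a diagonal weak equivalence. Hence it suffices to prove that $Z \downarrow f$ is a diagonal weak equivalence. By Lemma~\ref{lemme:bisimpl}, for this it is enough to check that for every $m \ge 0$ the morphism of simplicial sets
\[
  (Z \downarrow f)_{m, \bullet} : (Z \downarrow g)_{m, \bullet} \to (Z \downarrow h)_{m, \bullet}
\]
is a weak equivalence. Now, by the last displayed formula of paragraph~\ref{paragr:funct_tr_simpl}, this morphism is exactly the coproduct
\[
  (Z \downarrow f)_{m, \bullet} = \coprod_{z \in Z_m} \cotr{f}{z}
  : \coprod_{z \in Z_m} \cotr{X}{z} \to \coprod_{z \in Z_m} \cotr{Y}{z}.
\]
By hypothesis, each $\cotr{f}{z} : \cotr{X}{z} \to \cotr{Y}{z}$ is a weak equivalence, and since weak equivalences of simplicial sets are stable under arbitrary coproducts (as already used in the proof of Lemma~\ref{lemme:morph_can_eq}), the morphism $(Z \downarrow f)_{m, \bullet}$ is a weak equivalence. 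This concludes the argument.

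There is essentially no real obstacle left at this stage: all the homotopy-theoretic content has been absorbed into Lemmas~\ref{lemme:bisimpl} and~\ref{lemme:morph_can_eq} and Proposition~\ref{prop:thmA_simpl_pre}, so the proof is a pure assembly. The only point deserving a moment's care is that the fiberwise splitting $(Z \downarrow f)_{m, \bullet} = \coprod_{z \in Z_m} \cotr{f}{z}$ is genuinely a decomposition \emph{as simplicial sets}, compatible with $f$, over the constant simplicial set $Z_m$ — which is precisely what paragraph~\ref{paragr:funct_tr_simpl} records — together with the stability of simplicial weak equivalences under coproducts.
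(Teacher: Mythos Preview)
Your proof is correct and follows essentially the same approach as the paper's: reduce via Proposition~\ref{prop:thmA_simpl_pre} to showing that $Z \downarrow f$ is a diagonal weak equivalence, apply Lemma~\ref{lemme:bisimpl} to work levelwise, and then use the coproduct decomposition of paragraph~\ref{paragr:funct_tr_simpl} together with stability of weak equivalences under sums.
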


\begin{proof}
  En vertu de la proposition précédente, il s'agit de montrer
  que le morphisme $Z \downarrow f : Z \downarrow g \to Z \downarrow h$ est
  une équivalence faible diagonale. Pour cela, il suffit de montrer, en
  vertu du lemme~\ref{lemme:bisimpl}, que, pour tout $m \ge 0$, le morphisme
  $(Z \downarrow f)_{m, \bullet}$ est une équivalence faible. Or, on a vu
  au paragraphe~\ref{paragr:funct_tr_simpl} que ce morphisme s'identifie au
  morphisme
  \[
    \coprod_{z \in Z_m} \cotr{f}{z} :
    \coprod_{z \in Z_m} \cotr{X}{z}
    \to
    \coprod_{z \in Z_m} \cotr{Y}{z}.
  \]
  Puisque les équivalences faibles simpliciales sont stables par somme,
  l'hypothèse de l'énoncé entraîne que ce morphisme est bien une équivalence
  faible, ce qui achève la démonstration.
\end{proof}

\begin{coro}\label{coro:thmA_simpl_dual}
  Soit
  \[
    \xymatrix@C=1.5pc{
      X \ar[rr]^f \ar[dr]_g & & Y \ar[dl]^h \\
      & Z
    }
  \]
  un triangle commutatif de morphismes d'ensembles simpliciaux. Si pour tout
  $n \ge 0$ et tout $n$-simplexe $z$ de $Z$, le morphisme $\tr{f}{z} :
  \tr{X}{z} \to \tr{Y}{z}$ est une équivalence faible, alors $f$ est une
  équivalence faible.
\end{coro}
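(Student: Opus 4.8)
Le plan est de déduire ce corollaire du théorème~\ref{thm:thmA_simpl} par dualité, en l'appliquant au triangle obtenu en appliquant le foncteur $X \mapsto X^\op$ de $\pref\cDelta$ dans $\pref\cDelta$.

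On commencerait par observer que, le foncteur $\DDelta$ étant l'identité sur les objets (paragraphe~\ref{paragr:def_D}), un ensemble simplicial et son dual ont les mêmes $n$-simplexes pour tout $n \ge 0$; en particulier, le $n$-simplexe $z$ de $Z$ de l'énoncé se lit comme un $n$-simplexe de $Z^\op$. En appliquant le foncteur $(-)^\op$ au triangle commutatif de l'énoncé, on obtiendrait un triangle commutatif
\[
  \xymatrix@C=1.5pc{
    X^\op \ar[rr]^{f^\op} \ar[dr]_{g^\op} & & Y^\op \ar[dl]^{h^\op} \\
    & Z^\op
  }
\]
de morphismes d'ensembles simpliciaux, auquel on souhaite appliquer le théorème~\ref{thm:thmA_simpl}.

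Le point clef serait la traduction de l'hypothèse. En vertu du paragraphe~\ref{paragr:tr_dual_simpl}, on a $\tr{f}{z} = \big(\cotr{f^\op}{z}\big){}^\op$; comme les morphismes $\cotr{f^\op}{z}$ et $\big(\cotr{f^\op}{z}\big){}^\op = \tr{f}{z}$ ont même réalisation topologique (paragraphe~\ref{paragr:def_D}), le morphisme $\tr{f}{z}$ est une équivalence faible si et seulement si $\cotr{f^\op}{z} : \cotr{X^\op}{z} \to \cotr{Y^\op}{z}$ en est une. L'hypothèse du corollaire se réécrirait donc : pour tout $n \ge 0$ et tout $n$-simplexe $z$ de $Z^\op$, le morphisme $\cotr{f^\op}{z}$ est une équivalence faible.

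Il resterait alors à appliquer le théorème~\ref{thm:thmA_simpl} au triangle dual ci-dessus, ce qui donnerait que $f^\op$ est une équivalence faible; les morphismes $f$ et $f^\op$ ayant même réalisation topologique, on en conclurait que $f$ est une équivalence faible. Il n'y a pas d'obstacle véritable dans cet argument : c'est une dualisation formelle du théorème~\ref{thm:thmA_simpl}, et le seul point demandant un peu de soin — déjà réglé au paragraphe~\ref{paragr:tr_dual_simpl} — est la compatibilité de la construction des tranches au-dessus avec la dualité et l'identification des $n$-simplexes de $Z$ avec ceux de $Z^\op$.
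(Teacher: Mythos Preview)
Your proposal is correct and follows essentially the same approach as the paper: apply $(-)^\op$ to the triangle, use the identification $\tr{f}{z} = \big(\cotr{f^\op}{z}\big)^\op$ from paragraph~\ref{paragr:tr_dual_simpl} together with the invariance of weak equivalences under $(-)^\op$, invoke Theorem~\ref{thm:thmA_simpl} for $f^\op$, and conclude for $f$. The paper's proof is just a slightly terser version of exactly this argument.
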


\begin{proof}
  Soit $z$ un $n$-simplexe de $Z$. Puisque, par hypothèse, le morphisme
  $\tr{f}{z} : \tr{X}{z} \to \tr{X}{z}$ est une équivalence faible, il en
  est de même de $\big(\tr{f}{z}\big){}^\op$. Or, en vertu du
  paragraphe~\ref{paragr:tr_dual_simpl}, celui-ci s'identifie à
  $\cotr{f^\op}{z}$. Ainsi, on peut appliquer le théorème précédent à
  $f^\op$. On en déduit que $f^\op$ est une équivalence faible et donc qu'il
  en est de même de $f$, ce qu'on voulait démontrer.
\end{proof}

\section{Deux théorèmes A abstraits}
\label{sec:thmA_abs}

\begin{paragr}\label{paragr:def_thmA}
  Soit $\C$ une catégorie munie d'un objet cosimplicial $\cO : \cDelta \to
  \C$.  Pour $n \ge 0$, on pose $\On{n} = \cO(\Deltan{n})$. On notera $N : \C
  \to \EnsSimp$ le \ndef{foncteur nerf} associé défini par
  \[
    X \mapsto \left(\Deltan{n} \mapsto \Hom_\C(\On{n}, X)\right).
  \]
  On appellera \ndef{équivalences faibles} de $\C$ les morphismes de $\C$
  dont le nerf, c'est-à-dire l'image par $N$, est une équivalence faible
  simpliciale.

  On dira que l'objet cosimplicial $\cO : \C \to \EnsSimp$ \ndef{permet un
  théorème A} si, pour tout morphisme $g : X
  \to Z$ de $\C$, tout $m \ge 0$ et tout $m$-simplexe $z$ de $N(Z)$, le
  morphisme~$\cotr{N(X)}{z} \to \cotr{N(X)}{z_m}$, défini sur les
  $n$-simplexes par $(x, z') \mapsto (x, z'_{m, \dots, m+1+n})$, est une
  équivalence faible
  simpliciale.
\end{paragr}

\begin{thm}[Théorème A cosimplicial]\label{thm:thmA_cosimpl}
  Fixons une catégorie $\C$ munie d'un objet cosimplicial $\cO : \cDelta \to \C$
  permettant un théorème A. Soit
  \[
    \xymatrix@C=1.5pc{
      X \ar[rr]^f \ar[dr]_g & & Y \ar[dl]^h \\
      & Z
    }
  \]
  un triangle commutatif dans $\C$. Si pour tout $0$-simplexe $z$ de $N(Z)$,
  le morphisme
  \[ \cotr{N(f)}{z} : \cotr{N(X)}{z} \to \cotr{N(Y)}{z} \]
  est une équivalence faible simpliciale, alors $f$ est une
  équivalence faible de~$\C$.
\end{thm}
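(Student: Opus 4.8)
The plan is to apply the simplicial Theorem~A (Theorem~\ref{thm:thmA_simpl}) to the triangle of simplicial sets
\[
  \xymatrix@C=1.5pc{
    N(X) \ar[rr]^{N(f)} \ar[dr]_{N(g)} & & N(Y) \ar[dl]^{N(h)} \\
    & N(Z)
  }
\]
obtained by applying the nerve functor $N$ to the given triangle (its commutativity follows from $g = h \circ f$). For that, it suffices to show that for every $m \ge 0$ and every $m$-simplex $z$ of $N(Z)$, the morphism $\cotr{N(f)}{z} : \cotr{N(X)}{z} \to \cotr{N(Y)}{z}$ of paragraph~\ref{paragr:funct_tr_simpl} is a simplicial weak equivalence; Theorem~\ref{thm:thmA_simpl} then gives that $N(f)$ is a simplicial weak equivalence, that is, that $f$ is a weak equivalence of~$\C$ in the sense of paragraph~\ref{paragr:def_thmA}.

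To prove this I fix an $m$-simplex $z$ of $N(Z)$ and let $z_m$ denote its $m$-th vertex, a $0$-simplex of $N(Z)$. I then consider the square whose top edge is $\cotr{N(f)}{z} : \cotr{N(X)}{z} \to \cotr{N(Y)}{z}$, whose bottom edge is $\cotr{N(f)}{z_m} : \cotr{N(X)}{z_m} \to \cotr{N(Y)}{z_m}$, and whose vertical edges are the comparison morphisms $\cotr{N(X)}{z} \to \cotr{N(X)}{z_m}$ and $\cotr{N(Y)}{z} \to \cotr{N(Y)}{z_m}$ of paragraph~\ref{paragr:def_thmA}, associated respectively to $g : X \to Z$ and to $h : Y \to Z$. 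Using the explicit formulas — the horizontal maps sending an $n$-simplex $(x, z')$ to $(N(f)(x), z')$ and the vertical ones sending it to $(x, z'_{m, \dots, m+1+n})$ — one checks at once that this square commutes.

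It remains to feed in the hypotheses. Since $\cO$ permits a Theorem~A, the left vertical map (the case of $g$) and the right vertical map (the case of $h$) are simplicial weak equivalences. Since $z_m$ is a $0$-simplex of $N(Z)$, the bottom edge $\cotr{N(f)}{z_m}$ is a simplicial weak equivalence by the hypothesis of the theorem. Hence, by two-out-of-three for simplicial weak equivalences, the top edge $\cotr{N(f)}{z}$ is a simplicial weak equivalence. As $m$ and $z$ were arbitrary, the criterion of Theorem~\ref{thm:thmA_simpl} is satisfied, and we conclude that $N(f)$, hence $f$, is a weak equivalence.

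I do not expect a serious obstacle here: the argument is a formal two-out-of-three step that bridges from $0$-simplices to arbitrary simplices via the "permits a Theorem~A" hypothesis, followed by an invocation of the simplicial Theorem~A. The only points demanding a bit of care are the verification that the square above commutes and the bookkeeping identifying its vertical arrows with the comparison morphisms of paragraph~\ref{paragr:def_thmA} applied to $g$ and to $h$ — in particular the observation that "$\cO$ permits a Theorem~A" must be used both for the source and for the target of $f$, not merely for $f$ itself.
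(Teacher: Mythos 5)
Your proof is correct and follows essentially the same route as the paper's: apply the nerve, reduce via the simplicial Theorem~A, and bridge from the $0$-simplex hypothesis to arbitrary $m$-simplices by the commutative square whose verticals are the comparison maps to $\cotr{N(X)}{z_m}$ and $\cotr{N(Y)}{z_m}$ (weak equivalences because $\cO$ permits a Theorem~A), concluding by two-out-of-three.
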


\begin{proof}
  Le triangle commutatif
  \[
    \xymatrix@C=1.5pc{
      X \ar[rr]^f \ar[dr]_g & & Y \ar[dl]^h \\
      & Z
    }
  \]
  induit un triangle commutatif d'ensembles simpliciaux
  \[
    \xymatrix@C=1.5pc{
      N(X) \ar[rr]^{N(f)} \ar[dr]_{N(g)} & & N(Y) \ar[dl]^{N(h)} \\
      & N(Z)
    }
  \]
  auquel on va appliquer le théorème~\ref{thm:thmA_simpl}. Pour
  conclure, il suffit donc de montrer que, pour tout $m \ge 0$ et tout
  $m$-simplexe $z$ de $N(Z)$, le morphisme
  \[
    \cotr{N(f)}{z} : \cotr{N(X)}{z} \to \cotr{N(Y)}{z}
  \]
  est une équivalence faible simpliciale. Considérons le carré de
  morphismes d'ensembles simpliciaux
  \[
    \xymatrix{
      \cotr{N(X)}{z} \ar[r] \ar[d] &\cotr{N(Y)}{z} \ar[d] \\
      \cotr{N(X)}{z_m} \ar[r] & \cotr{N(Y)}{z_m}
    }
  \]
  dont on vérifie immédiatement la commutativité. Puisque l'objet
  cosimplicial $\cO$ permet un théorème A, les morphismes
  verticaux de ce carré sont des équivalences faibles. Le morphisme
  $\cotr{N(X)}{z_m} \to \cotr{N(Y)}{z_m}$ étant une équivalence faible par
  hypothèse, on conclut par deux sur trois.
\end{proof}

\emph{Dans la suite de cette section, on fixe une catégorie monoïdale $\C$ de produit
monoïdal noté~$\joint$ et d'unité un objet initial $\vide$ de $\C$.}

\begin{paragraph}\label{paragr:permet_thmA_mon}
  Supposons que $\C$ admette un objet final $e$. L'objet $e$
  admet une et une seule structure de monoïde dans $(\C, \joint, \vide)$. En
  effet, il existe d'uniques morphismes $e \joint e \to e$ et $\vide \to e$
  et ceux-ci vérifient trivialement les axiomes des monoïdes.
  En vertu de la propriété universelle de la catégorie des simplexes
  augmentés $\cDeltaAug$ \cite[chapitre VII, section 5]{MacLane}, ce monoïde
  induit un foncteur monoïdal (défini à unique isomorphisme monoïdal près)
  $\cOAug : \cDeltaAug \to \C$, où $\cDeltaAug$ est munie du produit
  monoïdal défini par la somme ensembliste
  \[ \Deltan{m} \amalg \Deltan{n} = \Deltan{m + 1 + n}, \]
  l'unité étant $\Deltan{-1}$. Ce foncteur monoïdal est caractérisé (à
  unique isomorphisme monoïdal près) par le fait que $\cOAug(\Deltan{0}) =
  e$. En restreignant $\cOAug$ à $\cDelta$, on obtient donc un objet
  cosimplicial $\cO : \cDelta \to \C$. Explicitement, $\On{n} =
  \cO(\Deltan{n})$ est égal à $e \joint \cdots \joint e$, où $e$ apparaît $n
  + 1$ fois (pour un certain choix de parenthésage).

  Ainsi, si $\C$ admet un objet final, on dispose d'un objet cosimplicial
  canonique et on est en position d'utiliser les définitions du début de la
  présente section. En particulier, on dispose d'un foncteur nerf $N : \C
  \to \pref{\cDelta}$ et d'équivalences faibles de $\C$, morphismes
  dont le nerf est une équivalence faible simpliciale.

  On dira que la catégorie monoïdale $\C$ \ndef{permet un théorème $A$} si
  l'objet cosimplicial $\cO : \cDelta \to \C$ permet un théorème $A$ au sens
  du paragraphe~\ref{paragr:def_thmA}.
\end{paragraph}

\begin{paragraph}\label{paragr:def_tr}
  Soient $X$ et $Y$ deux objets de $\C$. On dispose de morphismes
  \[ X \xto{\iota_1} X \joint Y \xot{\iota_2} Y \]
  définis par
  \[ X \simeq X \joint \vide \xto{X \joint \vide_Y} X \joint Y \xot{\,\vide_X
  \joint Y} \vide \joint Y \simeq Y, \]
  où $\vide_Z$, pour $Z$ un objet de $\C$, désigne l'unique morphisme $\vide
  \to Z$.

  On en déduit l'existence d'un foncteur
  \[
    \begin{split}
      \C & \to \cotr{\C}{X} \\
      Y & \mapsto (X \joint Y, \iota_1 : X \to X \joint Y).
    \end{split}
  \]
  On dira que la catégorie monoïdale $\C$ est \ndef{localement fermée à
  gauche} si ce foncteur admet un adjoint à droite. Dans ce cas, on dispose
  d'un foncteur
  \[
    \begin{split}
      \cotr{\C}{X} \quad & \to \quad \C \\
      (Z, g : X \to Z) & \mapsto \cotr{Z}{g}
    \end{split}
  \]
  et de bijections
  \[
  \Hom_{\cotr{\C}{X}}((X \joint Y, \iota_1), (Z, g)) \simeq \Hom_\C(Y, \cotr{Z}{g}),
  \]
  naturelles en $Y$ dans $\C$ et $(Z, g)$ dans $\cotr{\C}{X}$. On appelle
  $\cotr{Z}{g}$ la \ndef{tranche de $Z$ au-dessous de $g$}.

  De même, on dira que $\C$ est \ndef{localement fermée à droite} si le
  foncteur
  \[
    \begin{split}
      \C & \to \cotr{\C}{Y} \\
      X & \mapsto (X \joint Y, \iota_2 : Y \to X \joint Y)
    \end{split}
  \]
  admet un adjoint à droite. Enfin, on dira que $\C$ est \ndef{localement
  bifermée} si elle est localement fermée à gauche et à droite.
\end{paragraph}

\begin{paragraph}\label{paragr:fonct_tr_sur_1}
  Supposons $\C$ localement fermée à gauche et fixons $Z$ un objet de $\C$. On
  dispose d'un foncteur
  \[ (\tr{\C}{Z})^\o \to \C \]
  défini sur les objets par
  \[
      (T, z : T \to Z) \mapsto \cotr{Z}{z}
  \]
  et sur les morphismes de la manière suivante. Soit
  \[
    \xymatrix@C=1.5pc{
      T_1 \ar[dr]_{z_1} \ar[rr]^l & & T_2 \ar[dl]^{z_2} \\
      & Z
    }
  \]
  un morphisme de $(T_1, z_1)$ vers $(T_2, z_2)$ dans $\tr{\C}{Z}$. Pour
  tout objet $S$ de $\C$, le composé
  \[
    \begin{split}
   \Hom_\C(S, \cotr{Z}{z_2}) & \xto{\sim}
   \Hom_{\cotr{\C}{T_2}}((T_2 \joint S, \iota_1), (Z, z_2)) \\
   & \qquad\qquad\qquad\qquad \big\downarrow \\
   & \phantom{\xto{\sim}''} \Hom_{\cotr{\C}{T_1}}((T_1 \joint S, \iota_1), (Z, z_1)) 
   \xto{\sim} \Hom_\C(S, \cotr{Z}{z_1}),
   \end{split}
  \]
  où la flèche verticale est induite par le morphisme $l \joint S : T_1
  \joint S \to T_2 \joint S$, est naturel en~$S$. On obtient ainsi, par le
  lemme de Yoneda, le morphisme $l^\ast : \cotr{Z}{z_2} \to \cotr{Z}{z_1}$
  associé. La fonctorialité de ce morphisme est immédiate.

  En particulier, en appliquant cette construction au triangle
  \[
    \xymatrix@C=1pc{
      \vide \ar[dr]_{\vide_Z} \ar[rr]^{\vide_X} & & X \ar[dl]^{g} \\
      & Z &
    }
  \]
  et en tenant compte de l'isomorphisme canonique $\cotr{Z}{\vide_Z} \simeq
  Z$, on obtient un morphisme $U : \cotr{Z}{g} \to Z$ qu'on appellera
  \ndef{morphisme d'oubli}.

  Par fonctorialité, le foncteur
  \[
    \begin{split}
      (\tr{\C}{Z})^\o & \to \C \\
      (T, z : T \to Z) & \mapsto \cotr{Z}{z}
    \end{split}
  \]
  se relève, le long du foncteur d'oubli $\tr{\C}{Z} \to \C$, en un
  foncteur
  \[
    \begin{split}
      (\tr{\C}{Z})^\o & \to \tr{\C}{Z} \\
      (T, z : T \to Z) & \mapsto (\cotr{Z}{z}, U),
    \end{split}
  \]
  où $U : \cotr{Z}{z} \to Z$ désigne le morphisme d'oubli que l'on vient de
  définir.
\end{paragraph}

\begin{paragraph}\label{paragr:fonct_tr_sur_2}
  On suppose toujours $\C$ localement fermée à gauche et on suppose de plus
  que $\C$ admet des produits fibrés. Pour $g : X \to Z$ et $z : T \to Z$ des
  morphismes de $\C$, on pose
  \[ \cotr{X}{z} = \cotr{Z}{z} \times_Z X, \]
  où le morphisme $\cotr{Z}{z} \to Z$ est le morphisme d'oubli. On appellera
  la seconde projection $\cotr{X}{z} \to X$ le \ndef{morphisme d'oubli}.

  À $z : T \to Z$ fixé, on obtient un foncteur
  \[
    \begin{split}
      \tr{\C}{Z} \quad & \to \quad \C \\
      (X, g : X \to Z) & \mapsto \cotr{X}{z}
    \end{split}
  \]
  en composant les foncteurs
  \[ \tr{\C}{Z} \to \tr{\C}{(\cotr{Z}{z})} \to \C, \]
  le foncteur de gauche étant le foncteur de changement de base le long du
  morphisme d'oubli $\cotr{Z}{z} \to Z$ et celui de droite le foncteur
  d'oubli.

  De même, à $g : X \to Z$ fixé, on obtient un foncteur
  \[
    \begin{split}
      (\tr{\C}{Z})^\o \quad & \to \quad \C \\
      (T, z : T \to Z) & \mapsto \cotr{X}{z}
    \end{split}
  \]
  en composant les foncteurs
  \[ (\tr{\C}{Z})^\o \to \tr{\C}{Z} \to \tr{\C}{X} \to \C, \]
  le foncteur de gauche étant le foncteur du paragraphe précédent, celui du
  milieu le foncteur de changement de base le long de $g : X \to Z$ et celui
  de droite le foncteur d'oubli.
  Si
  \[
    \xymatrix@C=1.5pc{
      T_1 \ar[dr]_{z_1} \ar[rr]^l & & T_2 \ar[dl]^{z_2} \\
      & Z
    }
  \]
  est un morphisme de $(T_1, z_1)$ vers $(T_2, z_2)$ dans $\tr{\C}{Z}$, on
  notera $l^\ast : \cotr{X}{z_2} \to \cotr{X}{z_1}$ le morphisme associé par
  ce foncteur.
\end{paragraph}

\begin{paragraph}\label{paragr:joint_simpl}
  Dans ce paragraphe, nous allons nous placer dans le cas où $\C$ est la
  catégorie $\pref{\cDelta}$ des ensembles simpliciaux. Rappelons la
  définition du joint simplicial (voir par exemple~\cite[section
  3]{JoyalQuasiKan}). La catégorie des simplexes augmentée $\cDeltaAug$ est
  munie d'une structure de catégorie monoïdale de produit monoïdal $\joint$
  induit par la somme ensembliste
  \[
    \Deltan{m} \joint \Deltan{n} =
    \Deltan{m} \amalg \Deltan{n} = \Deltan{m + 1 + n},
  \]
  et d'objet unité $\vide = \Deltan{-1}$. La catégorie $\cDeltaAug$ est
  naturellement une sous-catégorie pleine de $\pref{\cDelta}$ et on définit
  le \ndef{joint} $X \joint Y$ de deux ensembles simpliciaux $X$ et $Y$ par
  la formule
  \[
    X \joint Y = \limind_{\substack{\Deltan{m} \to X\\\,\Deltan{n} \to Y}}
    \Deltan{m + 1 + n},
  \]
  où $\Deltan{m}$ et $\Deltan{n}$ varient dans $\cDeltaAug$. On montre qu'on
  obtient ainsi une structure de catégorie monoïdale sur les ensembles
  simpliciaux d'objet unité l'ensemble simplicial vide. Par ailleurs, on
  vérifie que cette structure est localement bifermée. En particulier, si
  $X$ est un ensemble simplicial, on en déduit l'existence d'un foncteur
  \[
    \begin{split}
      \cotr{\pref{\cDelta}}{X} \quad & \to \quad \pref{\cDelta} \\
      (Z, g : X \to Z) & \mapsto \cotr{Z}{g}
    \end{split}
  \]
  et de bijections naturelles
  \[
  \Hom_{\cotr{\pref{\cDelta}}{X}}((X \joint Y, \iota_1), (Z, g)) \simeq
  \Hom_{\pref{\cDelta}}(Y, \cotr{Z}{g}).
  \]
  Soient $Z$ un ensemble simplicial et $z : \Deltan{m} \to Z$ un
  $m$-simplexe. En spécialisant la bijection ci-dessus au cas $X =
  \Deltan{m}$, $Y = \Deltan{n}$ et $g = z$, on obtient que les $n$-simplexes
  de~$\cotr{Z}{z}$ correspondent aux morphismes $x : \Deltan{m + 1 + n} \to
  Z$ rendant commutatif le triangle
  \[
    \xymatrix{
      \Deltan{m + 1 + n} \ar[r]^-x & Z \\
      \Deltan{m} \ar[u]^{\iota_1} \ar[ur]_-z & \pbox{,}
    }
  \]
  où $\iota_1$ désigne l'inclusion comme section initiale. C'est exactement
  la description de la tranche $\cotr{Z}{z}$ définie par des formules
  explicites dans le paragraphe~\ref{paragr:def_tr_simpl}. De plus, si $g :
  X \to Z$ est quelconque, on a $\cotr{X}{z} \simeq \cotr{Z}{z} \times_Z X$
  pour les tranches au sens du présent paragraphe comme pour celles
  au sens du paragraphe~\ref{paragr:def_tr_simpl} (voir le
  paragraphe~\ref{paragr:tr_simpl_prod_fib}). On en déduit que les tranches
  au sens de ces deux paragraphes coïncident.
\end{paragraph}

\begin{remark}
  L'objet cosimplicial associé à la catégorie des ensembles simpliciaux
  munie du joint s'identifie au foncteur de Yoneda $\cDelta \to
  \pref{\cDelta}$. Ainsi, le foncteur nerf associé est l'identité.
\end{remark}

\emph{Dans la suite de cette section, on suppose que $\C$ est localement fermée à
gauche et admet des limites projectives finies. On note $e$ un objet final
de $\C$.}

\begin{proposition}\label{prop:comp_nerf_tr}
  Soient $X$ un objet de $\C$ et $x : \On{m} \to X$ un $m$-simplexe
  de~$N(X)$. On a un isomorphisme canonique
  \[ N(\cotr{X}{x}) \simeq \cotr{N(X)}{x}, \]
  naturel en $x$ dans la catégorie des éléments $\tr{\cDelta}{N(X)}$ de $N(X)$.
\end{proposition}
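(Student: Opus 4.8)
The plan is to compute both sides on $n$-simplices for each fixed $n \ge 0$ and to recognise that the two constructions pick out literally the same subset of a hom-set of $\C$; the isomorphism then follows by tracking functoriality. This is precisely the mechanism by which, for $\C = \EnsSimp$ equipped with the join, the monoidal slice and the explicit simplicial slice were reconciled in paragraph~\ref{paragr:joint_simpl}.

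\emph{Step 1.} By definition of the nerve (paragraph~\ref{paragr:def_thmA}), $N(\cotr{X}{x})_n = \Hom_\C(\On n, \cotr{X}{x})$, and, $\C$ being locally closed on the left, the slice adjunction of paragraph~\ref{paragr:def_tr}, applied with the first object there taken to be $\On m$ and $g = x$, yields a bijection, natural in $\On n \in \C$ and hence in $n$, between this set and the set of morphisms $w : \On m \joint \On n \to X$ of $\C$ such that $w \circ \iota_1 = x$. \emph{Step 2.} The key input is that $\cO$ is not an arbitrary cosimplicial object but the restriction to $\cDelta$ of the \emph{monoidal} functor $\cOAug : \cDeltaAug \to \C$ of paragraph~\ref{paragr:permet_thmA_mon}; its monoidal structure provides coherence isomorphisms
\[
\On m \joint \On n = \cOAug(\Deltan m) \joint \cOAug(\Deltan n) \simeq \cOAug(\Deltan m \joint \Deltan n) = \On{m + 1 + n},
\]
under which the coprojection $\iota_1 : \On m \to \On m \joint \On n$ corresponds to $\cO(f_{\{0, \dots, m\}})$, where $f_{\{0, \dots, m\}} : \Deltan m \to \Deltan{m+1+n}$ is the inclusion onto the initial segment. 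So $N(\cotr{X}{x})_n$ is identified with the subset $\{\, w : \On{m+1+n} \to X \mid w \circ \cO(f_{\{0, \dots, m\}}) = x \,\}$ of $\Hom_\C(\On{m+1+n}, X)$. \emph{Step 3.} On the other side, paragraph~\ref{paragr:def_tr_simpl} (with $g = \id{N(X)}$) describes $(\cotr{N(X)}{x})_n$ as $\{\, z' \in N(X)_{m+1+n} \mid z'_{0, \dots, m} = x \,\}$; and since $N(X)_{m+1+n} = \Hom_\C(\On{m+1+n}, X)$ while $z'_{0, \dots, m} = z' \circ \cO(f_{\{0, \dots, m\}})$ by the very definition of the face operator of the nerve, this is the \emph{same} subset. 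Hence a bijection $N(\cotr{X}{x})_n \simeq (\cotr{N(X)}{x})_n$ for every $n$.

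\emph{Step 4.} It remains to check that these bijections respect the simplicial operators and are natural in $x$ in the category of elements $\tr{\cDelta}{N(X)}$. A simplicial operator $\phi : \Deltan{n'} \to \Deltan n$ acts, on the simplicial-slice side, by precomposition with $\cO(\Deltan m \joint \phi)$, and, on the monoidal-slice side, through naturality of the slice adjunction in its argument combined with the naturality of the coherence isomorphisms of Step~2; monoidal functoriality of $\cOAug$ is what makes these agree. Likewise, a morphism $\psi : \Deltan m \to \Deltan{m'}$ of $\tr{\cDelta}{N(X)}$ from $x$ to $x'$, i.e. with $x' \circ \cO(\psi) = x$, induces $\cO(\psi)^\ast : \cotr{X}{x'} \to \cotr{X}{x}$ on one side by the functoriality of slices of paragraph~\ref{paragr:fonct_tr_sur_1}, and the evident restriction $\cotr{N(X)}{x'} \to \cotr{N(X)}{x}$ on the other; unwound, both act on $n$-simplices by precomposition with $\cO(\psi \joint \Deltan n)$. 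I expect the main obstacle to be concentrated in Steps~2 and~4: one must verify that the single family of coherence isomorphisms $\On m \joint \On n \simeq \On{m+1+n}$ is simultaneously compatible with the coprojections $\iota_1$ and with all the cosimplicial operators in both variables — equivalently, that $\cOAug$ carries to $\C$ the naturality squares relating the initial-segment inclusions $f_{\{0, \dots, m\}}$ to joins in $\cDeltaAug$. Granting this, the statement is a pure unwinding of definitions, exactly parallel to the simplicial case of paragraph~\ref{paragr:joint_simpl}.
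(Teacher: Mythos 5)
Your argument is correct and is essentially the paper's own proof: the same chain of identifications via the slice adjunction of paragraph~\ref{paragr:def_tr}, the monoidality of $\cOAug$ (giving $\On{m} \joint \On{n} \simeq \On{m+1+n}$ compatibly with $\iota_1$), and the comparison with the simplicial slice — the paper merely phrases the last step through the join-slice description of paragraph~\ref{paragr:joint_simpl}, where you unwind the explicit formulas of paragraph~\ref{paragr:def_tr_simpl} directly. The coherence and naturality checks you flag in Steps~2 and~4 are precisely the ones the paper leaves implicit as ``canonical'', and they go through as you expect (the compatibility with $\iota_1$ being automatic because the monoidal unit $\vide$ is an initial object of $\C$).
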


\begin{proof}
  Pour tout $n \ge 0$, on a des isomorphismes canoniques
  \[
    \begin{split}
      N(\cotr{X}{x})_n &  \simeq \Hom_\C(\On{n}, \cotr{X}{x}) \\
    & \simeq \Hom_{\cotr{\C}{\On{m}}}((\On{m} \joint \On{n}, \iota_1), (X,
    x)) \\
    & \simeq \Hom_{\cotr{\C}{\On{m}}}((\On{m + 1 + n}, \iota_1), (X,
    x)) \\
    & \simeq \Hom_{\cotr{\pref{\cDelta}}{\Deltan{m}}}((\Deltan{m+1+n}, \iota_1),
    (N(X), x)) \\
    & \simeq \Hom_{\cotr{\pref{\cDelta}}{\Deltan{m}}}((\Deltan{m} \joint
    \Deltan{n}, \iota_1), (N(X), x)) \simeq (\cotr{N(X)}{x})_n,
    \end{split}
  \]
  d'où le résultat.
\end{proof}

\begin{remark}
  Si on suppose de plus que la catégorie $\C$ est cocomplète, le foncteur
  nerf $N : \C \to \pref{\cDelta}$ admet un adjoint à gauche $c :
  \pref{\cDelta} \to \C$. On peut alors vérifier que ce foncteur $c$ est
  monoïdal pour le joint simplicial et le produit monoïdal~$\joint$ de $\C$. On en
  déduit que la proposition précédente reste vraie si on remplace le
  $m$-simplexe $x : \On{m} \to X$ par un morphisme $c(Y) \to X$ quelconque,
  où $Y$ est un ensemble simplicial, le cas de la proposition précédente
  étant celui où $Y = \Deltan{m}$.
\end{remark}

\begin{corollary}\label{coro:tr_simpl_mon}
  Soient $X \to Z$ un morphisme de $\C$ et $z : \On{m} \to Z$ un
  $m$\nbd-simplexe de $N(Z)$. On a un isomorphisme canonique
  \[ N(\cotr{X}{z}) \simeq \cotr{N(X)}{z}, \]
  naturel en $z$ dans la catégorie des éléments $\tr{\cDelta}{N(Z)}$ de
  $N(Z)$ et en $X$ dans $\tr{\C}{Z}$.
\end{corollary}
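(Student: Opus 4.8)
The plan is to deduce this from Proposition~\ref{prop:comp_nerf_tr}, exploiting that both slices in play are relative fibre products and that the nerve functor commutes with finite limits. First I would record that $N \colon \C \to \pref{\cDelta}$ preserves finite projective limits: limits of simplicial sets are computed degreewise, and for each $n \ge 0$ the functor $\Hom_\C(\On{n}, -) \colon \C \to \Ens$ preserves all limits, so $N$ preserves the finite limits that exist in $\C$ by hypothesis. In particular, $N$ preserves the fibre products used to define the slices of paragraph~\ref{paragr:fonct_tr_sur_2}.

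Next, by definition one has $\cotr{X}{z} = \cotr{Z}{z} \times_Z X$, the structural morphism $\cotr{Z}{z} \to Z$ being the forgetful morphism; and, on the simplicial side, paragraph~\ref{paragr:tr_simpl_prod_fib} gives $\cotr{N(X)}{z} = \cotr{N(Z)}{z} \times_{N(Z)} N(X)$, where this simplicial slice, formed with the morphism $N(X) \to N(Z)$, is the one of paragraph~\ref{paragr:def_tr_simpl} (which by paragraph~\ref{paragr:joint_simpl} also coincides with the join description). Applying $N$ to the first identity and using the previous step yields a canonical isomorphism $N(\cotr{X}{z}) \simeq N(\cotr{Z}{z}) \times_{N(Z)} N(X)$. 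It therefore suffices to produce a canonical isomorphism $N(\cotr{Z}{z}) \simeq \cotr{N(Z)}{z}$ \emph{over} $N(Z)$; transporting the fibre product along it then gives $N(\cotr{X}{z}) \simeq \cotr{N(Z)}{z} \times_{N(Z)} N(X) = \cotr{N(X)}{z}$, and the naturality in $X \in \tr{\C}{Z}$ is immediate since $\cotr{Z}{z} \times_Z X$, its nerve, and the fibre product $\cotr{N(Z)}{z} \times_{N(Z)} N(X)$ are all functorial in $(X, g)$ by base change.

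The isomorphism $N(\cotr{Z}{z}) \simeq \cotr{N(Z)}{z}$ is Proposition~\ref{prop:comp_nerf_tr} applied to the object $Z$ and the $m$-simplex $z \colon \On{m} \to Z$ of $N(Z)$, and its naturality in $z \in \tr{\cDelta}{N(Z)}$ is the one provided by that proposition (transported through the fibre product). The one point genuinely requiring a check — and the only real, if minor, obstacle — is that this isomorphism respects the forgetful morphisms to $N(Z)$. This I would verify by unwinding the chain of bijections in the proof of Proposition~\ref{prop:comp_nerf_tr}: an $n$-simplex of $N(\cotr{Z}{z})$ is there identified, via the joint isomorphism $\On{m} \joint \On{n} \simeq \On{m+1+n}$, with a morphism $\On{m+1+n} \to Z$ restricting to $z$ along $\iota_1$, hence with an $(m+1+n)$-simplex of $N(Z)$ whose first $m+1$ vertices form $z$; and under this identification both forgetful morphisms — the $\C$-side one of paragraph~\ref{paragr:fonct_tr_sur_1} and the simplicial one — send it to its restriction along the final coprojection $\iota_2 \colon \On{n} \to \On{m+1+n}$, that is, to its last $n+1$ vertices. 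This matches the two descriptions and completes the argument.
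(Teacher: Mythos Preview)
Your proof is correct and follows essentially the same route as the paper: both reduce to Proposition~\ref{prop:comp_nerf_tr} via the fibre-product descriptions $\cotr{X}{z} = \cotr{Z}{z} \times_Z X$ and $\cotr{N(X)}{z} = \cotr{N(Z)}{z} \times_{N(Z)} N(X)$ together with the commutation of the nerve with fibre products. You are simply more explicit than the paper about why $N$ preserves finite limits and about the compatibility of the isomorphism $N(\cotr{Z}{z}) \simeq \cotr{N(Z)}{z}$ with the forgetful maps to $N(Z)$, points the paper leaves implicit.
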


\begin{proof}
  On a, en utilisant la proposition précédente et la commutation du nerf
  aux produits fibrés, des isomorphismes canoniques
  \[ N(\cotr{X}{z}) \simeq N(\cotr{Z}{z} \times_Z X) \simeq N(\cotr{Z}{z})
  \times_{N(Z)} N(X) \simeq \cotr{N(Z)}{z} \times_{N(Z)} N(X) \simeq
  \cotr{N(X)}{z},
  \]
  ce qui prouve l'assertion.
\end{proof}

\begin{proposition}\label{prop:def_equiv_permet_A}
  La catégorie monoïdale $\C$ permet un théorème A si et seulement si, pour
  tout morphisme $g : X \to Z$ de $\C$, tout $m \ge 0$ et tout morphisme $z
  : \On{m} \to Z$, le morphisme $m^\ast : \cotr{X}{z} \to
  \cotr{X}{z_m}$, associé en vertu du paragraphe~\ref{paragr:fonct_tr_sur_2} au
  triangle commutatif
  \[
    \xymatrix@C=1.5pc{
      \On{0} \ar[dr]_{z_m} \ar[rr]^m & & \On{m} \ar[dl]^{z} \\
      & Z & \pbox{,}
    }
  \]
  où $m$ désigne l'image par $\cO : \cDelta \to \C$ du morphisme $m :
  \Deltan{0} \to \Deltan{m}$, est une équivalence faible de $\C$.
\end{proposition}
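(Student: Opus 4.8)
The plan is to check that the two conditions are merely reformulations, via the canonical isomorphisms of Corollary~\ref{coro:tr_simpl_mon}, of the statement that a certain family of simplicial morphisms consists of weak equivalences. Recall first that, by definition of the nerf, an $m$-simplex~$z$ of~$N(Z)$ is the same datum as a morphism $z : \On{m} \to Z$ of~$\C$, and that, under this identification, the vertex $z_m$ of the simplex~$z$ corresponds to the morphism $\On{0} \to Z$ obtained by precomposing~$z$ with $m : \On{0} \to \On{m}$, that is, to the object denoted~$z_m$ in the statement. Thus, by the very definition of paragraph~\ref{paragr:def_thmA}, the monoidal category~$\C$ permits a theorem~A if and only if, for every morphism $g : X \to Z$ of~$\C$, every $m \ge 0$ and every morphism $z : \On{m} \to Z$, the simplicial morphism $\cotr{N(X)}{z} \to \cotr{N(X)}{z_m}$, given on $n$-simplices by $(x, z') \mapsto (x, z'_{m, \dots, m+1+n})$, is a simplicial weak equivalence. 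It therefore suffices to prove that, under the canonical isomorphisms $N(\cotr{X}{z}) \simeq \cotr{N(X)}{z}$ and $N(\cotr{X}{z_m}) \simeq \cotr{N(X)}{z_m}$ of Corollary~\ref{coro:tr_simpl_mon}, the morphism $N(m^\ast)$ is identified with this simplicial morphism; the two implications of the proposition will then be immediate, since $m^\ast$ is a weak equivalence of~$\C$ precisely when $N(m^\ast)$ is a simplicial weak equivalence.

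To prove this identification, I would argue as follows. The morphism $m : \Deltan{0} \to \Deltan{m}$ of~$\cDelta$ defines a morphism $(\Deltan{0}, z_m) \to (\Deltan{m}, z)$ in the category of elements $\tr{\cDelta}{N(Z)}$, since $z \circ \cO(m) = z_m$; its image under the functor $(\tr{\C}{Z})^\o \to \C$ of paragraph~\ref{paragr:fonct_tr_sur_2}, restricted along the canonical functor $\tr{\cDelta}{N(Z)} \to \tr{\C}{Z}$, is by construction the morphism $m^\ast : \cotr{X}{z} \to \cotr{X}{z_m}$ of the statement. By the naturality in~$z$ in the category of elements $\tr{\cDelta}{N(Z)}$ asserted in Corollary~\ref{coro:tr_simpl_mon}, $N(m^\ast)$ corresponds, through the canonical isomorphisms, to the morphism of simplicial slices induced by this morphism of $\tr{\cDelta}{N(Z)}$. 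It then remains to unwind what this latter morphism is: a direct computation, using either the explicit formulas of paragraph~\ref{paragr:def_tr_simpl} together with the functoriality of paragraph~\ref{paragr:funct_tr_simpl}, or the description of simplicial slices in terms of the simplicial join given in paragraph~\ref{paragr:joint_simpl}, shows that the morphism $\cotr{N(X)}{z} \to \cotr{N(X)}{z_m}$ induced by $m : \Deltan{0} \to \Deltan{m}$ sends an $n$-simplex $(x, z')$ to $(x, z' \circ (m \joint \id{\Deltan{n}}))$, and $z' \circ (m \joint \id{\Deltan{n}})$ is exactly the face $z'_{m, \dots, m+1+n}$. This is precisely the formula of paragraph~\ref{paragr:def_thmA}, which completes the identification.

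Putting everything together, $N(m^\ast)$ is isomorphic, as a morphism of simplicial sets, to the simplicial morphism $\cotr{N(X)}{z} \to \cotr{N(X)}{z_m}$ of paragraph~\ref{paragr:def_thmA}; hence one is a simplicial weak equivalence if and only if the other is, and the proposition follows from the reformulation of the first paragraph. The only mildly delicate step is the identification of $N(m^\ast)$ with the explicit simplicial morphism: one must be careful about the variance of the functoriality of slices over the category of elements and about the precise shape of the naturality statement in Corollary~\ref{coro:tr_simpl_mon}, but no genuine difficulty is involved, everything reducing to the bookkeeping of the formulas recalled above.
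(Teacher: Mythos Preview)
Your proof is correct and follows essentially the same approach as the paper: both arguments reduce the equivalence to the identification of $N(m^\ast)$ with the explicit simplicial morphism $\cotr{N(X)}{z} \to \cotr{N(X)}{z_m}$ via the naturality of the isomorphisms in Corollary~\ref{coro:tr_simpl_mon}. The paper phrases the identification by observing that the simplicial morphism in question is itself the $m^\ast$ for the category of simplicial sets equipped with the joint, whereas you unwind the naturality directly; these are just two ways of saying the same thing.
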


\begin{proof}
  Par définition, la catégorie monoïdale $\C$ permet un théorème~A si, pour tout
  morphisme $g : X \to Z$ de $\C$ et tout $m$-simplexe $z : \On{m} \to Z$ de
  $N(Z)$, le morphisme $\cotr{N(X)}{z} \to \cotr{N(X)}{z_m}$, défini sur les
  $n$-simplexes par $(x, z') \mapsto (x, z'_{m, \dots, m+1+n})$, est une
  équivalence faible simpliciale. Or, ce morphisme n'est autre que le
  morphisme $m^\ast$, associé en vertu du
  paragraphe~\ref{paragr:fonct_tr_sur_2} au triangle
  \[
    \xymatrix@C=1.5pc{
      \Deltan{0} \ar[dr]_{z_m} \ar[rr]^m & & \Deltan{m} \ar[dl]^{z} \\
      & N(X) & \pbox{,}
    }
  \]
  pour $\C$ la catégorie des ensembles simpliciaux munie du joint. La
  naturalité de l'isomorphisme du corollaire~\ref{coro:tr_simpl_mon} donne
  donc un carré commutatif
  \[
    \xymatrix{
      N(\cotr{X}{z}) \ar[d]_*[@]{\sim} \ar[r] & N(\cotr{X}{z_m})
      \ar[d]^*[@]{\sim_{}} \\
      \cotr{N(X)}{z} \ar[r] & \cotr{N(X)}{z_m} \pbox{,}
    }
  \]
  où la flèche horizontale du haut est le nerf du morphisme de l'énoncé et
  celle du bas est le morphisme simplicial mentionné ci-dessus, ce qui
  entraîne le résultat.
\end{proof}

\begin{theorem}[Théorème A monoïdal]\label{thm:thmA_monoid}
  On suppose que la catégorie monoïdale $\C$ permet un théorème A.
  Soit
  \[
    \xymatrix@C=1.5pc{
      X \ar[rr]^f \ar[dr]_g & & Y \ar[dl]^h \\
      & Z
    }
  \]
  un triangle commutatif dans $\C$. Si pour tout morphisme $z : e \to Z$ de
  $\C$, le morphisme $\cotr{X}{z} \to \cotr{Y}{z}$
  induit par $f$ est une équivalence faible de $\C$, alors il en est de même
  du morphisme~$f$.
\end{theorem}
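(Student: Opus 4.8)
The plan is to apply the cosimplicial Theorem~A (Théorème~\ref{thm:thmA_cosimpl}) to the cosimplicial object $\cO : \cDelta \to \C$ associated to the monoidal structure. By hypothesis $\C$ permits a Theorem~A in the sense of paragraphe~\ref{paragr:def_thmA}, so the cosimplicial Theorem~A applies verbatim: it suffices to check that for every $0$-simplex $z$ of $N(Z)$ — that is, every morphism $z : \On{0} \to Z$, equivalently every $z : e \to Z$ since $\On{0} = e$ — the morphism $\cotr{N(X)}{z} \to \cotr{N(Y)}{z}$ induced by $N(f)$ is a simplicial weak equivalence. This is where the translation between the monoidal slices $\cotr{X}{z}$, $\cotr{Y}{z}$ in $\C$ and the simplicial slices $\cotr{N(X)}{z}$, $\cotr{N(Y)}{z}$ comes in.

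\emph{Key steps.} First I would invoke Corollaire~\ref{coro:tr_simpl_mon}, which for a morphism $X \to Z$ and a $0$-simplex $z : \On{0} \to Z$ of $N(Z)$ gives a canonical isomorphism $N(\cotr{X}{z}) \simeq \cotr{N(X)}{z}$, natural in $X$ over $Z$. Applying this naturality to the morphism $f : X \to Y$ over $Z$ yields a commutative square whose vertical arrows are these canonical isomorphisms and whose two horizontal arrows are, on top, $N$ of the morphism $\cotr{X}{z} \to \cotr{Y}{z}$ induced by $f$, and on the bottom, the simplicial morphism $\cotr{N(f)}{z} : \cotr{N(X)}{z} \to \cotr{N(Y)}{z}$. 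Second, by hypothesis the morphism $\cotr{X}{z} \to \cotr{Y}{z}$ in $\C$ is a weak equivalence of $\C$, i.e.\ its nerf is a simplicial weak equivalence; transporting across the isomorphisms of the square, $\cotr{N(f)}{z}$ is a simplicial weak equivalence as well. Third, since this holds for every $0$-simplex $z$ of $N(Z)$, and since $\C$ permits a Theorem~A, Théorème~\ref{thm:thmA_cosimpl} applies to the triangle and concludes that $f$ is a weak equivalence of~$\C$.

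\emph{Main obstacle.} The argument is essentially a bookkeeping reduction: the real content has already been isolated in the cosimplicial Theorem~A and in the comparison Corollaire~\ref{coro:tr_simpl_mon}. The only point requiring a little care is the identification $\On{0} = e$ and the resulting identification of ``$0$-simplices of $N(Z)$'' with ``morphisms $e \to Z$'', together with checking that the morphism $\cotr{X}{z} \to \cotr{Y}{z}$ appearing in the statement (induced by $f$ via paragraphe~\ref{paragr:fonct_tr_sur_2}) is indeed the one whose nerf corresponds, under Corollaire~\ref{coro:tr_simpl_mon}, to $\cotr{N(f)}{z}$; this is exactly the naturality clause in $X$ over $Z$ in that corollary, so no genuine difficulty arises. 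I expect the proof to be short.
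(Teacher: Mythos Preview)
Your proposal is correct and follows essentially the same route as the paper's own proof: apply the cosimplicial Theorem~A (Théorème~\ref{thm:thmA_cosimpl}) and use Corollaire~\ref{coro:tr_simpl_mon} to identify $\cotr{N(f)}{z}$ with the nerve of the morphism $\cotr{X}{z} \to \cotr{Y}{z}$, which is a weak equivalence by hypothesis. Your remarks on the identification $\On{0} = e$ and on the naturality in $X$ over $Z$ are exactly the bookkeeping details the paper leaves implicit.
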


\begin{proof}
  Puisque la catégorie monoïdale $\C$ permet un théorème A, on peut
  appliquer le théorème A cosimplicial (théorème \ref{thm:thmA_cosimpl}) à
  l'objet cosimplicial associé. Pour montrer que $f$ est une équivalence
  faible, il suffit donc de montrer que l'hypothèse de ce théorème est
  satisfaite, à savoir que, pour tout $0$-simplexe $z$ de $N(Z)$, le
  morphisme $\cotr{N(X)}{z} \to \cotr{N(Y)}{z}$ est une équivalence faible
  simpliciale. Or, en vertu de la proposition~\ref{coro:tr_simpl_mon}, ce
  morphisme s'identifie au nerf du morphisme $\cotr{X}{z} \to \cotr{Y}{z}$
  de $\C$, qui est une équivalence faible par hypothèse, d'où le résultat.
\end{proof}

Dans la suite de cette section, on va déduire le théorème A originel de
Quillen \cite{QuillenHAKTI} du résultat précédent.

\begin{paragraph}\label{paragr:def_joint_1}
  Rappelons la définition du joint de deux catégories (voir par
  exemple~\cite[section 3.1]{JoyalQCatAppl}). Celui-ci peut se
  définir essentiellement comme on l'a fait pour les ensembles simpliciaux au
  paragraphe~\ref{paragr:joint_simpl}. La catégorie des simplexes augmentée
  $\cDeltaAug$ peut être considérée de manière évidente comme une
  sous-catégorie pleine de la catégorie $\Cat$ des petites catégories. Si
  $A$ et $B$ sont deux petites catégories, on peut alors définir leur
  \ndef{joint} $A \joint B$ par la formule
  \[
    A \joint B = \limind_{\substack{\Deltan{m} \to A\\\,\Deltan{n} \to B}}
    \Deltan{m + 1 + n},
  \]
  où $\Deltan{m}$ et $\Deltan{n}$ varient dans $\cDeltaAug$. La catégorie $A
  \joint B$ peut se décrire explicitement de la manière suivante : le graphe
  sous-jacent à $A \joint B$ est obtenu à partir du graphe sous-jacent à $A
  \amalg B$ en ajoutant, pour tout objet $a$ de $A$ et tout objet $b$ de
  $B$, une flèche de $a$ vers $b$ ; les identités et composés sont
  définis de la manière évidente. En particulier, la catégorie $\Deltan{0}
  \joint A$ est la catégorie obtenue à partir de $A$ en ajoutant librement
  un objet initial.  On montre qu'on obtient bien ainsi une structure de
  catégorie monoïdale sur $\Cat$ d'objet unité la catégorie
  vide. Par ailleurs, on vérifie que cette structure est localement
  bifermée. En particulier, si $A$ est une petite catégorie, on en déduit
  l'existence d'un foncteur 
  \[
    \begin{split}
      \cotr{\Cat}{A} \quad & \to \quad \Cat \\
      (C, v : A \to C) & \mapsto \cotr{C}{v}
    \end{split}
  \]
  et de bijections naturelles
  \[
    \Hom_{\cotr{\Cat}{A}}((A \joint B, \iota_1), (C, v)) \simeq
      \Hom_{\Cat}(B, \cotr{C}{v}).
  \]
  Soient $C$ une petite catégorie et $c : \Deltan{m} \to C$ un foncteur pour
  un entier $m \ge 0$, c'est-à-dire une suite de flèches
  \[ \xymatrix{ c_0 \ar[r]^{f_1} & c_1 \ar[r]^{f_2} & \cdots \ar[r]^{f_m} & c_m } \]
  de $C$. En spécialisant la bijection ci-dessus au cas $A = \Deltan{m}$, $B
  = \Deltan{0}$ et $v = c$, on obtient que les objets de $\cotr{C}{c}$
  correspondent aux foncteurs $x : \Deltan{m+1} \to C$ rendant commutatif le
  triangle
  \[
    \xymatrix{
      \Deltan{m + 1} \ar[r]^-x & C \\
      \Deltan{m} \ar[u]^{\iota_1} \ar[ur]_-c & \pbox{,}
    }
  \]
  où $\iota_1$ désigne l'inclusion comme section initiale, c'est-à-dire aux
  suites de flèches
  \[
    \xymatrix{
      c_0 \ar[r]^{f_1} & c_1 \ar[r]^{f_2} & \cdots \ar[r]^{f_m} 
        & c_m \ar[r]^{g} & d \pbox{.}
    }
  \]
  De même, on obtient que les flèches de $\cotr{C}{c}$ correspondent aux
  suites de flèches
  \[ 
    \xymatrix{
      c_0 \ar[r]^{f_1} & c_1 \ar[r]^{f_2} & \cdots
          \ar[r]^{f_m} & c_m \ar[r]^{g} & d \ar[r]^{h} & d' \pbox{,}
    }
  \]
  et qu'une telle flèche a pour source
  \[
    \xymatrix{
      c_0 \ar[r]^{f_1} & c_1 \ar[r]^{f_2} &
      \cdots \ar[r]^{f_m} & c_m \ar[r]^{g} & d
    }
  \]
  et pour but
  \[ 
    \xymatrix{ c_0 \ar[r]^{f_1} & c_1 \ar[r]^{f_2} &
      \cdots \ar[r]^{f_m} & c_m \ar[r]^{hg} & d' \pbox{.}
    }
  \]
  On vérifie enfin que les identités et les composés sont obtenus de la
  manière évidente. Il résulte de cette description que, d'une part, dans le
  cas $m = 0$ où $c$ correspond à un objet de $C$, la catégorie
  $\cotr{C}{c}$ est la tranche usuelle et, d'autre part, le foncteur
  $\cotr{C}{c} \to \cotr{C}{c_m}$ induit par le triangle commutatif 
  \[
    \xymatrix@C=1.5pc{
      \Deltan{0} \ar[dr]_{c_m} \ar[rr]^m & & \Deltan{m} \ar[dl]^{c} \\
                                         & C &
    }
  \]
  est un isomorphisme. On en déduit que, plus généralement, si $v : A \to C$
  est un foncteur, le foncteur $\cotr{A}{c} \to \cotr{A}{c_m}$ induit par ce
  même triangle est un isomorphisme.
\end{paragraph}

\begin{paragraph}
  Considérons toujours la catégorie $\Cat$ munie du joint catégorique.
  L'objet cosimplicial $\cDelta \to \Cat$ induit par cette structure de
  catégorie monoïdale est, essentiellement par définition, l'inclusion
  canonique. En particulier, le foncteur nerf associé $N : \Cat \to
  \pref{\cDelta}$ est le foncteur nerf usuel.

  On appellera \ndef{équivalences de Thomason} les équivalences faibles
  associées, c'est-à-dire les foncteurs qui sont envoyés par le nerf usuel
  sur une équivalence faible simpliciale.
\end{paragraph}

\begin{proposition}
  La catégorie $\Cat$ des petites catégories munie du joint catégorique
  permet un théorème A.
\end{proposition}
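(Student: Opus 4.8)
The plan is to deduce the statement from Proposition~\ref{prop:def_equiv_permet_A}, using the explicit description of categorical slices recalled in paragraph~\ref{paragr:def_joint_1}. First I would recall that, as noted just before the statement, the cosimplicial object $\cO : \cDelta \to \Cat$ induced by the categorical join is the canonical inclusion; in particular $\On{m} = \Deltan{m}$ for every $m \ge 0$, the associated nerf is the usual nerf, and the associated weak equivalences are the Thomason equivalences.

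By Proposition~\ref{prop:def_equiv_permet_A}, in order to prove that $\Cat$ permits a Theorem~A it suffices to check that, for every functor $g : X \to Z$ between small categories, every integer $m \ge 0$ and every functor $z : \Deltan{m} \to Z$, the morphism $m^\ast : \cotr{X}{z} \to \cotr{X}{z_m}$ associated as in paragraph~\ref{paragr:fonct_tr_sur_2} to the commutative triangle
\[
  \xymatrix@C=1.5pc{
    \Deltan{0} \ar[dr]_{z_m} \ar[rr]^m & & \Deltan{m} \ar[dl]^{z} \\
    & Z &
  }
\]
is a Thomason equivalence.

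Here I would invoke directly the last part of paragraph~\ref{paragr:def_joint_1}: the explicit description of the slices $\cotr{C}{c}$ given there shows that the functor $\cotr{Z}{z} \to \cotr{Z}{z_m}$ induced by the triangle above is an isomorphism of categories --- indeed an object (resp.\ a morphism) of $\cotr{Z}{z}$ amounts precisely to a morphism out of $z_m$ in $Z$ (resp.\ a pair of composable such morphisms), exactly as for the usual slice $\cotr{Z}{z_m}$ --- and hence, by base change along $g : X \to Z$, that $m^\ast : \cotr{X}{z} \to \cotr{X}{z_m}$ is an isomorphism of categories as well. Since the nerf of an isomorphism of categories is an isomorphism of simplicial sets, and an isomorphism of simplicial sets is a weak equivalence, the morphism $m^\ast$ is a Thomason equivalence, which proves the proposition.

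There is no substantial obstacle in this argument: the one computation that matters, namely that $m^\ast$ is an isomorphism, has already been performed in paragraph~\ref{paragr:def_joint_1}. The only point needing care is the purely formal verification that the morphism $m^\ast$ appearing in Proposition~\ref{prop:def_equiv_permet_A} coincides with the functor $\cotr{X}{z} \to \cotr{X}{z_m}$ considered in paragraph~\ref{paragr:def_joint_1}, which is immediate by comparing their constructions via paragraph~\ref{paragr:fonct_tr_sur_2}.
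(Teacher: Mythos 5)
Votre démonstration est correcte et suit essentiellement la même voie que celle du texte : réduction à la proposition~\ref{prop:def_equiv_permet_A}, puis constat, via la description explicite du paragraphe~\ref{paragr:def_joint_1}, que le morphisme $m^\ast$ en jeu est un isomorphisme de catégories, donc une équivalence de Thomason. Le seul écart est cosmétique : vous re-déduisez par changement de base le cas d'un foncteur quelconque $g : X \to Z$, alors que le paragraphe~\ref{paragr:def_joint_1} énonce déjà ce cas général en dernière phrase.
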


\begin{proof}
  Le paragraphe~\ref{paragr:def_joint_1} montre que le morphisme de la
  proposition~\ref{prop:def_equiv_permet_A} est un isomorphisme. C'est donc
  une équivalence faible et on conclut par cette même proposition.
\end{proof}

\begin{thm}[Théorème A de Quillen originel]
  \noindent
  Soit
  \[
    \xymatrix@C=1.5pc{
      A \ar[rr]^u \ar[dr]_v & & B \ar[dl]^w \\
      & C
    }
  \]
  un triangle commutatif de foncteurs entre petites catégories. Si pour tout
  objet $c$ de~$C$, le foncteur $\cotr{A}{c} \to \cotr{B}{c}$ est une
  équivalence de Thomason, alors il en est de même du foncteur~$u$.
\end{thm}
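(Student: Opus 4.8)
Le plan est de spécialiser le théorème A monoïdal (théorème~\ref{thm:thmA_monoid}) à la catégorie $\Cat$ munie du joint catégorique. On a vu au paragraphe~\ref{paragr:def_joint_1} que cette catégorie monoïdale est localement bifermée et admet des limites projectives finies, et la proposition précédente affirme qu'elle permet un théorème~A : elle relève donc du théorème~A monoïdal. Pour le triangle commutatif de l'énoncé, vu comme un triangle dans $\Cat$ (avec $X = A$, $Y = B$, $Z = C$ et $f = u$), il suffira alors d'identifier l'hypothèse de Quillen avec celle du théorème~A monoïdal, puis de conclure.

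Pour cette identification, je commencerais par remarquer que l'objet final de $\Cat$ est la catégorie ponctuelle $\Deltan{0} = \On{0}$, de sorte qu'un morphisme $z : e \to C$ équivaut exactement au choix d'un objet $c$ de $C$. Il restera à reconnaître dans la tranche $\cotr{A}{z}$, définie au sens du paragraphe~\ref{paragr:fonct_tr_sur_2} comme le produit fibré $\cotr{C}{z} \times_C A$, la tranche catégorique usuelle $\cotr{A}{c}$ de l'introduction, dont les objets sont les couples $(a, f : c \to v(a))$. C'est l'unique point qui demande un minimum de soin, mais il est pour l'essentiel déjà réglé au paragraphe~\ref{paragr:def_joint_1} : dans le cas $m = 0$, la tranche $\cotr{C}{c}$ y est décrite explicitement comme la tranche usuelle de $C$ au-dessous de l'objet~$c$, et puisque $\cotr{A}{c}$ s'obtient elle aussi, le long du foncteur structural $v$, comme le produit fibré $\cotr{C}{c} \times_C A$, l'identification en découle, de manière compatible avec les foncteurs induits par $u$ ; le foncteur $\cotr{A}{z} \to \cotr{B}{z}$ du théorème~A monoïdal se lit donc comme le foncteur $\cotr{A}{c} \to \cotr{B}{c}$ de l'énoncé.

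Il ne restera plus qu'à conclure, ce qui est purement formel : l'hypothèse dit que, pour tout objet $c$ de $C$, c'est-à-dire pour tout morphisme $z : e \to C$, le foncteur $\cotr{A}{z} \to \cotr{B}{z}$ induit par $u$ est une équivalence de Thomason, donc une équivalence faible de $\Cat$ relativement à l'objet cosimplicial canonique ; le théorème~A monoïdal entraîne alors que $u$ est une équivalence faible de $\Cat$, autrement dit une équivalence de Thomason. La preuve étant ainsi une simple spécialisation du formalisme développé dans cette section, je ne m'attends à aucune réelle difficulté, si ce n'est le soin à apporter au rapprochement des deux descriptions — abstraite \forlang{via} le joint, et classique — des tranches catégoriques.
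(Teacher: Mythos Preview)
Your proposal is correct and follows exactly the paper's approach: apply the monoidal Theorem~A (théorème~\ref{thm:thmA_monoid}) to $\Cat$ with the categorical join, using the preceding proposition to ensure it permits a Theorem~A. The paper's proof is a one-liner invoking precisely this; the identification of the abstract slices with the usual ones that you spell out is already handled in paragraph~\ref{paragr:def_joint_1} and is taken as implicit.
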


\begin{proof}
  Cela résulte du théorème A monoïdal (théorème~\ref{thm:thmA_monoid}),
  qu'on peut appliquer en vertu de la proposition précédente.
\end{proof}

\section{Rappels sur la théorie de Steiner}

Dans cette section, on rappelle brièvement quelques éléments de la théorie
des complexes dirigés augmentés de Steiner \cite{Steiner}, ainsi que
quelques compléments issus de~\cite{AraMaltsiJoint}. On renvoie à
\cite[chapitre 2]{AraMaltsiJoint} pour des rappels plus complets.

\begin{paragraph}
  On notera $\ooCat$ la catégorie des \oo-catégories \emph{strictes} et des
  \oo-foncteurs \emph{stricts} entre celles-ci. Sauf mention expresse du
  contraire, les \oo-catégories et les \oo-foncteurs considérés dans ce
  texte seront supposés stricts. Rappelons qu'une \oo-catégorie $C$ consiste
  en la donnée, pour tout $i \ge 0$, d'un ensemble $C_i$ de
  \ndef{$i$-cellules}. Si $x$ est une $i$-cellule pour un $i \ge 1$, on
  dispose de sa \ndef{source} $s(x)$ et de son \ndef{but} $t(x)$ qui sont
  deux $(i-1)$-cellules. Si $x$ est une $i$-cellule pour un $i \ge 0$, on
  dispose de son \ndef{identité}~$\id{x}$ qui est une $(i+1)$-cellule. Par
  ailleurs, pour $i > j \ge 0$ et $x, y$ deux $i$-cellules telles que
  la $j$-cellule source itérée de $x$ soit égale à la $j$-cellule but
  itéré de $y$, on dispose d'une $i$-cellule composée $x \comp_j y$. Ces
  données sont soumises à des axiomes qui doivent être vérifiés à égalité
  près (et non à des contraintes supérieures près).
\end{paragraph}

\begin{paragr}
  Un \ndef{complexe dirigé augmenté} est un complexe de chaînes de groupes
  abéliens en degrés positifs augmenté
  \[
    \xymatrix{\cdots \ar[r]^-{d_{i+1}} & K_i \ar[r]^-{d_i} & K_{i-1}
    \ar[r]^-{d_{i-1}} & \cdots \ar[r]^{d_2} & K_1 \ar[r]^{d_1} & K_0
    \ar[r]^e & \Z
    }
  \]
  muni, pour tout $i \ge 0$, d'un sous-monoïde $K^\ast_i$ de $K^{}_i$ qu'on
  appellera \ndef{sous-monoïde de positivité}. Un \ndef{morphisme de
  complexes dirigés augmentés} est un morphisme de complexes de chaînes
  augmentés qui envoie les sous-monoïdes de positivité de sa source dans les
  sous-monoïdes de positivité de son but. On obtient ainsi une catégorie
  qu'on notera $\Cda$.
\end{paragr}

\begin{paragr}
  À toute \oo-catégorie $C$, on associe un complexe dirigé augmenté
  $\lambda(C)$ de la manière suivante. Pour tout $i \ge 0$, le groupe
  abélien $\lambda(C)_i$ est engendré par des générateurs $[x]$, où
  $x$ varie parmi les $i$-cellules de $C$, soumis aux relations
  $[x \ast_j y] = [x] + [y]$, où $x$ et $y$ varient parmi les $i$-cellules
  de $C$ pour lesquelles le composé $x \comp_j y$ est défini. Le sous-monoïde de
  positivité $\lambda(C)_i^\ast$ est le sous-monoïde engendré par les $[x]$,
  où $x$ varie parmi les $i$-cellules de $C$. Pour $i > 0$, la
  différentielle $d_i : \lambda(C)_i \to \lambda(C)_{i-1}$ est définie par
  \[
    d_i([x]) = [t(x)] - [s(x)].
  \]
  Enfin, l'augmentation $e : \lambda(C)_0 \to \Z$ est définie par $e([x]) = 1$.

  On vérifie qu'on obtient ainsi un foncteur $\lambda : \ooCat \to \Cda$,
  l'action sur les morphismes étant définie de la manière évidente.
\end{paragr}

\begin{proposition}[Steiner]
  Le foncteur $\lambda : \ooCat \to \Cda$ admet un adjoint à droite $\nu :
  \Cda \to \ooCat$.
\end{proposition}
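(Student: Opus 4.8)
\emph{Approche.} Le plan est de suivre l'argument original de Steiner \cite{Steiner} en construisant explicitement le foncteur $\nu$, puis en exhibant la bijection d'adjonction ; on mentionnera à la fin une variante plus abstraite. Étant donné un complexe dirigé augmenté $K$, on définit une \oo-catégorie $\nu(K)$ dont l'ensemble des $n$-cellules, pour tout $n \ge 0$, est l'ensemble des \emph{tableaux} $\tabld{x}{n}$ avec $x^0_i, x^1_i \in K^\ast_i$ pour $0 \le i \le n$, soumis aux conditions suivantes : $e(x^0_0) = e(x^1_0) = 1$ ; $d_i(x^\epsilon_i) = x^1_{i-1} - x^0_{i-1}$ pour $1 \le i \le n$ et $\epsilon \in \{0, 1\}$ ; et $x^0_n = x^1_n$. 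La source (resp.\ le but) d'une telle $n$-cellule, pour $n \ge 1$, est la $(n-1)$-cellule obtenue en supprimant la colonne d'indice $n$ et en remplaçant l'entrée inférieure $x^1_{n-1}$ (resp.\ l'entrée supérieure $x^0_{n-1}$) de la nouvelle dernière colonne par $x^0_{n-1}$ (resp.\ par $x^1_{n-1}$) ; l'identité d'une $n$-cellule s'obtient en ajoutant la colonne nulle $(0, 0)$ ; et le composé $x \comp_j y$ de deux $n$-cellules telles que $s_j(x) = t_j(y)$ est le tableau dont les colonnes d'indice $i < j$ sont les colonnes (alors nécessairement égales) de $x$ et de $y$, dont la colonne d'indice $j$ a pour entrée supérieure celle de $y$ et pour entrée inférieure celle de $x$, et dont les colonnes d'indice $i > j$ sont les sommes coordonnée par coordonnée de celles de $x$ et de $y$. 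On vérifie alors que ces données sont bien définies --- les éléments en jeu restent dans les sous-monoïdes de positivité, qui sont stables par somme ; les conditions portant sur la différentielle et l'augmentation sont additives ; et la condition de composabilité au niveau $j$ est exactement ce qu'il faut pour que les contributions des bords en degré $j+1$ se simplifient --- et qu'elles satisfont aux axiomes d'une \oo-catégorie, tout ceci se ramenant à des identités dans des groupes abéliens.

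\emph{L'adjonction.} Il reste à construire, pour une \oo-catégorie $C$ et un complexe dirigé augmenté $K$, une bijection $\Hom_{\ooCat}(C, \nu(K)) \simeq \Hom_{\Cda}(\lambda(C), K)$ naturelle en $C$ et en $K$. À un \oo-foncteur $F : C \to \nu(K)$ on associe le morphisme $\lambda(C) \to K$ qui, en degré $i$, envoie le générateur $[c]$ d'une $i$-cellule $c$ de $C$ sur la valeur commune des deux entrées d'indice $i$ du tableau $F(c)$ : les relations $[x \comp_j y] = [x] + [y]$ sont respectées parce que $F$ préserve les composés $\comp_j$ et en vertu de la formule de somme ci-dessus, la compatibilité aux différentielles et à l'augmentation résulte des conditions définissant les tableaux jointes au fait que $F$ préserve source, but et identités, et la positivité est immédiate. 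Réciproquement, à un morphisme $g : \lambda(C) \to K$ on associe le \oo-foncteur qui envoie une $n$-cellule $c$ de $C$ sur le tableau dont l'entrée supérieure (resp.\ inférieure) d'indice $i$ est $g([s_i(c)])$ (resp.\ $g([t_i(c)])$), où $s_i(c)$ et $t_i(c)$ sont les $i$-cellules source et but itérées de $c$ : les identités globulaires et le fait que $g$ soit un morphisme de complexes de chaînes montrent qu'on obtient bien un tableau vérifiant les conditions requises, et l'on vérifie directement que cette correspondance commute à source, but, identités et composés, donc définit un \oo-foncteur. Ces deux constructions sont mutuellement inverses et clairement naturelles en $C$ et en $K$, d'où l'adjonction annoncée.

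\emph{Difficulté principale.} Le point véritablement délicat est l'écriture correcte de la formule donnant le composé $x \comp_j y$, puis la vérification que $\nu(K)$ satisfait bien à \emph{tous} les axiomes des \oo-catégories --- associativité des $\comp_j$ et compatibilité entre composés de dimensions distinctes en particulier ; ce sont exactement les identités établies par Steiner, et si chacune est un calcul routinier dans des groupes abéliens, le suivi soigneux des indices constitue l'essentiel du travail. On peut aussi invoquer le théorème du foncteur adjoint, les catégories $\ooCat$ et $\Cda$ étant localement présentables, à condition de savoir au préalable que $\lambda$ préserve les limites inductives ; mais la construction explicite ci-dessus, due à Steiner, est de toute façon celle dont on se sert dans la suite et la seule qui fournisse une description utilisable de $\nu$.
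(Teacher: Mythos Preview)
Your proposal is correct and follows exactly the approach the paper invokes: the paper's own proof is simply a reference to \cite[théorème 2.11]{Steiner}, and your sketch reproduces Steiner's explicit construction of $\nu$ via tableaux together with the verification of the adjunction bijection. The description of the cells of $\nu(K)$ that you give coincides with the one the paper recalls in the paragraph immediately following the proposition.
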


\begin{proof}
  Voir \cite[théorème 2.11]{Steiner}.
\end{proof}

\begin{paragr}
  Soit $K$ un complexe dirigé augmenté. Pour $i \ge 0$, les $i$-cellules de
  $\nu(K)$, où $\nu$ désigne le foncteur de la proposition précédente, sont
  les tableaux
  \[
    \tabld{x}{i}
  \]
  où
  \begin{enumerate}
    \item $x^\epsilon_k$ appartient à $K^\ast_i$ pour $\epsilon = 0, 1$ et $0
      \le k \le i$ ;
    \item $d(x^\epsilon_k) = x^1_{k-1} - x^0_{k-1}$ pour $\epsilon = 0, 1$ et $0
      < k \le i$ ;
    \item $e(x^\epsilon_0) = 1$ pour $\epsilon = 0, 1$ ;
    \item $x_i^0 = x_i^1$.
  \end{enumerate}
  Les opérations de la \oo-catégorie $\nu(K)$ peuvent se décrire
  aisément en termes de ces tableaux mais nous n'aurons pas besoin de
  cette description dans ce texte.
\end{paragr}

\begin{paragr}
  Une \ndef{base} d'un complexe dirigé augmenté $K$ est un ensemble gradué
  \hbox{$B = (B_i)_{i \ge 0}$} tel que, pour tout $i \ge 0$,
  \begin{enumerate}
    \item $B_i$ est une base du $\Z$-module $K_i$ ;
    \item $B_i$ engendre le sous-monoïde $K^\ast_i$ de $K^{}_i$.
  \end{enumerate}
  On vérifie que si un complexe dirigé augmenté admet une base, cette base
  est unique.
\end{paragr}

\begin{paragr}
  Soit $K$ un complexe admettant une base $B = (B_i)$. Si $x$ est un élément
  de~$K_i$, son \ndef{support} est l'ensemble des éléments de $B_i$ qui
  apparaissent avec un coefficient non nul dans la décomposition de $x$
  selon la base $B_i$. Il est immédiat que tout élément~$x$ de~$K_i$ se décompose
  de manière unique en $x = x_+ - x_-$, où $x_-$ et $x_+$ sont deux éléments
  de~$K^\ast_i$ à supports disjoints.
\end{paragr}

\begin{paragr}
  Soit $K$ un complexe dirigé augmenté admettant une base. Si $x$ est un
  élément de degré $i$ de la base de $K$, on lui associe un tableau
  \[
    \atom{x}=\tabll{\atom{x}}{i},
  \]
  où les $\atom{x}^\epsilon_k$ sont définis par récurrence descendante sur $k$
  de $i$ à $0$ :
  \begin{itemize}
    \item $\atom{x}^0_i = x = \atom{x}^1_i$ ;
    \item $\atom{x}^0_{k - 1} = d(\atom{x}^0_k)_-$ et $\atom{x}^1_{k - 1} =
      d(\atom{x}^1_k)_+$ pour $0 < k \le i$.
  \end{itemize}
  Ce tableau est une cellule de $\nu(K)$ si et seulement si $e(\atom{x}^0_0)
  = 1 = e(\atom{x}^1_0)$.

  On dit que la base $B$ de $K$ est \ndef{unitaire} si, pour tout $i \ge 0$ et
  tout $x$ dans $B_i$, le tableau $\atom{x}$ est une $i$-cellule de $\nu(K)$.
\end{paragr}

\begin{paragr}\label{paragr:def_le_N}
  Soit $K$ un complexe dirigé augmenté admettant une base $B$. On notera
  $\leN$ la plus petite relation de préordre sur $B$ pour laquelle, pour
  tout $i \ge 1$ et tout $x$ dans~$B_i$, si $y$ est dans le support de
  $d(x)_-$ et $z$ est dans le support de $d(x)_+$, on a~$y \leN x \leN
  z$.

  On dira que la base $B$ est \ndef{fortement sans boucle} si cette relation
  $\leN$ est une relation d'ordre.
\end{paragr}

\begin{paragr}
  On appellera \ndef{complexe de Steiner fort} un complexe dirigé augmenté
  admettant une base unitaire et fortement sans boucle.
\end{paragr}

\begin{thm}[Steiner]\label{thm:Steiner}
  La restriction du foncteur $\nu : \Cda \to \ooCat$ à la sous-catégorie
  pleine formée des complexes de Steiner forts est un foncteur pleinement
  fidèle.
\end{thm}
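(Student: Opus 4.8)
Le plan est de ramener la pleine fidélité à un énoncé concernant la counité de l'adjonction $\lambda \dashv \nu$, puis d'établir cet énoncé par un argument d'« excision des extrémaux » gouverné par la relation d'ordre $\leN$.

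On commencerait par observer que, en vertu de l'adjonction $\lambda \dashv \nu$, pour tout complexe dirigé augmenté $L$ et tout complexe de Steiner fort $K$, on dispose d'une bijection naturelle
\[
  \Hom_{\ooCat}(\nu(K), \nu(L)) \simeq \Hom_{\Cda}(\lambda\nu(K), L),
\]
et que, par les identités triangulaires, le composé de cette bijection avec la précomposition par la counité $\epsilon_K \colon \lambda\nu(K) \to K$ n'est autre que l'application $f \mapsto \nu(f)$. Il suffit donc de démontrer que $\epsilon_K$ est un isomorphisme de complexes dirigés augmentés dès que $K$ est un complexe de Steiner fort ; la naturalité en $K$ et en $L$ étant immédiate, on en déduirait la pleine fidélité de $\nu$ sur la sous-catégorie pleine des complexes de Steiner forts.

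On expliciterait ensuite $\epsilon_K$ : elle envoie un générateur $[x]$ de $\lambda\nu(K)_i$, où $x$ est la $i$-cellule de $\nu(K)$ de partie principale $x^0_i = x^1_i$, sur cette partie principale $x^0_i \in K_i$ ; les conditions (a)--(d) décrivant les cellules de $\nu(K)$ montrent aussitôt qu'on définit bien ainsi un morphisme de $\Cda$, compatible aux différentielles, à l'augmentation et aux sous-monoïdes de positivité. On introduirait alors un candidat inverse : pour $b$ un élément de la base $B$ de $K$, on pose $\psi(b) = [\atom{b}] \in \lambda\nu(K)$, ce qui a un sens précisément parce que la base est unitaire, de sorte que le tableau $\atom{b}$ est une cellule de $\nu(K)$ ; on prolonge $\psi$ par $\Z$-linéarité en chaque degré. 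Comme $\atom{b}^0_i = b$, on a $\epsilon_K \circ \psi = \id{K}$ ; en particulier $\psi$ est injective et $\epsilon_K$ surjective, et il ne reste qu'à établir la surjectivité de $\psi$, c'est-à-dire qu'en chaque degré $i$ les classes $[\atom{b}]$, pour $b$ parcourant $B_i$, engendrent le groupe abélien $\lambda\nu(K)_i$.

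Le cœur de la démonstration, et sa principale difficulté, est l'assertion suivante : toute cellule de $\nu(K)$ s'obtient à partir des atomes $\atom{b}$ et de leurs identités par des compositions $\comp_j$ itérées. Cette assertion entraîne aussitôt la surjectivité de $\psi$, puisque dans $\lambda\nu(K)$ on a $[x \comp_j y] = [x] + [y]$ et $[\id{z}] = 0$. Pour la prouver, on raisonnerait par récurrence sur la taille des supports : la base étant fortement sans boucle, $\leN$ est une relation d'ordre et, tout support étant fini, la partie principale $x^0_i$ d'une $i$-cellule $x$ possède un élément $b \in B_i$ qui est $\leN$-maximal dans son support ; un lemme d'excision des extrémaux permet alors de factoriser $x$, dans une direction dictée par la position de $b$, sous la forme $y \comp_{i-1} \atom{b}$ ou $\atom{b} \comp_{i-1} y$, où $y$ est une cellule de $\nu(K)$ dont les supports sont strictement plus petits, et l'on conclut. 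La vérification de ce lemme est le point technique délicat : il faut contrôler que les entrées produites par l'excision vérifient encore (a)--(d), que le composé affiché est bien défini et reproduit $x$, et que le support total décroît strictement ; ce sont exactement la forte absence de boucle, qui interdit les dépendances $\leN$ « circulaires », et l'unitarité de la base qui rendent ces vérifications possibles. Une fois $\psi$ bijective en chaque degré, $\epsilon_K$ est un isomorphisme de groupes abéliens gradués, donc de complexes de chaînes augmentés --- son inverse commute automatiquement aux différentielles et à l'augmentation ---, et elle met en bijection $B_i$ avec le système de générateurs $[\atom{b}]$ du sous-monoïde de positivité ; c'est donc un isomorphisme dans $\Cda$, ce qui achève la démonstration.
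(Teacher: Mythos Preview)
Le texte ne démontre pas ce théorème et renvoie simplement à \cite[théorème~5.6]{Steiner}. Votre plan reconstitue fidèlement l'architecture de l'argument de Steiner : ramener la pleine fidélité à l'inversibilité de la counité $\epsilon_K$ via l'adjonction $\lambda \dashv \nu$, construire l'inverse candidat $\psi$ à l'aide des atomes (ce qui exploite l'unitarité), puis réduire la surjectivité de $\psi$ au fait que toute cellule de $\nu(K)$ s'écrit comme composé d'atomes et d'identités.

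Un point à rectifier dans votre description de l'excision : la factorisation que vous proposez, $y \comp_{i-1} \atom{b}$ ou $\atom{b} \comp_{i-1} y$ avec $b$ choisi $\leN$-maximal dans le support de la seule entrée principale $x^0_i$, est trop restrictive pour fonctionner telle quelle --- elle imposerait par exemple $x^1_{i-1} = \atom{b}^1_{i-1}$ ou $x^0_{i-1} = \atom{b}^0_{i-1}$, ce qui n'a aucune raison d'être vérifié. Dans l'argument de Steiner, l'élément à exciser est cherché parmi \emph{tous} les supports du tableau, et la décomposition obtenue est le long d'un $\comp_j$ pour un $j$ pouvant être strictement inférieur à $i-1$, l'atome y apparaissant éventuellement sous la forme d'une identité itérée ; l'induction porte alors sur la taille totale des supports. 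Votre diagnostic que c'est « le point technique délicat » est donc exact, mais le lemme d'excision a une forme plus élaborée que votre esquisse ne le suggère.
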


\begin{proof}
  Voir \cite[théorème 5.6]{Steiner}.
\end{proof}

La notion suivante, introduite dans \cite{AraMaltsiJoint}, joue un rôle
technique dans les fonctorialités du joint \oo-catégorique qu'on rappellera
dans la section suivante.

\begin{paragraph}\label{paragr:def_rig_ord}
  Soit $i : K \to L$ un monomorphisme entre des complexes dirigés augmentés
  admettant une base. On dira que $i$ est une \ndef{inclusion rigide
  ordonnée} si, d'une part, $i$~envoie tout élément de la base de $K$ sur un
  élément de la base de $L$ et, d'autre part, si $x$ et $y$ sont des
  éléments de la base de $K$, on a $x \leN y$ si et seulement si on a~$f(x) \leN
  f(y)$.
\end{paragraph}

Terminons cette section par quelques rappels sur les antihomotopies de
complexes dirigés augmentés.

\begin{paragr}\label{paragr:def_antihomot}
  Soient $f, g : K \to L$ deux morphismes de complexes dirigés augmentés.
  Une \ndef{antihomotopie} $h$ de $f$ vers $g$ consiste en la donnée,
  pour tout $i \ge 0$, de morphismes de groupes abéliens $h_i : K_i \to
  L_{i+1}$ envoyant $K^\ast_i$ dans $L^\ast_{i+1}$ et vérifiant, pour tout
  $i \ge 0$,
  \[ d_{i+1}h_i - h_{i-1}d_i = (-1)^i(g_i - f_i), \]
  en convenant que $h_{-1} = 0$ et $d_0 = 0$, de sorte qu'on a $d_1h_0 = g_0
  - f_0$ pour $i = 0$.

  Si maintenant $h$ et $k$ sont deux antihomotopies de $f$ vers $g$, une
  $2$-antihomotopie~$H$ de $h$ vers $k$ consiste en la donnée, pour tout $i
  \ge 0$, de morphismes de groupes abéliens $H_i : K_i \to L_{i+2}$ envoyant
  $K^\ast_i$ dans $L^\ast_{i+2}$ et vérifiant, pour tout $i \ge 0$,
  \[ d_{i + 2}H_i - H_{i-1}d_i = (-1)^i(k_i - h_i), \]
  en convenant que $H_{-1} = 0$ et $d_0 = 0$.
\end{paragr}

\begin{paragr}
  Soit $f : K \to L$ un morphisme de complexes dirigés augmentés. On notera
  $\id{f}$ l'antihomotopie de $f$ vers $f$ définie par $(\id{f})_i = 0$ pour
  tout $i \ge 0$. De même, si $h$ est une antihomotopie, on notera $\id{h}$
  la $2$-antihomotopie de $h$ vers $h$ définie par $(\id{h})_i = 0$ pour
  tout $i \ge 0$.

  On renvoie à la fin de la chapitre 2 de \cite{AraMaltsiJoint} pour les
  définitions de diverses opérations de composition pour les antihomotopies
  et les $2$-antihomotopies. Dans ce texte, nous manipulerons les
  antihomotopies et les $2$-antihomotopies comme des morphismes de groupes
  abéliens gradués (de degré $1$ et $2$) en les additionnant et les
  composant degré par degré. Nous renverrons à \cite{AraMaltsiJoint} pour
  le fait que les formules que nous utiliserons définissent bien des
  antihomotopies ou des $2$-antihomotopies.
\end{paragr}

\section{Préliminaires \pdfoo-catégoriques : produit tensoriel, joint et tranches}

Cette section est essentiellement un résumé de \cite{AraMaltsiJoint}, avec
quelques emprunts à~\cite{Steiner}. Commençons par des rappels sur le
produit tensoriel \oo-catégorique introduit par Al-Agl et
Steiner~\cite{AlAglSteiner} et généralisant le produit de Gray $2$-catégorique
\cite{GrayFCT}.

\begin{paragraph}\label{paragr:def_tens_comp}
  Soient $K$ et $L$ deux complexes dirigés augmentés. On définit leur
  \ndef{produit tensoriel} $K \otimes L$ de la manière suivante. Le
  complexe sous-jacent est le produit tensoriel usuel des complexes :
  pour tout $r \ge 0$, on pose
  \[
   (K \otimes L)_r = \bigoplus_{\substack{p + q = r\\p \ge 0,\,q \ge 0}}
   K_p \otimes L_q
  \]
  et, pour $x \otimes y$ dans $K_p \otimes L_q$ avec $p + q > 0$,
  \[
  d(x \otimes y) = d(x) \otimes y + (-1)^{p} x \otimes d(y),
  \]
  où on convient que $d(z) = 0$ lorsque $z$ est de degré $0$.
  L'augmentation, pour $x \otimes y$ dans $K_0 \otimes L_0$, est
  définie par
  \[ e(x \otimes y) = e(x)e(y). \]
  Enfin, pour $r \ge 0$, le sous-monoïde de positivité $(K \otimes L)^\ast_r$ est
  le sous-monoïde de~$(K \otimes L)_r$ engendré par les éléments de la forme
  $x \otimes y$, avec $x$ dans $K^\ast_p$,  $y$ dans $L^\ast_q$ et~$p + q
  = r$.
\end{paragraph}

\begin{proposition}[Steiner]\label{prop:tens_Steiner}
  Si $K$ et $L$ sont des complexes de Steiner forts admettant respectivement
  $X$ et $Y$ pour base, alors $K \otimes L$ est un complexe de Steiner fort
  admettant $X \otimes Y = \{x \otimes y \mid x \in X, \, y \in Y \}$ pour
  base.
\end{proposition}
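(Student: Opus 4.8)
Il s'agit d'un résultat de Steiner ; voici le schéma d'une démonstration possible. Rappelons qu'un complexe de Steiner fort est, par définition, un complexe dirigé augmenté admettant une base à la fois unitaire et fortement sans boucle. On vérifiera donc tour à tour que $X\otimes Y$ est une base de $K\otimes L$, qu'elle est unitaire, et qu'elle est fortement sans boucle. Le premier point est essentiellement formel : en degré $r$, on a $(K\otimes L)_r=\bigoplus_{p+q=r}K_p\otimes L_q$, de sorte que l'ensemble produit $(X\otimes Y)_r=\{x\otimes y\mid x\in X_p,\ y\in Y_q,\ p+q=r\}$ est une $\Z$-base de $(K\otimes L)_r$ puisque $X_p$ et $Y_q$ sont des $\Z$-bases de $K_p$ et $L_q$ ; et en développant un générateur quelconque $a\otimes b$ de $(K\otimes L)^\ast_r$, avec $a\in K^\ast_p$ et $b\in L^\ast_q$, selon $X_p$ et $Y_q$, on constate que $(K\otimes L)^\ast_r$ est le sous-monoïde engendré par $(X\otimes Y)_r$.

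Pour la forte absence de boucle, notons $\le_X$ et $\le_Y$ les relations d'ordre $\leN$ sur $X$ et $Y$ fournies par l'hypothèse. L'observation clé concerne la forme des parties positive et négative de $d(x\otimes y)$ pour $x\in X_p$ et $y\in Y_q$ : de $d(x\otimes y)=d(x)\otimes y+(-1)^p x\otimes d(y)$, et comme les deux termes vivent en des bidegrés distincts (donc n'interfèrent pas), on voit que $d(x\otimes y)_-$ contient toujours le bloc $d(x)_-\otimes y$ et $d(x\otimes y)_+$ toujours le bloc $d(x)_+\otimes y$, \emph{indépendamment de la parité de $p$} --- seuls les blocs $x\otimes d(y)_{\pm}$ sont échangés selon cette parité. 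Il s'ensuit que la projection $\pi_X\colon X\otimes Y\to X$, $x\otimes y\mapsto x$, est croissante pour $\leN$ à la source et $\le_X$ au but, car toute relation de couverture $p\leN q$ engendrant $\leN$ vérifie $\pi_X(p)\le_X\pi_X(q)$, avec égalité lorsqu'elle fixe la première coordonnée. Supposons alors $u\leN v$ et $v\leN u$ : la croissance de $\pi_X$ et l'antisymétrie de $\le_X$ imposent $\pi_X(u)=\pi_X(v)=:x$ et, en outre, que toute chaîne pour $\leN$ joignant $u$ à $v$ reste dans la fibre $\{x\}\otimes Y$ ; or, sur cette fibre, la relation $\leN$ se transporte, via la bijection canonique $Y\simeq\{x\}\otimes Y$, sur $\le_Y$ si $\deg x$ est pair et sur $\ge_Y$ si $\deg x$ est impair (ce qui se lit immédiatement sur les relations de couverture restreintes à la fibre), relations qui sont antisymétriques dans les deux cas, d'où $u=v$.

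Pour l'unitarité, on montrera que, pour tout $w=x\otimes y$ dans $(X\otimes Y)_r$, le tableau $\atom{w}$ est une $r$-cellule de $\nu(K\otimes L)$, ce qui, d'après le critère rappelé plus haut, revient à $e(\atom{w}^0_0)=1=e(\atom{w}^1_0)$. La méthode consiste à dérouler la récurrence de définition $\atom{w}^0_{k-1}=d(\atom{w}^0_k)_-$ et $\atom{w}^1_{k-1}=d(\atom{w}^1_k)_+$ à l'aide de la description de $d(\cdot)_{\pm}$ sur les produits tensoriels obtenue au paragraphe précédent, afin d'établir une formule produit exprimant $\atom{x\otimes y}^{\e}_k$ comme une somme de tenseurs $\atom{x}^{\e_1}_p\otimes\atom{y}^{\e_2}_q$ avec $p+q=k$ et des exposants $\e_1,\e_2\in\{0,1\}$ régis par une règle de parité ; en degré minimal, cette formule se réduit à l'unique terme $\atom{x\otimes y}^{\e}_0=\atom{x}^{\e}_0\otimes\atom{y}^{\e}_0$. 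Comme $e$ est multiplicative sur $K\otimes L$ et que les bases $X$ et $Y$ sont unitaires, on en déduit $e(\atom{x\otimes y}^{\e}_0)=e(\atom{x}^{\e}_0)\,e(\atom{y}^{\e}_0)=1$. Le point délicat est précisément cette dernière étape : le suivi des signes et des exposants $\e_1,\e_2$ dans la formule des tableaux est le cœur combinatoire de la preuve --- c'est le même calcul qui gouverne le produit tensoriel de Gray des orientaux --- alors que les parties « base » et « forte absence de boucle » sont légères une fois élucidée l'action de la différentielle sur la décomposition en parties positive et négative. L'énoncé n'affirmant aucune naturalité, il n'y a rien d'autre à vérifier.
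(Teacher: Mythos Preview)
Le papier ne prouve pas ce résultat : sa démonstration se réduit à un renvoi à \cite[exemple~3.10]{Steiner}. Votre esquisse est une reconstruction fidèle et correcte de l'argument de Steiner : la structure en trois temps (base produit, forte absence de boucle via la projection monotone $\pi_X\colon X\otimes Y\to X$, unitarité via une formule produit pour les atomes) est bien la sienne, et vos observations clés --- notamment que $d(x)_-\otimes y$ et $d(x)_+\otimes y$ atterrissent toujours dans $d(x\otimes y)_-$ et $d(x\otimes y)_+$ respectivement, indépendamment de la parité, faute d'interférence entre bidegrés distincts --- sont exactes. Vous identifiez à juste titre la formule des tableaux $\atom{x\otimes y}^\e_k$ comme le cœur combinatoire et la laissez à l'état d'esquisse ; c'est précisément ce que développe la référence citée.
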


\begin{proof}
  Voir \cite[exemple 3.10]{Steiner}.
\end{proof}

\begin{paragraph}
  Le produit tensoriel définit une structure de catégorie monoïdale sur la
  catégorie $\Cda$ des complexes dirigés augmentés. L'unité est le complexe
  dirigé augmenté~$\lambda(\Dn{0})$, où $\Dn{0}$ désigne la \oo-catégorie
  terminale. En vertu de la proposition précédente (et du fait évident que
  $\lambda(\Dn{0})$ est un complexe de Steiner fort), cette structure se
  restreint à la sous-catégorie pleine des complexes de Steiner forts.
\end{paragraph}

\begin{theorem}\label{thm:prod_tens}
  Il existe une et une seule structure de catégorie monoïdale bifermée (à
  unique isomorphisme monoïdal près) sur la catégorie $\ooCat$ des \oo-catégories
  telle que la restriction du foncteur $\nu : \Cda \to \ooCat$ à la
  sous-catégorie pleine de~$\Cda$ formée des complexes de Steiner forts
  munie du produit tensoriel soit un foncteur monoïdal.
\end{theorem}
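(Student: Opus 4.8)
The plan is to exploit that $\ooCat$ is locally presentable, so that a \emph{biclosed} monoidal structure on it is the same thing as a monoidal structure whose tensor product preserves small colimits in each variable separately (by the adjoint functor theorem, a colimit-preserving functor out of a locally presentable category has a right adjoint). Such a bivariate functor is determined, together with its coherence isomorphisms, by its restriction to any dense subcategory. We therefore fix once and for all a small dense subcategory $\mathbb{G} \subseteq \ooCat$ whose objects all lie in the essential image of $\nu|_{\Stf}$ --- for instance the category of disks --- and, for each $\theta$ in $\mathbb{G}$, a strong Steiner complex $K_\theta$ with $\nu(K_\theta) \cong \theta$; the nerve functor $\ooCat \to \pref{\mathbb{G}}$, $C \mapsto (\theta \mapsto \Hom(\theta, C))$, then exhibits $\ooCat$ as a reflective localization of $\pref{\mathbb{G}}$. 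We also use two facts recalled from \cite{AraMaltsiJoint}: that $\mathbb{G}$ is dense, i.e.\ every $\infty$-category $C$ is canonically $\varinjlim_\theta \theta$ over $\mathbb{G} \comma C$; and that for a strong Steiner complex $K$ this presentation of $\nu(K)$ is the image under $\nu$ of a colimit $K \cong \varinjlim_\theta K_\theta$ over $\mathbb{G} \comma \nu(K)$, computed in $\Cda$.

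\emph{Uniqueness.} Let $(\ooCat, \boxtimes)$ be a biclosed monoidal structure making $\nu|_{\Stf}$ monoidal. Being cocontinuous in each variable, $\boxtimes$ is determined, with its constraints, by its restriction to $\mathbb{G} \times \mathbb{G}$. But monoidality of $\nu|_{\Stf}$ forces $\theta \boxtimes \theta' = \nu(K_\theta) \boxtimes \nu(K_{\theta'}) \cong \nu(K_\theta \otimes K_{\theta'})$ --- which is meaningful because $\Stf$ is stable under $\otimes$ by Proposition~\ref{prop:tens_Steiner} --- and forces the associativity and unit constraints to be those transported from $(\Cda, \otimes, \lambda(\Dn{0}))$ along $\nu|_{\Stf}$; this transport is canonical since $\nu|_{\Stf}$ is fully faithful by Theorem~\ref{thm:Steiner}. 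Hence the restriction of $\boxtimes$ to $\mathbb{G}$, as a coherent datum, is pinned down up to a unique isomorphism, and $\boxtimes$ is unique up to a unique monoidal isomorphism.

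\emph{Existence.} Define, for $\infty$-categories $C$ and $D$,
\[
  C \otimes D = \varinjlim_{\substack{\theta \to C\\\theta' \to D}} \nu(K_\theta \otimes K_{\theta'}),
\]
the colimit running over $(\mathbb{G} \comma C) \times (\mathbb{G} \comma D)$ and functorial in $\theta, \theta'$ through $K_{(-)}$ and the tensor product of $\Cda$. This is the Day convolution of the promonoidal datum $(\theta, \theta') \mapsto \nu(K_\theta \otimes K_{\theta'})$ on $\mathbb{G}$; since $\ooCat$ is a reflective localization of $\pref{\mathbb{G}}$ and this datum is the restriction of the genuine monoidal structure $(\Stf, \otimes)$, the standard Day-convolution machinery endows $\ooCat$ with a biclosed monoidal structure restricting to this datum on $\mathbb{G}$, with unit $\nu(\lambda(\Dn{0})) = \Dn{0}$ (the terminal $\infty$-category) and with associativity and unit isomorphisms assembled from those of $(\Cda, \otimes)$; the pentagon and triangle reduce, by cocontinuity in each variable and density, to their counterparts in $\Cda$. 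This bookkeeping is routine but lengthy.

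\emph{Compatibility with $\nu$ on all of $\Stf$, and the main obstacle.} It remains to upgrade ``$\nu|_{\Stf}$ is monoidal on $\mathbb{G}$'' to ``$\nu|_{\Stf}$ is monoidal on all strong Steiner complexes''. Given $K, L$ in $\Stf$, write $K \cong \varinjlim_\theta K_\theta$ and $L \cong \varinjlim_{\theta'} K_{\theta'}$ in $\Cda$; since $\otimes$ on $\Cda$ is cocontinuous in each variable and $\Stf$ is stable under it, $K \otimes L \cong \varinjlim_{\theta, \theta'} (K_\theta \otimes K_{\theta'})$ is a colimit of strong Steiner complexes with strong Steiner colimit, and applying $\nu$ --- which carries these presenting colimits of strong Steiner complexes to colimits in $\ooCat$, by the second recalled fact --- yields $\nu(K \otimes L) \cong \varinjlim_{\theta, \theta'} \nu(K_\theta \otimes K_{\theta'}) = \nu(K) \otimes \nu(L)$; one then checks this isomorphism is compatible with the structural constraints. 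The crux, and the main obstacle, is precisely this last step together with the $\Cda$-level input it rests on --- that the canonical disk presentation of a strong Steiner complex, and of a tensor product of two of them, is preserved by $\nu$ --- which is the genuinely Steiner-theoretic heart of the argument, developed in \cite{AraMaltsiJoint} on top of Steiner's Proposition~\ref{prop:tens_Steiner}. Granting it, everything else --- adjoint functor theorem, Day convolution, interchange of colimits, coherence bookkeeping --- is formal.
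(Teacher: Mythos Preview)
The paper does not prove this theorem at all: its proof consists of a bare reference to \cite[théorème A.15]{AraMaltsiJoint}. Your proposal is therefore not a comparison target but an attempt to reconstruct the argument of that reference, and in broad outline it is the right one: density of a small subcategory of $\ooCat$ lying in the image of $\nu|_{\Stf}$, a colimit formula extending the tensor from that subcategory, cocontinuity in each variable plus the adjoint functor theorem for biclosedness, and the Steiner-theoretic input that $\nu$ preserves the relevant presenting colimits. You also correctly locate the hard core of the argument in this last point.

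There is, however, a genuine gap in your Day-convolution step. Day convolution produces a monoidal structure on $\pref{\mathbb{G}}$, not on $\ooCat$; passing to the reflective subcategory $\ooCat \subset \pref{\mathbb{G}}$ is \emph{not} automatic and requires that the reflector be monoidal for the Day structure, i.e.\ that the local objects form an exponential ideal or, equivalently, that tensoring a local object with a representable stays local. You assert this (``the standard Day-convolution machinery endows $\ooCat$ with a biclosed monoidal structure'') without justification, and it is precisely here that the construction can fail for an arbitrary dense $\mathbb{G}$ and an arbitrary promonoidal datum. In the actual argument of \cite{AraMaltsiJoint} (and in Steiner's original work), one does not pass through $\pref{\mathbb{G}}$: one defines $C \otimes D$ directly in $\ooCat$ by your colimit formula, checks cocontinuity in each variable by a Fubini argument, and then verifies associativity by showing that the two iterated colimits computing $(A \otimes B) \otimes C$ and $A \otimes (B \otimes C)$ are both indexed, after rewriting, by the \emph{same} diagram of triple tensors $\nu(K_\theta \otimes K_{\theta'} \otimes K_{\theta''})$ --- and it is exactly for this rewriting that one needs $\nu$ to send the disk presentation of a strong Steiner complex (in particular of $K_\theta \otimes K_{\theta'}$) to a colimit in $\ooCat$. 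So your ``crux'' is also what makes the Day shortcut unnecessary; if you drop the Day packaging and argue directly with the colimit formula, your sketch becomes correct.
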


\begin{proof}
  Voir \cite[théorème A.15]{AraMaltsiJoint}.
\end{proof}

\begin{paragraph}
  On appellera \ndef{produit tensoriel} le produit monoïdal
  \begin{alignat*}{2}
    \otimes & : \ooCat \times \ooCat & \to \ooCat\\
    & \phantom{=1}\quad\,\,(A, B) & \mapsto A \otimes B
  \end{alignat*}
  donné par le théorème précédent. L'unité de ce produit tensoriel est la
  \oo-catégorie terminale $\Dn{0}$.

  Le fait que le produit tensoriel est bifermé signifie qu'il existe des
  foncteurs
  \[
    \HomOpLax : \ooCat^\o \times \ooCat \to \ooCat
    \quadet
    \HomLax : \ooCat^\o \times \ooCat \to \ooCat
  \]
  tels qu'on ait des bijections
  \[
      \Hom_{\ooCat}(A \otimes B, C)
      \simeq \Hom_{\ooCat}(A, \HomOpLax(B, C))
  \]
  et
  \[
      \Hom_{\ooCat}(A \otimes B, C)
      \simeq \Hom_{\ooCat}(B, \HomLax(A, C)),
  \]
  naturelles en $A$, $B$ et $C$ dans $\ooCat$.
\end{paragraph}

Le produit tensoriel est compatible aux principales dualités de $\ooCat$
dont on rappelle maintenant les définitions.

\begin{paragraph}\label{paragr:dual_ooCat}
  Soit $J$ un ensemble d'entiers strictement positifs. On dispose d'un
  \oo-foncteur involutif $D_J : \ooCat \to \ooCat$ envoyant une
  \oo-catégorie $C$ sur la \oo-catégorie $D_J(C)$ obtenue en inversant le
  sens des $i$-cellules pour tout $i$ dans $J$.

  Outre la dualité triviale (le cas $J = \vide$), trois dualités jouent un
  rôle particulièrement important en théorie des \oo-catégories. Si $J$
  est l'ensemble de tous les entiers strictement positifs, on note $C^\o$
  la \oo-catégorie $D_J(C)$ et on parle du \ndef{dual total} de $C$ ; si $J$
  est l'ensemble des entiers impairs, on note $C^\op$ la \oo-catégorie
  $D_J(C)$ et on parle du \ndef{dual impair} de~$C$ ; enfin, si $J$ est
  l'ensemble des entiers pairs strictement positifs, on note $C^\co$ la
  \oo-catégorie~$D_J(C)$ et on parle du \ndef{dual pair} de $C$.
\end{paragraph}

\begin{proposition}
  Soient $A$ et $B$ deux \oo-catégories. On a des isomorphismes canoniques
  \[
    (A \otimes B)^\op \simeq B^\op \otimes A^\op,
    \quad
    (A \otimes B)^\co \simeq B^\co \otimes A^\co
    \quadet
    (A \otimes B)^\o \simeq A^\o \otimes B^\o,
  \]
  naturels en $A$ et $B$.
\end{proposition}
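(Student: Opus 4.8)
The plan is to reduce everything to the case of complexes of Steiner forts, where the duality isomorphisms can be written down explicitly on the level of chain complexes, and then transport them to $\ooCat$ via the functor $\nu$ and the monoidal structure from Theorem~\ref{thm:prod_tens}. First I would recall that each of the duality functors $D_J$ corresponds, on the level of augmented directed complexes, to a simple operation: reversing the sign convention in degrees $i \in J$. More precisely, for an augmented directed complex $K$, the complex $\lambda(D_J(C))$ is obtained from $\lambda(C)$ by a chain isomorphism that is, up to sign, the identity, and that swaps the roles of $K^\ast_i$ with (a relabelled copy of) itself in the degrees dictated by $J$. Since $C^\op$ uses $J = $ odd integers, $C^\co$ uses $J = $ even positive integers, and $C^\o$ uses $J = $ all positive integers, I would first establish the three corresponding statements at the level of $\Cda$: namely
\[
  (K \otimes L)^\op \simeq L^\op \otimes K^\op, \quad
  (K \otimes L)^\co \simeq L^\co \otimes K^\co, \quad
  (K \otimes L)^\o \simeq K^\o \otimes L^\o,
\]
naturally in $K$ and $L$.

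The key computation is the $\op$ case, and I expect it to be the main obstacle — not because it is deep, but because it requires being careful with the Koszul signs in the differential of $K \otimes L$. Recall $d(x \otimes y) = d(x)\otimes y + (-1)^p x \otimes d(y)$ for $x$ of degree $p$. Applying the odd-duality sign twist in each tensor factor and then swapping the factors, one has to check that the twisted differential on $L^\op \otimes K^\op$ matches, under the swap $x \otimes y \mapsto y \otimes x$ (possibly decorated with a global sign depending on $p$ and $q$), the twist of the differential on $(K \otimes L)$. This is a bookkeeping exercise with $(-1)^p$, $(-1)^q$ and $(-1)^{pq}$ factors; the positivity submonoids match because the twist is a monoid isomorphism in each degree and the swap sends $x \otimes y$ to $y \otimes x$. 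The $\co$ case is entirely analogous with the parity of degrees shifted, and the $\o$ case is either handled directly the same way (now the twist is in all positive degrees, so the swap is \emph{not} needed — which is why the statement reads $A^\o \otimes B^\o$ and not $B^\o \otimes A^\o$) or deduced by composing the $\op$ and $\co$ twists, since $D_J$ for $J$ all positive integers is the composite of the odd and even dualities and these commute.

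Once the three isomorphisms are established in $\Cda$, restricting to complexes de Steiner forts is immediate: by Proposition~\ref{prop:tens_Steiner} the tensor product of such complexes is again one, and the duality twists visibly preserve the property of admitting a unitary, strongly loop-free basis (they act bijectively on the basis and at worst reverse the order $\leN$, which remains an order). Finally I would transport to $\ooCat$. Every \oo-catégorie is a colimit of objects $\nu(K)$ with $K$ a complexe de Steiner fort (indeed the orientaux and more generally the representable \oo-catégories $\Dn{n}$ are of this form, and both $\otimes$ and the $D_J$ preserve colimits — the former because it is bifermé by Theorem~\ref{thm:prod_tens}, the latter because $D_J$ is an involutive self-equivalence of $\ooCat$). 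Since $\nu$ restricted to complexes de Steiner forts is fully faithful (Theorem~\ref{thm:Steiner}) and monoidal by the defining property in Theorem~\ref{thm:prod_tens}, and since it intertwines the chain-level duality twists with the functors $D_J$, the isomorphisms pass to $\ooCat$ on the full subcategory of Steiner \oo-categories and then extend to all of $\ooCat$ by the density/colimit argument, naturally in $A$ and $B$. This completes the proof.
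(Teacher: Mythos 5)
The paper itself gives no argument for this statement (its proof is the reference \cite[proposition A.22]{AraMaltsiJoint}), so your proposal has to stand on its own, and it contains a genuine gap. The chain-level part is fine: the three isomorphisms do hold for arbitrary complexes dirigés augmentés by a direct sign check (indeed the unsigned swap $x \otimes y \mapsto y \otimes x$ works for $\op$ and $\co$, and the identity map works for the total dual). The problem is the sentence asserting that the dualities ``visibly preserve'' strong Steiner complexes because they ``at worst reverse the order $\leN$''. That is true only for the total dual. For the odd (resp.\ even) duality the differential changes sign only in odd (resp.\ even) degrees, so among the generating relations of $\leN$ only those attached to basis elements of odd (resp.\ even) degree are reversed; the resulting preorder is neither $\leN$ nor its opposite, and antisymmetry can genuinely fail. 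It does fail: let $C$ be the free $2$-category with objects $0,1,2,3$, arrows $a_1, b_1 : 1 \to 2$, $a_2 : 0 \to 1$, $b_2 : 2 \to 3$, $c : 1 \to 3$, $c' : 0 \to 2$, and $2$-cells $u_1 : a_1 \Rightarrow b_1$, $u_2 : c\,a_2 \Rightarrow b_2\,c'$, and let $K = \lambda(C)$, so $d(u_1) = b_1 - a_1$ and $d(u_2) = c' + b_2 - a_2 - c$. Then $K$ is a strong Steiner complex (its basis is unitary, and the total order $0 < a_2 < 1 < a_1 < u_1 < b_1 < c < u_2 < c' < 2 < b_2 < 3$ makes every generating relation of $\leN$ strict, hence $\leN$ is an order), but in $K^{\op}$ the relation $\leN$ contains the cycle $2 \leN a_1 \leN u_1 \leN b_1 \leN 1 \leN a_2 \leN u_2 \leN b_2 \leN 2$, because the degree-$1$ generators flip while the degree-$2$ ones do not. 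Since the basis of a complex is unique, $K^{\op}$ is not a strong Steiner complex. Consequently your transport step, which applies the monoidality of $\nu$ on strong Steiner complexes to $K^{\op}$ and $L^{\op}$ in order to identify $\nu(L^{\op} \otimes K^{\op})$ with $\nu(L)^{\op} \otimes \nu(K)^{\op}$, is not justified as written (the same objection applies to the $\co$ case).

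The strategy can be repaired, but it needs more input than you invoke: one must run the comparison over a family of objects that is simultaneously dense in $\ooCat$, stable (up to isomorphism) under the dualities $D_J$, and whose associated complexes are strong Steiner --- for instance the globular sums (the objects of Joyal's category $\Theta$) --- and then conclude using cocontinuity in each variable of $\otimes$ and of the $D_J$. Two further cautions. First, the globes alone are not dense in $\ooCat$ (no more than the full subcategory on $[0]$ and $[1]$ is dense in $\Cat$), and ``every $\infty$-category is an iterated colimit of objects $\nu(K)$'' is weaker than the density actually needed to extend a natural isomorphism; the extension step should be phrased via a genuinely dense subcategory, or via the uniqueness clause of the theorem~\ref{thm:prod_tens} applied to the transported monoidal structure $(A,B) \mapsto (B^{\op} \otimes A^{\op})^{\op}$. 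Second, a small sign point: the reason no factor swap appears in the $\o$ case is not that ``the twist is in all positive degrees, so the swap is not needed'', but that the signs cancel so that the identity of the underlying graded group is already a chain isomorphism $(K \otimes L)^{\o} \simeq K^{\o} \otimes L^{\o}$.
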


\begin{proof}
  Voir par exemple \cite[proposition A.22]{AraMaltsiJoint}.
\end{proof}

\begin{proposition}\label{prop:tens_dual}
  Soient $A$ et $B$ deux \oo-catégories. On a des isomorphismes canoniques
  \[
    \HomOpLax(A, B)^\op \simeq \HomLax(A^\op, B^\op),
    \quad
    \HomOpLax(A, B)^\co \simeq \HomLax(A^\co, B^\co)
  \]
  et
  \[
    \HomOpLax(A, B)^\o \simeq \HomOpLax(A^\o, B^\o),
  \]
  naturels en $A$ et $B$.
\end{proposition}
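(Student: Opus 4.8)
Here is how I would prove Proposition~\ref{prop:tens_dual}.

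The plan is to derive all three isomorphisms from the two Hom--tensor adjunctions attached to the bifermé monoidal structure of Theorem~\ref{thm:prod_tens}, from the compatibility of $\otimes$ with the dualities recorded in the preceding proposition, and from the fact that each duality $D_J$ of paragraph~\ref{paragr:dual_ooCat} is an involutive automorphism of $\ooCat$; the Yoneda lemma is the glue. Since the three cases are obtained by one and the same computation, I would carry it out in detail only for the first isomorphism, $\HomOpLax(A, B)^\op \simeq \HomLax(A^\op, B^\op)$, and indicate the minor variations for the other two.

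For the first one, I would fix an arbitrary \oo-category $T$ and produce a chain of bijections, natural in $T$, from $\Hom_{\ooCat}(T, \HomOpLax(A, B)^\op)$ to $\Hom_{\ooCat}(T, \HomLax(A^\op, B^\op))$, as follows. Since $(-)^\op$ is an involution of $\ooCat$, one has $\Hom_{\ooCat}(T, \HomOpLax(A, B)^\op) \simeq \Hom_{\ooCat}(T^\op, \HomOpLax(A, B))$. The oplax adjunction turns the right-hand side into $\Hom_{\ooCat}(T^\op \otimes A, B)$. Applying the involution $(-)^\op$ once more, and then $(X \otimes Y)^\op \simeq Y^\op \otimes X^\op$ together with $(T^\op)^\op = T$, one gets $\Hom_{\ooCat}((T^\op \otimes A)^\op, B^\op) \simeq \Hom_{\ooCat}(A^\op \otimes T, B^\op)$. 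Finally the lax adjunction identifies this last group with $\Hom_{\ooCat}(T, \HomLax(A^\op, B^\op))$. The composite bijection is natural in $T$, so by Yoneda it is induced by an isomorphism $\HomOpLax(A, B)^\op \simeq \HomLax(A^\op, B^\op)$; naturality of each step in $A$ and $B$ then yields naturality of this isomorphism in $A$ and $B$.

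For the second isomorphism I would repeat the argument verbatim with $(-)^\op$ replaced by $(-)^\co$, using $(X \otimes Y)^\co \simeq Y^\co \otimes X^\co$; once more the transposition of the two tensor factors is precisely what exchanges $\HomOpLax$ and $\HomLax$. For the third, I would run the same chain with the total dual $(-)^\o$: the point is that $(X \otimes Y)^\o \simeq X^\o \otimes Y^\o$ involves \emph{no} transposition of factors, so that the two applications of $(-)^\o$ leave us with $\Hom_{\ooCat}(T \otimes A^\o, B^\o) \simeq \Hom_{\ooCat}(T, \HomOpLax(A^\o, B^\o))$, an oplax Hom on the right, which is exactly the asymmetry in the statement. (Alternatively, once the $\op$ and $\o$ cases are known, the $\co$ case also follows formally from the identity $(-)^\co \simeq ((-)^\op)^\o$, the odd and the even-positive dualities being commuting involutions whose composite is the total dual; but I would prefer the uniform direct argument.)

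I do not expect a genuine obstacle here: the only point demanding care is bookkeeping --- checking that each bijection in the chain is natural in the test object $T$, so that Yoneda applies cleanly, and natural in $A$ and $B$, so that the resulting isomorphism is natural --- which is immediate from the naturality already recorded for the Hom--tensor adjunctions and for the duality/tensor compatibilities. One should also fix the coherence convention for the associativity constraints of $\otimes$ so that $(T^\op \otimes A)^\op$ is identified with $A^\op \otimes T$ on the nose; this is harmless, but worth spelling out.
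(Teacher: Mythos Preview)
Your argument is correct and is exactly the standard Yoneda-plus-adjunction computation one expects here. The paper itself does not give a proof but simply refers to \cite[proposition A.23]{AraMaltsiJoint}; your derivation is essentially what that reference contains, so there is nothing to add.
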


\begin{proof}
  Voir par exemple \cite[proposition A.23]{AraMaltsiJoint}.
\end{proof}

Passons maintenant à des rappels sur les transformations oplax. Commençons
par introduire quelques notations.

\begin{paragraph}\label{paragr:def_Dn}
  Pour $i \ge 0$, on notera $\Dn{i}$ la \oo-catégorie coreprésentant le
  foncteur associant à une \oo-catégorie $C$ l'ensemble $C_i$ de ses
  $i$-cellules. On a donc une bijection naturelle
  \[ \Hom_{\ooCat}(\Dn{i}, C) \simeq C_i. \]
  La \oo-catégorie $\Dn{i}$ est en fait une $i$-catégorie. Elle possède une
  unique $i$-cellule n'étant pas une identité qu'on appellera sa
  \ndef{cellule principale}. Pour $k$ tel que $0 \le k < i$, elle admet
  exactement deux $i$-cellules qui ne sont pas des identités ; ces cellules
  sont la source et le but itérés en dimension $k$ de la cellule principale.

  Pour $i > 0$, on notera $\sigma$ et
  $\tau$ les \oo-foncteurs de $\Dn{i-1}$ vers $\Dn{i}$ qui correspondent
  respectivement à la source et au but de la cellule principale de $\Dn{i}$.
  De même, si $i \ge 0$, on notera $\kappa$ le \oo-foncteur de $\Dn{i+1}$
  vers $\Dn{i}$ correspondant à l'identité de la cellule principale de
  $\Dn{i}$.
\end{paragraph}

\begin{paragraph}\label{paragr:trans_oplax}
  Soient $A$ et $B$ deux \oo-catégories. Pour $i \ge 0$, les $i$-cellules de
  $\HomOpLax(A, B)$ seront appelées des \ndef{$i$-transformation oplax} de
  $0$-source $A$ et de $0$-but $B$. En vertu de la bijection canonique
  \[
    \Hom_{\ooCat}(\Dn{i}, \HomOpLax(A, B)) \simeq \Hom_{\ooCat}(\Dn{i}
    \otimes A, B),
  \]
  une $i$-transformation oplax s'identifie à un \oo-foncteur $\Dn{i}
  \otimes A \to B$. Ainsi, pour $i = 0$, en vertu de l'isomorphisme $\Dn{0}
  \otimes A \simeq A$, une $0$-transformation oplax n'est~autre qu'un
  \oo-foncteur \emph{strict} $A \to B$. On appellera \ndef{transformations
  oplax} les $1$\nbd-transformations oplax. Pour $i > 0$, une
  $i$-transformation oplax $\Lambda : \Dn{i} \otimes A \to B$ a une
  $(i-1)$\nbd-transformation oplax source $s(\Lambda)$ et une
  $(i-1)$\nbd-transformation oplax but $t(\Lambda)$ obtenues en précomposant
  $\Lambda$ par $\sigma \otimes A, \tau \otimes A : \Dn{i-1} \otimes A \to
  \Dn{i} \otimes A$ respectivement, où $\sigma$ et $\tau$ désignent les
  \oo-foncteurs du paragraphe précédent.

  En particulier, si $u$ et $v$ sont deux \oo-foncteurs, une
  transformation oplax $\alpha$ de $u$ vers $v$ est un \oo-foncteur $\alpha
  : \Dn{1} \otimes A \to B$ rendant commutatif le diagramme
  \[
    \xymatrix@C=3pc{
    A \ar[dr]^u \ar[d]_{\sigma \otimes A} \\
    \Dn{1} \otimes A \ar[r]^\alpha & B \\
    A \ar[ur]_v \ar[u]^{\tau \otimes A} & \pbox{,} \\
    }
  \]
  où on a identifié $A$ et $\Dn{0} \otimes A$.

  De même, on définit une notion de \ndef{$i$-transformation lax} en remplaçant la
  \oo-catégorie $\HomOpLax(A, B)$ par la \oo-catégorie $\HomLax(A, B)$. En
  vertu de la proposition~\ref{prop:tens_dual}, les $i$\nbd-transformations lax
  peuvent se définir par dualité à partir des $i$\nbd-transformations oplax (et
  réciproquement). En particulier, si $u, v : A \to B$ sont deux
  \oo-foncteurs, une transformation lax de $u$ vers $v$ n'est rien d'autre
  qu'une transformation oplax de $v^\op$ vers $u^\op$ (ou de $u^\co$ vers
  $v^\co$).
\end{paragraph}

\begin{remark}
  La donnée d'une transformation oplax $\alpha$ entre deux \oo-foncteurs de
  $A$ vers $B$ revient à la donnée, pour toute $i$-cellule $x$ de $A$, d'une
  $(i+1)$-cellule $\alpha_x$ de $B$ avec des sources et buts prescrits et
  vérifiant des axiomes de compatibilités aux identités et compositions.
  C'est cette définition concrète qui est utilisée dans
  \cite{AraMaltsiJoint} (voir le paragraphe 1.9 pour la définition et le
  corollaire B.2.6 pour la comparaison).
\end{remark}

\begin{paragraph}\label{paragr:def_sesqui_oplax}
  Si $v : C \to D$ est un \oo-foncteur, on notera $\id{v}$ la transformation
  oplax identité de $v$ dans $\HomOpLax(C, D)$. Explicitement, elle est
  donnée par le composé
  \[ \Dn{1} \otimes C \xto{\kappa \otimes C} C \xto{\,\,v\,\,} D, \]
  où $\kappa$ désigne le \oo-foncteur du paragraphe~\ref{paragr:def_Dn} et
  où on a identifié $C$ et $\Dn{0} \otimes C$.

  Soient maintenant $v_0, v_1 : C \to D$ deux \oo-foncteurs et $\alpha$ une
  transformation oplax de~$v_0$ vers $v_1$. Si $u : B \to C$ est un
  \oo-foncteur, on notera $\alpha \comp u$ la transformation oplax de $v_0u$
  vers $v_1u$ donnée par le composé
  \[
    \Dn{1} \otimes B \xto{\Dn{1} \otimes u} \Dn{1} \otimes C
      \xto{\,\,\alpha\,\,} D.
  \]
  De même, si $w : D \to E$ est un \oo-foncteur, on notera $w \comp \alpha$
  la transformation oplax de $wv_0$ vers $wv_1$ donnée par le composé
  \[ \Dn{1} \otimes C \xto{\,\,\alpha\,\,} D \xto{\,\,w\,\,} E. \]
\end{paragraph}

\begin{paragraph}\label{paragr:img_inv_trans}
  Considérons un diagramme
  \[
    \shorthandoff{;}
    \xymatrix@R=2.5pc@C=2.5pc{
      A \ar[r]^f
      \ar@/_2.5ex/[d]_(.50){\phantom{u'}u}_{}="u"
      \ar@/^2.5ex/[d]^(.50){u'}_{}="up"
      \ar@2"u";"up"^{\alpha}
      &
      C
      \ar@/_2.5ex/[d]_(.50){\phantom{w'}w}_{}="w"
      \ar@/^2.5ex/[d]^(.50){w'}_{}="wp"
      \ar@2"w";"wp"^{\gamma}
      &
      B \ar[l]_g
      \ar@/_2.5ex/[d]_(.50){\phantom{v'}v}_{}="v"
      \ar@/^2.5ex/[d]^(.50){v'}_{}="vp"
      \ar@2"v";"vp"^{\beta}
      \\
      A' \ar[r]_{f'} & C' & B' \ar[l]^{g'}
    }
  \]
  de \oo-catégories, où $\alpha$, $\beta$ et $\gamma$ sont des
  transformations oplax de $u$ vers $u'$, de $v$ vers $v'$ et de $w$ vers
  $w'$ respectivement. On suppose le
  diagramme commutatif au sens où
  \[ \gamma \comp f = f' \comp \alpha \quadet \gamma \comp g = g' \comp
  \beta. \]
  On définit alors une transformation oplax $\alpha \times_\gamma \beta$ de
  $u \times_w v$ vers $u' \times_{w'} v'$ (qui sont deux \oo-foncteurs de $A
  \times_C B$ vers $A' \times_{C'} B'$) par le composé
  \[
    \Dn{1} \otimes (A \times_C B) \to (\Dn{1} \otimes A) \times_{\Dn{1}
    \otimes C} (\Dn{1} \otimes B) \xto{\alpha \times_\gamma \beta} A'
    \times_{C'} B',
  \]
  où la flèche de gauche est le morphisme canonique et celle de droite est
  bien définie par l'hypothèse de commutativité du diagramme.

  Un cas particulièrement important est celui où 
  \[ 
     B = B',\quad v = \id{B} = v',\quad \beta = \id{\id{B}}, \quad
     C = C',\quad w = \id{C} = w',\quad \gamma = \id{\id{C}},
  \]
  c'est-à-dire celui d'un diagramme
  \[
    \shorthandoff{;:}
    \xymatrix@C=1.5pc@R=3pc{
      & A \ar@/^2ex/[rr]^(.50){u}_{}="1" \ar@/_2ex/[rr]_(.50){u'}_{}="0"
      \ar[dr]_{}="f"_{\phantom{f'}f}
      \ar@2"1";"0"_{\alpha\,}
      & & A' \ar[dl]^{f'} \\
      B \ar[rr]^g && C & \pbox{,}
      }
  \]
  commutatif au sens où $f' \comp \alpha = \id{f}$. Dans ce cas, on obtient
  une transformation oplax qu'on notera plus simplement $\alpha \times_C B$
  de $u \times_C B$ vers $u' \times_C B$.
\end{paragraph}

\begin{paragraph}\label{paragr:def_comp_trans}
  Soient $u_0, u_1, u_2 : C \to D$ trois \oo-foncteurs et soient $\alpha :
  u_0 \tod u_1$ et $\beta : u_1 \tod u_2$ deux transformations oplax. On
  définit une transformation oplax $\beta \alpha : u_0 \tod u_2$ de la
  manière suivante. Par définition, les transformations oplax~$\alpha$ et
  $\beta$ sont des $1$-cellules de $\HomOpLax(C, D)$, la première de source
  $u_0$ et de but $u_1$, et la seconde de source $u_1$ et de but $u_2$. La
  composition des $1$-cellules de~$\HomOpLax(C, D)$ fournit une $1$-cellule
  de $u_0$ vers $u_2$ qui par définition est la transformation oplax~$\beta
  \alpha : u_0 \tod u_2$.
\end{paragraph}

\begin{paragraph}\label{paragr:def_ooCatOpLax}
  On se gardera de croire que les \oo-catégories, les \oo-foncteurs et les
  transformations oplax munis des opérations de composition définies dans
  les paragraphes~\ref{paragr:def_sesqui_oplax}
  et~\ref{paragr:def_comp_trans} forment une $2$-catégorie. Ils forment
  néanmoins une sesquicatégorie (voir la définition~\ref{paragr:def_sesqui})
  que l'on notera $\ooCatOpLax$. On renvoie à l'appendice~\ref{app:tr_comma}
  pour plus de détails et notamment à l'exemple~\ref{ex:OpLaxGray} et à la
  remarque~\ref{rem:sesqui_sous-jacente} pour une justification du fait que
  $\ooCatOpLax$ est bien une sesquicatégorie.
\end{paragraph}

Le but des paragraphes qui suivent est d'expliciter le lien entre les
notions de transformation oplax et de transformation stricte.

\begin{paragraph}\label{paragr:def_q}
  Soient $A$ et $B$ deux \oo-catégories. On dispose d'un \oo-foncteur
  canonique $q_1 : A \otimes B \to A$ défini par le composé
  \[ A \otimes B \xto{A \otimes p} A \otimes \Dn{0} \xto{\,\sim\,} A, \]
  où $p$ désigne l'unique \oo-foncteur de $B$ vers $\Dn{0}$. De même, on
  dispose d'un \oo-foncteur canonique $q_2 : A \otimes B \to B$. On obtient
  donc un \oo-foncteur
  \[ q = (q_1, q_2) : A \otimes B \to A \times B. \]
  Les \oo-foncteurs $q_1$ et $q_2$ étant naturels en $A$ et $B$, il en est
  de même de $q$.
\end{paragraph}

\begin{proposition}
  Soient $A$ et $B$ deux \oo-catégories. Le \oo-foncteur
  \[ q : A \otimes B \to A \times B \]
  est surjectif sur les cellules et est en particulier un épimorphisme.
\end{proposition}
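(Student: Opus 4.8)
Un \oo-foncteur étant déterminé par son action sur les cellules, un \oo-foncteur surjectif sur les cellules est automatiquement un épimorphisme ; le plan est donc simplement d'exhiber, pour tout $i \ge 0$ et toute $i$-cellule de $A \times B$ — c'est-à-dire tout couple $(a, b)$ formé d'une $i$-cellule $a$ de $A$ et d'une $i$-cellule $b$ de $B$ — une $i$-cellule $c$ de $A \otimes B$ telle que $q_1(c) = a$ et $q_2(c) = b$.

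Identifions les cellules aux \oo-foncteurs qu'elles classifient (via $\Hom_{\ooCat}(\Dn{k}, {-}) \simeq ({-})_k$) et notons $s_0$ et $t_0$ la $0$-source et le $0$-but itérés. L'idée est de généraliser le « découpage » du carré de Gray : à l'aide de l'isomorphisme d'unité $\Dn{i} \otimes \Dn{0} \simeq \Dn{i} \simeq \Dn{0} \otimes \Dn{i}$, on voit
\[
  c_1 = a \otimes s_0(b)
  \quadet
  c_2 = t_0(a) \otimes b
\]
comme deux $i$-cellules de $A \otimes B$. Par bifonctorialité du produit tensoriel, $t_0(c_1)$ et $s_0(c_2)$ valent tous deux la $0$-cellule $t_0(a) \otimes s_0(b)$ de $A \otimes B$, de sorte que la $i$-cellule $c = c_2 \comp_0 c_1$ est bien définie (pour $i = 0$, on prend simplement $c = a \otimes b$).

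Il resterait à calculer $q(c)$. La naturalité de $q_1$ en $A$ et en $B$ donne $q_1 \circ (f \otimes g) = f \circ q_1$, et celle de $q_2$ donne $q_2 \circ (f \otimes g) = g \circ q_2$, pour tous \oo-foncteurs $f$ et $g$ ; par ailleurs, directement d'après la définition de $q_1$ et $q_2$, le \oo-foncteur $q_1 \colon \Dn{i} \otimes \Dn{0} \to \Dn{i}$ est l'isomorphisme d'unité tandis que $q_2 \colon \Dn{i} \otimes \Dn{0} \to \Dn{0}$ est l'unique \oo-foncteur de but $\Dn{0}$ (et symétriquement pour $\Dn{0} \otimes \Dn{i}$). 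On en déduirait
\[
  q_1(c_1) = a, \quad q_2(c_1) = 1^{i}_{s_0(b)}, \quad
  q_1(c_2) = 1^{i}_{t_0(a)}, \quad q_2(c_2) = b,
\]
où $1^{i}_{z}$ désigne l'identité itérée d'une $0$-cellule $z$ vue comme $i$-cellule, puis, $q_1$ et $q_2$ étant des \oo-foncteurs,
\[
  q_1(c) = 1^{i}_{t_0(a)} \comp_0 a = a
  \quadet
  q_2(c) = b \comp_0 1^{i}_{s_0(b)} = b
\]
par les axiomes d'unité de $\comp_0$. Ainsi $q(c) = (a, b)$, ce qui conclut.

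Il n'y a pas d'obstacle véritable dans cet argument : les seuls points demandant un peu d'attention sont l'identification précise des isomorphismes d'unité mis en jeu et la vérification que le composé $c_2 \comp_0 c_1$ est bien formé (ce qui est exactement le calcul de $t_0(c_1) = s_0(c_2)$ ci-dessus). On notera enfin que, contrairement à nombre d'énoncés sur le produit tensoriel, cette preuve ne fait aucun appel à la théorie de Steiner.
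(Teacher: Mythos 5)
Votre démonstration est correcte et suit pour l'essentiel le même chemin que celle de l'article : on relève d'abord les cellules de la forme $(x, \id{b})$ et $(\id{a}, y)$ grâce au fait que $\Dn{0}$ est l'unité du produit tensoriel comme du produit cartésien (et à la naturalité de $q$, ou de ses composantes $q_1$, $q_2$), puis on recolle par une composition $\comp_0$. La seule différence est anecdotique : vous utilisez la décomposition $(t_0(a) \otimes b) \comp_0 (a \otimes s_0(b))$ alors que l'article prend l'autre découpage « à la Godement » $(a \otimes t_0(b)) \comp_0 (s_0(a) \otimes b)$, les deux fonctionnant à l'identique.
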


\begin{proof}
  Fixons $i \ge 0$. Soient $a$ une $0$-cellule de $A$ et $y$ une $i$-cellule
  de $B$.  Considérons le carré de naturalité
  \[
    \xymatrix{
      \Dn{0} \otimes \Dn{i} \ar[r]^q \ar[d]_{a \otimes y} &
      \Dn{0} \times \Dn{i} \ar[d]^{a \times y} \\
      A \otimes B \ar[r]_q & A \times B \pbox{.}
    }
  \]
  Il résulte du fait que $\Dn{0}$ est à la fois l'unité du produit cartésien
  et du produit tensoriel que la flèche horizontale du haut de ce carré est un
  isomorphisme. Ainsi, la $i$-cellule~$(\id{a}, y)$ de $A \times B$,
  où $\id{a}$ désigne l'identité itérée de $a$ en dimension $i$, est
  atteinte par le \oo-foncteur $q$. Plus précisément, la $i$-cellule de $A
  \otimes B$ correspondant au \oo-foncteur $a \otimes y$, cellule que l'on
  notera également $a \otimes y$, est envoyée sur~$(\id{a}, y)$ par~$q$. En
  particulier, le \oo-foncteur $q : A \otimes B \to A \times B$ est
  surjectif sur les $0$-cellules. On montre de même que si $x$ est une
  $i$-cellule de $A$ et $b$ est une $0$-cellule de $B$, alors $q$ envoie $x
  \otimes b$ sur $(x, \id{b})$. Or, si $(x, y)$ est une $i$-cellule
  de $A \times B$ avec $i > 0$, on a
  \[ (x, y) = (x, \id{t_0(y)}) \comp_0 (\id{s_0(x)}, y), \]
  où $s_0$ et $t_0$ désignent les sources et buts en dimension $0$.
  Puisque
  \[ s_0(x \otimes t_0(y)) = s_0(x) \otimes t_0(y) = t_0(s_0(x) \otimes y), \]
  on dispose d'une $i$-cellule $(x \otimes t_0(y)) \comp_0 (s_0(x) \otimes y)$
  dans $A \otimes B$. En vertu des considérations précédentes, cette cellule
  est envoyée par $q$ sur $(x, \id{t_0(y)}) \comp_0 (\id{s_0(x)}, y)$,
  c'est-à-dire sur~$(x, y)$, ce qu'il fallait démontrer.
\end{proof}

\begin{paragraph}
  Rappelons que la catégorie $\ooCat$ des \oo-catégories est cartésienne
  fermée. Si $B$ et $C$ sont deux \oo-catégories, on notera $\HomStr(B, C)$
  le $\Hom$ interne associé. Par définition, si $A$ est une troisième
  \oo-catégorie, on a une bijection
  \[
    \Hom_{\ooCat}(A \times B, C) \simeq \Hom_{\ooCat}(A, \HomStr(B, C)),
  \]
  naturelle en $A$, $B$ et $C$.

  Soient $A$ et $B$ deux \oo-catégories. On déduit du \oo-foncteur
  naturel $q$ du paragraphe~\ref{paragr:def_q} un \oo-foncteur
  $i : \HomStr(A, B) \to \HomOpLax(A, B)$ naturel en $A$ et $B$. En effet,
  si $T$ est une troisième \oo-catégorie, en vertu du lemme de Yoneda, il
  suffit de définir une application naturelle
  \[
    \Hom_{\ooCat}(T, \HomStr(A, B)) \to \Hom_{\ooCat}(T, \HomOpLax(A, B)),
  \]
  c'est-à-dire une application naturelle
  \[
    \Hom_{\ooCat}(T \times A, B) \to \Hom_{\ooCat}(T \otimes A, B).
  \]
  Or, le \oo-foncteur $q : T \otimes A \to T \times A$ induit bien une telle
  application. Par ailleurs, puisque $q$ est un épimorphisme en vertu de
  la proposition précédente, le \oo-foncteur~$i$ est un monomorphisme.

  De même, on a un monomorphisme canonique $j : \HomStr(A, B) \to \HomLax(A,
  B),$ naturel en $A$ et $B$ dans $\ooCat$.
\end{paragraph}

\begin{paragraph}\label{paragr:trans_str}
  Soient $A$ et $B$ deux \oo-catégories. Pour tout $i \ge 0$, on définit comme
  dans le paragraphe~\ref{paragr:trans_oplax} une notion de
  \ndef{$i$-transformation stricte} de $0$-source~$A$ et de $0$-but $B$ en
  remplaçant $\HomOpLax(A, B)$ par $\HomStr(A, B)$. Le monomorphisme
  $\HomStr(A, B) \to \HomOpLax(A, B)$ défini au paragraphe précédent permet
  de considérer toute $i$-transformation stricte comme une
  $i$-transformation oplax. En particulier, si $u, v : A \to B$ sont deux
  \oo-foncteurs, une transformation stricte de $u$ vers $v$ est un
  \oo-foncteur $h : \Dn{1} \times A \to B$ rendant commutatif le diagramme
  \[
    \xymatrix@C=3pc{
    A \ar[dr]^u \ar[d]_{\sigma \times A} \\
    \Dn{1} \times A \ar[r]^h & B \\
    A \ar[ur]_v \ar[u]^{\tau \times A} & \pbox{,} \\
    }
  \]
  où on a identifié $A$ et $\Dn{0} \times A$, et la transformation oplax
  associée est donnée par le composé
  \[ \Dn{1} \otimes A \xto{\,q\,} \Dn{1} \times A \xto{\,h\,} B, \]
  où $q$ est le \oo-foncteur du paragraphe~\ref{paragr:def_q}.
\end{paragraph}

Nous terminons ces rappels liés au produit tensoriel par une proposition, de
nature technique, qui n'interviendra que dans l'appendice~\ref{app:tr_comma}.

\begin{prop}\label{prop:tens_morph}
  Soient $f : K \to K'$ et $g : L \to L'$ des morphismes entre complexes
  de Steiner forts et soient $x \otimes y$ un élément de la
  base de~$K \otimes L$. Supposons qu'il existe un élément $x' \otimes y'$
  de la base de $K' \otimes L'$ tel que
  \[
    \nu(f)(\atom{x}) = \id{\atom{x'}}
    \quadet
    \nu(f)(\atom{y}) = \id{\atom{y'}},
  \]
  où $\id{}$ désigne une identité itérée (éventuellement $0$ fois). Alors on
  a
  \[ \nu(f \otimes g)(\atom{x \otimes y}) = \id{\atom{x' \otimes y'}}. \]
  En particulier, lorsque $x' = f(x)$ et $y' = g(y)$ vérifient les
  hypothèses ci-dessus, on a
  \[ \nu(f \otimes g)(\atom{x \otimes y}) = \atom{f(x) \otimes g(y)}. \]
\end{prop}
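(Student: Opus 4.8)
Le plan consiste à ramener l'énoncé à deux compatibilités : celle de la construction des atomes $\atom{-}$ avec la structure monoïdale de $\nu$ restreint aux complexes de Steiner forts, et celle du produit tensoriel de cellules avec les identités itérées. Rappelons qu'en vertu du théorème~\ref{thm:prod_tens} et de la proposition~\ref{prop:tens_Steiner}, la restriction de $\nu$ à la sous-catégorie pleine de $\Cda$ formée des complexes de Steiner forts est un foncteur monoïdal ; on dispose donc, pour $K$ et $L$ des complexes de Steiner forts, d'un isomorphisme $\phi_{K, L} : \nu(K) \otimes \nu(L) \xrightarrow{\sim} \nu(K \otimes L)$, naturel en $K$ et $L$. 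Par ailleurs, pour deux \oo-catégories quelconques $A$ et $B$, le produit tensoriel de Gray est muni d'un produit tensoriel de cellules : à une $p$\nbd-cellule $a$ de $A$ et une $q$\nbd-cellule $b$ de $B$, il associe une $(p + q)$\nbd-cellule $a \otimes b$ de $A \otimes B$, cette opération étant naturelle en $A$ et $B$ et vérifiant $\id{a} \otimes b = \id{a \otimes b} = a \otimes \id{b}$ pour toutes cellules $a$, $b$, donc envoyant identités itérées sur identités itérées. L'ingrédient clé, que l'on citera de~\cite{AraMaltsiJoint} ou que l'on extraira de la construction sous-jacente au théorème~\ref{thm:prod_tens}, est l'égalité
\[ \atom{x \otimes y} = \phi_{K, L}(\atom{x} \otimes \atom{y}), \]
pour $x$ un élément de la base de $K$ de degré $p$ et $y$ un élément de la base de $L$ de degré $q$ : autrement dit, $\atom{x \otimes y}$ s'identifie, \emph{via} $\phi_{K, L}$, au produit tensoriel de cellules des atomes $\atom{x}$ et $\atom{y}$.

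Pour établir cette égalité, on peut se ramener, grâce à l'adjonction $\lambda \dashv \nu$ et à la pleine fidélité de $\nu$ sur les complexes de Steiner forts (théorème~\ref{thm:Steiner}), au cas universel $K = \lambda(\Dn{p})$ et $L = \lambda(\Dn{q})$. En effet, l'atome $\atom{x}$, vu comme $p$\nbd-cellule de $\nu(K)$, est classifié par un \oo-foncteur $\Dn{p} \to \nu(K)$, d'où par adjonction un morphisme $\hat{x} : \lambda(\Dn{p}) \to K$ qui envoie l'élément de base de degré $p$ de $\lambda(\Dn{p})$ sur $x$, et de même un morphisme $\hat{y} : \lambda(\Dn{q}) \to L$. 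On vérifie alors que $\hat{x} \otimes \hat{y}$ envoie l'élément de base de degré $p + q$ de $\lambda(\Dn{p}) \otimes \lambda(\Dn{q})$ sur $x \otimes y$ et que, \emph{via} les isomorphismes de monoïdalité, $\nu(\hat{x} \otimes \hat{y})$ envoie la cellule principale de $\Dn{p} \otimes \Dn{q}$ sur $\atom{x \otimes y}$ ; par construction du produit tensoriel de cellules, ceci fournit l'égalité voulue.

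Cela admis, la proposition résulte d'un calcul direct. Identifions, \emph{via} les isomorphismes $\phi$, la \oo-catégorie $\nu(K \otimes L)$ à $\nu(K) \otimes \nu(L)$, et de même pour $K'$ et $L'$ ; alors $\atom{x \otimes y}$ s'identifie à $\atom{x} \otimes \atom{y}$ et, par naturalité de $\phi$, le \oo-foncteur $\nu(f \otimes g)$ à $\nu(f) \otimes \nu(g)$. En invoquant la naturalité du produit tensoriel de cellules par rapport aux \oo-foncteurs, on obtient
\[
  \nu(f \otimes g)(\atom{x \otimes y}) = \bigl(\nu(f) \otimes \nu(g)\bigr)(\atom{x} \otimes \atom{y}) = \nu(f)(\atom{x}) \otimes \nu(g)(\atom{y}) = \id{\atom{x'}} \otimes \id{\atom{y'}},
\]
la dernière égalité résultant des hypothèses sur $f$ et $g$. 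Comme le produit tensoriel de cellules envoie les identités itérées sur des identités itérées, ce dernier terme vaut $\id{\atom{x'} \otimes \atom{y'}}$ (identité itérée $(p - \deg x') + (q - \deg y')$ fois) ; l'égalité $\atom{x' \otimes y'} = \phi_{K', L'}(\atom{x'} \otimes \atom{y'})$, c'est-à-dire notre ingrédient clé appliqué à $x'$ et $y'$, donne alors $\nu(f \otimes g)(\atom{x \otimes y}) = \id{\atom{x' \otimes y'}}$, ce qu'on voulait. Pour le cas particulier, si $x' = f(x)$ et $y' = g(y)$ sont des éléments de base, alors $\deg x' = \deg x = p$ et $\deg y' = \deg y = q$, de sorte que les identités itérées en jeu le sont $0$ fois : les hypothèses se réécrivent $\nu(f)(\atom{x}) = \atom{f(x)}$ et $\nu(g)(\atom{y}) = \atom{g(y)}$, et la conclusion devient $\nu(f \otimes g)(\atom{x \otimes y}) = \atom{f(x) \otimes g(y)}$.

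Le point délicat est l'égalité $\atom{x \otimes y} = \phi_{K, L}(\atom{x} \otimes \atom{y})$ : tout le reste est formel une fois le produit tensoriel de cellules et ses compatibilités avec $\nu$ et avec les identités en main. Dans l'approche par le cas universel, la difficulté est de vérifier que $\phi$ est compatible à la construction des atomes, ce qui revient à expliciter $\phi$ sur les complexes de Steiner forts de la forme $\lambda(\Dn{p}) \otimes \lambda(\Dn{q})$ ; une alternative plus calculatoire consiste en une récurrence sur $p + q$ fondée sur la description récursive explicite de $\atom{-}$, la formule de Leibniz $d(x \otimes y) = d(x) \otimes y + (-1)^{p}\, x \otimes d(y)$ et le calcul des parties positive et négative qu'elle fournit, mais celle-ci est sensiblement plus laborieuse.
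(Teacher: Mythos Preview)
The paper does not prove this proposition here: it simply cites \cite[proposition A.7]{AraMaltsiJoint}. So there is no ``paper's own proof'' to compare against directly, and your attempt to supply an actual argument is welcome. That said, there is a genuine circularity in your main line of attack.

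You invoke a ``produit tensoriel de cellules'' on arbitrary \oo-catégories, asserting in particular that $\id{a} \otimes b = \id{a \otimes b} = a \otimes \id{b}$. But in the framework of this paper, the tensor of cells is defined precisely via the principal cell $\atom{c_i \otimes c_j}$ of $\Dn{i} \otimes \Dn{j}$ (paragraphe~\ref{paragr:def_can_Dij}), and the identity compatibility you need reduces to showing that $(\kappa \otimes \Dn{j})(\atom{c_{i+1} \otimes c_j}) = \id{\atom{c_i \otimes c_j}}$. This is exactly an instance of the proposition you are trying to prove, and indeed the paper invokes \emph{this very proposition} to establish such facts later (see the proofs of propositions~\ref{prop:circ_assoc}, \ref{prop:circ_ident} and~\ref{prop:contr_Gray_str}). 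So your argument, as written, presupposes what it sets out to show.

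The honest route is the one you relegate to a closing remark: a direct computation with the recursive definition of $\atom{-}$, the Leibniz formula for $d(x \otimes y)$, and the identification of positive and negative parts. That is almost certainly what \cite[proposition A.7]{AraMaltsiJoint} does, and it is not avoidable here: the proposition is foundational for the tensor-of-cells formalism, not a consequence of it. Your high-level reformulation is conceptually right and would make a good \emph{corollary} once the computation is in hand, but it cannot replace it.
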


\begin{proof}
  Voir \cite[proposition A.7]{AraMaltsiJoint}.
\end{proof}

Passons maintenant à des rappels sur le joint \oo-catégorique introduit dans
\cite{AraMaltsiJoint}.

\begin{paragraph}
  Soient $K$ et $L$ deux complexes dirigés augmentés. On définit leur
  \ndef{joint} $K \joint L$ de la manière suivante.
  Pour tout $r \ge 0$, on pose
  \[
   (K \joint L)_r = \bigoplus_{\substack{p + 1 + q = r\\p \ge -1,\,q \ge -1}}
   K_p \otimes L_q,
  \]
  où on convient que $K_{-1} = \Z$ et $L_{-1} = \Z$. On notera $\vide$ le
  générateur positif de $K_{-1}$ et $L_{-1}$. Par ailleurs, si $x$ est dans
  $K_p$ et $y$ dans $L_q$ avec $q + 1 + p \ge 0$, l'élément correspondant de
  $K \joint L$ sera noté $x \joint y$. Avec ces notations, la différentielle
  de $K \joint L$ est définie par, pour $x \joint y$ dans $K_p \otimes L_q$
  avec $p + 1 + q > 0$,
  \[
  d(x \joint y) = d(x) \joint y + (-1)^{p+1} x \joint d(y),
  \]
  où on convient, d'une part, que $d(z) = e(z)\vide$ lorsque $z$ est de
  degré $0$ et, d'autre part, que $d(\vide) = 0$.  L'augmentation, pour $x$
  dans $K_0$ et $y$ dans $L_0$, est définie par
  \[ e(x \joint \vide) = e(x) \quadet e(\vide \joint y) = e(y). \]
  Enfin, pour $r \ge 0$, le sous-monoïde de positivité $(K \joint L)^\ast_r$ est
  le sous-monoïde de~$(K \joint L)_r$ engendré par les éléments de la forme
  $x \joint y$, avec $x$ dans $K^\ast_p$, $y$ dans~$L^\ast_q$ et $p + 1
  + q = r$, en convenant que $K^\ast_{-1} = \N$ et $L^\ast_{-1} = \N$.
\end{paragraph}

\begin{proposition}\label{prop:joint_Steiner}
  Si $K$ et $L$ sont des complexes de Steiner forts admettant $X$ et $Y$
  pour bases respectives, alors $K \joint L$ est un complexe de Steiner
  fort admettant
  \[
    X \joint Y = 
      \{x \joint \vide \mid x \in X\} \cup
      \{\vide \joint y \mid y \in Y\} \cup
      \{ x \joint y \mid x \in X,\, y \in Y\}
  \]
  pour base.
\end{proposition}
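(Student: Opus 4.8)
The plan is to verify the three conditions defining a strong Steiner complex for $K \joint L$ with the stated basis, following closely Steiner's treatment of the tensor product. This is reasonable because the join is, after a degree shift, a tensor product: with the convention $K_{-1} = L_{-1} = \Z$ one has $(K \joint L)_r = \bigoplus_{p + 1 + q = r} K_p \otimes L_q = \bigoplus_{a + b = r + 1} K_{a-1} \otimes L_{b-1}$, so that $K \joint L$ is identified with the tensor product of $K$ and $L$ seen as directed complexes concentrated in degrees $\ge 0$, shifted down by one, the degree-$(-1)$ summand $K_{-1} \otimes L_{-1} = \Z$ playing the role of the augmentation and the degree-$0$ differential of the tensor product playing the role of the augmentation of $K \joint L$. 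The first condition, that $X \joint Y$ is a basis of $K \joint L$, is then immediate: in each degree $r$ the module $(K \joint L)_r$ is a direct sum of the $K_p \otimes L_q$, a $\Z$-basis of which is given by the elements $x \joint y$, $x \joint \vide$ and $\vide \joint y$, and the positivity submonoid $(K \joint L)^\ast_r$ was defined to be the one generated by exactly these elements.

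For unitarity, I would first establish a shuffle-type formula expressing the atom $\atom{x \joint y}$ of a basis element (and the degenerate cases $\atom{x \joint \vide}$ and $\atom{\vide \joint y}$) in terms of the atoms $\atom{x}$ in $\nu(K)$ and $\atom{y}$ in $\nu(L)$. Such a formula can be transported from Steiner's formula for $\atom{x \otimes y}$ through the degree-shift dictionary above, the point being that the recursion defining the atoms uses only the differential and the decomposition $z \mapsto (z_-, z_+)$, both of which are visible on the tensor-product side. Granting the formula, the iterated $0$-source $\atom{x \joint y}^0_0$ is of the form $a \joint \vide$, where $a$ is the iterated $0$-source of $\atom{x}$ (and it is $\vide \joint b$ when $x = \vide$), so that $e(\atom{x \joint y}^0_0) = e(a) = 1$ by unitarity of the basis $X$; symmetrically $e(\atom{x \joint y}^1_0) = 1$ by unitarity of $Y$. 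Hence each $\atom{x \joint y}$ is a cell of $\nu(K \joint L)$, that is, $X \joint Y$ is unitary.

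For strong loop-freeness, I would analyse the preorder $\leN$ on $X \joint Y$. Using $d(x \joint y) = d(x) \joint y + (-1)^{p+1} x \joint d(y)$ (with $x$ in degree $p$) and the uniqueness of the decomposition of an element into its positive and negative parts, the generating instances of $\leN$ split into "first-coordinate" relations, which reproduce $\leN$ on $X$ with the second slot held fixed, and "second-coordinate" relations, which reproduce $\leN$ on $Y$ up to a parity-dependent sign twist in the second slot; no single generating relation moves both coordinates, and $\vide$ occurs only as a maximal symbol and never becomes dominated from below. Consequently $\leN$ on $X \joint Y$ has the same "twisted product order" shape as the order on a tensor product of two strong Steiner complexes, and the argument proving antisymmetry is Steiner's: it is a partial order because $\leN$ is one on $X$ and on $Y$.

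I expect the real work to lie in the two middle steps --- pinning down the shuffle formula for $\atom{x \joint y}$ with the correct signs, and carrying out the order analysis --- rather than in the conceptual setup; in both, the degree-shift dictionary reduces the problem to a controlled adaptation of the proof of Proposition~\ref{prop:tens_Steiner}, the only extra care being concentrated entirely in the bottom degree, i.e.\ around the augmentation. The complete argument, including the explicit formulas, is carried out in \cite{AraMaltsiJoint}.
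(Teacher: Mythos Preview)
The paper's own proof is simply a citation to \cite[paragraphe~6.13 et corollaire~6.21]{AraMaltsiJoint}, and your proposal likewise ends by deferring the complete argument to the same reference; in that sense the two agree. Your added sketch of the degree-shift dictionary between the join and the tensor product, and of how the three verifications (basis, unitarity, strong loop-freeness) transport from Proposition~\ref{prop:tens_Steiner}, is a faithful summary of what is actually done in the cited work, so the proposal is correct and consonant with the paper's approach.
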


\begin{proof}
  Voir \cite[paragraphe 6.13 et corollaire 6.21]{AraMaltsiJoint}.
\end{proof}

\begin{paragraph}
  Le joint définit une structure de catégorie monoïdale sur la catégorie
  $\Cda$ des complexes dirigés augmentés. L'unité est le complexe
  dirigé augmenté~$\lambda(\vide)$, où $\vide$ désigne la \oo-catégorie
  initiale. On notera également $\vide$ ce complexe dirigé augmenté. En
  vertu de la proposition précédente (et du fait évident que $\vide$ est un
  complexe de Steiner fort), cette structure se restreint à la
  sous-catégorie pleine des complexes de Steiner forts.
\end{paragraph}

\begin{theorem}
  Il existe une et une seule structure de catégorie monoïdale
  (à unique isomorphisme monoïdal près) localement bifermée (au sens du
  paragraphe~\ref{paragr:def_tr}) sur la catégorie $\ooCat$ des
  \oo-catégories telle que la restriction du foncteur $\nu : \Cda \to
  \ooCat$ à la sous-catégorie pleine de~$\Cda$ formée des complexes de
  Steiner forts munie du joint soit un foncteur monoïdal.
\end{theorem}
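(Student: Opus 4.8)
Le plan est de reprendre la stratégie de la preuve du théorème~\ref{thm:prod_tens}. Notons $\Stf$ la sous-catégorie pleine de $\Cda$ formée des complexes de Steiner forts. En vertu de la proposition~\ref{prop:joint_Steiner} (et du fait que $\vide = \lambda(\vide)$, de base vide, est un complexe de Steiner fort), le joint de $\Cda$ se restreint en une structure de catégorie monoïdale sur $\Stf$, d'unité~$\vide$. Le théorème~\ref{thm:Steiner} affirmant que $\nu$ est pleinement fidèle sur $\Stf$, on transporte cette structure le long de $\nu$ en une structure de catégorie monoïdale sur l'image essentielle $\nu(\Stf) \subseteq \ooCat$ : on pose $\nu(K) \joint \nu(L) := \nu(K \joint L)$ pour $K$ et $L$ dans $\Stf$, les contraintes d'associativité et d'unité étant les images par $\nu$ de celles de $\Cda$. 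En particulier, $\vide = \nu(\lambda(\vide))$ appartient à $\nu(\Stf)$ et en est l'unité.

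Il reste à prolonger cette structure à $\ooCat$ tout entière. Le point clé est que les globes $\Dn{i}$ — qui sont les $\nu(\lambda(\Dn{i}))$ avec $\lambda(\Dn{i})$ complexe de Steiner fort, donc des objets de $\nu(\Stf)$ — engendrent $\ooCat$ par colimites, toute \oo-catégorie étant un coégalisateur de coproduits de globes. Pour $X$ dans $\nu(\Stf)$ fixé, on prolonge alors le foncteur $Z \mapsto (X \joint Z, \iota_1)$, de $\nu(\Stf)$ vers la tranche $\cotr{\ooCat}{X}$, en un foncteur cocontinu $\ooCat \to \cotr{\ooCat}{X}$ par extension de Kan à gauche — on notera que c'est bien vers la tranche $\cotr{\ooCat}{X}$, et non vers $\ooCat$, qu'il faut demander la cocontinuité, puisque $X \joint \vide \simeq X$ n'est pas l'objet initial de $\ooCat$, conformément au formalisme du paragraphe~\ref{paragr:def_tr}. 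En procédant de même en la seconde variable et en vérifiant la compatibilité, on obtient un bifoncteur $\joint : \ooCat \times \ooCat \to \ooCat$. L'essentiel du travail, et la principale difficulté, réside dans la bonne définition de ces extensions de Kan et de leur compatibilité : cela repose sur le fait que le joint sur $\Stf$ est compatible aux morphismes entre complexes de Steiner forts qui présentent les \oo-catégories, via un argument de cofinalité, et c'est là qu'intervient le contenu combinatoire de la théorie de Steiner (théorème~\ref{thm:Steiner} et structure de $\Stf$). Les contraintes d'associativité et d'unité, étant des transformations naturelles entre bifoncteurs cocontinus au sens précédent, sont alors déterminées par leurs restrictions à $\nu(\Stf)$, où elles coïncident avec celles de $\Cda$ ; on obtient ainsi une structure de catégorie monoïdale sur $\ooCat$, d'unité $\vide$, prolongeant celle de $\nu(\Stf)$.

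La bifermeture locale est alors presque immédiate : par construction, pour toute \oo-catégorie $X$, le foncteur $\ooCat \to \cotr{\ooCat}{X}$, $Z \mapsto (X \joint Z, \iota_1)$, est cocontinu ; les catégories $\ooCat$ et $\cotr{\ooCat}{X}$ étant localement présentables, le théorème du foncteur adjoint fournit un adjoint à droite, qui est par définition le foncteur tranche $(Z, g : X \to Z) \mapsto \cotr{Z}{g}$. La catégorie monoïdale $\ooCat$ est donc localement fermée à gauche, et l'argument symétrique, faisant intervenir $\iota_2$ et la tranche $\tr{Z}{v}$, montre qu'elle est localement fermée à droite, donc localement bifermée au sens du paragraphe~\ref{paragr:def_tr}.

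Pour l'unicité, soit $(\ooCat, \joint', \vide')$ une autre structure de catégorie monoïdale localement bifermée pour laquelle $\nu|_{\Stf}$ est monoïdal. La structure de foncteur monoïdal sur $\nu|_{\Stf}$ fournit des isomorphismes $\nu(K) \joint' \nu(L) \simeq \nu(K \joint L)$, naturels en $K$ et $L$ dans $\Stf$ et compatibles aux contraintes ; comme $\nu|_{\Stf}$ est pleinement fidèle, ces isomorphismes identifient la restriction de $\joint'$ à $\nu(\Stf)$ à celle de $\joint$, et $\vide'$ à $\vide$. Par ailleurs, la bifermeture locale de $\joint'$ force, exactement comme ci-dessus, le foncteur $Z \mapsto (X \joint' Z, \iota'_1)$ à être cocontinu de $\ooCat$ vers $\cotr{\ooCat}{X}$, et de même en la seconde variable ; le fait que $\nu(\Stf)$ engendre $\ooCat$ par colimites entraîne alors que $\joint'$ et ses contraintes sont entièrement déterminés par leur restriction à $\nu(\Stf)$. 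On en déduit un unique isomorphisme monoïdal entre $(\ooCat, \joint', \vide')$ et $(\ooCat, \joint, \vide)$, ce qui établit l'unicité annoncée.
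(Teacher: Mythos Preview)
L'article ne prouve pas ce théorème : il renvoie simplement à \cite[théorème 6.29]{AraMaltsiJoint}, où la construction est menée en détail. Il n'y a donc pas de preuve dans le présent article à laquelle comparer votre proposition.

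Cela dit, votre esquisse reproduit fidèlement la stratégie de cette référence : transporter la structure monoïdale de $\Stf$ le long de $\nu$ sur son image essentielle, puis l'étendre à $\ooCat$ par cocontinuité variable par variable vers les tranches $\cotr{\ooCat}{X}$, l'existence des adjoints découlant de la présentabilité locale. Vous identifiez correctement le point délicat --- montrer que l'extension de Kan à gauche est bien définie et cocontinue, ce qui revient à vérifier que le joint sur $\Stf$ est compatible aux relations présentant les \oo-catégories comme colimites --- et vous avez raison de dire que c'est là que se concentre le travail combinatoire. Vous ne résolvez pas ce point, mais la référence non plus ne le fait pas en une ligne : c'est le contenu principal du chapitre 6 de \cite{AraMaltsiJoint}.

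Une imprécision à signaler : pour l'argument d'unicité, « engendre $\ooCat$ par colimites » ne suffit pas ; il faut que $\nu(\Stf)$ soit \emph{dense} dans $\ooCat$ pour que les foncteurs cocontinus soient déterminés par restriction. C'est bien le cas --- $\nu(\Stf)$ contient la catégorie $\Theta$ de Joyal, qui est dense --- mais les globes $\Dn{i}$ seuls, que vous mettez en avant, engendrent $\ooCat$ par colimites sans être denses (le nerf globulaire n'est pas pleinement fidèle). Votre extension de Kan et votre argument d'unicité fonctionnent parce que vous étendez depuis $\nu(\Stf)$ tout entier et non depuis les seuls globes ; il vaudrait mieux invoquer explicitement la densité plutôt que la génération.
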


\begin{proof}
  Voir \cite[théorème 6.29]{AraMaltsiJoint}.
\end{proof}

\begin{paragraph}
  On appellera \ndef{joint} le produit monoïdal
  \begin{alignat*}{2}
    \joint & : \ooCat \times \ooCat & \to \ooCat\\
    & \phantom{=1}\quad\,\,(A, B) & \mapsto A \joint B
  \end{alignat*}
  donné par le théorème précédent. L'unité de ce produit tensoriel est la
  \oo-catégorie initiale $\vide$.

  On notera, comme dans le paragraphe~\ref{paragr:def_tr}, pour $A$ et
  $B$ deux \oo-catégories,
  \[ A \xto{\iota_1} A \joint B \xot{\iota_2} B \]
  les deux \oo-foncteurs canoniques. Le fait que le joint est localement
  bifermé signifie que les foncteurs
  \[
    \begin{split}
      \ooCat & \to \cotr{\ooCat}{A} \\
      B & \mapsto (A \joint B, \iota_1 : A \to A \joint B)
    \end{split}
  \]
  et
  \[
    \begin{split}
      \ooCat & \to \cotr{\ooCat}{B} \\
      A & \mapsto (A \joint B, \iota_2 : B \to A \joint B) \pbox{,}
    \end{split}
  \]
  pour $A$ fixé pour le premier foncteur et $B$ fixé pour le second,
  admettent des adjoints à droite. On obtient donc des foncteurs
  \[
    \begin{split}
      \cotr{\ooCat}{A} & \to \ooCat \\
      (C, A \xto{v} C) & \mapsto \cotr{C}{v} \\
    \end{split}
  \]
  et
  \[
    \begin{split}
      \cotr{\ooCat}{B} & \to \ooCat \\
      (C, B \xto{w} C) & \mapsto \trm{C}{w} \\
    \end{split}
  \]
  et des bijections naturelles
  \[
    \begin{split}
      \Hom_{\cotr{\ooCat}{A}}((A \joint B, \iota_1), (C, v))
      & \simeq \Hom_{\ooCat}(B, \cotr{C}{v}), \\
      \Hom_{\cotr{\ooCat}{B}}((A \joint B, \iota_2), (C, w))
      & \simeq \Hom_{\ooCat}(A, \trm{C}{w}). \\
    \end{split}
  \]
  Si $v : A \to C$ est un \oo-foncteur, la \oo-catégorie $\cotr{C}{v}$ sera
  appelé la \ndef{tranche de $C$ au-dessous de $v$}.
\end{paragraph}

\begin{remark}
  Si $w : B \to C$ est un \oo-foncteur, ce n'est pas la
  \oo-catégorie $\trm{C}{w}$ qu'on appelle la tranche de $C$ au-dessus
  de $w$ dans \cite{AraMaltsiJoint} mais la \oo-catégorie~$\tr{C}{w} =
  \big(\cotr{C^\o}{w^\o}\big)^\o$.  Néanmoins, cette tranche n'interviendra
  pas dans ce texte. On renvoie à \cite[remarque 6.37]{AraMaltsiJoint} pour
  plus de détails.
\end{remark}

\begin{proposition}\label{prop:dual_joint}
  Soient $A$ et $B$ deux \oo-catégories. On a un isomorphisme canonique
  \[
    (A \joint B)^\op \simeq B^\op \joint A^\op,
  \]
  naturel en $A$ et $B$.
\end{proposition}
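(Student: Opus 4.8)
The plan is to reduce the statement to the theory of Steiner complexes, exactly as the companion results on the tensor product (such as the isomorphisms $(A\otimes B)^\op\simeq B^\op\otimes A^\op$ and Proposition~\ref{prop:tens_dual}) are established, and then to transport the isomorphism along the functor $\nu$. The first step is to identify the automorphism of $\Cda$ corresponding, under $\lambda$ and $\nu$, to the odd dual on $\ooCat$. For a complex dirigé augmenté $K$, let $K^\natural$ be the complex with the same underlying graded abelian group, the same positivity submonoids and the same augmentation, but with differential $d^\natural_i=(-1)^i d_i$; one checks immediately that $K^\natural$ is again a complex dirigé augmenté and that $K\mapsto K^\natural$ is an involutive automorphism of $\Cda$. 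Computing the source, the target and the iterated compositions in $C^\op$ in terms of those in $C$ (a $k$\nbd-cell of $C^\op$ for $k$ odd has its orientation reversed, while its own source and target — $(k{-}1)$-cells, of even dimension — are not) yields a natural isomorphism $\lambda(C^\op)\simeq\lambda(C)^\natural$; passing to right adjoints, both $(-)^\op$ and $(-)^\natural$ being their own inverses, gives a natural isomorphism $\nu(K^\natural)\simeq\nu(K)^\op$.

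The heart of the argument is a chain-level computation. I would define a morphism
\[ (K\joint L)^\natural \xto{\ \sim\ } L^\natural\joint K^\natural \]
by $x\joint y\mapsto y\joint x$ (and $x\joint\vide\mapsto\vide\joint x$, $\vide\joint y\mapsto y\joint\vide$). Checking that this is a morphism of complexes dirigés augmentés is a matter of signs: for $x$ in degree $p$ and $y$ in degree $q$ the differential of $x\joint y$ in $(K\joint L)^\natural$ is $(-1)^{p+1+q}\bigl(d(x)\joint y+(-1)^{p+1}x\joint d(y)\bigr)$, while in $L^\natural\joint K^\natural$ one has $d(y\joint x)=(-1)^q d(y)\joint x+(-1)^{p+q+1}y\joint d(x)$; applying $x\joint y\mapsto y\joint x$ to the first expression gives exactly the second, term by term, so no corrective sign is needed (the degenerate cases involving an element $\vide$ of degree $-1$ are checked the same way). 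Compatibility with the positivity submonoids and with the augmentation is immediate, and the formula is visibly natural in $K$ and $L$.

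It remains to transport this to $\ooCat$. The globes $\Dn{n}\simeq\nu(\lambda(\Dn{n}))$ are strong Steiner, and they are self-dual — $\Dn{n}^\op\simeq\Dn{n}$, since $(C^\op)_n=C_n$ naturally in $C$, so both $\infty$-categories corepresent the functor of $n$-cells — whence $\lambda(\Dn{n})^\natural\simeq\lambda(\Dn{n})$ is again strong Steiner. Since $\lambda(\Dn{m})\joint\lambda(\Dn{n})$ is strong Steiner (Proposition~\ref{prop:joint_Steiner}) and the restriction of $\nu$ to strong Steiner complexes is monoidal for the joint, applying $\nu$ to the isomorphism $(\lambda(\Dn{m})\joint\lambda(\Dn{n}))^\natural\simeq\lambda(\Dn{n})^\natural\joint\lambda(\Dn{m})^\natural$ together with $\nu(K^\natural)\simeq\nu(K)^\op$ produces a natural isomorphism $(\Dn{m}\joint\Dn{n})^\op\simeq\Dn{n}^\op\joint\Dn{m}^\op$. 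To reach arbitrary $A$ and $B$ one extends by colimits: the functors $A\mapsto(A\joint B)^\op$ and $A\mapsto B^\op\joint A^\op$ lift, along $\iota_2^\op$ and $\iota_1$ respectively, to functors $\ooCat\to\cotr{\ooCat}{B^\op}$ which preserve all colimits (connected ones are created by the forgetful functor from the coslice, and coproducts in the coslice are the pushouts over $B^\op$ into which the locally biclosed joint sends coproducts in each variable); as the globes generate $\ooCat$ under colimits, the natural isomorphism on globes extends, and naturality in $B$ follows likewise. Alternatively, granting the colimit description $A\joint B=\limind_{\Deltan{m}\to A,\ \Deltan{n}\to B}\Deltan{m+1+n}$ of the joint from~\cite{AraMaltsiJoint}, one obtains a shorter route: $(-)^\op$ preserves colimits, $\Deltan{k}^\op\simeq\Deltan{k}$, and the reindexing $(m,n)\mapsto(n,m)$ with $\Deltan{m+1+n}\simeq\Deltan{n+1+m}$ identifies the colimit computing $(A\joint B)^\op$ with the one computing $B^\op\joint A^\op$.

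The main difficulty I anticipate is purely bookkeeping: getting the orientation conventions right in the identification $\lambda(C^\op)\simeq\lambda(C)^\natural$, and — if one insists on the Steiner route at full generality rather than only on globes — checking that $(-)^\natural$ preserves unitarity and the strongly-loop-free condition, since it interchanges the positive and negative parts of differentials in odd degrees and hence reverses some of the generating inequalities defining $\leN$. None of these steps involves a genuinely new idea, which is why restricting attention to the globes (self-dual, strong Steiner, and generating under colimits) is the cleanest way to organize the proof.
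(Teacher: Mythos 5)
Your chain-level work is correct and is indeed the substance of the matter: the involution $K\mapsto K^\natural$ with $d^\natural_i=(-1)^id_i$ does satisfy $\lambda(C^\op)\simeq\lambda(C)^\natural$, hence $\nu(K^\natural)\simeq\nu(K)^\op$ by passing to right adjoints, and your sign check that $x\joint y\mapsto y\joint x$ defines an isomorphism $(K\joint L)^\natural\simeq L^\natural\joint K^\natural$ of complexes dirigés augmentés is accurate (the present paper only cites \cite[proposition~6.35]{AraMaltsiJoint} for the statement, and the cited proof lives at this level of Steiner complexes). The genuine gap is your globalization step. The full subcategory of $\ooCat$ on the globes $\Dn{n}$ is \emph{not} dense: a morphism of underlying reflexive globular sets between \oo-catégories need not preserve composition (on free categories one may send a composite $gf$ to any parallel arrow), so the restricted Yoneda functor to presheaves on globes is not full. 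Consequently ``the globes generate $\ooCat$ under colimits'' is not enough: two cocontinuous functors $\ooCat\to\cotr{\ooCat}{B^\op}$ that are naturally isomorphic on the globes need not be isomorphic, because the functorial presentations of an arbitrary \oo-catégorie as an iterated colimit of globes (monadic resolutions over globular sets) involve comparison maps that do not lie in the globe subcategory, and compatibility of your isomorphism with those maps is precisely what remains to be proved. Your ``shorter route'' does not repair this: the formula $A\joint B=\limind\Deltan{m+1+n}$ is the \emph{definition} of the simplicial and categorical joins, but no such Kan-extension description of the \oo-categorical joint is established in \cite{AraMaltsiJoint} (the joint there is built through Steiner complexes exactly because it is only locally biclosed), so it cannot be ``granted''.

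The fix is available with the tools you already invoke, but it forces you to face the point you defer. Prove the swap compatibility for all strong Steiner complexes, which requires showing that $K\mapsto K^\natural$ preserves unitary bases and strong loop-freeness, so that $\Stf$ is stable under this duality; then apply the uniqueness clause of the theorem characterizing the joint: the functor $(A,B)\mapsto(B^\op\joint A^\op)^\op$ is another locally biclosed monoidal structure on $\ooCat$ with unit $\vide$ (local biclosedness is inherited because $C\mapsto C^\op$ is an equivalence exchanging the two coslices), and your isomorphisms $\nu(K^\natural)\simeq\nu(K)^\op$ and $(K\joint L)^\natural\simeq L^\natural\joint K^\natural$ make $\nu$ restricted to $\Stf$ monoidal for it; uniqueness then yields a canonical monoidal isomorphism with $\joint$, which is the proposition, with naturality in $A$ and $B$ included. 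This is the same pattern the paper uses for Proposition~\ref{prop:nerf_op}, where the comparison is reduced to a universal property rather than to a colimit-generation argument. Restricting to globes avoids the $\natural$-stability question, but at the cost of an extension step that does not go through as stated.
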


\begin{proof}
  Voir \cite[proposition 6.35]{AraMaltsiJoint}.
\end{proof}

\begin{proposition}\label{prop:tr_op}
  Soient $C$ une \oo-catégorie et $v : A \to C$ un \oo-foncteur. On a un
  isomorphisme canonique
  \[
    (\cotr{C}{v})^\op \simeq \trm{C^\op}{v^\op},
  \]
  naturel en $A$ et $C$.
\end{proposition}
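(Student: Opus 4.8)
The plan is to deduce the isomorphism from the duality formula for the join (Proposition~\ref{prop:dual_joint}) by a Yoneda argument, showing that $(\cotr{C}{v})^\op$ and $\trm{C^\op}{v^\op}$ corepresent the same functor $\ooCat \to \Ens$.

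Fix an \oo-category $B$. Since $(-)^\op$ is an involutive automorphism of $\ooCat$, applying it to \oo-functors yields a bijection $\Hom_{\ooCat}(B, (\cotr{C}{v})^\op) \simeq \Hom_{\ooCat}(B^\op, \cotr{C}{v})$, natural in $B$. By the adjunction defining the slice $\cotr{C}{v}$, the right-hand side is the set of \oo-functors $w : A \joint B^\op \to C$ such that $w \o \iota_1 = v$. Applying $(-)^\op$ once more, this becomes the set of \oo-functors $(A \joint B^\op)^\op \to C^\op$ whose precomposition with $(\iota_1)^\op$ equals $v^\op$. Now Proposition~\ref{prop:dual_joint} supplies a canonical isomorphism $(A \joint B^\op)^\op \simeq B \joint A^\op$ (using $(B^\op)^\op = B$), natural in $A$ and $B$; granting for a moment that it carries $(\iota_1)^\op$ to $\iota_2 : A^\op \to B \joint A^\op$, the set in question is identified with the set of \oo-functors $B \joint A^\op \to C^\op$ whose precomposition with $\iota_2$ is $v^\op$, which by the adjunction defining the slice $\trm{C^\op}{v^\op}$ is $\Hom_{\ooCat}(B, \trm{C^\op}{v^\op})$. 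Composing these natural bijections and applying the Yoneda lemma produces the desired canonical isomorphism $(\cotr{C}{v})^\op \simeq \trm{C^\op}{v^\op}$, its naturality in $A$ and $C$ being inherited from that of Proposition~\ref{prop:dual_joint} and of the two slice adjunctions.

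The single non-formal point, and hence the main obstacle, is the compatibility of the isomorphism $(A \joint B^\op)^\op \simeq B \joint A^\op$ with the canonical inclusions. To handle it I would unwind the definition of $\iota_1$ recalled in paragraph~\ref{paragr:def_tr}: the \oo-functor $\iota_1 : A \to A \joint B^\op$ is the composite $A \simeq A \joint \vide \xto{A \joint \vide_{B^\op}} A \joint B^\op$. Applying $(-)^\op$, invoking the naturality of the isomorphism of Proposition~\ref{prop:dual_joint} with respect to the \oo-functor $\vide_{B^\op} : \vide \to B^\op$, and using $\vide^\op = \vide$, the identity $(\vide_{B^\op})^\op = \vide_B$ (both are the unique \oo-functor out of the initial \oo-category) and the canonical identification $(A \joint \vide)^\op \simeq \vide \joint A^\op$, one recognizes $(\iota_1)^\op$, transported along the isomorphism, as the composite $A^\op \simeq \vide \joint A^\op \xto{\vide_B \joint A^\op} B \joint A^\op$ — that is, as $\iota_2$, as needed.
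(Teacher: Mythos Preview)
Your argument is correct: the Yoneda computation using the duality formula for the join (Proposition~\ref{prop:dual_joint}) and the two slice adjunctions is exactly the right approach, and your verification that the isomorphism $(A \joint B^\op)^\op \simeq B \joint A^\op$ carries $(\iota_1)^\op$ to $\iota_2$ via the naturality of Proposition~\ref{prop:dual_joint} with respect to $\vide_{B^\op}$ is the one non-formal point, handled cleanly.

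The paper itself does not prove this proposition here; it simply refers to \cite[proposition 6.36]{AraMaltsiJoint}. Your argument is the natural one and is presumably what appears in that reference, so there is nothing further to compare.
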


\begin{proof}
  Voir \cite[proposition 6.36]{AraMaltsiJoint}.
\end{proof}

\begin{paragraph}\label{paragr:obj_tr}
  Soient $C$ une \oo-catégorie et $c$ un objet de $C$. En considérant $c$
  comme un \oo-foncteur $\Dn{0} \to C$, on obtient une \oo-catégorie
  $\cotr{C}{c}$. Par adjonction, les $i$-cellules de $\cotr{C}{c}$
  correspondent aux \oo-foncteurs $\Dn{0} \joint \Dn{i} \to C$ rendant
  le triangle
  \[
    \xymatrix{
      \Dn{0} \joint \Dn{i} \ar[r] & C \\
      \Dn{0} \ar[u]^{\iota_1} \ar[ur]_{c}
    }
  \]
  commutatif. On peut décrire explicitement ces \oo-foncteurs et la
  structure de \oo-catégorie résultante (voir \cite[chapitre
  9]{AraMaltsiJoint}). En particulier, lorsque $C$ est une $1$\nbd-catégorie,
  $\cotr{C}{c}$ est la $1$-catégorie tranche usuelle. Dans ce texte, nous
  aurons seulement besoin de la description explicite des objets de
  $\cotr{C}{c}$. On a un isomorphisme canonique $\Dn{0} \joint \Dn{0} \simeq
  \Dn{1}$ (cela résulte par exemple de \cite[corollaire
  7.10]{AraMaltsiJoint}) et les objets de $\cotr{C}{c}$ correspondent donc à
  des $1$-cellules $f$ de $C$ de source $c$. On notera $(c, f)$ un tel
  objet.

  Par ailleurs, d'après~\cite[proposition B.5.2]{AraMaltsiJoint}, la \oo-catégorie
  $\cotr{C}{c}$ peut se décrire par le carré cartésien
  \[
    \xymatrix{
      \cotr{C}{c} \ar[d] \ar[r] & \HomLax(\Dn{1}, C) \ar[d]^{\pi^{}_0} \\
      \Dn{0} \ar[r]_-c & C \pbox{,}
    }
  \]
  où $\pi_0$ désigne le \oo-foncteur
  $\HomLax(\sigma, \Dn{1}) : \HomLax(\Dn{1}, C) \to \HomLax(\Dn{0}, C)
  \simeq C$.
\end{paragraph}

\begin{paragraph}\label{paragr:contr}
  L'opération $C \mapsto \Dn{0} \joint C$ a une interprétation
  particulièrement simple en termes de transformations oplax. En effet, si
  $A$ et $B$ sont deux \oo-catégories, en vertu de \cite[paragraphe B.5.5 et
  corollaire B.5.6]{AraMaltsiJoint}, on dispose d'un \oo-foncteur $\Dn{1}
  \otimes A \to \Dn{0} \joint A$ rendant commutatif le triangle
  \[
    \xymatrix@C=1.5pc{
      \Dn{1} \otimes A \ar[rr] && \Dn{0} \joint A \\
      & A \ar[ul]^{\tau \otimes A} \ar[ur]_{\iota_2} & \pbox{,}
    }
  \]
  et un \oo-foncteur $\Dn{1} \otimes A \to B$ se factorise par $\Dn{0}
  \joint A$ si et seulement si sa source en tant que transformation oplax
  est un \oo-foncteur constant, cette valeur constante correspondant alors
  au composé $\Dn{0} \xto{\iota_1} \Dn{0} \joint A \to B$.

  En particulier, si $C$ est une \oo-catégorie, se donner une transformation
  oplax d'un \oo-endofoncteur constant de $C$ vers l'identité de $C$
  revient à se donner un \oo-foncteur $\Dn{0} \joint C \to C$ rendant
  le triangle
  \[
    \xymatrix{
      \Dn{0} \joint C \ar[r] & C \\
      C \ar[u]^{\iota_2} \ar[ur]_{\id{C}}
    }
  \]
  commutatif. Par adjonction, il revient au même de se donner un objet $c$
  de $C$ et un \oo-foncteur $C \to \cotr{C}{c}$ rendant commutatif le triangle
  \[
    \xymatrix{
      C \ar[r] \ar[dr]_{\id{C}} & \cotr{C}{c} \ar[d]^{U} \\
      & C \pbox{,}
    }
  \]
  où $U$ désigne le \oo-foncteur d'oubli.
\end{paragraph}

Nous terminons cette section par des rappels sur les résultats de
fonctorialités des tranches établis dans \cite[section 11]{AraMaltsiJoint},
qui sont centraux à la démonstration du théorème~A \oo-catégorique présentée
dans ce texte.

\medskip

\emph{Dans la suite de cette section, on fixe une \oo-catégorie $C$, un
complexe de Steiner fort~$L$ et un \oo-foncteur $b : \nu(L) \to C$.}

\begin{paragraph}\label{paragr:fonct_tri}
  Considérons un diagramme
  \[
    \shorthandoff{;}
    \xymatrix@C=1.5pc{
      K \ar[rr]^f \ar[dr]_{g}_{}="f" & & K' \ar[dl]^(0.42){g'} \\
      & L
      \ar@{}"f";[ur]_(.15){}="ff"
      \ar@{}"f";[ur]_(.55){}="oo"
      \ar@<-0.5ex>@2"ff";"oo"^{h}
      & \pbox{,}
    }
  \]
  où $K$ et $K'$ sont des complexes de Steiner forts, $f$ et $g$ sont des
  morphismes de complexes dirigés augmentés quelconques, $g'$ est une
  inclusion rigide ordonnée (voir le paragraphe~\ref{paragr:def_rig_ord}) et
  $h$ est une antihomotopie de $g$ vers $g'f$. Le théorème 11.2.2
  de~\cite{AraMaltsiJoint} associe à un tel diagramme un \oo-foncteur
  \[ (f, h, b)^\ast : \cotr{C}{c'} \to \cotr{C}{c}, \]
  où on a posé
  \[ c = b\nu(g) \quadet c' = b\nu(g'). \]
  La définition précise de ce \oo-foncteur ne jouera aucun rôle dans ce
  texte et nous utiliserons seulement quelques propriétés que nous allons
  maintenant rappeler.
\end{paragraph}

\begin{proposition}
  Le \oo-foncteur $(f, h, b)^\ast$ du paragraphe précédent est au-dessus de
  $C$ au sens où le triangle
  \[
  \xymatrix@C=1.5pc{
      \cotr{C}{c'} \ar[dr] \ar[rr]^{(f, h, b)^\ast} & & \cotr{C}{c} \ar[dl]
     \\
     & C & \pbox{,} 
  }
  \] 
  où les flèches obliques désignent les \oo-foncteurs d'oubli, est
  commutatif.
\end{proposition}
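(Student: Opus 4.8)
The plan is to read the statement off the very construction of $(f,h,b)^\ast$ recorded in \cite[théorème 11.2.2]{AraMaltsiJoint}: the compatibility with the forgetful \oo-foncteurs is part of the package established there, so the quickest route is simply to cite it. Let me nonetheless indicate why it holds. First I would recall, from paragraphe~\ref{paragr:fonct_tr_sur_1} applied to $\ooCat$ equipped with the \oo-catégorical join, the explicit shape of the forgetful \oo-foncteur $U : \cotr{C}{v} \to C$ attached to an \oo-foncteur $v : A \to C$: an $i$-cellule of $\cotr{C}{v}$ is an \oo-foncteur $x : A \joint \Dn{i} \to C$ with $x\circ\iota_1 = v$, and $U$ sends it to the $i$-cellule $x\circ\iota_2 : \Dn{i} \to C$. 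Indeed $U$ is, by paragraphe~\ref{paragr:fonct_tr_sur_1}, the slice morphism associated to the morphism $\vide_A : \vide \to A$ over $C$, under the identification $\cotr{C}{\vide_C} \simeq C$, and $\vide_A \joint \Dn{i}$ is precisely $\iota_2 : \Dn{i} \cong \vide \joint \Dn{i} \to A \joint \Dn{i}$.

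Second, I would unwind the construction of $(f,h,b)^\ast$ at the level of cells. With $c = b\nu(g)$ and $c' = b\nu(g')$, an $i$-cellule of $\cotr{C}{c'}$ is an \oo-foncteur $x' : \nu(K')\joint\Dn{i} \to C$ with $x'\circ\iota_1 = c'$, and the construction of \cite{AraMaltsiJoint} produces from it an \oo-foncteur $x : \nu(K)\joint\Dn{i} \to C$ with $x\circ\iota_1 = c$ obtained by precomposing $x'$, in the $\nu(K)$-variable, with an \oo-foncteur built from $f$ (using the comparison $\nu(K\joint D_i) \simeq \nu(K)\joint\Dn{i}$ for the strong Steiner complex $D_i$ of $\Dn{i}$), and then transporting along the antihomotopy $h$ to correct the $\iota_1$-restriction from $b\nu(g'f)$ back to $b\nu(g)$. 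The crucial observation is that all the input data $f$, $h$, $b$ live on the $\nu(K)/\nu(K')/\nu(L)$ side: the first operation is essentially precomposition with $\nu(f\joint D_i)$, which commutes with $\iota_2$, and the transport along $h$ is built entirely from data on that side and leaves the $\Dn{i}$-direction — in particular the image of $\iota_2$ — untouched. Hence $x\circ\iota_2 = x'\circ\iota_2$ for every input $x'$, which by the first step reads $U\bigl((f,h,b)^\ast(x')\bigr) = U(x')$.

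Finally, since an \oo-foncteur is determined by its action on all cells, the two \oo-foncteurs $U\circ(f,h,b)^\ast$ and $U : \cotr{C}{c'} \to C$ coincide, which is exactly the commutativity of the triangle. The only genuine work hidden in this argument is making precise the shape of the construction of \cite[théorème 11.2.2]{AraMaltsiJoint} — namely that it is ``precomposition in the $\nu(K)$-variable followed by a transport along $h$, both fixing the $\iota_2$-component''; once that is granted the compatibility with $U$ is formal. Given that this article has declared it will not use the precise definition of $(f,h,b)^\ast$, the pragmatic version of the proof is simply to attribute the statement to \cite{AraMaltsiJoint}, where it is established alongside the construction itself.
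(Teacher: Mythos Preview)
Your proposal is essentially correct and matches the paper's approach: both defer to \cite{AraMaltsiJoint}. The paper's proof is a single line, ``Voir \cite[proposition 11.3.6]{AraMaltsiJoint}'', citing the precise proposition where this compatibility is established, rather than th\'eor\`eme~11.2.2 (which is the construction itself); your informal unwinding of why the construction fixes the $\iota_2$-component is plausible intuition but is not needed here, since the present paper explicitly declines to use the definition of $(f,h,b)^\ast$ and treats this proposition as a black box imported from \cite{AraMaltsiJoint}.
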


\begin{proof}
  Voir \cite[proposition 11.3.6]{AraMaltsiJoint}.
\end{proof}

\begin{proposition}\label{prop:fonct_tri_comm}
  Soit
  \[
    \shorthandoff{;}
    \xymatrix@C=1.5pc{
      K \ar[rr]^f \ar[dr]_{g}_{}="f" & & K' \ar[dl]^(0.42){g'} \\
      & L
      \ar@{}"f";[ur]_(.15){}="ff"
      \ar@{}"f";[ur]_(.55){}="oo"
      \ar@<-0.5ex>@2"ff";"oo"^{\id{g}}
    }
  \]
  un diagramme commutatif de complexes de Steiner forts, avec $g'$ une
  inclusion rigide ordonnée. Alors on a
  \[ (f, \id{g}, b)^\ast = \nu(f)^\ast, \]
  où $\nu(f)^\ast$ est le \oo-foncteur associé au
  triangle commutatif
   \[
     \xymatrix@C=1.5pc{
       \nu(K) \ar[rr]^{\nu(f)} \ar[dr]_{c} & & \nu(K') \ar[dl]^(0.42){c'} \\
       & C
     }
   \]
   en vertu du paragraphe~\ref{paragr:fonct_tr_sur_1}.
\end{proposition}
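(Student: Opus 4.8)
The plan is to identify both $(f, \id{g}, b)^\ast$ and $\nu(f)^\ast$ as the $\infty$\nbd-functor produced, via the Yoneda lemma, by one and the same natural family of maps on $S$\nbd-points, and to conclude by uniqueness.

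First I would make $\nu(f)^\ast$ explicit. Since the triangle commutes we have $g = g'f$, hence $\nu(g) = \nu(g')\nu(f)$ and therefore $c = b\nu(g) = b\nu(g')\nu(f) = c'\nu(f)$, so that $\nu(f) : \nu(K) \to \nu(K')$ is a morphism from $(\nu(K), c)$ to $(\nu(K'), c')$ in $\tr{\ooCat}{C}$. By the construction of paragraph~\ref{paragr:fonct_tr_sur_1} applied with $Z = C$, the induced $\infty$\nbd-functor $\nu(f)^\ast : \cotr{C}{c'} \to \cotr{C}{c}$ is characterized by the fact that, for every $\infty$\nbd-category $S$, it sends the $S$\nbd-point of $\cotr{C}{c'}$ corresponding by adjunction to an $\infty$\nbd-functor $\psi : \nu(K') \joint S \to C$ with $\psi\iota_1 = c'$ to the $S$\nbd-point of $\cotr{C}{c}$ corresponding to the composite $\nu(K) \joint S \xto{\nu(f) \joint S} \nu(K') \joint S \xto{\,\psi\,} C$; this composite restricts along $\iota_1$ to $c'\nu(f) = c$, as it must, since $(\nu(f) \joint S)\iota_1 = \iota_1\nu(f)$. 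Naturality in $S$ is immediate.

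Next I would unwind the definition of $(f, h, b)^\ast$ from \cite[théorème~11.2.2]{AraMaltsiJoint}. That construction likewise prescribes the action of $(f, h, b)^\ast$ on $S$\nbd-points: from $\psi : \nu(K') \joint S \to C$ over $c'$ it produces an $\infty$\nbd-functor $\nu(K) \joint S \to C$ over $c$ obtained by precomposing $\psi$ with a morphism $\nu(K) \joint S \to \nu(K') \joint S$ assembled out of $f$, of the rigid ordered inclusion $g'$ and of the antihomotopy $h : g \tod g'f$, the rôle of $h$ being solely to correct the way the $\nu(K')$\nbd-part of the target is hit so that the restriction along $\iota_1$ lands on $c$ rather than on $c'\nu(f)$. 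The key — and essentially only non-formal — step is then the observation that when $h = \id{g}$, i.e.\ when every component of the antihomotopy vanishes and the triangle is genuinely commutative (so that already $c = c'\nu(f)$), this correction disappears and the morphism $\nu(K) \joint S \to \nu(K') \joint S$ in question is nothing but $\nu(f) \joint S$, using the monoidality of $\nu$ with respect to the join recalled above. Hence the action of $(f, \id{g}, b)^\ast$ on $S$\nbd-points coincides with that of $\nu(f)^\ast$, and by the Yoneda lemma $(f, \id{g}, b)^\ast = \nu(f)^\ast$.

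I expect the bulk of the work to be precisely the matching of conventions in that last step: one has to trace through the formulas defining $(f, h, b)^\ast$ in \cite{AraMaltsiJoint}, verify that every occurrence of $h$ enters through a term that becomes $0$ — or an iterated identity — when $h = \id{g}$, and check that the resulting morphism is exactly the plain precomposition by $\nu(f) \joint S$; everything else is a formal consequence of the Yoneda lemma. (It is quite possible that \cite{AraMaltsiJoint} already records this compatibility, in which case the proof reduces to a citation.)
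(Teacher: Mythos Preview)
Your overall strategy is sound and your parenthetical guess is correct: the paper's proof is a bare citation to \cite[proposition~11.2.5]{AraMaltsiJoint}, so there is nothing to compare beyond the fact that your sketch is what that cited proposition essentially does.

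One technical point worth flagging in your write-up: the description of $(f,h,b)^\ast$ from \cite{AraMaltsiJoint} does not quite go through a morphism $\nu(K)\joint S \to \nu(K')\joint S$. Rather (see the recollection in paragraph~\ref{paragr:desc_fonct_tri} of the present paper), the $T$\nbd-points of $\cotr{C}{c'}$ are identified with morphisms out of $\nu\bigl(L \amalg_{K'} (K'\joint T)\bigr)$ under $\nu(L)$, and $(f,h,b)^\ast$ is induced by a morphism
\[
  \psi : K \joint T \longrightarrow L \amalg_{K'} (K' \joint T),
\]
given explicitly by $\psi(x\joint y) = g(x)$ if $y=\vide$ and $\psi(x\joint y) = f(x)\joint y + e(y)k(x)$ otherwise. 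When $h=\id{g}$ (i.e.\ $k=0$) this collapses to the morphism induced by $(g, f\joint T)$, which is exactly what produces $\nu(f)^\ast$ after passing through the adjunction bijections. So your intuition that ``the correction disappears'' is right, but the intermediate object involves $L$ and a pushout, not just $\nu(K')\joint S$; if you want an actual proof rather than a citation, you should phrase the comparison at that level.
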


\begin{proof}
  Voir \cite[proposition 11.2.5]{AraMaltsiJoint}.
\end{proof}

\begin{proposition}\label{prop:fonct_tri_id}
  Soit $g : K \to L$ une inclusion rigide ordonnée entre complexes de Steiner
  forts. Considérons le diagramme commutatif
  \[
    \shorthandoff{;}
    \xymatrix@C=1.5pc{
      K \ar[rr]^{\id{K}} \ar[dr]_{g}_{}="f" & & K \ar[dl]^{g} \\
      & L
      \ar@{}"f";[ur]_(.15){}="ff"
      \ar@{}"f";[ur]_(.55){}="oo"
      \ar@<-0.5ex>@2"ff";"oo"^{\id{g}} & \pbox{.}
    }
  \]
  Alors on a
  \[ (\id{K}, \id{g}, b)^\ast = \id{\cotr{C}{c}}. \]
\end{proposition}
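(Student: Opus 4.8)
The statement should follow immediately from Proposition~\ref{prop:fonct_tri_comm} together with the functoriality of the construction of paragraph~\ref{paragr:fonct_tr_sur_1}. First I would observe that since here $g' = g$, we have $c' = b\nu(g') = b\nu(g) = c$, so both the source and the target of the \oo-foncteur in question are the same \oo-catégorie $\cotr{C}{c}$, and the claim makes sense as the assertion that this \oo-foncteur is the identity of $\cotr{C}{c}$.

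Next, I would apply Proposition~\ref{prop:fonct_tri_comm} to the given commutative diagram, taking $f = \id{K}$, $g' = g$ (which is a rigid ordered inclusion by hypothesis), and the antihomotopy $\id{g}$ from $g$ to $g'f = g\id{K} = g$. This yields
\[
  (\id{K}, \id{g}, b)^\ast = \nu(\id{K})^\ast,
\]
where $\nu(\id{K})^\ast$ is the \oo-foncteur associated by the construction of paragraph~\ref{paragr:fonct_tr_sur_1} to the commutative triangle with top edge $\nu(\id{K}) : \nu(K) \to \nu(K)$ and both oblique edges equal to $c$.

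Finally, since $\nu$ is a functor, $\nu(\id{K}) = \id{\nu(K)}$, so the triangle above is the trivial one with identity top edge; by the functoriality of the construction recalled in paragraph~\ref{paragr:fonct_tr_sur_1} (the passage $l \mapsto l^\ast$ being functorial, hence sending identities to identities), the associated \oo-foncteur $\id{\nu(K)}^\ast$ is $\id{\cotr{C}{c}}$. Combining, $(\id{K}, \id{g}, b)^\ast = \id{\cotr{C}{c}}$, as desired. I do not expect any genuine obstacle here: the only points to be careful about are that the hypotheses of Proposition~\ref{prop:fonct_tri_comm} are literally met (the diagram is commutative in the required sense because the antihomotopy is $\id{g}$, and $g' = g$ is a rigid ordered inclusion), and that one is entitled to invoke the functoriality of the $l \mapsto l^\ast$ construction; alternatively, this last point can be checked directly from the Yoneda description in paragraph~\ref{paragr:fonct_tr_sur_1}, the identity triangle inducing the identity natural transformation on $\Hom_\C(S, \cotr{Z}{z})$.
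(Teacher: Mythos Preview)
Your proof is correct and is exactly the argument the paper has in mind: the paper's proof simply refers to \cite[proposition 11.3.2]{AraMaltsiJoint} and notes parenthetically that the result is a direct consequence of the preceding proposition (Proposition~\ref{prop:fonct_tri_comm}), which is precisely the route you take.
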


\begin{proof}
  Voir \cite[proposition 11.3.2]{AraMaltsiJoint}. (Le résultat est en fait
  une conséquence directe de la proposition précédente.)
\end{proof}

\begin{proposition}\label{prop:fonct_tri}
  Soit
  \[
    \shorthandoff{;}
    \xymatrix{
      K \ar[r]^f \ar[dr]_{}="g"_(.40){g}
      & K' \ar[r]^{f'}_(.75){}="fp" \ar[d]_(.70){}="gp"_(.50){g'} & K''
      \ar[dl]_{}="gpp"^(.33){g''} \\
      & L
      \ar@{}"g";[u]_(0.10){}="x"
      \ar@{}"g";[u]_(.85){}="y"
      \ar@<-0.1ex>@2"x";"y"^(.30)h
      \ar@{}"gp";"fp"_(.25){}="x2"
      \ar@{}"gp";"fp"_(.75){}="y2"
      \ar@<0.4ex>@2"x2";"y2"^(0.40){h'\!}
    }
  \]
  un diagramme de complexes de Steiner forts, où $g'$ et $g''$ sont des inclusions
  rigides ordonnées, et $h$ et $h'$ sont des antihomotopies de
  $g$ vers $g'f$ et de $g'$ vers $g''f'$ respectivement. Considérons le
  diagramme composé
  \[
    \shorthandoff{;}
    \xymatrix@C=1.5pc{
      K \ar[rr]^{f'f} \ar[dr]_{g}_{}="f" & & K' \ar[dl]^(0.42){g''} \\
      & L
      \ar@{}"f";[ur]_(.15){}="ff"
      \ar@{}"f";[ur]_(.55){}="oo"
      \ar@<-0.5ex>@2"ff";"oo"^{h''}
      & \pbox{,}
    }
  \]
  où $h'' = h'f + h$. Alors on a
  \[ (f, h, b)^\ast (f', h', b)^\ast = (f'f, h'f + h, b)^\ast. \]
\end{proposition}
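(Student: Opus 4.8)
Le plan est d'établir cette identité par un argument de Yoneda combiné à l'adjonction définissant les tranches, ce qui la ramène à une égalité d'opérations explicites sur des ensembles de \oo-foncteurs, puis, \emph{via} la théorie de Steiner, à une comparaison de formules au niveau des complexes dirigés augmentés.

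D'abord, je décrirais l'action de $(f, h, b)^\ast$ sur les préfaisceaux représentables. Le joint étant localement bifermé (paragraphe~\ref{paragr:def_tr}), pour toute \oo-catégorie $T$ on a des bijections naturelles
\[
  \Hom_{\ooCat}(T, \cotr{C}{c}) \simeq \Hom_{\cotr{\ooCat}{\nu(K)}}\big((\nu(K) \joint T, \iota_1), (C, c)\big),
\]
et de même pour $K'$ et $K''$. À travers elles, $(f, h, b)^\ast$ correspond à une opération, naturelle en $T$, transformant un \oo-foncteur $\nu(K') \joint T \to C$ compatible avec $c'$ en un \oo-foncteur $\nu(K) \joint T \to C$ compatible avec $c$ ; d'après la construction du théorème 11.2.2 de~\cite{AraMaltsiJoint}, cette opération est bâtie à partir de $\nu(f)$, des antihomotopies (et $2$-antihomotopies) associées à $h$, et de la postcomposition par $b$. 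Comme deux \oo-foncteurs entre tranches qui induisent la même opération naturelle sur tous ces $\Hom$ coïncident, il suffit de vérifier que la composée des opérations attachées à $(f, h, b)$ et à $(f', h', b)$ est l'opération attachée à $(f'f, h'f + h, b)$.

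Ensuite, je rendrais cette comparaison effective en testant les \oo-foncteurs sur les cellules, c'est-à-dire en prenant $T = \Dn{n}$ pour $n \ge 0$. Le joint $\nu(K) \joint \Dn{n}$ est alors de la forme $\nu(K \joint \lambda(\Dn{n}))$ avec $K \joint \lambda(\Dn{n})$ de Steiner fort (proposition~\ref{prop:joint_Steiner}), et les hypothèses de rigidité et d'ordre sur $g'$ et $g''$ garantissent que les complexes intervenant dans la construction le sont aussi ; par pleine fidélité de $\nu$ sur ces complexes (théorème~\ref{thm:Steiner}), l'opération attachée à $(f, h, b)$ est entièrement gouvernée par des morphismes de complexes dirigés augmentés et des antihomotopies, qu'on peut comparer sur les bases (les atomes $\atom{\var}$). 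Composer les données attachées à $(f, h, b)$ et à $(f', h', b)$ revient alors à composer contravariantement les contributions de $f$, d'où le facteur $f'f$ par fonctorialité de $\nu$, et à ajouter celles des antihomotopies, manipulées ici additivement (paragraphe~\ref{paragr:def_antihomot}) : le précomposé $h'f$ de $h'$ par le morphisme de complexes $f$ est une antihomotopie de $g'f$ vers $g''f'f$, et $h'f + h$ est bien une antihomotopie de $g$ vers $g''(f'f)$, précisément celle du diagramme composé de l'énoncé. À titre de vérification, spécialiser $h = \id{g}$ redonne, \emph{via} la proposition~\ref{prop:fonct_tri_comm}, l'égalité $\nu(f)^\ast \o (f', h', b)^\ast = (f'f, h'f, b)^\ast$, et spécialiser $(f', h') = (\id{K'}, \id{g'})$ redonne, \emph{via} la proposition~\ref{prop:fonct_tri_id}, une identité triviale.

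Le principal obstacle est, dans cette seconde étape, la gestion des signes : la différentielle du joint porte un signe $(-1)^{p+1}$ et la relation d'antihomotopie un signe $(-1)^i$, de sorte que le cœur de la vérification est la compatibilité de ces signes avec la composition, et en particulier le fait que c'est $h'f + h$, et non une variante signée, qui apparaît. Une difficulté secondaire, préalable, est de justifier soigneusement que l'opération attachée à $(f, h, b)$ ne dépend de $b$ que par postcomposition, ce qui autorise à transporter $b$ passivement tout au long ; cela résulte de l'inspection de la construction du théorème 11.2.2, ou encore de sa naturalité en le \oo-foncteur de base.
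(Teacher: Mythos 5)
Le texte que vous commentez ne contient pas de démonstration interne : la preuve de la proposition est un simple renvoi à \cite[proposition 11.3.4]{AraMaltsiJoint}. Votre stratégie générale — réduire par Yoneda l'égalité de deux \oo-foncteurs entre tranches à l'égalité de leurs actions sur les $\Hom_{\ooCat}(\Dn{n}, {-})$ (ou $\Hom_{\ooCat}(\On{n}, {-})$), puis tout exprimer au niveau des complexes de Steiner forts — est bien la bonne, et c'est essentiellement ainsi que l'identité se vérifie ; les ingrédients nécessaires figurent d'ailleurs dans l'appendice~\ref{app:thmAI} : le paragraphe~\ref{paragr:desc_fonct_tri} rappelle que l'action de $(f, h, b)^\ast$ sur $\Hom_{\ooCat}(\nu(T), {-})$, pour $T$ de Steiner fort (et $\lambda(\Dn{n})$ l'est, proposition~\ref{prop:Steiner_disques}), est induite par le morphisme $\psi : K \joint T \to L \amalg_{K'} (K' \joint T)$ donné par $\psi(x \joint \vide) = g(x)$ et $\psi(x \joint y) = f(x) \joint y + e(y)h(x)$ pour $y \neq \vide$.

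La lacune est que votre esquisse s'arrête précisément au point qui constitue la démonstration : l'affirmation « composer les opérations revient à composer les contributions de $f$ et à ajouter celles des antihomotopies » est exactement ce qu'il faut prouver, et vous ne l'établissez pas. Pour le faire, il faut d'abord identifier, grâce au caractère cocartésien de $L \amalg_{K'}(K' \joint T)$, le but de l'opération attachée à $(f', h', b)$ (décrite au-dessus de $\nu(K')$) avec la source de celle attachée à $(f, h, b)$ (décrite au-dessus de $\nu(L)$), puis calculer le composé $K \joint T \to L \amalg_{K'}(K' \joint T) \to L \amalg_{K''}(K'' \joint T)$ : sur $x \joint y$ avec $y \neq \vide$, on obtient $f'f(x) \joint y + e(y)\bigl(h'f(x) + h(x)\bigr)$, c'est-à-dire le morphisme attaché à $(f'f, h'f + h, b)$. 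Notez au passage que votre « principal obstacle » est mal identifié : aucune gestion de signes n'intervient dans cette vérification, la formule pour $\psi$ étant linéaire en $h$ avec le seul coefficient $e(y)$ ; les signes $(-1)^{p+1}$ et $(-1)^i$ ne servent qu'à garantir que $h'f + h$ est bien une antihomotopie de $g$ vers $g''f'f$, ce qui est immédiat par additivité de la relation du paragraphe~\ref{paragr:def_antihomot}. En l'état, votre texte est un plan plausible dont l'étape décisive est admise plutôt que démontrée.
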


\begin{proof}
  Voir \cite[proposition 11.3.4]{AraMaltsiJoint}.
\end{proof}

\begin{paragraph}\label{paragr:fonct_cone}
   Considérons un diagramme
   \[
      \shorthandoff{;:}
      \xymatrix@C=1.5pc@R=3pc{
        K \ar@/^2ex/[rr]^(.33){f'}_{}="1" \ar@/_2ex/[rr]^(.30)f_{}="0"
        \ar[dr]_{}="f"_{\phantom{g'}g}
        \ar@2"0";"1"_k
        & & K' \ar[dl]^{g'} \\
        & L
        \ar@{}"f";[ur]_(.15){}="ff"
        \ar@{}"f";[ur]_(.55){}="oo"
        \ar@<-0.5ex>@/^1ex/@{:>}"ff";"oo"^(.18){h'\!\!}_(.30){}="h'"
        \ar@<-2.0ex>@/^-1ex/@2"ff";"oo"_(.36){h}_(.80){}="h"
        \ar@3"h";"h'"_(.20){H_{}}
        & \pbox{,}
        }
  \]
  où $K$ et $K'$ sont des complexes de Steiner forts, $f$, $f'$ et
  $g$ sont des morphismes de complexes dirigés augmentés quelconques, $g'$
  est une inclusion rigide ordonnée, $h$,  $h'$ et~$k$ sont des antihomotopies
  de $g$ vers $g'f$, de $g$ vers $g'f'$ et de $f$ vers $f'$ respectivement
  et $H$ est une $2$-antihomotopie de $g'k + h$ vers $h'$ (qui sont deux
  antihomotopies de $g$ vers $g'f'$). Le théorème 11.4.2 de
  \cite{AraMaltsiJoint} associe à un tel diagramme une transformation oplax
  \[
   \shorthandoff{;}
   (k, H, b)^\ast :
    \xymatrix@C=4pc{
      \cotr{C}{c'}
      \ar@/^2.5ex/[r]^{(f', h', b)^\ast}_{}="0"
      \ar@/_2.5ex/[r]_{(f, h, b)^\ast}^{}="1"
      \ar@2"0";"1"
      & \cotr{C}{c} \pbox{,}
    }
  \]
  où on pose toujours
  \[ c = b\nu(g) \quadet c' = b\nu(g'). \]
  Ici encore, la définition précise de cette transformation oplax ne jouera
  aucun rôle dans ce texte et nous utiliserons seulement quelques propriétés
  que nous allons maintenant rappeler.
\end{paragraph}

\begin{proposition}\label{prop:fonct_cone_sur}
  La transformation oplax $(k, H, b)^\ast$ du paragraphe précédent est
  au-dessus de $C$ au sens où on a
  \[ U \comp (k, H, b)^\ast = \id{U'}, \]
  où $U : \cotr{C}{c} \to C$ et $U' : \cotr{C}{c'} \to C$ désignent
  les \oo-foncteurs d'oubli.
\end{proposition}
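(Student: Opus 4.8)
Le plan est de dérouler la construction de la transformation oplax $(k, H, b)^\ast$ fournie par le théorème~11.4.2 de~\cite{AraMaltsiJoint} et de constater qu'après composition avec le \oo-foncteur d'oubli, toute sa dépendance en les données $k$ et $H$ s'évanouit. Par définition, $(k, H, b)^\ast$ est un \oo-foncteur $\Dn{1} \otimes \cotr{C}{c'} \to \cotr{C}{c}$ dont les précompositions par $\sigma \otimes \cotr{C}{c'}$ et $\tau \otimes \cotr{C}{c'}$ valent respectivement $(f', h', b)^\ast$ et $(f, h, b)^\ast$ (en identifiant $\Dn{0} \otimes \cotr{C}{c'}$ et $\cotr{C}{c'}$), tandis qu'en vertu du paragraphe~\ref{paragr:def_sesqui_oplax}, la transformation oplax $\id{U'}$ est le composé $\Dn{1} \otimes \cotr{C}{c'} \xto{\kappa \otimes \cotr{C}{c'}} \cotr{C}{c'} \xto{U'} C$. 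L'énoncé revient donc à l'égalité, entre \oo-foncteurs de $\Dn{1} \otimes \cotr{C}{c'}$ vers $C$, de $U \comp (k, H, b)^\ast$ et de ce composé. On observe déjà, en utilisant la proposition affirmant que les \oo-foncteurs $(f, h, b)^\ast$ sont au-dessus de $C$ (cf.~\cite[proposition~11.3.6]{AraMaltsiJoint}), que les deux membres ont même source $U'$ et même but $U'$ : en effet, $U \comp (k, H, b)^\ast$ va de $U \comp (f', h', b)^\ast = U'$ vers $U \comp (f, h, b)^\ast = U'$. Reste à obtenir l'égalité proprement dite.

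La première étape consiste à se ramener à la jambe $\iota_2$ du joint. Posons $T = \Dn{1} \otimes \cotr{C}{c'}$. Par l'adjonction définissant la tranche $\cotr{C}{c}$, le \oo-foncteur $(k, H, b)^\ast : T \to \cotr{C}{c}$ correspond à un \oo-foncteur $\Phi : \nu(K) \joint T \to C$ se restreignant à $c = b\nu(g)$ le long de $\iota_1 : \nu(K) \to \nu(K) \joint T$ ; par ailleurs, d'après la construction du \oo-foncteur d'oubli rappelée au paragraphe~\ref{paragr:fonct_tr_sur_1} (appliquée à $\ooCat$ munie du joint), le \oo-foncteur $U \comp (k, H, b)^\ast : T \to C$ s'obtient en restreignant $\Phi$ le long de $\iota_2 : T \to \nu(K) \joint T$. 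Il suffit donc de vérifier que cette restriction $\Phi \circ \iota_2$ coïncide avec le composé $\Dn{1} \otimes \cotr{C}{c'} \xto{\kappa \otimes \cotr{C}{c'}} \cotr{C}{c'} \xto{U'} C$, autrement dit que le \oo-foncteur $\Phi$ ne fait, sur les cellules de $\nu(K) \joint T$ provenant de $T$ \emph{via} $\iota_2$, qu'appliquer $\kappa$ sur le facteur $\Dn{1}$ puis le \oo-foncteur d'oubli $U'$.

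La seconde étape, qui est le cœur de la preuve, est alors la suivante : dans la construction du théorème~11.4.2, les données $k$ (antihomotopie de $f$ vers $f'$) et $H$ ($2$-antihomotopie de $g'k + h$ vers $h'$) ne mettent en jeu que les complexes $K$, $K'$ et $L$ ; combinées avec $b$, elles ne fabriquent des cellules de $C$ qu'au-dessus de la composante $\nu(K)$ et des cellules mixtes de $\nu(K) \joint T$, la construction restant égale à l'identité sur la partie provenant de $T$. C'est exactement le mécanisme qui fait d'une part que $f \joint T$ se restreint à $\id{T}$ le long de $\iota_2$, et d'autre part que le \oo-foncteur $(f, h, b)^\ast$ est au-dessus de $C$ ; il s'agit ici de le vérifier une dimension plus haut, pour la transformation oplax $(k, H, b)^\ast$ plutôt que pour un \oo-foncteur. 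Le principal obstacle est donc de suivre assez précisément la définition technique de $(k, H, b)^\ast$ dans~\cite{AraMaltsiJoint} pour contrôler sa composante selon $\iota_2$ ; une fois ceci fait, cette restriction est bien la transformation oplax identité de $U'$, ce qui conclut. En pratique, le plus économique est d'invoquer directement la proposition correspondante de~\cite{AraMaltsiJoint}, dont le présent énoncé n'est qu'une reformulation.
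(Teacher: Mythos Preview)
Your proposal is correct and aligns with the paper's own treatment: the paper simply cites \cite[proposition 11.5.6]{AraMaltsiJoint}, which is precisely what you arrive at after your informal unpacking of the construction. Your sketch of the mechanism (reduction via the joint adjunction to the restriction along $\iota_2$, where $k$ and $H$ do not contribute) is a reasonable gloss on why that external result holds, but the paper itself offers no such elaboration and defers entirely to the reference.
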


\begin{proof}
  Voir \cite[proposition 11.5.6]{AraMaltsiJoint}.
\end{proof}

\begin{proposition}\label{prop:fonct_cone_id}
  Soit
  \[
    \shorthandoff{;}
    \xymatrix@C=1.5pc{
      K \ar[rr]^f \ar[dr]_{g}_{}="f" & & K' \ar[dl]^(0.42){g'} \\
      & L
      \ar@{}"f";[ur]_(.15){}="ff"
      \ar@{}"f";[ur]_(.55){}="oo"
      \ar@<-0.5ex>@2"ff";"oo"^{h}
      &
    }
  \]
  un diagramme de complexes de Steiner forts, avec $g'$ une inclusion rigide
  ordonnée et $h$ une antihomotopie de $g$ vers $g'f$. Considérons le
  diagramme
  \[
    \shorthandoff{;:}
    \xymatrix@C=1.5pc@R=3pc{
    K \ar@/^2ex/[rr]^(.33){f}_{}="1" \ar@/_2ex/[rr]^(.30)f_{}="0"
    \ar[dr]_{}="f"_{\phantom{g'}g}
    \ar@2"0";"1"_{\id{f}}
    & & K' \ar[dl]^{g'} \\
    & L
    \ar@{}"f";[ur]_(.15){}="ff"
    \ar@{}"f";[ur]_(.55){}="oo"
    \ar@<-0.5ex>@/^1ex/@{:>}"ff";"oo"^(.18){h\!\!}_(.30){}="h'"
    \ar@<-2.0ex>@/^-1ex/@2"ff";"oo"_(.36){h}_(.80){}="h"
    \ar@3"h";"h'"_(.20){\,\id{h}}
    & \pbox{.}
    }
  \]
  Alors on a
  \[ (\id{f}, \id{h}, b)^\ast = \id{(f, h, b)^\ast}. \]
\end{proposition}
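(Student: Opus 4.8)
The plan is to unwind the construction of the oplax transformation $(k, H, b)^\ast$ supplied by \cite[théorème 11.4.2]{AraMaltsiJoint}, applied to the degenerate data $f' = f$, $h' = h$, $k = \id f$ and $H = \id h$, and to check that it collapses to $\id{(f, h, b)^\ast}$. Recall that $(k, H, b)^\ast$ is, by construction, a $1$-cell of $\HomOpLax(\cotr{C}{c'}, \cotr{C}{c})$, i.e.\ a \oo-functor $\Dn{1} \otimes \cotr{C}{c'} \to \cotr{C}{c}$, with source $(f', h', b)^\ast$ and target $(f, h, b)^\ast$; in our situation both of these equal $(f, h, b)^\ast$. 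On the other hand, by the description of identities in \ref{paragr:def_sesqui_oplax}, the oplax transformation $\id{(f, h, b)^\ast}$ is the composite $\Dn{1} \otimes \cotr{C}{c'} \xto{\kappa \otimes \cotr{C}{c'}} \cotr{C}{c'} \xto{(f, h, b)^\ast} \cotr{C}{c}$, where $\kappa$ is the \oo-functor of \ref{paragr:def_Dn}. So it suffices to identify the former \oo-functor with this composite.

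To do so, I would trace through the proof of \cite[théorème 11.4.2]{AraMaltsiJoint}: the \oo-functor defining $(k, H, b)^\ast$ is produced, via the adjunction defining the slices, from morphisms, antihomotopies and a $2$-antihomotopy at the level of the joined strong Steiner complexes, all built out of $h$, $h'$, $k$ and $H$. The key observation is that $k$ and $H$ enter these formulas only through their components, and that under our hypotheses these components all vanish, since $(\id f)_i = 0$ and $(\id h)_i = 0$ for every $i$ by \ref{paragr:def_antihomot}. Hence the part of the cone datum that lives in the $\Dn{1}$-direction is trivial, and the cone construction of \cite[théorème 11.4.2]{AraMaltsiJoint} degenerates to the construction of \cite[théorème 11.2.2]{AraMaltsiJoint} that defines $(f, h, b)^\ast$, precomposed with the projection $\kappa \otimes \cotr{C}{c'}$. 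This is precisely the composite above, which yields the equality $(\id f, \id h, b)^\ast = \id{(f, h, b)^\ast}$; just as with Proposition~\ref{prop:fonct_tri_id}, the statement is essentially the degenerate case of the composition law for these functorialities of slices.

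The main obstacle will be the bookkeeping of conventions: one must match the normalizations for the identity antihomotopy and the identity $2$-antihomotopy of \ref{paragr:def_antihomot} with the precise formulas in \cite[théorème 11.4.2]{AraMaltsiJoint}, so as to be certain that vanishing data produces the constant \oo-functor in the $\Dn{1}$-direction on the nose, not merely up to a canonical isomorphism. If a direct verification turns out to be too long, an alternative is to search for a characterizing property of the output of \cite[théorème 11.4.2]{AraMaltsiJoint} and to check that $\id{(f, h, b)^\ast}$ satisfies the property that defines $(\id f, \id h, b)^\ast$; but this presupposes such a characterization is available, which is not obviously the case here.
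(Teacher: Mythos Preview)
Your sketch is headed in the right direction, but it is worth noting that the paper does not prove this statement at all: its proof is simply the citation ``Voir \cite[proposition 11.5.2]{AraMaltsiJoint}.'' All of the functoriality results in this block (Propositions~\ref{prop:fonct_tri_comm}--\ref{prop:fonct_cone_2}) are imported wholesale from \cite{AraMaltsiJoint}, and the present paper treats them as black boxes.

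What you are proposing is essentially to reprove \cite[proposition 11.5.2]{AraMaltsiJoint} from scratch by unwinding the construction of \cite[théorème 11.4.2]{AraMaltsiJoint}. Your heuristic---that because $(\id f)_i = 0$ and $(\id h)_i = 0$ the cone data in the $\Dn{1}$-direction is trivial, so the construction collapses to $(f,h,b)^\ast$ precomposed with $\kappa \otimes \cotr{C}{c'}$---is the correct intuition, and is indeed how the argument in \cite{AraMaltsiJoint} proceeds. But as you yourself note, turning this into an actual proof requires opening up the explicit formulas of \cite[théorème 11.4.2]{AraMaltsiJoint} and checking the normalizations; that verification lives in \cite{AraMaltsiJoint}, not here. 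For the purposes of this paper, the appropriate proof is the one-line citation.
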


\begin{proof}
  Voir \cite[proposition 11.5.2]{AraMaltsiJoint}.
\end{proof}

\begin{proposition}\label{prop:sesqui_cone_1}
  Soit
  \[
      \shorthandoff{;:}
      \xymatrix@C=3.5pc@R=3.5pc{
      K  \ar[r]^f \ar[dr]_{}="g"_(.61){\phantom{g''}g} &
      K' \ar@/^2ex/[r]^(.33){f''}_{}="1"
      \ar@/_2ex/[r]^(.30){f'}_{}="0"_(.70){}="fp"
      \ar[d]_(.50){}="gp2"_(.20){}="gp"^(0.73){g'} &
      K'' \ar[dl]^(.61){g''} &
      \ar@2"0";"1"_{k} \\
        & L
      \ar@{}"g";[u]_(0.10){}="x"
      \ar@{}"g";[u]_(.75){}="y"
      \ar@<-0.1ex>@2"x";"y"^(.30)h
      \ar@{}"gp2";"fp"_(.10){}="ff2"
      \ar@{}"gp2";"fp"_(.55){}="oo2"
      \ar@<+0.5ex>@/^1ex/@{:>}"ff2";"oo2"^{\!\!h''}_(.30){}="h'''"
      \ar@<-0.5ex>@/^-1.5ex/@2"ff2";"oo2"_(.50){\!\!\!h'}_(.80){}="h''"
      \ar@3"h''";"h'''"_(.20){H_{}}
      }
  \]
  un diagramme de complexes de Steiner forts, où $g'$ et $g''$ sont des
  inclusions rigides ordonnés, $h$, $h'$, $h''$ et $k$ sont des
  antihomotopies de $g$ vers $g'f$, de $g'$ vers $g''f'$, de $g'$ vers
  $g''f''$ et de $f'$ vers $f''$ respectivement et $H$ est une
  $2$-antihomotopie de $g''k + h'$ vers $h''$ (qui sont deux antihomotopies
  de $g'$ vers $g''f''$). Considérons le diagramme composé
  \[
    \shorthandoff{;:}
    \xymatrix@C=1.5pc@R=3pc{
      K \ar@/^2ex/[rr]^(.33){f''f}_{}="1" \ar@/_2ex/[rr]^(.30){f'f}_{}="0"
      \ar[dr]_{}="f"_{\phantom{g'}g}
      \ar@2"0";"1"_{kf}
      & & K'' \ar[dl]^{g''} \\
      & L
      \ar@{}"f";[ur]_(.15){}="ff"
      \ar@{}"f";[ur]_(.55){}="oo"
      \ar@<-0.5ex>@/^1ex/@{:>}"ff";"oo"^(.18){\!\!}_(.30){}="h'"
      \ar@<-2.0ex>@/^-1ex/@2"ff";"oo"_(.36){}_(.80){}="h"
      \ar@3"h";"h'"_(.20){} & \pbox{,}
      }
  \]
  la $2$-cellule courbée de devant étant l'antihomotopie $h'f + h$, celle de
  derrière l'antihomotopie $h''f + h$ et la $3$-cellule la $2$-antihomotopie
  $Hf$. Alors le composé
  \[
   \shorthandoff{;}
    \xymatrix@C=4pc{
      \cotr{C}{c''}
      \ar@/^2.5ex/[r]^{(f'', h'', b)^\ast}_{}="0"
      \ar@/_2.5ex/[r]_{(f', h', b)^\ast}^{}="1"
      \ar@2"0";"1"
      &
      \cotr{C}{c'}
      \ar[r]^{(f, h, b)^\ast}
      &
      \cotr{C}{c}
      \pbox{,}
    }
  \]
  où la $2$-cellule du diagramme est $(k, H, b)^\ast$ et où on a posé \[ c =
  b\nu(g), \quad c' = b\nu(g') \quadet c'' = b\nu(g''), \]
  est égal à
  \[
   \shorthandoff{;}
   (kf, Hf, b)^\ast :
    \xymatrix@C=4pc{
      \cotr{C}{c''}
      \ar@/^2.5ex/[r]^{(f''f, h''f + h, b)^\ast}_{}="0"
      \ar@/_2.5ex/[r]_{(f'f, h'f + h , b)^\ast}^{}="1"
      \ar@2"0";"1"
      & \cotr{C}{c} \pbox{.}
    }
  \]
  Autrement dit, on a
   \[ (f, h, b)^\ast \comp (k, H, b)^\ast = (kf, Hf, b)^\ast. \]
\end{proposition}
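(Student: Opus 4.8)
The statement is one of the slice-functoriality properties established in \cite[section 11]{AraMaltsiJoint}, so the shortest route is simply to cite the corresponding result there; here I describe a direct plan. First I would use proposition~\ref{prop:fonct_tri} to pin down the source and target of both sides: since $(f, h, b)^\ast \comp (f'', h'', b)^\ast = (f''f, h''f + h, b)^\ast$ and $(f, h, b)^\ast \comp (f', h', b)^\ast = (f'f, h'f + h, b)^\ast$, both $(f,h,b)^\ast \comp (k,H,b)^\ast$ and $(kf,Hf,b)^\ast$ are oplax transformations $\cotr{C}{c''} \to \cotr{C}{c}$ from $(f''f, h''f+h, b)^\ast$ to $(f'f, h'f+h, b)^\ast$, so it remains only to compare their "$2$-cell components", viewed as $\infty$-functors $\Dn{1} \otimes \cotr{C}{c''} \to \cotr{C}{c}$.

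To carry this out, I would pass, via the join–slice adjunction recalled above for $c = b\nu(g)$ — that is, $\Hom_{\ooCat}(T, \cotr{C}{c}) \simeq \Hom_{\cotr{\ooCat}{\nu(K)}}((\nu(K) \joint T, \iota_1), (C, c))$ — to the explicit description of the cone transformation in \cite[théorème 11.4.2]{AraMaltsiJoint}, which builds $(k, H, b)^\ast$ out of joins and tensor products of Steiner complexes from the data $b$, the rigid ordered inclusions $g'$, $g''$, the morphisms $f'$, $f''$ and the antihomotopies $h'$, $h''$, $k$ together with the $2$-antihomotopy $H$. The heart of the plan is then to check that whiskering by $(f, h, b)^\ast$ amounts, at this level, to precomposing the cone datum on the "small" side $K$ by $f$, and that this precomposition carries $k$ to $kf$, the antihomotopies $h'$, $h''$ of $g'$ to $h'f$, $h''f$ (hence, after adding $h$, to $h'f + h$ and $h''f + h$, exactly as in proposition~\ref{prop:fonct_tri}), and $H$ to $Hf$, all compatibly. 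Since antihomotopies and $2$-antihomotopies are added and composed degree by degree, this is a routine graded computation, the "whiskering" analogue of the composition law of proposition~\ref{prop:fonct_tri} and of the identity compatibility of proposition~\ref{prop:fonct_cone_id}.

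The main obstacle I anticipate is purely the bookkeeping: I would have to verify that precomposition with $f$ on $K$ leaves intact the rigidity and orderedness hypotheses on $g'$, $g''$ (paragraph~\ref{paragr:def_rig_ord}) that make the slice functors and cone transformations well-defined, the identification of $\nu$ of a join of Steiner complexes with the $\infty$-categorical join, and the sign conventions in the antihomotopy and $2$-antihomotopy formulas of \cite[section 11]{AraMaltsiJoint}. Once these compatibilities are in place, the equality $(f, h, b)^\ast \comp (k, H, b)^\ast = (kf, Hf, b)^\ast$ follows, as in propositions~\ref{prop:fonct_tri} and~\ref{prop:fonct_cone_id}, from the functoriality of the cone-transformation construction in the Steiner data.
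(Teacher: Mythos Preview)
Your proposal is correct and in fact subsumes the paper's own proof: the paper simply writes ``Voir \cite[proposition 11.5.4]{AraMaltsiJoint}'', which is precisely the ``shortest route'' you identify in your first sentence. Your additional sketch of a direct argument via the join--slice adjunction and the explicit cone-transformation construction of \cite[théorème 11.4.2]{AraMaltsiJoint} goes beyond what the paper provides here, and is a reasonable outline of how the result is actually established in \cite{AraMaltsiJoint}.
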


\begin{proof}
  Voir \cite[proposition 11.5.4]{AraMaltsiJoint}.
\end{proof}

\begin{proposition}\label{prop:fonct_cone_2}
  Soit
  \[
    \shorthandoff{;:}
    \xymatrix@C=3.5pc@R=3.5pc{
    K
    \ar@/^2ex/[r]^(.33){f'}_{}="1"
    \ar@/_2ex/[r]^(.30){f}_{}="0"_(.70){}="f"
    \ar[dr]_{}="g"_(.61){\phantom{g''}g}
    \ar@2"0";"1"_{k}
    &
    K' \ar[r]^{f''}_(.75){}="fp"
       \ar[d]_(.70){}="gp2"_(.20){}="gp"^(0.73){g'}
    &
    K'' \ar[dl]^(.61){g''}
    \\
    & L
    \ar@{}"g";"gp"_(.15){}="ff1"
    \ar@{}"g";"gp"_(.80){}="oo1"
    \ar@<-0.0ex>@/^1ex/@{:>}"ff1";"oo1"^(.35){h'\!\!}_(.30){}="h'"
    \ar@<-1.0ex>@/^-1.5ex/@2"ff1";"oo1"_(.36){\!\!\!h}_(.80){}="h"
    \ar@3"h";"h'"_(.20)H
    \ar@{}"gp2";"fp"_(.25){}="x2"
    \ar@{}"gp2";"fp"_(.75){}="y2"
    \ar@<0.4ex>@2"x2";"y2"^{h''}
    }
  \]
  un diagramme de complexes de Steiner forts, où $g'$ et $g''$ sont des
  inclusions rigides ordonnées, $h$, $h'$, $h''$ et $k$ sont des
  antihomotopies de $g$ vers $g'f$, de $g$ vers $g'f'$, de~$g'$ vers
  $g''f''$ et de $f$ vers $f'$ respectivement et $H$ est une
  $2$-antihomotopie de $g'k + h$ vers~$h'$ (qui sont deux antihomotopies de
  $g$ vers $g'f'$). Considérons le diagramme composé
  \[
    \shorthandoff{;:}
    \xymatrix@C=1.5pc@R=3pc{
      K \ar@/^2ex/[rr]^(.33){f''\!f'}_{}="1" \ar@/_2ex/[rr]^(.30){f''\!f}_{}="0"
      \ar[dr]_{}="f"_{\phantom{g'}g}
      \ar@2"0";"1"_{f''\!k}
      & & K'' \ar[dl]^{g''} \\
      & L
      \ar@{}"f";[ur]_(.15){}="ff"
      \ar@{}"f";[ur]_(.55){}="oo"
      \ar@<-0.5ex>@/^1ex/@{:>}"ff";"oo"^(.18){\!\!}_(.30){}="h'"
      \ar@<-2.0ex>@/^-1ex/@2"ff";"oo"_(.36){}_(.80){}="h"
      \ar@3"h";"h'"_(.20){\,\phantom{H'}} & \pbox{,}
      }
  \]
  la $2$-cellule courbée de devant étant l'antihomotopie $h''f + h$, celle de
  derrière l'antihomotopie $h''f + h'$ et la $3$-cellule la $2$-antihomotopie
  $h''k + H$. Alors le composé
  \[
   \shorthandoff{;}
    \xymatrix@C=4pc{
      \cotr{C}{c''}
      \ar[r]^{(f'', h'', b)^\ast}
      &
      \cotr{C}{c'}
      \ar@/^2.5ex/[r]^{(f', h', b)^\ast}_{}="0"
      \ar@/_2.5ex/[r]_{(f, h, b)^\ast}^{}="1"
      \ar@2"0";"1"
      &
      \cotr{C}{c'}
      \pbox{,}
    }
  \]
  où la $2$-cellule du diagramme est $(k, H, b)^\ast$ et où on pose toujours
  \[ c = b\nu(g), \quad c' = b\nu(g') \quadet c'' = b\nu(g''), \]
  est égal à
  \[
   \shorthandoff{;}
   (f''k, h''k + H, b)^\ast :
    \xymatrix@C=4pc{
      \cotr{C}{c''}
      \ar@/^2.5ex/[r]^{(f''f', h''f' + h', b)^\ast}_{}="0"
      \ar@/_2.5ex/[r]_{(f''f, h''f + h , b)^\ast}^{}="1"
      \ar@2"0";"1"
      & \cotr{C}{c} \pbox{.}
    }
  \]
  Autrement dit, on a
  \[ (k, H, b)^\ast \comp (f'', h'', b)^\ast = (f''k, h''k + H, b)^\ast. \]
\end{proposition}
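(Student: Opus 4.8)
La stratégie est de reprendre la méthode directe de la
proposition~\ref{prop:sesqui_cone_1}, dont l'énoncé présent est le pendant
pour la précomposition : cette proposition traitait la postcomposition de la
transformation oplax $(k,H,b)^\ast$ par un \oo-foncteur, alors qu'ici on la
précompose par le \oo-foncteur $(f'',h'',b)^\ast$. On commencerait par
expliciter, à partir des théorèmes~11.2.2 et~11.4.2 de~\cite{AraMaltsiJoint},
les descriptions cellule par cellule du \oo-foncteur $(f'',h'',b)^\ast :
\cotr{C}{c''} \to \cotr{C}{c'}$ et de la transformation oplax $(k,H,b)^\ast$,
obtenues toutes deux \emph{via} l'adjonction entre le joint et les tranches,
de sorte qu'une $i$\nbd-cellule de $\cotr{C}{c}$ s'identifie à un
\oo-foncteur $\nu(K) \joint \Dn{i} \to C$ se restreignant en $c = b\nu(g)$
sur $\nu(K)$, et de même pour $c'$ et $c''$. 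En rappelant, d'après le
paragraphe~\ref{paragr:def_sesqui_oplax}, que $(k,H,b)^\ast \comp
(f'',h'',b)^\ast$ est par définition le composé
\[
  \Dn{1}\otimes\cotr{C}{c''}
  \xto{\,\Dn{1}\otimes(f'',h'',b)^\ast\,}
  \Dn{1}\otimes\cotr{C}{c'}
  \xto{\,(k,H,b)^\ast\,}
  \cotr{C}{c},
\]
le but serait d'identifier ce composé au \oo-foncteur
$\Dn{1}\otimes\cotr{C}{c''}\to\cotr{C}{c}$ définissant $(f''k, h''k+H, b)^\ast$.

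On ramènerait ensuite cette égalité à une identité combinatoire au niveau des
complexes dirigés augmentés. Évalués sur une $i$\nbd-cellule $x :
\nu(K'')\joint\Dn{i}\to C$ de $\cotr{C}{c''}$, les deux membres s'obtiennent
en combinant $b$ avec des \oo-foncteurs construits à partir des joints de
$\Dn{1}$, $\Dn{i}$ et des $\nu$ des complexes en jeu, les antihomotopies $h$,
$h'$, $h''$ et $k$ intervenant \emph{via} les opérations d'addition et de
composition du paragraphe~\ref{paragr:def_antihomot}. En utilisant la
fonctorialité du joint et du foncteur $\nu$, ainsi que la
proposition~\ref{prop:tens_morph} pour contrôler les atomes que $\nu$ envoie
sur des identités itérées, on se ramènerait à vérifier, d'une part, que les
antihomotopies source et but du composé sont $h''f+h$ et $h''f'+h'$
(conformément à la proposition~\ref{prop:fonct_tri}) et, d'autre part, que la
$2$-antihomotopie $g''(f''k)+(h''f+h)\Rightarrow h''f'+h'$ qu'il produit
coïncide avec $h''k+H$, c'est-à-dire avec la $2$-antihomotopie obtenue en
composant $H$ avec $h''$ du côté approprié. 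Cette dernière identité se
vérifierait degré par degré à partir des formules de~\ref{paragr:def_antihomot},
en exploitant le fait que $h''$, issue de l'inclusion rigide ordonnée $g'$,
se comporte de façon compatible vis-à-vis de $k$ et de $H$, et en prenant
garde aux signes $(-1)^i$.

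La principale difficulté sera précisément cette gestion des signes et des
degrés : la construction cône vit une dimension plus haut et la
précomposition décale les degrés concernés, de sorte qu'il faut s'assurer que
le terme « $h''k$ » de l'énoncé est bien le composé de $h''$ et $k$ qui
apparaît effectivement. Il paraît prudent de traiter d'abord les cas plus
simples des propositions~\ref{prop:fonct_cone_id} et~\ref{prop:sesqui_cone_1},
l'énoncé présent exprimant, une fois ceux-ci acquis, la $2$\nbd-fonctorialité
de $(-,-,b)^\ast$. Une réduction formelle à la
proposition~\ref{prop:sesqui_cone_1} via l'une des dualités $C\mapsto C^\op$
ou $C\mapsto C^\co$ n'est en revanche pas disponible : ces dualités échangent
transformations oplax et lax mais \emph{ne} permutent \emph{pas} la pré- et la
postcomposition du whiskering ; le calcul direct esquissé ci-dessus semble
donc incontournable.
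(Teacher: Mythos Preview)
The paper does not prove this proposition; its proof consists entirely of the reference ``Voir \cite[proposition 11.5.8]{AraMaltsiJoint}''. There is therefore no argument in the present paper to compare your proposal against.

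Your sketch is a reasonable outline of what the proof in the cited reference presumably contains: unwinding the definitions of $(f'',h'',b)^\ast$ and $(k,H,b)^\ast$ via the joint--tranche adjunction, reducing the equality of \oo-foncteurs $\Dn{1}\otimes\cotr{C}{c''}\to\cotr{C}{c}$ to an identity at the level of the underlying morphisms of complexes dirigés augmentés, and then verifying that identity componentwise. Your identification of the source and but antihomotopies as $h''f+h$ and $h''f'+h'$ via proposition~\ref{prop:fonct_tri} is correct, and your observation that no duality reduces this case to proposition~\ref{prop:sesqui_cone_1} is also correct. What your sketch leaves implicit is the precise mechanism by which the transformation oplax $(k,H,b)^\ast$ is encoded at the chain level in \cite{AraMaltsiJoint}: the theorems~11.2.2 and~11.4.2 you invoke construct these morphisms through auxiliary morphisms of the form $K\joint(\cn(\Deltan{1})\otimes T)\to L\amalg_{K'}(K'\joint T)$ (cf.~the paragraphs~\ref{paragr:desc_fonct_tri} and~\ref{paragr:desc_fonct_cone} of the present paper), and the actual verification in~\cite{AraMaltsiJoint} proceeds by comparing such morphisms rather than by a bare degree-by-degree check of antihomotopies. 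Your outline is compatible with this, but the step ``on se ramènerait à vérifier'' hides the main technical content.
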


\begin{proof}
  Voir \cite[proposition 11.5.8]{AraMaltsiJoint}.
\end{proof}

\begin{remark}
  Les transformations oplax $(k, H, b)^\ast$ sont également compatibles à la
  composition verticale des cônes (voir \cite[proposition
  11.5.10]{AraMaltsiJoint}) mais nous n'aurons pas besoin de cette
  fonctorialité dans ce texte. L'ensemble de ces résultats de fonctorialités
  des tranches peut s'exprimer par l'existence d'un sesquifoncteur. La
  source de ce sesquifoncteur est liée à la construction de la sesquicatégorie
  tranche d'une \oo-catégorie de Gray qui sera étudiée dans
  l'appendice~\ref{app:tr_comma}. D'ailleurs, la $2$\nbd-antihomotopie~$h''k$
  apparaissant dans la proposition précédente est la « contrainte de Gray »
  pour la composition horizontale des antihomotopies $k$ et $h''$.
\end{remark}

\section{Un théorème A \pdfoo-catégorique pour les triangles commutatifs}

\begin{paragraph}\label{paragr:def_orient}
  Considérons la catégorie $\ooCat$ des \oo-catégories munie du joint
  \oo-catégorique. À cette catégorie monoïdale, le
  paragraphe~\ref{paragr:permet_thmA_mon} associe un foncteur
  \[
      \cOAug : \cDeltaAug \to \ooCat
  \]
  défini par
  \[ \Deltan{n} \mapsto \On{n} = \On{0} \joint \cdots \joint \On{0}, \]
  où $\On{0} = \Dn{0}$ apparaît $n + 1$ fois, ainsi que, par restriction,
  un objet cosimplicial
  \[
      \cO : \cDelta \to \ooCat
  \]
  et donc un foncteur nerf
  \[
      N : \ooCat \to \pref{\cDelta} \\
  \]
  défini par
  \[
      C \mapsto (\Deltan{n} \mapsto \Hom_{\ooCat}(\On{n}, C)).
  \]
  On appellera $\On{n}$ le \ndef{$n$-ième oriental} et $N$ le \ndef{nerf de
  Street}. Ces objets coïncident avec ceux définis par Street dans
  \cite{StreetOrient} en vertu de \cite[chapitre 7]{AraMaltsiJoint}. En
  particulier, on a
  \[
    \shorthandoff{;}
    \On{0} = \Dn{0} = \xymatrix{\bullet}, \qquad
    \On{1} = \Dn{1} = \xymatrix{\bullet \ar[r] & \bullet}
    \quadet
    \On{2} =
    \raisebox{1.5pc}{
    $\xymatrix@C=1.5pc{
      \bullet \ar[rr] \ar[dr]_{}="f" & & \bullet \ar[dl] \\
      & \bullet
      \ar@{}"f";[ur]_(.15){}="ff"
      \ar@{}"f";[ur]_(.55){}="oo"
      \ar@<-0.5ex>@2"ff";"oo"
    }$
    }
    \text{.}
  \]

  On dira qu'un \oo-foncteur $u : A \to B$ est une
  \ndef{équivalence de Thomason} si $u$ est une équivalence faible au sens
  du paragraphe~\ref{paragr:permet_thmA_mon}, c'est-à-dire si son nerf
  $N(u)$ est une équivalence faible simpliciale. On dira qu'une
  \oo-catégorie $C$ est \ndef{asphérique} si l'unique \oo-foncteur de $C$
  vers la \oo-catégorie terminale est une équivalence de Thomason.
\end{paragraph}

\begin{proposition}\label{prop:nerf_op}
  Soit $C$ une \oo-catégorie. On a un isomorphisme canonique d'ensembles
  simpliciaux
  \[ N(C^\op) \simeq N(C)^\op, \]
  naturel en $C$.
\end{proposition}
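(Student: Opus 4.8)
\noindent
Le plan est de ramener l'énoncé à une compatibilité entre l'objet cosimplicial $\cO : \cDelta \to \ooCat$ et la dualité impaire, à savoir à l'existence d'un isomorphisme de foncteurs
\[ \cO \o \DDelta \simeq ({-})^\op \o \cO : \cDelta \to \ooCat. \]
Admettons cet isomorphisme. La proposition s'en déduit formellement : comme $({-})^\op = D_J$, où $J$ est l'ensemble des entiers impairs strictement positifs, est un automorphisme involutif de la catégorie $\ooCat$, on dispose d'une bijection naturelle $\Hom_{\ooCat}(A, D_J(C)) \simeq \Hom_{\ooCat}(D_J(A), C)$, d'où, naturellement en $C$,
\[
\begin{aligned}
  N(C^\op) &= \Hom_{\ooCat}(\cO({-}), C^\op) = \Hom_{\ooCat}(\cO({-}), D_J(C)) \\
  &\simeq \Hom_{\ooCat}(D_J(\cO({-})), C) \simeq \Hom_{\ooCat}(\cO(\DDelta({-})), C) = N(C) \o \DDelta = N(C)^\op,
\end{aligned}
\]
où le deuxième isomorphisme est obtenu en appliquant le foncteur contravariant $\Hom_{\ooCat}({-}, C)$ à l'isomorphisme de foncteurs cosimpliciaux ci-dessus. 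La compatibilité de cette chaîne aux opérateurs simpliciaux est précisément la naturalité en $\cDelta$ de cet isomorphisme.

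\noindent
Pour construire l'isomorphisme $\cO \o \DDelta \simeq ({-})^\op \o \cO$, on se place au niveau augmenté et on utilise la propriété universelle de $\cDeltaAug$ rappelée au paragraphe~\ref{paragr:permet_thmA_mon}. D'une part, l'automorphisme $\DDeltaAug : \cDeltaAug \to \cDeltaAug$ renverse le produit monoïdal $\amalg$ : en inversant l'ordre de $\Deltan{m} \amalg \Deltan{n} = \Deltan{m + 1 + n}$, l'image par $\DDeltaAug$ du facteur $\Deltan{n}$ vient en premier et celle du facteur $\Deltan{m}$ en second, de sorte que $\DDeltaAug$ est un foncteur monoïdal de $(\cDeltaAug, \amalg, \Deltan{-1})$ vers la catégorie monoïdale renversée, fixant l'unité $\Deltan{-1}$. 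D'autre part, la proposition~\ref{prop:dual_joint} affirme exactement que $({-})^\op$ est un foncteur monoïdal de $(\ooCat, \joint, \vide)$ vers la catégorie monoïdale obtenue en échangeant les deux facteurs du joint, et il fixe l'unité puisque $\vide^\op = \vide$. Par conséquent, les deux composés $\cOAug \o \DDeltaAug$ et $({-})^\op \o \cOAug$ sont des foncteurs monoïdaux de $(\cDeltaAug, \amalg, \Deltan{-1})$ vers $(\ooCat, \joint', \vide)$, où $\joint'$ désigne le joint dont on a échangé les facteurs, et tous deux envoient $\Deltan{0}$ sur la \oo-catégorie terminale $\On{0} = \Dn{0}$.

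\noindent
Comme $\On{0} = \Dn{0}$ est terminale, elle est munie d'une unique structure de monoïde dans la catégorie monoïdale $(\ooCat, \joint', \vide)$ ; la propriété universelle de la catégorie des simplexes augmentée (voir~\cite[chapitre~VII, section~5]{MacLane}) fournit alors un unique isomorphisme monoïdal entre deux foncteurs monoïdaux quelconques $\cDeltaAug \to (\ooCat, \joint', \vide)$ envoyant $\Deltan{0}$ sur ce monoïde. Appliquée à $\cOAug \o \DDeltaAug$ et à $({-})^\op \o \cOAug$, elle donne l'isomorphisme voulu de foncteurs monoïdaux, en particulier de foncteurs tout court, et en le restreignant le long de l'inclusion $\cDelta \hookto \cDeltaAug$ on obtient $\cO \o \DDelta \simeq ({-})^\op \o \cO$, ce qui achève la démonstration.

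\noindent
Le seul point un peu délicat est de vérifier soigneusement que $\DDeltaAug$ échange $\amalg$ avec son renversé — un court calcul sur les ordres totaux renversés — et que les deux composés ci-dessus sont bien monoïdaux pour le joint renversé $\joint'$ ; une fois ce cadre mis en place, c'est la clause d'unicité dans la propriété universelle de $\cDeltaAug$ qui fait tout le travail restant, et le passage aux nerfs est purement formel.
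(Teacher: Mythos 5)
Votre preuve est correcte et suit essentiellement le même chemin que celle du texte : réduction à un isomorphisme $\cO \o \DDelta \simeq ({-})^\op \o \cO$, obtenu au niveau augmenté via la monoïdalité de $\DDeltaAug$ et de $({-})^\op$ (proposition~\ref{prop:dual_joint}) vers les structures monoïdales « renversées », puis unicité de la structure de monoïde sur l'objet final $\Dn{0}$ et propriété universelle de $\cDeltaAug$; le passage aux nerfs est le même calcul formel que dans le texte.
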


\begin{proof}
  Si $\C$ est une catégorie monoïdale de produit tensoriel $\otimes$, on
  appellera dans cette preuve \ndef{transposée} de $\C$ la catégorie
  monoïdale $\trans{\C}$ de même catégorie sous-jacente et de produit
  tensoriel $(X, Y) \mapsto Y \otimes X$. Notons que la transposée est
  fonctorielle sur les foncteurs monoïdaux de manière évidente.
  Avec cette terminologie, la
  proposition~\ref{prop:dual_joint} affirme que la dualité \oo-catégorique
  $C \mapsto C^\op$ est un foncteur monoïdal de la catégorie $\ooCat$ munie
  du joint vers sa transposée. De même, l'automorphisme~$\DDeltaAug$ de la
  catégorie $\cDeltaAug$ (voir le paragraphe~\ref{paragr:def_D}) est un
  foncteur monoïdal de $\cDeltaAug$ munie de la somme ensembliste vers sa
  transposée. Ainsi, on dispose d'un carré
  \[
    \xymatrix{
      \cDeltaAug \ar[r]^-{\cOAug} \ar[d]_{\DDeltaAug} & \ooCat \ar[d]^{\op} \\
      \trans{\!\cDeltaAug} \ar[r]_-{\trans{\cOAug}} & \trans{\ooCat} \\
    }
  \]
  de foncteurs monoïdaux, les structures de catégories monoïdales étant
  celles mentionnées ci-dessus. Nous allons montrer que ce carré est
  commutatif à isomorphisme canonique près. Notons que les deux foncteurs que
  l'on veut comparer envoient $\Deltan{0}$ sur $\Dn{0}$. Ainsi, en vertu de
  la propriété universelle de $\cDeltaAug$ \cite[chapitre VII,
  section~5]{MacLane}, chacun de ces foncteurs monoïdaux correspond à une
  structure de monoïde sur $\Dn{0}$ dans~$\trans{\ooCat}$. Or, $\Dn{0}$
  étant un objet final, il existe une unique telle structure. La propriété
  universelle de $\cDeltaAug$ fournit donc l'isomorphisme recherché.

  Par restriction, on obtient un isomorphisme entre les objets cosimpliciaux
  en \oo-catégories $\cO \circ \DDelta$ et $\op \circ \cO$. Ainsi, les deux
  foncteurs nerfs associés sont canoniquement isomorphes. Or, le foncteur
  nerf associé au premier objet cosimplicial envoie une \oo-catégorie $C$
  sur $N(C) \circ \DDelta = N(C)^\op$, alors que le foncteur nerf associé au
  second envoie~$C$ sur
  $\Hom_{\ooCat}(\On{\var}^\op, C) \simeq \Hom_{\ooCat}(\On{\var}, C^\op) \simeq
  N(C^\op)$, d'où le résultat.
\end{proof}

\begin{corollary}\label{coro:Thom_op}
  Un \oo-foncteur $u  : A \to B$ est une équivalence de Thomason si et
  seulement si $u^\op : A^\op \to B^\op$ en est une.
\end{corollary}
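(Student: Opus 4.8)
The plan is to deduce the corollary formally from Proposition~\ref{prop:nerf_op} together with the elementary fact, recalled in paragraph~\ref{paragr:def_D}, that a morphism of simplicial sets $f$ is a weak equivalence if and only if $f^\op$ is one (both having the same topological realization).

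First I would apply the natural isomorphism $N(C^\op) \simeq N(C)^\op$ of Proposition~\ref{prop:nerf_op} to the \oo-foncteur $u : A \to B$. Naturality in $C$ yields a commutative square
\[
  \xymatrix{
    N(A^\op) \ar[r]^{N(u^\op)} \ar[d]_\wr & N(B^\op) \ar[d]^\wr \\
    N(A)^\op \ar[r]_{N(u)^\op} & N(B)^\op
  }
\]
whose vertical arrows are isomorphismes d'ensembles simpliciaux. Hence $N(u^\op)$ is a simplicial weak equivalence if and only if $N(u)^\op$ is one.

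Then I would invoke paragraph~\ref{paragr:def_D}: applied to $f = N(u)$, it gives that $N(u)^\op$ is a simplicial weak equivalence if and only if $N(u)$ is one. Chaining the two equivalences, $N(u^\op)$ is a weak equivalence if and only if $N(u)$ is, which, by the definition of équivalence de Thomason given in paragraph~\ref{paragr:def_orient}, is exactly the assertion.

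I do not anticipate any genuine difficulty: the statement is a purely formal corollary of the preceding proposition. The only point deserving attention is that the isomorphism of Proposition~\ref{prop:nerf_op} is natural in $C$, which is what produces the commutative square above — but this naturality is part of that proposition's statement, so the argument is immediate.
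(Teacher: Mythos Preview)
Your proof is correct and follows exactly the approach of the paper: the corollary is deduced from Proposition~\ref{prop:nerf_op} together with the stability of simplicial weak equivalences under the duality $X \mapsto X^\op$ recalled in paragraph~\ref{paragr:def_D}. You have simply made the naturality square explicit, which the paper leaves implicit in its one-line justification.
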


\begin{proof}
  Cela résulte de la proposition précédente et du fait que la
  classe des équivalences faibles simpliciales est stable par la dualité~$X
  \mapsto X^\op$.
\end{proof}

Notre but est maintenant de montrer que la catégorie $\ooCat$ munie du joint
permet un théorème A au sens du paragraphe~\ref{paragr:permet_thmA_mon}.

\begin{paragraph}\label{paragr:def_retr}
  Soit $i : A \to B$ un \oo-foncteur. Une \ndef{structure de rétracte par
  transformation oplax à gauche} (resp. \ndef{à droite}) sur $i$ consiste en
  la donnée de :
  \begin{enumerate}
    \item une rétraction $r : B \to A$ de $i$ (de sorte qu'on a $ri =
    \id{A}$) ;
    \item une transformation oplax $\alpha$ de $ir$ vers $\id{B}$ (resp. de
    $\id{B}$ vers $ir$).
  \end{enumerate}
  On omettra parfois les indications « à gauche » ou « à droite » dans les
  énoncés abstraits qui sont valables pour les deux variantes (à condition
  de rester cohérent dans un même énoncé).

  Si on dispose d'un \oo-foncteur $q : B \to C$, on dira que la structure
  est \ndef{au-dessus de $C$} si on a
  \[ qir = q \quadet q \comp \alpha = \id{q}. \]
  En particulier, on pourra utiliser cette notion pour $C = A$ et $q = r$
  (auquel cas, l'égalité $qir = q$ est automatique).
  De même, si on dispose d'un \oo-foncteur $j : C \to B$, on dira que la
  structure est \ndef{au-dessous de $C$} si on a
  \[ irj = j \quadet \alpha \comp j = \id{j}. \]
  Le cas où $C = A$ et $j = i$ (ce qui entraîne l'égalité $irj = j$) est
  particulièrement important : dans ce cas, on dira que la structure est
  \ndef{forte}.

  On dit que $i$ est un \ndef{rétracte par transformation oplax à gauche} (resp.
  \emph{à droite}) si $i$ admet une structure de rétracte par transformation
  oplax à gauche (resp. à droite). On qualifiera un tel rétracte d'au-dessus
  de $C$, d'au-dessous de $C$ ou de fort en fonction des propriétés des
  structures que $i$ peut admettre.

  On appellera \ndef{rétraction d'un rétracte par transformation oplax} non
  pas n'importe quelle rétraction d'un tel rétracte mais une rétraction $r$
  faisant partie d'une structure~$(r, \alpha)$.

  Toutes les notions introduites dans ce paragraphe admettent également des
  variantes lax obtenues en remplaçant la transformation oplax $\alpha$ par
  une transformation lax.
\end{paragraph}

\begin{paragraph}\label{paragr:contr_fort}
  Soit $C$ une \oo-catégorie. En vertu du paragraphe~\ref{paragr:contr},
  la donnée d'une structure de rétracte par transformation oplax à gauche
  sur un \oo-foncteur $c : \Dn{0} \to C$ correspondant à un objet $c$ de $C$
  est équivalente à celle d'un \oo-foncteur $\Dn{0} \joint C \to C$
  au-dessous de $C$ tel que $\Dn{0} \xto{\iota_1} \Dn{0} \joint C \to C$
  soit $c$, ou encore à celle d'un \oo-foncteur $C \to \cotr{C}{c}$
  au-dessus de~$C$.

  On vérifie qu'une telle structure est forte si et seulement si, dans la
  première description, le composé
  \[
     \Dn{0} \joint \Dn{0} \xto{\,\Dn{0} \joint c\,} \Dn{0} \joint C
     \xto{\phantom{\Dn{0} \joint c}} C
  \]
  correspond à la $1$-cellule $\id{c}$ de $C$ (rappelons qu'on a $\Dn{0}
  \joint \Dn{0} \simeq \Dn{1}$) ou, dans la deuxième description, le composé
  \[
    \Dn{0} \xto{c} C \to \cotr{C}{c}
  \]
  correspond à l'objet $(c, \id{c})$ de $\cotr{C}{c}$.
\end{paragraph}

\begin{prop}\label{prop:retr_univ}
  Si $i : A \to B$ est un rétracte par transformation oplax au-dessus d'une
  \oo-catégorie $C$, alors tout changement de base de $i$ au-dessus de $C$
  est un rétracte par transformation oplax. Autrement dit, pour tout
  \oo-foncteur \hbox{$D \to C$}, le \oo-foncteur
  \[
    i \times_C D : A \times_C D \to B \times_C D
  \]
  est un rétracte par transformation oplax.

  Plus précisément, si $(r, \alpha)$ est une structure de rétracte par
  transformation oplax sur $i$ au-dessus de $C$, alors $(r \times_C D, \alpha
  \times_C D)$ est une structure de rétracte par transformation oplax sur $i
  \times_C D$ au-dessus de $D$. De plus, si la structure $(r, \alpha)$ est
  au-dessous d'une \oo-catégorie $E$, alors la nouvelle structure est
  au-dessous de $E \times_C D$.
\end{prop}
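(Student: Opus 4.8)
The plan is to produce the structure $(r \times_C D, \alpha \times_C D)$ explicitly and to reduce each axiom of paragraph~\ref{paragr:def_retr} to the corresponding axiom for $(r, \alpha)$, using functoriality of the fibre product together with the construction of induced oplax transformations from paragraph~\ref{paragr:img_inv_trans}. Write $q : B \to C$ for the structure $\infty$-functor witnessing that $i$ is a retract above $C$, so that the hypothesis reads $qir = q$ and $q \comp \alpha = \id q$; here $A$ lies over $C$ via $qi$, and $A \times_C D$, $B \times_C D$ are formed along a fixed $\infty$-functor $g : D \to C$. I treat the left variant; the right variant is identical with $\alpha$ reversed.

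First I would check that the data are well defined: $i$ is over $C$ for free, so $i \times_C D$ exists; and $r : B \to A$ is a morphism over $C$ --- with $B$ over $C$ via $q$ and $A$ over $C$ via $qi$ --- precisely because $qir = q$, so $r \times_C D$ exists. Functoriality of $- \times_C D$ then gives $(r \times_C D)(i \times_C D) = (ri) \times_C D = \id{A \times_C D}$, so $r \times_C D$ is a retraction of $i \times_C D$. For the oplax transformation I would invoke the special case treated at the end of paragraph~\ref{paragr:img_inv_trans}, applied to the parallel pair $ir,\, \id{B} : B \to B$, the oplax transformation $\alpha$, the $\infty$-functor $q$ used on both sides, and $g : D \to C$: the commutativity condition it requires is exactly $q \comp \alpha = \id q$, which holds by hypothesis, and its output is an oplax transformation $\alpha \times_C D$ from $(ir) \times_C D = (i \times_C D)(r \times_C D)$ to $\id{B} \times_C D = \id{B \times_C D}$. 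This is the asserted structure. It is above $D$ for the second projection $p : B \times_C D \to D$: the equality $p\,(i \times_C D)(r \times_C D) = p$ holds because $i \times_C D$ and $r \times_C D$ are $\infty$-functors over $D$, while $p \comp (\alpha \times_C D) = \id p$ because post-composing $\alpha \times_C D$ with $p$ extracts the $D$-component of the construction, which in this special case is the identity oplax transformation of $\id{D}$; concretely the resulting composite $\Dn{1} \otimes (B \times_C D) \to \Dn{1} \otimes D \to D$ equals $(\kappa \otimes D) \circ (\Dn{1} \otimes p)$, hence $p \circ \kappa$ by naturality of $\kappa$, that is, $\id p$.

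For the final assertion, suppose $(r, \alpha)$ is below $E$ via $j : E \to B$ with $irj = j$ and $\alpha \comp j = \id j$, and regard $E$ over $C$ via $qj$, so that $j \times_C D$ exists. Then $(i \times_C D)(r \times_C D)(j \times_C D) = (irj) \times_C D = j \times_C D$ by functoriality, and $(\alpha \times_C D) \comp (j \times_C D) = (\alpha \comp j) \times_C D = \id j \times_C D = \id{j \times_C D}$ by compatibility of the construction of paragraph~\ref{paragr:img_inv_trans} with precomposition by an $\infty$-functor over $C$; hence the new structure is below $E \times_C D$. The main obstacle is organisational rather than conceptual: one must keep careful track of which $\infty$-functor to $C$ each fibre product is formed along --- it is $qi$ for $A$, $q$ for $B$, and $qj$ for $E$ --- so that the very existence of $r \times_C D$ and $j \times_C D$ really does use the retract hypotheses, and one must make explicit the two formal properties of $\alpha \mapsto \alpha \times_C D$ invoked above, namely its compatibility with horizontal composition against $\infty$-functors on either side; both follow at once from the explicit formula in paragraph~\ref{paragr:img_inv_trans} and the naturality of $q_1$, $q_2$, $\kappa$.
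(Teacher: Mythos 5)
Your argument is correct and follows the same route as the paper: the retraction $r \times_C D$ exists and satisfies $(r \times_C D)(i \times_C D) = \id{}$ by functoriality of base change, and $\alpha \times_C D$ is exactly the transformation provided by the special case of paragraphe~\ref{paragr:img_inv_trans}, whose commutativity hypothesis is $q \comp \alpha = \id{q}$. You merely spell out the verifications (structure above $D$, below $E \times_C D$) that the paper compresses into « fonctorialité du changement de base », and you do so correctly.
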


\begin{proof}
  Puisque $r$ est au-dessus de $C$, le \oo-foncteur $r \times_C D$ a un sens
  et est bien une rétraction de $i \times_C D$ par fonctorialité du
  changement de base. De plus, $\alpha$ étant au-dessus de $C$, le
  transformation oplax $\alpha \times_C D$ a également un sens en vertu du
  paragraphe~\ref{paragr:img_inv_trans} et on conclut de nouveau par
  fonctorialité du changement de base.
\end{proof}

\begin{prop}\label{prop:retr_Thomason}
  Un rétracte par transformation oplax est une équivalence de Thomason.
\end{prop}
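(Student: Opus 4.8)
The statement to prove is Proposition~\ref{prop:retr_Thomason}: a rétracte par transformation oplax is a Thomason equivalence. The plan is to reduce to the statement that the nerf (Street nerve) of a \oo-foncteur underlying a transformation oplax interacts with the data of the retraction in the expected way, so that $N(i)$ becomes a deformation retract of simplicial sets. Concretely, let $i : A \to B$ be a rétracte par transformation oplax, with structure $(r, \alpha)$, so $ri = \id{A}$ and $\alpha$ is a transformation oplax between $ir$ and $\id{B}$ (in one direction or the other, according to the left/right variant). Applying the nerf of Street gives simplicial maps $N(i) : N(A) \to N(B)$ and $N(r) : N(B) \to N(A)$ with $N(r)N(i) = \id{N(A)}$.

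The key input, which I would invoke from the appendix on transformations (the excerpt announces precisely this in Appendice~\ref{app:transformation}: \emph{on associe à toute transformation oplax une homotopie simpliciale}, and \emph{les rétractes par transformation oplax forts sont des équivalences de Thomason}), is that a transformation oplax $\alpha$ between $u$ and $v$ induces a simplicial homotopy between $N(u)$ and $N(v)$, i.e.\ a simplicial map $N(A) \times \Deltan{1} \to N(B)$ (or the appropriate variant with the join/cylinder) restricting to $N(u)$ and $N(v)$ at the two endpoints. Granting this, the simplicial homotopy associated to $\alpha$ (which relates $N(ir) = N(i)N(r)$ and $\id{N(B)}$) shows that $N(i)N(r)$ is simplicially homotopic to $\id{N(B)}$. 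Combined with $N(r)N(i) = \id{N(A)}$, this exhibits $N(i)$ as a simplicial homotopy equivalence, hence an équivalence faible d'ensembles simpliciaux; by definition of équivalence de Thomason (paragraphe~\ref{paragr:def_orient}), $i$ is a Thomason equivalence.

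First I would recall that a simplicial homotopy equivalence is a weak equivalence of simplicial sets (the topological realizations are homotopy equivalent), so it suffices to produce the homotopy data. Then I would state cleanly that it is enough to treat the non-strong, general case — no appeal to strength or to being above/below some \oo-catégorie is needed here, only the bare data $(r, \alpha)$. The two variants (oplax à gauche and à droite) are handled symmetrically: in the "à droite" case $\alpha$ goes from $\id{B}$ to $ir$, and the induced simplicial homotopy runs the other way, but the conclusion is the same. The content is entirely in the cited fact from Appendice~\ref{app:transformation}, so the proof here should be a short paragraph invoking it: \emph{En vertu de l'appendice~\ref{app:transformation}, la transformation oplax $\alpha$ induit une homotopie simpliciale entre $N(ir)$ et $\id{N(B)}$; comme par ailleurs $N(r)N(i) = \id{N(A)}$, le morphisme simplicial $N(i)$ est une équivalence d'homotopie, donc une équivalence faible simpliciale, ce qui signifie exactement que $i$ est une équivalence de Thomason.}

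The main obstacle — and the reason the substantive work is deferred to the appendix — is constructing the simplicial homotopy from an oplax transformation in a way that is natural and compatible with composition/whiskering; this requires understanding how the Street nerve behaves with respect to the Gray tensor product $\Dn{1} \otimes A$ (since a transformation oplax is a \oo-foncteur $\Dn{1} \otimes A \to B$) and identifying $N(\Dn{1} \otimes A)$, or at least a suitable quotient/diagonal of it, with $N(A) \times \Deltan{1}$ up to weak equivalence (the remark in the Remerciements about the Alexander–Whitney diagonal is exactly the tool for this). But at the level of the present proposition, all of that is a black box: the proof is a two-line deduction from the appendix's main result together with the triviality that $N(r)N(i)=\id{}$.
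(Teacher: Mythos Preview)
Your proposal is correct and matches the paper's approach: the paper's proof is the one-liner ``C'est un cas particulier du corollaire~\ref{coro:retr_Thom}'', and that corollary (with its own one-line proof invoking Theorem~\ref{thm:nerf_trans_oplax}) is exactly the argument you spell out --- the oplax transformation $\alpha$ yields a simplicial homotopy between $N(ir)$ and $\id{N(B)}$, and combined with $N(r)N(i)=\id{N(A)}$ this makes $N(i)$ a simplicial homotopy equivalence. The only minor difference is that the paper packages the result via the slightly more general Corollary~\ref{coro:retr_Thom} (where one only assumes oplax transformations between $vu$ and $\id{C}$ and between $uv$ and $\id{D}$, without requiring $ri=\id{A}$ on the nose), of which the retract case is the specialization where one of the two transformations is an identity.
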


\begin{proof}
  C'est un cas particulier du corollaire~\ref{coro:retr_Thom}.
\end{proof}

\begin{coro}\label{coro:retr_equiv_Thom}
  Un rétracte par transformation oplax au-dessus d'une \oo-catégorie
  $C$, de même que la rétraction d'un tel rétracte, est une équivalence de
  Thomason et le reste après tout changement de base au-dessus de $C$.
\end{coro}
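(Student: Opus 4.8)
Le plan est de combiner les propositions~\ref{prop:retr_univ} et~\ref{prop:retr_Thomason} avec la propriété de deux sur trois pour les équivalences de Thomason, laquelle résulte immédiatement du fait que les équivalences faibles simpliciales vérifient deux sur trois et que les équivalences de Thomason en sont, par définition, les images inverses par le nerf de Street $N$.

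On commence par fixer une structure $(r, \alpha)$ de rétracte par transformation oplax sur le \oo-foncteur $i : A \to B$ au-dessus de $C$. En vertu de la proposition~\ref{prop:retr_Thomason}, le \oo-foncteur $i$ est une équivalence de Thomason. Comme $ri = \id{A}$ et que $\id{A}$ est une équivalence de Thomason, on déduit de l'égalité $N(r)\,N(i) = N(\id{A})$ et de deux sur trois que $r$ est également une équivalence de Thomason. Ceci établit la partie non relative de l'énoncé, pour le rétracte comme pour sa rétraction.

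Pour la partie relative, fixons un \oo-foncteur $D \to C$ quelconque. En vertu de la proposition~\ref{prop:retr_univ}, le couple $(r \times_C D, \alpha \times_C D)$ est une structure de rétracte par transformation oplax sur le \oo-foncteur $i \times_C D : A \times_C D \to B \times_C D$, de sorte que, d'après la proposition~\ref{prop:retr_Thomason}, ce dernier est une équivalence de Thomason. Par ailleurs, la fonctorialité du changement de base fournit l'égalité $(r \times_C D)(i \times_C D) = \id{A \times_C D}$, et deux sur trois montre à nouveau que la rétraction $r \times_C D$ est une équivalence de Thomason.

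Cette démonstration ne présente essentiellement pas d'obstacle, puisqu'elle se ramène à invoquer des résultats déjà établis ; le seul point demandant un peu d'attention est de bien reconnaître $r \times_C D$ comme la rétraction \emph{appartenant à la structure transportée} $(r \times_C D, \alpha \times_C D)$ --- et non comme une rétraction arbitraire de $i \times_C D$ ---, ce que fournit précisément la proposition~\ref{prop:retr_univ}.
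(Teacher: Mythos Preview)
Your proof is correct and follows essentially the same approach as the paper, which simply states that the result follows from the two preceding propositions. You have spelled out in detail the step that the paper leaves implicit, namely the use of two-out-of-three to pass from the retract $i$ to its retraction $r$ (both before and after base change); this is exactly the intended argument.
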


\begin{proof}
  Cela résulte des deux propositions précédentes.
\end{proof}

\begin{paragr}\label{paragr:def_retr_Steiner}
  Soit $i : K \to L$ un morphisme de complexes dirigés augmentés.
  Une \ndef{structure de rétracte par antihomotopie fort} sur $i$ est
  la donnée de :
  \begin{enumerate}
    \item une rétraction $r : L \to K$ de $i$ ;
    \item une antihomotopie $h$ de $\id{L}$ vers $ir$ vérifiant $hi = 0$.
  \end{enumerate}
  Si $rh = 0$, on dira que la structure est \ndef{au-dessus de $K$}. Si $hh
  = 0$, on parlera de \ndef{structure de rétracte par antihomotopie de carré
  nul fort}.

  On dira que $i$ est un \ndef{rétracte par antihomotopie fort} s'il admet
  une structure de rétracte par antihomotopie fort. On qualifiera $i$ de
  rétracte par antihomotopie \ndef{de carré nul} fort ou \ndef{au-dessus de
  $K$} en fonction des propriétés des structures que $i$ peut admettre.
\end{paragr}

\begin{remark}
  Tout comme la notion de rétracte par transformation \oo-catégorique
  du paragraphe~\ref{paragr:def_retr}, la définition du paragraphe précédent
  admet de nombreuses variantes. On a ici privilégié les structures fortes,
  à droite et les antihomotopies (qui correspondent aux transformations lax,
  voir \cite[remarque B.4.11]{AraMaltsiJoint}). La condition « de carré nul »,
  qu'on n'a pas introduite dans le contexte \oo-catégorique même si elle
  a également un sens, correspond à la trivialité d'une « contrainte de
  Gray ».
\end{remark}

La proposition suivante est le cœur de la démonstration présentée dans ce
texte du théorème A \oo-catégorique. Elle repose sur les résultats de
fonctorialité des tranches rappelés dans la section précédente.

\begin{prop}\label{prop:retr}
  Soit $i : K \to L$ une inclusion rigide ordonnée entre complexes de
  Steiner forts qui est un rétracte par antihomotopie de carré nul fort
  au-dessus de~$K$ et soit $C$ une \oo-catégorie munie d'un \oo-foncteur $b
  : \nu(L) \to C$. Posons $c = b\nu(i)$ et $c' = b$. Alors le foncteur
  $\nu(i)^\ast : \cotr{C}{c'} \to \cotr{C}{c}$ (du
  paragraphe~\ref{paragr:fonct_tr_sur_1}) est la rétraction d'un rétracte
  par transformation oplax à gauche fort au-dessus de $C$.
\end{prop}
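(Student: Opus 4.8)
The plan is to realise the required retract structure entirely through the slice-functoriality operations $(f,h,b)^\ast$ and $(k,H,b)^\ast$ recalled above, applied to diagrams of strong Steiner complexes built from the retract-by-antihomotopy data on $i$: a retraction $r:L\to K$ with $ri=\id{K}$, together with an antihomotopy $h$ of $\id{L}$ towards $ir$ satisfying $hi=0$, $rh=0$ and $hh=0$. Write $U:\cotr{C}{c}\to C$ and $U':\cotr{C}{c'}\to C$ for the forgetful \oo-functors. First I would identify the actors. Proposition~\ref{prop:fonct_tri_comm} applied to the commutative triangle $K\xto{i}L\xto{\id{L}}L$ (inclusion $\id{L}$, antihomotopy $\id{i}$) gives $\nu(i)^\ast=(i,\id{i},b)^\ast$. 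Since $i$ is a rigid ordered inclusion and $h$ an antihomotopy of $\id{L}$ towards $ir$, the diagram
\[
  \shorthandoff{;}
  \xymatrix@C=1.5pc{
    L \ar[rr]^{r} \ar[dr]_{\id{L}}_{}="f" & & K \ar[dl]^{i} \\
    & L
    \ar@{}"f";[ur]_(.15){}="ff"
    \ar@{}"f";[ur]_(.55){}="oo"
    \ar@<-0.5ex>@2"ff";"oo"^{h}
  }
\]
is of the type of paragraph~\ref{paragr:fonct_tri} and produces an \oo-functor $\iota:=(r,h,b)^\ast:\cotr{C}{c}\to\cotr{C}{c'}$; this is the retract, whose retraction will be $\nu(i)^\ast$.

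Next I would establish the section identity. Composing via Proposition~\ref{prop:fonct_tri}, one gets $\nu(i)^\ast\circ\iota=(ri,\ hi+\id{i},b)^\ast=(\id{K},0,b)^\ast$, since $ri=\id{K}$, $hi=0$ and $\id{i}=0$; and $(\id{K},0,b)^\ast=\id{\cotr{C}{c}}$ by Proposition~\ref{prop:fonct_tri_id}. Symmetrically, Proposition~\ref{prop:fonct_tri} yields $\iota\circ\nu(i)^\ast=(ir,\ \id{i}\circ r+h,b)^\ast=(ir,h,b)^\ast$, which is the \oo-functor attached to the diagram $L\xto{ir}L\xto{\id{L}}L$ with base $L$ and antihomotopy $h:\id{L}\to ir$.

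Then I would produce the oplax transformation $\alpha$ through the cone construction of paragraph~\ref{paragr:fonct_cone}, applied to
\[
  \shorthandoff{;:}
  \xymatrix@C=1.5pc@R=3pc{
    L \ar@/^2ex/[rr]^(.33){ir}_{}="1" \ar@/_2ex/[rr]^(.30){\id{L}}_{}="0"
    \ar[dr]_{}="f"_{\id{L}}
    \ar@2"0";"1"_{h}
    & & L \ar[dl]^{\id{L}} \\
    & L
    \ar@{}"f";[ur]_(.15){}="ff"
    \ar@{}"f";[ur]_(.55){}="oo"
    \ar@<-0.5ex>@/^1ex/@{:>}"ff";"oo"^(.18){h\!\!}_(.30){}="hp"
    \ar@<-2.0ex>@/^-1ex/@2"ff";"oo"_(.36){0}_(.80){}="hm"
    \ar@3"hm";"hp"_(.20){\id{h}}
    & \pbox{,}
  }
\]
where the lower antihomotopy of $\id{L}$ is $0$, the upper antihomotopy of $ir$ is $h$, and the $2$-antihomotopy is $\id{h}=0$ (one checks $\id{L}\circ h+0=h$). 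This yields an oplax transformation $\alpha:=(h,0,b)^\ast$ of $(ir,h,b)^\ast=\iota\circ\nu(i)^\ast$ towards $(\id{L},0,b)^\ast=\id{\cotr{C}{c'}}$; together with the retraction $\nu(i)^\ast$ this is a left oplax retract structure on $\iota$.

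Finally I would check the two remaining requirements. The structure is above $C$: the proposition following paragraph~\ref{paragr:fonct_tri} gives $U'\circ\iota=U$ and $U\circ\nu(i)^\ast=U'$, hence $U'\circ\iota\circ\nu(i)^\ast=U'$, while Proposition~\ref{prop:fonct_cone_sur} gives $U'\comp\alpha=\id{U'}$. It is strong: $\iota\circ\nu(i)^\ast\circ\iota=\iota$ follows from the section identity, and for $\alpha\comp\iota=\id{\iota}$ I would precompose the cone $\alpha$ with $\iota=(r,h,b)^\ast$ using Proposition~\ref{prop:fonct_cone_2}, obtaining $\alpha\comp\iota=(r\circ h,\ h\circ h+\id{h},b)^\ast=(0,0,b)^\ast$ thanks to $rh=0$ and $hh=0$; since $hi=0$ also gives $h\circ(ir)=0$ and $ri=\id{K}$ gives $r\circ ir=r$, the source \oo-functor of this transformation is again $\iota$, so Proposition~\ref{prop:fonct_cone_id} identifies $(0,0,b)^\ast$ with $\id{\iota}$. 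The main obstacle is entirely bookkeeping: one must match each abstract composite to the correct diagram of Steiner complexes and see that the composition formulas of the preceding section collapse to identities using precisely $ri=\id{K}$, $hi=0$, $rh=0$ and $hh=0$. The one structural point is that the square-zero hypothesis $hh=0$ is exactly what trivialises the Gray-constraint term $h''k=h\circ h$ occurring in Proposition~\ref{prop:fonct_cone_2}; this is its sole use.
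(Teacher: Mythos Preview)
Your argument is correct and follows the paper's proof essentially verbatim: you build the section as $(r,h,b)^\ast$, use Proposition~\ref{prop:fonct_tri} twice to get the section identity and the composite $(ir,h,b)^\ast$, construct the oplax transformation from the cone with $2$-antihomotopy $\id{h}$, and use Proposition~\ref{prop:fonct_cone_2} together with $rh=0$ and $hh=0$ to obtain strength. Your explicit verification that the source and target of the resulting transformation both reduce to $(r,h,b)^\ast$ (via $h(ir)=0$ and $r(ir)=r$), and your remark that $hh=0$ is precisely what kills the Gray-constraint term $h''k$ in Proposition~\ref{prop:fonct_cone_2}, make implicit steps of the paper's proof explicit, but the route is the same.
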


\begin{proof}
  Notons tout d'abord que, d'après la proposition~\ref{prop:fonct_tri_comm},
  le \oo-foncteur $\nu(i)^\ast$ coïncide avec le \oo-foncteur $(i, \id{i},
  b)^\ast$ associé au diagramme
  \[
    \shorthandoff{;}
    \xymatrix@C=1.5pc{
      K \ar[rr]^i \ar[dr]_{i}_{}="f" & & L \ar@{=}[dl]_{}="g" \\
      & L
      \ar@{}"f";[ur]_(.15){}="ff"
      \ar@{}"f";[ur]_(.55){}="oo"
      \ar@{}"f";"g"^{\textstyle =}
      &
    }
  \]
  en vertu du paragraphe~\ref{paragr:fonct_tri}. Soit $(r, h)$ une structure
  de rétracte par antihomotopie fort sur~$i$ et soit
  $(r, h, b)^\ast$ le \oo-foncteur associé au diagramme
  \[
    \shorthandoff{;}
    \xymatrix@C=1.5pc{
      L \ar[rr]^r \ar@{=}[dr]_{}="f" & & K \ar[dl]^(0.42){i} \\
      & L
      \ar@{}"f";[ur]_(.15){}="ff"
      \ar@{}"f";[ur]_(.55){}="oo"
      \ar@<-0.5ex>@2"ff";"oo"^{h}
      &
    }
  \]
  en vertu du paragraphe~\ref{paragr:fonct_tri}. Considérons le diagramme
  \[
    \shorthandoff{;}
    \xymatrix{
      K \ar[r]^i \ar[dr]_{}="g"_(.40)i
      & L \ar[r]^r_(.75){}="fp" \ar@{=}[d]_(.70){}="gp"_(.50){}="gp1" & K
      \ar[dl]_{}="gpp"^(.40)i \\
      & L
      \ar@{}"g";[u]_(0.10){}="x"
      \ar@{}"g";[u]_(.85){}="y"
      \ar@{}^{\textstyle =}"g";"gp1"
      \ar@{}"gp";"fp"_(.25){}="x2"
      \ar@{}"gp";"fp"_(.75){}="y2"
      \ar@<0.4ex>@2"x2";"y2"^h
      & \pbox{.}
    }
  \]
  D'après la proposition~\ref{prop:fonct_tri}, on a
  \[
    (i, \id{i}, b)^\ast (r, h, b)^\ast = (ri, hi, b)^\ast = (\id{K},
    \id{\id{K}}, b)^\ast = \id{\cotr{C}{c}},
  \]
  la dernière égalité résultant de la proposition~\ref{prop:fonct_tri_id},
  et $(i, \id{i}, b)^\ast$ est donc une rétraction de $(r, h, b)^\ast$. On
  obtient de même, en considérant le diagramme
  \[
    \shorthandoff{;}
    \xymatrix{
      L \ar[r]^r \ar@{=}[dr]_{}="g"
      & K \ar[r]^i_(.75){}="fp" \ar[d]_{}="gp"_(.50)i & L
      \ar@{=}[dl]_{}="gpp" \\
      & L
      \ar@{}"g";[u]_(0.10){}="x"
      \ar@{}"g";[u]_(.85){}="y"
      \ar@<-0.1ex>@2"x";"y"^(.30)h
      \ar@{}^{\textstyle =}"gp";"gpp"
      & \pbox{,}
    }
  \]
  l'égalité
  \[
    (r, h, b)^\ast (i, \id{i}, b)^\ast = (ir, h, b)^\ast.
  \]
  Par ailleurs, la construction du paragraphe~\ref{paragr:fonct_cone}
  associe au diagramme
  \[
      \shorthandoff{;:}
      \xymatrix@C=1.5pc@R=3pc{
        L \ar@/^2ex/[rr]^(.33){ir}_{}="1" \ar@{=}@/_2ex/[rr]_{}="0"
        \ar@{=}[dr]_{}="f"
        \ar@2"0";"1"_h
        & & L \ar@{=}[dl] \\
        & L
        \ar@{}"f";[ur]_(.10){}="ff"
        \ar@{}"f";[ur]_(.50){}="oo"
        \ar@<-0.5ex>@/^1ex/@{:>}"ff";"oo"^(.25){h\!}_(.30){}="h'"
        \ar@<-2.0ex>@/^-1ex/@{=}"ff";"oo"_(.80){}="h"
        \ar@3{-}"h";"h'" & \pbox{,}
        }
  \]
  une transformation oplax $(h, \id{h}, b)^\ast$ de $(ir, h, b)^\ast$
  vers~$(\id{L}, \id{\id{L}}, b)^\ast = \id{\cotr{C}{c'}}$. Cette
  transformation est au-dessus de $C$ en vertu de la
  proposition~\ref{prop:fonct_cone_sur}.

  Supposons enfin que $rh = 0$ et $hh = 0$, et considérons le
  diagramme
  \[
      \shorthandoff{;:}
      \xymatrix@C=3.5pc@R=3.5pc{
      L
      \ar@/^2ex/[r]^(.33){ir}_{}="1"
      \ar@/_2ex/@{=}[r]_{}="0"_(.70){}="f"
      \ar@{=}[dr]_{}="g"
      \ar@2"0";"1"_{h}
      &
      L \ar[r]^{r}_(.75){}="fp"
         \ar@{=}[d]_(.70){}="gp2"_(.20){}="gp"
      &
      K \ar[dl]^(.62){i}
      \\
      & L
      \ar@{}"g";"gp"_(.15){}="ff1"
      \ar@{}"g";"gp"_(.80){}="oo1"
      \ar@<-0.0ex>@/^1ex/@{:>}"ff1";"oo1"^(.25){h\!}_(.30){}="h'"
      \ar@<-1.0ex>@/^-1.5ex/@{=}"ff1";"oo1"_(.80){}="h"
      \ar@3{-}"h";"h'"
      \ar@{}"gp2";"fp"_(.25){}="x2"
      \ar@{}"gp2";"fp"_(.75){}="y2"
      \ar@<0.4ex>@2"x2";"y2"^{h} & \pbox{.}
      }
    \]
    La proposition~\ref{prop:fonct_cone_2} entraîne qu'on a
    \[ (h, \id{h}, b)^\ast \comp (r, h, b)^\ast = (rh, hh, b)^\ast
     = (\id{r}, \id{h}, b)^\ast = \id{(r, h, b)^\ast}, \]
    la dernière égalité résultant de la
    proposition~\ref{prop:fonct_cone_id}, d'où le résultat.
\end{proof}

\begin{remark}
  On a montré plus précisément que si $(r, h)$ est une structure de rétracte
  par antihomotopie fort sur~$i$, alors $((i, \id{i}, b)^\ast, (h, \id{h},
  b)^\ast)$ est une structure de rétracte par transformation oplax à gauche
  au-dessus de $C$ sur~$(r, h, b)^\ast$ et que si, de plus, la structure
  $(r, h)$ est au-dessus de $K$ et l'antihomotopie $h$ est de carré nul,
  alors la structure $((i, \id{i}, b)^\ast, (h, \id{h}, b)^\ast)$ est forte.
\end{remark}

\begin{coro}\label{coro:anticontr_Thomason}
  Soit $i : K \to L$ une inclusion rigide ordonnée entre complexes de
  Steiner forts qui est un rétracte par antihomotopie de carré nul fort
  au-dessus de~$K$ et soient $u : A \to C$ et $b : \nu(L) \to C$ deux
  \oo-foncteurs. Posons $c = b\nu(i)$ et $c' = b$. Alors le \oo-foncteur
  $\nu(i)^\ast : \cotr{A}{c'} \to \cotr{A}{c}$ (du
  paragraphe~\ref{paragr:fonct_tr_sur_2}) est la rétraction d'un rétracte
  par transformation oplax à gauche fort au-dessus de $A$ et, en
  particulier, une équivalence de Thomason.
\end{coro}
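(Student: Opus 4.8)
Le plan consiste à déduire ce corollaire de la proposition~\ref{prop:retr} par changement de base le long de $u$. On commencerait par remarquer que, par la définition même de la fonctorialité des tranches du paragraphe~\ref{paragr:fonct_tr_sur_2}, le \oo-foncteur $\nu(i)^\ast : \cotr{A}{c'} \to \cotr{A}{c}$ de l'énoncé s'obtient en appliquant au \oo-foncteur $\nu(i)^\ast : \cotr{C}{c'} \to \cotr{C}{c}$ du paragraphe~\ref{paragr:fonct_tr_sur_1} le foncteur de changement de base $\tr{\ooCat}{C} \to \tr{\ooCat}{A}$ le long de $u$, suivi du foncteur d'oubli. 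Autrement dit, comme $\cotr{A}{c} = \cotr{C}{c} \times_C A$ et $\cotr{A}{c'} = \cotr{C}{c'} \times_C A$ par définition, et comme le \oo-foncteur $\nu(i)^\ast$ du paragraphe~\ref{paragr:fonct_tr_sur_1} est au-dessus de $C$ pour les \oo-foncteurs d'oubli, on a
\[ \nu(i)^\ast_A = \nu(i)^\ast_C \times_C A, \]
où $\nu(i)^\ast_A$ et $\nu(i)^\ast_C$ désignent les deux \oo-foncteurs en jeu.

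On invoquerait ensuite la proposition~\ref{prop:retr}, et plus précisément la remarque qui la suit : sous les hypothèses de l'énoncé, il existe un \oo-foncteur $j := (r, h, b)^\ast : \cotr{C}{c} \to \cotr{C}{c'}$ dont $\nu(i)^\ast_C$ est une rétraction, et le couple $\big((i, \id{i}, b)^\ast, (h, \id{h}, b)^\ast\big)$ est une structure de rétracte par transformation oplax à gauche fort au-dessus de $C$ sur $j$. Comme cette structure est au-dessus de $C$, le \oo-foncteur $\nu(i)^\ast_C$ et la transformation oplax qui la composent sont compatibles aux \oo-foncteurs d'oubli vers $C$, de sorte que leur changement de base le long de $u$ est défini (paragraphes~\ref{paragr:img_inv_trans} et~\ref{paragr:fonct_tr_sur_2}).

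On appliquerait alors la proposition~\ref{prop:retr_univ} au rétracte $j$ muni de cette structure au-dessus de $C$, avec le changement de base le long de $u : A \to C$ : le \oo-foncteur $j \times_C A : \cotr{A}{c} \to \cotr{A}{c'}$ est un rétracte par transformation oplax à gauche au-dessus de $A$, de rétraction $\nu(i)^\ast_C \times_C A = \nu(i)^\ast_A$. De plus, dire que la structure sur $j$ est forte signifie qu'elle est au-dessous de $\cotr{C}{c}$ \emph{via} $j$ lui-même, et la proposition~\ref{prop:retr_univ} assure que la structure obtenue par changement de base est au-dessous de $\cotr{C}{c} \times_C A = \cotr{A}{c}$ \emph{via} $j \times_C A$, donc est encore forte. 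Ainsi, $\nu(i)^\ast_A$ est la rétraction d'un rétracte par transformation oplax à gauche fort au-dessus de $A$, ce qui est la première assertion ; la seconde en résulte par le corollaire~\ref{coro:retr_equiv_Thom}, selon lequel la rétraction d'un rétracte par transformation oplax au-dessus d'une \oo-catégorie est une équivalence de Thomason.

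Le point demandant le plus d'attention est de nature comptable : il faut dérouler avec soin la définition du paragraphe~\ref{paragr:fonct_tr_sur_2} pour constater que $\nu(i)^\ast_A$ est littéralement le changement de base de $\nu(i)^\ast_C$, et vérifier que les trois qualificatifs « à gauche », « fort » et « au-dessus de $A$ » correspondent chacun, \emph{via} la proposition~\ref{prop:retr_univ}, au qualificatif homologue de la structure fournie par la proposition~\ref{prop:retr}. Aucun ingrédient homotopique nouveau n'intervient au-delà des propositions~\ref{prop:retr} et~\ref{prop:retr_univ} et du corollaire~\ref{coro:retr_equiv_Thom}.
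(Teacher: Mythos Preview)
Votre démonstration est correcte et suit exactement la même stratégie que celle de l'article : identifier $\nu(i)^\ast$ au niveau de $A$ comme le changement de base de $\nu(i)^\ast$ au niveau de $C$, invoquer la proposition~\ref{prop:retr} pour obtenir la structure de rétracte par transformation oplax sur ce dernier, puis transporter cette structure par la proposition~\ref{prop:retr_univ}. Vous êtes simplement plus explicite que l'article sur la préservation du qualificatif « fort » par changement de base, et vous concluez par le corollaire~\ref{coro:retr_equiv_Thom} là où l'article cite directement les propositions~\ref{prop:retr_univ} et~\ref{prop:retr_Thomason}, ce qui revient au même.
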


\begin{proof}
  Le \oo-foncteur $\nu(i)^\ast : \cotr{A}{c'} \to \cotr{A}{c'}$ s'obtient
  par définition comme changement de base le long de $A \to C$ du \oo-foncteur
  $\nu(i)^\ast : \cotr{C}{c'} \to \cotr{C}{c}$ (du
  paragraphe~\ref{paragr:fonct_tr_sur_1}). Or ce dernier \oo-foncteur
  est la rétraction d'un rétracte par transformation oplax à gauche fort
  au-dessus de $C$ d'après la proposition précédente. Le résultat découle
  donc des propositions~\ref{prop:retr_univ} et~\ref{prop:retr_Thomason}.
\end{proof}

\begin{paragraph}\label{paragr:def_cn}
  Considérons la catégorie $\Cda$ des complexes dirigés augmentés munie du
  joint des complexes. En vertu du paragraphe~\ref{paragr:permet_thmA_mon},
  on associe à cette catégorie monoïdale un objet cosimplicial
  \[ \cn : \cDelta \to \Cda \]
  défini par
  \[
    \Deltan{n} \mapsto \cn(\Deltan{n}) =
     \cn(\Deltan{0}) \joint \cdots \joint \cn(\Deltan{0}),
  \]
  où $\cn(\Deltan{0}) = \lambda(\Dn{0})$ apparaît $n + 1$ fois. Notons que,
  d'après la proposition~\ref{prop:joint_Steiner}, le complexe
  $\cn(\Deltan{n})$ est un complexe de Steiner fort. Ainsi, il résulte du
  fait que la restriction du foncteur $\nu : \Cda \to \ooCat$ aux complexes
  de Steiner forts est monoïdale pour le joint que l'objet cosimplicial
  $\cO : \cDelta \to \ooCat$ du paragraphe~\ref{paragr:def_orient} se
  factorise (à isomorphisme canonique près) en
  \[ \cDelta \xto{\,\cn\,} \Cda \xto{\,\nu\,} \ooCat. \]
  En particulier, on a un isomorphisme canonique $\On{n} \simeq
  \nu(\cn(\Deltan{n}))$ qu'on considérera comme une égalité.
\end{paragraph}

\begin{paragraph}\label{paragr:base_cn}
  Fixons $n \ge 0$. Le complexe dirigé augmenté $\cn(\Deltan{n})$ du
  paragraphe précédent  peut se décrire explicitement de la manière
  suivante (voir \cite[paragraphe 7.3 et remarque 7.7]{AraMaltsiJoint}).
  Pour $p \ge 0$, le groupe abélien $\cn(\Deltan{n})_p$ est le groupe
  abélien libre sur l'ensemble
  \[
    B_p = \{(i_0, \dots, i_p) \mid 0 \le i_0 < \cdots < i_p \le n\}.
  \]
  Si $p \ge 1$ et si $(i_0, \dots, i_p)$ est dans $B_p$, on a
  \[
    d(i_0, \dots, i_p) = \sum_{k = 0}^p (-1)^k (i_0, \dots, \hat i_k,
    \dots, i_p),
  \]
  où on a posé $(i_0, \dots, \hat i_k, \dots, i_p) =
  (i_0, \dots, i_{k-1}, i_{k+1}, \dots, i_p)$.
  Si $(i_0)$ est dans $B_0$, on a
  \[ e(i_0) = 1. \]
  Enfin, les sous-monoïdes de positivité $\cn(\Deltan{n})^\ast_p$ sont les
  sous-monoïdes engendrés par les ensembles $B_p$. Notons que les $B_p$
  forment une base du complexe dirigé augmenté~$\cn(\Deltan{n})$.

  De plus, si $f : \Deltan{n} \to \Deltan{n'}$ est un morphisme de
  $\cDelta$, le morphisme associé $\cn(f) : \cn(\Deltan{n}) \to
  \cn(\Deltan{n'})$ est donné sur la base de $\cn(\Deltan{n})$ par
  \[ f(i_0, \dots, i_p) = (f(i_0), \dots, f(i_p)), \]
  où on convient que $(j_0, \dots, j_p) = 0$ si la suite des $j_k$ n'est pas
  strictement croissante. Notons qu'avec cette convention, les formules
  définissant la différentielle $d$ et l'application $f$ restent valables
  pour $(i_0, \dots, i_p)$ avec $0 \le i_0 \le \cdots \le i_p \le n$.
\end{paragraph}

\begin{prop}\label{prop:Deltan_contr}
  Pour tout $m \ge 0$, le morphisme $m : \cn(\Deltan{0}) \to
  \cn(\Deltan{m})$, image par $\cn$ du morphisme $m : \Deltan{0} \to
  \Deltan{m}$, est un rétracte par antihomotopie de carré nul fort au-dessus
  de $\cn(\Deltan{0})$.
\end{prop}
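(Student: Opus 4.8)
Le plan est d'exhiber explicitement une rétraction et une antihomotopie, puis de vérifier les identités requises à l'aide de la description combinatoire de $\cn(\Deltan{m})$ rappelée au paragraphe~\ref{paragr:base_cn}. Comme rétraction $r : \cn(\Deltan{m}) \to \cn(\Deltan{0})$ de $m$, on prendra l'image par $\cn$ de l'unique morphisme $\Deltan{m} \to \Deltan{0}$ de $\cDelta$ ; comme le composé $\Deltan{0} \xto{m} \Deltan{m} \to \Deltan{0}$ est l'identité de $\Deltan{0}$ et que $\cn$ est un foncteur, on a bien $rm = \id{\cn(\Deltan{0})}$. Explicitement, d'après la formule du paragraphe~\ref{paragr:base_cn}, $r$ envoie un élément de base $(i_0)$ de degré $0$ sur $(0)$ et s'annule en degré strictement positif.

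Pour l'antihomotopie, on posera, pour $p \ge 0$ et $(i_0, \dots, i_p)$ un élément de la base de $\cn(\Deltan{m})_p$,
\[
  h_p(i_0, \dots, i_p) = (i_0, \dots, i_p, m),
\]
en utilisant la convention du paragraphe~\ref{paragr:base_cn} selon laquelle une suite non strictement croissante désigne $0$ : ainsi $h_p(i_0, \dots, i_p)$ vaut l'élément de base $(i_0, \dots, i_p, m)$ lorsque $i_p < m$ et vaut $0$ lorsque $i_p = m$. Cette formule définit de manière évidente des morphismes de groupes abéliens $h_p : \cn(\Deltan{m})_p \to \cn(\Deltan{m})_{p+1}$ envoyant $\cn(\Deltan{m})^\ast_p$ dans $\cn(\Deltan{m})^\ast_{p+1}$.

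On vérifiera ensuite que $h$ est une antihomotopie de $\id{}$ vers $mr$, c'est-à-dire que $d_{p+1}h_p - h_{p-1}d_p = (-1)^p\big((mr)_p - \id{}\big)$ pour tout $p \ge 0$. Pour un élément de base $x = (i_0, \dots, i_p)$ tel que $i_p < m$, développer $d_{p+1}(i_0, \dots, i_p, m)$ au moyen de la formule de la différentielle isole le terme $(-1)^{p+1}(i_0, \dots, i_p)$ provenant de l'effacement de la dernière entrée, et les termes restants sont exactement ceux de $h_{p-1}(d_p(x))$ puisqu'aucune des suites tronquées $(i_0, \dots, \hat i_k, \dots, i_p)$ ne se termine par $m$ ; on obtient donc $d_{p+1}h_p(x) - h_{p-1}d_p(x) = (-1)^{p+1}x$, ce qui est bien $(-1)^p\big((mr)_p(x) - x\big)$ puisque $mr$ s'annule en degré strictement positif. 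En degré $0$, l'identité se réduit à $d_1(i_0, m) = (m) - (i_0)$, en accord avec $(mr)_0(i_0) - (i_0) = (m) - (i_0)$. Les cas où $i_p = m$ se traitent à part : alors $h_p(x) = 0$, et dans $h_{p-1}(d_p(x))$ seul le terme correspondant à l'effacement de $m$ subsiste, ce qui redonne $(-1)^{p+1}x$ (et $0 = (m) - (m)$ en degré $0$).

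Les trois conditions restantes découleront alors directement de la formule définissant $h$. D'abord, $hm = 0$ : le morphisme $m$ envoie le générateur $(0)$ de $\cn(\Deltan{0})$ sur $(m)$, et $h_0(m) = (m, m) = 0$. Ensuite, $rh = 0$, de sorte que la structure est au-dessus de $\cn(\Deltan{0})$, car $h$ prend ses valeurs en degré strictement positif, où $r$ s'annule. Enfin, $hh = 0$, de sorte que l'antihomotopie est de carré nul, car $h_{p+1}h_p(i_0, \dots, i_p) = h_{p+1}(i_0, \dots, i_p, m) = (i_0, \dots, i_p, m, m) = 0$. Il n'y a pas de difficulté réelle dans cette preuve : le seul point demandant un peu d'attention est le suivi des signes et de la convention sur les suites non strictement croissantes lors de la vérification de l'identité d'antihomotopie.
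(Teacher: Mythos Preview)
Your proof is correct and follows essentially the same approach as the paper: the same retraction $r$ (induced by $\Deltan{m}\to\Deltan{0}$), the same antihomotopy $h(i_0,\dots,i_p)=(i_0,\dots,i_p,m)$, and the same verifications of $hm=0$, $rh=0$, $hh=0$. The only minor difference is presentational: the paper treats the identity $dh-hd=(-1)^p(mr-\id{})$ uniformly by invoking the convention that the differential formula remains valid for weakly increasing sequences, whereas you split into the cases $i_p<m$ and $i_p=m$; both arrive at the same result.
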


\begin{proof}
  L'unique morphisme $\Deltan{m} \to \Deltan{0}$ dans $\cDelta$ induit
  une rétraction $r : \cn(\Deltan{m}) \to \cn(\Deltan{0})$ de $m$. Nous
  allons produire une antihomotopie $h$ de $\id{\cn(\Deltan{m})}$ vers~$mr$.
  Notons que $mr$ vérifie
  \[
    mr(i_0, \dots, i_p) =
    \begin{cases}
      (m) & \text{si $p = 0$,} \\
      0 & \text{sinon,}
    \end{cases}
  \]
  sur la base de $\cn(\Deltan{m})$ ou, plus généralement, pour $0 \le i_0
  \le \cdots \le i_p \le m$, avec la convention du paragraphe précédent. On
  définit $h$ sur cette même base par
  \[
    h(i_0, \dots, i_p) = (i_0, \dots, i_p, m),
  \]
  où, suivant la convention du paragraphe précédent, cette expression est
  nulle si $i_p = m$. Encore une fois, cette expression reste valable pour
  $0 \le i_0 \le \cdots \le i_p \le m$. Vérifions que cette formule définit
  bien une antihomotopie de $\id{\cn(\Deltan{m})}$ vers $mr$. Fixons un
  élément $(i_0, \dots, i_p)$ de la base de $\cn(\Deltan{m})$. On distingue
  deux cas :
  \begin{itemize}
    \item Si $p = 0$, on a
      \[ dh(i_0) = d(i_0, m) = (m) - (i_0) = mr(i_0) - (i_0). \]
    \item Si $p \ge 1$, on a
      \[
        \begin{split}
          \MoveEqLeft
          dh(i_0, \dots, i_p) - hd(i_0, \dots, i_p)  \\
          & = d(i_0, \dots, i_p, m) - \sum_{k = 0}^p (-1)^k h(i_0, \dots, \hat i_k, \dots, i_p) \\
          & = \sum_{k = 0}^p (-1)^k (i_0, \dots, \hat i_k, \dots, i_p, m) +
          (-1)^{p+1}(i_0, \dots, i_p) \\*
          & \phantom{=1} \qquad - \sum_{k = 0}^p (-1)^k (i_0, \dots, \hat i_k, \dots, i_p, m) \\
          & = (-1)^p (0 - (i_0, \dots, i_p)) \\
          & = (-1)^p (mr(i_0, \dots, i_p) - (i_0, \dots, i_p)).
        \end{split}
      \]
  \end{itemize}
  Enfin, il est immédiat qu'on a bien $hm = 0$, $rh =
  0$ et $hh = 0$.
\end{proof}

\begin{prop}\label{prop:permet_thmA}
  Soient $v : A \to C$ un \oo-foncteur et $c$ un $m$-simplexe
  de~$N(C)$. Alors le \oo-foncteur $m^\ast :
  \cotr{A}{c} \to \cotr{A}{c_m}$ est la rétraction d'un rétracte par
  transformation oplax à gauche fort au-dessus de $A$ et, en particulier,
  une équivalence de Thomason.
\end{prop}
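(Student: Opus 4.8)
Le plan est de déduire cet énoncé du corollaire~\ref{coro:anticontr_Thomason}, appliqué au morphisme de complexes dirigés augmentés $m : \cn(\Deltan{0}) \to \cn(\Deltan{m})$, image par $\cn$ du morphisme $m : \Deltan{0} \to \Deltan{m}$ de $\cDelta$.

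On commencera par se ramener au cadre de la théorie de Steiner. D'après le paragraphe~\ref{paragr:def_cn}, l'objet cosimplicial $\cO : \cDelta \to \ooCat$ se factorise, à isomorphisme canonique près qu'on traitera comme une égalité, en $\cDelta \xto{\,\cn\,} \Cda \xto{\,\nu\,} \ooCat$. En particulier, le \oo-foncteur $m : \On{0} \to \On{m}$ intervenant dans le triangle commutatif formé de $m$, de $c : \On{m} \to C$ et de $c_m : \On{0} \to C$, triangle auquel $m^\ast$ est associé au sens du paragraphe~\ref{paragr:fonct_tr_sur_2}, n'est autre que $\nu(m)$ ; le $m$-simplexe $c$ est quant à lui un \oo-foncteur $c : \On{m} = \nu(\cn(\Deltan{m})) \to C$ vérifiant $c_m = c\,\nu(m)$ par définition.

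On vérifiera ensuite que $m : \cn(\Deltan{0}) \to \cn(\Deltan{m})$ satisfait aux hypothèses du corollaire~\ref{coro:anticontr_Thomason}. D'une part, les complexes $\cn(\Deltan{0})$ et $\cn(\Deltan{m})$ sont des complexes de Steiner forts en vertu de la proposition~\ref{prop:joint_Steiner} (voir le paragraphe~\ref{paragr:def_cn}). D'autre part, d'après la description explicite du paragraphe~\ref{paragr:base_cn}, le complexe $\cn(\Deltan{0})$ admet $(0)$ pour unique élément de base, en degré~$0$, et n'en a aucun en degré strictement positif, tandis que $m$ envoie $(0)$ sur l'élément de base $(m)$ de $\cn(\Deltan{m})$ ; le morphisme $m$ est donc injectif, envoie la base dans la base, et satisfait trivialement à la condition sur l'ordre $\leN$ de la définition d'une inclusion rigide ordonnée (paragraphe~\ref{paragr:def_rig_ord}), ce qui en fait une inclusion rigide ordonnée. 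Enfin, la proposition~\ref{prop:Deltan_contr} affirme précisément que $m$ est un rétracte par antihomotopie de carré nul fort au-dessus de $\cn(\Deltan{0})$.

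Il ne restera plus qu'à appliquer le corollaire~\ref{coro:anticontr_Thomason} avec $i = m$, $u = v : A \to C$ et $b = c : \nu(\cn(\Deltan{m})) \to C$ : avec les notations de ce corollaire, on aura $b\,\nu(i) = c\,\nu(m) = c_m$ et $c' = b = c$, de sorte que $\nu(i)^\ast = \nu(m)^\ast = m^\ast : \cotr{A}{c} \to \cotr{A}{c_m}$ sera la rétraction d'un rétracte par transformation oplax à gauche fort au-dessus de~$A$, et en particulier une équivalence de Thomason. Il n'y a pas de réel obstacle dans cette démonstration : l'énoncé est une conséquence essentiellement formelle de la proposition~\ref{prop:Deltan_contr} et du corollaire~\ref{coro:anticontr_Thomason}, lesquels reposent eux-mêmes sur les résultats de fonctorialité des tranches rappelés dans la section précédente ; les seuls points demandant un peu d'attention sont l'identification soigneuse des diverses tranches en jeu et la vérification, immédiate, que la dégénérescence $m$ est une inclusion rigide ordonnée, sa source $\cn(\Deltan{0})$ n'ayant qu'un seul élément de base.
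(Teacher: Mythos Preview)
Your proof is correct and follows exactly the same approach as the paper: apply the corollaire~\ref{coro:anticontr_Thomason} to the morphism $m : \cn(\Deltan{0}) \to \cn(\Deltan{m})$, the hypotheses being supplied by the proposition~\ref{prop:Deltan_contr} together with the immediate verification that $m$ is an inclusion rigide ordonnée. One minor terminological slip: $m : \Deltan{0} \to \Deltan{m}$ is a coface, not a dégénérescence.
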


\begin{proof}
  C'est exactement le contenu du corollaire~\ref{coro:anticontr_Thomason}
  appliqué au morphisme $m : \cn(\Deltan{0}) \to \cn(\Deltan{m})$, ses
  hypothèses étant vérifiées en vertu de la proposition précédente (et du
  fait immédiat que $m$ est bien une inclusion rigide ordonnée).
\end{proof}

\begin{coro}
  La catégorie $\ooCat$ des \oo-catégories munie du joint permet un théorème
  A au sens du paragraphe~\ref{paragr:permet_thmA_mon}.
\end{coro}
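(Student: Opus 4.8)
Le plan est d'appliquer directement la proposition~\ref{prop:def_equiv_permet_A}, qui ramène la vérification qu'une catégorie monoïdale permet un théorème A à une condition portant sur les morphismes de tranches $m^\ast : \cotr{X}{z} \to \cotr{X}{z_m}$. Je commencerais par m'assurer que les hypothèses sous lesquelles cette proposition a été énoncée sont bien satisfaites pour $\C = \ooCat$ munie du joint \oo-catégorique : il s'agit d'une catégorie monoïdale dont l'unité, la \oo-catégorie initiale $\vide$, est un objet initial de $\ooCat$ ; elle est localement fermée à gauche — elle est même localement bifermée d'après les rappels de la section précédente — ; elle admet des limites projectives finies ; et elle possède un objet final, à savoir $\Dn{0}$. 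On est donc en position d'invoquer la proposition~\ref{prop:def_equiv_permet_A}.

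Il reste alors à vérifier son critère : pour tout \oo-foncteur $g : X \to Z$, tout $m \ge 0$ et tout \oo-foncteur $z : \On{m} \to Z$, le \oo-foncteur $m^\ast : \cotr{X}{z} \to \cotr{X}{z_m}$, associé en vertu du paragraphe~\ref{paragr:fonct_tr_sur_2} au triangle commutatif formé par $m : \On{0} \to \On{m}$, est une équivalence faible de $\ooCat$, c'est-à-dire une équivalence de Thomason. Or, en identifiant un $m$-simplexe de $N(Z)$ à un \oo-foncteur $\On{m} \to Z$, c'est exactement ce qu'affirme la proposition~\ref{prop:permet_thmA} — appliquée avec $g : X \to Z$ et $z : \On{m} \to Z$ dans les rôles respectifs de $v : A \to C$ et $c$ —, qui donne même la structure de rétracte par transformation oplax sous-jacente. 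Cela conclut.

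Il n'y a ici aucune difficulté résiduelle, l'énoncé étant un corollaire immédiat : tout le contenu a été dégagé en amont, d'une part dans la proposition~\ref{prop:retr} et son corollaire~\ref{coro:anticontr_Thomason}, qui reposent sur les fonctorialités des tranches empruntées à~\cite{AraMaltsiJoint}, et d'autre part dans la proposition~\ref{prop:Deltan_contr}, qui fournit la structure de rétracte par antihomotopie de carré nul fort sur $m : \cn(\Deltan{0}) \to \cn(\Deltan{m})$. Le seul point un tant soit peu délicat de l'ensemble du dispositif — la comparaison entre tranches \oo-catégoriques et tranches simpliciales via le nerf de Street — a déjà été réglé par le corollaire~\ref{coro:tr_simpl_mon} et absorbé dans la preuve de la proposition~\ref{prop:def_equiv_permet_A} ; il n'y a donc rien de plus à établir.
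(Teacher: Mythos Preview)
Your proof is correct and follows exactly the same approach as the paper: the paper's proof is simply the two-sentence observation that, by proposition~\ref{prop:def_equiv_permet_A}, the assertion is equivalent to the preceding proposition~\ref{prop:permet_thmA}, hence the result. Your version is a more detailed unfolding of this same argument, making explicit the verification of the standing hypotheses on $\ooCat$ and the identification of the criterion with proposition~\ref{prop:permet_thmA}.
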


\begin{proof}
  En vertu de la proposition~\ref{prop:def_equiv_permet_A}, l'assertion est
  équivalente à la proposition précédente, d'où le résultat.
\end{proof}

\begin{thm}\label{thm:thmA}
  Soit
  \[
    \xymatrix@C=1.5pc{
      A \ar[rr]^u \ar[dr]_v & & B \ar[dl]^w \\
      & C
    }
  \]
  un triangle commutatif de \oo-foncteurs. Si pour tout objet $c$ de $C$, le
  \oo-foncteur \hbox{$\cotr{A}{c} \to \cotr{B}{c}$} est une équivalence de
  Thomason, alors il en est de même de $u$.
\end{thm}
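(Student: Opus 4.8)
Le plan est de déduire ce théorème directement du théorème~A monoïdal (théorème~\ref{thm:thmA_monoid}) appliqué à la catégorie $\ooCat$ munie du joint \oo-catégorique, exactement comme on vient de déduire le théorème~A de Quillen originel du cas de $\Cat$ munie du joint catégorique. Tout le travail ayant déjà été effectué dans les sections précédentes, il ne reste qu'à vérifier que les hypothèses de ce théorème abstrait sont satisfaites et à traduire soigneusement sa conclusion. D'abord, je rappellerais que la catégorie monoïdale $(\ooCat, \joint, \vide)$ satisfait aux hypothèses permanentes du théorème~A monoïdal : son unité $\vide$ est bien un objet initial (la \oo-catégorie initiale), elle est localement bifermée au sens du paragraphe~\ref{paragr:def_tr} (c'est une partie de la structure du joint rappelée dans la section précédente), elle admet des limites projectives finies, et elle possède un objet final, à savoir l'oriental $\On{0} = \Dn{0}$, qui jouera le rôle de l'objet~$e$.

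Ensuite, l'ingrédient essentiel est que $\ooCat$ munie du joint permet un théorème~A au sens du paragraphe~\ref{paragr:permet_thmA_mon} : c'est précisément le contenu du corollaire précédent. Rappelons que ce corollaire repose, \emph{via} la proposition~\ref{prop:permet_thmA} et le corollaire~\ref{coro:anticontr_Thomason}, sur le fait établi à la proposition~\ref{prop:Deltan_contr} que le morphisme sommet $m : \cn(\Deltan{0}) \to \cn(\Deltan{m})$ est un rétracte par antihomotopie de carré nul fort au-dessus de~$\cn(\Deltan{0})$, combiné aux résultats de sesquifonctorialité des tranches \oo-catégoriques de~\cite{AraMaltsiJoint}.

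Il reste alors à traduire la conclusion du théorème~\ref{thm:thmA_monoid} dans notre cadre. Les équivalences faibles de $\ooCat$ pour cette structure monoïdale sont, par définition, les équivalences de Thomason, puisque le foncteur nerf associé au joint est le nerf de Street (paragraphe~\ref{paragr:def_orient}). Par ailleurs, un morphisme $z : e \to C$ n'est autre qu'un objet $c$ de $C$, et la tranche $\cotr{A}{c}$ au sens de la structure monoïdale coïncide avec la tranche \oo-catégorique au-dessous de l'objet~$c$ rappelée au paragraphe~\ref{paragr:obj_tr} ; le morphisme $\cotr{A}{c} \to \cotr{B}{c}$ qu'elle produit est celui induit par $u$. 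L'hypothèse du présent énoncé coïncide donc exactement avec celle du théorème~\ref{thm:thmA_monoid} appliqué à notre triangle, et sa conclusion fournit que $u$ est une équivalence de Thomason.

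Il n'y a ainsi aucun obstacle véritable : contrairement à la preuve simpliciale de~\cite{AraMaltsiThmAI}, tout le contenu non formel a été absorbé dans les théorèmes~A abstraits de la section~\ref{sec:thmA_abs} et dans les propositions de la présente section. L'unique point demandant un peu d'attention est de s'assurer que les diverses notions de « tranche au-dessous d'un objet » — celle issue de la structure monoïdale, la notion \oo-catégorique, et celle qui intervient dans la définition de « permet un théorème~A » — coïncident bien, ce qui a été organisé aux paragraphes~\ref{paragr:obj_tr} et~\ref{paragr:permet_thmA_mon}.
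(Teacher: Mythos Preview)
Your proposal is correct and follows exactly the same approach as the paper: apply the monoidal Theorem~A (théorème~\ref{thm:thmA_monoid}) to $\ooCat$ munie du joint, the corollary immediately preceding the statement guaranteeing that this category permits a Theorem~A. The paper's proof is simply a two-sentence version of yours, omitting the explicit verification of the standing hypotheses (locally biclosed, finite limits, final object) and the dictionary between the abstract and concrete notions, which you have spelled out carefully.
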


\begin{proof}
  En vertu du corollaire précédent, on peut appliquer le théorème~A
  monoïdal (théorème~\ref{thm:thmA_monoid}) à $\ooCat$ munie du joint. On
  obtient alors exactement l'assertion qu'on voulait démontrer.
\end{proof}



\begin{remark}\label{rem:thmA_dual}
  La stabilité des équivalences de Thomason par la dualité $C \mapsto C^\op$
  (voir le corollaire~\ref{coro:Thom_op}) permet de déduire du théorème précédent
  un théorème analogue pour les tranches de type \smash{$\trm{C}{c}$} (voir
  le théorème~\ref{thm:thmA_lax} pour un énoncé plus général).
  Les résultats analogues pour les tranches de type $\tr{C}{c}$ ou
  \smash{$\cotrm{C}{c}$} (voir \cite[remarque~6.37]{AraMaltsiJoint}) sont également
  vrais. Néanmoins, pour les établir, on a besoin de savoir que la classe des
  équivalences de Thomason est stable par la dualité~$C \mapsto C^\co$, ce
  qui sera démontré dans~\cite{AraMaltsiNerfs}.
\end{remark}

Les théorèmes A que l'on vient d'établir sont relatifs au sens où ils
traitent de \oo-foncteurs au-dessus d'une \oo-catégorie. Afin de déduire de
ces résultats des théorèmes A absolus, nous avons besoin de montrer que les
tranches $\cotr{C}{c}$, où $c$ est un objet de $C$, sont asphériques. Cela
résulte facilement de l'énoncé analogue pour les ensembles simpliciaux (voir
la remarque~\ref{rem:tr_asp}). Nous exposons maintenant une
démonstration alternative dont les conséquences joueront un rôle important
dans la section~\ref{sec:thmA_2-tri}.

\begin{proposition}
  Soient $C$ une \oo-catégorie et $c$ un objet de $C$. On a un isomorphisme
  canonique
  \[
    \cotr{\big(\cotr{C}{c}\big)}{(c, \id{c})}
    \simeq
    \cotr{C}{\id{c}},
  \]
  où, à gauche, le couple $(c, \id{c})$ est considéré comme un objet de
  $\cotr{C}{c}$ (voir le paragraphe \ref{paragr:obj_tr}) et, à droite,
  $\id{c}$ est considéré comme une $1$-cellule de $C$, c'est-à-dire un
  \oo-foncteur $\Dn{1}
  \to C$.
\end{proposition}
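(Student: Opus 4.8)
Le plan est d'établir l'isomorphisme \emph{via} le lemme de Yoneda, en identifiant, pour toute \oo-catégorie $T$, l'ensemble $\Hom_{\ooCat}(T, \cotr{\big(\cotr{C}{c}\big)}{(c, \id{c})})$ à $\Hom_{\ooCat}(T, \cotr{C}{\id{c}})$, de manière naturelle en $T$. Les ingrédients sont les bijections naturelles définissant les tranches \oo-catégoriques (rappelées dans la section précédente), l'associativité du joint, l'isomorphisme canonique $\Dn{0} \joint \Dn{0} \simeq \Dn{1}$, et le fait, issu du paragraphe~\ref{paragr:obj_tr}, que l'objet $(c, \id{c})$ de $\cotr{C}{c}$ correspond, \emph{via} l'adjonction $\Hom_{\ooCat}(\Dn{0}, \cotr{C}{c}) \simeq \Hom_{\cotr{\ooCat}{\Dn{0}}}((\Dn{0} \joint \Dn{0}, \iota_1), (C, c))$ et l'isomorphisme $\Dn{0} \joint \Dn{0} \simeq \Dn{1}$, à la $1$-cellule $\id{c}$ de $C$, vue comme un \oo-foncteur $\Dn{1} \to C$.

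Concrètement, on partirait de la chaîne de bijections, toutes naturelles en $T$,
\begin{align*}
  \Hom_{\ooCat}(T, \cotr{\big(\cotr{C}{c}\big)}{(c, \id{c})})
  &\simeq \{\, g : \Dn{0} \joint T \to \cotr{C}{c} \mid g \iota_1 = (c, \id{c}) \,\} \\
  &\simeq \{\, \tilde g : \Dn{0} \joint \Dn{0} \joint T \to C \mid \tilde g \comp (\Dn{0} \joint \iota_1) = \id{c} \,\},
\end{align*}
où la première bijection est la propriété universelle de la tranche au-dessous de l'objet $(c, \id{c})$, et la seconde celle de la tranche $\cotr{C}{c}$ combinée à la naturalité de l'adjonction, l'égalité $g \iota_1 = (c, \id{c})$ se transportant en $\tilde g \comp (\Dn{0} \joint \iota_1) = \id{c}$ (identité de \oo-foncteurs $\Dn{0} \joint \Dn{0} \to C$). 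On observera que cette dernière condition entraîne que $\tilde g$ est au-dessus de $\Dn{0}$, puisque la source de la $1$-cellule $\id{c}$ est $c$. En utilisant l'associativité du joint et l'isomorphisme $\Dn{0} \joint \Dn{0} \simeq \Dn{1}$, on identifie $\Dn{0} \joint \Dn{0} \joint T$ à $\Dn{1} \joint T$ et le \oo-foncteur $\Dn{0} \joint \iota_1$ à $\iota_1 : \Dn{1} \to \Dn{1} \joint T$, d'où
\[
  \Hom_{\ooCat}(T, \cotr{\big(\cotr{C}{c}\big)}{(c, \id{c})})
  \simeq \{\, h : \Dn{1} \joint T \to C \mid h \iota_1 = \id{c} \,\}
  \simeq \Hom_{\ooCat}(T, \cotr{C}{\id{c}}),
\]
la dernière bijection étant la propriété universelle de la tranche au-dessous du \oo-foncteur $\id{c} : \Dn{1} \to C$. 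Par le lemme de Yoneda, on obtient l'isomorphisme canonique annoncé ; on vérifie de même qu'il est naturel en $C$.

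Le principal obstacle sera le suivi minutieux des diverses inclusions $\iota_1$ à travers l'isomorphisme d'associativité du joint, et la vérification que la condition de la seconde ligne se ramène bien à l'unique condition $h \iota_1 = \id{c}$ de la dernière ligne. Pour rendre l'argument plus transparent, on pourra dégager au préalable l'énoncé général dont la proposition est le cas $A = B = \Dn{0}$, $v = c$ : \emph{pour un \oo-foncteur $v : A \to C$ et un \oo-foncteur $w : B \to \cotr{C}{v}$ de transposé $\tilde w : A \joint B \to C$ (vérifiant $\tilde w \iota_1 = v$), on a un isomorphisme canonique $\cotr{\big(\cotr{C}{v}\big)}{w} \simeq \cotr{C}{\tilde w}$}, la preuve étant identique à celle esquissée ci-dessus avec $\Dn{0}, \Dn{0}$ remplacés par $A, B$.
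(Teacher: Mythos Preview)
Your proof is correct and follows essentially the same approach as the paper's: both apply the Yoneda lemma, unwind the two slice adjunctions in succession, use the associativity of the joint together with the identification $\Dn{0} \joint \Dn{0} \simeq \Dn{1}$, and conclude. Your extra remark that the condition $\tilde g \comp (\Dn{0} \joint \iota_1) = \id{c}$ already forces $\tilde g\,\iota_1 = c$, as well as the proposed generalisation to arbitrary $v : A \to C$ and $w : B \to \cotr{C}{v}$, are fine but not present in the paper's argument.
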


\begin{proof}
  Soit $T$ une \oo-catégorie. Par adjonction, se donner un \oo-foncteur $T
  \to \cotr{\big(\cotr{C}{c}\big)}{(c, \id{c})}$ revient à se donner un
  \oo-foncteur $\Dn{0} \joint T \to \cotr{C}{c}$ rendant le triangle
  \[
    \xymatrix{
      \Dn{0} \joint T \ar[r] & \cotr{C}{c} \\
      \Dn{0} \ar[u]^{\iota_1} \ar[ur]_{(c, \id{c})}
    }
  \]
  commutatif. De nouveau par adjonction, en vertu de la définition de
  $(c, \id{c})$ (voir le paragraphe~\ref{paragr:obj_tr}), cela revient à se
  donner un \oo-foncteur $\Dn{0} \joint \Dn{0} \joint T \to C$ rendant
  commutatif le triangle
  \[
    \xymatrix{
      (\Dn{0} \joint \Dn{0}) \joint T \ar[r] & C \\
      \Dn{0} \joint \Dn{0} \ar[u]^{\iota_1} \ar[ur]_{\id{c}} & \pbox{,}
    }
  \]
  où on a identifié $\Dn{0} \joint \Dn{0}$ et $\Dn{1}$. Par adjonction, cela
  revient à se donner un \oo-foncteur~$T \to \cotr{C}{\id{c}}$, d'où le
  résultat en vertu du lemme de Yoneda.
\end{proof}

\begin{proposition}\label{prop:tr_fort_asp}
  Soient $C$ une \oo-catégorie et $c$ un objet de $C$. Le \oo-foncteur
  $\Dn{0} \to \cotr{C}{c}$ correspondant à l'objet $(c, \id{c})$ est un
  rétracte par transformation oplax à gauche fort.
\end{proposition}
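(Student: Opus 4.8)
The plan is to invoke the dictionary recalled in~\ref{paragr:contr_fort}: writing $i \colon \Dn{0} \to \cotr{C}{c}$ for the $\infty$-functor picking out the object $(c, \id{c})$, a strong left oplax retract structure on $i$ amounts to an $\infty$-functor $F \colon \cotr{C}{c} \to \cotr{\big(\cotr{C}{c}\big)}{(c, \id{c})}$ over $\cotr{C}{c}$ whose composite with $i$ is the distinguished object $\big((c, \id{c}), \id{(c, \id{c})}\big)$. By the preceding proposition the target of $F$ may be replaced by $\cotr{C}{\id{c}}$, where $\id{c}$ now denotes the $\infty$-functor $\Dn{1} \to C$ given by the identity $1$-cell of $c$; unwinding the adjunctions defining that isomorphism, the forgetful $\infty$-functor on $\cotr{\big(\cotr{C}{c}\big)}{(c, \id{c})}$ corresponds to the $\infty$-functor $\sigma^{\ast} \colon \cotr{C}{\id{c}} \to \cotr{C}{c}$ obtained, via the functoriality of slices of~\ref{paragr:fonct_tr_sur_1} applied to $\ooCat$ equipped with the join, from the source inclusion $\sigma \colon \Dn{0} \to \Dn{1}$ (which satisfies $\id{c} \circ \sigma = c$; recall $\Dn{1} \simeq \Dn{0} \joint \Dn{0}$ and that $\sigma$ is $\iota_{1}$ under this isomorphism).

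Next I would take $F = \kappa^{\ast} \colon \cotr{C}{c} \to \cotr{C}{\id{c}}$, where $\kappa \colon \Dn{1} \to \Dn{0}$ is the unique $\infty$-functor and $\kappa^{\ast}$ is induced (again via~\ref{paragr:fonct_tr_sur_1}) by the triangle $\Dn{1} \xto{\kappa} \Dn{0} \xto{c} C$; this makes sense because $c \circ \kappa \colon \Dn{1} \to C$ is exactly $\id{c}$, the identity $1$-cell being $c$ precomposed with the canonical degeneracy. Since $\kappa \circ \sigma = \id{\Dn{0}}$, the functoriality of $l \mapsto l^{\ast}$ yields $\sigma^{\ast} \circ \kappa^{\ast} = (\id{\Dn{0}})^{\ast} = \id{\cotr{C}{c}}$, so $F$ lies over $\cotr{C}{c}$, and in particular $F$ is a section of the forgetful $\infty$-functor.

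It then remains to identify $F \circ i$. Chasing the adjunctions, the object $(c, \id{c})$ of $\cotr{C}{c}$ is the $\infty$-functor $\Dn{1} \to C$ equal to $c$ composed with the unique map $\Dn{1} \to \Dn{0}$, and applying $\kappa^{\ast}$ amounts to restricting along a suitable $\infty$-functor $\Dn{0} \joint \Dn{0} \joint \Dn{0} \to \Dn{1}$; since $c$ itself factors through $\Dn{0}$, the result is $c$ composed with the unique map $\Dn{0} \joint \Dn{0} \joint \Dn{0} \to \Dn{0}$. The same unwinding applied to $\big((c, \id{c}), \id{(c, \id{c})}\big)$ through the isomorphism of the preceding proposition gives this very same $\infty$-functor, so $F \circ i$ is the distinguished object, which completes the verification.

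The only delicate point is bookkeeping: one must handle the associativity isomorphisms for the join carefully and correctly match the forgetful $\infty$-functor on $\cotr{\big(\cotr{C}{c}\big)}{(c, \id{c})}$ with $\sigma^{\ast} = \iota_{1}^{\ast}$ under the preceding proposition. Beyond that, the proof rests only on the preceding proposition, the translation of~\ref{paragr:contr_fort}, and the elementary functoriality of the slice construction, the last verifications collapsing because all the relevant $\infty$-functors factor through the terminal object $\Dn{0}$.
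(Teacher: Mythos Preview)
Your proof is correct and follows the same route as the paper: reduce via~\ref{paragr:contr_fort} to producing a section of the forgetful functor, use the preceding proposition to identify the target with $\cotr{C}{\id{c}}$, take the section to be $\kappa^\ast$, and verify the ``fort'' condition by showing that both $\kappa^\ast \circ (c,\id{c})$ and the distinguished object correspond to the constant $\infty$-functor $\Dn{1}\joint\Dn{0}\to\Dn{0}\xto{c}C$. The one visible difference is that you identify the forgetful $\cotr{C}{\id{c}}\to\cotr{C}{c}$ with $\sigma^\ast=\iota_1^\ast$ whereas the paper writes $\tau^\ast$; since $\kappa\sigma=\kappa\tau=\id_{\Dn{0}}$, either identification gives the needed equality and the argument is unaffected.
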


\begin{proof}
  En vertu du paragraphe \ref{paragr:contr_fort}, il suffit de construire
  un \oo-foncteur $\cotr{C}{c} \to \cotr{\big(\cotr{C}{c}\big)}{(c,
  \id{c})}$ au-dessus de $\cotr{C}{c}$ (satisfaisant à une propriété
  additionnelle pour le caractère fort) ou encore, en utilisant la
  proposition précédente, un \oo-foncteur $\cotr{C}{c} \to \cotr{C}{\id{c}}$
  au-dessus de $\cotr{C}{c}$. On obtient ce \oo-foncteur comme le
  \oo-foncteur~$\kappa^\ast$ associé, en vertu du
  paragraphe~\ref{paragr:fonct_tr_sur_1}, au triangle commutatif
  \[
    \xymatrix@C=1.5pc{
      \Dn{1} \ar[dr]_{\id{c}} \ar[rr]^{\kappa} & & \Dn{0} \ar[dl]^{c} \\
                                               & C & \pbox{,}
    }
  \]
  où $\kappa$ désigne le \oo-foncteur du paragraphe~\ref{paragr:def_Dn}.
  Le \oo-foncteur
  $\cotr{C}{\id{c}} \to \cotr{C}{c}$ correspondant au \oo-foncteur d'oubli
  $\cotr{\big(\cotr{C}{c}\big)}{(c, \id{c})} \to \cotr{C}{c}$ à
  travers l'isomorphisme de la proposition précédente s'identifie lui au
  \oo-foncteur~$\tau^\ast$ associé au triangle commutatif
  \[
    \xymatrix@C=1.5pc{
      \Dn{0} \ar[dr]_{c} \ar[rr]^{\tau} & & \Dn{1} \ar[dl]^{\id{c}} \\
      & C & \pbox{,}
    }
  \]
  où $\tau$ désigne le \oo-foncteur du paragraphe~\ref{paragr:def_Dn}.
  On en déduit que $\kappa^\ast : \cotr{C}{c} \to \cotr{C}{\id{C}}$ est bien
  au-dessus de $\cotr{C}{c}$ par fonctorialité de la construction du
  paragraphe~\ref{paragr:fonct_tr_sur_1}. Il nous reste à vérifier le
  caractère fort du rétracte par transformation. Par définition
  de~$\kappa^\ast$, l'objet
  \[ \Dn{0} \xto{(c, \id{c})} \cotr{C}{c} \xto{\,\kappa^\ast\,} \cotr{C}{\id{c}} \]
  de $\cotr{C}{\id{c}}$ correspond au \oo-foncteur
  \[ \Dn{1} \joint \Dn{0} \xto{\kappa \joint \Dn{0}} \Dn{0} \joint \Dn{0}
  \xto{\,\,\id{c}\,\,} C, \]
  c'est-à-dire à un \oo-foncteur constant de valeur $c$, ce qui montre bien
  en vertu du paragraphe~\ref{paragr:contr_fort} que le rétracte par
  transformation de l'énoncé est fort.
\end{proof}

\begin{corollary}\label{coro:tr_asp}
  Soient $C$ une \oo-catégorie et soit $c$ un objet de $C$. Les
  \oo-catégories $\cotr{C}{c}$ et \smash{$\trm{C}{c}$} sont asphériques.
\end{corollary}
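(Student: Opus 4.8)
Le plan est de combiner la proposition~\ref{prop:tr_fort_asp} avec le fait que les rétractes par transformation oplax sont des équivalences de Thomason, puis de ramener le cas de $\trm{C}{c}$ à celui de $\cotr{C}{c}$ par dualité.

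Pour $\cotr{C}{c}$, je considérerais l'\oo-foncteur $i : \Dn{0} \to \cotr{C}{c}$ correspondant à l'objet $(c, \id{c})$. En vertu de la proposition~\ref{prop:tr_fort_asp}, celui-ci est un rétracte par transformation oplax à gauche fort, donc une équivalence de Thomason d'après la proposition~\ref{prop:retr_Thomason}. Notons $p : \cotr{C}{c} \to \Dn{0}$ l'unique \oo-foncteur vers la \oo-catégorie terminale. Comme $\Dn{0}$ est un objet final, on a $p i = \id{\Dn{0}}$, qui est trivialement une équivalence de Thomason ; puisque $i$ en est une, on conclut par deux sur trois que $p$ est une équivalence de Thomason, c'est-à-dire que $\cotr{C}{c}$ est asphérique.

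Pour $\trm{C}{c}$, j'appliquerais le point précédent à la \oo-catégorie $C^\op$ et à l'objet $c$, considéré comme un objet de $C^\op$ (les dualités $D_J$ fixant les $0$-cellules) : la \oo-catégorie $\cotr{C^\op}{c}$ est alors asphérique, c'est-à-dire que l'unique \oo-foncteur $\cotr{C^\op}{c} \to \Dn{0}$ est une équivalence de Thomason. Par le corollaire~\ref{coro:Thom_op}, son image par la dualité $X \mapsto X^\op$, à savoir $(\cotr{C^\op}{c})^\op \to (\Dn{0})^\op = \Dn{0}$, en est encore une. Or la proposition~\ref{prop:tr_op} fournit un isomorphisme canonique $(\cotr{C^\op}{c})^\op \simeq \trm{C}{c}$, et, par unicité de l'\oo-foncteur vers la \oo-catégorie terminale, l'\oo-foncteur ci-dessus s'identifie à travers cet isomorphisme à l'unique \oo-foncteur $\trm{C}{c} \to \Dn{0}$. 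Celui-ci est donc une équivalence de Thomason et $\trm{C}{c}$ est asphérique.

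Il n'y aura pas de réelle difficulté : tout le travail est effectué dans la proposition~\ref{prop:tr_fort_asp}, et le seul point méritant une attention est de s'assurer que la dualité $X \mapsto X^\op$ transforme la projection canonique vers l'objet final en la projection canonique vers l'objet final, ce qui est immédiat par finalité.
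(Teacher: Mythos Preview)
Your proof is correct and takes essentially the same approach as the paper: proposition~\ref{prop:tr_fort_asp} plus the fact that rétractes par transformation oplax are Thomason equivalences for $\cotr{C}{c}$, then duality via proposition~\ref{prop:tr_op} and corollaire~\ref{coro:Thom_op} for $\trm{C}{c}$. The only cosmetic difference is that the paper invokes corollaire~\ref{coro:retr_equiv_Thom} directly (which already packages the passage from the retract to its retraction), whereas you spell out the two-out-of-three argument; the content is the same.
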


\begin{proof}
  Le cas de $\cotr{C}{c}$ résulte de la proposition précédente en vertu du
  corollaire~\ref{coro:retr_equiv_Thom} (et du fait que $N(\Dn{0}) =
  \Deltan{0}$). Celui de \smash{$\trm{C}{c}$} en résulte
  par dualité, grâce à la proposition~\ref{prop:tr_op} et au
  corollaire~\ref{coro:Thom_op} :
  \[ N(\trm{C}{c}) \simeq N((\cotr{C^\op}{c})^\op) \simeq
  N(\cotr{C^\op}{c})^\op. \qedhere \]
\end{proof}

\begin{remark}\label{rem:tr_asp}
  On peut montrer le corollaire précédent de manière plus directe. En effet,
  en vertu de la proposition~\ref{prop:comp_nerf_tr}, on a $N(\cotr{C}{c})
  \simeq \cotr{N(C)}{c}$ et cet ensemble simplicial est faiblement
  contractile d'après la proposition~\ref{prop:tr_contr}. L'asphéricité de
  $\smash{\trm{C}{c}}$ s'en déduit par dualité comme dans la preuve
  précédente.
\end{remark}

\begin{corollary}
  Soit $u : A \to B$ un \oo-foncteur. Si pour tout objet $b$ de $B$, la
  \oo-catégorie $\cotr{A}{b}$ est asphérique, alors $u$ est une équivalence
  de Thomason.
\end{corollary}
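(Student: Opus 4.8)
The plan is to deduce this corollaire from the relative théorème~A (théorème~\ref{thm:thmA}) by applying it to the tautological triangle commutatif
\[
  \xymatrix@C=1.5pc{
    A \ar[rr]^u \ar[dr]_u & & B \ar[dl]^{\id{B}} \\
    & B
  }
\]
obtained by taking $C = B$, $v = u$ and $w = \id{B}$; this triangle commutes trivially. According to théorème~\ref{thm:thmA}, in order to conclude that $u$ is une équivalence de Thomason it suffices to check that for every objet $b$ of $B$ the \oo-foncteur $\cotr{A}{b} \to \cotr{B}{b}$ induced by $u$ is une équivalence de Thomason.

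The key observation is that both $\cotr{A}{b}$ and $\cotr{B}{b}$ are asphériques: the first one by the hypothèse of the énoncé, the second one by le corollaire~\ref{coro:tr_asp}. First I would consider the triangle
\[
  \xymatrix@C=1.5pc{
    \cotr{A}{b} \ar[rr] \ar[dr] & & \cotr{B}{b} \ar[dl] \\
    & \Dn{0}
  }
\]
whose horizontal arrow is the \oo-foncteur induced by $u$ and whose two slanted arrows are the unique \oo-foncteurs to the \oo-catégorie terminale $\Dn{0}$; it commutes because $\Dn{0}$ is finale. By definition of asphéricité, the two slanted arrows are des équivalences de Thomason. Since une équivalence de Thomason is, by definition, a \oo-foncteur whose nerf de Street est une équivalence faible simpliciale, and since les équivalences faibles simpliciales satisfont deux sur trois, it follows that the \oo-foncteur $\cotr{A}{b} \to \cotr{B}{b}$ is itself une équivalence de Thomason, which is exactly what is needed to apply théorème~\ref{thm:thmA}.

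I do not expect any real obstacle in this argument; the only point requiring a word of care is the commutativity of the last triangle, which is immediate from the fact that $\Dn{0}$ is une \oo-catégorie finale, so that any two parallel \oo-foncteurs with target $\Dn{0}$ coincide. Putting the two steps together, théorème~\ref{thm:thmA} applied to the first triangle yields that $u$ is une équivalence de Thomason, which completes the proof. (Alternatively, one could bypass théorème~\ref{thm:thmA} altogether and argue directly via l'isomorphisme $N(\cotr{A}{b}) \simeq \cotr{N(A)}{b}$ of la proposition~\ref{prop:comp_nerf_tr} together with le théorème~A simplicial, théorème~\ref{thm:thmA_simpl}, applied to $N(u)$, but the reduction above is the most economical.)
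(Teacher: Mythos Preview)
Your proof is correct and follows essentially the same approach as the paper: apply théorème~\ref{thm:thmA} to the tautological triangle with $C = B$, $v = u$, $w = \id{B}$, and use the asphéricité of $\cotr{B}{b}$ (corollaire~\ref{coro:tr_asp}) together with the hypothèse and deux sur trois to verify that $\cotr{A}{b} \to \cotr{B}{b}$ is une équivalence de Thomason. The paper states the deux-sur-trois step a bit more tersely, but the argument is the same.
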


\begin{proof}
  Considérons le triangle commutatif
  \[
    \xymatrix@C=1.5pc{
      A \ar[rr]^u \ar[dr]_u & & B \ar[dl]^{\id{B}} \\
      & B & \pbox{.}
    }
  \]
  L'hypothèse permettant d'appliquer le théorème~\ref{thm:thmA} (et donc de
  conclure que $u$ est une équivalence de Thomason) est que, pour tout objet
  $b$ de $B$, le \oo-foncteur $\cotr{A}{b} \to \cotr{B}{b}$ est une
  équivalence de Thomason. Puisque $\cotr{B}{b}$ est asphérique en vertu du
  corollaire~\ref{coro:tr_asp}, cette hypothèse est équivalente au fait que
  $\cotr{A}{b}$ soit asphérique, ce qui entraîne le résultat.
\end{proof}

\begin{corollary}
  Soit $u : A \to B$ un \oo-foncteur. Si pour tout objet $b$ de $B$, la
  \oo-catégorie \smash{$\trm{A}{b}$} est asphérique, alors $u$ est une équivalence
  de Thomason.
\end{corollary}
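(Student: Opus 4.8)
The plan is to reduce this to the previous corollary by passing to the odd dual. Consider the \oo-foncteur $u^\op : A^\op \to B^\op$. By Corollary~\ref{coro:Thom_op} it suffices to prove that $u^\op$ is a Thomason equivalence, and for this I would apply the previous corollary to $u^\op$. The task then becomes to check that for every object $b$ of $B^\op$, that is, for every object $b$ of $B$, the \oo-catégorie $\cotr{A^\op}{b}$ is aspherical.

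The key point is a canonical isomorphism $\cotr{A^\op}{b} \simeq (\trm{A}{b})^\op$, which is nothing but the relative version of Proposition~\ref{prop:tr_op}. Indeed, applied in $B^\op$ to the object $b$, that proposition gives $\cotr{B^\op}{b} \simeq (\trm{B}{b})^\op$; since the relative slices $\cotr{A^\op}{b}$ and $\trm{A}{b}$ are obtained from $\cotr{B^\op}{b}$ and $\trm{B}{b}$ by base change along $u^\op$ and $u$ respectively (as in paragraph~\ref{paragr:fonct_tr_sur_2}), and since the duality $C \mapsto C^\op$ commutes with pullbacks, base-changing along $u$ yields the desired isomorphism. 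Combined with Proposition~\ref{prop:nerf_op}, this gives $N(\cotr{A^\op}{b}) \simeq N(\trm{A}{b})^\op$. As $\trm{A}{b}$ is aspherical by hypothesis, $N(\trm{A}{b})$ is weakly contractible, and since $X \mapsto X^\op$ preserves weak equivalences of simplicial sets (the two simplicial sets having the same topological realization, as recalled in the proof of Corollary~\ref{coro:Thom_op}), it follows that $N(\cotr{A^\op}{b})$ is weakly contractible as well, i.e.\ that $\cotr{A^\op}{b}$ is aspherical.

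Applying the previous corollary to $u^\op$ then shows that $u^\op$ is a Thomason equivalence, whence so is $u$ by Corollary~\ref{coro:Thom_op}. The only step calling for a little care is the relative identification $\cotr{A^\op}{b} \simeq (\trm{A}{b})^\op$, but it is entirely parallel to the duality argument already carried out in the proof of Corollary~\ref{coro:tr_asp} and presents no genuine obstacle; everything else is purely formal.
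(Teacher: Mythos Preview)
Your proposal is correct and is precisely the duality argument the paper has in mind: the paper's own proof reads in its entirety ``L'assertion se déduit du corollaire précédent par dualité'', and you have faithfully unpacked this one-line justification using Proposition~\ref{prop:tr_op}, Proposition~\ref{prop:nerf_op} and Corollary~\ref{coro:Thom_op}, exactly as done in the proof of Corollary~\ref{coro:tr_asp}.
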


\begin{proof}
  L'assertion se déduit du corollaire précédent par dualité.
\end{proof}

Terminons cette section par une application de ces théorèmes $A$.

\begin{theorem}
  Soit $C$ une \oo-catégorie admettant un objet $c_0$ ayant la
  propriété suivante : pour tout objet $c$ de $C$, la \oo-catégorie
  $\Homi_C(c_0, c)$ (provenant de l'enrichissement de $\ooCat$ sur
  elle-même) est asphérique. Alors $C$ est asphérique.
\end{theorem}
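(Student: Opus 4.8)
Le plan est de montrer que le \oo-foncteur $c_0 : \Dn 0 \to C$ correspondant à l'objet $c_0$ de $C$ est une équivalence de Thomason. La conclusion en résultera aussitôt : si $p : C \to \Dn 0$ désigne l'unique \oo-foncteur, on a $p\,c_0 = \id{\Dn 0}$, de sorte que, $c_0$ et $\id{\Dn 0}$ étant des équivalences de Thomason, il en est de même de $p$ par deux sur trois ; comme $N(\Dn 0) = \Deltan 0$, cela signifie précisément que $C$ est asphérique.

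Pour établir que $c_0$ est une équivalence de Thomason, on applique le corollaire ci-dessus (conséquence du théorème~\ref{thm:thmA}) affirmant qu'un \oo-foncteur $u : A \to B$ dont toutes les tranches $\trm{A}{b}$, pour $b$ parcourant les objets de $B$, sont asphériques est une équivalence de Thomason. On l'applique à $u = c_0 : \Dn 0 \to C$. Par définition (paragraphe~\ref{paragr:fonct_tr_sur_2}, dans sa variante obtenue par dualité), pour un objet $c$ de $C$, on a
\[ \trm{\Dn 0}{c} = \trm{C}{c} \times_C \Dn 0, \]
où $\trm{C}{c}$ est munie de son \oo-foncteur d'oubli vers $C$ et où $\Dn 0 \to C$ désigne le \oo-foncteur $c_0$ ; autrement dit, $\trm{\Dn 0}{c}$ est la fibre du \oo-foncteur d'oubli $\trm{C}{c} \to C$ au-dessus de l'objet $c_0$.

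Le cœur de la preuve consiste à identifier cette fibre avec la \oo-catégorie $\Homi_C(c_0, c)$. Par adjonction, se donner un \oo-foncteur $T \to \trm{C}{c} \times_C \Dn 0$ revient à se donner un \oo-foncteur $\varphi : T \joint \Dn 0 \to C$ tel que $\varphi\iota_2 = c$ et tel que la restriction $\varphi\iota_1 : T \to C$ le long de $\iota_1 : T \simeq T \joint \vide \to T \joint \Dn 0$ soit le \oo-foncteur constant de valeur $c_0$. En vertu de la description explicite du joint \oo-catégorique et des tranches de~\cite{AraMaltsiJoint}, une telle donnée équivaut à celle d'un \oo-foncteur $T \to \Homi_C(c_0, c)$ : une cellule $x \joint y$ de $T \joint \Dn 0$, où $x$ est une cellule de $T$ et $y$ l'unique objet de $\Dn 0$, est envoyée par $\varphi$ sur une cellule de $C$ dont la source itérée en dimension $0$ est $c_0$ et le but itéré en dimension $0$ est $c$, c'est-à-dire sur une cellule de $\Homi_C(c_0, c)$, cette correspondance étant compatible aux opérations de composition. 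Le lemme de Yoneda fournit alors un isomorphisme $\trm{\Dn 0}{c} \simeq \Homi_C(c_0, c)$.

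Par hypothèse, la \oo-catégorie $\Homi_C(c_0, c)$, et donc la tranche $\trm{\Dn 0}{c}$, est asphérique pour tout objet $c$ de $C$. Le corollaire rappelé ci-dessus entraîne que $c_0 : \Dn 0 \to C$ est une équivalence de Thomason, et l'asphéricité de $C$ en découle comme expliqué plus haut. L'étape la plus délicate est la dernière identification : il s'agit de faire correspondre la structure de \oo-catégorie sur la fibre de $\trm{C}{c} \to C$ au-dessus de $c_0$, induite par le joint, avec celle de $\Homi_C(c_0, c)$, provenant de l'enrichissement de $\ooCat$ sur elle-même. Cette vérification, un peu technique mais sans difficulté conceptuelle, se ramène aux descriptions explicites des tranches de~\cite{AraMaltsiJoint} ; on peut également la mener directement grâce aux compatibilités du joint aux sources, buts, identités et compositions.
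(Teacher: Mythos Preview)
Your proof follows the same strategy as the paper: apply the corollary of Theorem~A to the \oo-foncteur $c_0 : \Dn{0} \to C$ and identify each tranche $\trm{\Dn{0}}{c}$ with $\Homi_C(c_0,c)$. The overall plan and the conclusion are identical.

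The one substantive difference lies in how the key isomorphism $\trm{\Dn{0}}{c} \simeq \Homi_C(c_0,c)$ is obtained. The paper first computes $\cotr{\Dn{0}}{c}$ via the cartesian description of $\cotr{C}{c}$ as a pullback involving $\HomLax(\Dn{1},C)$ (paragraphe~\ref{paragr:obj_tr}), then invokes \cite[proposition~B.6.2]{AraMaltsiJoint} to identify the resulting double pullback with $\Homi_C(c,c_0)^\o$, and finally passes to $\trm{\Dn{0}}{c}$ by the duality $(\cotr{C}{v})^\op \simeq \trm{C^\op}{v^\op}$, yielding $\Homi_C(c_0,c)$ after a short chain of dualities $(\ )^{\co\,\o\,\op} = \id{}$. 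You instead argue directly from the universal property of the joint: a \oo-foncteur $\varphi : T \joint \Dn{0} \to C$ with $\varphi\iota_2 = c$ and $\varphi\iota_1$ constant equal to~$c_0$ should correspond to a \oo-foncteur $T \to \Homi_C(c_0,c)$. This is correct, but your justification (reading off the data from ``cells $x \joint y$'') is only a heuristic; making it precise requires exactly the kind of explicit description of the cells and compositions of $T \joint \Dn{0}$ that is packaged in the result the paper cites. The paper's route is cleaner because it reduces the identification to an already established proposition rather than reopening the explicit combinatorics of the joint.
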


\begin{proof}
  Nous allons appliquer le corollaire précédent au \oo-foncteur $c_0 :
  \Dn{0} \to C$. Soit $c$ un objet de $C$. Il s'agit donc de vérifier que
  \smash{$\trm{\Dn{0}}{c}$} est asphérique. Commençons par calculer
  $\cotr{\Dn{0}}{c}$.  D'après le paragraphe~\ref{paragr:obj_tr}, on a
  \[
    \cotr{\Dn{0}}{c} = \cotr{C}{c} \times_C \{c_0\} \simeq \{c\}
    \times^{\pi_0}_C
    \HomLax(\Dn{1}, C) \,{}^{\pi_1}\!\!\times_C \{c_0\},
  \]
  où $\pi_0$ et $\pi_1$ désignent respectivement les \oo-foncteurs
  $\HomLax(\sigma, C)$ et $\HomLax(\tau, C)$ de $\HomLax(\Dn{1}, C)$ vers
  $\HomLax(\Dn{0}, C) \simeq C$. Or, en vertu de \cite[proposition
  B.6.2]{AraMaltsiJoint}, ce dernier produit fibré est isomorphe à
  $\Homi_C(c, c_0)^\o$. Ainsi, en utilisant la proposition~\ref{prop:tr_op},
  on obtient des isomorphismes
  \[ \trm{\Dn{0}}{c} \simeq (\cotr{\Dn{0}^\op}{c})^\op
  \simeq \big(\Homi_{C^\op}(c, c_0)^\o\big)^\op
  \simeq \Homi_C(c_0, c)^{\co\,\o\,\op}
  \simeq \Homi_C(c_0, c),
  \]
  d'où le résultat.
\end{proof}

\begin{remark}
  Il est également vrai que si $C$ admet un objet $c_0$ tel que, pour tout
  objet $c$ de $C$, la \oo-catégorie $\Homi_C(c, c_0)$ est asphérique, alors
  $C$ est asphérique. Ceci peut se déduire du résultat précédent pourvu
  qu'on sache que la classe des \oo-catégories asphériques est stable par la
  dualité qui inverse le sens des $1$-cellules, ce qui sera établi dans
  \cite{AraMaltsiNerfs}.
\end{remark}

\section{\pdfoo-catégories comma}\label{sec:comma_1}

Dans cette section, on introduit une généralisation \oo-catégorique des
catégories comma. Ces \oo-catégories comma nous permettront, dans la section
suivante, de déduire du théorème A pour les triangles commutatifs un
théorème A pour les triangles commutatifs à transformation près.

\medskip

\emph{On fixe une \oo-catégorie $Z$.}

\begin{paragraph}\label{paragr:def_comma}
  Soient
  \[
    \xymatrix{
      X \ar[r]^f & Z & Y \ar[l]_g
    }
  \]
  deux \oo-foncteurs. On définit la \ndef{\oo-catégorie comma} $f \comma_Z
  g$, qu'on notera également plus simplement $f \comma g$, par le
  produit fibré itéré
  \[
    f \comma g = X \times_Z \HomLax(\Dn{1}, Z) \times_Z Y,
  \]
  limite projective du diagramme
  \[
    \xymatrix{
      X \ar[r]^f & Z & \HomLax(\Dn{1}, Z) \ar[l]_-{\pi_0} \ar[r]^-{\pi_1}
                 & Z & Y \ar[l]_g,
    }
  \]
  où $\pi_\e$, pour $\e = 0, 1$, désigne le \oo-foncteur $\HomLax(\Dn{1}, Z)
  \to \HomLax(\{\e\}, Z) \simeq Z$ induit par l'inclusion~$\{\e\} \hookto
  \Dn{1}$.

  Notons que les projections canoniques fournissent des \oo-foncteurs
  \[
    \xymatrix{X & f \comma g \ar[l]_{p_1} \ar[r]^{p_2} & Y}
  \]
  et donc un \oo-foncteur
  \[ p : f \comma g \to X \times Y. \]
\end{paragraph}

\begin{paragraph}\label{paragr:comma_pu}
  Soient toujours
  \[
    \xymatrix{
      X \ar[r]^f & Z & Y \ar[l]_g
    }
  \]
  deux \oo-foncteurs. La \oo-catégorie comma $f \comma g$ a la propriété
  universelle suivante. Soit $T$ une \oo-catégorie. Par adjonction, la
  donnée d'un \oo-foncteur $\lambda$ de $T$ vers~$\HomLax(\Dn{1}, Z)$
  correspond à celle d'une transformation oplax de~$\pi_0\lambda$
  vers~$\pi_1\lambda$.
  Ainsi, la donnée d'un \oo-foncteur $T \to f \comma g$ correspond à celle
  d'un diagramme
  \[
    \shorthandoff{;}
    \xymatrix@C=1.5pc@R=1.5pc{
      & T \ar[dl]_x \ar[dr]^y \\
      X \ar[dr]_f \ar@{}[rr]_(.35){}="x"_(.65){}="y"
      \ar@2"x";"y"^{\lambda} 
      & & Y \ar[dl]^g \\
      & Z & \pbox{,}
    }
  \]
  où $x$ et $y$ sont des \oo-foncteurs et $\lambda$ est une transformation
  oplax. On notera $(x, \lambda, y)$ le \oo-foncteur $T \to f \comma g$
  correspondant à un tel diagramme.
\end{paragraph}

\begin{remark}\label{rem:comma_lax}
  Le paragraphe précédent exprime une propriété universelle de la
  \oo-catégorie $f \comma g$ en termes de transformations oplax. En
  remplaçant dans cette propriété universelle les transformations oplax par
  des transformations lax, on obtient une \oo-catégorie $f \commalax g$
  définie par
  \[
    f \comma' g = X \times_Z \HomOpLax(\Dn{1}, Z) \times_Z Y.
  \]
  Pour différencier ces deux \oo-catégories, on pourra parler de
  \ndef{\oo-catégorie comma oplax} pour $f \comma g$ et de
  \ndef{\oo-catégorie comma lax} pour $f \comma' g$. Il résulte de la
  dualité entre transformations oplax et transformations lax (voir la fin du
  paragraphe~\ref{paragr:trans_oplax}) qu'on a
  \[ f \comma' g = \big(g^\op \comma f^\op\big)^\op. \]
  Dans ce texte, on travaillera uniquement avec des \oo-catégories comma
  oplax.
\end{remark}

Dans la suite de cette section, on va montrer que la construction comma
définit un foncteur
  \[ \commaCfun : \SpanC \to \ooCat, \]
  où $\tr{\ooCatOpLax}{Z}$ et $\trto{\ooCatOpLax}{Z}$ sont des catégories
  que l'on va maintenant décrire. On montrera dans
  l'appendice~\ref{app:tr_comma} que ce foncteur provient en fait d'un
sesquifoncteur.

\begin{paragraph}
  On définit une catégorie $\tr{\ooCatOpLax}{Z}$, où $\ooCatOpLax$ désigne la
  sesquicatégorie des \oo-catégories, \oo-foncteurs et transformations oplax
  (voir le paragraphe~\ref{paragr:def_ooCatOpLax}), de la manière suivante.
  \begin{itemize}[wide]
    \item Les objets de $\tr{\ooCatOpLax}{Z}$ sont les couples $(X, f)$, où $X$
      est une \oo-catégorie et $f : X \to Z$ un \oo-foncteur,
      c'est-à-dire les diagrammes
      \[
        \xymatrix{
          X \ar[d]_f \\ Z
        }
      \]
      dans $\ooCat$.
    \item Les morphismes sont les diagrammes
  \[
    \shorthandoff{;}
    \xymatrix@C=1.5pc{
      X \ar[rr]^u \ar[dr]_(0.40){\phantom{f'}f}_(.60){}="f" & & X' \ar[dl]^(0.40){f'} \\
      & Z
      \ar@{}"f";[ur]_(.15){}="ff"
      \ar@{}"f";[ur]_(.55){}="oo"
      \ar@<-0.0ex>@2"oo";"ff"_\alpha
      &
    }
  \]
  dans $\ooCatOpLax$, où
  \[ \alpha : f' u \tod f \]
  est donc une transformation oplax. La source d'un tel morphisme est $(X,
  f)$ et son but est $(X', f')$.
    \item L'identité d'un objet
      \[
        \xymatrix{
          X \ar[d]_f \\ Z
        }
      \]
      est le morphisme
  \[
    \shorthandoff{;}
    \xymatrix@C=1.5pc{
      X \ar[rr]^{\id{X}} \ar[dr]_(0.40){f}_(.60){}="f" & & X \ar[dl]^(0.40){f} \\
      & Z
      \ar@{}"f";[ur]_(.15){}="ff"
      \ar@{}"f";[ur]_(.55){}="oo"
      \ar@<-0.0ex>@2"oo";"ff"_{\id{f}}
      & \pbox{.}
    }
  \]
    \item Le composé de deux morphismes composables
  \[
    \shorthandoff{;}
    \xymatrix{
      X \ar[r]^u \ar[dr]_{}="g"_(.40){f}
      & X' \ar[r]^{u'}_(.75){}="fp" \ar[d]_(.70){}="gp"_(.56){f'} & X''
      \ar[dl]_{}="gpp"^(.38){f''} \\
      & Z
      \ar@{}"g";[u]_(0.10){}="x"
      \ar@{}"g";[u]_(.85){}="y"
      \ar@<-0.1ex>@{<=}"x";"y"^(.30){\alpha}
      \ar@{}"gp";"fp"_(.25){}="x2"
      \ar@{}"gp";"fp"_(.75){}="y2"
      \ar@<0.4ex>@{<=}"x2";"y2"^(0.40){\alpha'\!}
    }
  \]
  est le morphisme
  \[
    \shorthandoff{;}
    \xymatrix@C=1.5pc{
      X \ar[rr]^{u''} \ar[dr]_(0.40){\phantom{f'}f}_(.60){}="f"
        & & X'' \ar[dl]^(0.40){f''} \\
      & Z
      \ar@{}"f";[ur]_(.15){}="ff"
      \ar@{}"f";[ur]_(.55){}="oo"
      \ar@<-0.0ex>@2"oo";"ff"_{\alpha''}
      & \pbox{,}
    }
  \]
  où
  \[ u'' = u' u \quadet \alpha'' = \alpha (\alpha' \comp{} u). \]
  \end{itemize}
  Il résulte facilement du fait que $\ooCatOpLax$ est une sesquicatégorie
  qu'on obtient bien ainsi une catégorie. On verra dans
  l'appendice~\ref{app:tr_comma} (voir le
  paragraphe~\ref{paragr:def_tr_Gray} et l'exemple~\ref{ex:OpLaxGray}) que
  cette catégorie est la catégorie
  sous-jacente à une sesquicatégorie.
\end{paragraph}

\begin{paragraph}
  De même, on définit une catégorie $\trto{\ooCatOpLax}{Z}$ de la manière
  suivante.
  \begin{itemize}[wide]
    \item Les objets de $\smash{\trto{\ooCatOpLax}{Z}}$ sont les mêmes que
      ceux de $\tr{\ooCatOpLax}{Z}$.
    \item Les morphismes sont les diagrammes
  \[
    \shorthandoff{;}
    \xymatrix@C=1.5pc{
      Y \ar[rr]^v \ar[dr]_(0.40){\phantom{g'}g}_(.60){}="f" & & Y' \ar[dl]^(0.40){g'} \\
      & Z
      \ar@{}"f";[ur]_(.15){}="ff"
      \ar@{}"f";[ur]_(.55){}="oo"
      \ar@<-0.0ex>@2"ff";"oo"^\beta
      &
    }
  \]
  dans $\ooCatOpLax$, où $\beta : g \tod g'v$ est donc une transformation
  oplax. La source d'un tel morphisme est $(Y,
  g)$ et son but est $(Y', g')$.
    \item Les identités et la composition des morphismes sont définies de
      manière analogue à celles de~$\tr{\ooCatOpLax}{Z}$. (Une description
      précise peut être extraite du paragraphe suivant.)
  \end{itemize}
  On verra dans l'appendice~\ref{app:tr_comma} (voir le
  paragraphe~\ref{paragr:def_trto}) que cette catégorie est la catégorie
  sous-jacente à une sesquicatégorie qui se déduit par dualité de la
  sesquicatégorie de catégorie sous-jacente~$\tr{\ooCatOpLax}{Z}$ mentionnée
  dans le paragraphe précédent.
\end{paragraph}

\begin{paragraph}\label{paragr:def_cat_span}
  Décrivons maintenant la catégorie produit $\SpanC$.
  \begin{itemize}[wide]
    \item Les objets sont les diagrammes
    \[
      \xymatrix{
        X \ar[r]^f & Z & Y \ar[l]_g
      }
    \]
    dans $\ooCat$. On notera $(X, f, g, Y)$ un tel objet.
    \item Les morphismes sont les diagrammes
      \[
        \shorthandoff{;}
        \xymatrix@R=1pc@C=3pc{
          X \ar[dd]_u \ar[dr]^f_{}="f" & & Y \ar[dl]_g_{}="g" \ar[dd]^v \\
            & Z \\
          X' \ar[ur]_{f'} & & Y' \ar[ul]^{g'}
          \ar@{}[ll];"f"_(0.35){}="sa"_(0.85){}="ta"
          \ar@2"sa";"ta"^{\alpha}
          \ar@{}[];"g"_(0.35){}="tb"_(0.85){}="sb"
          \ar@2"sb";"tb"^{\beta}
        }
      \]
      dans $\ooCatOpLax$, où
      \[ \alpha : f' u \tod f \quadet \beta : g \tod g' v \]
      sont donc des transformations oplax. On notera $(u, \alpha, \beta, v)$
      un tel morphisme, sous-entendant ainsi les morphismes $f$, $f'$, $g$
      et $g'$. La source de $(u, \alpha, \beta, v)$ est $(X, f, g,
      Y)$ et son but est~$(X', f', g', Y')$.
    \item L'identité d'un objet
    \[
      \xymatrix{
        X \ar[r]^f & Z & Y \ar[l]_g
      }
    \]
    est le morphisme
      \[
        \shorthandoff{;}
        \xymatrix@R=1pc@C=3pc{
          X \ar[dd]_{\id{X}} \ar[dr]^f_{}="f" & & Y \ar[dl]_g_{}="g"
          \ar[dd]^{\id{Y}} \\
            & Z \\
          X \ar[ur]_{f} & & Y \ar[ul]^{g}
          \ar@{}[ll];"f"_(0.35){}="sa"_(0.85){}="ta"
          \ar@2"sa";"ta"^{\id{f}}
          \ar@{}[];"g"_(0.35){}="tb"_(0.85){}="sb"
          \ar@2"sb";"tb"^{\id{g}} \pbox{.}
        }
      \]
      \item Le composé de deux morphismes composables
     \[
        \shorthandoff{;}
        \xymatrix@R=2pc@C=3pc{
          X \ar[d]_u \ar[dr]^(0.50)f_{}="f" & & Y \ar[dl]_(0.60)g_{}="g"
          \ar[d]^v \\
          X' \ar[r]^(0.60){f'}_(.70){}="f'" \ar[d]_{u'\!}_(.70){}="u'"
          \ar@2[];"f"^(.68){\alpha\phantom{'}}
           & Z & Y' \ar[l]_(0.65){g'}_(.70){}="g'" \ar[d]^{v'}_(.70){}="v'"
          \ar@2"g";[]^(0.30){\beta} 
           \\
          X'' \ar[ur]_(0.51){f''}
          \ar@{}"u'";"f'"_(.20){}="sa'"^(.80){}="ta'"
          \ar@2"sa'";"ta'"^{\alpha'}
          & & Y'' \ar[ul]^(0.52){g''}_{}="g''"
          \ar@{}"g'";"v'"_(.20){}="sb'"^(.80){}="tb'"
          \ar@2"sb'";"tb'"^{\beta'}
        }
      \]
      est le morphisme
    \[
    \shorthandoff{;}
    \xymatrix@R=1pc@C=3pc{
      X \ar[dd]_{u''} \ar[dr]^f_{}="f" & & Y \ar[dl]_g_{}="g" \ar[dd]^{v''} \\
        & Z \\
      X'' \ar[ur]_{f''} & & Y'' \ar[ul]^{g''}
      \ar@{}[ll];"f"_(0.35){}="sa"_(0.85){}="ta"
      \ar@2"sa";"ta"^{\alpha''}
      \ar@{}[];"g"_(0.35){}="tb"_(0.85){}="sb"
      \ar@2"sb";"tb"^{\beta''}
    }
    \]
    où
    \[
      u'' = u' u,
      \quad
      \alpha'' = \alpha (\alpha' \comp u),
      \quad
      \beta'' = (\beta' \comp v) \beta
      \quadet
      v'' = v' v.
    \]
  \end{itemize}
\end{paragraph}

\begin{paragraph}\label{paragr:comma_act_fonct}
  Considérons
  \[
    \shorthandoff{;}
    (u, \alpha, \beta, v) =
    \raisebox{2pc}{
    $\xymatrix@R=1pc@C=3pc{
      X \ar[dd]_u \ar[dr]^f_{}="f" & & Y \ar[dl]_g_{}="g" \ar[dd]^v \\
        & Z \\
      X' \ar[ur]_{f'} & & Y' \ar[ul]^{g'}
      \ar@{}[ll];"f"_(0.35){}="sa"_(0.85){}="ta"
      \ar@2"sa";"ta"^{\alpha}
      \ar@{}[];"g"_(0.35){}="tb"_(0.85){}="sb"
      \ar@2"sb";"tb"^{\beta}
    }$}
  \]
  un morphisme de $\SpanC$. On lui associe un \oo-foncteur
  \[ (u, \alpha) \comma_Z (\beta, v) : f \comma_Z g \to f' \comma_Z g', \]
  qu'on notera plus simplement $(u, \alpha) \comma (\beta, v)$ et parfois
  également $(u, \alpha, \beta, v)_\ast$, de la manière suivante. Soit $T$
  une \oo-catégorie et soit $(x, \lambda, y) : T \to f \comma g$ un
  \oo-foncteur (voir le paragraphe~\ref{paragr:comma_pu}). En composant le
  diagramme
  \[
    \shorthandoff{;}
    \xymatrix@R=1pc@C=3pc{
      & T \ar[dl]_x \ar[dr]^y \\
      X \ar[dd]_u \ar[dr]^f_{}="f" & & Y \ar[dl]_g_{}="g" \ar[dd]^v
      \ar@{}[ll];[]_(0.40){}="x"_(0.60){}="y"
      \ar@2"x";"y"^{\lambda}
       \\
        & Z \\
      X' \ar[ur]_{f'} & & Y' \ar[ul]^{g'}
      \ar@{}[ll];"f"_(0.35){}="sa"_(0.85){}="ta"
      \ar@2"sa";"ta"^{\alpha}
      \ar@{}[];"g"_(0.35){}="tb"_(0.85){}="sb"
      \ar@2"sb";"tb"^{\beta}
      \pbox{,}
    }
  \]
  on obtient un carré correspondant au \oo-foncteur
  \[
  (u x, (\beta \comp y) \lambda (\alpha \comp x)
  ,v y)
  : T \to f' \comma g'.
  \]
  Il résulte du fait que $\ooCatOpLax$ est une sesquicatégorie que cette
  correspondance est naturelle en $T$. En vertu du lemme de Yoneda, on
  a donc bien défini un \oo-foncteur~$f \comma g \to f' \comma g'$.
\end{paragraph}

\begin{prop}\label{prop:comma_fonct}
  Soit $Z$ une \oo-catégorie. Les applications
  \[
    \begin{split}
      (f, g) & \mapsto f \comma_Z g \\
      (u, \alpha, \beta, v) & \mapsto (u, \alpha) \comma_Z (\beta, v)
    \end{split}
  \]
  définissent un foncteur
    \[ \commaCfun : \SpanC \to \ooCat. \]
\end{prop}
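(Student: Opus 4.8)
Le plan est de ramener la fonctorialité à une identité entre transformations oplax, au moyen de la propriété universelle de la \oo-catégorie comma du paragraphe~\ref{paragr:comma_pu} et du lemme de Yoneda, exactement comme au paragraphe~\ref{paragr:comma_act_fonct}. Rappelons que, pour une \oo-catégorie $T$, les \oo-foncteurs $T \to f \comma g$ correspondent naturellement aux triplets $(x, \lambda, y)$, où $x : T \to X$ et $y : T \to Y$ sont des \oo-foncteurs et $\lambda : fx \tod gy$ une transformation oplax, et que $(u, \alpha) \comma_Z (\beta, v)$ envoie un tel triplet sur $\bigl(ux,\, (\beta \comp y)\lambda(\alpha \comp x),\, vy\bigr)$. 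Le plongement de Yoneda étant fidèle, il suffira de vérifier les deux axiomes de foncteur après évaluation des deux membres sur un triplet arbitraire $(x, \lambda, y)$, et l'on constatera que cela ne met en jeu que les axiomes structuraux de la sesquicatégorie $\ooCatOpLax$ (paragraphe~\ref{paragr:def_ooCatOpLax} et appendice~\ref{app:tr_comma}).

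Pour la compatibilité aux identités, on évalue $(\id{X}, \id{f}) \comma_Z (\id{g}, \id{Y})$ sur $(x, \lambda, y)$ : on obtient $\bigl(\id{X}\,x,\, (\id{g} \comp y)\,\lambda\,(\id{f} \comp x),\, \id{Y}\,y\bigr)$, et il reste à observer que $\id{f} \comp x = \id{fx}$ et $\id{g} \comp y = \id{gy}$ par définition de la composition d'une transformation oplax identité avec un \oo-foncteur (paragraphe~\ref{paragr:def_sesqui_oplax}), puis que $\id{gy}\,\lambda\,\id{fx} = \lambda$ par les axiomes d'unité de la composition verticale ; ce triplet est donc $(x, \lambda, y)$, d'où $(\id{X}, \id{f}) \comma_Z (\id{g}, \id{Y}) = \id{f \comma_Z g}$.

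Pour la compatibilité à la composition, on prend deux morphismes composables $(u, \alpha, \beta, v)$ et $(u', \alpha', \beta', v')$ de $\SpanC$, de composé $(u'', \alpha'', \beta'', v'')$ donné au paragraphe~\ref{paragr:def_cat_span} par $u'' = u'u$, $v'' = v'v$, $\alpha'' = \alpha(\alpha' \comp u)$ et $\beta'' = (\beta' \comp v)\beta$. En appliquant $(u, \alpha) \comma_Z (\beta, v)$ puis $(u', \alpha') \comma_Z (\beta', v')$ à $(x, \lambda, y)$, on obtient
\[
  \bigl(u'ux,\ (\beta' \comp vy)\,(\beta \comp y)\,\lambda\,(\alpha \comp x)\,(\alpha' \comp ux),\ v'vy\bigr),
\]
tandis que $(u'', \alpha'') \comma_Z (\beta'', v'')$ envoie $(x, \lambda, y)$ sur $\bigl(u''x,\, (\beta'' \comp y)\,\lambda\,(\alpha'' \comp x),\, v''y\bigr)$. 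Les composantes extérieures coïncident par associativité de la composition des \oo-foncteurs ; pour la composante médiane, la compatibilité de la précomposition par des \oo-foncteurs avec leur composition, d'une part, et avec la composition verticale des transformations oplax, d'autre part (deux axiomes de sesquicatégorie), donne $\alpha'' \comp x = (\alpha \comp x)(\alpha' \comp ux)$ et $\beta'' \comp y = (\beta' \comp vy)(\beta \comp y)$, de sorte que $(\beta'' \comp y)\,\lambda\,(\alpha'' \comp x)$ coïncide, par associativité de la composition verticale, avec la composante médiane affichée ci-dessus. On en conclut que la composée des deux \oo-foncteurs associés est le \oo-foncteur associé à $(u'', \alpha'', \beta'', v'')$, ce qui achèvera la démonstration.

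Il n'y a en vérité aucun obstacle sérieux ici, l'énoncé étant essentiellement formel : la seule chose demandant de l'attention est le suivi des sources et buts et l'application des axiomes de sesquicatégorie de $\ooCatOpLax$ dans le bon ordre, et aucun ingrédient nouveau n'est requis au-delà de la naturalité déjà établie au paragraphe~\ref{paragr:comma_act_fonct}. (La fonctorialité résultera aussi de la sesquifonctorialité établie à l'appendice~\ref{app:tr_comma}, mais l'argument direct ci-dessus est plus court et n'en dépend pas.)
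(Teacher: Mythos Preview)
Your proposal is correct and follows essentially the same approach as the paper: both arguments reduce the functoriality to the Yoneda lemma by evaluating on a generic triplet $(x,\lambda,y)$, then verify the identity and composition axioms using only the sesquicategory structure of $\ooCatOpLax$ (unit laws for $\id{f}\comp x$, compatibility of $\comp$ with vertical composition, and associativity). The computations and their justifications are the same.
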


\begin{proof}
  Fixons
  \[
    (X, f, g, Y) =
    \xymatrix{
       X \ar[r]^f & Z & Y \ar[l]_g
    }
  \]
  un objet de $\SpanC$, $T$ une \oo-catégorie et $(x, \lambda, y) : T \to f
  \comma g$ un \oo-foncteur, c'est-à-dire un diagramme
  \[
    \shorthandoff{;}
     \xymatrix@C=1.5pc@R=1.5pc{
      & T \ar[dl]_x \ar[dr]^y \\
      X \ar[dr]_f
      \ar@{}[rr]_(.35){}="x"_(.65){}="y"
      \ar@2"x";"y"^{\lambda}
      & & Y \ar[dl]^g \\
        & Z
  }
  \]
  dans $\ooCatOpLax$. On va vérifier la fonctorialité de $\commaCfun$ en
  utilisant le lemme de Yoneda, c'est-à-dire en précomposant les égalités
  qu'on veut démontrer par $(x, \lambda, y)$.

  Commençons par la compatibilité à l'identité de l'objet $(f, X, Y, g)$. On a
  \[
    \begin{split}
      (\id{(X, f, g, Y)})_\ast (x, \lambda, y) & = (\id{X}, \id{f}, \id{g},
      \id{Y})_\ast (x, \lambda, y) \\
      & = (\id{X}\, x, (\id{g} \comp y)
      \lambda (\id{f} \comp x), \id{Y}\, y) \\
      & = (x, \lambda, y) \\
      & = (\id{f \comma g}) (x, \lambda, y),
    \end{split}
  \]
  d'où la compatibilité recherchée.

  Soit maintenant
     \[
        \shorthandoff{;}
        \xymatrix@R=2pc@C=3pc{
          X \ar[d]_u \ar[dr]^(0.50)f_{}="f" & & Y \ar[dl]_(0.60)g_{}="g"
          \ar[d]^v \\
          X' \ar[r]^(0.60){f'}_(.70){}="f'" \ar[d]_{u'\!}_(.70){}="u'"
          \ar@2[];"f"^(.68){\alpha\phantom{'}}
           & Z & Y' \ar[l]_(0.65){g'}_(.70){}="g'" \ar[d]^{v'}_(.70){}="v'"
          \ar@2"g";[]^(0.30){\beta} 
           \\
          X'' \ar[ur]_(0.51){f''}
          \ar@{}"u'";"f'"_(.20){}="sa'"^(.80){}="ta'"
          \ar@2"sa'";"ta'"^{\alpha'}
          & & Y'' \ar[ul]^(0.52){g''}_{}="g''"
          \ar@{}"g'";"v'"_(.20){}="sb'"^(.80){}="tb'"
          \ar@2"sb'";"tb'"^{\beta'}
        }
      \]
   deux morphismes composables de $\SpanC$. Vérifions la compatibilité de
   $\commaCfun$ à leur composition. On a
   \[
    \begin{split}
      \MoveEqLeft
      (u', \alpha', \beta', v')_\ast (u, \alpha, \beta, v)_\ast(x, \lambda,
      y) \\
      & =
      (u', \alpha', \beta', v')_\ast \big(u x, (\beta \comp y)
      \lambda (\alpha \comp x), v y\big) \\
      & =
      (u' u x, \lambda', v' v y),
    \end{split}
  \]
  où
  \[
    \begin{split}
    \lambda' & =
    (\beta' \comp (v y))
      (\beta \comp y)
      \lambda (\alpha \comp x) (\alpha' \comp (u x)) \\
      & =
      \big(((\beta' \comp v) \beta) \comp y\big)
      \lambda
      \big((\alpha (\alpha' \comp u)) \comp x\big).
    \end{split}
  \]
  D'où
  \[
    \begin{split}
      \MoveEqLeft
      (u', \alpha', \beta', v')_\ast (u, \alpha, \beta, v)_\ast(x, \lambda,
      y) \\
      & =
      \big(u' u, \alpha (\alpha' \comp u),
        (\beta' \comp v) \beta, v' v\big)_\ast
        (x, \lambda, y) \\
      & =
        \big((u', \alpha', \beta', v')(u, \alpha, \beta, v)\big)_\ast(x, \lambda,
      y),
    \end{split}
  \]
  ce qui achève de montrer la fonctorialité de $\commaCfun$.
\end{proof}

\begin{proposition}\label{prop:comma_au-dessus}
  Si
  \[
    \shorthandoff{;}
    (u, \alpha, \beta, v) =
    \raisebox{2pc}{
    $\xymatrix@R=1pc@C=3pc{
      X \ar[dd]_u \ar[dr]^f_{}="f" & & Y \ar[dl]_g_{}="g" \ar[dd]^v \\
        & Z \\
      X' \ar[ur]_{f'} & & Y' \ar[ul]^{g'}
      \ar@{}[ll];"f"_(0.35){}="sa"_(0.85){}="ta"
      \ar@2"sa";"ta"^{\alpha}
      \ar@{}[];"g"_(0.35){}="tb"_(0.85){}="sb"
      \ar@2"sb";"tb"^{\beta}
    }$}
  \]
  est un morphisme de $\SpanC$, alors le carré
  \[
    \xymatrix@C=3pc{
      f \comma g \ar[d]_p \ar[r]^-{(u, \alpha) \comma (\beta, v)} &
      f' \comma g' \ar[d]^p \\
      X \times Y \ar[r]_-{u \times v} & X' \times Y'
    }
  \]
  est commutatif.
\end{proposition}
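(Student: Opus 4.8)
The plan is to verify the commutativity of the square pointwise, via the Yoneda lemma, using the universal property of the $\infty$-categories comma recalled in paragraph~\ref{paragr:comma_pu} and the explicit formula for the action of the comma construction on morphisms established in paragraph~\ref{paragr:comma_act_fonct}. Concretely, fix an $\infty$-category $T$; it suffices to show that the two $\infty$-functors $f \comma g \to X' \times Y'$ obtained by going around the square induce the same map
\[
\Hom_{\ooCat}(T, f \comma g) \to \Hom_{\ooCat}(T, X' \times Y'),
\]
naturally in $T$.

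First I would unwind the effect of the projection $p$ on an element of the source. An element of $\Hom_{\ooCat}(T, f \comma g)$ is an $\infty$-functor $(x, \lambda, y) : T \to f \comma g$ in the notation of paragraph~\ref{paragr:comma_pu}, where $x : T \to X$ and $y : T \to Y$ are $\infty$-functors and $\lambda : fx \tod gy$ is a transformation oplax. Since, by the very definition of $f \comma g$ as the iterated fibre product $X \times_Z \HomLax(\Dn{1}, Z) \times_Z Y$ (paragraph~\ref{paragr:def_comma}), the components $p_1$ and $p_2$ of $p$ are the two outer projections, and since the $x$ and $y$ in the triple $(x, \lambda, y)$ are precisely the composites of $(x, \lambda, y)$ with those outer projections, one has $p \circ (x, \lambda, y) = (x, y) : T \to X \times Y$. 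Consequently, the lower-left composite of the square sends $(x, \lambda, y)$ to $(u \times v)(x, y) = (ux, vy)$.

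For the upper-right composite, recall from paragraph~\ref{paragr:comma_act_fonct} that
\[
\big((u, \alpha) \comma (\beta, v)\big) \circ (x, \lambda, y)
= \big(ux,\ (\beta \comp y)\,\lambda\,(\alpha \comp x),\ vy\big) : T \to f' \comma g'.
\]
Applying $p$ to this and using once more the identity $p \circ (x', \lambda', y') = (x', y')$ (now for $f' \comma g'$), we see that the upper-right composite also sends $(x, \lambda, y)$ to $(ux, vy)$. The two composites therefore agree on $\Hom_{\ooCat}(T, f \comma g)$; since the identifications used are manifestly natural in $T$, the Yoneda lemma yields the commutativity of the square.

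There is no genuine obstacle here: the whole verification reduces to the already-established formula of paragraph~\ref{paragr:comma_act_fonct} together with the trivial observation that $p$ reads off the two legs $x$ and $y$ of a triple $(x, \lambda, y)$. The only point requiring a moment's care is making this last observation explicit, i.e.\ matching the construction of $p = (p_1, p_2)$ in paragraph~\ref{paragr:def_comma} with the parametrization of $\infty$-functors into $f \comma g$ by triples in paragraph~\ref{paragr:comma_pu}; once that is noted, the proof is a one-line computation on each side.
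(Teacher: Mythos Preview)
Your proof is correct and follows essentially the same approach as the paper's: both argue via the Yoneda lemma by precomposing with a generic $(x,\lambda,y) : T \to f \comma g$, use the formula from paragraph~\ref{paragr:comma_act_fonct} for $(u,\alpha)\comma(\beta,v)$, and observe that $p$ simply extracts the pair $(x,y)$ from a triple, reducing both composites to $(ux,vy)$. The paper's version is just a compressed one-line computation of the same chain of equalities.
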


\begin{proof}
  On va procéder comme dans la preuve précédente. Soit $T$ une \oo-catégorie
  et soit $(x, \lambda, y) : T \to f \comma g$ un \oo-foncteur. On a
  \[
    \begin{split}
      p(u,\alpha,\beta,v)_\ast (x, \lambda, y)
      & =
      p \big(u x, (\beta \comp y) \lambda (\alpha \comp x), v y\big) \\
      & = (ux, vy) = (u \times v)(x, y) \\
      & = (u \times v)p(x, \lambda, y),
    \end{split}
  \]
  d'où l'égalité recherchée en vertu du lemme de Yoneda.
\end{proof}

\begin{paragraph}\label{paragr:comma_carre}
  Soit $f : X \to Z$ un \oo-foncteur. Si
  \[
    \shorthandoff{;}
    \xymatrix@C=1.5pc{
      Y \ar[rr]^v \ar[dr]_(0.40){\phantom{g'}g}_(.60){}="g" & & Y' \ar[dl]^(0.40){g'} \\
      & Z
      \ar@{}"g";[ur]_(.15){}="gg"
      \ar@{}"g";[ur]_(.55){}="oo"
      \ar@<-0.0ex>@2"gg";"oo"^\beta
      &
    }
  \]
  est un diagramme dans $\ooCatOpLax$, on notera
  \[ f \comma (\beta, v) : f \comma g \to f \comma g' \]
  le \oo-foncteur $(\id{X}, \id{f}) \comma (\beta, v)$. On obtient ainsi un
  \oo-foncteur
  \[ f \comma {-} : \trto{\ooCatOpLax}{Z} \to \ooCat \]
  (voir le paragraphe~\ref{paragr:comma_var_fixe} pour un énoncé plus
  précis). Dans
  le cas particulier où~$X = Z$ et $f = \id{Z}$, on notera simplement $Z
  \comma {-}$ ce \oo-foncteur.

  De même, si $g : Y \to Z$ est un \oo-foncteur et si
  \[
    \shorthandoff{;}
    \xymatrix@C=1.5pc{
      X \ar[rr]^u \ar[dr]_(0.40){\phantom{f'}f}_(.60){}="f" & & X' \ar[dl]^(0.40){f'} \\
      & Z
      \ar@{}"f";[ur]_(.15){}="ff"
      \ar@{}"f";[ur]_(.55){}="oo"
      \ar@<-0.0ex>@2"oo";"ff"_\alpha
      &
    }
  \]
  est un diagramme dans $\ooCatOpLax$, on notera
  \[ (u, \alpha) \comma g : f \comma g \to f' \comma g \]
  le \oo-foncteur $(u, \alpha) \comma (\id{g}, \id{Y})$. On obtient ainsi un
  \oo-foncteur
  \[ {-} \comma g : \tr{\ooCatOpLax}{Z} \to \ooCat \]
  (voir également le paragraphe~\ref{paragr:comma_var_fixe}) qu'on notera
  simplement ${-} \comma Z$ dans le cas particulier où $Y = Z$ et $g =
  \id{Z}$.

  De plus, si
  \[
    \shorthandoff{;}
    (u, \alpha, \beta, v) =
    \raisebox{2pc}{
    $\xymatrix@R=1pc@C=3pc{
      X \ar[dd]_u \ar[dr]^f_{}="f" & & Y \ar[dl]_g_{}="g" \ar[dd]^v \\
        & Z \\
      X' \ar[ur]_{f'} & & Y' \ar[ul]^{g'}
      \ar@{}[ll];"f"_(0.35){}="sa"_(0.85){}="ta"
      \ar@2"sa";"ta"^{\alpha}
      \ar@{}[];"g"_(0.35){}="tb"_(0.85){}="sb"
      \ar@2"sb";"tb"^{\beta}
    }$}
  \]
  est un morphisme de $\SpanC$, l'égalité
  \[
    (u, \alpha, \id{g'}, \id{Y'})(\id{X}, \id{f}, \beta, v)
    =
    (\id{X'}, \id{f'}, \beta, v)(u, \alpha, \id{g}, \id{Y})
  \]
  des deux décompositions de $(u, \alpha, \beta, v)$ dans $\SpanC$ entraîne
  que le carré
  \[
    \xymatrix@C=3pc{
      f \comma g \ar[d]_-{(u, \alpha) \comma g} \ar[r]^-{f \comma (\beta, v)} &
      f \comma g' \ar[d]^-{(u, \alpha) \comma g'} \\
      f' \comma g \ar[r]_-{f' \comma (\beta, v)} & f' \comma g'
    }
  \]
  est commutatif.
\end{paragraph}

\begin{proposition}\label{prop:assoc_comma}
  Soit
  \[
    \xymatrix{
      X \ar[r]^f & Z & Y \ar[l]_g
    }
  \]
  un diagramme dans $\ooCat$. On a un isomorphisme canonique
  \[
    (p^{}_{2, f \comma Z}) \comma g \simeq f \comma (p^{}_{1, Z \comma g}),
  \]
  où
  \[
    p^{}_{2, f \comma Z} = p_2 : f \comma Z \to Z
    \quadet
    p^{}_{1, Z \comma g} = p_1 : Z \comma g \to Z,
  \]
  naturel en $(X, f, g, Y)$ dans $\SpanC$.
\end{proposition}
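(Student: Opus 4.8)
Le plan est de développer les deux membres de l'isomorphisme cherché comme des limites projectives itérées au-dessus de~$Z$, de constater qu'ils proviennent d'un même diagramme, puis de vérifier la naturalité. Tout d'abord, en utilisant l'isomorphisme canonique $W \times_Z Z \simeq W$ valable pour tout \oo-foncteur $W \to Z$, la définition du paragraphe~\ref{paragr:def_comma} donne
\[
  f \comma Z \simeq X \times_Z \HomLax(\Dn{1}, Z),
\]
le produit fibré étant pris le long de $f : X \to Z$ et de $\pi_0 : \HomLax(\Dn{1}, Z) \to Z$, et, sous cette identification, le \oo-foncteur $p^{}_{2, f \comma Z} : f \comma Z \to Z$ n'est autre que le composé $f \comma Z \to \HomLax(\Dn{1}, Z) \xto{\pi_1} Z$. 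De manière symétrique, on a $Z \comma g \simeq \HomLax(\Dn{1}, Z) \times_Z Y$, le produit fibré étant cette fois pris le long de $\pi_1$ et de $g$, et $p^{}_{1, Z \comma g} : Z \comma g \to Z$ s'identifie au composé $Z \comma g \to \HomLax(\Dn{1}, Z) \xto{\pi_0} Z$.

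Je substituerais alors ces descriptions dans les définitions des deux \oo-catégories comma en jeu, pour obtenir que $(p^{}_{2, f \comma Z}) \comma g$ et $f \comma (p^{}_{1, Z \comma g})$ sont tous deux canoniquement isomorphes au produit fibré itéré
\[
  X \times_Z \HomLax(\Dn{1}, Z) \times_Z \HomLax(\Dn{1}, Z) \times_Z Y,
\]
limite projective du diagramme $X \xto{f} Z \xot{\pi_0} \HomLax(\Dn{1}, Z) \xto{\pi_1} Z \xot{\pi_0} \HomLax(\Dn{1}, Z) \xto{\pi_1} Z \xot{g} Y$ ; les deux isomorphismes résulteront de l'associativité et de la commutativité des limites projectives finies. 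De façon équivalente, via la propriété universelle du paragraphe~\ref{paragr:comma_pu}, un \oo-foncteur d'une \oo-catégorie $T$ vers l'un ou l'autre de ces deux membres correspond à la donnée de trois \oo-foncteurs $x : T \to X$, $y : T \to Z$ et $y' : T \to Y$ et de deux transformations oplax $f x \tod y$ et $y \tod g y'$, description rendant l'isomorphisme manifeste.

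Il restera alors à établir la naturalité en $(X, f, g, Y)$ dans $\SpanC$. Pour cela, je décrirais les deux membres comme des foncteurs $\SpanC \to \ooCat$ : le premier comme composé de $\commaCfun$ avec le foncteur $\SpanC \to \SpanC$ envoyant $(X, f, g, Y)$ sur $(f \comma Z, p_2, g, Y)$ et un morphisme $(u, \alpha, \beta, v)$ sur le morphisme de composantes $(u, \alpha) \comma (\id{\id{Z}}, \id{Z})$ à gauche et $(\beta, v)$ à droite (la transformation oplax structurale à gauche étant alors l'identité, en vertu de la proposition~\ref{prop:comma_au-dessus} et de la fonctorialité de $\commaCfun$ établie à la proposition~\ref{prop:comma_fonct}) ; le second se décrit de manière symétrique à l'aide du foncteur $(X, f, g, Y) \mapsto (X, f, p_1, Z \comma g)$ et du paragraphe~\ref{paragr:comma_carre}. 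Il suffira ensuite de vérifier que l'isomorphisme construit commute aux \oo-foncteurs induits par un morphisme $(u, \alpha, \beta, v)$ de $\SpanC$, ce que l'on fera, comme dans les preuves des propositions~\ref{prop:comma_fonct} et~\ref{prop:comma_au-dessus}, en précomposant l'égalité voulue par un \oo-foncteur arbitraire $(x, \lambda, y) : T \to (p^{}_{2, f \comma Z}) \comma g$ et en invoquant le lemme de Yoneda : les deux \oo-foncteurs induits agissent par $u$ sur la composante $X$, par $v$ sur la composante $Y$ et par $\alpha$ et $\beta$ sur les transformations oplax, en laissant inchangées les composantes $\HomLax(\Dn{1}, Z)$, de sorte que la compatibilité est immédiate. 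L'obstacle principal ne sera donc pas de nature conceptuelle mais résidera dans la mise en place soigneuse des structures fonctorielles sur les deux membres, la naturalité elle-même se réduisant à une vérification routinière.
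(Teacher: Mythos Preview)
Your approach is correct and essentially identical to the paper's: both identify each side with the limit of the diagram
\[
  X \xto{f} Z \xot{\pi_0} \HomLax(\Dn{1}, Z) \xto{\pi_1} Z \xot{\pi_0} \HomLax(\Dn{1}, Z) \xto{\pi_1} Z \xot{g} Y.
\]
The paper's proof is just that one sentence; you have (reasonably) unpacked the identifications $f \comma Z \simeq X \times_Z \HomLax(\Dn{1}, Z)$ and $Z \comma g \simeq \HomLax(\Dn{1}, Z) \times_Z Y$ and spelled out a plan for the naturality, which the paper leaves implicit in the word ``canonique''.
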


\begin{proof}
  Les deux \oo-catégories en jeu sont limites projectives du diagramme
  \[
    \xymatrix{
      X \ar[r]^f & Z & \HomLax(\Dn{1}, Z) \ar[l]_-{\pi_0} \ar[r]^-{\pi_1} & Z &
     \HomLax(\Dn{1}, Z) \ar[l]_-{\pi_0} \ar[r]^-{\pi_1} & Z & Y \ar[l]_g,
    }
  \]
  d'où le résultat.
\end{proof}

\begin{remark}
  Si $T$ est une \oo-catégorie, un \oo-foncteur de $T$ vers les deux
  \oo-catégories isomorphes de la proposition précédente
  correspond à un diagramme
  \[
    \shorthandoff{;}
    \xymatrix{
      & T \ar[dl]_x \ar[dd]_(0.30)z_(0.5){}="z"
      \ar[dr]^y \\
      X \ar[dr]_f
      & & Y \ar[dl]^g \\
      & Z
      \ar@{}[ul];"z"_(.30){}="sl"_(0.85){}="tl"
      \ar@2"sl";"tl"^\lambda
      \ar@{}"z";[ur]_(.15){}="sr"_(0.70){}="tr"
      \ar@2"sr";"tr"^{\lambda'}
    }
  \]
  dans $\ooCatOpLax$.
\end{remark}

On étudiera dans l'appendice~\ref{app:tr_comma} les propriétés de
$2$-fonctorialité de la construction comma. Le but de cette étude est
essentiellement de démontrer le résultat suivant qui jouera un rôle central
dans la section suivante :

\begin{proposition}\label{prop:comma_retr}
  Soit $i : X' \to X$ un rétracte par transformation oplax à gauche fort.
  Alors, pour tout diagramme
    \[
      \xymatrix{
        X \ar[r]^f & Z & Y \ar[l]_g
      }
    \]
  dans $\ooCat$, le \oo-foncteur
  \[ (i, \id{fi}) \comma g :  (fi) \comma g \to f \comma g \]
  est également un rétracte par transformation oplax à gauche fort.
\end{proposition}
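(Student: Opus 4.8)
The plan is to transport a strong oplax left retract structure on $i$ through the functor ${-} \comma g : \tr{\ooCatOpLax}{Z} \to \ooCat$ of paragraph~\ref{paragr:comma_carre}. Concretely, suppose $(r, \alpha)$ is a strong structure of oplax left retract on $i : X' \to X$, so that $r : X \to X'$ satisfies $r i = \id{X'}$, $\alpha$ is an oplax transformation from $i r$ to $\id{X}$, and the strongness condition gives $i r i = i$ and $\alpha \comp i = \id{i}$. First I would observe that $(r, \id{fr})$ and $(i, \id{fi})$ are morphisms of $\tr{\ooCatOpLax}{Z}$ from $(X, f)$ to $(X', fi)$ and back: indeed $f i r = (f) (i r)$ and the required oplax transformations are $\id{f} \comp \alpha : f i r \tod f$ and its composite with $r$. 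Applying the functor ${-} \comma g$ to the retract identity $r i = \id{X'}$ in $\tr{\ooCatOpLax}{Z}$ and using functoriality (Proposition~\ref{prop:comma_fonct}, via the description of morphisms of $\SpanC$ restricted to the left leg) should give $\bigl((r, \id{fr}) \comma g\bigr)\bigl((i, \id{fi}) \comma g\bigr) = \id{(fi) \comma g}$, exhibiting $(i, \id{fi}) \comma g$ as a section with retraction $(r, f \comp \alpha) \comma g$.

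Next I would produce the oplax transformation witnessing the retract. The key point is the $2$-functoriality of the comma construction announced for the appendix, which I may invoke here in the form of Proposition~\ref{prop:comma_retr}'s intended proof strategy: the sesquifunctor $\commaCfun : \Spanoo \to \ooCatOpLax$ of the appendix sends the $2$-cell $\alpha$ (viewed as a $2$-cell in the appropriate slice sesquicategory $\tr{\ooCatOpLaxGray}{Z}$, with the right leg fixed to be $g$) to an oplax transformation from $\bigl((i, \id{fi}) \comma g\bigr)\bigl((r, f \comp \alpha) \comma g\bigr)$ to $\id{f \comma g}$. The composition $\bigl((i, \id{fi})\comma g\bigr)\bigl((r, f\comp\alpha)\comma g\bigr)$ equals $\bigl((ir, f\comp\alpha)\comma g\bigr)$ by functoriality, and applying the sesquifunctor to the $2$-cell $\alpha : ir \tod \id{X}$ over $Z$ yields exactly the desired oplax transformation $\alpha \comma g$ from this $\oo$-foncteur to $(\id{X},\id{f})\comma g = \id{f\comma g}$. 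So $\bigl((i,\id{fi})\comma g,\ (r,f\comp\alpha)\comma g,\ \alpha\comma g\bigr)$ is a structure of oplax left retract on $(i,\id{fi})\comma g$.

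Finally I would check strongness. The two conditions to verify are $\bigl((i,\id{fi})\comma g\bigr)\bigl((r,f\comp\alpha)\comma g\bigr)\bigl((i,\id{fi})\comma g\bigr) = (i,\id{fi})\comma g$ and $(\alpha\comma g)\comp\bigl((i,\id{fi})\comma g\bigr) = \id{(i,\id{fi})\comma g}$. The first follows by applying the functor to the identity $i r i = i$ of $\tr{\ooCatOpLax}{Z}$, which is the strongness of $(r,\alpha)$; the second follows by applying the sesquifunctor to the identity $\alpha\comp i = \id{i}$, which is the other strongness condition, together with the fact that the sesquifunctor sends $\id{i}$ (a $2$-cell identity) to an identity oplax transformation. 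The main obstacle I expect is purely organizational rather than mathematical: one must be careful that the sesquifunctoriality of $\commaCfun$ used here is precisely the content established in the appendix (Proposition~\ref{prop:comma_retr} is stated in section~\ref{sec:comma_1} but "découlera de l'appendice~\ref{app:tr_comma}"), and that the $2$-cell $\alpha$ over $Z$ lives in the slice sesquicategory $\tr{\ooCatOpLaxGray}{Z}$ whose construction requires the Gray-category structure on $\ooCatOpLaxGray$ and the slice sesquicategory of a Gray $\oo$-category; all the verifications above are then formal consequences of sesquifunctoriality applied to the defining identities of a strong oplax left retract, with no nontrivial computation. Hence the actual proof in the text will simply read: "This is a special case of the preservation of strong oplax left retracts by $\commaCfun$, established in the appendix" — see Corollary~\ref{coro:comma_retr} or the analogous statement there.
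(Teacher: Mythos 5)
Your proposal is correct and takes essentially the same route as the paper's own proof (Corollary~\ref{coro:comma_retr_app}): apply the sesquifunctor ${-}\comma g$ to the $1$-cells $(i,\id{fi})$ and $(r, f\comp\alpha)$ and to the $2$-cell $(\alpha,\id{f\comp\alpha})$ of $\tr{\ooCatOpLaxGray}{Z}$, then transport the retract identities formally. Two small points: your $(r,\id{fr})$ should be $(r, f\comp\alpha)$ (as you write later), and the identity $(r,f\comp\alpha)(i,\id{fi})=\id{(X',fi)}$ already uses the strongness condition $\alpha\comp i=\id{i}$, not merely $ri=\id{X'}$.
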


\begin{proof}
  C'est l'une des assertions du corollaire~\ref{coro:comma_retr_app}.
\end{proof}

\section{Un théorème A \pdfoo-catégorique pour les $2$-triangles}
\label{sec:thmA_2-tri}

Le but de cette section est de déduire du théorème A \oo-catégorique pour
les triangles commutatifs un théorème A pour les $2$-triangles, c'est-à-dire
les triangles commutatifs à transformation près. Nous montrerons que cette
réduction du cas des $2$-triangles au cas des triangles commutatifs est
valable pour toute classe de \oo-foncteurs vérifiant des axiomes adéquats.
Notre principal outil pour y parvenir sera la construction comma
\oo-catégorique développée dans la section précédente.

\medbreak

Commençons par faire le lien entre \oo-catégories comma et tranches.

\begin{proposition}\label{prop:ident_tr_comma}
  Soient $C$ une \oo-catégorie et $c$ un objet de $C$. Pour toute
  \oo-catégorie $A$ et tout \oo-foncteur $v : A \to C$, on a un isomorphisme
  canonique
  \[ \cotr{A}{c} \simeq c \comma v, \]
  où on considère $c$ comme un \oo-foncteur $\Dn{0} \to C$,
  naturel en $A$ et $v$. De plus, cet isomorphisme est au-dessus de $A$ au
  sens où le triangle
  \[
    \shorthandoff{;}
    \xymatrix@C=1.5pc{
      \cotr{A}{c} \ar[rr]^{\sim} \ar[dr]_(0.40){U}_(.60){}="f"
      & & c \comma v \ar[dl]^(0.40){p_2} \\
      & A & \pbox{,}
    }
  \]
  où $U : \cotr{A}{c} \to A$ désigne le \oo-foncteur d'oubli,
  est commutatif.
\end{proposition}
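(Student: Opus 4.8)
The statement to prove is the identification $\cotr{A}{c} \simeq c \comma v$, naturally in $(A, v)$, and compatibly with the projections to $A$. The natural strategy is to exploit the universal property of the comma construction (paragraph~\ref{paragr:comma_pu}) together with the universal property of the slice $\cotr{A}{c}$ coming from the join adjunction, and to compare the two via the Yoneda lemma. The plan is thus: for an arbitrary \oo-category $T$, describe $\Hom_{\ooCat}(T, c \comma v)$ and $\Hom_{\ooCat}(T, \cotr{A}{c})$ in parallel, produce a natural bijection between them, and then invoke Yoneda to get the isomorphism of objects; naturality in $(A, v)$ and commutativity of the triangle will then be read off from the description of the bijection.

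First I would unwind the right-hand side. By paragraph~\ref{paragr:comma_pu} applied with $X = \Dn{0}$, $f = c$, $g = v$, a \oo-foncteur $T \to c \comma v$ amounts to a diagram consisting of \oo-foncteurs $x : T \to \Dn{0}$ and $y : T \to A$ together with a transformation oplax $\lambda$ from $c\, x$ to $v\, y$. Since $\Dn{0}$ is terminal, $x$ is unique and $c\, x$ is the constant \oo-foncteur on $c$; so the data reduces to a \oo-foncteur $y : T \to A$ and a transformation oplax from the constant \oo-endofoncteur (well, \oo-foncteur $T \to C$) of value $c$ to $v\, y$. By paragraph~\ref{paragr:contr} (the factorization of \oo-foncteurs $\Dn{1} \otimes T \to C$ through $\Dn{0} \joint T$ when the source is a constant \oo-foncteur), such a transformation oplax corresponds exactly to a \oo-foncteur $\Dn{0} \joint T \to C$ whose composite $\Dn{0} \xto{\iota_1} \Dn{0} \joint T \to C$ equals $c$, the value $y$ being recovered as the composite with $\iota_2 : T \to \Dn{0} \joint T$. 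That is precisely, by the join adjunction (paragraph~\ref{paragr:obj_tr}, the bijection $\Hom_{\cotr{\ooCat}{\Dn{0}}}((\Dn{0} \joint T, \iota_1), (C, c)) \simeq \Hom_{\ooCat}(T, \cotr{C}{c})$), the datum of a \oo-foncteur $T \to \cotr{C}{c}$. Finally $\cotr{A}{c} = \cotr{C}{c} \times_C A$ by paragraph~\ref{paragr:fonct_tr_sur_2} (with the forgetful map $\cotr{C}{c} \to C$), so a \oo-foncteur $T \to \cotr{A}{c}$ is the datum of a \oo-foncteur $T \to \cotr{C}{c}$ together with a \oo-foncteur $y : T \to A$ lifting its composite to $C$; tracking through, the composite $T \to \cotr{C}{c} \to C$ is exactly $v\,y$, so the compatibility with the fibre product matches the $y$ extracted above. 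Chaining these bijections gives a natural identification $\Hom_{\ooCat}(T, \cotr{A}{c}) \simeq \Hom_{\ooCat}(T, c \comma v)$, hence the desired isomorphism by Yoneda.

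For the last two assertions: naturality in $A$ and $v$ is automatic because every step above (the comma universal property, the adjunctions of paragraphs~\ref{paragr:contr} and~\ref{paragr:obj_tr}, the fibre-product description) is natural in the relevant data; one just checks that the chain of bijections is compatible with precomposition by a morphism $(A, v) \to (A', v')$ in the evident slice, which is immediate from the functoriality statements already in the text. For the commutative triangle: under the bijection, a \oo-foncteur $T \to c \comma v$ has second projection $p_2 : c \comma v \to A$ corresponding to the \oo-foncteur $y : T \to A$; and under the identification with $\cotr{A}{c} = \cotr{C}{c} \times_C A$, the forgetful \oo-foncteur $U$ picks out exactly the $A$-component, which we saw is the same $y$. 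So $p_2$ and $U$ agree on $T$-points naturally, hence $p_2 \circ (\text{iso}) = U$ by Yoneda.

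**Main obstacle.** The conceptual content is entirely in paragraph~\ref{paragr:contr}'s translation between transformations oplax out of a constant \oo-foncteur and \oo-foncteurs out of $\Dn{0} \joint T$, so the proof is essentially a bookkeeping exercise once that is in hand. The one place requiring genuine care is the bookkeeping itself: one must verify that the "$y$ extracted from the comma datum via $\iota_2$" coincides, under the chain of adjunction isomorphisms, with the "$A$-component of the point of $\cotr{A}{c} = \cotr{C}{c} \times_C A$", i.e.\ that the fibre-product leg over $A$ and the comma leg $p_2$ get matched correctly — rather than, say, being intertwined with the forgetful map $\cotr{C}{c} \to C$ composed with $v$. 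This amounts to checking that the \oo-foncteur $T \to \cotr{C}{c}$ produced from $(y, \lambda)$ has composite with the forgetful $U : \cotr{C}{c} \to C$ equal to $v\, y$, which follows from the explicit form of the forgetful map in paragraph~\ref{paragr:fonct_tr_sur_1} (obtained from the triangle $\vide \to X \to Z$) but should be spelled out; it is the only step where one could plausibly get a variance or a composite wrong.
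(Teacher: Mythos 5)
Your argument is correct, but it takes a different route from the paper's. The paper's proof is a one-line object-level computation: by the cartesian square of the paragraph \ref{paragr:obj_tr} one has $\cotr{C}{c} \simeq \Dn{0} \times_C \HomLax(\Dn{1}, C)$, so $c \comma v = \Dn{0} \times_C \HomLax(\Dn{1}, C) \times_C A \simeq \cotr{C}{c} \times_C A = \cotr{A}{c}$ by the very definitions of the comma and of $\cotr{A}{c}$; naturality is then checked by identifying $c \comma (\id{v}, u)$ with $\Dn{0} \times_C \HomLax(\Dn{1}, C) \times_C u$, and the compatibility with the projections to $A$ is immediate. You instead work on $T$-points: you use the universal property of the comma (paragraph \ref{paragr:comma_pu}), the dictionary of the paragraph \ref{paragr:contr} between oplax transformations with constant source and \oo-foncteurs out of $\Dn{0} \joint T$, the join adjunction, and Yoneda, in effect re-deriving the content of the cartesian square that the paper simply quotes. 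The step you rightly flag as delicate — that the composite of the resulting $T \to \cotr{C}{c}$ with the forgetful $U$ is $v\,y$ — does hold, because under the identification $\vide \joint T \simeq T$ the morphism $\vide_{\Dn{0}} \joint T$ is exactly $\iota_2$, so the construction of $U$ in the paragraph \ref{paragr:fonct_tr_sur_1} gives $U \circ G = F \circ \iota_2$. What the paper's route buys is brevity and an essentially formal naturality check; what yours buys is an explicit point-level description of the isomorphism (useful for seeing at once why $p_2$ matches $U$), at the cost of more bookkeeping and of invoking the factorization property of \ref{paragr:contr} rather than the pullback square of \ref{paragr:obj_tr}.
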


\begin{proof}
  En vertu du paragraphe~\ref{paragr:obj_tr}, on a
  \[
    \begin{split}
     c \comma v
     & =
     \Dn{0} \times^{\pi_0}_C \HomLax(\Dn{1}, C)\, {}^{\pi_1}\!\!\times_C A
     \\
     & \simeq
     \cotr{C}{c} \times_C A
     \\
     & =
     \cotr{A}{c},
   \end{split}
  \]
  d'où l'isomorphisme recherché. Le fait que cet isomorphisme est au-dessus
  de $A$ est immédiat. De plus, si
  \[
    \shorthandoff{;}
    \xymatrix@C=1.5pc{
      A \ar[rr]^u \ar[dr]_(0.40){\phantom{v'}v}_(.60){}="f" & & A' \ar[dl]^(0.40){v'} \\
      & C
    }
  \]
  est un triangle commutatif, on vérifie que le \oo-foncteur $c \comma
  (\id{v}, u) : c \comma v \to c \comma v'$ est égal à $\Dn{0} \times_C
  \HomLax(\Dn{1}, C) \times_C u$, ce qui établit la naturalité de
  l'isomorphisme.
\end{proof}

Les trois lemmes suivants isolent les aspects techniques relatifs aux
tranches et aux \oo-catégories comma qui apparaîtront dans notre
démonstration du théorème A pour les $2$-triangles.

\begin{lemme}\label{lem:tr_comma}
  Soient $v : A \to C$ un \oo-foncteur et $c$ un objet de $C$. On a un
  isomorphisme canonique naturel
  \[ \cotr{(C \comma v)}{c} \simeq {U \comma v}, \]
  où la tranche $\cotr{(C \comma v)}{c}$ est relative au \oo-foncteur $p_1 :
  C \comma v \to C$ et où $U : \cotr{C}{c} \to C$ désigne le \oo-foncteur
  d'oubli.
\end{lemme}

\begin{proof}
  En vertu de la proposition précédente et de la
  proposition~\ref{prop:assoc_comma}, avec les notations de cette dernière,
  on a des isomorphismes naturels
  \[
    \cotr{(C \comma v)}{c} \simeq c \comma (p^{}_{1, C \comma v})
    \simeq (p^{}_{2, c \comma C}) \comma v
    \simeq U \comma v,
  \]
  d'où le résultat.
\end{proof}

\begin{lemme}\label{lem:comma_tr_retr}
  Soient $v : A \to C$ un \oo-foncteur et $c$ un objet de $C$.
  Alors le \oo-foncteur
  \[
    c \comma v \to U \comma v
  \]
  associé au triangle commutatif
  \[
    \shorthandoff{;}
    \xymatrix@C=1.5pc{
      \Dn{0} \ar[rr]^{(c, \id{c})} \ar[dr]_(0.40){c}
      & & \cotr{C}{c} \ar[dl]^(0.40){U} \\
      & C & \pbox{,}
    }
  \]
  où $U : \cotr{C}{c} \to C$ désigne le \oo-foncteur d'oubli, est un
  rétracte par transformation oplax à gauche fort.
\end{lemme}

\begin{proof}
  En vertu de la proposition~\ref{prop:tr_fort_asp}, le foncteur $(c,
  \id{c}) : \Dn{0} \to \cotr{C}{c}$ est un rétracte par transformation oplax
  à gauche fort et l'assertion résulte donc de la
  proposition~\ref{prop:comma_retr}.
\end{proof}

\begin{lemme}\label{lem:comma_pi_2_retr}
  Si $v : A \to C$ est un \oo-foncteur, alors le \oo-foncteur
  \[ p_2 : C \comma v \to A \]
  est la rétraction d'un rétracte par transformation oplax à droite fort.
\end{lemme}

\begin{proof}
  Par définition, le carré
  \[
    \xymatrix{
      C \comma v \ar[r] \ar[d]_{p_2} & \HomLax(\Dn{1}, C) \ar[d]^{\pi_1} \\
      A \ar[r]_v & C
    }
  \]
  est cartésien. Nous allons montrer que le \oo-foncteur $\pi_1$ est la
  rétraction d'un rétracte par transformation oplax à droite fort au-dessus
  de $C$. L'assertion résultera alors des propriétés de stabilité de ces
  rétractes par changement de base (proposition~\ref{prop:retr_univ}). Par
  définition, le \oo-foncteur $\pi_1$ est l'image du \oo-foncteur $1 : \Dn{0} \to
  \Dn{1}$ par le foncteur~$\HomLax({-}, C)$. Il est immédiat que $1
  : \Dn{0} \to \Dn{1}$ est un rétracte par transformation lax à droite fort
  et au-dessus de $\Dn{0}$. Or, pour des raisons formelles
  (voir~\cite[exemple C.23.(f)]{AraMaltsiJoint}), le foncteur $\HomLax({-}, C)$
  s'étend en un sesquifoncteur $(\ooCatLax)^\op \to \ooCatOpLax$, où
  $\ooCatLax$ (resp.~$\ooCatOpLax$) désigne la sesquicatégorie des
  \oo-catégories, \oo-foncteurs et transformations lax (resp.
  transformations oplax) et, si $\C$ est une sesquicatégorie, $\C^\op$
  désigne la sesquicatégorie obtenue en inversant le sens de ses
  $1$-cellules. Le foncteur $\HomLax({-}, C)$ transforme donc rétractes
  par transformation lax à droite fort $i : A \to B$ au-dessus de $A$ en
  rétractions de rétracte par transformation oplax à droite fort au-dessus
  de $\HomLax(A, C)$, ce qui achève la démonstration.
\end{proof}

Nous pouvons maintenant formuler et prouver notre théorème A pour les
triangles commutatifs à transformation oplax près.

\begin{paragraph}
  Soit
  \[
    \shorthandoff{;}
    \xymatrix@C=1.5pc{
      A \ar[rr]^u \ar[dr]_(0.40){v}_(.60){}="g" & & B \ar[dl]^(0.40){w} \\
      & C
      \ar@{}"g";[ur]_(.15){}="gg"
      \ar@{}"g";[ur]_(.55){}="oo"
      \ar@<-0.0ex>@2"gg";"oo"^\alpha
      &
    }
  \]
  un diagramme dans $\ooCatOpLax$. Pour tout objet $c$ de $C$, on dispose
  d'un \oo-foncteur
  \[ \cotr{(u, \alpha)}{c} : \cotr{A}{c} \to \cotr{B}{c}. \]
  En effet, en vertu de la proposition~\ref{prop:ident_tr_comma}, il revient
  au même de définir un \oo-foncteur~$c \comma v \to c \comma w$. Or, $c
  \comma (\alpha, u)$ est un tel \oo-foncteur.
\end{paragraph}

\begin{remark}
  Dans \cite{AraMaltsiThmAI}, le \oo-foncteur $\cotr{(u, \alpha)}{c}$ (qui y
  est noté $\cotr{\mathcal{T}}{c}$, où $\mathcal{T}$ désigne le $2$-triangle
  en jeu) est défini sans référence aux \oo-catégories comma. Néanmoins,
  ce \oo-foncteur est défini dans \cite[paragraphe 5.1]{AraMaltsiThmAI} en
  termes d'une propriété universelle des tranches (exprimée par
  \cite[proposition 4.3]{AraMaltsiThmAI}) qui est exactement celle des
  \oo-catégories comma de la forme $c \comma v$. En particulier, les deux
  définitions coïncident.
\end{remark}

\begin{theorem}\label{thm:thmA_strict_oplax}
  Soit $\W$ une classe de \oo-foncteurs satisfaisant aux propriétés
  suivantes :
  \begin{enumerate}
    \item\label{item:23} $\W$ contient les identités et satisfait à la
      propriété du deux sur trois ;
    \item\label{item:retr} tout rétracte par transformation oplax (à gauche
      comme à droite) fort est dans~$\W$ ;
    \item\label{item:thmA} $\W$ vérifie un théorème A pour les triangles
      commutatifs au sens où, pour tout triangle commutatif de \oo-foncteurs
    \[
      \xymatrix@C=1.5pc{
        A \ar[rr]^u \ar[dr]_v & & B \ar[dl]^w \\
                              & C & \pbox{,}
      }
    \]
    si pour tout objet $c$ de~$C$, le foncteur $\cotr{A}{c} \to \cotr{B}{c}$
    induit par $u$ est dans $\W$, alors il en est de même du foncteur~$u$.
  \end{enumerate}
  Alors $\W$ vérifie un théorème A pour les $2$-triangles au sens où, pour
  tout triangle de \oo-foncteurs commutatif à une transformation oplax
  $\alpha$ près
  \[
    \shorthandoff{;}
    \xymatrix@C=1.5pc{
      A \ar[rr]^u \ar[dr]_(0.40){v}_(.60){}="g" & & B \ar[dl]^(0.40){w} \\
      & C
      \ar@{}"g";[ur]_(.15){}="gg"
      \ar@{}"g";[ur]_(.55){}="oo"
      \ar@<-0.0ex>@2"gg";"oo"^\alpha
      & \pbox{,}
    }
  \]
  si pour tout objet $c$ de $C$, le \oo-foncteur $\cotr{(u, \alpha)}{c} :
  \cotr{A}{c} \to \cotr{B}{c}$ est dans $\W$, alors il en est de même de
  $u$.
\end{theorem}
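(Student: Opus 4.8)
L'idée est de factoriser le $2$-triangle à transformation oplax près en triangles \emph{commutatifs} en introduisant une \oo-catégorie auxiliaire construite à l'aide de la \oo-catégorie comma $B \comma w$, exactement comme Chiche procède dans le cas $2$-catégorique à l'aide de la construction de Grothendieck. Concrètement, je considérerais le \oo-foncteur $p_2 : B \comma w \to C$ (où $B \comma w$ désigne $\id{B} \comma_C w$ avec $w : B \to C$) et je relèverais $u : A \to B$ en un \oo-foncteur $\widetilde{u} : A \to B \comma w$ défini grâce à la propriété universelle du paragraphe~\ref{paragr:comma_pu} à partir du diagramme formé de $u : A \to B$, de $v : A \to C$ et de la transformation oplax $\alpha : wu \tod v$ (ou plutôt de son interprétation comme \oo-foncteur, en prenant garde au sens : $\alpha$ va de $v$ vers $wu$ ou l'inverse selon les conventions, à vérifier soigneusement avec le paragraphe~\ref{paragr:def_comma} où c'est $g$ qui est à droite). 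On obtient ainsi un triangle \emph{commutatif}
\[
  \xymatrix@C=1.5pc{
    A \ar[rr]^{\widetilde{u}} \ar[dr]_v & & B \comma w \ar[dl]^{p_1} \\
    & C & \pbox{,}
  }
\]
car par construction $p_1 \widetilde{u} = wu$... non, plutôt $p_1 \widetilde{u}$ doit redonner $v$ ; il faut choisir le bon côté de la comma pour que le triangle ferme. Le point est que l'on dispose aussi du \oo-foncteur $p_2 : B \comma w \to B$ avec $p_2 \widetilde{u} = u$, donc $u$ se factorise en $u = p_2 \circ \widetilde{u}$.

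Les deux morphismes de cette factorisation se traitent séparément. D'abord, $p_2 : B \comma w \to B$ ; mais attention, ici c'est plutôt le \oo-foncteur $w \comma B$ ou la projection vers $B$ qu'il faut regarder : c'est exactement le lemme~\ref{lem:comma_pi_2_retr} (appliqué avec $v = w$) qui dit que la projection vers la source est la rétraction d'un rétracte par transformation oplax fort, donc dans $\W$ par la propriété~(\emph{b}). Ensuite, pour $\widetilde{u} : A \to B \comma w$, on applique l'hypothèse~(\emph{c}) : il faut montrer que pour tout objet $c$ de $C$, le \oo-foncteur induit $\cotr{A}{c} \to \cotr{(B \comma w)}{c}$ (tranche relative à $p_1$) est dans $\W$. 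Ici interviennent les lemmes~\ref{lem:tr_comma} et~\ref{lem:comma_tr_retr} : le lemme~\ref{lem:tr_comma} identifie $\cotr{(B \comma w)}{c}$ avec $U \comma w$ où $U : \cotr{C}{c} \to C$, la proposition~\ref{prop:ident_tr_comma} identifie $\cotr{A}{c}$ avec $c \comma v$, et le lemme~\ref{lem:comma_tr_retr} fournit un rétracte par transformation oplax fort $c \comma v \to U \comma v$... mais ici c'est $w$, pas $v$ — il faut composer avec le \oo-foncteur $U \comma (\alpha, u) : U \comma v \to U \comma w$ provenant de la fonctorialité de la comma, et relier ce composé au \oo-foncteur $\cotr{(u,\alpha)}{c}$ de l'énoncé. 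Le \oo-foncteur $\cotr{(u,\alpha)}{c}$ étant dans $\W$ par hypothèse, et le rétracte par transformation oplax fort y étant aussi par~(\emph{b}), la propriété du deux sur trois~(\emph{a}) permet de conclure que $\cotr{A}{c} \to \cotr{(B\comma w)}{c}$ est dans $\W$. On obtient alors $\widetilde{u} \in \W$ par~(\emph{c}), puis $u = p_2 \widetilde{u} \in \W$ par deux sur trois~(\emph{a}).

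**Point délicat.** L'obstacle principal est la vérification combinatoire que les divers \oo-foncteurs s'assemblent correctement : il faut s'assurer que le relèvement $\widetilde{u}$ construit via la propriété universelle rend bien commutatifs les triangles voulus, que le \oo-foncteur induit sur les tranches $\cotr{A}{c} \to \cotr{(B\comma w)}{c}$ coïncide bien, à travers tous les isomorphismes canoniques de la proposition~\ref{prop:ident_tr_comma}, du lemme~\ref{lem:tr_comma} et du lemme~\ref{lem:comma_tr_retr}, avec le composé $c \comma v \xto{\text{retr.}} U \comma v \xto{U \comma (\alpha,u)} U \comma w$ — et que ce composé contient bien $\cotr{(u,\alpha)}{c}$ comme l'un de ses facteurs (ou lui est relié par un rétracte par transformation oplax fort). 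Cela requiert de dépiler soigneusement les définitions des \oo-foncteurs de fonctorialité de la comma du paragraphe~\ref{paragr:comma_act_fonct} et de vérifier la compatibilité de l'isomorphisme du lemme~\ref{lem:tr_comma} avec ces fonctorialités, en utilisant de façon répétée le lemme de Yoneda comme dans les preuves des propositions~\ref{prop:comma_fonct} et~\ref{prop:comma_au-dessus}. Je commencerais donc par énoncer précisément, dans un lemme préliminaire, l'égalité de ces deux \oo-foncteurs, puis j'enchaînerais l'argument ci-dessus de façon purement formelle.
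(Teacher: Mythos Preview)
Your plan is essentially correct and very close to the paper's own argument, but there are two points worth noting.

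\textbf{The wrong comma.} Your auxiliary object ``$B \comma w$'' does not type-check: in the conventions of the paragraphe~\ref{paragr:def_comma}, $f \comma_Z g$ requires both $f$ and $g$ to have codomain~$Z$, so ``$\id_B \comma_C w$'' is not defined. The correct auxiliary object is $C \comma w = \id_C \comma_C w$ (with $p_1 : C \comma w \to C$ and $p_2 : C \comma w \to B$); your relèvement is then $\widetilde{u} = (v, \alpha, u) : A \to C \comma w$, which satisfies $p_1 \widetilde{u} = v$ and $p_2 \widetilde{u} = u$. You yourself notice that something is off (``il faut choisir le bon côté''), and everything you write afterwards (the objects $U \comma v$, $U \comma w$ and the lemmas you invoke) is for the right category.

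\textbf{Comparison with the paper.} The paper does not factor $u$ through a single comma category but replaces \emph{both} $A$ and $B$ by comma categories: it considers the commutative square
\[
  \xymatrix@C=3pc{
    C \comma v \ar[r]^-{C \comma (\alpha, u)} \ar[d]_{p_2} &
    C \comma w \ar[d]^{p_2} \\
    A \ar[r]_u & B
  }
\]
whose vertical arrows are in~$\W$ by the lemme~\ref{lem:comma_pi_2_retr}, and reduces to showing $C \comma (\alpha, u) \in \W$ via hypothesis~(\emph{c}) applied to the commutative triangle over~$C$ formed by the two~$p_1$'s. The gain is that the slice map $\cotr{(C \comma (\alpha, u))}{c}$ is identified \emph{directly} with $U \comma (\alpha, u)$ by the naturality of the lemme~\ref{lem:tr_comma}: no further unpacking is needed. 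One then uses the square of paragraphe~\ref{paragr:comma_carre} (with vertical arrows the rétractes of lemme~\ref{lem:comma_tr_retr} and top arrow $\cotr{(u,\alpha)}{c} = c \comma (\alpha, u)$) and deux-sur-trois. In your approach you must instead identify $\cotr{\widetilde{u}}{c}$ with a composite of a rétracte and $U \comma (\alpha, u)$; this is exactly the ``point délicat'' you flag, and it amounts to the same commutative square read diagonally, but it requires an extra compatibility check (e.g.\ by first factoring $\widetilde{u}$ through the section $A \to C \comma v$, $a \mapsto (v, \id_v, \id_A)$, which is the rétracte of lemme~\ref{lem:comma_pi_2_retr}). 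Both routes work; the paper's is slightly more economical precisely because it sidesteps that identification.
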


\begin{proof}
  En vertu de la proposition~\ref{prop:comma_au-dessus}, on dispose de
  diagrammes commutatifs
  \[
    \xymatrix@C=1.5pc{
      C \comma v \ar[rr]^-{C \comma (\alpha, u)} \ar[dr]_{p_1} &&
      C \comma w \ar[dl]^{p_1} \\
      & C & \pbox{,}
    }
    \qquad
    \qquad
    \xymatrix@C=3pc{
      C \comma v \ar[r]^-{C \comma (\alpha, u)} \ar[d]_{p_2} &
      C \comma w \ar[d]^{p_2} \\
      A \ar[r]_u & B \pbox{.}
    }
  \]
  D'après le lemme~\ref{lem:comma_pi_2_retr}, les \oo-foncteurs
  verticaux du carré sont des rétractions de rétractes par transformation
  oplax à droite forts, et sont donc dans $\W$ en vertu des
  conditions~\ref{item:23} et \ref{item:retr}. Ainsi, en vertu de
  la condition~\ref{item:23}, pour montrer que $u$ est dans~$\W$, il suffit
  donc de montrer que $C \comma (\alpha, u)$ est dans $\W$.
  Pour ce faire, nous allons appliquer le
  théorème A pour les triangles commutatifs (condition~\ref{item:thmA}) au
  triangle ci-dessus. Il s'agit donc de montrer que, pour tout objet $c$ de
  $C$, le \oo-foncteur
  \[
    \cotr{(C \comma (\alpha, u))}{c} :
    \cotr{(C \comma v)}{c} \to \cotr{(C \comma w)}{c}
  \]
  est dans $\W$. En vertu du
  lemme~\ref{lem:tr_comma}, ce \oo-foncteur s'identifie au
  \oo-foncteur
  \[
    U \comma (\alpha, u) : U \comma v \to U \comma w,
  \]
  où $U : \cotr{C}{c} \to C$ désigne le \oo-foncteur d'oubli.
  Or, en appliquant la fonctorialité de la construction comma (voir la fin du
  paragraphe~\ref{paragr:comma_carre}) au diagramme
  \[
    \shorthandoff{;}
    \xymatrix@R=1pc@C=3pc{
      \Dn{0} \ar[dd]_{(c, \id{c})}_{}="m" \ar[dr]^c_{}="f" & &
        A \ar[dl]_v_{}="g" \ar[dd]^u \\
        & C \\
      \cotr{C}{c} \ar[ur]_{U} & & B \ar[ul]^{w} \pbox{,}
      \ar@{}[ll];"f"_(0.35){}="sa"_(0.85){}="ta"
      \ar@{}[];"g"_(0.35){}="tb"_(0.85){}="sb"
      \ar@2"sb";"tb"^{\alpha}
      \ar@{}"m";[lu]|(.40){\textstyle =}
    }
  \]
  on obtient un carré commutatif
  \[
    \xymatrix@C=3pc@R=3pc{
      c \comma v \ar[r]^-{c \comma (\alpha, u)}
      \ar[d]_{((c, \id{c}), \id{c}) \comma v}
      &
      c \comma w \ar[d]^{((c, \id{c}), \id{c}) \comma w}
      \\
      U \comma v \ar[r]_-{U \comma (\alpha, u)} & U \comma w
      \pbox{.}
    }
  \]
  En vertu du lemme~\ref{lem:comma_tr_retr}, les flèches
  verticales de ce carré sont des rétractes par transformation oplax à
  gauche forts, et sont donc dans $\W$ en vertu de la condition~\ref{item:retr}.
  Par ailleurs, par définition, la flèche horizontale du haut du carré
  s'identifie au \oo-foncteur~$\cotr{(u, \alpha)}{c}$ qui est dans $\W$ par
  hypothèse. On en déduit que $U \comma (\alpha, u)$ est dans $\W$, ce qui
  achève de prouver que le triangle commutatif introduit au début de cette
  preuve vérifie bien les hypothèses du théorème A et termine la
  démonstration.
\end{proof}

\begin{theorem}\label{thm:thmA_oplax}
  Soit
  \[
    \shorthandoff{;}
    \xymatrix@C=1.5pc{
      A \ar[rr]^u \ar[dr]_(0.40){v}_(.60){}="g" & & B \ar[dl]^(0.40){w} \\
      & C
      \ar@{}"g";[ur]_(.15){}="gg"
      \ar@{}"g";[ur]_(.55){}="oo"
      \ar@<-0.0ex>@2"gg";"oo"^\alpha
      &
    }
  \]
  un triangle de \oo-foncteurs commutatif à une transformation oplax $\alpha$
  près. Si pour tout objet $c$ de $C$, le \oo-foncteur $\cotr{(u,
  \alpha)}{c} : \cotr{A}{c} \to \cotr{B}{c}$ est une équivalence de
  Thomason, alors il en est de même de $u$.
\end{theorem}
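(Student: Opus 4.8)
Le plan est d'appliquer le théorème~A abstrait pour les $2$-triangles (théorème~\ref{thm:thmA_strict_oplax}) à la classe $\W$ des équivalences de Thomason. Il suffit alors de vérifier que cette classe satisfait aux trois conditions figurant dans l'énoncé de ce théorème ; l'assertion voulue en résultera immédiatement, tout le contenu conceptuel ayant déjà été concentré dans le théorème~\ref{thm:thmA} et dans le théorème~\ref{thm:thmA_strict_oplax}.

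La première condition, à savoir que $\W$ contient les identités et vérifie le deux sur trois, est immédiate : un \oo-foncteur est une équivalence de Thomason précisément lorsque son nerf de Street est une équivalence faible simpliciale, le foncteur nerf préserve identités et composés, et la classe des équivalences faibles d'ensembles simpliciaux contient les identités et satisfait au deux sur trois. La troisième condition, l'existence d'un théorème~A pour les triangles commutatifs, n'est autre que le théorème~\ref{thm:thmA} que l'on vient d'établir. Quant à la deuxième condition --- tout rétracte par transformation oplax fort, à gauche comme à droite, est dans $\W$ ---, c'est exactement la proposition~\ref{prop:retr_Thomason}, dont l'énoncé couvre les deux variantes suivant la convention du paragraphe~\ref{paragr:def_retr}.

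Les trois conditions étant vérifiées, le théorème~\ref{thm:thmA_strict_oplax} s'applique et donne précisément la conclusion souhaitée. À proprement parler, la démonstration ne présente donc pas d'obstacle véritable : le point le plus substantiel est la vérification de la deuxième condition, qui repose \foreignlanguage{french}{\emph{in fine}} sur le fait, établi dans l'appendice~\ref{app:transformation}, qu'une transformation oplax induit une homotopie simpliciale entre les nerfs de sa source et de son but --- c'est là que se situe le travail effectif, hérité de la théorie de la fonctorialité des tranches et de la construction comma \oo-catégorique. On pourra enfin, dans un second temps, étudier la compatibilité de cet énoncé aux dualités de $\ooCat$ comme on l'a fait pour le cas commutatif.
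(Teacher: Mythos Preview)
Your proof is correct and follows exactly the paper's approach: apply the abstract theorem~\ref{thm:thmA_strict_oplax} to the class $\W$ of Thomason equivalences, verifying its three hypotheses via the obvious argument for two-out-of-three, proposition~\ref{prop:retr_Thomason} for the retract condition, and theorem~\ref{thm:thmA} for the commutative-triangle Theorem~A. The additional commentary you give is fine but not needed for the proof itself.
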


\begin{proof}
  Cela résulte du théorème précédent appliqué à $\W$ la classe des
  équivalences de Thomason, les hypothèses de l'énoncé étant satisfaites en
  vertu de la proposition~\ref{prop:retr_Thomason} et du
  théorème~\ref{thm:thmA}.
\end{proof}

\begin{remark}
  \newcommand\Ws{\mathsf{W}}
  La classe $\W_\infty$ des équivalences de Thomason a été définie à partir
  de la classe $\Ws_\infty$ des équivalences d'homotopie
  simpliciales faibles par la formule
  \[ \W_\infty = N^{-1}(\Ws_\infty). \]
  Une inspection attentive des preuves précédentes révèle que les seules
  propriétés de la classe $\Ws_\infty$ que l'on a utilisées pour démontrer
  le théorème précédent
  (ainsi que les théorèmes~\ref{thm:thmA_simpl}, \ref{thm:thmA_cosimpl} et
  \ref{thm:thmA_monoid}) sont les suivantes :
  \begin{enumerate}
    \item $\Ws_\infty$ est faiblement saturée, c'est-à-dire contient les
      identités, satisfait à la propriété du deux sur trois et, tout
      morphisme $i$ admettant une rétraction $r$ telle que $ir$ soit dans
      $\Ws_\infty$ est dans $\Ws_\infty$ ;
    \item $\Ws_\infty$ est stable par petites sommes ;
    \item $\Ws_\infty$ vérifie le lemme bisimplicial
      (lemme~\ref{lemme:bisimpl}) ;
    \item pour tout ensemble simplicial $X$, la projection $\Deltan{1}
      \times X \to X$ est dans $\Ws_\infty$.
  \end{enumerate}
  Ainsi, le théorème précédent, ainsi que les théorèmes mentionnés
  ci-dessus, se généralisent à toute classe $\W = N^{-1}(\Ws)$ de
  \oo-foncteurs, où $\Ws$ est une classe de morphismes simpliciaux vérifiant
  les quatre propriétés ci-dessus. On peut montrer qu'une telle classe
  $\Ws$ correspond exactement à ce qui est appelé un $\Delta$-localisateur
  test \hbox{dans \cite[section 4.2]{Cisinski}}.
\end{remark}

\begin{paragraph}
  Soit
  \[
    \shorthandoff{;}
    \xymatrix@C=1.5pc{
      A \ar[rr]^u \ar[dr]_(0.40){v}_(.60){}="f" & & B \ar[dl]^(0.40){w} \\
      & C
      \ar@{}"f";[ur]_(.15){}="ff"
      \ar@{}"f";[ur]_(.55){}="oo"
      \ar@<-0.0ex>@2"oo";"ff"_\alpha
      &
    }
  \]
  un diagramme dans $\ooCatLax$, la $2$-flèche $\alpha$ étant donc une
  transformation lax. Si $c$ est un objet de $C$, on définit un \oo-foncteur
  \[
    \trm{(u, \alpha)}{c} : \trm{A}{c} \to \trm{B}{c}
  \]
  de la manière suivante. En appliquant la dualité $X \mapsto X^\op$ à ce
  diagramme, on obtient un diagramme
  \[
    \shorthandoff{;}
    \xymatrix@C=1.5pc{
      A^\op \ar[rr]^{u^\op} \ar[dr]_(0.40){v^\op}_(.60){}="g" & & B^\op
      \ar[dl]^(0.40){w^\op} \\
      & C^\op
      \ar@{}"g";[ur]_(.20){}="gg"
      \ar@{}"g";[ur]_(.50){}="oo"
      \ar@<-0.0ex>@2"gg";"oo"^{\alpha^\op}
      & \pbox{,}
    }
  \]
  où $\alpha^\op$ est une transformation oplax (voir la fin du
  paragraphe~\ref{paragr:trans_oplax}). On dispose ainsi d'un
  \oo-foncteur
  \[ \cotr{(u^\op, \alpha^\op)}{c} : \cotr{A^\op}{c} \to \cotr{B^\op}{c}. \]
  En appliquant de nouveau la dualité $X \mapsto X^\op$, on obtient, en
  vertu de la proposition~\ref{prop:tr_op}, le \oo-foncteur \smash{$\trm{A}{c} \to
  \trm{B}{c}$} recherché.
\end{paragraph}

\begin{remark}
  L'apparente asymétrie entre la définition directe de la \oo-catégorie
  $\cotr{(u, \alpha)}{c}$ et celle, par dualité, de $\smash{\trm{(u,
  \alpha)}{c}}$ vient du fait qu'on
  a privilégié la construction comma oplax par rapport à la construction
  comma lax (voir la remarque~\ref{rem:comma_lax}). En effet, si $v : A \to
  C$ est un \oo-foncteur et $c$ est un objet de $C$, alors la tranche
  $\smash{\trm{A}{c}}$ est canoniquement isomorphe à la \oo-catégorie comma
  lax $v \commalax c$ et on peut définir le \oo-foncteur $\smash{\trm{(u,
  \alpha)}{c}}$ par fonctorialité de la construction comma lax.
\end{remark}

\begin{theorem}\label{thm:thmA_lax}
  Soit
  \[
    \shorthandoff{;}
    \xymatrix@C=1.5pc{
      A \ar[rr]^u \ar[dr]_(0.40){v}_(.60){}="f" & & B \ar[dl]^(0.40){w} \\
      & C
      \ar@{}"f";[ur]_(.15){}="ff"
      \ar@{}"f";[ur]_(.55){}="oo"
      \ar@<-0.0ex>@2"oo";"ff"_\alpha
      &
    }
  \]
  un triangle de \oo-foncteurs commutatif à une transformation lax $\alpha$
  près. Si pour tout objet $c$ de $C$, le \oo-foncteur
  \smash{$\trm{(u, \alpha)}{c} : \trm{A}{c} \to \trm{B}{c}$} est une
  équivalence de Thomason, alors il en est de même de $u$.
\end{theorem}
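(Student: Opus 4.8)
The plan is to deduce this lax statement from its oplax counterpart, Theorem~\ref{thm:thmA_oplax}, by transporting everything along the odd duality $D \mapsto D^\op$ — which is exactly the route by which the $\infty$\nobreakdash-functor $\smash{\trm{(u, \alpha)}{c}}$ was defined from $\cotr{(u^\op, \alpha^\op)}{c}$ in the paragraph just before the statement. First I would apply the functor $D \mapsto D^\op$ to the given lax $2$\nobreakdash-triangle. Since the dual of a lax transformation is an oplax transformation (see the end of paragraph~\ref{paragr:trans_oplax}), this produces a triangle of $\infty$\nobreakdash-functors $u^\op : A^\op \to B^\op$, $v^\op : A^\op \to C^\op$, $w^\op : B^\op \to C^\op$ which is commutative up to the oplax transformation $\alpha^\op$, i.e.\ an oplax $2$\nobreakdash-triangle in the sense of Theorem~\ref{thm:thmA_oplax}.

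Next I would translate the hypothesis. Recall that for an object $c$ of $C$ — equivalently, an object of $C^\op$ — the $\infty$\nobreakdash-functor $\smash{\trm{(u, \alpha)}{c} : \trm{A}{c} \to \trm{B}{c}}$ was defined precisely as $\big(\cotr{(u^\op, \alpha^\op)}{c}\big){}^\op$, using the canonical isomorphisms $\smash{\trm{A}{c} \simeq (\cotr{A^\op}{c})^\op}$ and $\smash{\trm{B}{c} \simeq (\cotr{B^\op}{c})^\op}$ furnished by Proposition~\ref{prop:tr_op}. As the class of Thomason equivalences is stable under $D \mapsto D^\op$ by Corollary~\ref{coro:Thom_op}, the assumption that $\smash{\trm{(u, \alpha)}{c}}$ is a Thomason equivalence for every object $c$ of $C$ is therefore equivalent to the assertion that $\cotr{(u^\op, \alpha^\op)}{c}$ is a Thomason equivalence for every object $c$ of $C^\op$.

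Finally I would invoke Theorem~\ref{thm:thmA_oplax} for the oplax $2$\nobreakdash-triangle built above: its hypothesis is exactly what has just been established, so it yields that $u^\op$ is a Thomason equivalence; applying Corollary~\ref{coro:Thom_op} once more gives that $u$ itself is a Thomason equivalence, as desired. I do not expect a genuine obstacle here: the only point needing a moment's care is identifying $\smash{\trm{(u, \alpha)}{c}}$ with the opposite of $\cotr{(u^\op, \alpha^\op)}{c}$, but this is essentially built into the definition of the former, so the whole argument is a routine dualization. (Alternatively, one could give a ``direct'' proof by rerunning the proof of Theorem~\ref{thm:thmA_strict_oplax} with the lax comma construction $v \commalax c$ in place of the oplax one and using that $\smash{\trm{A}{c} \simeq v \commalax c}$; the dualization argument is shorter and is the one I would present.)
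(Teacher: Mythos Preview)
Your proposal is correct and follows essentially the same route as the paper's proof: dualize the lax $2$-triangle via $D \mapsto D^\op$ to obtain an oplax one, use the definition of $\smash{\trm{(u,\alpha)}{c}}$ together with Corollary~\ref{coro:Thom_op} to translate the hypothesis into the oplax hypothesis, apply Theorem~\ref{thm:thmA_oplax}, and dualize back. The paper presents exactly this argument, with the same references to Proposition~\ref{prop:tr_op} (implicitly, via the definition) and Corollary~\ref{coro:Thom_op}.
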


\begin{proof}
  Soit $c$ un objet de $C$. En vertu du corollaire~\ref{coro:Thom_op},
  l'hypothèse entraîne que le \oo-foncteur
  \[ (\trm{(u, \alpha)}{c})^\op : \big(\trm{A}{c}\big)_{}^\op \to
  \big(\trm{B}{c}\big)_{}^\op \]
  est une équivalence de Thomason. Or, par définition, ce \oo-foncteur
  n'est autre que le \oo-foncteur 
  \[ \cotr{(u^\op, \alpha^\op)}{c} : \cotr{(A^\op)}{c} \to \cotr{(B^\op)}{c}. \]
  On est donc en mesure d'appliquer le théorème~\ref{thm:thmA_oplax} au
  triangle
  \[
    \shorthandoff{;}
    \xymatrix@C=1.5pc{
      A^\op \ar[rr]^{u^\op} \ar[dr]_(0.40){v^\op}_(.60){}="g" & & B^\op
      \ar[dl]^(0.40){w^\op} \\
      & C^\op
      \ar@{}"g";[ur]_(.20){}="gg"
      \ar@{}"g";[ur]_(.50){}="oo"
      \ar@<-0.0ex>@2"gg";"oo"^{\alpha^\op}
      & \pbox{.}
    }
  \]
  On en déduit que $u^\op$ est une équivalence de Thomason et donc, en
  appliquant de nouveau le corollaire~\ref{coro:Thom_op}, que $u$ est une
  équivalence de Thomason, ce qu'il fallait démontrer.
\end{proof}

\begin{remark}
  Les deux théorèmes précédents admettent des variantes pour les tranches de type
  $\tr{C}{c}$ ou \smash{$\cotrm{C}{c}$} (voir \cite[remarque 6.37]{AraMaltsiJoint}).
  Néanmoins, pour les établir, comme dans le cas du théorème A pour les
  triangles commutatifs (voir la remarque~\ref{rem:thmA_dual}), on a besoin
  de savoir que la classe des équivalences de Thomason est stable par la
  dualité~$C \mapsto C^\co$, ce qu'on démontrera dans~\cite{AraMaltsiNerfs}.
\end{remark}

\appendix

\section{Transformations oplax et homotopies simpliciales}\label{app:transformation}

Le but de cet appendice est d'associer à toute transformation oplax $\alpha$
d'un \oo-foncteur $u$ vers un \oo-foncteur $v$ une homotopie simpliciale
$N(\alpha)$ de~$N(u)$ vers~$N(v)$.

\begin{paragraph}
  Si $A$ et $B$ sont deux \oo-catégories, on dispose d'un morphisme
  d'ensembles simpliciaux
  \[ N(q) : N(A \otimes B) \to  N(A) \times N(B), \]
  où $q$ désigne le \oo-foncteur du paragraphe~\ref{paragr:def_q}.
  On va construire une section
  \[ s : N(A) \times N(B) \to N(A \otimes B) \]
  de ce morphisme, naturelle en $A$ et $B$.
\end{paragraph}

\begin{paragraph}
  On rappelle que, pour $n \ge 0$, on dispose d'un morphisme de complexes
  \[
    \begin{split}
      \nabla : \cn(\Deltan{n}) & \to \cn(\Deltan{n}) \otimes \cn(\Deltan{n}) \\
      (i_0, \ldots, i_p) & \mapsto \sum^p_{l = 0} (i_0, \ldots, i_l) \otimes
      (i_l, \ldots, i_p),
    \end{split}
  \]
  appelé diagonale d'Alexander-Whitney,
  naturel en $\Deltan{n}$ dans $\cDelta$, faisant de $\cn(\Deltan{n})$ une
  cogèbre différentielle graduée coassociative et coünitaire de coünité
  $c(p)$, où $p$ désigne l'unique morphisme de $\cDelta$ de $\Deltan{n}$
  vers $\Deltan{0}$.

  En appliquant le foncteur $\nu : \Cda \to \ooCat$, on obtient donc un
  \oo-foncteur
  \[
    \nu(\nabla) : \On{n} \to \On{n} \otimes \On{n},
  \]
  naturel en $\Deltan{n}$ dans $\cDelta$, faisant de $\On{n}$ une cogèbre
  coassociative et coünitaire de coünité l'unique \oo-foncteur de $\On{n}$
  vers $\Dn{0}$. En effet, cela résulte du théorème~\ref{thm:prod_tens}
  affirmant que le foncteur $\nu$ restreint aux complexes de Steiner forts
  est monoïdal pour le produit tensoriel, ainsi que du
  paragraphe~\ref{paragr:def_cn} et en particulier de l'isomorphisme
  canonique~$\nu\cn(\Deltan{n}) \simeq \On{n}$.
\end{paragraph}

\begin{paragraph}
  Soient $A$ et $B$ deux \oo-catégories. On définit un morphisme d'ensembles
  simpliciaux
  \[ s : N(A) \times N(B) \to N(A \otimes B) \]
  de la manière suivante. Considérons $(x, y) : \Deltan{n} \to N(A) \times N(B)$ un
  $n$-simplexe de~$N(A) \times N(B)$. Les morphismes $x$ et $y$
  correspondent à des \oo-foncteurs $\On{n} \to A$ et $\On{n} \to B$
  respectivement qu'on notera également $x$ et $y$. Le
  morphisme simplicial $s$ associe à $(x, y)$ le $n$-simplexe de
  $N(A \otimes B)$ défini par le composé
  \[
     \On{n} \xto{\nu(\nabla)} \On{n} \otimes \On{n}
     \xto{x \otimes\, y} A \otimes B.
  \]
\end{paragraph}

\begin{proposition}
  L'application $s$ est bien un morphisme d'ensembles simpliciaux.
\end{proposition}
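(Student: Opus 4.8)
Le plan est de vérifier directement que $s$ commute aux opérateurs simpliciaux, toute la substance de l'énoncé résidant dans la naturalité de la diagonale d'Alexander-Whitney rappelée ci-dessus. Soit $\phi : \Deltan{m} \to \Deltan{n}$ un morphisme de $\cDelta$ et soit $\cO(\phi) : \On{m} \to \On{n}$ le \oo-foncteur associé. Rappelons que, par définition du foncteur nerf (paragraphe~\ref{paragr:def_orient}), l'action de $\phi$ sur un $n$-simplexe d'un nerf $N(C)$, identifié à un \oo-foncteur $\On{n} \to C$, n'est autre que la précomposition par $\cO(\phi)$. Il s'agit donc de montrer que, pour tout $n$-simplexe $(x, y)$ de $N(A) \times N(B)$, où l'on note encore $x : \On{n} \to A$ et $y : \On{n} \to B$ les \oo-foncteurs correspondants, les $m$-simplexes $s\big((x, y) \cdot \phi\big)$ et $s(x, y) \cdot \phi$ de $N(A \otimes B)$ coïncident.

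Pour cela, je commencerais par expliciter ces deux $m$-simplexes comme des \oo-foncteurs $\On{m} \to A \otimes B$. Comme $(x, y) \cdot \phi = (x \circ \cO(\phi), y \circ \cO(\phi))$, le premier est le composé
\[
  \On{m} \xto{\nu(\nabla)} \On{m} \otimes \On{m}
  \xto{(x \circ \cO(\phi)) \otimes (y \circ \cO(\phi))} A \otimes B,
\]
tandis que le second est le composé
\[
  \On{m} \xto{\cO(\phi)} \On{n} \xto{\nu(\nabla)} \On{n} \otimes \On{n}
  \xto{x \otimes y} A \otimes B.
\]
L'étape clé consiste alors à invoquer la naturalité de $\nu(\nabla)$ en $\Deltan{n}$ dans $\cDelta$, qui fournit l'égalité $\nu(\nabla) \circ \cO(\phi) = (\cO(\phi) \otimes \cO(\phi)) \circ \nu(\nabla)$ de \oo-foncteurs de $\On{m}$ vers $\On{n} \otimes \On{n}$. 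En la reportant dans le second composé et en utilisant la fonctorialité du produit tensoriel sous la forme $(x \otimes y) \circ (\cO(\phi) \otimes \cO(\phi)) = (x \circ \cO(\phi)) \otimes (y \circ \cO(\phi))$, on retrouve exactement le premier composé, d'où l'égalité voulue.

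Je ne prévois pas d'obstacle sérieux. La seule propriété non formelle en jeu, à savoir la compatibilité de la diagonale d'Alexander-Whitney aux faces et aux dégénérescences, a déjà été absorbée dans l'énoncé de naturalité de $\nabla$ — puis transportée à $\nu(\nabla)$ grâce au fait que la restriction de $\nu$ aux complexes de Steiner forts est monoïdale pour le produit tensoriel (théorème~\ref{thm:prod_tens}) et à l'isomorphisme $\nu\cn(\Deltan{n}) \simeq \On{n}$ du paragraphe~\ref{paragr:def_cn}. Le reste de l'argument relève uniquement de la fonctorialité du produit tensoriel et de la description du foncteur nerf ; le seul point méritant d'être signalé est l'identification de l'action de $\phi$ sur les simplexes des nerfs avec la précomposition par $\cO(\phi)$. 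Enfin, la même vérification, menée cette fois en faisant varier $A$ et $B$, établit au passage la naturalité de $s$ annoncée avant l'énoncé.
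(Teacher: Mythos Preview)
Your proposal is correct and follows essentially the same approach as the paper: both arguments reduce the simplicial compatibility of $s$ to the naturality of $\nu(\nabla)$ in $\Deltan{n}$ combined with the functoriality of the tensor product, the only difference being that the paper presents the verification as the commutativity of a diagram while you write out the two composites explicitly. Note that your $\cO(\phi)$ is precisely the paper's $\nu\cn(\psi)$, since $\cO$ factors as $\nu \circ \cn$.
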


\begin{proof}
  Soit $(x, y)$ un $n$-simplexe de~$N(A) \times N(B)$ et soit $\psi
  : \Deltan{n'} \to \Deltan{n}$ un morphisme de $\cDelta$. Notons $(x',
  y')$ le $n'$\nbd-simplexe $(N(A) \times N(B))(\psi)(x, y)$. Par
  définition, les triangles
  \[
    \xymatrix@C=1pc{
      \On{n'} \ar[rr]^{\nu\cn(\psi)} \ar[dr]_{x'} & & \On{n}
      \ar[ld]^{x{\phantom{x'}}} \\
      & A
    }
    \qquad
    \xymatrix@C=1pc{
      \On{n'} \ar[rr]^{\nu\cn(\psi)} \ar[dr]_{y'} & & \On{n}
      \ar[ld]^{y{\phantom{y'}}} \\
      & B
    }
  \]
  sont commutatifs et il s'agit de montrer que les deux \oo-foncteurs de
  $\On{n'}$ vers $A \otimes B$ donnés par le bord du diagramme
  \[
    \xymatrix@R=1pc@C=3pc{
      \On{n'} \ar[r]^-{\nu(\nabla)} \ar[dd]_{\nu\cn(\psi)} & \On{n'}
      \otimes \On{n'} \ar[dd]_{\nu\cn(\psi) \otimes \,\nu\cn(\psi)}
      \ar[dr]^{x' \otimes \, y'} \\
      & & A \otimes B \\
      \On{n} \ar[r]_-{\nu(\nabla)} & \On{n} \otimes \On{n}
      \ar[ur]_{\,x \otimes \, y} \\
    }
  \]
  sont égaux. Or, le triangle de ce diagramme est commutatif car il est le
  produit tensoriel des deux triangles commutatifs mentionnés précédemment
  dans la preuve et le carré est commutatif par naturalité de $\nu(\nabla)$,
  d'où le résultat.
\end{proof}

\begin{proposition}
  Le morphisme
  \[ s : N(A) \times N(B) \to N(A \otimes B) \]
  est une section de
  \[ N(q) : N(A \otimes B) \to N(A) \times N(B) \]
  naturelle en $A$ et $B$.
\end{proposition}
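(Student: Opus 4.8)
The plan is to verify the two claimed properties---being a section and naturality---by direct computation on simplices, using the description of $s$ via the Alexander--Whitney diagonal together with the coünitality of $\nu(\nabla)$.

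First I would check naturality in $A$ and $B$. Given \oo-foncteurs $a : A \to A'$ and $b : B \to B'$, and an $n$-simplexe $(x, y)$ of $N(A) \times N(B)$ (corresponding to \oo-foncteurs $x : \On{n} \to A$ and $y : \On{n} \to B$), the two composites one wants to compare are
\[
  \On{n} \xto{\nu(\nabla)} \On{n} \otimes \On{n} \xto{x \otimes y} A \otimes B
  \xto{a \otimes b} A' \otimes B'
\]
and
\[
  \On{n} \xto{\nu(\nabla)} \On{n} \otimes \On{n} \xto{(ax) \otimes (by)}
  A' \otimes B'.
\]
These are equal simply because $(a \otimes b)(x \otimes y) = (ax) \otimes (by)$ by functoriality of the tensor product on \oo-catégories; hence $s$ is natural in $A$ and $B$. (Naturality in the simplicial direction is the content of the previous proposition.)

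Next I would check that $s$ is a section of $N(q)$, i.e.\ that $N(q) \circ s = \id{}$ on each $n$-simplexe. Unwinding the definitions, $N(q)(s(x, y))$ is the pair of \oo-foncteurs $\On{n} \to A$ and $\On{n} \to B$ obtained by postcomposing the composite $\On{n} \xto{\nu(\nabla)} \On{n} \otimes \On{n} \xto{x \otimes y} A \otimes B$ with $q_1 : A \otimes B \to A$ and $q_2 : A \otimes B \to B$ respectively. By definition of $q_1$ (paragraphe~\ref{paragr:def_q}) and the fact that $\otimes$ is a functor, postcomposing $x \otimes y$ with $q_1$ gives, after identifying $A \otimes \Dn{0} \simeq A$, the \oo-foncteur $x \otimes (\text{terminal}) : \On{n} \otimes \On{n} \to A \otimes \Dn{0} \simeq A$, which under this identification is $x \circ (\text{the projection } \On{n} \otimes \On{n} \to \On{n} \text{ onto the first factor})$. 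So the first component of $N(q)(s(x,y))$ is $x$ composed with $\On{n} \xto{\nu(\nabla)} \On{n} \otimes \On{n} \to \On{n}$, where the last map is the first-factor projection. The key point is that this composite equals $\id{\On{n}}$: this is precisely the coünitality of the coalgebra structure $\nu(\nabla)$ on $\On{n}$ established just above, whose coünit is the unique \oo-foncteur $\On{n} \to \Dn{0}$ (equivalently, on the level of complexes, the counit of the coassociative counital coalgebra $\cn(\Deltan{n})$ is $\cn(p)$). Symmetrically, the second component is $y$. Hence $N(q)(s(x,y)) = (x,y)$, as desired.

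The main obstacle---really the only nontrivial input---is the identification of $q_i \circ \nu(\nabla)$ with the coünit maps of the coalgebra structure on $\On{n}$. This requires matching the \oo-foncteur $q_1 : A \otimes B \to A$ of paragraphe~\ref{paragr:def_q}, defined via $A \otimes B \xto{A \otimes p} A \otimes \Dn{0} \xto{\sim} A$, with the first-factor ``projection'' coming from the counit of $\cn(\Deltan{n})$ after applying $\nu$; this is where one uses that $\nu$ restricted to strong Steiner complexes is monoïdal for $\otimes$ (théorème~\ref{thm:prod_tens}) and the canonical isomorphism $\nu\cn(\Deltan{n}) \simeq \On{n}$ of paragraphe~\ref{paragr:def_cn}. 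Concretely, on the complex side one computes $(\cn(p) \otimes \id{}) \circ \nabla = \id{}$ directly from the Alexander--Whitney formula: applying $\cn(p) \otimes \id{}$ to $\sum_{l=0}^p (i_0, \dots, i_l) \otimes (i_l, \dots, i_p)$ kills every term with $l > 0$ (since then $(i_0,\dots,i_l)$ has degree $\ge 1$ and $\cn(p)$ lands in degree $0$, so $\cn(p)$ vanishes on it) and sends the $l = 0$ term $(i_0) \otimes (i_0, \dots, i_p)$ to $e(i_0) \otimes (i_0,\dots,i_p) = (i_0, \dots, i_p)$; symmetrically for $\id{} \otimes \cn(p)$. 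Transporting this identity along $\nu$ and the monoïdalité isomorphism yields the required equality of \oo-foncteurs, and the proposition follows.
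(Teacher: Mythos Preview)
Your proof is correct and follows essentially the same approach as the paper: naturality via functoriality of $\otimes$, and the section property via the coünitality of the Alexander--Whitney coalgebra structure on $\On{n}$. The paper organizes the section argument slightly differently (through a commutative diagram involving the cartesian diagonal $\Delta : \On{n} \to \On{n} \times \On{n}$ and naturality of $q$, then reducing to the two coünitality triangles), whereas you work componentwise with $q_1$ and $q_2$ directly; you also spell out the coünitality identity at the level of complexes, which the paper simply invokes from the paragraph where it is stated.
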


\begin{proof}
  Commençons par montrer que $s$ est une section de $N(q)$. Soit $(x, y)$
  un $n$-simplexe de $N(A) \times N(B)$. Il s'agit de montrer que le
  composé
  \[
    \On{n} \xto{\nu(\nabla)} \On{n} \otimes \On{n} \xto{x \otimes  y}
    A \otimes B \xto{\,\,q\,\,} A \times B
  \]
  est égal à
  \[ (x, y) : \On{n} \to A \times B. \]
  Pour cela, il suffit de montrer que le diagramme
  \[
    \xymatrix@R=1pc@C=2.5pc{
      & \On{n} \otimes \On{n} \ar[r]^{x \otimes y} \ar[dd]_q &
      A \otimes B \ar[dd]^q  \\
      \On{n} \ar[ru]^{\nu(\nabla)} \ar[rd]_-{\Delta} \\
      & \On{n} \times \On{n} \ar[r]_{x \times y} &
      A \times B \pbox{,} \\
    }
  \]
  où $\Delta$ désigne le \oo-foncteur diagonal, est commutatif. Le carré de
  ce diagramme étant commutatif par naturalité de $q$, il suffit de montrer
  que les deux triangles
  \[
    \xymatrix@R=1pc@C=2.5pc{
      & \On{n} \otimes \On{n} \ar[dd]^{q_1} \\
      \On{n} \ar[ru]^-{\nu(\nabla)} \ar[rd]_{\id{\On{n}}} \\
      & \On{n} \\
    }
    \qquad
    \xymatrix@R=1pc@C=2.5pc{
      & \On{n} \otimes \On{n} \ar[dd]^{q_2} \\
      \On{n} \ar[ru]^-{\nu(\nabla)} \ar[rd]_{\id{\On{n}}} \\
      & \On{n} \\
    }
  \]
  sont commutatifs, ce qui résulte du caractère coünitaire de la cogèbre
  $\On{n}$.

  Montrons maintenant la naturalité de $s$ en $A$ et $B$. Soient $u : A \to
  A'$ et $v : B \to B'$ deux \oo-foncteurs et soit $(x, y)$ un $n$-simplexe
  de $N(A) \times N(B)$. En considérant $s(x, y)$ comme un
  \oo-foncteur de~$\On{n}$ vers $A \otimes B$, on a
  \[
    (u \otimes v)s(x, y) = (u \otimes v)(x \otimes y)\nu(\nabla) =
    (ux \otimes vy)\nu(\nabla) = s(ux, vx),
  \]
  ce qu'on voulait démontrer.
\end{proof}

\begin{remark}
  Il résulte immédiatement de la coassociativité et du caractère coünitaire
  du coproduit $\nu(\nabla)$ que le morphisme
  \[ s : N(A) \times N(B) \to N(A \otimes B) \]
  fait du nerf de Street $N : \ooCat \to \pref{\cDelta}$ un foncteur
  monoïdal lax de $\ooCat$ munie du produit tensoriel vers $\pref{\cDelta}$
  munie du produit cartésien.
\end{remark}

\begin{remark}
  On peut montrer que le morphisme $s : N(A) \times N(B) \to N(A \otimes B)$
  n'est \emph{pas} le nerf d'un \oo-foncteur $A \times B \to A \otimes B$.
  Moralement, le morphisme $s$ correspond à un \oo-foncteur \emph{oplax}.
\end{remark}

\begin{paragraph}\label{paragr:g_phi}
  En particulier, lorsque $C$ est une \oo-catégorie,
  on obtient une section
  \[ s : \Deltan{1} \times N(C) \to N(\Dn{1} \otimes C), \]
  naturelle en $C$, du morphisme
  \[ N(q) : N(\Dn{1} \otimes C) \to \Deltan{1} \times N(C), \]
  compatible aux extrémités au sens où  le diagramme
  \[
    \xymatrix{
      \Deltan{1} \times N(C) \ar[r] & N(\Dn{1} \otimes C) \\
      \{\e\} \times N(C) \ar@{^(->}@<0.7ex>[u] \ar[r]^{\sim} 
      & N(\{\e\} \otimes C) \ar@<-1.7ex>@{^(->}[u] 
    }
  \]
  est commutatif pour $\e = 0, 1$.

  Explicitons le morphisme $s$ dans ce cas. Soit $(\phi, x) : \Deltan{n} \to
  \Deltan{1} \times N(C)$ un $n$\nbd-simplexe de $\Deltan{1} \times N(C)$. Par
  définition, le morphisme $s$ associe à $(\phi, x)$ le $n$-simplexe de
  $N(\Dn{1} \otimes C)$ défini par le composé
  \[
    \On{n} \xto{\nu(\nabla)} \On{n} \otimes \On{n} \xto{\nu\cn(\phi) \otimes x}
    \Dn{1} \otimes C.
  \]
  Autrement dit, en définissant un morphisme
  \[ g_\phi : \cn(\Deltan{n}) \to \cn(\Deltan{1}) \otimes \cn(\Deltan{n}) \]
  par le composé
  \[
    \cn(\Deltan{n}) \xto{\,\,\,\,\nabla\,\,\,\,} \cn(\Deltan{n}) \otimes \cn(\Deltan{n})
    \xto{\cn(\phi) \otimes \cn(\Deltan{n})}
    \cn(\Deltan{1}) \otimes \cn(\Deltan{n}),
  \]
  on a
  \[ s(\phi, x) = (\Dn{1} \otimes x)\nu(g_\phi). \]
  On vérifie que si $(i_0, \dots, i_p)$ est dans la base de $\cn(\Deltan{n})$
  (voir le paragraphe~\ref{paragr:base_cn}) et si $r$ désigne le nombre de
  $0$ parmi $\phi(i_0), \dots, \phi(i_p)$, on a
  \[
     g_\phi(i_0, \dots, i_p) =
     \begin{cases}
       (1) \otimes (i_0, \dots, i_p) & \text{si $r = 0$,} \\
       (0) \otimes (i_0, \dots, i_p) + (01) \otimes (i_1, \dots, i_p) &
       \text{si $r = 1$,} \\
       (0) \otimes (i_0, \dots, i_p) & \text{si $r \ge 2$,} \\
     \end{cases}
  \]
  en convenant que $(i_1, \dots, i_p) = 0$ lorsque $p = 0$.
\end{paragraph}

\begin{proposition}
  Le morphisme
  \[ s :  \Deltan{1} \times N(C) \to N(\Dn{1} \otimes C) \]
  est l'unique section du morphisme
  \[ N(q) : N(\Dn{1} \otimes C) \to \Deltan{1} \times N(C) \]
  qui soit à la fois naturelle en $C$ et compatible aux extrémités au sens
  du paragraphe précédent.
\end{proposition}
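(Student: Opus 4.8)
Le fait que $s$ est une section de $N(q)$ naturelle en $C$ et compatible aux extrémités résulte des deux propositions précédentes et de la discussion du paragraphe~\ref{paragr:g_phi} ; le seul point à établir est l'unicité. Soit donc $s'$ une section de $N(q)$ naturelle en $C$ et compatible aux extrémités. Je commencerais par me ramener à un cas universel grâce à la naturalité : pour $C$ une \oo-catégorie et $(\phi, x)$ un $n$-simplexe de $\Deltan1 \times N(C)$, le \oo-foncteur $x \colon \On n \to C$ étant vu comme un $n$-simplexe de $N(C)$, on a $(\phi, x) = (\Deltan1 \times N(x))(\phi, \id_{\On n})$, d'où $s'(\phi, x) = N(\Dn1 \otimes x)\bigl(s'(\phi, \id_{\On n})\bigr)$, et de même pour $s$. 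Il suffit donc de prouver l'égalité $s'(\phi, \id_{\On n}) = s(\phi, \id_{\On n})$ pour tout $n \ge 0$ et tout morphisme $\phi \colon \Deltan n \to \Deltan1$ de $\cDelta$.

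Ensuite, je transporterais la question dans la catégorie $\Cda$ des complexes dirigés augmentés. On dispose de l'isomorphisme canonique $\On n \simeq \nu\cn(\Deltan n)$ (paragraphe~\ref{paragr:def_cn}) et, le foncteur $\nu$ restreint aux complexes de Steiner forts étant monoïdal pour le produit tensoriel (théorème~\ref{thm:prod_tens}), de l'isomorphisme $\Dn1 \otimes \On n \simeq \nu\bigl(\cn(\Deltan1) \otimes \cn(\Deltan n)\bigr)$ ; les complexes $\cn(\Deltan n)$ et $\cn(\Deltan1) \otimes \cn(\Deltan n)$ sont de Steiner forts (propositions~\ref{prop:joint_Steiner} et~\ref{prop:tens_Steiner}). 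Par pleine fidélité de $\nu$ sur ces complexes (théorème~\ref{thm:Steiner}), le $n$-simplexe $s'(\phi, \id_{\On n})$ de $N(\Dn1 \otimes \On n)$, c'est-à-dire un \oo-foncteur $\On n \to \Dn1 \otimes \On n$, correspond à un morphisme de complexes dirigés augmentés $\widetilde{s'_\phi} \colon \cn(\Deltan n) \to \cn(\Deltan1) \otimes \cn(\Deltan n)$. La condition que $s'$ soit une section de $N(q)$ se traduit alors, compte tenu de la description de $q$ dans $\Cda$ (il est induit dans chaque facteur par la coünité $\cn(\Deltan k) \to \cn(\Deltan0)$ de la structure de cogèbre), par le fait que les composés de $\widetilde{s'_\phi}$ avec les deux projections canoniques $\cn(\Deltan1) \otimes \cn(\Deltan n) \to \cn(\Deltan1)$ et $\cn(\Deltan1) \otimes \cn(\Deltan n) \to \cn(\Deltan n)$ valent respectivement $\cn(\phi)$ et $\id_{\cn(\Deltan n)}$ ; la compatibilité aux extrémités se traduit par le fait que, lorsque $\phi$ est le $n$-simplexe constant de valeur $\e \in \{0, 1\}$, le morphisme $\widetilde{s'_\phi}$ est l'inclusion $\cn(\Deltan n) \simeq \cn(\Deltan0) \otimes \cn(\Deltan n) \to \cn(\Deltan1) \otimes \cn(\Deltan n)$ induite par $\{\e\} \hookto \Deltan1$.

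Il resterait alors à vérifier que ces contraintes, jointes à la simplicialité de $s'$ — qui relie les morphismes $\widetilde{s'_\phi}$ pour les différents $\phi$ \emph{via} les opérateurs simpliciaux, tant du côté de la coordonnée $\Deltan1$ que du côté de la coordonnée $N(C)$ —, forcent $\widetilde{s'_\phi} = g_\phi$, où $g_\phi = (\cn(\phi) \otimes \cn(\Deltan n)) \circ \nabla$ est le morphisme du paragraphe~\ref{paragr:g_phi}. Je reformulerais cela en disant que la famille des $\widetilde{s'_\phi}$ provient d'un unique morphisme naturel de « diagonale oplax » $\On\bullet \to \On\bullet \otimes \On\bullet$, nécessairement coünitaire, et l'établirais par récurrence sur la dimension, en exploitant l'ordre $\leN$ sur la base du complexe de Steiner fort $\cn(\Deltan1) \otimes \cn(\Deltan n)$ ainsi que la positivité, et en amorçant la récurrence à l'aide de la compatibilité aux extrémités sur les valeurs extrêmes de $\phi$, jusqu'à retrouver la formule explicite de $g_\phi$.

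Le point délicat est précisément cette dernière étape : les conditions « section » et « compatible aux extrémités » ne suffisent pas à elles seules à déterminer un morphisme de complexes dirigés augmentés $\cn(\Deltan n) \to \cn(\Deltan1) \otimes \cn(\Deltan n)$ respectant les deux projections — déjà pour $n = 1$ et $\phi = \id_{\Deltan1}$ il existe deux tels morphismes positifs, correspondant aux deux façons de parcourir le carré de Gray $\Dn1 \otimes \Dn1$. C'est la simplicialité de $s'$, c'est-à-dire la cohérence entre les $\widetilde{s'_\phi}$ pour les faces (y compris du côté de la variable $N(C)$), qui doit intervenir de façon essentielle pour éliminer les « mauvaises » diagonales ; le cœur combinatoire de la preuve consiste à montrer que ces compatibilités sélectionnent exactement la diagonale d'Alexander–Whitney $g_\phi$, et donc que $s' = s$.
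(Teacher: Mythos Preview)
Ton approche est exactement celle de l'article : réduction au cas universel par naturalité, transport dans $\Cda$ par pleine fidélité de $\nu$, traduction des trois contraintes (section, naturalité, extrémités) en propriétés des morphismes $g'_\phi : \cn(\Deltan{n}) \to \cn(\Deltan{1}) \otimes \cn(\Deltan{n})$, puis récurrence sur $n$. Tu identifies même correctement le point délicat : pour $n = 1$ et $\phi = \id{\Deltan{1}}$, il y a deux morphismes positifs candidats, correspondant aux deux chemins dans le carré de Gray.

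Ce qui te manque est le mécanisme concret par lequel la simplicialité élimine la mauvaise solution, et c'est là que l'article fait quelque chose d'un peu inattendu : un argument de \emph{look-ahead}. Après avoir constaté que la seconde solution $g'_{\id{}}(01) = (1) \otimes (01) + (01) \otimes (0)$ satisfait toutes les contraintes au niveau $n = 1$, l'article la propage par naturalité au niveau $n = 2$ (pour $\psi : \Deltan{2} \to \Deltan{1}$ envoyant $0, 1 \mapsto 0$ et $2 \mapsto 1$), calcule $dg'_\psi(012)$ à l'aide des valeurs déjà connues sur les faces, et montre qu'aucun élément \emph{positif} $x \in \cn(\Deltan{2})^\ast_1$ ne peut résoudre l'équation résultante. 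C'est donc la positivité au cran supérieur, et non une propriété intrinsèque au niveau $n = 1$, qui exclut la mauvaise diagonale. Une fois $n = 1$ réglé, le cas $n > 1$ se traite par une analyse directe selon le nombre $r$ de zéros parmi les $\phi(i)$, la positivité suffisant à chaque fois à forcer l'unique solution. L'ordre $\leN$ que tu mentionnes n'intervient pas ; tout repose sur la positivité des sous-monoïdes.
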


\begin{proof}
  Soit $s'$ une seconde section de $N(q)$ vérifiant les conditions de l'énoncé.
  Soit $\phi : \Deltan{n} \to \Deltan{1}$ un $n$-simplexe de $\Deltan{1}$.
  Considérons le $n$\nbd-simplexe $(\phi, \id{\On{n}})$ de $\Deltan{1}
  \times N(\On{n})$. On en déduit un \oo-foncteur
  \[ s'(\phi, \id{\On{n}}) : \On{n} \to \Dn{1} \otimes \On{n}. \]
  Si $x$ est un $n$-simplexe de $N(C)$, alors, par naturalité de $s'$, on a
  \[ s'(\phi, x) = (\Dn{1} \otimes x)s'(\phi, \id{\On{n}}) \]
  et les $s'(\phi, \id{\On{n}})$ déterminent donc $s'$. Par ailleurs,
  puisqu'on a des isomorphismes
  \[
   \nu\cn(\Deltan{n}) \simeq \On{n}
   \quadet
    \nu(\cn(\Deltan{1}) \otimes \cn(\Deltan{n})) 
    \simeq \Dn{1} \otimes \On{n}
  \]
  et que les complexes $\cn(\Deltan{n})$ et $\cn(\Deltan{1}) \otimes
  \cn(\Deltan{n})$ sont de Steiner forts (voir le
  paragraphe~\ref{paragr:def_cn} et la proposition~\ref{prop:tens_Steiner}),
  par pleine fidélité du foncteur $\nu$ restreint aux complexes de Steiner
  forts (théorème \ref{thm:Steiner}), on obtient l'existence d'un unique
  morphisme de complexes dirigés augmentés
  \[
    g'_\phi : \cn(\Deltan{n}) \to \cn(\Deltan{1}) \otimes \cn(\Deltan{n})
  \]
  tel que $s(\phi, \id{\On{n}}) = \nu(g'_\phi)$. Quand on applique ces
  considérations à la section $s$ de l'énoncé, on obtient le morphisme
  $g_\phi$ du paragraphe~\ref{paragr:g_phi}. Pour conclure, il suffit donc
  de montrer l'égalité $g'_\phi = g_\phi$.

  Les propriétés de la section $s'$ se traduisent de la manière suivante sur
  les $g'_\phi$ :
  \begin{enumerate}[label=(\alph*)]
    \item\label{item:sec} les triangles
  \[
    \xymatrix@R=1pc@C=2.5pc{
      & \cn(\Deltan{1}) \otimes \cn(\Deltan{n}) \ar[dd]^{q_1} \\
      \cn(\Deltan{n}) \ar[ru]^{g'_\phi} \ar[rd]_{\cn(\phi)} \\
      & \cn(\Deltan{1}) \\
    }
    \qquad
    \xymatrix@R=1pc@C=2.5pc{
      & \cn(\Deltan{1}) \otimes \cn(\Deltan{n}) \ar[dd]^{q_2} \\
      \cn(\Deltan{n}) \ar[ru]^{g'_\phi} \ar[rd]_{\id{\cn(\Deltan{n})}} \\
      & \cn(\Deltan{n}) \pbox{,} \\
    }
  \]
      où $q_1$ et $q_2$ désignent les deux projections, sont commutatifs ;
    \item\label{item:nat} le carré
      \[
        \xymatrix{
          \cn(\Deltan{n'}) \ar[r]^-{g'_{\phi'}} \ar[d]_{\cn(\psi)} &
          \cn(\Deltan{1}) \otimes \cn(\Deltan{n'})
          \ar[d]^{\cn(\Deltan{1}) \otimes \cn(\psi)} \\
          \cn(\Deltan{n}) \ar[r]_-{g'_{\phi}} & \cn(\Deltan{1}) \otimes
          \cn(\Deltan{n}) \pbox{,}
        }
      \]
      où $\phi : \Deltan{n} \to \Deltan{1}$ est un $n$-simplexe de
      $\Deltan{1}$, $\psi : \Deltan{n'} \to \Deltan{n}$ est un morphisme
      de~$\cDelta$ et $\phi ' = \phi\psi$, est commutatif ;
    \item\label{item:extr} si $\phi = \{\e\}$ est l'application constante de
      valeur $\e = 0,1$, alors $g'_\phi$ s'identifie à l'inclusion $\{\e\}
      \otimes \cn(\Deltan{n}) \hookto \cn(\Deltan{1}) \otimes
      \cn(\Deltan{n})$.
  \end{enumerate}

  On va montrer par récurrence sur $n \ge 0$ que, pour tout $\phi :
  \Deltan{n} \to \Deltan{1}$, on a l'égalité~$g'_\phi = g_\phi$. Pour $n =
  0$, le morphisme $\phi$ est nécessairement constant et la valeur de
  $g'_\phi$ est imposée par la propriété~\ref{item:extr}. Soit $n > 0$. En
  vertu de la propriété~\ref{item:nat} et de l'hypothèse de récurrence, on a
  $g'_\phi(i_0, \dots, i_p) = g_\phi(i_0, \dots, i_p)$ dès que $p < n$. Pour
  conclure, il suffit donc de montrer l'égalité $g'_\phi(0, \dots, n) =
  g_\phi(0, \dots, n)$. Notons qu'on a
  \[
    dg'_\phi(0, \dots, n) = g'_\phi d(0, \dots, n) = g_\phi d(0, \dots, n) =
    dg_\phi(0, \dots, n).
  \]
  Par ailleurs, la commutativité du deuxième triangle de la
  propriété~\ref{item:sec} montre que
  \[
    g'_\phi(0, \dots, n) = (\epsilon) \otimes (0, \dots, n) + (01) \otimes x
  \]
  pour $\epsilon = 0, 1$ et $x$ un élément de $\cn(\Deltan{n})^\ast_{n-1}$.
  Ainsi, on a
  \[
    dg'_\phi(0, \dots, n) = (\epsilon) \otimes d(0, \dots, n) + (1) \otimes x
      - (0) \otimes x - (01) \otimes dx,
  \]
  en convenant que $dx = 0$ dans le cas $n = 1$, et cette expression est
  égale à~$dg_\phi(0, \dots, n)$.

  Commençons par traiter le cas $n = 1$. Si l'application $\phi$ est
  constante, la valeur de~$g'_\phi$ est de nouveau imposée par la
  propriété~\ref{item:extr}. Il reste donc à considérer le cas où $\phi$ est
  l'identité de $\Deltan{1}$. Dans ce cas, on a
  \[ g_\phi(01) = (0) \otimes (01) + (01) \otimes (1) \]
  et donc
  \[
    \begin{split}
      dg_\phi(01)
      & = (0) \otimes (1) - (0) \otimes (0) + (1) \otimes (1) - (0) \otimes
        (1) \\
      & = (1) \otimes (1) - (0) \otimes (0).
    \end{split}
  \]
  Ainsi, pour déterminer $g'_\phi(01)$, on est conduit à résoudre l'équation
  \[
    (\e) \otimes (1) - (\e) \otimes (0) + (1) \otimes x - (0) \otimes x
    = (1) \otimes (1) - (0) \otimes (0).
  \]
  Si $\e = 0$, alors, par identification, on a $x = (1)$ et on trouve bien
  $g'_\phi(01) = g_\phi(01)$. Si $\e = 1$, alors, toujours par
  identification, on a $x = (0)$ et
  \[
    g'_\phi(01) = (1) \otimes (01) + (01) \otimes (0)
  \]
  est une seconde solution de l'équation. Néanmoins, on va montrer que cette
  formule pour~$g'_\phi(01)$ est en contradiction avec le cas $n = 2$.
  Considérons le morphisme $\psi : \Deltan{2} \to \Deltan{1}$ envoyant $0$
  et $1$ sur $0$, et $2$ sur $1$. En utilisant cette valeur de
  $g'_\phi(01)$, on aurait, en vertu des conditions~\ref{item:nat}
  et~\ref{item:extr},
  \[
    \begin{split}
      dg'_\psi(012) & = g'_\psi(12) -
      g'_\psi(02) + g'_\psi(01) \\
      & = \big((1) \otimes (12) + (01) \otimes (1)\big) - \big((1) \otimes
        (02) + (01) \otimes (0)\big) + (0) \otimes (01) \\
      & = (0) \otimes (01) + (1) \otimes \big((12) - (02)\big) + (01) \otimes
      \big((1) - (0)\big)
    \end{split}
  \]
  et cette expression vaudrait
  \[
    (\e) \otimes d(012) + (1) \otimes x - (0) \otimes x - (01) \otimes dx.
  \]
  Si $\e = 0$, par identification, on aurait $x = (12) - (02)$, ce qui est
  impossible puisque cet élément n'est pas positif au sens où il
  n'appartient pas au sous-monoïde de positivité~$\cn(\Deltan{2})_1^\ast$.
  De même, si $\e = 1$, on aurait $x = -(01)$, ce qui est également
  impossible. Ainsi, la seconde formule considérée pour $g'_\phi(01)$ n'est
  pas correcte et on a bien établi le cas $n = 1$ de notre récurrence.

  Passons maintenant au cas $n > 1$. Notons $r$ le nombre de $0$ parmi les
  entiers~$\phi(0), \dots, \phi(n)$. On distingue trois cas suivant la
  définition de $g_\phi$ :
  \begin{itemize}
    \item Si $r = 0$, alors 
      \[
        g_\phi(0, \dots, n) = (1) \otimes (0, \dots, n)
      \]
      et donc
      \[
        dg_\phi(0, \dots, n) = (1) \otimes d(0, \dots, n).
      \]
      Il suffit de montrer que l'équation
      \[
        (\epsilon) \otimes d(0, \dots, n) + (1) \otimes x
        - (0) \otimes x - (01) \otimes dx = (1) \otimes d(0, \dots, n)
      \]
      admet pour unique solution $\e = 1$ et $x = 0$. Si $\e = 0$, alors,
      par identification,~$x = d(0, \dots, n)$, ce qui est impossible
      puisque cet élément n'est pas positif. Si $\e = 1$, alors, toujours
      par identification, $x = 0$, ce qu'on voulait démontrer.
    \item Si $r = 1$, alors
      \[
        g_\phi(0, \dots, n) = (0) \otimes (0, \dots, n) + (01) \otimes (1,
        \dots, n)
      \]
      et donc $dg_\phi(0, \dots, n)$ est égal à
      \[
        (0) \otimes d(0, \dots, n) + (1) \otimes (1, \dots, n) - (0) \otimes
        (1, \dots, n) - (01) \otimes d(1, \dots, n).
      \]
      On cherche donc à résoudre l'équation affirmant l'égalité de cette
      dernière expression et de
      \[
        (\epsilon) \otimes d(0, \dots, n) + (1) \otimes x - (0) \otimes x -
        (01) \otimes dx.
      \]
      Si $\e = 1$, alors $x = (1, \dots, n) - d(0, \dots, n)$, ce qui est
      impossible puisque cet élément n'est pas positif (car $n > 1$). Si $\e
      = 0$, alors $x = (1, \dots, n)$, ce qu'on voulait démontrer.
    \item Si $r \ge 2$, alors
      \[
        g_\phi(0, \dots, n) = (0) \otimes (0, \dots, n)
      \]
      et donc
      \[
        dg_\phi(0, \dots, n) = (0) \otimes d(0, \dots, n).
      \]
      Ainsi, il s'agit de résoudre l'équation
      \[
        (\epsilon) \otimes d(0, \dots, n) + (1) \otimes x
        - (0) \otimes x - (01) \otimes dx = (0) \otimes d(0, \dots, n).
      \]
      Si $\e = 1$, alors $x = -d(0, \dots, n)$ qui n'est pas positif. Si $\e
      = 0$, alors $x = 0$, ce qui achève la démonstration de la proposition.
      \qedhere
  \end{itemize}
\end{proof}

\begin{paragraph}
  Soient $u, v : C \to D$ deux \oo-foncteurs et $\alpha$ une transformation
  oplax de~$u$ vers $v$. On définit une homotopie simpliciale $N(\alpha) :
  \Deltan{1} \times N(C) \to N(D)$ de $N(u)$ vers
  $N(v)$ en composant
  \[
    \Deltan{1} \times N(C) \xto{\,\,s\,\,} N(\Dn{1} \otimes C)
    \xto{N(\alpha)} N(D).
  \]
\end{paragraph}

\begin{theorem}\label{thm:nerf_trans_oplax}
  Le morphisme $N(\alpha) : \Deltan{1} \times N(C) \to N(D)$ est bien
  une homotopie simpliciale de~$N(u)$ vers $N(v)$.
\end{theorem}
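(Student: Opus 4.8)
Le principe de la preuve est de revenir à la définition de l'homotopie $N(\alpha)$ et d'invoquer la compatibilité aux extrémités de la section $s$ établie au paragraphe~\ref{paragr:g_phi}. Rappelons d'une part que, par définition, une homotopie simpliciale de $N(u)$ vers $N(v)$ est un morphisme d'ensembles simpliciaux $H : \Deltan{1} \times N(C) \to N(D)$ dont la restriction à $\{0\} \times N(C)$, identifié à $N(C)$, vaut $N(u)$ et dont la restriction à $\{1\} \times N(C)$, identifié à $N(C)$, vaut $N(v)$ ; rappelons d'autre part que, par définition, $N(\alpha)$ est le composé $\Deltan{1} \times N(C) \xto{\,s\,} N(\Dn{1} \otimes C) \xto{N(\alpha)} N(D)$, la seconde flèche étant le nerf du \oo-foncteur $\alpha : \Dn{1} \otimes C \to D$. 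Il s'agira donc de calculer les restrictions de ce composé à $\{\e\} \times N(C)$ pour $\e = 0, 1$.

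On commencerait par restreindre le composé ci-dessus le long de l'inclusion $\{\e\} \times N(C) \hookto \Deltan{1} \times N(C)$. D'après le carré commutatif figurant à la fin du paragraphe~\ref{paragr:g_phi}, la restriction de $s$ à $\{\e\} \times N(C)$, modulo les identifications canoniques $\{\e\} \times N(C) \simeq N(C) \simeq N(\{\e\} \otimes C)$, coïncide avec le morphisme $N(\iota_\e \otimes C)$ induit par l'inclusion $\iota_\e : \Dn{0} \to \Dn{1}$ du sommet $\e$ de $\Dn{1}$. Par fonctorialité du nerf de Street, la restriction de $N(\alpha)$ à $\{\e\} \times N(C)$ est donc égale à $N(\alpha) \circ N(\iota_\e \otimes C) = N\big(\alpha \circ (\iota_\e \otimes C)\big)$.

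Il ne resterait plus qu'à identifier ces deux \oo-foncteurs. Comme $0$ et $1$ sont respectivement la source et le but de la cellule principale de $\Dn{1}$, l'inclusion $\iota_0$ (resp. $\iota_1$) du sommet $0$ (resp. $1$) n'est autre que le \oo-foncteur $\sigma$ (resp. $\tau$) du paragraphe~\ref{paragr:def_Dn}. Or, par définition même d'une transformation oplax de $u$ vers $v$ (paragraphe~\ref{paragr:trans_oplax}), on a $\alpha \circ (\sigma \otimes C) = u$ et $\alpha \circ (\tau \otimes C) = v$. Par conséquent, la restriction de $N(\alpha)$ à $\{0\} \times N(C)$ vaut $N(u)$ et sa restriction à $\{1\} \times N(C)$ vaut $N(v)$, ce qui est précisément l'assertion à démontrer.

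Il n'y a pas ici de véritable obstacle : l'essentiel du travail a déjà été accompli dans la construction et l'étude de la section $s$ — en particulier dans sa compatibilité aux extrémités — et la démonstration se réduit à un assemblage. Le seul point demandant un peu de soin est de rester cohérent dans les conventions d'orientation (quelle extrémité de $\Deltan{1}$ et de $\Dn{1}$ joue le rôle de la source, et la correspondance avec $\sigma$ et $\tau$), afin que l'homotopie obtenue aille bien de $N(u)$ vers $N(v)$ et non dans le sens opposé.
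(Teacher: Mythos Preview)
Your proof is correct and follows essentially the same approach as the paper's own proof: both reduce the verification to the compatibility of the section $s$ with the extremities established in paragraph~\ref{paragr:g_phi}, then conclude using the defining property of $\alpha$ as an oplax transformation. Your version is simply more explicit in spelling out the identification of the endpoint inclusions with $\sigma$ and $\tau$ and the resulting equalities $\alpha\circ(\sigma\otimes C)=u$ and $\alpha\circ(\tau\otimes C)=v$, which the paper leaves implicit in its diagram.
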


\begin{proof}
  Il nous suffit vérifier que l'homotopie simpliciale $N(\alpha)$ a bien
  pour source $N(u)$ et pour but $N(v)$. Cela résulte de la compatibilité de
  la section $s$ aux extrémités et plus précisément de la commutativité du
  diagramme
  \[
    \xymatrix{
      \Deltan{1} \times N(C) \ar[r]^s & N(\Dn{1} \otimes C)
        \ar[r]^-{N(\alpha)} & D \\
      \{\e\} \times N(C) \ar@{^(->}@<0.7ex>[u] \ar[r]^{\sim} 
      & N(\{\e\} \otimes C) \ar@<-1.7ex>@{^(->}[u] 
    }
  \]
  pour $\e = 0, 1$.
\end{proof}

\begin{remark}
  On prendra garde au fait que si $\alpha : \Dn{1} \otimes C \to D$ est une
  transformation oplax, alors $N(\alpha)$ désigne \forlang{a priori} deux objets
  distincts : d'une part, l'homotopie simpliciale du théorème précédent et,
  d'autre part, le nerf du \oo-foncteur~$\alpha$ qui est un morphisme
  simplicial de $N(\Dn{1} \otimes C)$ vers $N(D)$. Dans la suite, sauf
  mention expresse du contraire, c'est toujours le premier objet, à savoir
  l'homotopie simpliciale, qui sera désigné par la notation $N(\alpha)$.
\end{remark}

\begin{corollary}\label{coro:retr_Thom}
  Soit $u : C \to D$ un \oo-foncteur. On suppose qu'il existe un
  \oo-foncteur $v : D \to C$ et des transformations oplax entre, d'une part,
  $vu$ et $\id{C}$ et, d'autre part, $uv$ et $\id{D}$. Alors $N(u)$ est une
  équivalence d'homotopie et, en particulier, $u$ est une équivalence de
  Thomason.
\end{corollary}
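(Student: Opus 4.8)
The plan is to read the statement off directly from Theorem~\ref{thm:nerf_trans_oplax}, which converts an oplax transformation into a simplicial homotopy between the Street nerves of its two $\infty$\nobreakdash-functors.

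First I would apply that theorem to the oplax transformation relating $vu$ and $\id{C}$: irrespective of its direction, it provides a simplicial homotopy $\Deltan{1} \times N(C) \to N(C)$ whose two faces are $N(vu)$ and $N(\id{C})$. By functoriality of the Street nerve one has $N(vu) = N(v) \circ N(u)$ and $N(\id{C}) = \id{N(C)}$, so this is a simplicial homotopy between $N(v) \circ N(u)$ and $\id{N(C)}$. Applying the theorem to the oplax transformation relating $uv$ and $\id{D}$ yields, in the same way, a simplicial homotopy between $N(u) \circ N(v)$ and $\id{N(D)}$. Thus $N(v)$ is a two-sided simplicial homotopy inverse of $N(u)$; in particular $N(u)$ is a simplicial homotopy equivalence.

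It then remains to upgrade this to the two conclusions of the statement. For any simplicial set $X$ there is a natural map $[0,1] \times |X| = |\Deltan{1}| \times |X| \to |\Deltan{1} \times X|$, so the topological realization functor carries simplicial homotopies to topological homotopies; hence $|N(u)|$ and $|N(v)|$ are mutually inverse topological homotopy equivalences, and $|N(u)|$ is a homotopy equivalence, which is what ``$N(\alpha)$ étant une équivalence d'homotopie'' means for the simplicial morphism $N(u)$. By the definition recalled in~\ref{paragr:def_orient}, the fact that $N(u)$ is a weak equivalence of simplicial sets is precisely the assertion that $u$ is a Thomason equivalence, which concludes the argument.

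The only subtlety worth flagging is that simplicial homotopy is a priori neither symmetric nor transitive, so one should not try to manipulate the two simplicial homotopies directly at the simplicial level; passing to realizations, as above, sidesteps this entirely, and everything else is routine bookkeeping with the functoriality of $N$ and of the realization functor.
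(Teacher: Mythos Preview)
Your argument is correct and is exactly the approach of the paper, whose proof consists of the single sentence ``Cela r\'esulte imm\'ediatement de la proposition pr\'ec\'edente'' (namely Theorem~\ref{thm:nerf_trans_oplax}); you have merely spelled out the routine details. One small slip: you quote the statement as ``$N(\alpha)$ \'etant une \'equivalence d'homotopie'', but the conclusion concerns $N(u)$, not $N(\alpha)$.
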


\begin{proof}
  Cela résulte immédiatement de la proposition précédente.
\end{proof}

\begin{proposition}\label{prop:nerf_trans_str}
  Soient $u, v$ deux \oo-foncteurs, $h : \Dn{1} \times C \to D$ une
  transformation stricte de $u$ vers $v$ et $\alpha_h$ la transformation
  oplax associée à $h$ (voir le paragraphe~\ref{paragr:trans_str}). Alors
  les homotopies simpliciales $N(\alpha_h)$ et $N(h)$ coïncident.
\end{proposition}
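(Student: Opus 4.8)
Le plan est de ramener les deux membres à un même morphisme $\Deltan{1} \times N(C) \to N(D)$, le seul ingrédient non trivial étant le fait, déjà établi dans cet appendice, que $s$ est une section de $N(q)$.

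D'abord, je rappellerais que, d'après le paragraphe~\ref{paragr:trans_str}, la transformation oplax $\alpha_h$ est par définition le \oo-foncteur composé $\Dn{1} \otimes C \xto{q} \Dn{1} \times C \xto{h} D$, où $q$ est le \oo-foncteur du paragraphe~\ref{paragr:def_q}. Par fonctorialité du nerf, l'homotopie simpliciale $N(\alpha_h)$, qui est par définition le composé $\Deltan{1} \times N(C) \xto{s} N(\Dn{1} \otimes C) \xto{N(\alpha_h)} N(D)$ — la seconde flèche désignant ici le nerf du \oo-foncteur $\alpha_h$ —, se factorise donc en
\[
  \Deltan{1} \times N(C) \xto{\,s\,} N(\Dn{1} \otimes C) \xto{N(q)} N(\Dn{1} \times C) \xto{N(h)} N(D),
\]
où $N(q)$ et $N(h)$ désignent maintenant les nerfs des \oo-foncteurs $q$ et $h$. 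Par ailleurs, le nerf commutant aux produits finis et $N(\Dn{1})$ étant égal à $N(\On{1}) = \Deltan{1}$, l'homotopie simpliciale $N(h)$ figurant dans l'énoncé est par définition le composé de l'isomorphisme canonique $\Deltan{1} \times N(C) \simeq N(\Dn{1} \times C)$ avec le nerf $N(h) : N(\Dn{1} \times C) \to N(D)$. Il suffit donc de vérifier que le composé
\[
  \Deltan{1} \times N(C) \xto{\,s\,} N(\Dn{1} \otimes C) \xto{N(q)} N(\Dn{1} \times C)
\]
coïncide avec cet isomorphisme canonique.

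Pour conclure, j'observerais que le morphisme $N(q) : N(\Dn{1} \otimes C) \to \Deltan{1} \times N(C)$ apparaissant dans la proposition de cet appendice affirmant que $s : \Deltan{1} \times N(C) \to N(\Dn{1} \otimes C)$ en est une section est, par construction, le composé du nerf du \oo-foncteur $q : \Dn{1} \otimes C \to \Dn{1} \times C$ avec l'isomorphisme canonique $N(\Dn{1} \times C) \simeq \Deltan{1} \times N(C)$. Cette proposition affirme donc exactement que le composé
\[
  \Deltan{1} \times N(C) \xto{\,s\,} N(\Dn{1} \otimes C) \xto{N(q)} N(\Dn{1} \times C) \xto{\,\sim\,} \Deltan{1} \times N(C)
\]
est l'identité ; la dernière flèche étant un isomorphisme, il en résulte que le composé des deux premières flèches est l'isomorphisme canonique $\Deltan{1} \times N(C) \simeq N(\Dn{1} \times C)$, ce qu'il restait à démontrer. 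L'unique point demandant un peu d'attention est de bien distinguer les deux sens du symbole $N(h)$ — le nerf du \oo-foncteur $h$ d'une part, l'homotopie simpliciale d'autre part, comme le signale la remarque ci-dessus — ainsi que le double sens analogue de $N(q)$ ; aucune difficulté réelle n'apparaît au-delà de cette mise en place, le seul ingrédient substantiel étant la propriété de section de $s$.
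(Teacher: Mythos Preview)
Your proof is correct and follows exactly the same route as the paper: you unfold $\alpha_h = h \circ q$, write $N(\alpha_h)$ as the composite $\Deltan{1} \times N(C) \xto{s} N(\Dn{1} \otimes C) \xto{N(q)} N(\Dn{1} \times C) \xto{N(h)} N(D)$, and then invoke that $s$ is a section of $N(q)$. The paper's proof is the same argument in two sentences, treating the canonical identification $\Deltan{1} \times N(C) \simeq N(\Dn{1} \times C)$ more implicitly than you do.
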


\begin{proof}
  Par définition, l'homotopie simpliciale $N(\alpha_h)$ est donnée par le composé
  \[ \Deltan{1} \times N(C) \xto{\,\,s\,\,}
    N(\Dn{1} \otimes C) \xto{N(q)} N(\Dn{1} \times C)
    \xto{\,N(h)\,} N(D).
  \]
  Puisque $s$ est une section de $N(q)$, ce composé n'est autre que $N(h)$,
  ce qu'on voulait démontrer.
\end{proof}

\begin{proposition}
  Soient $v_0, v_1 : C \to D$ deux \oo-foncteurs et $\alpha$ une
  transformation oplax de $v_0$ vers $v_1$.
  \begin{enumerate}
    \item Si $u : B \to C$ est un \oo-foncteur, alors on a
     \[ N(\alpha \comp u) = N(\alpha)(\Delta_1 \times N(u)). \]
    \item Si $w : D \to E$ est un \oo-foncteur, alors on a
     \[ N(w \comp \alpha) = N(w)N(\alpha). \]
  \end{enumerate} 
\end{proposition}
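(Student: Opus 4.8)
The plan is to reduce both identities to the functoriality of the Street nerve $N \colon \ooCat \to \pref{\cDelta}$ together with the naturality of the section $s$ in its two variables, after unravelling the definition of the simplicial homotopy attached to a transformation oplax. Throughout, I write $\alpha \colon \Dn{1} \otimes C \to D$ for the \oo-foncteur underlying $\alpha$, so that by the paragraph preceding Theorem~\ref{thm:nerf_trans_oplax} the homotopy $N(\alpha)$ is the composite $\Deltan{1} \times N(C) \xto{s} N(\Dn{1} \otimes C) \xto{N(\alpha)} N(D)$; in this last expression $N(\alpha)$ denotes the image under $N$ of the \oo-foncteur $\alpha$, and I keep this ambiguity of notation under control exactly as in the remark following that theorem.

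For part~(a), recall from paragraph~\ref{paragr:def_sesqui_oplax} that the \oo-foncteur underlying $\alpha \comp u$ is the composite $\Dn{1} \otimes B \xto{\Dn{1} \otimes u} \Dn{1} \otimes C \xto{\alpha} D$. Hence, by definition of the associated homotopy and functoriality of $N$, the homotopy $N(\alpha \comp u)$ equals $N(\alpha) \circ N(\Dn{1} \otimes u) \circ s$, where now $s \colon \Deltan{1} \times N(B) \to N(\Dn{1} \otimes B)$. The next step is to apply the naturality of $s$ (the proposition above stating that $s \colon N(A) \times N(B) \to N(A \otimes B)$ is a section of $N(q)$ natural in $A$ and $B$) to the pair of \oo-foncteurs $(\id{\Dn{1}}, u)$: the commutative square it provides reads
\[
  N(\Dn{1} \otimes u) \circ s = s \circ \bigl(N(\id{\Dn{1}}) \times N(u)\bigr) = s \circ \bigl(\id{\Deltan{1}} \times N(u)\bigr),
\]
where on the right $s \colon \Deltan{1} \times N(C) \to N(\Dn{1} \otimes C)$ and I have used $N(\Dn{1}) = \Deltan{1}$ (recall $\Dn{1} = \On{1}$, see paragraph~\ref{paragr:def_orient}). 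Substituting, $N(\alpha \comp u) = N(\alpha) \circ s \circ (\id{\Deltan{1}} \times N(u))$, and since $N(\alpha) \circ s$ is by definition the homotopy $N(\alpha)$, this is exactly the asserted equality $N(\alpha \comp u) = N(\alpha)(\Deltan{1} \times N(u))$.

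For part~(b), the \oo-foncteur underlying $w \comp \alpha$ is the composite $\Dn{1} \otimes C \xto{\alpha} D \xto{w} E$, again by paragraph~\ref{paragr:def_sesqui_oplax}. Therefore, by definition of the associated homotopy and functoriality of $N$, the homotopy $N(w \comp \alpha)$ equals $N(w) \circ N(\alpha) \circ s$, where $N(\alpha)$ here is the image under $N$ of the \oo-foncteur $\alpha$; since $N(\alpha) \circ s$ is the homotopy $N(\alpha)$, this rewrites as $N(w \comp \alpha) = N(w) N(\alpha)$, as claimed.

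I expect no serious obstacle here: the computation is a short diagram chase. The only point deserving care is the bookkeeping between the two meanings of the symbol $N(\alpha)$ --- the nerve of the \oo-foncteur $\Dn{1} \otimes C \to D$ versus the simplicial homotopy built from it --- and, in part~(a), invoking the naturality of $s$ with the correct \oo-foncteurs in each slot ($\id{\Dn{1}}$ on the left, $u$ on the right).
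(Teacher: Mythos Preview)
Your proof is correct and follows exactly the approach the paper itself takes: the paper's own proof is a one-line remark that both identities follow from the defining formulas of $\alpha \comp u$ and $w \comp \alpha$ in paragraph~\ref{paragr:def_sesqui_oplax}, together with the naturality of $s$ for the first. You have simply unpacked that remark in full detail, including the bookkeeping between the two meanings of $N(\alpha)$ and the identification $N(\Dn{1}) \simeq \Deltan{1}$.
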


\begin{proof}
  Ces égalités résultent des formules définissant $\alpha \comp u$ et $w
  \comp \alpha$ (voir le paragraphe~\ref{paragr:def_sesqui_oplax}) et de la
  naturalité de $s$ pour la première.
\end{proof}

\begin{proposition}
  Considérons un diagramme
  \[
    \shorthandoff{;}
    \xymatrix@R=2.5pc@C=2.5pc{
      A \ar[r]^f
      \ar@/_2.5ex/[d]_(.50){\phantom{u'}u}_{}="u"
      \ar@/^2.5ex/[d]^(.50){u'}_{}="up"
      \ar@2"u";"up"^{\alpha}
      &
      C
      \ar@/_2.5ex/[d]_(.50){\phantom{w'}w}_{}="w"
      \ar@/^2.5ex/[d]^(.50){w'}_{}="wp"
      \ar@2"w";"wp"^{\gamma}
      &
      B \ar[l]_g
      \ar@/_2.5ex/[d]_(.50){\phantom{v'}v}_{}="v"
      \ar@/^2.5ex/[d]^(.50){v'}_{}="vp"
      \ar@2"v";"vp"^{\beta}
      \\
      A' \ar[r]_{f'} & C' & B' \ar[l]^{g'}
    }
  \]
  de \oo-catégories, où $\alpha$, $\beta$ et $\gamma$ sont de
  transformations oplax de $u$ vers $u'$, de $v$ vers~$v'$ et de $w$ vers
  $w'$ respectivement, commutatif au
  sens où
  \[ \gamma \comp f = f' \comp \alpha \quadet \gamma \comp g = g' \comp
  \beta. \]
  Alors on a
  \[
    N(\alpha \times_\gamma \beta) = N(\alpha) \times_{N(\gamma)} N(\beta),
  \]
  où $\alpha \times_\gamma \beta$ est la transformation oplax du
  paragraphe~\ref{paragr:img_inv_trans}.
\end{proposition}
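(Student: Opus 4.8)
The plan is to see both sides as simplicial homotopies $\Deltan{1} \times N(A \times_C B) \to N(A' \times_{C'} B')$ and to prove their equality by composing with the two projections of the target fibre product. First recall that the nerf $N$ commutes with fibre products, since each $\Hom_{\ooCat}(\On{n}, {-})$ does and fibre products of simplicial sets are computed levelwise; hence the canonical morphisms
\[
  N(A \times_C B) \to N(A) \times_{N(C)} N(B)
  \quadet
  N(A' \times_{C'} B') \to N(A') \times_{N(C')} N(B')
\]
are isomorphisms, which we treat as identities. Likewise, since the cartesian product $\Deltan{1} \times {-}$ of simplicial sets commutes with fibre products, the right-hand side $N(\alpha) \times_{N(\gamma)} N(\beta)$, formed by analogy with paragraph~\ref{paragr:img_inv_trans}, is just the composite
\[
  \Deltan{1} \times \big(N(A) \times_{N(C)} N(B)\big)
  \xto{\sim}
  \big(\Deltan{1} \times N(A)\big) \times_{\Deltan{1} \times N(C)} \big(\Deltan{1} \times N(B)\big)
  \xto{N(\alpha) \times_{N(\gamma)} N(\beta)}
  N(A') \times_{N(C')} N(B'),
\]
the first arrow being invertible and the second one being induced by the homotopies $N(\alpha)$, $N(\beta)$, $N(\gamma)$ (which makes sense thanks to the compatibility hypotheses of the statement and the preceding proposition). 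As $N(A') \times_{N(C')} N(B')$ is a fibre product, it suffices to check that the two sides become equal after composition with the projections to $N(A')$ and to $N(B')$; by symmetry we only treat the projection to~$N(A')$.

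Write $\pi : A \times_C B \to A$ for the first projection. For the right-hand side, the projection of $N(\alpha) \times_{N(\gamma)} N(\beta)$ to $N(A')$ is, by construction, the composite $N(\alpha) \circ (\Deltan{1} \times N(\pi))$. For the left-hand side, recall from paragraph~\ref{paragr:img_inv_trans} that $\alpha \times_\gamma \beta$, as a \oo-foncteur $\Dn{1} \otimes (A \times_C B) \to A' \times_{C'} B'$, is the composite of the canonical \oo-foncteur $\Dn{1} \otimes (A \times_C B) \to (\Dn{1} \otimes A) \times_{\Dn{1} \otimes C} (\Dn{1} \otimes B)$ with the \oo-foncteur induced on fibre products by $\alpha$, $\beta$ and $\gamma$. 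Composing with the projection $A' \times_{C'} B' \to A'$ and using that the induced \oo-foncteur followed by this projection is $\alpha$ preceded by the projection to $\Dn{1} \otimes A$, and that this latter projection precomposed with the canonical \oo-foncteur is $\Dn{1} \otimes \pi$ (which is the very definition of that canonical \oo-foncteur), one gets that the projection to $A'$ of $\alpha \times_\gamma \beta$ is $\alpha \circ (\Dn{1} \otimes \pi)$. Applying the nerf and precomposing with the section $s$, the projection to $N(A')$ of the homotopy $N(\alpha \times_\gamma \beta)$ is therefore $N(\alpha) \circ N(\Dn{1} \otimes \pi) \circ s$. Now the naturality of the section $s$, applied to $\pi$, gives $N(\Dn{1} \otimes \pi) \circ s = s \circ (\Deltan{1} \times N(\pi))$, so that this projection equals $\big(N(\alpha) \circ s\big) \circ (\Deltan{1} \times N(\pi))$, that is, the homotopy $N(\alpha)$ precomposed with $\Deltan{1} \times N(\pi)$. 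It thus coincides with the projection to $N(A')$ of the right-hand side; the same argument with the second projections gives the equality after projection to $N(B')$, and the result follows.

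The only slightly delicate point is the juggling, via universal properties, with the canonical \oo-foncteur $\Dn{1} \otimes (A \times_C B) \to (\Dn{1} \otimes A) \times_{\Dn{1} \otimes C} (\Dn{1} \otimes B)$, which is not an isomorphism in general; the substantive inputs — the commutation of $N$ with fibre products and the naturality of the section $s$ — are already available, so no genuine obstacle arises and the identity is a formal consequence of the constructions.
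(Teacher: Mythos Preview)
Your proof is correct and follows essentially the same approach as the paper: both verify the equality by checking it after composing with the two projections to $N(A')$ and $N(B')$, using that the projection to $A'$ of the $\infty$-foncteur $\alpha \times_\gamma \beta$ is $\alpha \circ (\Dn{1} \otimes \pi)$ together with the naturality of the section~$s$. The only difference is presentational: the paper argues pointwise, fixing an $n$-simplexe $(\phi, z)$ of $\Deltan{1} \times N(A \times_C B)$ and drawing the relevant commutative diagram at the level of $\Dn{1} \otimes \On{n}$, whereas you phrase the same verification globally as an equality of morphisms of simplicial sets.
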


\begin{proof}
  Notons tout d'abord qu'en vertu de la proposition précédente, le diagramme
  \[
    \xymatrix@C=4pc{
      \Deltan{1} \times N(A) \ar[r]^{\Deltan{1} \times N(f)}
      \ar[d]_{N(\alpha)}
      &
      \Deltan{1} \times N(C) 
      \ar[d]_{N(\gamma)}
      &
      \Deltan{1} \times N(B) \ar[l]_{\Deltan{1} \times N(g)}
      \ar[d]^{N(\beta)}
      \\
      N(A') \ar[r]_{N(f')} & N(C') & N(B') \ar[l]^{N(g')}
    }
  \]
  est commutatif et le produit fibré $N(\alpha) \times_{N(\gamma)}
  N(\beta)$ est donc bien défini. Soit $(\phi, z)$ un $n$-simplexe de
  $\Deltan{1} \times N(A \times_C B)$. Notons $x : \On{n} \to A$ et $y :
  \On{n} \to B$ les composantes de~$z$ de sorte qu'on a $z = (x, y) : \On{n}
  \to A \times_C B$. L'assertion résulte de la commutativité du diagramme
  \[
    \xymatrix{
      & \Dn{1} \otimes A \ar@{=}[r]
      & \Dn{1} \otimes A \ar[r]^\alpha
      & A'
      \\
      \Dn{1} \otimes \On{n} \ar[r]^-{\Dn{1} \otimes z}
      \ar[ur]^{\Dn{1} \otimes x} \ar[dr]_{\Dn{1} \otimes y}
      & \Dn{1} \otimes (A \times_C B) \ar[r] \ar[u] \ar[d]
      &
      (\Dn{1} \otimes A) \times_{\Dn{1} \otimes C} (\Dn{1} \otimes B)
      \ar[r]^-{\alpha \times_\gamma \beta} \ar[u] \ar[d]
      &
      A' \times_{C'} B' \ar[u] \ar[d]
      \\
      & \Dn{1} \otimes B \ar@{=}[r]
      & \Dn{1} \otimes B \ar[r]_\beta
      & B' \pbox{,}
    }
  \]
  où les flèches non nommées sont les flèches canoniques.
\end{proof}

\begin{corollary}\label{coro:nerf_trans_prod_fib}
  Considérons un diagramme
  \[
    \shorthandoff{;:}
    \xymatrix@C=1.5pc@R=3pc{
      & A \ar@/^2ex/[rr]^(.50){u}_{}="1" \ar@/_2ex/[rr]_(.50){u'}_{}="0"
      \ar[dr]_{}="f"_{\phantom{f'}f}
      \ar@2"1";"0"_{\alpha\,}
      & & A' \ar[dl]^{f'} \\
      B \ar[rr]^g && C &
      }
  \]
  de \oo-catégories, commutatif au sens où $f' \comp \alpha = \id{f}$. Alors
  on a 
  \[ N(\alpha \times_C B) = N(\alpha) \times_C B. \]
\end{corollary}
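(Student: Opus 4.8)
Le plan est de déduire cet énoncé de la proposition précédente par spécialisation, une fois identifiés les nerfs des transformations oplax identités en jeu avec des homotopies simpliciales triviales.

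On commence par remarquer que la transformation oplax $\alpha \times_C B$ du paragraphe~\ref{paragr:img_inv_trans} n'est autre que la transformation $\alpha \times_\gamma \beta$ de la proposition précédente dans le cas particulier
\[ B = B',\quad v = v' = \id{B},\quad \beta = \id{\id{B}},\quad C = C',\quad w = w' = \id{C},\quad \gamma = \id{\id{C}}, \]
les deux hypothèses de commutativité $\gamma \comp f = f' \comp \alpha$ et $\gamma \comp g = g' \comp \beta$ de cette proposition se réduisant respectivement à l'hypothèse $f' \comp \alpha = \id{f}$ du corollaire et à l'égalité tautologique $\id{g} = \id{g}$. La proposition précédente fournit alors
\[ N(\alpha \times_C B) = N(\alpha) \times_{N(\id{\id{C}})} N(\id{\id{B}}). \]

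Il reste à voir que $N(\id{\id{C}})$, et de même $N(\id{\id{B}})$, est l'homotopie triviale, à savoir la projection $\Deltan{1} \times N(C) \to N(C)$. Par définition (paragraphe~\ref{paragr:def_sesqui_oplax}), la transformation oplax $\id{\id{C}}$ est le composé de $\kappa \otimes C$ et de l'isomorphisme canonique $\Dn{0} \otimes C \simeq C$ ; en appliquant la naturalité du \oo-foncteur $q$ du paragraphe~\ref{paragr:def_q} au \oo-foncteur $\kappa : \Dn{1} \to \Dn{0}$, on voit que ce \oo-foncteur coïncide avec le composé $\Dn{1} \otimes C \xto{q} \Dn{1} \times C \to C$, où la seconde flèche est la projection canonique (c'est-à-dire le composé de $\kappa \times C$ et de $\Dn{0} \times C \simeq C$). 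Autrement dit, $\id{\id{C}}$ est la transformation oplax associée, au sens du paragraphe~\ref{paragr:trans_str}, à la transformation stricte de $\id{C}$ vers $\id{C}$ donnée par la projection canonique $\Dn{1} \times C \to C$ ; la proposition~\ref{prop:nerf_trans_str} entraîne donc que $N(\id{\id{C}})$ coïncide avec le nerf de cette projection, lequel s'identifie à la projection $\Deltan{1} \times N(C) \to N(C)$ puisque $N$ commute aux produits finis (chaque foncteur $\Hom_{\ooCat}(\On{n}, {-})$ y commutant) et que $N(\Dn{1}) = \Deltan{1}$.

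En combinant ces deux observations, on obtient $N(\alpha \times_C B) = N(\alpha) \times_{N(C)} N(B)$, le changement de base étant pris le long des homotopies triviales sur $N(C)$ et $N(B)$, ce qui est précisément la transformation simpliciale $N(\alpha) \times_C B$, d'où le corollaire. Le seul point demandant un peu d'attention est l'identification de $N(\id{\id{C}})$ avec une projection — en particulier, il faut se garder de confondre cette homotopie avec le nerf du \oo-foncteur $\id{\id{C}}$, comme le met en garde la remarque suivant le théorème~\ref{thm:nerf_trans_oplax} — mais une fois la factorisation à travers $q$ écrite, cela résulte immédiatement de la proposition~\ref{prop:nerf_trans_str}, et il n'y a pas d'obstacle substantiel.
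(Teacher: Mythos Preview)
Your argument is correct and follows exactly the paper's approach: apply the preceding proposition in the degenerate case $\beta=\id{\id{B}}$, $\gamma=\id{\id{C}}$, then identify $N(\id{\id{D}})$ with the projection $\Deltan{1}\times N(D)\to N(D)$ via proposition~\ref{prop:nerf_trans_str}. The only difference is that you spell out in detail why $\id{\id{C}}$ is the oplax transformation associated to the strict projection (the factorisation through $q$ via naturality), whereas the paper simply invokes proposition~\ref{prop:nerf_trans_str} directly.
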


\begin{proof}
  En vertu de la proposition précédente, on a
  \[ N(\alpha \times_C B) = N(\alpha) \times_{N(\id{\id{C}})} N(\id{\id{B}}). \]
  Or, il résulte de la proposition~\ref{prop:nerf_trans_str} que, si $D$ est
  une \oo-catégorie, alors $N(\id{\id{D}})$ est la projection canonique
  $\Deltan{1} \times D \to D$, d'où le résultat.
\end{proof}

\begin{paragraph}
  Soient $u, v : C \to D$ deux \oo-foncteurs et $\alpha$ une transformation
  \emph{lax} de $u$ vers~$v$. On va associer à $\alpha$ une homotopie
  simpliciale $N(\alpha)$ de $N(u)$ vers~$N(v)$. Considérons la
  transformation \emph{oplax} $\alpha^\op$ de $v^\op$ vers $u^\op$
  et $N(\alpha^\op) : \Deltan{1} \times N(C^\op) \to
  N(D^\op)$ l'homotopie simpliciale de $N(v^\op)$ vers $N(u^\op)$ associée.
  En appliquant la dualité simpliciale $X \mapsto X^\op$,  on obtient
  un morphisme $N(\alpha^\op)^\op : \Deltan{1}^\op \times N(C^\op)^\op \to
  N(D^\op)^\op$ qui définit une homotopie simpliciale de~$N(u^\op)^\op$ vers
  $N(v^\op)^\op$. Or, en vertu de la proposition~\ref{prop:nerf_op}, ces
  deux morphismes s'identifient respectivement à $N(u)$ et $N(v)$. Ainsi,
  $N(\alpha^\op)^\op$ est bien une homotopie simpliciale de~$N(u)$ vers
  $N(v)$ et c'est cette homotopie simpliciale qu'on notera~$N(\alpha)$.

  Tous les résultats obtenus dans le cas oplax s'adaptent immédiatement au
  cas lax.
\end{paragraph}

\begin{remark}
  On peut également définir directement $N(\alpha)$, pour $\alpha$ une transformation
  lax, comme on l'a fait dans le cas oplax. Pour ce faire, il suffit de
  remplacer le morphisme $g_\phi :
  \cn(\Deltan{n}) \to \cn(\Deltan{1}) \otimes \cn(\Deltan{n})$ du
  paragraphe~\ref{paragr:g_phi} par le morphisme $g'_\phi :
  \cn(\Deltan{n}) \to \cn(\Deltan{n}) \otimes \cn(\Deltan{1})$ défini par
  le composé
  \[
    \cn(\Deltan{n}) \xto{\,\,\,\,\nabla\,\,\,\,} \cn(\Deltan{n}) \otimes
    \cn(\Deltan{n}) \xto{\cn(\Deltan{n}) \otimes \cn(\phi)}
    \cn(\Deltan{n}) \otimes \cn(\Deltan{1}).
  \]
  Explicitement, on a
  \[
     g'_\phi(i_0, \dots, i_p) =
     \begin{cases}
       (i_0, \dots, i_p) \otimes (0) &
       \mkern-10mu
       \text{si $r' = 0$,} \\
       (i_0, \dots, i_p) \otimes (1) + (i_0, \dots, i_{p-1}) \otimes (01) &
       \mkern-10mu
       \text{si $r' = 1$,} \\
       (i_0, \dots, i_p) \otimes (1) &
       \mkern-10mu
       \text{si $r' \ge 2$,} \\
     \end{cases}
  \]
  où $r'$ désigne le nombre de $1$ parmi $\phi(i_0), \dots, \phi(i_p)$ et où
  on a convenu que $(i_0, \dots, i_{p-1}) = 0$ si $p = 0$.
\end{remark}

\section{Tranches sesquicatégoriques et \pdfoo-catégories comma}
\label{app:tr_comma}

Le but de cet appendice est d'étudier les propriétés de $2$-fonctorialité de
la construction comma introduite dans la section~\ref{sec:comma_1} afin de
démontrer la proposition~\ref{prop:comma_retr} qu'on a admise dans le corps
de ce texte. Dans la première sous-section, on construit des
sesquicatégories $\tr{\GrayC}{c}$ et $\smash{\trto{\GrayC}{c}}$ associées à
une \oo-catégorie de Gray $\GrayC$ et un objet $c$ de $\GrayC$ et, dans la
seconde, on montre que, pour toute \oo-catégorie
$Z$, la construction comma ${-} \comma_Z {-}$ définit un sesquifoncteur
\[ \Spanoo \to \ooCatOpLax, \]
où $\ooCatOpLaxGray$ désigne la \oo-catégorie de Gray des \oo-catégories,
\oo-foncteurs et $i$\nbd-transformations oplax pour $i \ge 1$ et
$\ooCatOpLax$ la sesquicatégorie des \oo-catégories, \oo-foncteurs et
transformations oplax.

\subsection{Tranche sesquicatégorique d'une \pdfoo-catégorie de Gray}

\begin{paragraph}\label{paragr:def_oo-cat_Gray}
  Une \ndef{\oo-catégorie de Gray} est une catégorie enrichie dans la
  catégorie monoïdale des \oo-catégories munie du produit tensoriel de Gray.
  Ainsi, si $\GrayC$ est une \oo-catégorie de Gray, on dispose d'un ensemble
  $\Ob(\GrayC)$ appelé \ndef{ensemble des objets} ou des \ndef{$0$-cellules}
  de $\GrayC$ et, pour tous objets $x$ et~$y$, d'une \oo-catégorie
  $\Homi_{\GrayC}(x, y)$, qu'on notera parfois aussi $\GrayC_{x, y}$.
  On dispose également, pour toute $0$-cellule $x$, d'un objet
  \ndef{identité}~$\id{x}$ de~$\Homi_{\GrayC}(x, x)$ et, pour
  tous objets $x$, $y$ et $z$ de $\GrayC$, d'un \oo-foncteur de
  \emph{composition}
  \[
     \circ_{z,y,x} :
       \Homi_{\GrayC}(y, z) \otimes \Homi_{\GrayC}(x, y)
       \to \Homi_{\GrayC}(x, z).
  \]
  Ces données sont soumises à des axiomes affirmant que les identités sont
  des neutres pour la composition et que la composition est associative.

  Soit $\GrayC$ une \oo-catégorie de Gray. Pour $i \ge 1$, on appellera
  \ndef{$i$-cellule de $\GrayC$} une $(i-1)$\nbd-cellule de $\Homi_{\GrayC}(x,
  y)$ pour $x$ et $y$ deux objets de $\GrayC$. On dira que $x$ est la
  \ndef{$0$-source} et que $y$ est le \ndef{$0$-but} d'une telle cellule.
  Les cellules de
  $\GrayC$ forment de manière évidente un ensemble globulaire. Soient
  $\alpha$ et $\beta$ deux $i$-cellules de $\GrayC$, pour~$i > 1$,
  ayant même $0$-source $x$ et même $0$-but $y$. On notera $\beta \comp_j
  \alpha$ le composé $\beta \comp_{j-1} \alpha$ de $\Homi_{\GrayC}(x, y)$,
  pour $1 \le j < i$, si celui-ci est bien défini.
\end{paragraph}

\begin{example}\label{ex:ooCatStrGray}
  Le foncteur identité $\ooCat \to \ooCat$ est un foncteur monoïdal lax de
  source $\ooCat$ munie du produit cartésien et de but $\ooCat$
  munie du produit tensoriel de Gray, la contrainte tensorielle étant donnée par
  la transformation naturelle $q : A \otimes B \to A \times B$ du
  paragraphe~\ref{paragr:def_q}. Une \oo-catégorie stricte pouvant être
  considérée comme une catégorie enrichie dans la catégorie monoïdale des
  \oo-catégories strictes munie du produit cartésien, on en déduit un
  foncteur des \oo-catégories strictes vers les \oo-catégories de Gray.
  Si $C$ est une \oo-catégorie stricte, pour tout $i \ge 0$, les
  $i$-cellules de la \oo-catégorie de Gray associée à $C$ coïncident avec
  les $i$-cellules de $C$, ce qui justifie notre terminologie pour les
  cellules des \oo-catégories de Gray.
\end{example}

\begin{example}\label{ex:OpLaxGray}
  Les \oo-catégories, \oo-foncteurs, transformations oplax,
  $2$-trans\-formations oplax, etc.~s'organisent naturellement en une
  \oo-catégorie de Gray qu'on notera $\ooCatOpLaxGray$. Plus précisément,
  les objets de $\ooCatOpLaxGray$ sont les \oo-catégories et, si $A$ et $B$
  sont deux \oo-catégories, on pose
  \[
    \Homi_{\ooCatOpLaxGray}(A, B) = \HomOpLax(A, B).
  \]
  Il résulte formellement de la relation d'adjonction entre le produit
  tensoriel $\otimes$ et~$\HomOpLax$ qu'on dispose de compositions et
  d'unités et que $\ooCatOpLaxGray$ est bien une \oo-catégorie de Gray (voir
  \cite[exemples C.10 et C.18]{AraMaltsiJoint}).
\end{example}

Nous allons maintenant expliciter une partie de la structure des
\oo-catégories de Gray. Nous commençons par des préliminaires sur les
disques et les produits tensoriels de disques.

\begin{proposition}[Steiner]\label{prop:Steiner_disques}
  Soit $i \ge 0$. Le complexe dirigé augmenté $\lambda(\Dn{i})$ est de
  Steiner fort. Sa base est constituée des $[x]$, où $x$ varie parmi les
  cellules de $\Dn{i}$ qui ne sont pas des identités. De plus, le morphisme
  d'adjonction $\Dn{i} \to \nu\lambda(\Dn{i})$ est un isomorphisme.
\end{proposition}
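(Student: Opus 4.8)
<br>

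The plan is to prove Proposition~\ref{prop:Steiner_disques} by induction on $i\ge 0$, treating separately the claim that $\lambda(\Dn{i})$ is a strong Steiner complex, the description of its basis, and the fact that the adjunction unit $\Dn{i}\to\nu\lambda(\Dn{i})$ is an isomorphism. For $i=0$ everything is immediate: $\Dn{0}$ is the terminal $\infty$\nobreakdash-category, $\lambda(\Dn{0})$ is concentrated in degree $0$ with $\lambda(\Dn{0})_0\simeq\Z$, augmentation the identity, and the positivity submonoid $\N$; its basis is the singleton $\{[x_0]\}$ where $x_0$ is the unique object, and one checks directly that this basis is unitary and that $\leN$ is trivially an order, so $\lambda(\Dn{0})$ is strongly Steiner. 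The unit $\Dn{0}\to\nu\lambda(\Dn{0})$ is an isomorphism because $\nu\lambda(\Dn{0})$ again has a single cell in each dimension, all identities.

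For the inductive step, the key structural observation I would use is that $\Dn{i}\simeq\Dn{1}\otimes\Dn{i-1}$ (the $i$-disk is the Gray tensor product of the $1$-disk with the $(i-1)$-disk); more precisely, at the level of augmented directed complexes one has $\lambda(\Dn{i})\simeq\lambda(\Dn{1})\otimes\lambda(\Dn{i-1})$. Granting the case $i=1$ — where $\lambda(\Dn{1})$ is the complex $\Z\xrightarrow{(-1,1)}\Z^2$ with evident positivity submonoids, which one verifies by hand to be strongly Steiner with basis $\{[0],[1]\}$ in degree $0$ and $\{[f]\}$ in degree $1$, $f$ being the principal $1$-cell — the inductive step follows from Proposition~\ref{prop:tens_Steiner}: the tensor product of strong Steiner complexes is a strong Steiner complex, with basis the tensor product of the bases. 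This simultaneously yields that $\lambda(\Dn{i})$ is strongly Steiner and identifies its basis; one then checks that the basis elements $x\otimes y$, with $x$ a non-identity cell of $\Dn{1}$ and $y$ a non-identity cell of $\Dn{i-1}$, correspond under the identification $\Dn{i}\simeq\Dn{1}\otimes\Dn{i-1}$ to the non-identity cells of $\Dn{i}$, which is a combinatorial verification using the explicit description of cells of a Gray tensor product.

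For the statement that the adjunction unit $\Dn{i}\to\nu\lambda(\Dn{i})$ is an isomorphism, I would argue as follows. By Theorem~\ref{thm:Steiner}, the restriction of $\nu$ to strong Steiner complexes is fully faithful; by Theorem~\ref{thm:prod_tens}, this restriction is moreover monoidal for the tensor products. Hence $\nu(\lambda(\Dn{1})\otimes\lambda(\Dn{i-1}))\simeq\nu\lambda(\Dn{1})\otimes\nu\lambda(\Dn{i-1})$, and by the inductive hypothesis this is $\Dn{1}\otimes\Dn{i-1}\simeq\Dn{i}$. One must then check that this chain of isomorphisms is compatible with the adjunction unit, i.e.\ that the composite $\Dn{i}\to\nu\lambda(\Dn{i})\xrightarrow{\sim}\Dn{i}$ is the identity; this is a naturality/coherence bookkeeping, using that the unit is a monoidal natural transformation (the comparison for $\Dn{0}$ being an isomorphism by the base case). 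Alternatively, and perhaps more cleanly, one invokes the fact that $\Dn{i}$ is itself a strong Steiner $\infty$\nobreakdash-category — its atoms $\atom{x}$ for $x$ a non-identity cell are precisely the cells of $\Dn{i}$ — which is exactly the content of Steiner's original computation, and then the unit is an isomorphism by the general fact that $C\to\nu\lambda(C)$ is an isomorphism whenever $C$ is a strong Steiner $\infty$\nobreakdash-category.

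The main obstacle I anticipate is not any single hard argument but rather the careful matching of two bookkeeping systems: the cells of $\Dn{i}$ (principal cell together with its iterated sources and targets in each dimension $k<i$) on one hand, and the basis $\{x\otimes y\}$ of $\lambda(\Dn{1})\otimes\lambda(\Dn{i-1})$ described via Proposition~\ref{prop:tens_Steiner} on the other, together with checking that the source/target formulas coming from the tensor differential $d(x\otimes y)=d(x)\otimes y+(-1)^{p}x\otimes d(y)$ reproduce exactly the globular source/target structure of $\Dn{i}$. This is the routine-but-delicate heart of the proof; once it is in place, both the basis description and the isomorphism of the adjunction unit drop out of the Steiner machinery (Theorems~\ref{thm:Steiner} and~\ref{thm:prod_tens} and Proposition~\ref{prop:tens_Steiner}) without further effort. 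Since the excerpt attributes the result to Steiner, it is also legitimate simply to cite \cite{Steiner} for the explicit computation and reserve the inductive argument above as a reconstruction; I would present the short proof by citation and include the tensor-product reduction as the conceptual explanation.
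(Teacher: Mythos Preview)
The paper's own proof is simply a citation to \cite[section 3]{SteinerTheta} and \cite[chapitre 4]{AraMaltsiJoint}, so your closing remark that one may just cite Steiner is exactly what the authors do.

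However, the inductive ``conceptual explanation'' you propose contains a genuine error: the claim $\Dn{i}\simeq\Dn{1}\otimes\Dn{i-1}$ is false, both at the level of $\infty$\nobreakdash-categories and at the level of augmented directed complexes. Already for $i=2$, the Gray tensor product $\Dn{1}\otimes\Dn{1}$ is the lax commutative square described explicitly in the proof of Proposition~\ref{prop:s_t_Gray}: it has four objects, four non-identity $1$\nobreakdash-cells and one non-identity $2$\nobreakdash-cell, whereas $\Dn{2}$ has only two objects and two non-identity $1$\nobreakdash-cells. Equivalently, $\lambda(\Dn{1})\otimes\lambda(\Dn{1})$ has $\Z^2\otimes\Z^2\simeq\Z^4$ in degree~$0$, while $\lambda(\Dn{2})_0\simeq\Z^2$. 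What does exist is the \oo-foncteur $c:\Dn{i+j}\to\Dn{i}\otimes\Dn{j}$ of paragraph~\ref{paragr:def_can_Dij}, picking out the principal cell $\atom{c_i\otimes c_j}$, but this is very far from an isomorphism.

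Consequently the appeal to Proposition~\ref{prop:tens_Steiner} does not yield the result, and the bookkeeping you anticipate (matching the tensor basis with the non-identity cells of $\Dn{i}$) would fail for cardinality reasons before it even begins. The actual argument in the references proceeds by a direct computation on the globular structure of $\Dn{i}$: one writes down $\lambda(\Dn{i})$ explicitly (two generators in each degree $<i$, one in degree $i$), checks by hand that the basis is unitary and that $\leN$ is a total order, and identifies the tables defining cells of $\nu\lambda(\Dn{i})$ with the cells of $\Dn{i}$. No tensor decomposition is involved.
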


\begin{proof}
  Voir \cite[section 3]{SteinerTheta} ou \cite[chapitre 4]{AraMaltsiJoint}.
\end{proof}

\begin{paragraph}\label{paragr:def_can_Dij}
  Soient $i \ge 0$ et $j \ge 0$ deux entiers. On définit un \oo-foncteur
  \[ c : \Dn{i+j} \to \Dn{i} \otimes \Dn{j} \]
  de la manière suivante. En vertu de la
  proposition précédente et de la compatibilité aux produits tensoriels du
  foncteur $\nu$ restreint aux complexes de Steiner forts
  (voir le théorème~\ref{thm:prod_tens}), on a un
  isomorphisme canonique
  \[
    \Dn{i} \otimes \Dn{j} \simeq \nu(\lambda(\Dn{i}) \otimes
    \lambda(\Dn{j})).
  \]
  Ainsi, en notant $c_k$, pour $k \ge 0$, la cellule principale de $\Dn{k}$
  (voir le paragraphe~\ref{paragr:def_Dn}), on dispose d'une $(i+j)$-cellule
  \[ \atom{[c_i] \otimes [c_j]} \]
  de $\Dn{i} \otimes \Dn{j}$. On appellera cette cellule la \ndef{cellule
  principale} de $\Dn{i} \otimes \Dn{j}$ et on la notera~$c_{i,j}$. Elle
  définit un \oo-foncteur $c : \Dn{i+j} \to \Dn{i} \otimes
  \Dn{j}$ comme annoncé.

  Dans la suite, on identifiera $c_k$, pour $k \ge 0$, avec $[c_k]$ et on
  notera donc simplement $\atom{c_i \otimes c_j}$ la cellule principale de
  $\Dn{i} \otimes \Dn{j}$.
\end{paragraph}

\begin{remark}
  On peut montrer que $c_{i,j} = \atom{c_i \otimes c_j}$ est l'unique
  $(i+j)$-cellule de $\Dn{i} \otimes \Dn{j}$ qui ne soit pas une identité.
\end{remark}

\begin{paragraph}
  Soit $\GrayC$ une \oo-catégorie de Gray et soient $i \ge 1$ et $j \ge 1$ deux
  entiers. Si $\alpha$ est une $i$-cellule de $\GrayC$ de $0$\nbd-source~$x$
  et $0$\nbd-but $y$ et $\beta$ est une $j$-cellule de $\GrayC$ de
  $0$-source~$y$ et de $0$-but~$z$, on définit une $(i+j-1)$\nbd-cellule
  $\beta \circ \alpha$ de~$\GrayC$ de $0$-source $x$ et de $0$-but $z$ de la
  manière suivante. Les cellules $\alpha$ et $\beta$ correspondent à des
  \oo-foncteurs
  \[
    \alpha : \Dn{i-1} \to \Homi_{\GrayC}(x, y)
    \quadet
    \beta : \Dn{j-1} \to \Homi_{\GrayC}(y, z)
  \]
  et la cellule $\beta \circ \alpha$ est définie par le \oo-foncteur
  \[
    \xymatrix@C=2.5pc{
      \Dn{i+j-2}
      \ar[r]^-{c}
      & \Dn{j-1} \otimes \Dn{i-1}
      \ar[r]^-{\beta \otimes \alpha}
      & \Homi_{\GrayC}(y, z) \otimes \Homi_{\GrayC}(x, y)
      \ar[r]^-{\circ_{z,y,x}}
      & \Homi_{\GrayC}(x, z),
    }
  \]
  où $c$ désigne le \oo-foncteur du
  paragraphe~\ref{paragr:def_can_Dij}.

  Si $\alpha$ est une $1$-cellule $f$, on notera $\beta \comp_0 f$ la
  $j$-cellule $\beta \circ f$. De même, si $\beta$ est une
  $1$-cellule $f$, on notera $f \comp_0 \alpha$ la $i$-cellule $f \circ
  \alpha$.
\end{paragraph}

\begin{proposition}\label{prop:Gray_fonct_1-cell}
  Soit $f$ une $1$-cellule d'une \oo-catégorie de Gray $\GrayC$ de
  source~$x$ et de but $y$.
  \begin{enumerate}
    \item Soit $i \ge 1$ et soit $\alpha$ une $i$-cellule de $0$-source $y$
      et de $0$-but $z$. Alors la $i$-cellule $\alpha \comp_0 f$ est l'image
      de $\alpha$ par le \oo-foncteur
      \[
        \xymatrix@C=2.5pc{
        \Homi_{\GrayC}(y, z)
        \ar[r]^-{\id{} \otimes f}
        &
        \Homi_{\GrayC}(y, z) \otimes \Homi_{\GrayC}(x, y)
        \ar[r]^-{\circ_{z,y,x}}
        &
        \Homi_{\GrayC}(x, z),
      }
      \]
      où on a identifié $\Homi_{\GrayC}(y, z)$ à $\Homi_{\GrayC}(y, z)
      \otimes \Dn{0}$. En particulier, ${-} \comp_0 f$ est un \oo-foncteur.
    \item Soit $i \ge 1$ et soit $\alpha$ une $i$-cellule de $0$-source $t$
      et de $0$-but $x$. Alors la $i$-cellule $f \comp_0 \alpha$ est l'image
      de $\alpha$ par le \oo-foncteur
      \[
        \xymatrix@C=2.5pc{
        \Homi_{\GrayC}(t, x)
        \ar[r]^-{f \otimes \id{}}
        &
        \Homi_{\GrayC}(x, y) \otimes \Homi_{\GrayC}(t, x)
        \ar[r]^-{\circ_{y,x,t}}
        &
        \Homi_{\GrayC}(t, y).
      }
      \]
      En particulier, $f \comp_0 {-}$ est un \oo-foncteur.
  \end{enumerate}
\end{proposition}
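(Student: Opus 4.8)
Proposal for the proof of Proposition~\ref{prop:Gray_fonct_1-cell}.

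The plan is to reduce both assertions to the definition of $\beta \circ \alpha$ given just above, using that the $1$-cellule $f$ corresponds to a \oo-foncteur $f : \Dn{0} \to \Homi_{\GrayC}(x, y)$. First I would treat assertion~(a). By definition, $\alpha \comp_0 f = \alpha \circ f$ is the \oo-foncteur obtained from $\Dn{i-1+1-2} = \Dn{i-1} \xto{c} \Dn{i-1} \otimes \Dn{-1}$\ldots{} — wait, more precisely, with the convention $i = i$ and the second cellule being the $1$-cellule $f$ (so $j = 1$), the composite in the definition is
\[
  \xymatrix@C=2.5pc{
    \Dn{i-1}
    \ar[r]^-{c}
    & \Dn{i-1} \otimes \Dn{0}
    \ar[r]^-{\alpha \otimes f}
    & \Homi_{\GrayC}(y, z) \otimes \Homi_{\GrayC}(x, y)
    \ar[r]^-{\circ_{z,y,x}}
    & \Homi_{\GrayC}(x, z),
  }
\]
where $c : \Dn{i-1} \to \Dn{i-1} \otimes \Dn{0}$ is the \oo-foncteur of paragraph~\ref{paragr:def_can_Dij}. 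The key point is to identify $c : \Dn{i-1} \to \Dn{i-1} \otimes \Dn{0}$ with the canonical isomorphism coming from the fact that $\Dn{0}$ is the unit of the tensor product. This follows from the description of $c$ as corresponding to the cellule principale $\atom{c_{i-1} \otimes c_0}$: since $\Dn{0}$ is the unit both of $\times$ and of $\otimes$, and since $c_0$ is the unique nontrivial cellule of $\Dn{0}$ (a $0$-cellule), one checks using Proposition~\ref{prop:tens_morph} (or directly from Proposition~\ref{prop:Steiner_disques} and the explicit description of $\lambda(\Dn{i-1}) \otimes \lambda(\Dn{0})$) that $\atom{c_{i-1} \otimes c_0}$ is the image of the cellule principale of $\Dn{i-1}$ under the canonical isomorphism $\Dn{i-1} \simeq \Dn{i-1} \otimes \Dn{0}$. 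Once $c$ is thus identified with the unit isomorphism, the displayed composite becomes exactly $\circ_{z,y,x} \o (\id{} \otimes f) \o (\text{unit iso})$, which is the \oo-foncteur in the statement once we identify $\Homi_{\GrayC}(y,z)$ with $\Homi_{\GrayC}(y,z) \otimes \Dn{0}$. Evaluating at $\alpha$ gives $\alpha \comp_0 f$, proving~(a).

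For assertion~(b), the argument is entirely symmetric, now taking the first cellule to be $f$ (a $1$-cellule, so in the role of $\alpha$ with $i = 1$) and $\alpha$ (an $i$-cellule) in the role of $\beta$. The definition of $f \comp_0 \alpha = f \circ \alpha$ yields the composite
\[
  \xymatrix@C=2.5pc{
    \Dn{i-1}
    \ar[r]^-{c}
    & \Dn{i-1} \otimes \Dn{0}
    \ar[r]^-{\alpha \otimes f}
    & \Homi_{\GrayC}(x, y) \otimes \Homi_{\GrayC}(t, x)
    \ar[r]^-{\circ_{y,x,t}}
    & \Homi_{\GrayC}(t, y),
  }
\]
and the same identification of $c$ with the unit isomorphism $\Dn{i-1} \simeq \Dn{i-1} \otimes \Dn{0}$ — this time using that $\Dn{0}$ is the unit on the \emph{left} factor as well, which is again immediate from paragraph~\ref{paragr:def_can_Dij} applied with the first index equal to $0$ — turns this into $\circ_{y,x,t} \o (f \otimes \id{}) \o (\text{unit iso})$, giving the claim after identifying $\Homi_{\GrayC}(t,x)$ with $\Dn{0} \otimes \Homi_{\GrayC}(t,x)$. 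That ${-} \comp_0 f$ and $f \comp_0 {-}$ are \oo-foncteurs is then automatic, since they have been exhibited as composites of \oo-foncteurs.

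The main obstacle I anticipate is the bookkeeping around the \oo-foncteur $c : \Dn{i+j} \to \Dn{i} \otimes \Dn{j}$ in the degenerate cases $i = 0$ or $j = 0$: one needs to know that $c$ then specializes to the canonical unit isomorphism rather than merely to \emph{some} isomorphism. This is where Proposition~\ref{prop:tens_morph} (applied to $f = \id{}$, $g$ the relevant projection, or conversely) is used to pin down $\nu$ of the tensor product of the relevant morphisms of Steiner complexes; the computation is short but must be done carefully because the naturality of $c$ and the coherence of the unit isomorphisms both enter. Everything else is a direct unwinding of the definitions in paragraph~\ref{paragr:def_can_Dij} and in the paragraph defining $\beta \circ \alpha$.
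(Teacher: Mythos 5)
Your proof is correct and takes essentially the same route as the paper's: one identifies the \oo-foncteur $c$ of the paragraph~\ref{paragr:def_can_Dij} with the unit constraint of the tensor product (the paper asserts this directly, you justify it via the Steiner description) and concludes by naturality of that constraint, the second assertion being handled symmetrically. Only a cosmetic slip: in your displayed composite for the second assertion the middle term should read $\Dn{0} \otimes \Dn{i-1}$ with arrow $f \otimes \alpha$, i.e.\ the left-unit constraint, as your surrounding text correctly indicates.
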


\begin{proof}
  Démontrons la première assertion, la seconde se démontrant de manière
  analogue. Il s'agit de démontrer l'égalité des deux composés de $\Dn{i-1}$
  vers~$\GrayC_{x, z}$ du bord du diagramme
  \[
    \xymatrix@R=1pc@C=2.5pc{
      \Dn{i-1} \ar[r]^-{c} \ar[dd]_{\alpha} &
      \Dn{i-1} \otimes \Dn{0} \ar[dr]^{\alpha \otimes f} 
      \ar[dd]_{\alpha \otimes \Dn{0}}
      \\
      & & \GrayC_{y, z} \otimes \GrayC_{x, y} \ar[r]^-{\circ_{z,y,x}}
      & \GrayC_{x, z}
      \\
      \GrayC_{y, z} \ar[r] &
      \GrayC_{y, z} \otimes \Dn{0} \ar[ur]_{\,\GrayC_{y, z} \otimes f}
      & & \pbox{,}
    }
  \]
  où la flèche horizontale du bas est la contrainte d'unité du produit
  tensoriel. Le triangle de ce diagramme étant commutatif par définition,
  il suffit de vérifier la commutativité du carré. Or, le morphisme $c :
  \Dn{i-1} \to \Dn{i-1} \otimes \Dn{0}$ n'est autre que la contrainte
  d'unité du produit tensoriel et le carré est donc commutatif par
  naturalité de celle-ci.
\end{proof}

\begin{remark}
  La proposition précédente entraîne que si $f$ est une $1$-cellule d'une
  \oo-catégorie de Gray, on a
  \begin{align*}
    f \comp_0 \id{\alpha} & = \id{f \comp_0 \alpha}
    & \id{\alpha} \comp_0 f & = \id{\alpha \comp_0 f} \\
    f \comp_0 (\beta \comp_j \alpha) & = (f \comp_0 \beta) \comp_j (f
      \comp_0 \alpha)
    & (\beta \comp_j \alpha) \comp f & = (\beta \comp_0 f) \comp_j (\alpha
      \comp_0 f),
  \end{align*}
  où, dans les égalités du haut, $\alpha$ est une $i$-cellule avec $i \ge 1$
  et, dans celles du bas, $\alpha$~et $\beta$ sont des $i$-cellules et $i >
  j \ge 1$, dès que ces compositions ont un sens (voir le
  paragraphe~\ref{paragr:def_oo-cat_Gray} pour la définition de $\comp_j$).
\end{remark}

\begin{proposition}\label{prop:circ_assoc}
  L'opération $\circ$ d'une \oo-catégorie de Gray $\GrayC$ est associative.
  Autrement dit, pour $i \ge 1$, $j \ge 1$ et $k \ge 1$, si $\alpha$ est une
  $i$-cellule de $\GrayC$ de $0$-source~$x$ et de $0$-but $y$, $\beta$ est
  une $j$-cellule de $\GrayC$ de $0$-source $y$ et de $0$-but $z$, et
  $\gamma$ est une $k$-cellule de $\GrayC$ de $0$-source $z$ et de $0$-but
  $t$, alors on a l'égalité
  \[
    (\gamma \circ \beta) \circ \alpha = \gamma \circ (\beta \circ \alpha)
  \]
  de $(i + j + k - 2)$\nbd-cellules de $0$-source $x$ et de $0$-but $t$.
\end{proposition}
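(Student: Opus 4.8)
The strategy is to unwind the definition of $\circ$ given just above and reduce the claimed associativity to two facts: the associativity of the composition $\o_{z,y,x}$ of the Gray \oo-category $\GrayC$ (one of its structural axioms, see~\ref{paragr:def_oo-cat_Gray}) and a coherence property of the canonical \oo-foncteurs $c : \Dn{i+j} \to \Dn{i} \otimes \Dn{j}$ of~\ref{paragr:def_can_Dij}. Concretely, with $\alpha$, $\beta$, $\gamma$ corresponding to \oo-foncteurs $\alpha : \Dn{i-1} \to \GrayC_{x,y}$, $\beta : \Dn{j-1} \to \GrayC_{y,z}$ and $\gamma : \Dn{k-1} \to \GrayC_{z,t}$, both sides of the desired equality are \oo-foncteurs $\Dn{i+j+k-3} \to \GrayC_{x,t}$, and I would write each as an explicit composite built from the structure \oo-foncteurs and from suitable instances of $c$, then show these two composites agree.

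\textbf{Key steps.} First I would expand $(\gamma \circ \beta) \circ \alpha$: by definition $\gamma \circ \beta$ is the composite $\Dn{j+k-2} \xto{c} \Dn{k-1} \otimes \Dn{j-1} \xto{\gamma \otimes \beta} \GrayC_{z,t} \otimes \GrayC_{y,z} \xto{\o_{t,z,y}} \GrayC_{y,t}$, and then $(\gamma \circ \beta) \circ \alpha$ is $\Dn{i+j+k-3} \xto{c} \Dn{j+k-2} \otimes \Dn{i-1} \xto{(\gamma \circ \beta) \otimes \alpha} \GrayC_{y,t} \otimes \GrayC_{x,y} \xto{\o_{t,y,x}} \GrayC_{x,t}$. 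Using functoriality of $\otimes$ I would rewrite $(\gamma \circ \beta) \otimes \alpha$ as $(\o_{t,z,y} \otimes \GrayC_{x,y}) \o ((\gamma \otimes \beta) \otimes \alpha) \o (c \otimes \Dn{i-1})$, and similarly expand the other side $\gamma \circ (\beta \circ \alpha)$. After these expansions, the two sides differ by, on the one hand, the associativity constraint of $\otimes$ applied to $\gamma \otimes \beta \otimes \alpha$ together with the axiom $\o_{t,y,x} \o (\o_{t,z,y} \otimes \GrayC_{x,y}) = \o_{t,z,x} \o (\GrayC_{z,t} \otimes \o_{z,y,x})$ (up to the associator), and on the other hand by a purely geometric identity relating the various disk maps $c$: namely that the two evident \oo-foncteurs $\Dn{i+j+k-3} \to \Dn{k-1} \otimes \Dn{j-1} \otimes \Dn{i-1}$, obtained by composing instances of $c$ in the two possible orders, coincide (modulo the associator of $\otimes$). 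So the proof splits into (i) the Gray-category axioms, which are available by hypothesis, and (ii) this disk-coherence lemma.

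\textbf{Main obstacle.} The crux is step (ii): proving that the two composites of $c$-maps into the triple tensor product of disks agree. I would establish this using the Steiner machinery: by Proposition~\ref{prop:tens_Steiner} and Proposition~\ref{prop:Steiner_disques}, $\Dn{k-1} \otimes \Dn{j-1} \otimes \Dn{i-1} \simeq \nu(\lambda(\Dn{k-1}) \otimes \lambda(\Dn{j-1}) \otimes \lambda(\Dn{i-1}))$ is (the $\nu$ of) a strong Steiner complex, and by the full faithfulness of $\nu$ on strong Steiner complexes (Theorem~\ref{thm:Steiner}) it suffices to check the equality of the corresponding morphisms of augmented directed complexes, or even just that both send the principal cell $c_{k-1+j-1+i-1}$ to the same atom. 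By the very definition of $c_{i,j}$ as $\atom{c_i \otimes c_j}$ together with Proposition~\ref{prop:tens_morph} — which computes $\nu(f \otimes g)(\atom{x \otimes y})$ in terms of $\atom{f(x) \otimes g(y)}$ — both composites send the principal cell to $\atom{c_{k-1} \otimes c_{j-1} \otimes c_{i-1}}$, forcing the equality. Once this coherence is in hand, assembling the two expansions and invoking the associativity axiom of $\o$ and the coherence of the associator of $\otimes$ finishes the proof; these last manipulations are routine diagram chases that I would not spell out in full.
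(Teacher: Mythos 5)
Your proposal is correct and follows essentially the same route as the paper: both reduce the statement to the naturality of the associativity constraint of $\otimes$, the associativity axiom of the composition of $\GrayC$, and a pentagon identity for the disk maps $c$, the latter being checked on principal cells via Proposition~\ref{prop:tens_morph}, which forces both composites to correspond to the atom $\atom{c_{k-1} \otimes c_{j-1} \otimes c_{i-1}}$ (up to the associator). Your appeal to the full faithfulness of $\nu$ on strong Steiner complexes is harmless but not needed, since a \oo-foncteur out of $\Dn{n}$ is already determined by the image of its principal cell.
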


\begin{proof}
  Posons $i' = i - 1$, $j' = j - 1$ et $k' = k - 1$. Il s'agit de montrer la
  commutativité du bord du diagramme
  \[
    \xymatrix@C=1.4pc@R=1pc{
      &
      \Dn{j'+k'} \otimes \Dn{i'}
      \ar[r]^-{\substack{c \otimes \Dn{i'}\\\phantom{x}}}
      &
      (\Dn{k'} \otimes \Dn{j'}) \otimes \Dn{i'}
      \ar[r]^-{\substack{(\gamma \otimes \beta) \otimes \alpha\\\ }}
      \ar[dd]_*[@]{\sim}
      &
      (\GrayC_{z,t} \otimes \GrayC_{y,z}) \otimes \GrayC_{x,y}
      \ar[dr] \ar[dd]_*[@]{\sim}
      \\
      \Dn{i'+j'+k'} \ar[ur]^c \ar[dr]_c
      & & & &
      \GrayC_{x,t}
      \\
      &
      \Dn{k'} \otimes \Dn{i'+j'}
      \ar[r]_-{\substack{\phantom{x}\\\Dn{k'} \otimes c}}
      &
      \Dn{k'} \otimes (\Dn{j'} \otimes \Dn{i'})
      \ar[r]_-{\substack{\phantom{x}\\\gamma \otimes (\beta \otimes \alpha)}}
      &
      \GrayC_{z,t} \otimes (\GrayC_{y,z} \otimes \GrayC_{x,y})
      \ar[ur]
      &
      ,
    }
  \]
  où les isomorphismes verticaux sont la contrainte d'associativité du
  produit tensoriel et les flèches obliques de droite sont induites par la
  composition de $\GrayC$. Le carré du diagramme est commutatif par
  naturalité de la contrainte d'associativité et le triangle de droite par
  associativité de la composition de $\GrayC$. Il suffit donc de montrer la
  commutativité du pentagone. Par
  définition, et avec les notations du paragraphe~\ref{paragr:def_can_Dij},
  la flèche oblique d'en haut à gauche correspond à la cellule
  $\atom{c_{j'+k'} \otimes c_{i'}}$. Puisque le \oo-foncteur 
  $c : \Dn{j'+k'} \to \Dn{k'} \otimes \Dn{j'}$ envoie $\atom{c_{j'+k'}}$ sur
  $\atom{c_{k'} \otimes c_{j'}}$, en vertu de la
  proposition~\ref{prop:tens_morph}, la flèche horizontale d'en haut à gauche envoie
  $\atom{c_{j'+k'} \otimes c_{i'}}$ sur $(\atom{c_{k'} \otimes c_{j'})
  \otimes c_{i'}}$. Ainsi, le \oo-foncteur $\Dn{i'+j'+k'} \to (\Dn{k'}
  \otimes \Dn{j'}) \otimes \Dn{i'}$ du diagramme correspond à la
  cellule~$\atom{(c_{k'} \otimes c_{j'}) \otimes c_{i'}}$.
  De même, le \oo-foncteur $\Dn{i'+j'+k'} \to \Dn{k'} \otimes (\Dn{j'}
  \otimes \Dn{i'})$ du diagramme correspond à la
  cellule~$\atom{c_{k'} \otimes (c_{j'} \otimes c_{i'})}$,
  ce qui prouve la commutativité du pentagone et donc du diagramme, d'où le
  résultat.
\end{proof}

\begin{proposition}\label{prop:circ_ident}
  L'opération $\circ$ d'une \oo-catégorie de Gray $\GrayC$ vérifie les
  compatibilités aux unités suivantes :
  \begin{enumerate}
    \item Soit $i \ge 1$ et soit $\alpha$ une $i$-cellule de $\GrayC$ de
      $0$-source $x$ et de $0$-but $y$. On a
  \[ \id{y} \comp_0 \alpha = \alpha \quadet \alpha \comp_0 \id{x} = \alpha.  \]
    \item Soient $i \ge 1$ et $j \ge 1$ deux entiers et soient $\alpha$ une
      $i$-cellule de $\GrayC$ de $0$-source $x$ et de $0$-but $y$, et
      $\beta$ une $j$-cellule de $\GrayC$ de $0$-source $y$ et de $0$-but
      $z$. Alors on a
      \[
        \id{\beta} \circ \alpha = \id{\beta \circ \alpha}
        \quadet
        \beta \circ \id{\alpha} = \id{\beta \circ \alpha}.
      \]
  \end{enumerate}
\end{proposition}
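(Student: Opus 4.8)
<br>

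The plan is to prove Proposition~\ref{prop:circ_ident} by unwinding the definition of $\circ$ from the displayed composite
\[
  \Dn{i+j-2} \xto{\,c\,} \Dn{j-1} \otimes \Dn{i-1}
  \xto{\beta \otimes \alpha} \GrayC_{y,z} \otimes \GrayC_{x,y}
  \xto{\circ_{z,y,x}} \GrayC_{x,z}
\]
and reducing each identity to already-established facts about the tensor product, the canonical cells $c_{i,j} = \atom{c_i \otimes c_j}$, and the unit/associativity axioms of a \oo-category of Gray. None of the verifications are genuinely hard; the work is bookkeeping with the $\nu$ functor and Proposition~\ref{prop:tens_morph}.

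For assertion~(\emph{a}), the cell $\id{y}$ is the identity object of $\GrayC_{y,y}$, i.e.\ the \oo-foncteur $\Dn{0} \to \GrayC_{y,y}$ picking out $\id{y}$; since it is a $1$-cell, $\id{y} \comp_0 \alpha = \id{y} \circ \alpha$ is, by Proposition~\ref{prop:Gray_fonct_1-cell}(\emph{1}), the image of $\alpha$ under the \oo-foncteur $\GrayC_{y,z} \xto{\id{} \otimes \id{y}} \GrayC_{y,z} \otimes \GrayC_{y,y} \xto{\circ_{z,y,y}} \GrayC_{y,z}$; wait---here I must be careful with indices: $\id{y}$ has source and target $y$, and $\alpha$ has $0$-source $x$, $0$-but $y$, so it is rather $\alpha \comp_0 \id{x}$ that involves postcomposing by the map $\GrayC_{x,y}$-side. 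In any case, both composites $\id{y} \comp_0 \alpha$ and $\alpha \comp_0 \id{x}$ unwind, via Proposition~\ref{prop:Gray_fonct_1-cell}, to the image of $\alpha$ under a \oo-foncteur which, by the \textbf{unit axiom} of the enriched category $\GrayC$ (the identities are neutral for composition, stated in paragraph~\ref{paragr:def_oo-cat_Gray}), is the identity of $\GrayC_{x,y}$. Concretely: $\circ_{y,y,x} \o (\id{\GrayC_{y,y}} \otimes {-})$ applied after the unit constraint, with the first factor the identity object, equals $\id{\GrayC_{x,y}}$; this is exactly the left unit law of the enrichment, and dually for the right unit law. So assertion~(\emph{a}) follows.

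For assertion~(\emph{b}), I first record that $\id{\gamma}$, for $\gamma$ a cell classified by $\gamma : \Dn{k-1} \to \GrayC_{x,y}$, is classified by the composite $\Dn{k} \xto{\kappa} \Dn{k-1} \xto{\gamma} \GrayC_{x,y}$, where $\kappa$ is the \oo-foncteur of paragraph~\ref{paragr:def_Dn}. Then $\id{\beta} \circ \alpha$ is classified by
\[
  \Dn{i+j-1} \xto{\,c\,} \Dn{j} \otimes \Dn{i-1}
  \xto{\kappa \otimes \Dn{i-1}} \Dn{j-1} \otimes \Dn{i-1}
  \xto{\beta \otimes \alpha} \GrayC_{y,z} \otimes \GrayC_{x,y}
  \xto{\circ} \GrayC_{x,z},
\]
while $\id{\beta \circ \alpha}$ is classified by $\Dn{i+j-1} \xto{\kappa} \Dn{i+j-2} \xto{\beta \circ \alpha} \GrayC_{x,z}$. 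Comparing, it suffices to establish the identity of \oo-foncteurs $(\kappa \otimes \Dn{i-1}) \o c = c \o \kappa : \Dn{i+j-1} \to \Dn{j-1} \otimes \Dn{i-1}$, i.e.\ that the canonical cell is compatible with degeneracies in the first variable. This is a statement about the canonical cells $c_{j,i-1}$ and $c_{j-1,i-1}$ of tensor products of disks: by Proposition~\ref{prop:tens_morph} applied to $\lambda(\kappa) \otimes \id{}$, using that $\nu(\lambda(\kappa))$ sends $\atom{c_j}$ to $\id{\atom{c_{j-1}}}$ (which follows from the description of $\kappa$ and the fact that $\lambda(\Dn{k})$ is Steiner strong, Proposition~\ref{prop:Steiner_disques}), one gets $\nu(\lambda(\kappa) \otimes \id{})(\atom{c_j \otimes c_{i-1}}) = \id{\atom{c_{j-1} \otimes c_{i-1}}}$, which is precisely the desired commutativity after the identification $\Dn{k} \otimes \Dn{l} \simeq \nu(\lambda(\Dn{k}) \otimes \lambda(\Dn{l}))$. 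The symmetric identity $\beta \circ \id{\alpha} = \id{\beta \circ \alpha}$ is handled the same way, now with $\kappa$ in the second variable, $\id{} \otimes \lambda(\kappa)$, and Proposition~\ref{prop:tens_morph} again.

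The main obstacle, such as it is, is the compatibility of the canonical cells $c_{i,j}$ with degeneracies---precisely the input $(\kappa \otimes \Dn{\bullet}) \o c = c \o \kappa$ (and its mirror). Everything else is a direct transcription of the enriched-category axioms. I would isolate this degeneracy-compatibility as a small lemma (it pairs naturally with the source/target compatibility $(\sigma \otimes \Dn{\bullet}) \o c = c \o \sigma$ etc.\ that underlies Proposition~\ref{prop:circ_assoc}'s pentagon argument), prove it once via Proposition~\ref{prop:tens_morph} and the explicit action of $\nu\lambda(\kappa)$, $\nu\lambda(\sigma)$, $\nu\lambda(\tau)$ on principal cells, and then both clauses of Proposition~\ref{prop:circ_ident}---clause~(\emph{a}) from the unit axioms of $\GrayC$ and Proposition~\ref{prop:Gray_fonct_1-cell}, clause~(\emph{b}) from that lemma---fall out immediately.
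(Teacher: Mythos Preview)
Your proposal is correct and follows essentially the same route as the paper. For part~(\emph{a}) the paper unfolds the definition directly rather than citing Proposition~\ref{prop:Gray_fonct_1-cell}, but the content is identical: one observes that $c : \Dn{i'} \to \Dn{0} \otimes \Dn{i'}$ is the unit constraint and invokes the unit axiom of the enrichment; for part~(\emph{b}) the paper, like you, reduces to showing that $\kappa \otimes \Dn{i'}$ sends $\atom{c_{j'+1} \otimes c_{i'}}$ to $\id{\atom{c_{j'} \otimes c_{i'}}}$ and appeals to Proposition~\ref{prop:tens_morph}.
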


\begin{proof}
  \begin{enumerate}[wide]
    \item Démontrons la première égalité, la seconde se démontrant de manière
      analogue. Posons $i' = i - 1$. Il s'agit de montrer que le
      \oo-foncteur composé
      \[
        \xymatrix{
          \Dn{i'}
          \ar[r]^-{c}
          &
          \Dn{0} \otimes \Dn{i'}
          \ar[r]^-{\id{y} \otimes \alpha}
          &
          \GrayC_{y,y} \otimes \GrayC_{x, y}
          \ar[r]^-{\circ_{y,y,x}}
          &
          \GrayC_{x, y}
        }
      \]
      correspond à la cellule~$\alpha$. Considérons le diagramme
      \[
        \xymatrix{
          \Dn{i'}
          \ar[r]^-{c}
          &
          \Dn{0} \otimes \Dn{i'}
          \ar[r]^-{\id{y} \otimes \alpha}
          \ar[dr]_{\Dn{0} \otimes \alpha}
          &
          \GrayC_{y,y} \otimes \GrayC_{x, y}
          \ar[r]^-{\circ_{y,y,x}}
          &
          \GrayC_{x, y}
          \\
          & &
          \Dn{0} \otimes \GrayC_{x, y}
          \ar[u]^(0.60){\id{y} \otimes \GrayC_{x, y}\!}
          \ar[ur]_{\sim}
          & \pbox{,}
        }
      \]
      où la flèche oblique de droite est la contrainte d'unité du produit
      tensoriel. Le triangle de gauche est commutatif par définition et
      celui de droite par l'axiome d'unité des \oo-catégories de Gray. Or,
      le morphisme $c : \Dn{i'} \to \Dn{0} \otimes \Dn{i'}$ n'est autre que
      la contrainte d'unité du produit tensoriel et on conclut par
      naturalité de celle-ci.

    \item Démontrons la première égalité, la seconde se démontrant de manière
      analogue. Posons $i' = i - 1$ et $j' = j - 1$. Il s'agit de montrer
      l'égalité des deux composés de~$\Dn{i'+j'+1}$ vers $\GrayC_{x, z}$
      du bord du diagramme
    \[
    \xymatrix@R=1pc{
      \Dn{i'+j'+1}
      \ar[r]^-c
      \ar[dd]_{\kappa}
      &
      \Dn{j'+1} \otimes \Dn{i'}
      \ar[dr]^{\id{\beta} \otimes \alpha}
      \ar[dd]_{\kappa \otimes \Dn{i'}}
      \\
      & &
      \GrayC_{y,z} \otimes \GrayC_{x,y}
      \ar[r]^-{\circ_{z,y,x}}
      &
      \GrayC_{x,z}
      \\
      \Dn{i'+j'}
      \ar[r]_-c
      &
      \Dn{j'} \otimes \Dn{i'}
      \ar[ur]_{\beta \otimes \alpha}
       & &
      \pbox{,}
    }
  \]
  où $\kappa$ désigne le \oo-foncteur du paragraphe~\ref{paragr:def_Dn}. Le
  triangle du diagramme étant commutatif par définition, il suffit de
  montrer la commutativité du carré.  Il s'agit donc de montrer que la
  flèche verticale de droite envoie $\atom{c_{j'+1} \otimes c_{i'}}$ sur
  $\id{\atom{c_{j'} \otimes c_{i'}}}$, ce qui résulte de la
  proposition~\ref{prop:tens_morph}. \qedhere
  \end{enumerate}
\end{proof}

\begin{proposition}\label{prop:contr_Gray_str}
  Soit $\GrayC$ une \oo-catégorie de Gray provenant d'une \oo-catégorie stricte
  (voir l'exemple~\ref{ex:ooCatStrGray}), soient $i \ge 1$ et $j \ge 1$ deux
  entiers et soient $\alpha$ une $i$-cellule de $0$-source~$x$ et de
  $0$-but~$y$, et $\beta$ une $j$-cellule de $0$-source $y$ et de $0$-but
  $z$. Alors la $(i+j-1)$\nbd-cellule $\beta \circ \alpha$ est l'identité
  itérée de la cellule $\beta \comp_0 \alpha$.  En particulier, si $i = 1$
  ou $j = 1$, les deux définitions de $\beta \comp_0 \alpha$ coïncident.
\end{proposition}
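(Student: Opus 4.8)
The plan is to unwind the definition of the operation $\o$ (given just before Proposition~\ref{prop:Gray_fonct_1-cell}) in the case where $\GrayC$ comes from a strict $\infty$-category $C$. Set $p = i - 1$ and $q' = j - 1$; then $\beta$ and $\alpha$ are classified by $\infty$-functors $\beta : \Dn{q'} \to \Homi_{\GrayC}(y, z)$ and $\alpha : \Dn{p} \to \Homi_{\GrayC}(x, y)$ carrying the principal cells $c_{q'}$ and $c_p$ to $\beta$ and $\alpha$, and $\beta \o \alpha$ is the cell classified by
\[
  \Dn{p+q'} \xto{c} \Dn{q'} \otimes \Dn{p} \xto{\beta \otimes \alpha}
  \Homi_{\GrayC}(y, z) \otimes \Homi_{\GrayC}(x, y) \xto{\o_{z,y,x}}
  \Homi_{\GrayC}(x, z).
\]
The first key point is that, by the very construction of the functor from strict $\infty$-categories to $\infty$-categories of Gray (Example~\ref{ex:ooCatStrGray}), the composition $\o_{z,y,x}$ of $\GrayC$ factors as $\o_{z,y,x} = \o^{\mathrm{str}}_{z,y,x} \o q$, where $q$ is the canonical $\infty$-functor $\Homi_{\GrayC}(y,z) \otimes \Homi_{\GrayC}(x,y) \to \Homi_{\GrayC}(y,z) \times \Homi_{\GrayC}(x,y)$ of paragraph~\ref{paragr:def_q} and $\o^{\mathrm{str}}_{z,y,x} : \Homi_{\GrayC}(y,z) \times \Homi_{\GrayC}(x,y) \to \Homi_{\GrayC}(x,z)$ is the composition of $C$ viewed as a category enriched in $(\ooCat, \times)$, which on cells sends $(\beta', \alpha')$ to $\beta' \comp_0 \alpha'$ computed in $C$. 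By naturality of $q$, the displayed composite equals $\o^{\mathrm{str}}_{z,y,x} \o (\beta \times \alpha) \o q \o c$, so that $\beta \o \alpha$ is the image of the principal cell $c_{q',p} = \atom{c_{q'} \otimes c_p}$ of $\Dn{q'} \otimes \Dn{p}$ (this is $c(c_{p+q'})$, by paragraph~\ref{paragr:def_can_Dij}) under $\o^{\mathrm{str}}_{z,y,x} \o (\beta \times \alpha) \o q$.

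The second step is to identify $q(c_{q',p}) = (q_1(c_{q',p}), q_2(c_{q',p}))$ in $\Dn{q'} \times \Dn{p}$. Since $q_1$ is the composite $\Dn{q'} \otimes \Dn{p} \to \Dn{q'} \otimes \Dn{0} \xto{\sim} \Dn{q'}$ induced by the unique $\infty$-functor $\Dn{p} \to \Dn{0}$, whose image under $\lambda$ sends $c_p$ to an iterated identity of the basis element of $\lambda(\Dn{0})$, I would apply Proposition~\ref{prop:tens_morph} — with $f = \lambda(\mathrm{id}_{\Dn{q'}})$ and $g$ the image under $\lambda$ of the unique $\infty$-functor $\Dn{p} \to \Dn{0}$, identifying the disks with $\nu$ of their (strong Steiner, by Proposition~\ref{prop:Steiner_disques}) complexes — to get $q_1(c_{q',p}) = \id{c_{q'}}^{p}$, the $p$-fold iterated identity of $c_{q'}$ (with the convention that the $0$-fold iterate is the cell itself); symmetrically $q_2(c_{q',p}) = \id{c_p}^{q'}$. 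Since identities in a product $\infty$-category are computed componentwise and $\min(p,q') + \max(p,q') = p + q'$, the cell $(\id{c_{q'}}^{p}, \id{c_p}^{q'})$ is the $\min(p,q')$-fold iterated identity of the $\max(p,q')$-cell $\big(\id{c_{q'}}^{\max(p-q',0)}, \id{c_p}^{\max(q'-p,0)}\big)$ of $\Dn{q'} \times \Dn{p}$.

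Applying finally the $\infty$-functor $\o^{\mathrm{str}}_{z,y,x} \o (\beta \times \alpha)$, which preserves identities, and using $\beta(c_{q'}) = \beta$ and $\alpha(c_p) = \alpha$, one finds that $\beta \o \alpha$ is the $\min(p,q')$-fold iterated identity of $\big(\id{\beta}^{\max(i-j,0)}\big) \comp_0 \big(\id{\alpha}^{\max(j-i,0)}\big)$, the $0$-composition being taken in $C$; by the standard convention for $\comp_0$ of cells of unequal dimension, this last cell is precisely $\beta \comp_0 \alpha$, a $\max(i,j)$-cell of $C$. As $\min(p,q') = \min(i,j) - 1 = (i+j-1) - \max(i,j)$, this gives the statement, and the final assertion follows because $\min(i,j) - 1 = 0$ when $i = 1$ or $j = 1$. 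The bookkeeping here — the componentwise computation of iterated identities in the product, and the matching of $\max(p-q',0)$ and $\max(q'-p,0)$ with the dimension-padding built into $\comp_0$ — is immediate; the only delicate point is the correct application of Proposition~\ref{prop:tens_morph} to pin down $q(c_{q',p})$, together with keeping the two tensor factors in the order $\Homi_{\GrayC}(y,z) \otimes \Homi_{\GrayC}(x,y)$ so that $\beta$ lies in the first factor.
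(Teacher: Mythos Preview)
Your proof is correct and follows essentially the same approach as the paper: factor the Gray composition as the strict composition precomposed with~$q$, use naturality of~$q$ to reduce to computing $q(c_{q',p})$, and invoke Proposition~\ref{prop:tens_morph} to identify the two components as iterated identities of the principal cells. The paper packages this as the commutativity of a single diagram whose bottom-left arrow is declared to be the cell $(\id{c_{j'}},\id{c_{i'}})$, leaving your $\min/\max$ bookkeeping implicit, but the content is the same.
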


\begin{proof}
  Posons $i' = i - 1$ et $j' = j -1$. Il s'agit de montrer la commutativité
  du bord du diagramme
  \[
    \xymatrix@R=1pc{
      &
      \Dn{j'} \otimes \Dn{i'}
      \ar[dd]_q
      \ar[r]^-{\beta \otimes \alpha}
      &
      \GrayC_{y,z} \otimes \GrayC_{x,y}
      \ar[dd]_q
      \ar[dr]^-{\circ_{z,y,x}}
      \\
      \Dn{i'+j'}
      \ar[ur]^c \ar[dr]
      & & &
      \GrayC_{x,z}
      \\
      &
      \Dn{j'} \times \Dn{i'}
      \ar[r]^-{\beta \times \alpha}
      &
      \GrayC_{y,z} \times \GrayC_{x,y}
      \ar[ur]
      & \pbox{,}
    }
  \]
  où la flèche oblique d'en bas à gauche correspond à la $(i' + j')$-cellule
  $(\id{c_{j'}}, \id{c_{i'}})$ et celle d'en bas à droite est la composition
  de la \oo-catégorie stricte de laquelle provient $\GrayC$. Or, le triangle
  de droite est commutatif par définition et le carré central est commutatif
  par naturalité de $q$. Il s'agit donc de vérifier la commutativité du
  triangle de gauche, c'est-à-dire le fait que les « projections » $\Dn{j'}
  \otimes \Dn{i'} \to \Dn{j'}$ et $\Dn{j'} \otimes \Dn{i'} \to \Dn{i'}$
  envoient $\atom{c_{j'} \otimes c_{i'}}$ sur $\id{\atom{c_{j'}}}$ et
  $\id{\atom{c_{i'}}}$ respectivement. Ceci résulte de la
  proposition~\ref{prop:tens_morph} (qu'on applique en identifiant
  $\Dn{j'}$ à $\Dn{j'} \otimes \Dn{0}$ et $\Dn{i'}$ à $\Dn{0} \otimes
  \Dn{i'}$), d'où le résultat.
\end{proof}

On va maintenant décrire l'opération~$\circ$ sur les $2$-cellules d'une
\oo-catégorie de Gray.

\begin{paragraph}\label{paragr:def_contr_Gray}
  Soit $\GrayC$ une \oo-catégorie de Gray et soient $\alpha$ une $2$-cellule
  de $0$-source $x$ et de $0$-but $y$, et $\beta$ une $2$-cellule de
  $0$-source $y$ et de $0$-but $z$
  \[
    \shorthandoff{;:}
    \xymatrix@C=3pc{
      x \ar@/^2.3ex/[r]_{}="1"
        \ar@/_2.3ex/[r]_{}="0"
      &
      y \ar@/^2.3ex/[r]_{}="2"
        \ar@/_2.3ex/[r]_{}="3"
      &
      z
      \pbox{.}
      \ar@2"1";"0"_{\alpha}
      \ar@2"2";"3"_{\beta}
    }
  \]
  Si $\GrayC$ était une \oo-catégorie stricte, on disposerait d'une
  $2$-cellule $\beta \comp_0 \alpha$ composée horizontale de $\beta$ et
  $\alpha$. Selon la règle de Godement, cette composée s'exprimerait en termes
  de la composition verticale des $2$-cellules des deux manières suivantes :
  \[
    (\beta \comp_0 t(\alpha)) \comp_1 (s(\beta) \comp_0 \alpha)
    \quadet
    (t(\beta) \comp_0 \alpha)) \comp_1 (\beta \comp_0 s(\alpha)).
  \]
  En général, ces deux composés diffèrent dans une \oo-catégorie de Gray.
  Néanmoins, la proposition suivante affirme que l'opération $\circ$ produit
  une $3$-cellule de comparaison qu'on appellera \ndef{contrainte de Gray}.
\end{paragraph}

\begin{proposition}\label{prop:s_t_Gray}
  Soit
  \[
    \shorthandoff{;:}
    \xymatrix@C=3pc{
      x \ar@/^2.3ex/[r]_{}="1"
        \ar@/_2.3ex/[r]_{}="0"
      &
      y \ar@/^2.3ex/[r]_{}="2"
        \ar@/_2.3ex/[r]_{}="3"
      &
      z
      \ar@2"1";"0"_{\alpha}
      \ar@2"2";"3"_{\beta}
    }
  \]
  un diagramme dans une \oo-catégorie de Gray. Le \oo-foncteur composé
  \[
    \xymatrix@C=3pc{
      \Dn{1} \otimes \Dn{1} \ar[r]^-{\beta \otimes \alpha} & \Homi_{\GrayC}(y,
      z) \otimes \Homi_{\GrayC}(x, y) \ar[r]^-{\circ_{z,y,x}} & \Homi_{\GrayC}(x, z)
    }
  \]
  correspond à un diagramme
  \[
    \shorthandoff{;:}
    \xymatrix@C=3pc@R=3pc{
      \bullet
      \ar@2[d]_{\beta \comp_0 s(\alpha)}
      \ar@2[r]^{s(\beta) \comp_0 \alpha}
      &
      \bullet \ar@2[d]^{\beta \comp_0 t(\alpha)}
      \\
      \bullet \ar@2[r]_{t(\beta) \comp_0 \alpha}
      &
      \bullet
      \ar@{}[u];[l]_(.30){}="x"
      \ar@{}[u];[l]_(.70){}="y"
      \ar@3"x";"y"_{\beta \circ \alpha\!}
    }
  \]
  dans $\GrayC$. En particulier, on a
  \[
    \xymatrix@C=1.7pc{
    (\beta \comp_0 t(\alpha)) \comp_1 (s(\beta) \comp_0 \alpha)
    \ar@3[r]^-{\beta \circ \alpha}
    &
    (t(\beta) \comp_0 \alpha)) \comp_1 (\beta \comp_0 s(\alpha)).
  }
  \]
\end{proposition}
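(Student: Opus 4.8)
<br>

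The plan is to unwind the definition of $\beta \circ \alpha$ from the preceding paragraph and identify the source and target of the resulting cell by a direct computation inside $\Dn{1} \otimes \Dn{1}$. Recall that $\beta \circ \alpha$ is defined by the $\oo$-foncteur
\[
  \Dn{2} \xto{c} \Dn{1} \otimes \Dn{1} \xto{\beta \otimes \alpha}
  \GrayC_{y,z} \otimes \GrayC_{x,y} \xto{\circ_{z,y,x}} \GrayC_{x,z},
\]
where $c : \Dn{2} \to \Dn{1} \otimes \Dn{1}$ is the $\oo$-foncteur of~\ref{paragr:def_can_Dij}, corresponding to the principal cell $\atom{c_1 \otimes c_1}$. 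So the first step is to analyze the $\oo$-foncteur $\gamma := \circ_{z,y,x} \o (\beta \otimes \alpha) : \Dn{1} \otimes \Dn{1} \to \GrayC_{x,z}$: it is a $2$-cell of $\GrayC_{x,z}$, i.e.\ a $3$-cell of $\GrayC$, and I want to describe its source, target, and the two iterated sources/targets in dimension $1$ as composites involving $\alpha$, $\beta$ and the $\comp_0$-operations.

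The key is that $\Dn{1} \otimes \Dn{1}$ has an explicit small description: using Proposition~\ref{prop:Steiner_disques} and Proposition~\ref{prop:tens_Steiner}, we have $\Dn{1} \otimes \Dn{1} \simeq \nu(\lambda(\Dn{1}) \otimes \lambda(\Dn{1}))$, whose base consists of the four $0$-cells $0 \otimes 0$, $0 \otimes 1$, $1 \otimes 0$, $1 \otimes 1$, the four $1$-cells obtained as tensors of a generating $1$-cell with a generating $0$-cell, and the principal $2$-cell $c_{1,1} = \atom{c_1 \otimes c_1}$. The source and target of $c_{1,1}$ in $\Dn{1}\otimes\Dn{1}$ are precisely the two composites of $1$-cells giving the two sides of the square $0 \otimes 0 \to 1 \otimes 1$, and by Proposition~\ref{prop:tens_morph} the images of the various $x \otimes y$ faces under $\beta \otimes \alpha$ are the corresponding $\beta_i \otimes \alpha_j$ (iterated identities of $s/t$ of $\beta$ and $\alpha$). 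Feeding these through $\circ_{z,y,x}$ and using Proposition~\ref{prop:Gray_fonct_1-cell} (which rewrites $\circ$ against a $1$-cell as a $\comp_0$-action) together with the unit compatibilities of Proposition~\ref{prop:circ_ident} identifies the two boundary $1$-cells of $\gamma \o c$ with $(\beta \comp_0 t(\alpha)) \comp_1 (s(\beta) \comp_0 \alpha)$ and $(t(\beta) \comp_0 \alpha) \comp_1 (\beta \comp_0 s(\alpha))$ respectively, and the $2$-dimensional face with $\beta \circ \alpha$ itself, which yields the announced square in $\GrayC$.

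Concretely I would proceed as follows. First, record the boundary of $c_{1,1}$ in $\Dn{1} \otimes \Dn{1}$: its $1$-source is $(t(c_1) \otimes c_1) \comp_0 (c_1 \otimes s(c_1))$ and its $1$-target is $(c_1 \otimes t(c_1)) \comp_0 (s(c_1) \otimes c_1)$ — or the analogous formula depending on the orientation conventions of the Gray tensor product fixed in~\ref{paragr:def_tens_comp}; I would verify this against the explicit formula for $\lambda(\Dn{1})\otimes\lambda(\Dn{1})$ since it is exactly the classical square cell. Second, apply $\beta \otimes \alpha$ and invoke Proposition~\ref{prop:tens_morph} to compute the images of each factor: $t(c_1) \otimes c_1 \mapsto \id{t(\beta)} \otimes \alpha$, etc. Third, apply $\circ_{z,y,x}$ and use Proposition~\ref{prop:Gray_fonct_1-cell} to turn $\circ_{z,y,x}(\id{t(\beta)} \otimes \alpha)$ into $t(\beta) \comp_0 \alpha$ and $\circ_{z,y,x}(\beta \otimes \id{s(\alpha)})$ into $\beta \comp_0 s(\alpha)$, being careful about the composition of the $\oo$-foncteur $\circ_{z,y,x}$ with $\comp_0$-operations and using its functoriality to split $\circ_{z,y,x}$ of a $\comp_0$ of $1$-cells into a $\comp_1$. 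The last statement of the proposition, namely the existence of the comparison $3$-cell $\beta \circ \alpha$ between the two Godement composites, is then just reading off the $2$-dimensional component of this computation.

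The main obstacle is purely bookkeeping: tracking the orientations of the faces of $\Dn{1} \otimes \Dn{1}$ under the sign conventions for the Gray tensor differential, and making sure that the two iterated sources/targets in dimension $1$ of $\gamma \o c$ land on the two Godement composites in the \emph{correct} order (i.e.\ that $s$ gives $(\beta \comp_0 t(\alpha)) \comp_1 (s(\beta) \comp_0 \alpha)$ and not the other composite). There is no conceptual difficulty once the explicit description of the square cell $c_{1,1}$ and its image are in hand; the whole argument is an instance of the technique already used in the proofs of Propositions~\ref{prop:circ_assoc}, \ref{prop:circ_ident} and~\ref{prop:contr_Gray_str}, namely reducing an identity about $\circ$ to a computation of images of principal cells of tensor products of disks via Proposition~\ref{prop:tens_morph} and the naturality of the tensor constraints.
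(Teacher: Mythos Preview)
Your proposal is correct and follows essentially the same approach as the paper. The paper's proof is more concise: it cites \cite[proposition B.1.6]{AraMaltsiJoint} directly for the explicit square description of $\Dn{1} \otimes \Dn{1}$ (rather than reconstructing it via Steiner theory as you outline), and then identifies the edges of the image square in one stroke by observing that $\comp_0$ is a special case of $\circ$, which amounts to your use of Proposition~\ref{prop:Gray_fonct_1-cell} together with Proposition~\ref{prop:tens_morph}.
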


\begin{proof}
  En vertu de \cite[proposition B.1.6]{AraMaltsiJoint}, en notant $a$ et $b$
  les cellules principales des copies de $\Dn{1}$ apparaissant de droite à
  gauche dans $\Dn{1} \otimes \Dn{1}$, on a
  \[
    \shorthandoff{;:}
    \Dn{1} \otimes \Dn{1} \simeq
    \raisebox{2pc}{
    $\xymatrix@C=3pc@R=3pc{
      \bullet
      \ar[d]_{\atom{b \otimes s(a)}}
      \ar[r]^{\atom{s(b) \otimes a}}
      &
      \bullet \ar[d]^{\atom{b \otimes t(a)}}
      \\
      \bullet \ar[r]_{\atom{t(b) \otimes a}}
      &
      \bullet
      \ar@{}[u];[l]_(.30){}="x"
      \ar@{}[u];[l]_(.70){}="y"
      \ar@2"x";"y"_{\atom{b \otimes a}\!\!\!}
    }$
    }.
  \]
  Par définition, le \oo-foncteur $\Dn{2} \to \Dn{1} \otimes \Dn{1}$ du
  paragraphe~\ref{paragr:def_can_Dij} correspond à la $2$\nbd-cellule~$\atom{b
  \otimes a}$. La composition $\comp_0$ étant un cas particulier de la
  composition $\circ$, on en déduit un carré
  \[
    \shorthandoff{;:}
    \xymatrix@C=3pc@R=3pc{
      \bullet
      \ar[d]_{\beta \comp_0 s(\alpha)}
      \ar[r]^{s(\beta) \comp_0 \alpha}
      &
      \bullet \ar[d]^{\beta \comp_0 t(\alpha)}
      \\
      \bullet \ar[r]_{t(\beta) \comp_0 \alpha}
      &
      \bullet
      \ar@{}[u];[l]_(.30){}="x"
      \ar@{}[u];[l]_(.70){}="y"
      \ar@2"x";"y"_{\beta \circ \alpha\!}
    }
  \]
  dans $\Homi_{\GrayC}(x, z)$, ce qu'on voulait démontrer.
\end{proof}

\begin{proposition}\label{prop:circ_horiz}
  Soit $\GrayC$ une \oo-catégorie de Gray.
  \begin{enumerate}
    \item
  Si
  \[
    \shorthandoff{;:}
    \xymatrix@C=3pc{
      \bullet
        \ar@/^5ex/[r]_{}="0"
        \ar[r]_{}="1"
        \ar@/_5ex/[r]_{}="2"
      &
      \bullet
        \ar@/^3ex/[r]_{}="3"
        \ar@/_3ex/[r]_{}="4"
      &
      \bullet
      \ar@2"0";"1"_{\alpha}
      \ar@2"1";"2"_{\beta}
      \ar@2"3";"4"_{\gamma}
    }
  \]
  est un diagramme dans $\GrayC$, alors on a
  \[
    \gamma \circ (\beta \comp_1 \alpha) =
      \big((t(\gamma) \comp_0 \beta) \comp_1 (\gamma \circ \alpha)\big)
      \comp_2
      \big((\gamma \circ \beta) \comp_1 (s(\gamma) \comp_0 \alpha)\big).
  \]
  \item
  De même, si
  \[
    \shorthandoff{;:}
    \xymatrix@C=3pc{
      \bullet
        \ar@/^3ex/[r]_{}="1"
        \ar@/_3ex/[r]_{}="0"
      &
      \bullet
        \ar@/^5ex/[r]_{}="2"
        \ar[r]_{}="3"
        \ar@/_5ex/[r]_{}="4"
      &
      \bullet
      \ar@2"1";"0"_{\alpha}
      \ar@2"2";"3"_{\beta}
      \ar@2"3";"4"_{\gamma}
    }
  \]
  est un diagramme $\GrayC$, alors on 
  \[
    (\gamma \comp_1 \beta) \circ \alpha
    = \big((\gamma \circ \alpha) \comp_1 (\beta \comp_0 s(\alpha))\big)
        \comp_2 \big((\gamma \comp_0 t(\alpha)) \comp_1 (\beta \circ
        \alpha)\big).
  \]
  \end{enumerate}
\end{proposition}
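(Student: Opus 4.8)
The plan is to read off both identities from the definition of the operation $\circ$ in terms of the composition $\infty$\nbd-functors $\circ_{z,y,x}$, the Gray tensor product, and the canonical cells $c : \Dn{i+j} \to \Dn{i} \otimes \Dn{j}$ of~\ref{paragr:def_can_Dij}, by reducing them to an identity of $\infty$\nbd-functors between tensor products of disks that can be checked at the level of strong Steiner complexes. I will carry out the first identity; the second is its mirror image. The key observation is that, viewed as an $\infty$\nbd-functor $\Dn{1} \to \Homi_{\GrayC}(x,y)$, the composite $2$\nbd-cell $\beta \comp_1 \alpha$ factors as $\Dn{1} \xto{\,c'\,} \Dn{1} \amalg_{\Dn{0}} \Dn{1} \xto{(\alpha,\beta)} \Homi_{\GrayC}(x,y)$, where $\Dn{1} \amalg_{\Dn{0}} \Dn{1}$ denotes the pushout gluing the target of one disk to the source of the other (the category $[2]$) and $c'$ classifies its composite $1$\nbd-cell. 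Since the Gray tensor product is bi\nbd-closed (\ref{thm:prod_tens}), the functor $\Dn{1} \otimes ({-})$ preserves pushouts, so that
\[
  \Dn{1} \otimes (\Dn{1} \amalg_{\Dn{0}} \Dn{1}) \simeq (\Dn{1} \otimes \Dn{1}) \amalg_{\Dn{1}} (\Dn{1} \otimes \Dn{1}),
\]
two copies of the Gray square glued along a copy of $\Dn{1}$ (using $\Dn{1} \otimes \Dn{0} \simeq \Dn{1}$). Consequently $\gamma \circ (\beta \comp_1 \alpha)$ is the image, under the $\infty$\nbd-functor induced by $\gamma \otimes \alpha$, $\gamma \otimes \beta$ and $\circ_{z,y,x}$, of the $2$\nbd-cell classified by the composite $\Dn{2} \xto{\,c\,} \Dn{1} \otimes \Dn{1} \xto{\Dn{1} \otimes c'} (\Dn{1} \otimes \Dn{1}) \amalg_{\Dn{1}} (\Dn{1} \otimes \Dn{1})$.

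The heart of the argument is then a combinatorial identity in the ``pasted Gray square'' $P := (\Dn{1} \otimes \Dn{1}) \amalg_{\Dn{1}} (\Dn{1} \otimes \Dn{1})$, which is itself a $2$\nbd-category: the $2$\nbd-cell classified by $(\Dn{1} \otimes c') \circ c$ equals the $\comp_1$\nbd-composite, in $P$, of two $2$\nbd-cells, each a whiskering by a $1$\nbd-cell of the image under $\iota_1$ (resp.\ $\iota_2$) of the principal $2$\nbd-cell $\atom{c_1 \otimes c_1}$ of one of the two sub-squares. I would prove this by descending to strong Steiner complexes: by~\ref{prop:Steiner_disques} and~\ref{thm:prod_tens}, $P$ is (isomorphic to) $\nu$ applied to the strong Steiner complex $\lambda(\Dn{1}) \otimes L$, where $L = \lambda(\Dn{1} \amalg_{\Dn{0}} \Dn{1}) = \lambda(\Dn{1}) \amalg_{\lambda(\Dn{0})} \lambda(\Dn{1})$ is a strong Steiner complex in which the composite $1$\nbd-cell is literally the sum of its two factors; the explicit tensor-product formulas of~\ref{paragr:def_tens_comp}, together with the compatibility of the atom construction with tensor products recorded in~\ref{prop:tens_morph}, then reduce the statement to a bookkeeping of differentials and positivity conditions, and full faithfulness of $\nu$ on strong Steiner complexes (\ref{thm:Steiner}) upgrades it to the claimed identity of $\infty$\nbd-functors. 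This is the analogue, for a composable pair of $1$\nbd-cells, of the explicit description of $\Dn{1} \otimes \Dn{1}$ invoked in the proof of~\ref{prop:s_t_Gray}, and this is where all the genuine work lies.

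Granting this, I would conclude by post-composing with $\gamma \otimes \alpha$, $\gamma \otimes \beta$ and $\circ_{z,y,x}$. Since $\circ_{z,y,x}$ is a strict $\infty$\nbd-functor it preserves $\comp_1$; the two sub-square contributions become the $3$\nbd-cells $\gamma \circ \alpha$ and $\gamma \circ \beta$ of $\GrayC$ by the definition of $\circ$; and the two whiskerings by $1$\nbd-cells become the whiskerings $s(\gamma) \comp_0 \alpha$ and $t(\gamma) \comp_0 \beta$ by~\ref{prop:Gray_fonct_1-cell}, which identifies whiskering by a $1$\nbd-cell with the action of an $\infty$\nbd-functor. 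Unwinding the resulting expression (in which the $\comp_1$ of $P$ has become the $\comp_2$ of $\GrayC$) gives exactly the stated formula. For the second identity one runs the same argument with the two slots of the tensor product exchanged: it is now $\gamma \comp_1 \beta$ that is a composite, one factors it through $\Dn{1} \amalg_{\Dn{0}} \Dn{1}$ in the left-hand slot, uses $(\Dn{1} \amalg_{\Dn{0}} \Dn{1}) \otimes \Dn{1} \simeq (\Dn{1} \otimes \Dn{1}) \amalg_{\Dn{1}} (\Dn{1} \otimes \Dn{1})$ (preservation of pushouts by $({-}) \otimes \Dn{1}$), and proceeds verbatim, the roles of source and target being swapped. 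The main obstacle, as indicated, is establishing the combinatorial identity in $P$ with all orientations and $\comp_j$\nbd-indices correct and carrying out the accompanying Steiner computation; everything downstream is formal, resting only on~\ref{prop:Gray_fonct_1-cell} and the strictness of $\circ_{z,y,x}$.
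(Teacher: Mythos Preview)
Your approach is correct and essentially the same as the paper's: factor $\beta \comp_1 \alpha$ through the fold map $\nabla : \Dn{1} \to \Dn{1} \amalg_{\Dn{0}} \Dn{1}$, use that $\Dn{1} \otimes ({-})$ preserves pushouts, identify the composite $\Dn{2} \xto{c} \Dn{1} \otimes \Dn{1} \xto{\Dn{1} \otimes \nabla} (\Dn{1} \otimes \Dn{1}) \amalg_{\Dn{1}} (\Dn{1} \otimes \Dn{1})$ with the pasting of the two principal $2$\nbd-cells of the sub-squares, and then push forward along $\gamma \otimes (\beta,\alpha)$ and $\circ_{z,y,x}$. The only difference is that the paper does not redo the Steiner computation you outline for the ``pasted Gray square'': it simply invokes \cite[proposition~B.1.6]{AraMaltsiJoint} for the explicit description of $\Dn{1} \otimes (\Dn{1} \amalg_{\Dn{0}} \Dn{1})$ and reads off the result.
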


\begin{proof}
  Démontrons la première assertion, la seconde se démontrant de manière
  analogue. Considérons donc un diagramme
  \[
    \shorthandoff{;:}
    \xymatrix@C=3pc{
      x
        \ar@/^5ex/[r]_{}="0"
        \ar[r]_{}="1"
        \ar@/_5ex/[r]_{}="2"
      &
      y
        \ar@/^3ex/[r]_{}="3"
        \ar@/_3ex/[r]_{}="4"
      &
      z
      \ar@2"0";"1"_{\alpha}
      \ar@2"1";"2"_{\beta}
      \ar@2"3";"4"_{\gamma}
    }
  \]
  dans $\GrayC$. Notons $\nabla : \Dn{1} \to \Dn{1}
  {}^\sigma\!\!\amalg^{\tau}_{\Dn{0}} \Dn{1}$, où $\sigma$ et $\tau$ désignent
  les \oo-foncteurs du paragraphe~\ref{paragr:def_Dn}, le \oo-foncteur
  correspondant au composé des deux $1$-cellules de
  \[ \Dn{1} \amalg_{\Dn{0}}
    \Dn{1} = \xymatrix{\bullet \ar[r] & \bullet
    \ar[r] & \bullet \pbox{.}}
  \]
  Par définition, la $2$-cellule $\gamma \circ (\beta \comp_1 \alpha)$
  correspond au \oo-foncteur
  \[
    \xymatrix@C=2.5pc{
      \Dn{2}
      \ar[r]^c
      &
      \Dn{1} \otimes \Dn{1}
      \ar[r]^-{\Dn{1} \otimes \nabla}
      &
      \Dn{1} \otimes (\Dn{1} \amalg_{\Dn{0}} \Dn{1})
      \ar[r]^-{\gamma \otimes (\beta, \alpha)}
      &
      \GrayC_{y,z} \otimes \GrayC_{x, y}
      \ar[r]^-{\circ_{z,y,x}}
      &
      \GrayC_{x,z}
      \pbox{.}
    }
  \]
  Notons que, le produit tensoriel commutant aux limites inductives en
  chaque variable, on a un isomorphisme canonique
  \[
    \Dn{1} \otimes (\Dn{1} \amalg_{\Dn{0}} \Dn{1})
    \simeq
    (\Dn{1} \otimes \Dn{1}) \amalg_{\Dn{1}} (\Dn{1} \otimes \Dn{1})
  \]
  et le \oo-foncteur $\gamma \otimes (\beta, \alpha)$ s'identifie, à travers
  cet isomorphisme, au \oo-foncteur~$(\gamma \otimes \beta, \gamma \otimes
  \alpha)$. En vertu de \cite[proposition B.1.6]{AraMaltsiJoint}, en notant
  $a$, $b$ et $c$ les cellules principales des copies de $\Dn{1}$
  apparaissant de droite à gauche dans $\Dn{1} \otimes (\Dn{1}
  \amalg_{\Dn{0}} \Dn{1})$, on a
  \[
    \shorthandoff{;:}
    \Dn{1} \otimes (\Dn{1} \amalg_{\Dn{0}} \Dn{1}) \simeq
    \raisebox{2pc}{
    $\xymatrix@C=3pc@R=3pc{
      \bullet
      \ar[d]_{\atom{c \otimes s(a)}}
      \ar[r]^{\atom{s(c) \otimes a}}
      &
      \bullet
      \ar[d]
      \ar[r]^{\atom{s(c) \otimes b}}
      &
      \bullet
      \ar[d]^{\atom{c \otimes t(b)}}
      \\
      \bullet
      \ar[r]_{\atom{t(c) \otimes a}}
      &
      \bullet
      \ar[r]_{\atom{t(c) \otimes b}}
      \ar@{}[u];[l]_(.30){}="x"
      \ar@{}[u];[l]_(.70){}="y"
      \ar@2"x";"y"_{\atom{c \otimes a}\!\!}
      &
      \bullet
      \ar@{}[u];[l]_(.30){}="x"
      \ar@{}[u];[l]_(.70){}="y"
      \ar@2"x";"y"_{\atom{c \otimes b}\!\!}
    }$
    }\,,
  \]
  la flèche verticale du milieu étant $\atom{c \otimes t(a)} = \atom{c
  \otimes s(b)}$. L'image de ce diagramme par le \oo-foncteur de $\Dn{1}
  \otimes (\Dn{1} \amalg_{\Dn{0}} \Dn{1})$ vers $\GrayC_{x,z}$ est le
  diagramme
  \[
    \shorthandoff{;:}
    \xymatrix@C=3pc@R=3pc{
      \bullet
      \ar[d]_{\gamma \comp_0 s(\alpha)}
      \ar[r]^{s(\gamma) \comp_0 \alpha}
      &
      \bullet
      \ar[d]
      \ar[r]^{s(\gamma) \comp_0 \beta}
      &
      \bullet
      \ar[d]^{\gamma \comp_0 t(\beta)}
      \\
      \bullet
      \ar[r]_{t(\gamma) \comp_0 \alpha}
      &
      \bullet
      \ar[r]_{t(\gamma) \comp_0 \beta}
      \ar@{}[u];[l]_(.30){}="x"
      \ar@{}[u];[l]_(.70){}="y"
      \ar@2"x";"y"_{\gamma \circ \alpha\!\!}
      &
      \bullet
      \ar@{}[u];[l]_(.30){}="x"
      \ar@{}[u];[l]_(.70){}="y"
      \ar@2"x";"y"_{\gamma \circ \beta\!\!}
      \pbox{,}
    }
  \]
  la flèche verticale du milieu étant $\gamma \comp_0 t(\alpha) = \gamma
  \comp_0 s(\beta)$. Puisque le \oo-foncteur $\Dn{1} \otimes \nabla$
  correspond à la composition des carrés, la cellule $\gamma \circ (\beta,
  \alpha)$ est la $2$-cellule du composé des deux carrés ci-dessus, ce qui
  donne la formule qu'on cherchait à établir.
\end{proof}

Dans \cite{AraMaltsiJoint}, nous conjecturons que si $\GrayC$ est une
\oo-catégorie de Gray et $c$ est un objet de $\GrayC$, alors il existe une
\oo-catégorie de Gray tranche $\tr{\GrayC}{c}$ (voir la
conjecture~C.24). À défaut de prouver cette conjecture, nous allons
maintenant prouver qu'il existe une sesquicatégorie tranche
$\tr{\GrayC}{c}$.

\begin{paragraph}\label{paragr:def_sesqui}
  Rappelons qu'une \ndef{sesquicatégorie} est une catégorie $\SesquiC$ munie d'un foncteur
  \[ \Homi_{\SesquiC} : \SesquiC^\o \times \SesquiC \to \Cat \]
  rendant le triangle
  \[
    \shorthandoff{;}
    \xymatrix@C=1.5pc{
      \SesquiC^\o \times \SesquiC 
      \ar[rr]^-{\Homi_{\SesquiC}}
      \ar[dr]_-{\Hom_\SesquiC}
      & &
      \Cat
      \ar[dl]^-{\Ob}
      \\
      &
      \Ens
      &
    }
  \]
  commutatif.
  Ainsi, si $\C$ est une sesquicatégorie, on dispose d'objets, aussi appelés
  $0$-cellules, de $1$-cellules et de $2$-cellules. Les objets et les
  $1$-cellules ont des identités. On peut par ailleurs composer les
  $1$\nbd-cellules et composer verticalement les $2$-cellules. On notera ces
  deux compositions par la concaténation. On ne peut néanmoins pas composer
  horizontalement les $2$\nbd-cellules mais on dispose d'une composition
  $\comp$ d'une $1$-cellule suivie d'une $2$-cellule ou d'une $2$-cellule
  suivie d'une $1$-cellule. En notant $x, y$ des objets, $f, g$ des
  $1$-cellules et $\alpha, \beta$ des $2$-cellules de $\SesquiC$, les
  axiomes vérifiés par cette composition $\comp$ sont les suivants :
  \begin{align*}
    \qquad & \mkern50mu \zbox{$(g \comp \alpha) \comp f = g \comp (\alpha
    \comp f)$} \\
    \id{y} \comp \alpha & = \alpha & \alpha \comp \id{x} & = \alpha \\
    g \comp (f \comp \alpha) & = (gf) \comp \alpha &
      (\alpha \comp g) \comp f & = \alpha \comp (gf) \\
    g \comp \id{f} & = \id{gf} & \id{g} \comp f & = \id{gf} \\
    g \comp (\beta\alpha) & = (g \comp \beta)(g \comp \alpha) &
      (\beta\alpha) \comp f & = (\beta \comp f)(\alpha \comp f),
  \end{align*}
  lorsque ces compositions ont un sens.

  Si $\SesquiC$ et $\SesquiD$ sont deux sesquicatégories, un
  \ndef{sesquifoncteur} $F : \SesquiC \to \SesquiD$ est la donnée d'un
  foncteur $F$ de la catégorie sous-jacente à $\SesquiC$ vers la catégorie
  sous-jacente à $\SesquiD$ et d'une transformation naturelle
  \[
    \shorthandoff{;}
    \xymatrix@C=1.5pc{
      \SesquiC^\o \times \SesquiC \ar[rr]^-{F^\o \times F}
      \ar[dr]_-{\Homi_{\SesquiC}}_(.60){}="g"
      & &
      \SesquiD^\o \times \SesquiD \ar[dl]^-{\Homi_{\SesquiD}}
      \\
      & \Cat
      \ar@{}"g";[ur]_(.10){}="sa"_(.50){}="ta"
      \ar@2"sa";"ta"^{\phi}
    }
  \]
  au-dessus de $\Ens$ au sens où, pour tous objets $x$ et $y$ de $\SesquiC$,
  l'application
  \[ \Ob(\phi_{x, y}) : \Hom_{\SesquiC}(x, y) \to \Hom_{\SesquiD}(F(x),
  F(y)) \]
  est celle induite par le foncteur $F$. Explicitement, un sesquifoncteur $F
  : \SesquiC \to \SesquiD$ associe à toute $i$\nbd-cellule~$x$ de
  $\SesquiC$, pour $i = 0, 1, 2$, une $i$-cellule $F(x)$ de $\SesquiD$, ceci
  de manière compatible aux sources et aux buts, de sorte qu'on ait
  \begin{align*}
    F(\id{x}) & = \id{F(x)} &
    F(gf) & = F(g)F(f) &
    F(\beta \comp f) & = F(\beta) \comp F(f) \\
    F(\id{f}) & = \id{F(f)} &
    F(\beta\alpha) & = F(\beta)F(\alpha) &
    F(g \comp \alpha) & = F(g) \comp F(\alpha),
  \end{align*}
  où $x$ désigne un objet, $f, g$ des $1$-cellules et $\alpha, \beta$ des
  $2$-cellules de~$\SesquiC$, dès que ces compositions ont un sens.
\end{paragraph}

\begin{remark}\label{rem:sesqui_sous-jacente}
  Il découle des résultats de cette section qu'une \oo-catégorie de Gray
  a une sesquicatégorie sous-jacente et même une \oo-sesquicatégorie
  sous-jacente (la notion de \oo-sesquicatégorie est obtenue en remplaçant
  $\Cat$ par $\ooCat$ dans la définition de sesquicatégorie,
  voir~\cite[paragraphe C.3]{AraMaltsiJoint}). Cela résulte en fait de
  considérations purement formelles (voir \cite[paragraphe
  C.9]{AraMaltsiJoint}).
\end{remark}

\begin{paragraph}\label{paragr:def_tr_Gray}
  Soit $\GrayC$ une \oo-catégorie de Gray et soit $c$ un objet de $\GrayC$.
  On va définir une sesquicatégorie $\tr{\GrayC}{c}$. Commençons par définir
  ses cellules.
  \begin{itemize}[wide]
    \item Les objets de $\tr{\GrayC}{c}$ sont les couples $(x, f)$, où $x$
      est un objet de $\GrayC$ et $f : x \to c$ une $1$-cellule, 
      c'est-à-dire les diagrammes
      \[
        \xymatrix{
          x \ar[d]_f \\ c
        }
      \]
      dans $\GrayC$.
    \item Les $1$-cellules sont les diagrammes
  \[
    \shorthandoff{;}
    \xymatrix@C=1.5pc{
      x \ar[rr]^u \ar[dr]_(0.40){\phantom{f'}f}_(.60){}="f" & & x' \ar[dl]^(0.40){f'} \\
      & c
      \ar@{}"f";[ur]_(.15){}="ff"
      \ar@{}"f";[ur]_(.55){}="oo"
      \ar@<-0.0ex>@2"oo";"ff"_\alpha
      &
    }
  \]
  dans $\GrayC$, où
  \[ u : x \to x' \quadet \alpha : f'\comp_0 u \tod f. \]
  On notera $(u, \alpha)$ une telle $1$-cellule, sous-entendant
  ainsi $f$ et $f'$. La source de $(u, \alpha)$ est $(x, f)$ et son but est
  $(x', f')$.
    \item Les $2$-cellules sont les diagrammes
   \[
      \shorthandoff{;:}
      \xymatrix@C=1.5pc@R=3pc{
        x \ar@/^2ex/[rr]^(.33){u}_{}="1" \ar@/_2ex/[rr]^(.30){u'}_{}="0"
        \ar[dr]_{}="f"_{\phantom{f'}f}
        \ar@2"1";"0"^{\,\gamma}
        & & x' \ar[dl]^{f'} \\
        & c
        \ar@{}"f";[ur]_(.15){}="ff"
        \ar@{}"f";[ur]_(.55){}="oo"
        \ar@<2.0ex>@/^1ex/@{=>}"oo";"ff"^(.70){\alpha'}_(.30){}="h'"
        \ar@<0.5ex>@/^-1ex/@{:>}"oo";"ff"_(.65){\alpha}_(.80){}="h"
        \ar@3"h'";"h"_(.20){\Gamma_{}}
        &
        }
  \]
  dans $\GrayC$, où
  \[ \gamma : u \tod u' \quadet \Gamma : \alpha' \comp_1 (f' \comp_0 \gamma)
  \tot \alpha. \]
  On notera $(\gamma, \Gamma)$ une telle $2$-cellule, sous-entendant ainsi,
  comme dans le cas des $1$\nbd-cellules, les autres cellules du diagramme.
  La source de $(\gamma, \Gamma)$ est $(u, \alpha)$ et son but est~$(u',
  \alpha')$.
  \end{itemize}

  Définissons maintenant les identités.
  \begin{itemize}[wide]
    \item L'identité d'un objet
      \[
        \xymatrix{
          x \ar[d]_f \\ c
        }
      \]
      est la $1$-cellule
  \[
    \shorthandoff{;}
    \xymatrix@C=1.5pc{
      x \ar[rr]^{\id{x}} \ar[dr]_(0.40){f}_(.60){}="f" & & x \ar[dl]^(0.40){f} \\
      & c
      \ar@{}"f";[ur]_(.15){}="ff"
      \ar@{}"f";[ur]_(.55){}="oo"
      \ar@<-0.0ex>@2"oo";"ff"_{\id{f}}
      & \pbox{.}
    }
  \]
    \item L'identité d'une $1$-cellule
  \[
    \shorthandoff{;}
    \xymatrix@C=1.5pc{
      x \ar[rr]^u \ar[dr]_(0.40){\phantom{f'}f}_(.60){}="f" & & x' \ar[dl]^(0.40){f'} \\
      & c
      \ar@{}"f";[ur]_(.15){}="ff"
      \ar@{}"f";[ur]_(.55){}="oo"
      \ar@<-0.0ex>@2"oo";"ff"_\alpha
      &
    }
  \]
  est la $2$-cellule
   \[
      \shorthandoff{;:}
      \xymatrix@C=1.5pc@R=3pc{
        x \ar@/^2ex/[rr]^(.33){u}_{}="1" \ar@/_2ex/[rr]^(.30){u}_{}="0"
        \ar[dr]_{}="f"_{\phantom{f'}f}
        \ar@2"1";"0"^{\,\id{u}}
        & & x' \ar[dl]^{f'} \\
        & c
        \ar@{}"f";[ur]_(.15){}="ff"
        \ar@{}"f";[ur]_(.55){}="oo"
        \ar@<2.0ex>@/^1ex/@{=>}"oo";"ff"^(.65){\alpha}_(.30){}="h'"
        \ar@<0.5ex>@/^-1ex/@{:>}"oo";"ff"_(.65){\alpha}_(.80){}="h"
        \ar@3"h'";"h"_(.20){\id{\alpha}}
        & \pbox{.}
        }
  \]
  \end{itemize}

  Définissons enfin les compositions.
  \begin{itemize}[wide]
    \item Le composé de deux $1$-cellules composables
  \[
    \shorthandoff{;}
    \xymatrix{
      x \ar[r]^u \ar[dr]_{}="g"_(.40){f}
      & x' \ar[r]^{u'}_(.75){}="fp" \ar[d]_(.70){}="gp"_(.56){f'} & x''
      \ar[dl]_{}="gpp"^(.38){f''} \\
      & c
      \ar@{}"g";[u]_(0.10){}="x"
      \ar@{}"g";[u]_(.85){}="y"
      \ar@<-0.1ex>@{<=}"x";"y"^(.30){\alpha}
      \ar@{}"gp";"fp"_(.25){}="x2"
      \ar@{}"gp";"fp"_(.75){}="y2"
      \ar@<0.4ex>@{<=}"x2";"y2"^(0.40){\alpha'\!}
    }
  \]
  est la $1$-cellule
  \[
    \shorthandoff{;}
    \xymatrix@C=1.5pc{
      x \ar[rr]^{v} \ar[dr]_(0.40){\phantom{f'}f}_(.60){}="f" 
        & & x'' \ar[dl]^(0.40){f''} \\
      & c
      \ar@{}"f";[ur]_(.15){}="ff"
      \ar@{}"f";[ur]_(.55){}="oo"
      \ar@<-0.0ex>@2"oo";"ff"_\beta
      & \pbox{,}
    }
  \]
  où
  \[ v = u' \comp_0 u \quadet \beta = \alpha \comp_1 (\alpha'
  \comp_0 u). \]
  \item Le composé verticale de deux $2$-cellules
  \[
      \shorthandoff{;:!}
      \xymatrix@C=2pc@R=4.5pc{
        x \ar@/^3.5ex/[rr]^(.22)*+<-.2em>{\labelstyle u}_(.65){}="2"
        \ar[rr]^(.22)*+<-.3em>{\labelstyle u'}_(.65){}="1"
        \ar@/_3.5ex/[rr]^(.20)*+<-.3em>{\labelstyle u''}_(.65){}="0"
        \ar[dr]_{}="f"_{\phantom{f'}f}
        \ar@{<=}"0";"1"_{\gamma'}
        \ar@{<=}"1";"2"_{\gamma}
        & & x' \ar[dl]^{f'} \\
        & c
        \ar@{}"f";[ur]_(.15){}="ff"
        \ar@{}"f";[ur]_(.55){}="oo"
        \ar@<2.0ex>@/^1.5ex/@{<:}"ff";"oo"^(.50)*+<-.1em>{\labelstyle \alpha}_(.30){}="h''"
        \ar@<0ex>@/^0ex/@{<:}"ff";"oo"^(.0)*+<-.5em>{\labelstyle{\alpha'\!\!}}_(.30){}="h'"_(.70){}="h'2"
        \ar@<-2.0ex>@/^-1.5ex/@2{<-}"ff";"oo"_(.36){\alpha''}_(.80){}="h"
        \ar@3"h'2";"h''"_(.28){\Gamma}
        \ar@3"h";"h'"_(.20){\Gamma'_{}}
        }
  \]
  est la $2$-cellule
  \[
      \shorthandoff{;:}
      \xymatrix@C=1.5pc@R=3pc{
        x \ar@/^2ex/[rr]^(.32){u}_{}="1" \ar@/_2ex/[rr]^(.30){u''}_{}="0"
        \ar[dr]_{}="f"_{\phantom{f'}f}
        \ar@2"1";"0"^{\,\delta}
        & & x' \ar[dl]^{f'} \\
        & c
        \ar@{}"f";[ur]_(.15){}="ff"
        \ar@{}"f";[ur]_(.55){}="oo"
        \ar@<2.0ex>@/^1ex/@{=>}"oo";"ff"^(.70){\alpha''}_(.30){}="h'"
        \ar@<0.5ex>@/^-1ex/@{:>}"oo";"ff"_(.65){\alpha}_(.80){}="h"
        \ar@3"h'";"h"_(.20){\Delta_{}}
        & \pbox{,}
        }
  \]
  où
  \[
    \delta = \gamma' \comp_1 \gamma
    \quadet
    \Delta = \Gamma \comp_2 \big(\Gamma' \comp_1 (f' \comp_0 \gamma)\big).
  \]
  \item Le composé horizontal d'une $1$-cellule suivie d'une $2$-cellule
  \[
      \shorthandoff{;:}
      \xymatrix@C=3.5pc@R=3.5pc{
      x  \ar[r]^u \ar[dr]_{}="g"_{\phantom{f''}f} &
      x' \ar@/^2ex/[r]^(.33){u'}_{}="1"
      \ar@/_2ex/[r]^(.30){u''}_{}="0"_(.70){}="fp"
      \ar[d]_(.50){}="gp2"_(.20){}="gp"_(.62){f'} &
      x'' \ar[dl]^{f''} &
      \ar@2{<-}"0";"1"_{\gamma} \\
        & c
      \ar@{}"g";[u]_(0.10){}="x"
      \ar@{}"g";[u]_(.75){}="y"
      \ar@<-0.1ex>@2{<-}"x";"y"^(.30){\alpha}
      \ar@{}"gp2";"fp"_(.10){}="ff2"
      \ar@{}"gp2";"fp"_(.55){}="oo2"
      \ar@<+0.5ex>@/^1ex/@{<:}"ff2";"oo2"^{\!\alpha'}_(.40){}="h'''"
      \ar@<-0.5ex>@/^-1.5ex/@2{<-}"ff2";"oo2"_(.45){\!\!\!\alpha''}_(.70){}="h''"
      \ar@3"h''";"h'''"_(.20){\Gamma_{}}
      }
  \]
  est la $2$-cellule
  \[
      \shorthandoff{;:}
      \xymatrix@C=1.5pc@R=3pc{
        x \ar@/^2ex/[rr]^(.32){}_{}="1" \ar@/_2ex/[rr]^(.30){}_{}="0"
        \ar[dr]_{}="f"_{\phantom{f''}f}
        \ar@2"1";"0"^{\,\delta}
        & & x'' \ar[dl]^{f''} \\
        & c
        \ar@{}"f";[ur]_(.15){}="ff"
        \ar@{}"f";[ur]_(.55){}="oo"
        \ar@<2.0ex>@/^1ex/@{=>}"oo";"ff"^(.70){}_(.30){}="h'"
        \ar@<0.5ex>@/^-1ex/@{:>}"oo";"ff"_(.65){}_(.80){}="h"
        \ar@3"h'";"h"_(.20){\Delta_{}}
        & \pbox{,}
        }
  \]
  où
  \[
    \delta = \gamma \comp_0 u
    \quadet
    \Delta = \alpha \comp_1 (\Gamma \comp_0 u).
  \]
  \item Enfin, le composé horizontal d'une $2$-cellule suivie d'une
    $1$-cellule
  \[
    \shorthandoff{;:}
    \xymatrix@C=3.5pc@R=3.5pc{
    x
    \ar@/^2ex/[r]^(.31){u}_{}="1"
    \ar@/_2ex/[r]^(.30){u'}_{}="0"_(.70){}="f"
    \ar[dr]_{}="g"_{\phantom{f''}f}
    \ar@2{<-}"0";"1"_{\,\gamma}
    &
    x' \ar[r]^{u''}_(.75){}="fp"
       \ar[d]_(.70){}="gp2"_(.20){}="gp"^(0.62){f'}
    &
    x'' \ar[dl]^{f''}
    \\
    & c
    \ar@{}"g";"gp"_(.15){}="ff1"
    \ar@{}"g";"gp"_(.80){}="oo1"
    \ar@<-0.0ex>@/^1ex/@{<:}"ff1";"oo1"^(.35){\alpha}_(.40){}="h'"
    \ar@<-1.0ex>@/^-1.5ex/@2{<-}"ff1";"oo1"_(.46){\!\!\!\alpha'}_(.70){}="h"
    \ar@3"h";"h'"_(.20){\Gamma_{}}
    \ar@{}"gp2";"fp"_(.25){}="x2"
    \ar@{}"gp2";"fp"_(.75){}="y2"
    \ar@<0.4ex>@2{<-}"x2";"y2"^{\alpha''}
    }
  \]
  est la $2$-cellule
  \[
      \shorthandoff{;:}
      \xymatrix@C=1.5pc@R=3pc{
        x \ar@/^2ex/[rr]^(.32){}_{}="1" \ar@/_2ex/[rr]^(.30){}_{}="0"
        \ar[dr]_{}="f"_{\phantom{f''}f}
        \ar@2"1";"0"^{\,\delta}
        & & x'' \ar[dl]^{f''} \\
        & c
        \ar@{}"f";[ur]_(.15){}="ff"
        \ar@{}"f";[ur]_(.55){}="oo"
        \ar@<2.0ex>@/^1ex/@{=>}"oo";"ff"^(.70){}_(.30){}="h'"
        \ar@<0.5ex>@/^-1ex/@{:>}"oo";"ff"_(.65){}_(.80){}="h"
        \ar@3"h'";"h"_(.20){\Delta_{}}
        & \pbox{,}
        }
  \]
  où
  \[
    \delta = u'' \comp_0 \gamma
    \quadet
    \Delta = (\Gamma \comp_1 (\alpha'' \comp_0 u)) \comp_2 (\alpha' \comp_1
    (\alpha'' \circ \gamma)).
  \]
  Notons que $\Delta$ a les source et but attendus. En effet, en vertu de la
  proposition~\ref{prop:s_t_Gray}, on a
  \[
    \alpha' \comp_1 (\alpha'' \circ \gamma) :
    \alpha' \comp_1 (\alpha'' \comp_0 u') \comp_1 (f'' \comp_0 u'' \comp_0
    \gamma)
    \tot
    \alpha' \comp_1 (f' \comp_0 \gamma) \comp_1 (\alpha'' \comp_0 u)
  \]
  et
  \[ \Gamma \comp_1 (\alpha'' \comp_0 u) :
    \alpha' \comp_1 (f' \comp_0 \gamma) \comp_1 (\alpha'' \comp_0 u)
    \tot
    \alpha \comp_1 (\alpha'' \comp_0 u).
  \]
  \end{itemize}
\end{paragraph}

\begin{theorem}
  Soit $\GrayC$ une \oo-catégorie de Gray et soit $c$ un objet de $\GrayC$.
  Alors $\tr{\GrayC}{c}$ est bien une sesquicatégorie.
\end{theorem}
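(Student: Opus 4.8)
L'idée est de montrer que toutes les données définies au paragraphe~\ref{paragr:def_tr_Gray} satisfont aux axiomes d'une sesquicatégorie, en ramenant systématiquement chaque vérification aux propriétés de l'opération $\circ$ et de la composition $\comp_j$ d'une \oo-catégorie de Gray établies dans les propositions~\ref{prop:circ_assoc}, \ref{prop:circ_ident}, \ref{prop:s_t_Gray} et~\ref{prop:circ_horiz}. Concrètement, il faut vérifier : (i)~que les objets et les $1$-cellules, munis de la composition des $1$-cellules et des identités, forment une catégorie ; (ii)~que pour tous objets $(x, f)$ et $(x', f')$, les $2$-cellules de source et but fixés forment, avec la composition verticale et les identités de $1$-cellules, une catégorie — c'est-à-dire la donnée d'une \oo-catégorie $\Homi_{\tr{\GrayC}{c}}((x, f), (x', f'))$ ; (iii)~que la composition $\comp$ (une $1$-cellule suivie d'une $2$-cellule, ou une $2$-cellule suivie d'une $1$-cellule) satisfait aux huit familles d'axiomes listées au paragraphe~\ref{paragr:def_sesqui}. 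Pour chacune de ces vérifications, on précompose l'égalité voulue de $\GrayC$ par les cellules sous-jacentes et on remplace $\tr{\GrayC}{c}$ par les formules explicites : la première composante ($v = u' \comp_0 u$, $\delta = \gamma' \comp_1 \gamma$, etc.) se traite directement dans $\GrayC$ vue comme sesquicatégorie sous-jacente (remarque~\ref{rem:sesqui_sous-jacente}), et la seconde composante (l'antihomotopie $\beta$, la $2$-antihomotopie $\Delta$, etc.) demande un calcul dans la \oo-catégorie $\Homi_{\GrayC}(x, z)$ utilisant l'associativité et les compatibilités aux unités de $\circ$.

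\textbf{Déroulement.} Je commencerais par (i), qui est formel : l'associativité de la composition des $1$-cellules repose sur l'associativité de $\comp_0$ dans $\GrayC$ et sur l'identité $\alpha \comp_1 ((\alpha' \comp_1 (\alpha'' \comp_0 u')) \comp_0 u) = \alpha \comp_1 (\alpha' \comp_0 u) \comp_1 (\alpha'' \comp_0 u' \comp_0 u)$, qui découle de la proposition~\ref{prop:Gray_fonct_1-cell} (fonctorialité de $\comp_0$ par une $1$-cellule) et de l'associativité de $\comp_1$. Les axiomes d'unité résultent de la proposition~\ref{prop:circ_ident}. Ensuite (ii) : la composition verticale des $2$-cellules, en première composante, est la composition $\comp_1$ dans $\GrayC_{x, x'}$ ; en seconde composante, l'associativité de la formule $\Delta = \Gamma \comp_2 (\Gamma' \comp_1 (f' \comp_0 \gamma))$ se déduit de l'associativité de $\comp_2$, de la fonctorialité de $\comp_1$ et $\comp_0$, et du fait que $f' \comp_0 (\gamma' \comp_1 \gamma) = (f' \comp_0 \gamma') \comp_1 (f' \comp_0 \gamma)$. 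Puis (iii) : chacun des axiomes de compatibilité de $\comp$ se vérifie en première composante par les lois de $\GrayC$ (notamment la règle de Godement pour les $2$-cellules, qui est bien valable puisqu'on compose une $1$-cellule et une $2$-cellule, jamais deux $2$-cellules horizontalement) et en seconde composante par un calcul de $2$-antihomotopies dans $\GrayC_{x, x''}$ faisant apparaître, pour les deux compositions « mixtes », les formules $\Delta = \alpha \comp_1 (\Gamma \comp_0 u)$ et $\Delta = (\Gamma \comp_1 (\alpha'' \comp_0 u)) \comp_2 (\alpha' \comp_1 (\alpha'' \circ \gamma))$ ; c'est ici qu'intervient de façon essentielle la contrainte de Gray $\alpha'' \circ \gamma$ via la proposition~\ref{prop:s_t_Gray}, qui en donne la source et le but, et la proposition~\ref{prop:circ_horiz} pour les compatibilités de $\circ$ aux compositions $\comp_1$.

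\textbf{Principal obstacle.} La difficulté n'est pas conceptuelle mais combinatoire : il s'agit de gérer correctement les nombreuses $2$-antihomotopies (notées par des lettres grecques majuscules) et de vérifier à chaque fois qu'elles ont les source et but attendus — c'est-à-dire que les compositions écrites sont bien définies dans $\GrayC_{x, x''}$. Le cas le plus délicat est l'axiome d'échange $(g \comp \beta) \comp f = g \comp (\beta \comp f)$ pour les $1$-cellules $f, g$ et une $2$-cellule $\beta$, ainsi que la vérification que le composé vertical de deux composés horizontaux coïncide avec le composé horizontal des composés verticaux lorsque l'une des deux entrées reste une $1$-cellule : ces calculs font intervenir à la fois la proposition~\ref{prop:circ_assoc} (associativité de $\circ$), la proposition~\ref{prop:circ_horiz} et l'interversion des diverses compositions $\comp_1$, $\comp_2$ dans $\GrayC_{x, x''}$. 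Je m'attends à ce que ces vérifications soient longues mais entièrement mécaniques une fois qu'on a soigneusement explicité les types de toutes les cellules en jeu ; aucune idée nouvelle n'est requise au-delà des propositions déjà établies dans cette sous-section.
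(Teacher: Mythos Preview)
Your plan is correct and the ingredients you list (propositions~\ref{prop:Gray_fonct_1-cell}, \ref{prop:circ_assoc}, \ref{prop:circ_ident}, \ref{prop:s_t_Gray}, \ref{prop:circ_horiz}) are exactly the ones the paper uses. The paper does, however, organise the work more economically than you propose: it observes that all the formulas defining $\tr{\GrayC}{c}$ \emph{except} the horizontal composition of a $2$-cell followed by a $1$-cell are literally the same as the corresponding formulas for the slice $\tr{C}{c}$ of a strict $\infty$-category, already treated in~\cite[chapitre~9]{AraMaltsiJoint}. Hence all the axioms not involving that particular composition (your step~(i), step~(ii), and the right-hand column of the sesquicategory axioms) hold by the same calculations as in the strict case and need no new argument. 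The paper then verifies only the five remaining axioms --- the mixed associativity $(g \comp \alpha) \comp f = g \comp (\alpha \comp f)$ and the four left-column axioms --- each time reducing the question to the equality of the $3$-cells of two $3$-cônes and invoking~\ref{prop:circ_assoc}, \ref{prop:circ_ident} or~\ref{prop:circ_horiz} as appropriate. Your exhaustive verification would of course work too, but recognising which axioms are ``strict'' cuts the workload roughly in half.
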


\begin{proof}
  Les formules définissant les cellules de $\tr{\GrayC}{c}$, ainsi que leurs
  identités et compositions, mis à part la composition horizontale d'une
  $2$-cellule suivie d'une $1$-cellule, sont les mêmes que pour les tranches
  $\tr{C}{c}$ pour $C$ une \oo-catégorie stricte (voir \cite[propositions
  9.6 et 9.15]{AraMaltsiJoint} pour la tranche $\cotr{C}{c}$). On vérifie
  par les mêmes calculs que tous les axiomes des sesquicatégories ne
  faisant par intervenir la composition horizontale mentionnée ci-dessus
  sont vérifiés par $\tr{\GrayC}{c}$. Il nous reste donc à vérifier le
  premier axiome et les axiomes de la colonne de gauche de la définition de
  sesquicatégorie donnée au paragraphe~\ref{paragr:def_sesqui}.

  Commençons par le premier axiome. Soit donc
  \[
      \shorthandoff{;:}
      \xymatrix@C=1.5pc@R=3pc{
        x \ar[rr]^u
        \ar[drrr]_(0.47)f_{}="nf"
        & &
        x'
        \ar@/^2ex/[rr]^(.33){u'}_{}="1"
        \ar@/_2ex/[rr]^(.30){u''}_{}="0"
        \ar[dr]_{}="f"_{f'\!}
        \ar@2"1";"0"^{\,\gamma}
        & &
        x'' \ar[rr]^{u'''}_(0.40){}="u4"
        \ar[dl]^{\!f''}_(0.80){}="f2"
        & &
        x'''
        \ar[dlll]^(0.45){f'''}_(0.60){}="f3"
        \\
        & & & c
        \ar@{}"f";[ur]_(.15){}="ff"
        \ar@{}"f";[ur]_(.55){}="oo"
        \ar@<2.0ex>@/^1ex/@{=>}"oo";"ff"^(.70){\alpha''}_(.30){}="h'"
        \ar@<0.5ex>@/^-1ex/@{:>}"oo";"ff"_(.75){\alpha'\!}_(.80){}="h"
        \ar@3"h'";"h"_(.20){\Gamma_{}}
        \ar@{}[ul];"nf"_(0.20){}="sa"
        \ar@{}[ul];"nf"_(0.90){}="ta"
        \ar@2"sa";"ta"_{\alpha\,}
        \ar@{}"u4";"f3"_(0.15){}="sa4"
        \ar@{}"u4";"f3"_(0.70){}="ta4"
        \ar@2"sa4";"ta4"_{\alpha'''\!\!}
        &
        }
  \]
  un diagramme dans $\GrayC$. Il s'agit de vérifier qu'on a
  \[
    \big((u''', \alpha''') \comp (\gamma, \Gamma)\big) \comp (u, \alpha)
    =
    (u''', \alpha''') \comp \big((\gamma, \Gamma) \comp (u, \alpha)\big).
  \]
  Ces deux $2$-cellules sont des « $3$-cônes ». On vérifie facilement, en
  utilisant les propriétés de sesquicatégorie de $\GrayC$, que ces cônes ont
  mêmes objets, $1$-cellules et $2$-cellules. Il reste à vérifier que leurs
  $3$-cellules coïncident. Or, on a
  \begin{align*}
    \MoveEqLeft
    \alpha \comp_1 \Big[\Big[\big(\Gamma \comp_1 (\alpha''' \comp_0 u')\big)
    \comp_2 \big(\alpha'' \comp_1 (\alpha''' \circ \gamma)\big)\Big]
    \comp_0 u\Big]
    \\
    & =
    \alpha \comp_1 \Big[\big((\Gamma \comp_0 u) \comp_1 (\alpha''' \comp_0
      u' \comp_0 u)\big)
    \comp_2 \big((\alpha'' \comp_0 u) \comp_1 ((\alpha''' \circ \gamma)
    \comp_0 u)\big)\Big]
    \\
    & =
    \big(\alpha \comp_1 (\Gamma \comp_0 u) \comp_1 (\alpha''' \comp_0
      u' \comp_0 u)\big)
    \comp_2 \big(\alpha \comp_1 (\alpha'' \comp_0 u) \comp_1 ((\alpha''' \circ \gamma)
    \comp_0 u)\big)
    \\
    & =
    \big(\alpha \comp_1 (\Gamma \comp_0 u) \comp_1 (\alpha''' \comp_0
      u' \comp_0 u)\big)
    \comp_2 \big(\alpha \comp_1 (\alpha'' \comp_0 u) \comp_1 (\alpha'''
    \circ (\gamma \comp_0 u))\big),
  \end{align*}
  la dernière égalité résultant de l'associativité de l'opération $\circ$
  (proposition~\ref{prop:circ_assoc}), ce qu'il fallait démontrer.

  Nous allons maintenant vérifier les quatre axiomes de la colonne de
  gauche, dans l'ordre. Considérons donc
  \[
    \shorthandoff{;:}
    \xymatrix@C=3.5pc@R=3.5pc{
    x
    \ar@/^2ex/[r]^(.31){u}_{}="1"
    \ar@/_2ex/[r]^(.30){u'}_{}="0"_(.70){}="f"
    \ar[dr]_{}="g"_{\phantom{f'}f}
    \ar@2{<-}"0";"1"_{\,\gamma}
    &
    x' \ar[r]^{\id{x'}}_(.75){}="fp"
       \ar[d]_(.70){}="gp2"_(.20){}="gp"^(0.62){f'}
    &
    x' \ar[dl]^{f'}
    \\
    & c
    \ar@{}"g";"gp"_(.15){}="ff1"
    \ar@{}"g";"gp"_(.80){}="oo1"
    \ar@<-0.0ex>@/^1ex/@{<:}"ff1";"oo1"^(.35){\alpha}_(.40){}="h'"
    \ar@<-1.0ex>@/^-1.5ex/@2{<-}"ff1";"oo1"_(.46){\!\!\!\alpha'}_(.70){}="h"
    \ar@3"h";"h'"_(.20){\Gamma_{}}
    \ar@{}"gp2";"fp"_(.25){}="x2"
    \ar@{}"gp2";"fp"_(.75){}="y2"
    \ar@<0.4ex>@2{<-}"x2";"y2"^{\id{f'}}
    }
  \]
  un diagramme dans $\GrayC$ et montrons qu'on a
  \[ \id{(x', f')} \comp (\gamma, \Gamma) = (\gamma, \Gamma). \]
  La seule vérification non triviale est celle de l'égalité des $3$-cellules
  de ces deux $3$-cônes mais, en utilisant les compatibilités des
  contraintes de Gray aux identités (proposition~\ref{prop:circ_ident}), on
  a
  \begin{align*}
    \MoveEqLeft
    \big(\Gamma \comp_1 (\id{f'} \comp_0 u)\big) \comp_2
      \big(\alpha' \comp_1 (\id{f'} \circ \gamma)\big)
    \\
    & =
    \big(\Gamma \comp_1 (f' \comp_0 u)\big) \comp_2
    \big(\alpha' \comp_1 (f' \circ \gamma)\big)
    \\
    & =
    \big(\Gamma \comp_1 (f' \comp_0 u)\big) \comp_2
    \big(\alpha' \comp_1 (f' \comp_0 \gamma)\big)
    \\
    & =
    \Gamma \comp_2
    \big(\alpha' \comp_1 (f' \comp_0 \gamma)\big)
   \\
   & =
   \Gamma,
  \end{align*}
  ce qu'il s'agissait de vérifier.

  Soit maintenant
  \[
    \shorthandoff{;:}
    \xymatrix@C=3.5pc@R=3.5pc{
    x
    \ar@/^2ex/[r]^(.31){u}_{}="1"
    \ar@/_2ex/[r]^(.30){u'}_{}="0"_(.70){}="f"
    \ar[dr]_{}="g"_{\phantom{f''}f}
    \ar@2{<-}"0";"1"_{\,\gamma}
    &
    x' \ar[r]^{u''}_(.75){}="fp"
       \ar[d]_(.70){}="gp2"_(.20){}="gp"^(0.62){f'}
    &
    x'' \ar[dl]^{\!\!\!\!f''}
    \ar[r]^{u'''}_(.40){}="u4"
    &
    x''' \ar[dll]^{f'''}_(.65){}="f3"
    \\
    & c
    \ar@{}"g";"gp"_(.15){}="ff1"
    \ar@{}"g";"gp"_(.80){}="oo1"
    \ar@<-0.0ex>@/^1ex/@{<:}"ff1";"oo1"^(.35){\alpha}_(.40){}="h'"
    \ar@<-1.0ex>@/^-1.5ex/@2{<-}"ff1";"oo1"_(.46){\!\!\!\alpha'}_(.70){}="h"
    \ar@3"h";"h'"_(.20){\Gamma_{}}
    \ar@{}"gp2";"fp"_(.25){}="x2"
    \ar@{}"gp2";"fp"_(.75){}="y2"
    \ar@<0.4ex>@2{<-}"x2";"y2"^{\alpha''}
    \ar@{}"u4";"f3"_(.22){}="sa3"
    \ar@{}"u4";"f3"_(.78){}="ta3"
    \ar@2"sa3";"ta3"_{\alpha'''\!\!}
    }
  \]
  un diagramme dans $\GrayC$. Montrons qu'on a
  \[
    (u''', \alpha''') \comp \big((u'', \alpha'') \comp (\gamma,
    \Gamma)\big)
    =
    \big((u''',\alpha''')\,(u'', \alpha'')\big) \comp (\gamma, \Gamma).
  \]
  Comme précédemment, nous allons uniquement vérifier l'égalité des deux
  $3$-cellules des $3$-cônes associés. On a, en utilisant l'associativité de
  l'opération $\circ$ (proposition~\ref{prop:circ_assoc}) pour la première
  égalité,
  {
  \allowdisplaybreaks
  \begin{align*}
    \MoveEqLeft
    \Big[\Big(\big(\Gamma \comp_1 (\alpha'' \comp_0 u)\big) \comp_2 \big(\alpha'
    \comp_1 (\alpha'' \circ \gamma)\big)\Big)
    \comp_1 \Big(\alpha''' \comp_0 u'' \comp_0 u\Big)\Big]
    \\*
    \MoveEqLeft \quad
    \comp_2 \Big[\alpha' \comp_1 \big(\alpha'' \comp_0 u'\big)
    \comp_1 \big(\alpha''' \circ (u'' \comp_0 \gamma)\big)\Big]
    \\
    & =
    \Big[\Gamma \comp_1 \big(\alpha'' \comp_0 u\big) \comp_1 \big(\alpha''' \comp_0 u''
    \comp_0 u\big)\Big]
    \comp_2
    \Big[\alpha' \comp_1 \big(\alpha'' \circ \gamma\big) \comp_1
    \big(\alpha''' \comp_0 u'' \comp_0 u\big)\Big]
    \\*
    & \phantom{=1} \quad
    \comp_2
    \Big[\alpha' \comp_1 \big(\alpha'' \comp_0 u'\big)
    \comp_1 \big((\alpha''' \comp_0 u'') \circ \gamma\big)\Big]
    \\
    & =
    \Big[\Gamma \comp_1 \Big(\big(\alpha'' \comp_1 (\alpha''' \comp_0
    u'')\big) \comp_0 u\Big)\Big]
    \\*
    & \phantom{=1} \quad
    \comp_2
    \Big[ \alpha' \comp_1 \Big[\Big(\big(\alpha'' \circ \gamma\big) \comp_1
        \big(\alpha''' \comp_0 u'' \comp_0 u\big)\Big)
    \\*
    & \phantom{=1} \qquad\qquad\qquad\quad
        \comp_2
        \Big(\big(\alpha'' \comp_0 u'\big) \comp_1 \big((\alpha''' \comp_0 u'')
    \circ \gamma\big)\Big)\Big]\Big]
    \\
    & =
    \Big[\Gamma \comp_1 \Big(\big(\alpha'' \comp_1 (\alpha''' \comp_0
    u'')\big) \comp_0 u\Big)\Big]
    \comp_2
    \Big[ \alpha'
      \comp_1
      \Big[\big(\alpha'' \comp_1 (\alpha''' \comp_0 u'')\big) \circ
    \gamma\Big]\Big],
  \end{align*}
  }%
  la dernière égalité résultant de la compatibilité des contraintes de Gray
  à la composition $\comp_1$ (proposition~\ref{prop:circ_horiz}),
  ce qu'il fallait démontrer.

  Considérons maintenant
  \[
    \shorthandoff{;:}
    \xymatrix@C=3.5pc@R=3.5pc{
    x
    \ar@/^2ex/[r]^(.33){u}_{}="1"
    \ar@/_2ex/[r]^(.30){u}_{}="0"_(.70){}="f"
    \ar[dr]_{}="g"_{\phantom{f''}f}
    \ar@2{<-}"0";"1"_{\,\id{u}}
    &
    x' \ar[r]^{u'}_(.75){}="fp"
       \ar[d]_(.70){}="gp2"_(.20){}="gp"^(0.62){f'}
    &
    x'' \ar[dl]^{f''}
    \\
    & c
    \ar@{}"g";"gp"_(.15){}="ff1"
    \ar@{}"g";"gp"_(.80){}="oo1"
    \ar@<-0.0ex>@/^1ex/@{<:}"ff1";"oo1"^(.40){\alpha}_(.40){}="h'"
    \ar@<-1.0ex>@/^-1.5ex/@2{<-}"ff1";"oo1"_(.46){\!\!\!\alpha}_(.70){}="h"
    \ar@3"h";"h'"_(.25){\id{\alpha}\,}
    \ar@{}"gp2";"fp"_(.25){}="x2"
    \ar@{}"gp2";"fp"_(.75){}="y2"
    \ar@<0.4ex>@2{<-}"x2";"y2"^{\alpha'}
    }
  \]
  un diagramme dans $\GrayC$ et montrons qu'on a
  \[
    (u', \alpha') \comp \id{(u, \alpha)} = \id{(u', \alpha')(u, \alpha)}.
  \]
  Vérifions l'égalité des $3$-cellules associées. En utilisant la
  compatibilité des contraintes de Gray aux identités, on a
  \begin{align*}
    \MoveEqLeft
    \big(\id{\alpha} \comp_1 (\alpha' \comp_0 u)\big) \comp_2 \big(\alpha
    \comp_1 (\alpha' \circ \id{u})\big)
    \\
    & =
    \big(\id{\alpha} \comp_1 (\alpha' \comp_0 u)\big)
    \comp_2 \big(\alpha \comp_1 \id{\alpha' \comp_0 u}\big)
    \\
    & =
    \id{\alpha \comp_1 (\alpha' \comp_0 u)}
    \comp_2
    \id{\alpha \comp_1 (\alpha' \comp_0 u)}
    \\
    & =
    \id{\alpha \comp_1 (\alpha' \comp_0 u)},
  \end{align*}
  ce qu'il s'agissait de vérifier.

  Soit enfin
  \[
      \shorthandoff{;:!}
      \xymatrix@R=5pc@C=4pc{
        x
        \ar[dr]_{\phantom{f''}f}_{}="f"
        \ar@/^3.5ex/[r]^(.30){u}_(.55){}="0"
        \ar[r]^(.30){u'}_(.55){}="1"
        \ar@/_3.5ex/[r]^(.29){u''}_(.55){}="2"
        \ar@2"0";"1"^\gamma
        \ar@2"1";"2"^{\gamma'}
        &
        x'
        \ar[d]^(.58){f'}_(.15){}="f1"_(.75){}="f12"
        \ar[r]^{u'''}_(.75){}="u3"
        &
        x''
        \ar[dl]^{f''}
        \\
        &
        c
        \ar@{}"f";"f1"_(.15){}="ff"
        \ar@{}"f";"f1"_(.55){}="oo"
        \ar@<2.0ex>@/^1.5ex/@{<:}"ff";"oo"^(.25)*+<.3em>{\labelstyle \alpha}_(.30){}="h''"
        \ar@<0ex>@/^0ex/@{<:}"ff";"oo"^(.0)*+<-.5em>{\labelstyle{\alpha'\!\!}}_(.30){}="h'"_(.70){}="h'2"
        \ar@<-2.0ex>@/^-1.5ex/@2{<-}"ff";"oo"_(.36){\alpha''}_(.80){}="h"
        \ar@3"h'2";"h''"_(.28){\Gamma}
        \ar@3"h";"h'"_(.20){\Gamma'_{}}
        \ar@{}"u3";"f12"_(.70){}="s"
        \ar@{}"u3";"f12"_(.30){}="t"
        \ar@<0.4ex>@2{<-}"s";"t"^{\alpha'''}
      }
  \]
  un diagramme dans $\GrayC$. Montrons qu'on a
  \[
    \big((u''', \alpha''') \comp (\gamma', \Gamma')\big)
       \, \big((u''', \alpha''') \comp (\gamma, \Gamma)\big)
    =
    (u''', \alpha''') \comp \big((\gamma', \Gamma')\,(\gamma, \Gamma)\big).
  \]
  On a
  {
  \allowdisplaybreaks
  \begin{align*}
    \MoveEqLeft
    \Big[\Gamma \comp_1 \big(\alpha''' \comp_0 u\big)\Big]
      \comp_2 
      \Big[\alpha' \comp_1 \big(\alpha''' \circ \gamma\big)\Big]
    \\*
    \MoveEqLeft \quad
    \comp_2
    \Big[\Big[
        \Big(\Gamma' \comp_1 \big(\alpha''' \comp_0 u'\big)\Big)
        \comp_2
        \Big(\alpha'' \comp_1 \big(\alpha''' \circ \gamma'\big)\Big)
    \Big] \comp_1 \Big[f'' \comp_0 u''' \comp_0 \gamma\Big]\Big]
    \\
    & =
    \Big[\Gamma \comp_1 \big(\alpha''' \comp_0 u\big)\Big]
      \comp_2
      \Big[\alpha' \comp_1 \big(\alpha''' \circ \gamma\big)\Big]
    \\*
    & \phantom{=1} \quad
    \comp_2
    \Big[
        \Gamma' \comp_1 \big(\alpha''' \comp_0 u'\big)
      \comp_1 \big(f'' \comp_0 u''' \comp_0 \gamma\big)
    \Big]
    \\*
    & \phantom{=1} \quad
        \comp_2
        \Big[\alpha'' \comp_1 \big(\alpha''' \circ \gamma'\big)
        \comp_1 \big(f'' \comp_0 u''' \comp_0 \gamma\big)\Big]
    \\
    & =
    \Big[\Gamma \comp_1 \big(\alpha''' \comp_0 u\big)\Big]
      \comp_2
      \Big[\Gamma' \comp_1 \big(f' \comp_0 \gamma\big) \comp_1 \big(\alpha'''
      \comp_0 u\big)\Big]
    \\*
    & \phantom{=1} \quad
    \comp_2
    \Big[
      \alpha'' \comp_1 \big(f' \comp_0 \gamma'\big) \comp_1 \big(\alpha'''
      \circ \gamma\big)\Big]
    \\*
    & \phantom{=1} \quad
        \comp_2
        \Big[\alpha'' \comp_1 \big(\alpha''' \circ \gamma'\big)
        \comp_1 \big(f'' \comp_0 u''' \comp_0 \gamma\big)\Big]
    \\*
    & \phantom{=1} \text{(en appliquant la loi d'échange aux termes
        centraux)}
    \\
    & =
    \Big[\Big(\Gamma \comp_2 \big(\Gamma' \comp_1 (f' \comp_0 \gamma)
        \big)\Big)
        \comp_1 \Big(\alpha''' \comp_0 u\Big)\Big]
    \\*
    & \phantom{=1} \quad
    \comp_2
    \Big[
      \alpha'' \comp_1 \Big[
        \Big(\big(f' \comp_0 \gamma'\big) \comp_1 \big(\alpha'''
        \circ \gamma\big)\Big)
        \comp_2
        \Big(\big(\alpha''' \circ \gamma'\big)
    \comp_1 \big(f'' \comp_0 u''' \comp_0 \gamma\big)\Big)\Big]\Big]
    \\
    & =
    \Big[\Big(\Gamma \comp_2 \big(\Gamma' \comp_1 (f' \comp_0 \gamma)
        \big)\Big)
        \comp_1 \Big(\alpha''' \comp_0 u\Big)\Big]
    \comp_2
    \Big[
      \alpha'' \comp_1 \Big(\alpha''' \circ \big(\gamma' \comp_1
    \gamma\big)\Big)\Big],
  \end{align*}
  }%
  la dernière égalité résultant de la compatibilité des contraintes de Gray
  à la composition $\comp_1$ (proposition~\ref{prop:circ_horiz}), ce
  qui achève la démonstration du théorème.
\end{proof}

\begin{remark}
  On peut vérifier que si $C$ est une \oo-catégorie et $c$ est un objet de
  $C$, alors la sesquicatégorie sous-jacente à la \oo-catégorie $\tr{C}{c}$ est la
  sesqui\-catégorie~$\tr{\GrayC}{c}$, où $\GrayC$ est la \oo-catégorie de Gray
  associée à $C$. Cela résulte des formules explicites définissant
  $\tr{C}{c}$ évoquées au début de la preuve de la proposition précédente et
  de la description des contraintes de Gray dans le cas strict donnée par la
  proposition~\ref{prop:contr_Gray_str}.
\end{remark}

\begin{paragraph}
  Si $\GrayC$ est une \oo-catégorie de Gray de sesquicatégorie sous-jacente
  $\SesquiC$ et $c$ est un objet de $\GrayC$, on définit un sesquifoncteur
  \[ \tr{\GrayC}{c} \to \SesquiC \]
  par
  \[
    \begin{split}
      (x, f) & \mapsto x \\
      (u, \alpha) & \mapsto u \\
      (\gamma, \Gamma) & \mapsto \gamma, \\
    \end{split}
  \]
  où les cellules de $\tr{\GrayC}{c}$ sont désignées selon les notations du
  paragraphe~\ref{paragr:def_tr_Gray}. Les formules décrivant la structure
  de $\tr{\GrayC}{c}$ données dans ce même paragraphe rendent évident le
  fait qu'on obtient bien ainsi un sesquifoncteur. On appellera ce
  sesquifoncteur le \ndef{sesquifoncteur d'oubli} de $\tr{\GrayC}{c}$ vers
  $\SesquiC$.
\end{paragraph}

\begin{paragraph}
  Soit $\GrayC$ une \oo-catégorie de Gray. On définit une \oo-catégorie de
  Gray $\oloc{\GrayC}$ de la manière suivante : les objets de $\oloc{\GrayC}$
  sont les mêmes que ceux de $\GrayC$ et, si $x$ et $y$ sont deux objets de
  $\oloc{\GrayC}$, on pose $\Homi_{\oloc{\GrayC}}(x, y) =
  \Homi_{\GrayC^{}}(x, y)^\o$. Il résulte immédiatement de la compatibilité
  du dual total au produit tensoriel (voir la
  proposition~\ref{prop:tens_dual}) qu'on obtient bien ainsi une
  \oo-catégorie de Gray. (La notation $\oloc{\GrayC}$ provient du fait que
  cette opération sur les \oo-catégories de Gray se décompose naturellement
  en deux opérations, voir~\cite[paragraphe~C.21]{AraMaltsiJoint}.) Lorsque
  $\GrayC$ provient d'une \oo-catégorie stricte~$C$, la \oo-catégorie de
  Gray $\oloc{\GrayC}$ provient de la \oo-catégorie stricte~$\oloc{C}$
  obtenue à partir de~$C$ en inversant le sens des $i$-cellules pour $i \ge
  2$. Notons également que si $\SesquiC$ est la sesquicatégorie sous-jacente
  à $\GrayC$, alors la sesquicatégorie sous-jacente à la \oo-catégorie de Gray
  $\oloc{\GrayC}$ est la sesquicatégorie $\oloc{\SesquiC}$ obtenue à partir
  de $\SesquiC$ en inversant le sens des $2$-cellules.
\end{paragraph}

\begin{paragraph}\label{paragr:def_trto}
  Soit $\GrayC$ une \oo-catégorie de Gray. On définit une sesquicatégorie
  $\trto{\GrayC}{c}$ en posant
  \[ \trto{\GrayC}{c} = \oloc{\big(\tr{\oloc{\GrayC}}{c}\big)}. \]
  Cette sesquicatégorie admet une description semblable à celle de
  $\tr{\GrayC}{c}$, l'orientation de certaines cellules étant renversée.
  Plus précisément, ses cellules se décrivent ainsi :
  \begin{itemize}[wide]
    \item Les objets de $\trto{\GrayC}{c}$ sont les diagrammes
      \[
        \xymatrix{
          y \ar[d]_g \\ c
        }
      \]
      dans $\GrayC$. On notera $(y, g)$ un tel objet.
    \item Les $1$-cellules sont les diagrammes
  \[
    \shorthandoff{;}
    \xymatrix@C=1.5pc{
      y \ar[rr]^v \ar[dr]_(0.40){\phantom{g'}g}_(.60){}="g" & & y' \ar[dl]^(0.40){g'} \\
      & c
      \ar@{}"g";[ur]_(.15){}="gg"
      \ar@{}"g";[ur]_(.55){}="oo"
      \ar@<-0.0ex>@2"gg";"oo"^\beta
      &
    }
  \]
  dans $\GrayC$.
  On notera $(v, \beta)$ une telle $1$-cellule.  La source de $(v, \beta)$
  est $(y, g)$ et son but est $(y', g')$.
    \item Les $2$-cellules sont les diagrammes
   \[
      \shorthandoff{;:}
      \xymatrix@C=1.5pc@R=3pc{
        y \ar@/^2ex/[rr]^(.33){v}_{}="1" \ar@/_2ex/[rr]^(.30){v'}_{}="0"
        \ar[dr]_{}="f"_{\phantom{g'}g}
        \ar@2"1";"0"^{\,\delta}
        & & y' \ar[dl]^{g'} \\
        & c
        \ar@{}"f";[ur]_(.15){}="ff"
        \ar@{}"f";[ur]_(.55){}="oo"
        \ar@<2.0ex>@/^1ex/@{<=}"oo";"ff"^(.70){\beta'}_(.30){}="h'"
        \ar@<0.5ex>@/^-1ex/@{<:}"oo";"ff"_(.75){\beta}_(.80){}="h"
        \ar@3"h'";"h"_(.20){\Delta_{}}
        &
        }
  \]
  dans $\GrayC$, où
  \[ \delta : v \tod v' \quadet \Delta : \beta' \tot (g' \comp_0 \delta)
  \comp_1 \beta. \]
  On notera $(\delta, \Delta)$ une telle $2$-cellule.  La source de
  $(\delta, \Delta)$ est $(v, \beta)$ et son but est~$(v', \beta')$.
  \end{itemize}

  Notons $\SesquiC$ la sesquicatégorie sous-jacente à $\GrayC$. Le
  sesquifoncteur d'oubli de $\tr{\oloc{\GrayC}}{c}$ vers la sesquicatégorie
  sous-jacente à $\oloc{\GrayC}$, qui n'est autre que
  $\oloc{\SesquiC}$, induit un sesquifoncteur de
  $\oloc{\big(\tr{\oloc{\GrayC}}{c}\big)}$ vers
  $\oloc{\big(\oloc{\SesquiC}\big)} = \SesquiC$. On dispose donc d'un
  sesquifoncteur
  \[ \trto{\GrayC}{c} \to \SesquiC \]
  qu'on appellera \ndef{sesquifoncteur d'oubli}.
\end{paragraph}

\begin{paragraph}\label{paragr:def_span}
  Soit $\GrayC$ une \oo-catégorie de Gray. Considérons la sesquicatégorie
  produit
  \[ \Span{\GrayC}{c}\ . \]
  Si $f : x \to c$ est une $1$-cellule de $\GrayC$, on définit
  un \ndef{sesquifoncteur d'inclusion}
  \[ {}^{}_f\iota : \trto{\GrayC}{c} \to \Span{\GrayC}{c} \]
  par le produit
  \[ (x, f) \times \id{} : \Dn{0} \times \trto{\GrayC}{c} \to
     \tr{\GrayC}{c} \times \trto{\GrayC}{c}\,, \]
  où $\Dn{0}$ désigne la sesquicatégorie terminale et $(x, f)$ le
  sesquifoncteur correspondant à l'objet $(x, f)$ de $\tr{\GrayC}{c}$.
  De même, si $g : y \to c$ est une $1$-cellule de $\GrayC$, on définit un
  \ndef{sesquifoncteur d'inclusion}
  \[ \iota^{}_g : \tr{\GrayC}{c} \to \Span{\GrayC}{c} \]
  par le produit
  \[ \id{} \times (y, g) : \tr{\GrayC}{c} \times \Dn{0} \to
     \tr{\GrayC}{c} \times \trto{\GrayC}{c}\,. \]
\end{paragraph}

\subsection{\pdfoo-catégories comma : sesquifonctorialités}\ 

\medskip

\emph{Dans cette sous-section, on fixe une \oo-catégorie $Z$.}

\begin{paragraph}
  On a défini dans la section~\ref{sec:comma_1} un foncteur
  \[
    \commaCfun : \SpanC \to \ooCat.
  \]
  Le but de cette sous-section est d'étendre ce foncteur en un
  sesquifoncteur
  \[
    \commaCfun : \Spanoo \to \ooCatOpLax,
  \]
  où $\tr{\ooCatOpLaxGray}{Z}$ et $\trto{\ooCatOpLaxGray}{Z}$ désignent les
  sesquicatégories décrites dans la sous-section précédente (et plus
  précisément aux paragraphes~\ref{paragr:def_tr_Gray} et
  \ref{paragr:def_trto}) dans le cas~$\GrayC = \ooCatOpLaxGray$ (voir
  l'exemple~\ref{ex:OpLaxGray}).
\end{paragraph}

\begin{paragraph}\label{paragr:def_sesqui_span}
  Explicitons la sesquicatégorie $\Spanoo$. Commençons par décrire ses
  cellules.
  \begin{itemize}[wide]
    \item Les objets sont les diagrammes
    \[
      \xymatrix{
        X \ar[r]^f & Z & Y \ar[l]_g
      }
    \]
    dans $\ooCat$. On notera $(X, f, g, Y)$ un tel objet.
      \item Les $1$-cellules sont les diagrammes
      \[
        \shorthandoff{;}
        \xymatrix@R=1pc@C=3pc{
          X \ar[dd]_u \ar[dr]^f_{}="f" & & Y \ar[dl]_g_{}="g" \ar[dd]^v \\
            & Z \\
          X' \ar[ur]_{f'} & & Y' \ar[ul]^{g'}
          \ar@{}[ll];"f"_(0.35){}="sa"_(0.85){}="ta"
          \ar@2"sa";"ta"^{\alpha}
          \ar@{}[];"g"_(0.35){}="tb"_(0.85){}="sb"
          \ar@2"sb";"tb"^{\beta}
        }
      \]
      dans $\ooCatOpLax$, où
      \[ \alpha : f' \comp_0 u \tod f \quadet \beta : g \tod g' \comp_0 v \]
      sont donc des transformations oplax. On notera $(u, \alpha, \beta, v)$
      un tel morphisme. La source de $(u, \alpha, \beta, v)$ est $(X, f, g,
      Y)$ et son but est~$(X', f', g', Y')$.
      \item Les $2$-cellules sont les diagrammes
      \[
        \shorthandoff{;:}
        \xymatrix@R=1.5pc@C=3.5pc{
          X \ar@/_2ex/[dd]_(0.62){\phantom{u'}u}_{}="u"
          \ar@/^2ex/[dd]_(0.65){u'\!}_{}="u'"
           \ar[dr]^f_{}="f" & &
          Y \ar@/_2ex/[dd]^(0.65){v'}_{}="v"
          \ar@/^2ex/[dd]^(0.65){v}_{}="v'"
           \ar[dl]_g_{}="g" \\
            & Z \\
          X' \ar[ur]_{f'} & & Y' \ar[ul]^{g'}
          \ar@2"u";"u'"^{\gamma\,\,}
          \ar@2"v'";"v"_{\,\,\delta}
          \ar@{}[ll];"f"_(0.40){}="sa"_(0.85){}="ta"
          \ar@<-1.5ex>@/_1ex/@2"sa";"ta"_(.70){\alpha'}_(0.40){}="a'"
          \ar@<0.0ex>@/^1ex/@{:>}"sa";"ta"^(.70){\alpha}_(0.60){}="a"
          \ar@3"a'";"a"_(.60){\Gamma_{}}
          \ar@{}[];"g"_(0.40){}="tb"_(0.85){}="sb"
          \ar@<-1.5ex>@/_1ex/@2"sb";"tb"_(.30){\beta'\!}_(0.40){}="b'"
          \ar@<0.0ex>@/^1ex/@{:>}"sb";"tb"^(.30){\!\beta}_(0.60){}="b"
          \ar@3"b'";"b"^(.40){\Delta_{}}
        }
      \]
      dans $\ooCatOpLax$, où
  \[ \gamma : u \tod u' \quadet \delta : v \tod v' \]
  sont des transformations oplax et
  \[
    \Gamma : \alpha' \comp_1 (f' \comp_0 \gamma) \tot \alpha
    \quad
    \Delta : \beta' \tot (g' \comp_0 \delta) \comp_1 \beta
  \]
  sont des $2$-transformations oplax. On notera $(\gamma, \Gamma, \Delta,
  \delta)$ une telle $2$-cellule. La source de $(\gamma, \Gamma, \Delta,
  \delta)$ est $(u, \alpha, \beta, v)$ et son but est $(u', \alpha', \beta',
  v')$.
  \end{itemize}

  Définissons maintenant les identités.
  \begin{itemize}[wide]
      \item L'identité d'un objet
    \[
      \xymatrix{
        X \ar[r]^f & Z & Y \ar[l]_g
      }
    \]
    est la $1$-cellule
      \[
        \shorthandoff{;}
        \xymatrix@R=1pc@C=3pc{
          X \ar[dd]_{\id{X}} \ar[dr]^f_{}="f" & & Y \ar[dl]_g_{}="g"
          \ar[dd]^{\id{Y}} \\
            & Z \\
          X \ar[ur]_{f} & & Y \ar[ul]^{g}
          \ar@{}[ll];"f"_(0.35){}="sa"_(0.85){}="ta"
          \ar@2"sa";"ta"^{\id{f}}
          \ar@{}[];"g"_(0.35){}="tb"_(0.85){}="sb"
          \ar@2"sb";"tb"^{\id{g}} \pbox{.}
        }
      \]
      \item L'identité d'une $1$-cellule
      \[
        \shorthandoff{;}
        \xymatrix@R=1pc@C=3pc{
          X \ar[dd]_u \ar[dr]^f_{}="f" & & Y \ar[dl]_g_{}="g" \ar[dd]^v \\
            & Z \\
          X' \ar[ur]_{f'} & & Y' \ar[ul]^{g'}
          \ar@{}[ll];"f"_(0.35){}="sa"_(0.85){}="ta"
          \ar@2"sa";"ta"^{\alpha}
          \ar@{}[];"g"_(0.35){}="tb"_(0.85){}="sb"
          \ar@2"sb";"tb"^{\beta}
        }
      \]
      est la $2$-cellule
      \[
        \shorthandoff{;:}
        \xymatrix@R=1.5pc@C=3.5pc{
          X \ar@/_2ex/[dd]_(0.62){u}_{}="u"
          \ar@/^2ex/[dd]_(0.64){u\!}_{}="u'"
           \ar[dr]^f_{}="f" & &
          Y \ar@/_2ex/[dd]^(0.64){v}_{}="v"
          \ar@/^2ex/[dd]^(0.62){v}_{}="v'"
           \ar[dl]_g_{}="g" \\
            & Z \\
          X' \ar[ur]_{f'} & & Y' \ar[ul]^{g'}
          \ar@2"u";"u'"^{\,\,\id{u}}
          \ar@2"v'";"v"_{\,\,\id{v}}
          \ar@{}[ll];"f"_(0.40){}="sa"_(0.85){}="ta"
          \ar@<-1.5ex>@/_1ex/@2"sa";"ta"_(.70){\alpha}_(0.40){}="a'"
          \ar@<0.0ex>@/^1ex/@{:>}"sa";"ta"^(.70){\alpha}_(0.60){}="a"
          \ar@3"a'";"a"_(.75){\id{\alpha}}
          \ar@{}[];"g"_(0.40){}="tb"_(0.85){}="sb"
          \ar@<-1.5ex>@/_1ex/@2"sb";"tb"_(.30){\beta}_(0.40){}="b'"
          \ar@<0.0ex>@/^1ex/@{:>}"sb";"tb"^(.30){\!\beta}_(0.60){}="b"
          \ar@3"b'";"b"^(.40){\id{\beta}}
          \pbox{.}
        }
      \]
    \end{itemize}

    Enfin, définissons les compositions.
    \begin{itemize}[wide]
      \item Le composé de deux $1$-cellules composables
     \[
        \shorthandoff{;}
        \xymatrix@R=2pc@C=3pc{
          X \ar[d]_u \ar[dr]^(0.50)f_{}="f" & & Y \ar[dl]_(0.60)g_{}="g"
          \ar[d]^v \\
          X' \ar[r]^(0.60){f'}_(.70){}="f'" \ar[d]_{u'\!}_(.70){}="u'"
          \ar@2[];"f"^(.68){\alpha\phantom{'}}
           & Z & Y' \ar[l]_(0.65){g'}_(.70){}="g'" \ar[d]^{v'}_(.70){}="v'"
          \ar@2"g";[]^(0.30){\beta} 
           \\
          X'' \ar[ur]_(0.51){f''}
          \ar@{}"u'";"f'"_(.20){}="sa'"^(.80){}="ta'"
          \ar@2"sa'";"ta'"^{\alpha'}
          & & Y'' \ar[ul]^(0.52){g''}_{}="g''"
          \ar@{}"g'";"v'"_(.20){}="sb'"^(.80){}="tb'"
          \ar@2"sb'";"tb'"^{\beta'}
        }
      \]
      est la $1$-cellule
    \[
    \shorthandoff{;}
    \xymatrix@R=1pc@C=3pc{
      X \ar[dd]_{u''} \ar[dr]^f_{}="f" & & Y \ar[dl]_g_{}="g" \ar[dd]^{v''} \\
        & Z \\
      X'' \ar[ur]_{f''} & & Y'' \ar[ul]^{g''}
      \ar@{}[ll];"f"_(0.35){}="sa"_(0.85){}="ta"
      \ar@2"sa";"ta"^{\alpha''}
      \ar@{}[];"g"_(0.35){}="tb"_(0.85){}="sb"
      \ar@2"sb";"tb"^{\beta''}
    }
    \]
    où
    \[
      u'' = u' \comp_0 u,
      \quad
      \alpha'' = \alpha \comp_1 (\alpha' \comp_0 u),
      \quad
      \beta'' = (\beta' \comp_0 v) \comp_1 \beta
      \quadet
      v'' = v' \comp_0 v.
    \]
    \item Le composé vertical de deux $2$-cellules
      \[
        \shorthandoff{;:}
        \xymatrix@R=2pc@C=4.5pc{
          X
          \ar@/_3.5ex/[dd]^(0.21){\mkern-6mu u}_(0.65){}="u"
          \ar[dd]^(0.22){\mkern-4mu u'}_(0.65){}="u'"
          \ar@/^3.5ex/[dd]^(0.25){\mkern-4mu u''}_(0.65){}="u''"
          \ar@2"u";"u'"_{\,\gamma\phantom{'}}
          \ar@2"u'";"u''"_{\gamma'}
          \ar[dr]^f_{}="f" & & Y \ar[dl]_g_{}="g"
          \ar@/_3.5ex/[dd]_(0.25){v''\mkern-9mu}_(0.65){}="v''"
          \ar[dd]_(0.22){v'\mkern-5mu}_(0.65){}="v'"
          \ar@/^3.5ex/[dd]_(0.21){v\mkern -4mu}_(0.65){}="v"
          \ar@2"v";"v'"^{\,\delta\phantom{'}}
          \ar@2"v'";"v''"^{\delta'}
            \\
            & Z \\
          X' \ar[ur]_{f'} & & Y' \ar[ul]^{g'}
          \ar@{}[ll];"f"_(0.40){}="sa"_(0.85){}="ta"
          \ar@<2ex>@/^1.5ex/@{:>}"sa";"ta"^(0.50){\alpha\!}_{}="a"
          \ar@{:>}"sa";"ta"_(0.20){\!\!\alpha'}_{}="a'"
          \ar@<-2ex>@/_1.5ex/@2"sa";"ta"_(0.60){\!\!\alpha''}_{}="a''"
          \ar@3"a'";"a"_{\,\Gamma}
          \ar@3"a''";"a'"_{\Gamma'}
          \ar@{}"g";[]_(0.10){}="sb"_(0.55){}="tb"
          \ar@<2ex>@/^1.5ex/@{:>}"sb";"tb"^(0.58){\beta}_{}="b"
          \ar@{:>}"sb";"tb"_(0.95)*+<-0.5em>{\labelstyle \beta'\mkern -2mu}_{}="b'"
          \ar@<-2ex>@/_1.5ex/@2"sb";"tb"_(0.30){\beta''\mkern-2mu}_{}="b''"
          \ar@3"b'";"b"^*+<0.2em>{\labelstyle \Delta}
          \ar@3"b''";"b'"^*+<0.25em>{\mkern17mu\labelstyle \Delta\mkern-2mu'}
        }
      \]
      est la $2$-cellule
      \[
        \shorthandoff{;:}
        \xymatrix@R=1.5pc@C=3.5pc{
          X \ar@/_2ex/[dd]_(0.62){\phantom{u''}u}_{}="u"
          \ar@/^2ex/[dd]_(0.65){u''\!}_{}="u'"
           \ar[dr]^f_{}="f" & &
          Y \ar@/_2ex/[dd]^(0.65){v''}_{}="v"
          \ar@/^2ex/[dd]^(0.65){v}_{}="v'"
           \ar[dl]_g_{}="g" \\
            & Z \\
          X' \ar[ur]_{f'} & & Y' \ar[ul]^{g'}
          \ar@2"u";"u'"^{\gamma''\,\,}
          \ar@2"v'";"v"_{\,\,\delta''}
          \ar@{}[ll];"f"_(0.40){}="sa"_(0.85){}="ta"
          \ar@<-1.5ex>@/_1ex/@2"sa";"ta"_(.70){\alpha''}_(0.40){}="a'"
          \ar@<0.0ex>@/^1ex/@{:>}"sa";"ta"^(.70){\alpha}_(0.60){}="a"
          \ar@3"a'";"a"_(.60){\Gamma''_{}}
          \ar@{}[];"g"_(0.40){}="tb"_(0.85){}="sb"
          \ar@<-1.5ex>@/_1ex/@2"sb";"tb"_(.30){\beta''\!}_(0.40){}="b'"
          \ar@<0.0ex>@/^1ex/@{:>}"sb";"tb"^(.30){\!\beta}_(0.60){}="b"
          \ar@3"b'";"b"^(.40)*+<1.0em>{\labelstyle \mskip-5mu\Delta\!''}
        }
      \]
    où
    \[
    \renewcommand\quad{\hskip0.5em\relax}
     \gamma'' = \gamma' \comp_1 \gamma, \quad
     \Gamma'' = \Gamma \comp_2 \big(\Gamma' \comp_1 (f' \comp_0
     \gamma)\big), \quad
     \Delta '' = \big((g' \comp_0 \delta') \comp_1 \Delta\big)\comp_2
     \Delta'
     \quadet \delta'' = \delta' \comp_1 \delta.
    \]

  \item
    Le composé horizontal d'une $1$-cellule suivie d'une $2$-cellule
     \[
        \shorthandoff{;:}
        \xymatrix@R=3pc@C=3pc{
          X \ar[d]_u \ar[dr]^(0.50)f_{}="f" & & Y \ar[dl]_(0.50)g_{}="g"
          \ar[d]^v \\
          X' \ar[r]^(0.60){f'}_(.70){}="f'" \ar@/_2ex/[d]_(.64){u'\!}_{}="u'"
          \ar@/^2ex/[d]_(.70){u''\!\!}_{}="u''"
          \ar@2"u'";"u''"^{\gamma}
        \ar@2[];"f"^(.68){\alpha\phantom{'}}
          & Z & Y' \ar[l]_(0.65){g'}_(.70){}="g'" \ar@/^2ex/[d]^(.64){\!v'}_{}="v'"
          \ar@/_2ex/[d]^(0.70){\!v''}_{}="v''"
           \ar@2"v'";"v''"_{\delta}
          \ar@2"g";[]^(0.30){\beta}
           \\
          X'' \ar[ur]_(0.51){f''}
          & & Y'' \ar[ul]^(0.52){g''}_{}="g''"
          \ar@{}"u''";"f'"_(.10){}="sa'"_(.75){}="ta'"
          \ar@<0.6ex>@/^1ex/@{:>}"sa'";"ta'"_{}="tG"^(0.5)*+<-.3em>{\labelstyle \alpha'}
          \ar@<-0.9ex>@/_1ex/@2"sa'";"ta'"_{}="sG"_(0.05)*+<-.3em>{\labelstyle \alpha''}
          \ar@3"sG";"tG"_\Gamma
          \ar@{}"g'";"v''"_(.20){}="sb'"_(.90){}="tb'"
          \ar@<0.6ex>@/^1ex/@{:>}"sb'";"tb'"_{}="sD"^(0.6)*+<-.1em>{\!\labelstyle \beta'}
          \ar@<-0.9ex>@/_1ex/@2"sb'";"tb'"_{}="tD"_(0.95)*+<-.7em>{\,\,\,\,\labelstyle \beta''}
          \ar@3"tD";"sD"^*+<.1em>{\labelstyle \Delta\!}
        }
      \]
      est la $2$-cellule
      \[
        \shorthandoff{;:}
        \xymatrix@R=1.5pc@C=3.5pc{
          X \ar@/_2ex/[dd]_(0.62){\phantom{}}_{}="u"
          \ar@/^2ex/[dd]_(0.65){}_{}="u'"
           \ar[dr]^f_{}="f" & &
          Y \ar@/_2ex/[dd]^(0.65){}_{}="v"
          \ar@/^2ex/[dd]^(0.65){}_{}="v'"
           \ar[dl]_g_{}="g" \\
            & Z \\
          X'' \ar[ur]_{f''} & & Y'' \ar[ul]^{g''}
          \ar@2"u";"u'"^{\gamma'\,\,}
          \ar@2"v'";"v"_{\,\,\delta'}
          \ar@{}[ll];"f"_(0.40){}="sa"_(0.85){}="ta"
          \ar@<-1.5ex>@/_1ex/@2"sa";"ta"_(.70){}_(0.40){}="a'"
          \ar@<0.0ex>@/^1ex/@{:>}"sa";"ta"^(.70){}_(0.60){}="a"
          \ar@3"a'";"a"_(.60){\Gamma'_{}}
          \ar@{}[];"g"_(0.40){}="tb"_(0.85){}="sb"
          \ar@<-1.6ex>@/_1ex/@2"sb";"tb"_(.30){}_(0.40){}="b'"
          \ar@<-0.0ex>@/^1ex/@{:>}"sb";"tb"^(.30){}_(0.60){}="b"
          \ar@3"b'";"b"^(.42)*+<0.1em>{\labelstyle \mskip-5mu\Delta\!'}
        }
      \]
    où
    \[
     \gamma' = \gamma \comp_0 u,
     \quad
     \Gamma' = \alpha \comp_1 (\Gamma \comp_0 u),
     \quad
     \Delta ' = (\Delta \comp_0 v) \comp_1 \beta
     \quadet
     \delta' = \delta \comp_0 v.
    \]
    \item Enfin, le composé horizontal d'une $2$-cellule suivie d'une
    $1$-cellule
      \[
        \shorthandoff{;:}
        \xymatrix@R=3pc@C=3pc{
          X \ar@/_2ex/[d]_(0.67)u_{}="u" \ar@/^2ex/[d]_(0.70){u'\!\!}_{}="u'"
          \ar@2"u";"u'"^\gamma
          \ar[dr]^(0.55)f_{}="f" & & Y \ar[dl]_(0.58)g_{}="g"
          \ar@/^2ex/[d]^(.68)v_{}="v" \ar@/_2ex/[d]^(0.7){\!v'}_{}="v'"
          \ar@2"v";"v'"_\delta
          \\
          X' \ar[r]_(0.60){\,f'}_(.70){}="f'"_(.20){}="f'2" \ar[d]_{u''\!}_(.70){}="u''"
          & C & Y' \ar[l]^(0.65){g'\,}_(.70){}="g'"_(.20){}="g'2" \ar[d]^{v''}_(.70){}="v''"
          \ar@{}"f'2";"f"_(.25){}="sa"_(.90){}="ta"
          \ar@<0.6ex>@/^1ex/@{:>}"sa";"ta"_{}="tG"^(0.7)*+<.1em>{\labelstyle \alpha}
          \ar@<-0.9ex>@/_1ex/@2"sa";"ta"_{}="sG"_(0.65)*+<-.3em>{\labelstyle \alpha'}
          \ar@3"sG";"tG"_\Gamma
          \ar@{}"g";"g'2"_(.10){}="sb'"_(.80){}="tb'"
          \ar@<0.6ex>@/^1ex/@{:>}"sb'";"tb'"_{}="sD"^(0.3)*+<-.1em>{\!\labelstyle \beta}
          \ar@<-0.9ex>@/_1ex/@2"sb'";"tb'"_{}="tD"_(0.4)*+<-.7em>{\labelstyle
            \beta'\,\,}
          \ar@3"tD";"sD"^*+<.1em>{\labelstyle \Delta\!}
           \\
          X'' \ar[ur]_(0.51){f''}
          \ar@{}"u''";"f'"_(.20){}="sa'"^(.80){}="ta'"
          \ar@2"sa'";"ta'"^{\alpha''}
          & & Y'' \ar[ul]^(0.52){g''}_{}="g''"
          \ar@{}"g'";"v''"_(.20){}="sb'"^(.80){}="tb'"
          \ar@2"sb'";"tb'"^{\beta''}
        }
      \]
      est la $2$-cellule
      \[
        \shorthandoff{;:}
        \xymatrix@R=1.5pc@C=3.5pc{
          X \ar@/_2ex/[dd]_(0.62){\phantom{}}_{}="u"
          \ar@/^2ex/[dd]_(0.65){}_{}="u'"
           \ar[dr]^f_{}="f" & &
          Y \ar@/_2ex/[dd]^(0.65){}_{}="v"
          \ar@/^2ex/[dd]^(0.65){}_{}="v'"
           \ar[dl]_g_{}="g" \\
            & Z \\
          X'' \ar[ur]_{f''} & & Y'' \ar[ul]^{g''}
          \ar@2"u";"u'"^{\gamma'\,\,}
          \ar@2"v'";"v"_{\,\,\delta'}
          \ar@{}[ll];"f"_(0.40){}="sa"_(0.85){}="ta"
          \ar@<-1.5ex>@/_1ex/@2"sa";"ta"_(.70){}_(0.40){}="a'"
          \ar@<0.0ex>@/^1ex/@{:>}"sa";"ta"^(.70){}_(0.60){}="a"
          \ar@3"a'";"a"_(.60){\Gamma'_{}}
          \ar@{}[];"g"_(0.40){}="tb"_(0.85){}="sb"
          \ar@<-1.6ex>@/_1ex/@2"sb";"tb"_(.30){}_(0.40){}="b'"
          \ar@<-0.0ex>@/^1ex/@{:>}"sb";"tb"^(.30){}_(0.60){}="b"
          \ar@3"b'";"b"^(.42)*+<0.1em>{\labelstyle \mskip-5mu\Delta\!'}
        }
      \]
    où
    \begin{align*}
     \gamma' = u'' \comp_0 \gamma,
     \quad
     \Gamma' & = (\Gamma \comp_1 (\alpha'' \comp_0 u)) \comp_2 (\alpha' \comp_1
    (\alpha'' \circ \gamma)),
     \\
     \Delta' & =
    ((\beta'' \circ \delta) \comp_1 \beta)
     \comp_2
    ((\beta'' \comp_0 v') \comp_1 \Delta),
     \quad
     \delta' = v'' \comp_0 \delta,
    \end{align*}
  \end{itemize}
  le symbole $\circ$ désignant la contrainte de Gray qui associe à deux
  transformations oplax composables horizontalement une $2$-transformation
  oplax (voir le paragraphe~\ref{paragr:def_contr_Gray} et la
  proposition~\ref{prop:s_t_Gray}).
\end{paragraph}

\begin{paragr}
  La description donnée au paragraphe précédent de la sesquicatégorie
  $\smash{\Spanoo}$ montre que, comme annoncé, sa catégorie sous-jacente est
  la catégorie $\smash{\SpanC}$ décrite au paragraphe~\ref{paragr:def_cat_span}.
  En particulier, le sesquifoncteur
  \[
    \commaCfun : \Spanoo \to \ooCatOpLax,
  \]
  qu'on cherche à définir est déjà défini sur les objets et les
  $1$-cellules (voir les paragraphes~\ref{paragr:def_comma} et
  \ref{paragr:comma_act_fonct}).
\end{paragr}

\begin{paragraph}\label{paragr:comma_pu_trans}
  Soit
  \[
    \xymatrix{
      X \ar[r]^f & Z & Y \ar[l]_g
    }
  \]
  un diagramme dans $\ooCat$ et soit $T$ une \oo-catégorie. En vertu du
  paragraphe~\ref{paragr:comma_pu}, la donnée d'une
  transformation oplax entre \oo-foncteurs de $T$ vers $f \comma g$,
  c'est-à-dire d'un \oo-foncteur $\Dn{1} \otimes T \to f \comma g$,
  correspond à celle d'un diagramme
  \[
    \shorthandoff{;}
    \xymatrix@C=1.5pc@R=1.5pc{
      & \Dn{1} \otimes T \ar[dl]_{\gamma} \ar[dr]^{\delta} \\
      X \ar[dr]_f
      \ar@{}[rr]_(.40){}="x"_(.60){}="y"
      \ar@2"x";"y"^{\Upsilon}
      & & Y \ar[dl]^g \\
        & Z
    }
  \]
  dans $\ooCatOpLax$. La transformation oplax $\Upsilon$ correspond par
  définition à un \oo-foncteur $\Dn{1} \otimes \Dn{1} \otimes T \to Z$ qui, à
  son tour, par adjonction, correspond à un \oo-foncteur
  $\Dn{1} \otimes \Dn{1} \to \HomOpLax(T, Z)$, ou encore à un « carré
  oplax » dans la \oo-catégorie $\HomOpLax(T, Z)$, c'est-à-dire à un diagramme
  \[
    \shorthandoff{;}
    \xymatrix{
      \ar@2[d]_\alpha \ar@2[r]^{f \comp \gamma} & \ar@2[d]^\beta \\
      \ar@2[r]_{g \comp \delta} &
      \ar@{}[l];[u]_(.30){}="x"^(.70){}="y"
      \ar@3"y";"x"_{\Lambda\!}
      \pbox{,}
    }
  \]
  où $\alpha$ et $\beta$ sont des transformations oplax et $\Lambda$ est
  une $2$-transformation oplax. La donnée d'une transformation oplax entre
  \oo-foncteurs de $T$ vers~$f \comma g$ correspond donc exactement à celle
  d'un diagramme
  \[
    \shorthandoff{;}
    \xymatrix@R=3pc{
      T
      \ar@/_2ex/[d]_{}="sg" \ar@/^2ex/[d]_{}="tg"
      \ar@2"sg";"tg"^{\gamma\,\,}
      \ar@/^6ex/[dd]_{}="ta"
      \ar@2[d];"ta"_{\beta\,\,}
      & & &
      T
      \ar@/_2ex/[d]_{}="sd" \ar@/^2ex/[d]_{}="td"
      \ar@2"sd";"td"^{\delta}
      \ar@/_6ex/[dd]_{}="sb"
      \ar@2"sb";[d]_{\alpha\,\,\,\,}
      \\
      X \ar[d]_f
      & & &
      Y \ar[d]^g
      \\
      Z
      & & &
      Z
      \ar@{}"sb";"ta"_(0.30){}="sl"_(0.70){}="tl"
      \ar@3"tl";"sl"^{\raisebox{1ex}{$\Lambda$}}
    }
  \]
  dans $\ooCatOpLax$. On notera $(\gamma, \alpha, \Lambda, \beta, \delta)$
  la transformation oplax $\Dn{1} \otimes T \to f \comma g$ correspondant à
  un tel diagramme. La source et le but de cette transformation sont les
  \oo-foncteurs $T \to f \comma g$ correspondant respectivement aux
  diagrammes
  \[
    \shorthandoff{;}
    \xymatrix@C=1.5pc@R=1.5pc{
      & T \ar[dl]_{s(\gamma)} \ar[dr]^{s(\delta)} \\
      X \ar[dr]_f
      \ar@{}[rr]_(.35){}="x"_(.65){}="y"
      \ar@2"x";"y"^{\alpha}
      & & Y \ar[dl]^g \\
      & Z & \pbox{,}
    }
    \qquad\qquad
    \xymatrix@C=1.5pc@R=1.5pc{
      & T \ar[dl]_{t(\gamma)} \ar[dr]^{t(\delta)} \\
      X \ar[dr]_f
      \ar@{}[rr]_(.35){}="x"_(.65){}="y"
      \ar@2"x";"y"^{\beta}
      & & Y \ar[dl]^g \\
      & Z & \pbox{.}
    }
  \]
\end{paragraph}

\begin{paragraph}\label{paragr:comma_act_trans}
  Considérons
  \[
    \shorthandoff{;:}
    (\gamma, \Gamma, \Delta, \delta) =
    \raisebox{2.5pc}{
        $\xymatrix@R=1.5pc@C=3.5pc{
          X \ar@/_2ex/[dd]_(0.62){\phantom{u'}u}_{}="u"
          \ar@/^2ex/[dd]_(0.65){u'\!}_{}="u'"
           \ar[dr]^f_{}="f" & &
          Y \ar@/_2ex/[dd]^(0.65){v'}_{}="v"
          \ar@/^2ex/[dd]^(0.65){v}_{}="v'"
           \ar[dl]_g_{}="g" \\
            & Z \\
          X' \ar[ur]_{f'} & & Y' \ar[ul]^{g'}
          \ar@2"u";"u'"^{\gamma\,\,}
          \ar@2"v'";"v"_{\,\,\delta}
          \ar@{}[ll];"f"_(0.40){}="sa"_(0.85){}="ta"
          \ar@<-1.5ex>@/_1ex/@2"sa";"ta"_(.70){\alpha'}_(0.40){}="a'"
          \ar@<0.0ex>@/^1ex/@{:>}"sa";"ta"^(.70){\alpha}_(0.60){}="a"
          \ar@3"a'";"a"_(.60){\Gamma_{}}
          \ar@{}[];"g"_(0.40){}="tb"_(0.85){}="sb"
          \ar@<-1.5ex>@/_1ex/@2"sb";"tb"_(.30){\beta'\!}_(0.40){}="b'"
          \ar@<0.0ex>@/^1ex/@{:>}"sb";"tb"^(.30){\!\beta}_(0.60){}="b"
          \ar@3"b'";"b"^(.40){\Delta_{}}
    }$}
  \]
  une $2$-cellule de $\Spanoo$. On lui associe une transformation oplax
  \[
    \shorthandoff{;}
    \xymatrix@C=9pc{
      f \comma g
      \ar@/^4ex/[r]^{(u, \alpha) \comma (\beta, v)}_{}="0"
      \ar@/_4ex/[r]_{(u', \alpha') \comma (\beta', v')}_{}="1"
      \ar@{}"0";"1"_(0.10){}="00"_(0.90){}="11"
      \ar@2"00";"11"_{(\gamma, \Gamma) \comma (\Delta, \delta)\,}
      &
      f' \comma g' \pbox{,}
    }
  \]
  qu'on notera parfois également $(\gamma, \Gamma, \Delta, \delta)_\ast$, de
  la manière suivante. Soit $T$ une \oo-catégorie et soit $(x, \lambda,
  y) : T \to f \comma g$ un \oo-foncteur (voir le
  paragraphe~\ref{paragr:comma_pu}). En composant le diagramme
  \[
    \shorthandoff{;:}
        \xymatrix@R=1.5pc@C=3.5pc{
          & T \ar[dl]_x \ar[dr]^y
          \ar@{}[dl];[dr]_(0.40){}="sl"_(0.60){}="tl"
          \ar@2"sl";"tl"^{\lambda}
          \\
          X \ar@/_2ex/[dd]_(0.62){\phantom{u'}u}_{}="u"
          \ar@/^2ex/[dd]_(0.65){u'\!}_{}="u'"
           \ar[dr]^f_{}="f" & &
          Y \ar@/_2ex/[dd]^(0.65){v'}_{}="v"
          \ar@/^2ex/[dd]^(0.65){v}_{}="v'"
           \ar[dl]_g_{}="g" \\
            & Z \\
          X' \ar[ur]_{f'} & & Y' \ar[ul]^{g'}
          \ar@2"u";"u'"^{\gamma\,\,}
          \ar@2"v'";"v"_{\,\,\delta}
          \ar@{}[ll];"f"_(0.40){}="sa"_(0.85){}="ta"
          \ar@<-1.5ex>@/_1ex/@2"sa";"ta"_(.70){\alpha'}_(0.40){}="a'"
          \ar@<0.0ex>@/^1ex/@{:>}"sa";"ta"^(.70){\alpha}_(0.60){}="a"
          \ar@3"a'";"a"_(.60){\Gamma_{}}
          \ar@{}[];"g"_(0.40){}="tb"_(0.85){}="sb"
          \ar@<-1.5ex>@/_1ex/@2"sb";"tb"_(.30){\beta'\!}_(0.40){}="b'"
          \ar@<0.0ex>@/^1ex/@{:>}"sb";"tb"^(.30){\!\beta}_(0.60){}="b"
          \ar@3"b'";"b"^(.40){\Delta_{}}
          \pbox{,}
    }
  \]
  on obtient une $2$-transformation oplax
  \[
    \shorthandoff{;:}
        \xymatrix@R=1.5pc@C=3.5pc{
          & T \ar[dl]_x \ar[dr]^y
          \ar@{}[dl];[dr]_(0.40){}="sl"_(0.60){}="tl"
          \ar@2"sl";"tl"^{\lambda}
          \\
          X \ar@/_2ex/[dd]_(0.62){\phantom{u'}u}_{}="u"
          \ar@/^2ex/[dd]_(0.65){u'\!}_{}="u'"
           \ar[dr]^f_{}="f" & &
           Y \ar@/^2ex/[dd]^(0.62){v'}_{}="v'"
           \ar[dl]_g_{}="g" \\
            & Z \\
          X' \ar[ur]_{f'} & & Y' \ar[ul]^{g'}
          \ar@2"u";"u'"^{\gamma\,\,}
          \ar@{}[ll];"f"_(0.40){}="sa"_(0.85){}="ta"
          \ar@<-1ex>@2"sa";"ta"_(.70){\alpha'}_(0.40){}="a'"
          \ar@{}[];"g"_(0.33){}="tb"_(0.78){}="sb"
          \ar@<1ex>@2"sb";"tb"_(.38){\beta'\!}_(0.40){}="b'"
    }
    \raisebox{-5pc}{
    $\xymatrix@C=2pc{
      \ar@3[r]^{} &
    }$}
    \shorthandoff{;:}
        \xymatrix@R=1.5pc@C=3.5pc{
          & T \ar[dl]_x \ar[dr]^y
          \ar@{}[dl];[dr]_(0.40){}="sl"_(0.60){}="tl"
          \ar@2"sl";"tl"^{\lambda}
          \\
          X \ar@/_2ex/[dd]_(0.62){\phantom{u'}u}_{}="u"
           \ar[dr]^f_{}="f" & &
          Y \ar@/_2ex/[dd]^(0.67){v}_{}="v"
          \ar@/^2ex/[dd]^(0.62){v'}_{}="v'"
           \ar[dl]_g_{}="g" \\
            & Z \\
          X' \ar[ur]_{f'} & & Y' \ar[ul]^{g'}
          \ar@2"v";"v'"^{\delta}
          \ar@{}[ll];"f"_(0.40){}="sa"_(0.85){}="ta"
          \ar@2"sa";"ta"_(.50){\,\alpha}_(0.60){}="a"
          \ar@{}[];"g"_(0.40){}="tb"_(0.85){}="sb"
          \ar@2"sb";"tb"_(.45){\!\beta}_(0.60){}="b"
    }
  \]
  donnée par la formule
  $(\Delta \comp_0 y) \comp_1 \lambda \comp_1 (\Gamma \comp_0 x)$.
  Ainsi, en vertu du paragraphe précédent,
  \[
    \big(\gamma \comp_0 x,
    (\beta \comp_0 y) \comp_1 \lambda \comp_1 (\alpha \comp_0 x),
    (\Delta \comp_0 y) \comp_1 \lambda \comp_1 (\Gamma \comp_0 x),
    (\beta' \comp_0 y) \comp_1 \lambda \comp_1 (\alpha' \comp_0 x),
    \delta \comp_0 y\big)
  \]
  définit une transformation oplax qui, par adjonction, peut se représenter
  comme un \oo-foncteur $T \to \HomLax(\Dn{1}, f' \comma g')$. Par ailleurs,
  en vertu de ce même paragraphe, la source et le but de cette
  transformation oplax sont
  les \oo-foncteurs $T \to f' \comma g'$
  \[
  \big(u \comp_0 x, (\beta \comp_0 y) \comp_1 \lambda \comp_1 (\alpha
  \comp_0 x), v \comp_0 y\big)
  \quadet
  \big(u' \comp_0 x, (\beta' \comp_0 y) \comp_1 \lambda \comp_1
  (\alpha' \comp_0 x), v' \comp_0 y\big),
  \]
  selon la notation du paragraphe~\ref{paragr:comma_pu}. Il résulte de la
  fonctorialité de la composition horizontale par une $1$-cellule que
  l'application
  \[ \Hom_{\ooCat}(T, f \comma g) \to \Hom_{\ooCat}(T, \HomLax(\Dn{1}, f' \comma g')) \]
  que l'on vient de décrire est naturelle en $T$ et, en vertu du lemme de
  Yoneda, on a donc bien défini une transformation oplax $(\gamma, \Gamma)
  \comma (\Delta, \delta)$ entre \oo-foncteurs de $f \comma g$ vers $f'
  \comma g'$.  Par ailleurs, les formules donnant la source et le but de la
  transformation oplax $T \to \HomLax(\Dn{1}, f' \comma g')$ montrent que
  les source et but de la transformation oplax $(\gamma, \Gamma) \comma
  (\Delta, \delta)$ sont bien respectivement $(u, \alpha) \comma (\beta, v)$
  et $(u', \alpha') \comma (\beta', v')$ (voir le
  paragraphe~\ref{paragr:comma_act_fonct}).
\end{paragraph}

\begin{theorem}\label{thm:comma_fonct}
  Soit $Z$ une \oo-catégorie. Les applications
  \[
    \begin{split}
      (f, g) & \mapsto f \comma g \\
      (u, \alpha, \beta, v) & \mapsto (u, \alpha) \comma (\beta, v) \\
      (\gamma, \Gamma, \Delta, \delta) & \mapsto (\gamma, \Gamma) \comma (\Delta,
      \delta)
    \end{split}
  \]
  définissent un sesquifoncteur
    \[ \commaCfun : \Spanoo \to \ooCatOpLax. \]
\end{theorem}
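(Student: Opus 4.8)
The strategy is to reduce the claim to the three component statements established along the way: functoriality of $\commaCfun$ on the underlying categories (Proposition~\ref{prop:comma_fonct}), which already handles objects, $1$\nbd-cells, identities of objects and composition of $1$\nbd-cells; the well-definedness of $(\gamma, \Gamma) \comma (\Delta, \delta)$ as a transformation oplax with the prescribed source and target (paragraph~\ref{paragr:comma_act_trans}); and the compatibility of the assignment $(\gamma, \Gamma, \Delta, \delta) \mapsto (\gamma, \Gamma) \comma (\Delta, \delta)$ with the remaining sesquicategorical structure, namely identities of $1$\nbd-cells, vertical composition of $2$\nbd-cells, and the two whiskerings of a $2$\nbd-cell by a $1$\nbd-cell. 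Once these equalities of transformations oplax are verified, the list of axioms for a sesquifunctor given in paragraph~\ref{paragr:def_sesqui} is exhausted, and the theorem follows.

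\textbf{Key steps.} First I would recall from paragraph~\ref{paragr:comma_pu_trans} the explicit description of a transformation oplax $\Dn{1} \otimes T \to f \comma g$ as a diagram $(\gamma, \alpha, \Lambda, \beta, \delta)$ in $\ooCatOpLax$, and the description of its source and target; this is the bookkeeping device that makes all subsequent computations possible. Then, exactly as in the proof of Proposition~\ref{prop:comma_fonct}, I would argue by the Yoneda lemma: to check an equality of transformations oplax between $\oo$\nbd-functors $f \comma g \to f' \comma g'$, it suffices to precompose both sides with an arbitrary $\oo$\nbd-functor $(x, \lambda, y) : T \to f \comma g$ and compare the resulting diagrams in $\ooCatOpLax$, using the formula for $(\gamma, \Gamma) \comma (\Delta, \delta)$ from paragraph~\ref{paragr:comma_act_trans}. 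Concretely: for the identity of a $1$\nbd-cell $(u, \alpha, \beta, v)$ one plugs in $\gamma = \id{u}$, $\delta = \id{v}$, $\Gamma = \id{\alpha}$, $\Delta = \id{\beta}$ and uses the compatibility of horizontal composition with identities (Proposition~\ref{prop:Gray_fonct_1-cell} and the remark following it) to see the result is $\id{(u, \alpha) \comma (\beta, v)}$. For vertical composition one plugs in the composite data from paragraph~\ref{paragr:def_sesqui_span} and checks that $(\Delta'' \comp_0 y) \comp_1 \lambda \comp_1 (\Gamma'' \comp_0 x)$ factors, via the interchange law in $\ooCatOpLax$ together with the associativity (Proposition~\ref{prop:circ_assoc}) and unit (Proposition~\ref{prop:circ_ident}) properties of $\circ$, as the appropriate vertical composite of the two transformations oplax $(\gamma', \Gamma') \comma (\Delta', \delta')$ and $(\gamma, \Gamma) \comma (\Delta, \delta)$. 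The two whiskering axioms are treated analogously, plugging in the whiskered data from paragraph~\ref{paragr:def_sesqui_span}.

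\textbf{Main obstacle.} The delicate point, as the structure of paragraph~\ref{paragr:def_sesqui_span} already signals, is the whiskering of a $2$\nbd-cell by a $1$\nbd-cell \emph{on the outside} (a $2$\nbd-cell followed by a $1$\nbd-cell), where the formulas for $\Gamma'$ and $\Delta'$ genuinely involve the Gray constraint $\circ$ rather than mere strict composites. Here the verification cannot be a routine transcription of the strict $\oo$\nbd-category computation: one must use Proposition~\ref{prop:s_t_Gray} to identify the source and target of $\alpha'' \circ \gamma$ and $\beta'' \circ \delta$, and Proposition~\ref{prop:circ_horiz} to reorganize the resulting composites of $3$\nbd-cells — precisely the same mechanism that appears in the proof that $\tr{\GrayC}{c}$ is a sesquicategory. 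I expect this single axiom to absorb most of the work; the others reduce, after the Yoneda precomposition, to interchange laws and the elementary unit/associativity identities for $\circ$ recalled in the first subsection of this appendix. Having dispatched all the axioms, one concludes that $\commaCfun$ is a sesquifunctor, which is the assertion of the theorem.
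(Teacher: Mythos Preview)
Your plan coincides with the paper's proof: both reduce to the $1$-functoriality already established, then verify the four remaining sesquifunctor axioms by precomposing with a generic $(x,\lambda,y):T\to f\comma g$ via Yoneda, and both single out the whiskering of a $2$-cell by a $1$-cell on the outside as the case where the Gray constraint $\circ$ genuinely intervenes. One small imprecision: vertical composition requires only the interchange law and the functoriality of $\comp_0$ by a $1$-cell (no $\circ$ appears in those formulas), while in the hard whiskering case the paper invokes Proposition~\ref{prop:circ_assoc} (to rewrite $\beta''\circ(\delta\comp_0 y)$ as $(\beta''\circ\delta)\comp_0 y$) and Proposition~\ref{prop:Gray_fonct_1-cell}, rather than Proposition~\ref{prop:circ_horiz}.
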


\begin{proof}
  On a déjà montré la $1$-fonctorialité de la construction comma
  (proposition~\ref{prop:comma_fonct}) et il s'agit de montrer la
  compatibilité aux opérations mettant en jeu des $2$-cellules.
  Fixons
  \[
    (X, f, g, Y) =
    \xymatrix{
       X \ar[r]^f & Z & Y \ar[l]_g
    }
  \]
  un objet de $\Spanoo$, $T$ une \oo-catégorie et $(x, \lambda, y) : T \to f
  \comma g$ un \oo-foncteur, c'est-à-dire un diagramme
  \[
    \shorthandoff{;}
     \xymatrix@C=1.5pc@R=1.5pc{
      & T \ar[dl]_x \ar[dr]^y \\
      X \ar[dr]_f
      \ar@{}[rr]_(.35){}="x"_(.65){}="y"
      \ar@2"x";"y"^{\lambda}
      & & Y \ar[dl]^g \\
        & Z
  }
  \]
  dans $\ooCatOpLax$. On va vérifier la sesquifonctorialité de
  $\commaCfun$ en utilisant le lemme de Yoneda, c'est-à-dire en précomposant
  les égalités que l'on veut montrer par $(x, \lambda, y)$. Dans cette
  démonstration, on considérera toute transformation oplax entre
  \oo-foncteurs d'une \oo-caté\-gorie~$A$ vers une \oo-catégorie $B$ comme
  un \oo-foncteur de $A$ vers~$\HomLax(\Dn{1}, B)$.

  Considérons
  \[
    \shorthandoff{;}
    (u, \alpha, \beta, v) =
    \raisebox{2pc}{
    $\xymatrix@R=1pc@C=3pc{
      X \ar[dd]_u \ar[dr]^f_{}="f" & & Y \ar[dl]_g_{}="g" \ar[dd]^v \\
        & Z \\
      X' \ar[ur]_{f'} & & Y' \ar[ul]^{g'}
      \ar@{}[ll];"f"_(0.35){}="sa"_(0.85){}="ta"
      \ar@2"sa";"ta"^{\alpha}
      \ar@{}[];"g"_(0.35){}="tb"_(0.85){}="sb"
      \ar@2"sb";"tb"^{\beta}
    }$}
  \]
  une $1$-cellule de $\Spanoo$. Vérifions la compatibilité de $\commaCfun$ à
  l'identité de $(f, \alpha, \beta, g)$. Il suffit donc de vérifier
  l'égalité
  \[
    (\id{(f, \alpha, \beta, g)})_\ast (x, \lambda, y)
    = \id{(f, \alpha, \beta, g)_\ast} (x, \lambda, y),
  \]
  où on considère les transformations oplax $(\id{(f, \alpha, \beta,
  g)})_\ast$ et $\id{(f, \alpha, \beta, g)_\ast}$ comme des \oo-foncteurs de
  $f \comma g$ vers $\HomLax(\Dn{1}, f \comma g)$, comme mentionné
  précédemment. En vertu du paragraphe~\ref{paragr:comma_act_trans}, il
  s'agit donc de vérifier l'égalité entre deux $5$-uplets. Toutes les
  composantes de ces $5$-uplets à l'exception de la troisième étant
  déterminées par la source et le but de ces transformations oplax, par
  compatibilité de $\commaCfun$ aux sources et aux buts, il suffit de
  vérifier l'égalité des troisièmes composantes. Or ces composantes sont
  respectivement
  \[
    (\id{\beta} \comp_0 y) \comp_1 \lambda \comp_1 (\id{\alpha} \comp_0 x)
    \quadet
    \id{(\beta \comp_0 y) \comp_1 \lambda \comp_1 (\alpha \comp_0 x)}
  \]
  et sont donc bien égales.

  Soient maintenant
  \[
    \shorthandoff{;:}
    \xymatrix@R=2pc@C=4.5pc{
      X
      \ar@/_3.5ex/[dd]^(0.21){\mkern-6mu u}_(0.65){}="u"
      \ar[dd]^(0.22){\mkern-4mu u'}_(0.65){}="u'"
      \ar@/^3.5ex/[dd]^(0.25){\mkern-4mu u''}_(0.65){}="u''"
      \ar@2"u";"u'"_{\,\gamma\phantom{'}}
      \ar@2"u'";"u''"_{\gamma'}
      \ar[dr]^f_{}="f" & & Y \ar[dl]_g_{}="g"
      \ar@/_3.5ex/[dd]_(0.25){v''\mkern-9mu}_(0.65){}="v''"
      \ar[dd]_(0.22){v'\mkern-5mu}_(0.65){}="v'"
      \ar@/^3.5ex/[dd]_(0.21){v\mkern -4mu}_(0.65){}="v"
      \ar@2"v";"v'"^{\,\delta\phantom{'}}
      \ar@2"v'";"v''"^{\delta'}
        \\
        & Z \\
      X' \ar[ur]_{f'} & & Y' \ar[ul]^{g'}
      \ar@{}[ll];"f"_(0.40){}="sa"_(0.85){}="ta"
      \ar@<2ex>@/^1.5ex/@{:>}"sa";"ta"^(0.50){\alpha\!}_{}="a"
      \ar@{:>}"sa";"ta"_(0.20){\!\!\alpha'}_{}="a'"
      \ar@<-2ex>@/_1.5ex/@2"sa";"ta"_(0.60){\!\!\alpha''}_{}="a''"
      \ar@3"a'";"a"_{\,\Gamma}
      \ar@3"a''";"a'"_{\Gamma'}
      \ar@{}"g";[]_(0.10){}="sb"_(0.55){}="tb"
      \ar@<2ex>@/^1.5ex/@{:>}"sb";"tb"^(0.58){\beta}_{}="b"
      \ar@{:>}"sb";"tb"_(0.95)*+<-0.5em>{\labelstyle \beta'\mkern -2mu}_{}="b'"
      \ar@<-2ex>@/_1.5ex/@2"sb";"tb"_(0.30){\beta''\mkern-2mu}_{}="b''"
      \ar@3"b'";"b"^*+<0.2em>{\labelstyle \Delta}
      \ar@3"b''";"b'"^*+<0.25em>{\mkern17mu\labelstyle \Delta\mkern-2mu'}
    }
  \]
  deux $2$-cellules composables verticalement de $\Spanoo$. Vérifions la
  compatibilité de $\commaCfun$ à leur composition. Il s'agit de vérifier
  l'égalité
  \[
    \big((\gamma', \Gamma', \Delta', \delta')_\ast
    (\gamma, \Gamma, \Delta, \delta)_\ast\big)
    (x, \lambda, y)
    =
    \big((\gamma', \Gamma', \Delta', \delta')
    (\gamma, \Gamma, \Delta, \delta)\big)_\ast
    (x, \lambda, y).
  \]
  Pour les mêmes raisons que dans la vérification précédente (et le fait
  qu'on a déjà vérifié la $1$-fonctorialité), il suffit de vérifier
  l'égalité des troisièmes composantes des $5$-uplets décrivant les deux
  membres. Or, la troisième composante du membre de gauche est la
  $2$-transformation oplax composée du diagramme
  \[
    \shorthandoff{;}
    \xymatrix@C=3.5pc@R=2.5pc{
      \ar@2[d] \ar@2[r]^{f' \comp_0 \gamma \comp_0 x} & \ar@2[d]
      \ar@2[r]^{f' \comp_0 \gamma' \comp_0 x} & \ar@2[d] \\
      \ar@2[r]_{g' \comp_0 \delta \comp_0 y} &
      \ar@2[r]_{g' \comp_0 \delta' \comp_0 y} &
      \ar@{}[ll];[lu]_(.30){}="x"^(.70){}="y"
      \ar@3"y";"x"_{\Lambda}
      \ar@{}[l];[u]_(.30){}="x'"^(.70){}="y'"
      \ar@3"y'";"x'"_{\Lambda'\!}
      \pbox{,}
    }
  \]
  où les flèches verticales, qui ne joueront aucun rôle dans ce calcul, sont
  données par les formules du paragraphe~\ref{paragr:comma_act_trans}
  et où
  \[
    \Lambda = (\Delta \comp_0 y) \comp_1 \lambda \comp_1 (\Gamma \comp_0 x)
    \quadet
    \Lambda' = (\Delta' \comp_0 y) \comp_1 \lambda \comp_1 (\Gamma' \comp_0
    x),
  \]
  c'est-à-dire la $2$-transformation oplax
  \[
    \big[(g' \comp_0 \delta' \comp_0 y) \comp_1 (\Delta \comp_0 y) \comp_1 \lambda
    \comp_1 (\Gamma \comp_0 x)\big]
    \comp_2
    \big[(\Delta' \comp_0 y) \comp_1 \lambda \comp_1 (\Gamma' \comp_0 x)
      \comp_1 (f' \comp_0 \gamma \comp_0 x)\big].
  \]
  Par ailleurs, la troisième composante du membre de droite est
  \[ (\Delta'' \comp_0 y) \comp_1 \lambda \comp_1 (\Gamma'' \comp_0 x), \]
  où
  \[
    \Gamma '' = \Gamma \comp_2 (\Gamma' \comp_1 (f' \comp_0 \gamma))
    \quadet
    \Delta'' = ((g' \comp_0 \delta') \comp_1 \Delta) \comp_2 \Delta',
  \]
  c'est-à-dire
  \[
    \big[(((g' \comp_0 \delta') \comp_1 \Delta) \comp_2 \Delta') \comp_0 y\big]
    \comp_1 \lambda \comp_1
    \big[(\Gamma \comp_2 (\Gamma' \comp_1 (f' \comp_0 \gamma))) \comp_0
    x\big].
  \]
  Or, la fonctorialité de la composition par une $1$-cellule dans
  $\ooCatOpLaxGray$ et la loi de l'échange (dans $\HomOpLax(T, f' \comma
  g')$) entraînent que ces deux expressions sont égales, ce
  qui achève de montrer la compatibilité de $\commaCfun$ à la composition
  verticale des $2$-cellules.

  Considérons maintenant
  \[
    \shorthandoff{;:}
    \xymatrix@R=3pc@C=3pc{
      X \ar[d]_u \ar[dr]^(0.50)f_{}="f" & & Y \ar[dl]_(0.50)g_{}="g"
      \ar[d]^v \\
      X' \ar[r]^(0.60){f'}_(.70){}="f'" \ar@/_2ex/[d]_(.64){u'\!}_{}="u'"
      \ar@/^2ex/[d]_(.70){u''\!\!}_{}="u''"
      \ar@2"u'";"u''"^{\gamma}
    \ar@2[];"f"^(.68){\alpha\phantom{'}}
      & Z & Y' \ar[l]_(0.65){g'}_(.70){}="g'" \ar@/^2ex/[d]^(.64){\!v'}_{}="v'"
      \ar@/_2ex/[d]^(0.70){\!v''}_{}="v''"
       \ar@2"v'";"v''"_{\delta}
      \ar@2"g";[]^(0.30){\beta}
       \\
      X'' \ar[ur]_(0.51){f''}
      & & Y'' \ar[ul]^(0.52){g''}_{}="g''"
      \ar@{}"u''";"f'"_(.10){}="sa'"_(.75){}="ta'"
      \ar@<0.6ex>@/^1ex/@{:>}"sa'";"ta'"_{}="tG"^(0.5)*+<-.3em>{\labelstyle \alpha'}
      \ar@<-0.9ex>@/_1ex/@2"sa'";"ta'"_{}="sG"_(0.05)*+<-.3em>{\labelstyle \alpha''}
      \ar@3"sG";"tG"_\Gamma
      \ar@{}"g'";"v''"_(.20){}="sb'"_(.90){}="tb'"
      \ar@<0.6ex>@/^1ex/@{:>}"sb'";"tb'"_{}="sD"^(0.6)*+<-.1em>{\!\labelstyle \beta'}
      \ar@<-0.9ex>@/_1ex/@2"sb'";"tb'"_{}="tD"_(0.95)*+<-.7em>{\,\,\,\,\labelstyle \beta''}
      \ar@3"tD";"sD"^*+<.1em>{\labelstyle \Delta\!}
    }
   \]
   une $1$-cellule suivie d'une $2$-cellule composables dans $\Spanoo$.
   Montrons la compatibilité de $\commaCfun$ à la composition de ces
   cellules. Il s'agit de montrer, comme dans les vérifications
   précédentes, l'égalité entre les troisièmes composantes des $5$\nbd-uplets
   \[
     \big((\gamma, \Gamma, \Delta, \delta)_\ast \comp (u, \alpha, \beta,
     v)_\ast\big) (x, \lambda, y)
     \quadet
     \big((\gamma, \Gamma, \Delta, \delta) \comp (u, \alpha, \beta,
     v)\big)_\ast (x, \lambda, y).
   \]
   Or
   \[
     \begin{split}
     \MoveEqLeft
     \big((\gamma, \Gamma, \Delta, \delta)_\ast \comp (u, \alpha, \beta,
     v)_\ast\big) (x, \lambda, y) \\
     & =
     (\gamma, \Gamma, \Delta, \delta)_\ast (u, \alpha, \beta, v)_\ast (x,
     \lambda, y)
     \\
     & =
     (\gamma, \Gamma, \Delta, \delta)_\ast (u \comp_0 x, (\beta \comp_0 y)
     \comp_1 \lambda \comp_1 (\alpha \comp_0 x), v \comp_0 y)
     \end{split}
   \]
   et la troisième composante de ce $5$-uplet est donc
   \[
     (\Delta \comp_0 v \comp_0 y) \comp_1 (\beta \comp_0 y)
     \comp_1 \lambda \comp_1
     (\alpha \comp_0 x) \comp_1 (\Gamma \comp_0 u \comp_0 x).
   \]
   D'autre part, la troisième composante de
   $\big((\gamma, \Gamma, \Delta, \delta) \comp (u, \alpha, \beta,
   v)\big)_\ast (x, \lambda, y)$ est
   \[
      (\Delta' \comp_0 y) \comp_1 \lambda \comp_1 (\Gamma' \comp_0 x)
   \]
   où
   \[
     \Gamma' = \alpha \comp_1 (\Gamma \comp_0 u)
     \quadet
     \Delta' = (\Delta \comp_0 v) \comp_1 \beta,
   \]
   c'est-à-dire
   \[
     \big(((\Delta \comp_0 v) \comp_1 \beta) \comp_0 y\big)
     \comp_1 \lambda \comp_1
     \big((\alpha \comp_1 (\Gamma \comp_0 u)) \comp_0 x\big).
   \]
   Or ces deux expressions sont bien égales en vertu de la fonctorialité de
   la composition par une $1$-cellule dans $\ooCatOpLaxGray$.

  Enfin, soient
  \[
    \shorthandoff{;:}
    \xymatrix@R=3pc@C=3pc{
      X \ar@/_2ex/[d]_(0.67)u_{}="u" \ar@/^2ex/[d]_(0.70){u'\!\!}_{}="u'"
      \ar@2"u";"u'"^\gamma
      \ar[dr]^(0.55)f_{}="f" & & Y \ar[dl]_(0.58)g_{}="g"
      \ar@/^2ex/[d]^(.68)v_{}="v" \ar@/_2ex/[d]^(0.7){\!v'}_{}="v'"
      \ar@2"v";"v'"_\delta
      \\
      X' \ar[r]_(0.60){\,f'}_(.70){}="f'"_(.20){}="f'2" \ar[d]_{u''\!}_(.70){}="u''"
      & Z & Y' \ar[l]^(0.65){g'\,}_(.70){}="g'"_(.20){}="g'2" \ar[d]^{v''}_(.70){}="v''"
      \ar@{}"f'2";"f"_(.25){}="sa"_(.90){}="ta"
      \ar@<0.6ex>@/^1ex/@{:>}"sa";"ta"_{}="tG"^(0.7)*+<.1em>{\labelstyle \alpha}
      \ar@<-0.9ex>@/_1ex/@2"sa";"ta"_{}="sG"_(0.65)*+<-.3em>{\labelstyle \alpha'}
      \ar@3"sG";"tG"_\Gamma
      \ar@{}"g";"g'2"_(.10){}="sb'"_(.80){}="tb'"
      \ar@<0.6ex>@/^1ex/@{:>}"sb'";"tb'"_{}="sD"^(0.3)*+<-.1em>{\!\labelstyle \beta}
      \ar@<-0.9ex>@/_1ex/@2"sb'";"tb'"_{}="tD"_(0.4)*+<-.7em>{\labelstyle
        \beta'\,\,}
      \ar@3"tD";"sD"^*+<.1em>{\labelstyle \Delta\!}
       \\
      X'' \ar[ur]_(0.51){f''}
      \ar@{}"u''";"f'"_(.20){}="sa'"^(.80){}="ta'"
      \ar@2"sa'";"ta'"^{\alpha''}
      & & Y'' \ar[ul]^(0.52){g''}_{}="g''"
      \ar@{}"g'";"v''"_(.20){}="sb'"^(.80){}="tb'"
      \ar@2"sb'";"tb'"^{\beta''}
    }
  \]
  une $2$-cellule suivie d'une $1$-cellule composables dans $\Spanoo$.
  Vérifions la compatibilité de $\commaCfun$ à la composition de ces
  cellules. Comme précédemment, il s'agit de montrer l'égalité entre les
  troisièmes composantes des $5$-uplets
   \[
     \big((u'', \alpha'', \beta'', v'')_\ast \comp
     (\gamma, \Gamma, \Delta, \delta)_\ast\big)
       (x, \lambda, y)
     \quadet
     \big((u'', \alpha'', \beta'', v'') \comp (\gamma, \Gamma, \Delta, \delta)\big)_\ast
       (x, \lambda, y).
   \]
  Déterminons la troisième composante du $5$-uplet du membre de
  gauche. Par définition, ce $5$-uplet correspond au composé du diagramme
  \[
    \shorthandoff{;}
    \xymatrix@R=1pc@C=3pc{
      & \Dn{1} \otimes T \ar[dl]_{\gamma \comp_0 x} \ar[dr]^{\delta \comp_0
    y} \\
      X' \ar[dd]_{u''} \ar[dr]^{f'}_{}="f" & & Y' \ar[dl]_{g'}_{}="g"
      \ar[dd]^{v''}
      \ar@{}[ll];[]_(0.40){}="x"_(0.60){}="y"
      \ar@2"x";"y"^{\Lambda}
       \\
        & Z \\
      X'' \ar[ur]_{f''} & & Y'' \ar[ul]^{g''}
      \ar@{}[ll];"f"_(0.35){}="sa"_(0.85){}="ta"
      \ar@2"sa";"ta"^{\alpha''\!}
      \ar@{}[];"g"_(0.35){}="tb"_(0.85){}="sb"
      \ar@2"sb";"tb"^{\beta''}
      \pbox{,}
    }
  \]
  où
  \[
    \Lambda = (\gamma, \Gamma, \Delta, \delta)_\ast (x, \lambda, y).
  \]
  On vérifie que si
  \[
    \shorthandoff{;}
    \xymatrix@C=2.5pc{
      \Dn{1} \otimes A \ar[r]^-{\phi} & B \ar@/^2ex/[r]_{}="s"
      \ar@/_2ex/[r]_{}="t" \ar@2"s";"t"_{\psi} & C
    }
  \]
  est un diagramme dans $\ooCatOpLax$, alors le composé
  \[
    \shorthandoff{;}
    \xymatrix@C=2.5pc{
      \Dn{1} \otimes A \ar@/^3ex/[r]_{}="s"
      \ar@/_3ex/[r]_{}="t" \ar@2"s";"t" & C
    }
  \]
  correspond, par adjonction, au carré commutatif à transformation oplax
  près donné par le \oo-foncteur
  \[
    \xymatrix@C=3pc{
      \Dn{1} \otimes \Dn{1} \ar[r]^-{\psi \otimes \phi} & \HomOpLax(B,
      C) \otimes \HomOpLax(A, B) \ar[r]^-{\circ_{C,B,A}} & \HomOpLax(A, C)
    }
  \]
  qui est décrit explicitement dans la proposition~ \ref{prop:s_t_Gray} et
  dont la $2$-cellule correspond à la contrainte de Gray $\psi \circ \phi$.
  Ainsi, le composé qu'on cherche à déterminer correspond au composé
  \[
    \shorthandoff{;}
    \xymatrix@C=3pc@R=3pc{
      \bullet
      \ar@2[r]^*+<0.4em>{\labelstyle f'' \comp_0 u'' \comp_0 \gamma \comp_0 x}
      \ar@2[d]_{\alpha'' \comp_0 u \comp_0 x}
      &
      \bullet
      \ar@2[d]^{\alpha'' \comp_0 u' \comp_0 x}
      \\
      \bullet
      \ar@2[r]
      \ar@2[d]_{(\beta \comp_0 y) \comp_1 \lambda \comp_1 (\alpha \comp_0
      x)}
      &
      \bullet
      \ar@2[d]^{(\beta' \comp_0 y) \comp_1 \lambda \comp_1 (\alpha' \comp_0
      x)}
      \\
      \bullet
      \ar@2[r]
      \ar@2[d]_{\beta'' \comp_0 v \comp_0 y}
      &
      \bullet
      \ar@2[d]^{\beta'' \comp_0 v' \comp_0 y}
      \\
      \bullet
      \ar@2[r]_*+<0.4em>{\labelstyle g'' \comp_0 v'' \comp_0 \delta \comp_0 y}
      &
      \bullet
      \ar@{}[uuu];[uul]_(.20){}="s1"_(.80){}="t1"
      \ar@3"s1";"t1"|-{\scriptscriptstyle \alpha'' \circ_{} (\gamma \comp_0 x)}
      \ar@{}[uu];[ul]_(.30){}="s2"_(.70){}="t2"
      \ar@3"s2";"t2"_{\Lambda'}
      \ar@{}[u];[l]_(.20){}="s3"_(.80){}="t3"
      \ar@3"s3";"t3"|-{\scriptscriptstyle \beta'' \circ_{} (\delta \comp_0 y)}
      \pbox{,}
    }
  \]
  où les deux flèches horizontales non décorées sont, de haut en bas,
  \[
    f' \comp_0 \gamma \comp_0 x
    \quadet
    g' \comp_0 \delta \comp_0 y,
  \]
  et où
  \[
    \Lambda' = (\Delta \comp_0 y) \comp_1 \lambda \comp_1 (\Gamma \comp_0
    x).
  \]
  Ainsi, la troisième composante recherchée est la $2$-transformation oplax
  \[
    \begin{split}
      &
      \big[ (\beta'' \circ (\delta \comp_0 y)) \comp_1 (\beta \comp_0 y)
        \comp_1 \lambda \comp_1 (\alpha \comp_0 x) \comp_1 (\alpha'' \comp_0
      u \comp_0 x) \big] \\
      & \quad \comp_2
      \big[ (\beta'' \comp_0 v' \comp_0 y) \comp_1 (\Delta \comp_0 y)
        \comp_1 \lambda \comp_1 (\Gamma \comp_0 x) \comp_1 (\alpha'' \comp_0
        u \comp_0 x) \big] \\
      & \quad \comp_2
        \big[ (\beta'' \comp_0 v' \comp_0 y) \comp_1 (\beta' \comp_0 y)
          \comp_1 \lambda \comp_1 (\alpha' \comp_0 x) \comp_1 (\alpha''
        \circ (\gamma \comp_0 x)) \big].
    \end{split}
  \]
  En vertu de la loi de l'échange pour les compositions $\comp_1$ et
  $\comp_2$, cette $2$-transformation oplax est égale à
  \[ \Delta'' \comp_1 \lambda \comp_1 \Gamma'' \]
  où $\Delta''$ est
  \[
    \big[(\beta'' \circ (\delta \comp_0 y)) \comp_1 (\beta \comp_0 y)\big] \comp_2
    \big[(\beta'' \comp_0 v' \comp_0 y) \comp_1 (\Delta \comp_0 y)\big] \comp_2
    \big[(\beta'' \comp_0 v' \comp_0 y) \comp_1 (\beta' \comp_0 y)\big]
  \]
  et $\Gamma''$ est
  \[
    \big[(\alpha \comp_0 x) \comp_1 (\alpha'' \comp_0 u \comp_0 x)\big]
    \comp_2
    \big[(\Gamma \comp_0 x) \comp_1 (\alpha'' \comp_0 u \comp_0 x)\big]
    \comp_2
    \big[(\alpha' \comp_0 x) \comp_1 (\alpha'' \circ (\gamma \comp_0
    x))\big].
  \]
  Or, il résulte du fait que $\ooCatOpLaxGray$ est une \oo-catégorie de Gray (et
  plus précisément des propositions~\ref{prop:circ_assoc} et
  \ref{prop:Gray_fonct_1-cell}) qu'on a
  \[
    \begin{split}
      \Delta''
      & =
    \big[(\beta'' \circ (\delta \comp_0 y)) \comp_1 (\beta \comp_0 y)\big] \comp_2
    \big[(\beta'' \comp_0 v' \comp_0 y) \comp_1 (\Delta \comp_0 y)\big] \\
    & =
    \big[((\beta'' \circ \delta) \comp_0 y) \comp_1 (\beta \comp_0 y)\big] \comp_2
    \big[(\beta'' \comp_0 v' \comp_0 y) \comp_1 (\Delta \comp_0 y)\big] \\
    & =
    \big[
    ((\beta'' \circ \delta) \comp_1 \beta) \comp_2
    ((\beta'' \comp_0 v') \comp_1 \Delta)\big] \comp_0 y \\
    \end{split}
  \]
  et, de même,
  \[ \Gamma'' = \big[(\Gamma \comp_1 (\alpha'' \comp_0 u)) \comp_2 (\alpha'
  \comp_1 (\alpha'' \circ \gamma))\big] \comp_0 x. \]
  Ainsi, la troisième composante du membre de gauche de l'égalité qu'on
  cherche à établir est égale à
   \[
      (\Delta' \comp_0 y) \comp_1 \lambda \comp_1 (\Gamma' \comp_0 x),
   \]
   où
  \begin{align*}
     \Gamma' & = (\Gamma \comp_1 (\alpha'' \comp_0 u)) \comp_2 (\alpha' \comp_1
    (\alpha'' \circ \gamma)),
     \\
     \Delta' & =
    ((\beta'' \circ \delta) \comp_1 \beta)
     \comp_2
    ((\beta'' \comp_0 v') \comp_1 \Delta).
  \end{align*}
  Or, c'est précisément la définition de la troisième composante du membre
  de droite. On obtient donc l'égalité recherchée, ce qui achève la
  démonstration.
\end{proof}

\begin{paragraph}\label{paragr:comma_var_fixe}
  Fixons $f : X \to Z$ un \oo-foncteur. En vertu du
  paragraphe~\ref{paragr:def_span}, on dispose d'un sesquifoncteur
  d'inclusion
  \[ {}^{}_f\iota : \trto{(\ooCatOpLaxGray)}{Z} \to \Spanoo \]
  et on notera
  \[ f \comma {-} : \trto{(\ooCatOpLaxGray)}{Z} \to \ooCatOpLax \]
  le sesquifoncteur composé
  \[
     \trto{(\ooCatOpLaxGray)}{Z} \xto{\,\,\,\,{}^{}_f\iota\,\,\,\,} \Spanoo
     \xto{\commaCfun} \ooCatOpLax.
  \]
  De même, si on fixe un \oo-foncteur $g : Y \to Z$, on dispose d'un
  sesquifoncteur d'inclusion
  \[ \iota_{g} : \tr{(\ooCatOpLaxGray)}{Z} \to \Spanoo \]
  et on notera
  \[ {-} \comma g : \tr{(\ooCatOpLaxGray)}{Z} \to \ooCatOpLax \]
  le sesquifoncteur obtenu par composition avec le sesquifoncteur $\commaCfun$.
\end{paragraph}

\begin{corollary}\label{coro:comma_retr_app}
  Soient
    \[
      \xymatrix{
        X \ar[r]^f & Z & Y \ar[l]_g
      }
    \]
  deux \oo-foncteurs.
  \begin{enumerate}
    \item Si $i : X' \to X$ est un rétracte par transformation oplax à gauche
      fort, alors il en est de même de
      \[ (i, \id{fi}) \comma g :  (fi) \comma g \to f \comma g. \]
    \item Si $j : Y' \to Y$ est un rétracte par transformation oplax à
      droite fort, alors il en est de même de
      \[ f \comma (\id{gj}, j) :  f \comma (gj) \to f \comma g. \]
  \end{enumerate}
\end{corollary}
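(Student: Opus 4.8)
The plan is to deduce the result from the sesquifunctoriality of the comma construction established in Theorem~\ref{thm:comma_fonct}. The key observation is that the notion of \emph{strong} left (resp.\ right) oplax transformation retract from paragraph~\ref{paragr:def_retr} is purely sesquicategorical: a strong left oplax retract structure on a $1$-cell $i : A \to B$ of a sesquicategory is the same thing as a $1$-cell $r : B \to A$ with $r i = \id{A}$, a $2$-cell $\alpha$ of source $i r$ and target $\id{B}$, and the equation $\alpha \comp i = \id{i}$. Consequently any sesquifunctor $F$ transports such a structure, the image $(F(r), F(\alpha))$ being a strong left oplax retract structure on $F(i)$: indeed $F(r)F(i) = F(\id{A}) = \id{F(A)}$, the cell $F(\alpha)$ has source $F(ir) = F(i)F(r)$ and target $F(\id{B}) = \id{F(B)}$ since $F$ commutes with the sources and targets of $2$-cells, and $F(\alpha) \comp F(i) = F(\alpha \comp i) = F(\id{i}) = \id{F(i)}$; the right-handed version is obtained by reversing the orientation of $\alpha$. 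It therefore suffices to lift a given strong oplax retract structure on $i$ (resp.\ $j$) in $\ooCat$ to one on the $1$-cell $(i, \id{fi})$ of $\tr{\ooCatOpLaxGray}{Z}$ (resp.\ on the corresponding $1$-cell built from $j$ and $\id{gj}$ in $\trto{\ooCatOpLaxGray}{Z}$), and then to apply the sesquifunctor ${-} \comma g$ (resp.\ $f \comma {-}$) of paragraph~\ref{paragr:comma_var_fixe}.

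For assertion~(a), let $(r, \alpha)$ be a strong left oplax retract structure on $i : X' \to X$, so $r : X \to X'$, $r i = \id{X'}$, $\alpha : i r \tod \id{X}$ and $\alpha \comp i = \id{i}$. I would take as lifted retraction the $1$-cell $(r, f \comp_0 \alpha) : (X, f) \to (X', fi)$ of $\tr{\ooCatOpLaxGray}{Z}$, whose structure transformation $f \comp_0 \alpha : fir \tod f$ makes sense since $\alpha : ir \tod \id{X}$, and as lifted comparison $2$-cell the pair $(\alpha, \id{f \comp_0 \alpha})$, whose transformation component is $\alpha : ir \tod \id{X}$ and whose modification component is trivial. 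Running the composition formulas of paragraph~\ref{paragr:def_tr_Gray} one then verifies successively that $(r, f \comp_0 \alpha)(i, \id{fi}) = (\id{X'}, \id{fi}) = \id{(X', fi)}$, that $(i, \id{fi})(r, f \comp_0 \alpha) = (ir, f \comp_0 \alpha)$, that $(\alpha, \id{f \comp_0 \alpha})$ is a $2$-cell from $(ir, f \comp_0 \alpha)$ to $\id{(X, f)}$, and that $(\alpha, \id{f \comp_0 \alpha}) \comp (i, \id{fi}) = \id{(i, \id{fi})}$. In each case the relevant transformation component collapses to an identity because the strong equation forces $(f \comp_0 \alpha) \comp_0 i = f \comp_0 (\alpha \comp_0 i) = \id{fi}$; since a $2$-cell of $\tr{\ooCatOpLaxGray}{Z}$ whose transformation and modification components are both identities is itself an identity, each verification reduces to one of the equalities $r i = \id{X'}$ or $\alpha \comp i = \id{i}$ holding in $\ooCatOpLaxGray$. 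This exhibits $(i, \id{fi})$ as a strong left oplax transformation retract in $\tr{\ooCatOpLaxGray}{Z}$, and applying ${-} \comma g$ gives~(a).

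Assertion~(b) is the mirror statement. One can repeat the previous paragraph after reversing the orientation of the relevant $2$-cells, now working in $\trto{\ooCatOpLaxGray}{Z}$ and applying $f \comma {-}$; alternatively one can deduce it from~(a) using the duality $\trto{\GrayC}{c} = \oloc{\bigl(\tr{\oloc{\GrayC}}{c}\bigr)}$ of paragraph~\ref{paragr:def_trto}, under which a strong right oplax retract becomes a strong left one, the strong equation $\alpha \comp j = \id{j}$ involving only the composition $\comp_0$, which the operation $\oloc{}$ leaves unchanged. The lifted retraction of the $1$-cell determined by $j$ and $\id{gj}$ is then the one determined by $\id{gj}$ and the given retraction $r' : Y \to Y'$ of $j$, with comparison $2$-cell built from the oplax transformation $\beta : \id{Y} \tod j r'$.

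The main obstacle is the verification in the second paragraph. The composition laws of $\tr{\ooCatOpLaxGray}{Z}$ — in particular the horizontal composite of a $1$-cell followed by a $2$-cell and the vertical composite of $2$-cells — are the rather intricate formulas of paragraph~\ref{paragr:def_tr_Gray}, and one must feed them the lifted data and check that the outputs are exactly the identities claimed. The strongness hypothesis is precisely what makes those formulas degenerate: without $\alpha \comp i = \id{i}$ the cells $(f \comp_0 \alpha) \comp_0 i$ would be genuinely non-identity oplax transformations and the composite $(r, f \comp_0 \alpha)(i, \id{fi})$ would not be an identity $1$-cell, so the argument genuinely uses that we deal with strong retracts and not merely retracts.
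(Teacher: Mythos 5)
Your proposal is correct and follows essentially the same route as the paper: lift the strong retract structure on $i$ to the $1$-cell $(i, \id{fi})$ of $\tr{\ooCatOpLaxGray}{Z}$ via the retraction $(r, f \comp \alpha)$ and the $2$-cell $(\alpha, \id{f \comp \alpha})$, check the required identities using the composition formulas (where $\alpha \comp i = \id{i}$ makes everything collapse), and transport through the sesquifunctor ${-} \comma g$, with the right-handed case handled dually. The only cosmetic difference is that you package the transport step as an explicit "sesquifunctors preserve strong oplax retract structures" principle, which the paper applies implicitly equation by equation.
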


\begin{proof}
  Démontrons la première assertion, la seconde se démontrant de manière
  analogue. Soit $(r, \alpha)$ une structure de rétracte par transformation
  oplax à gauche fort sur $i$.  Rappelons que cela signifie que $r : X \to
  X'$ est un \oo-foncteur vérifiant $ri = \id{X'}$ et que $\alpha : ir \tod
  \id{X}$ est une transformation oplax vérifiant $\alpha \comp i = \id{i}$.
  Par définition, $(i, \id{fi}) \comma g$ est l'image par le sesquifoncteur
  ${-} \comma g$ du triangle commutatif
  \[
    \shorthandoff{;}
    (i, \id{fi}) =
    \raisebox{1.5pc}{
    $\xymatrix@C=1.5pc{
      X' \ar[rr]^i \ar[dr]_(0.40){fi}_{}="f" & & X \ar[dl]^(0.40)f_{}="g" \\
      & Z
      \ar@{}"f";[ur]_(.15){}="ff"
      \ar@{}"f";[ur]_(.55){}="oo"
      \ar@{}"f";"g"^{\textstyle =}
      & \pbox{.}
    }$}
  \]
  Considérons le triangle
  \[
    \shorthandoff{;}
    (r, f \comp \alpha) =
    \raisebox{1.5pc}{
    $\xymatrix@C=1.5pc{
      X \ar[rr]^r \ar[dr]_(0.40){f}_(.60){}="f" & & X' \ar[dl]^(0.40){fi} \\
      & Z
      \ar@{}"f";[ur]_(.15){}="ff"
      \ar@{}"f";[ur]_(.55){}="oo"
      \ar@<-0.0ex>@2"oo";"ff"_{f \comp \alpha}
      & \pbox{.}
    }$}
  \]
  Le composé
  \[
    \shorthandoff{;}
    \xymatrix@C=2pc{
      X' \ar[r]^i \ar[dr]_(0.50){}="se"_(.40){fi}
      & X \ar[r]^{r}_(.75){}="fp" \ar[d]_(.70){}="gp"_(0.50){}="te"_(.56){f} & X'
      \ar[dl]_{}="gpp"^(.40){fi} \\
      & Z
      \ar@{}"se";"te"^{\textstyle =}
      \ar@{}"gp";"fp"_(.25){}="x2"
      \ar@{}"gp";"fp"_(.75){}="y2"
      \ar@<0.0ex>@{<=}"x2";"y2"^(0.40){f \comp \alpha\!\!\!}
    }
  \]
  est égal à
  \[
  (r, f \comp \alpha)(i, \id{fi}) = (ri, f \comp \alpha \comp i) =
  (ri, f \comp \id{i}) = (\id{X'}, \id{fi}) = \id{(X', fi)}.
  \]
  Ainsi, par fonctorialité de ${-} \comma g$, le \oo-foncteur $(r, f \comp
  \alpha) \comma g$ est une rétraction de~$(i, \id{fi}) \comma g$. Par
  ailleurs, le composé
  \[
    \shorthandoff{;}
    \xymatrix@C=2pc{
      X \ar[r]^r \ar[dr]_(.40){f}_{}="g"
      & X' \ar[r]^{i}_(.75){}="fp"
      \ar[d]_(.70){}="gp"_(0.50){}="se"_(.56){fi} & X
      \ar[dl]_{}="gpp"^(.40){f}_{}="te" \\
      & Z
      \ar@{}"g";[u]_(0.10){}="x"
      \ar@{}"g";[u]_(.85){}="y"
      \ar@<-0.1ex>@{<=}"x";"y"^(.30){f \comp \alpha\!\!}
      \ar@{}"se";"te"^{\textstyle =}
    }
  \]
  est égal à
  \[
  (i, \id{i})(r, f \comp \alpha) = (ir, f \comp \alpha)
  \]
  et le cône commutatif
  \[
      \shorthandoff{;:}
      (\alpha, \id{f \comp \alpha}) =
      \raisebox{1.5pc}{
      $\xymatrix@C=1.5pc@R=3pc{
        X \ar@/^2ex/[rr]^(.33){ir}_{}="1" \ar@{=}@/_2ex/[rr]_{}="0"
        \ar[dr]_f_{}="f"
        \ar@2"1";"0"_{\alpha\,}
        & & X \ar[dl]^f \\
        & Z
        \ar@{}"f";[ur]_(.10){}="ff"
        \ar@{}"f";[ur]_(.50){}="oo"
        \ar@<-0.5ex>@/^1ex/@{:>}"ff";"oo"^(.25){f \comp \alpha\!\!}_(.30){}="h'"
        \ar@<-2.0ex>@/^-1ex/@{=}"ff";"oo"_(.80){}="h"
        \ar@3{-}"h";"h'" &
        }$}
  \]
  définit une $2$-cellule de ce composé vers $\id{(X, f)}$ dans
  $\tr{\ooCatOpLaxGray}{Z}$. Ainsi, par sesquifonctorialité de ${-} \comma
  g$, on dispose d'une transformation oplax
  \[ (\alpha, \id{f \comp \alpha}) \comma g :
    \big((i, \id{fi}) \comma g\big)\big((r, f \comp \alpha) \comma g)\big)
    \tod
    \id{(X, f) \comma g}.
  \]
  Enfin, le composé
  \[
      \shorthandoff{;:}
      \xymatrix@C=3.5pc@R=3.5pc{
      X' \ar[r]^i \ar[dr]_{}="g"_{fi}_(0.40){}="se" &
      X \ar@/^2ex/[r]^(.33){ir}_{}="1"
      \ar@/_2ex/@{=}[r]_{}="0"_(.70){}="fp"
      \ar[d]_(.50){}="gp2"_(.20){}="gp"_(0.62){f}_(0.40){}="te" &
      X \ar[dl]^{f} &
      \ar@2{<-}"0";"1"^{\alpha} \\
        & Z
      \ar@{}"se";"te"|{\textstyle =}
      \ar@{}"gp2";"fp"_(.10){}="ff2"
      \ar@{}"gp2";"fp"_(.55){}="oo2"
      \ar@<+0.5ex>@/^1ex/@{<:}"ff2";"oo2"^*+<-.2em>{\labelstyle f \comp \alpha\!\!}_(.40){}="h'''"
      \ar@<-0.5ex>@/^-1.5ex/@{=}"ff2";"oo2"_(.70){}="h''"
      \ar@3{-}"h''";"h'''"
      }
  \]
  étant égal à
  \[
  (\alpha, \id{f \comp \alpha}) \comp (i, \id{fi})
   = (\alpha \comp i, \id{f \comp \alpha \comp i})
   = (\id{i}, \id{\id{fi}})
   = \id{(i, \id{fi})},
  \]
  on a, par sesquifonctorialité de ${-} \comma g$,
  \[
     \big((\alpha, \id{f \comp \alpha}) \comma g\big) \comp
     \big((i, \id{fi}) \comma g\big) = \id{(i, \id{fi}) \comma g},
   \]
   ce qui achève de montrer que $\big((r, \alpha) \comma g, (\alpha, \id{f
   \comp \alpha}) \comma g)\big)$ est une structure de rétracte par
   transformation oplax à gauche fort sur $(i, \id{fi}) \comma g$.
\end{proof}

\begin{proposition}
  Soit
  \[
    \shorthandoff{;:}
    (\gamma, \Gamma, \Delta, \delta) =
    \raisebox{2.5pc}{
        $\xymatrix@R=1.5pc@C=3.5pc{
          X \ar@/_2ex/[dd]_(0.62){\phantom{u'}u}_{}="u"
          \ar@/^2ex/[dd]_(0.65){u'\!}_{}="u'"
           \ar[dr]^f_{}="f" & &
          Y \ar@/_2ex/[dd]^(0.65){v'}_{}="v"
          \ar@/^2ex/[dd]^(0.65){v}_{}="v'"
           \ar[dl]_g_{}="g" \\
            & Z \\
          X' \ar[ur]_{f'} & & Y' \ar[ul]^{g'}
          \ar@2"u";"u'"^{\gamma\,\,}
          \ar@2"v'";"v"_{\,\,\delta}
          \ar@{}[ll];"f"_(0.40){}="sa"_(0.85){}="ta"
          \ar@<-1.5ex>@/_1ex/@2"sa";"ta"_(.70){\alpha'}_(0.40){}="a'"
          \ar@<0.0ex>@/^1ex/@{:>}"sa";"ta"^(.70){\alpha}_(0.60){}="a"
          \ar@3"a'";"a"_(.60){\Gamma_{}}
          \ar@{}[];"g"_(0.40){}="tb"_(0.85){}="sb"
          \ar@<-1.5ex>@/_1ex/@2"sb";"tb"_(.30){\beta'\!}_(0.40){}="b'"
          \ar@<0.0ex>@/^1ex/@{:>}"sb";"tb"^(.30){\!\beta}_(0.60){}="b"
          \ar@3"b'";"b"^(.40){\Delta_{}}
    }$}
  \]
  une $2$-cellule de $\Spanoo$. Alors le diagramme
  \[
    \shorthandoff{;:}
    \xymatrix@C=5pc@R=3pc{
      f \comma g
      \ar[d]_p
      \ar@/^3ex/[r]^(.50){(u, \alpha) \comma (\beta, v)}_{}="1"
      \ar@/_3ex/[r]_(.50){(u', \alpha') \comma (\beta', v')}_{}="0"
      \ar@2"1";"0"|{(\gamma, \Gamma) \comma (\Delta, \delta)\,}
      &
      f' \comma g'
      \ar[d]^p
      \\
      X \times Y
      \ar@/^3ex/[r]^(.50){u \times v}_{}="2"
      \ar@/_3ex/[r]_(.50){u' \times
      v'}_{}="1"
      \ar@2"2";"1"_{\gamma \times \delta\,}
      &
      X' \times Y'
      }
  \]
  est commutatif au sens où on a l'égalité
  \[ p \comp ((\gamma, \Gamma) \comma (\Delta, \delta)) = (\gamma \times
  \delta) \comp p. \]
\end{proposition}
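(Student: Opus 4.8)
The plan is to use the Yoneda lemma, exactly as in the proofs of Propositions~\ref{prop:comma_au-dessus} and~\ref{prop:comma_fonct} and as in the final part of the proof of Theorem~\ref{thm:comma_fonct}. Fix a \oo-categorie $T$ and a \oo-foncteur $(x, \lambda, y) : T \to f \comma g$; it suffices to verify that precomposing both sides of the claimed equality by $(x, \lambda, y)$ yields the same transformation oplax $\Dn{1} \otimes T \to X' \times Y'$. As in the proof of Theorem~\ref{thm:comma_fonct}, one considers every transformation oplax between \oo-foncteurs from $A$ to $B$ as a \oo-foncteur $A \to \HomLax(\Dn{1}, B)$, so that the two sides become ordinary \oo-foncteurs and the equality can be checked componentwise.

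First I would compute $((\gamma, \Gamma) \comma (\Delta, \delta)) (x, \lambda, y)$ using the formula from paragraph~\ref{paragr:comma_act_trans}: it is the transformation oplax
\[
  \big(\gamma \comp_0 x,
  (\beta \comp_0 y) \comp_1 \lambda \comp_1 (\alpha \comp_0 x),
  (\Delta \comp_0 y) \comp_1 \lambda \comp_1 (\Gamma \comp_0 x),
  (\beta' \comp_0 y) \comp_1 \lambda \comp_1 (\alpha' \comp_0 x),
  \delta \comp_0 y\big),
\]
viewed as a \oo-foncteur $T \to \HomLax(\Dn{1}, f' \comma g')$. Postcomposing by $p : f' \comma g' \to X' \times Y'$ amounts to keeping only the relevant data: the first component $\gamma \comp_0 x$ gives the $X'$-part and the last component $\delta \comp_0 y$ gives the $Y'$-part, so $p \comp ((\gamma, \Gamma) \comma (\Delta, \delta))$ precomposed by $(x, \lambda, y)$ is the transformation oplax $(\gamma \comp_0 x, \delta \comp_0 y) : \Dn{1} \otimes T \to X' \times Y'$. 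On the other side, $(\gamma \times \delta) \comp p$ precomposed by $(x, \lambda, y)$ is $(\gamma \times \delta) \comp (x, y)$, which by the very definition of $\gamma \times \delta$ as a componentwise product of transformations oplax (there is no fibre product here, only a cartesian product) equals $(\gamma \comp_0 x, \delta \comp_0 y)$. Hence the two transformations oplax coincide.

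The main point to be careful about — the only genuine obstacle — is bookkeeping: one must make sure that $p$ composed with a transformation oplax into $f' \comma g'$ extracts exactly the first and last components of the $5$\nbd-uplet of paragraph~\ref{paragr:comma_act_trans}, and that the projection $p : f' \comma g' \to X' \times Y'$ is compatible with the adjunction identifications used throughout (this is the transformation-oplax analogue of Proposition~\ref{prop:comma_au-dessus}, whose proof is essentially identical but at the level of $1$-cells). Once this is granted, no Gray-constraint manipulation is needed, and the verification is purely a matter of reading off components; the naturality in $T$ follows from the functoriality of horizontal composition by a $1$-cell, exactly as in paragraph~\ref{paragr:comma_act_trans}, so the Yoneda lemma concludes.
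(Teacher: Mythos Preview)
Your proposal is correct and follows essentially the same approach as the paper: both use the Yoneda lemma, precompose by a generic $(x,\lambda,y) : T \to f \comma g$, invoke the explicit description from paragraph~\ref{paragr:comma_act_trans} of the resulting transformation oplax into $f' \comma g'$, and observe that postcomposition by $p$ discards all but the two outer components $(\gamma \comp_0 x,\ \delta \comp_0 y)$, which is exactly $(\gamma \times \delta) \comp p$ applied to $(x,\lambda,y)$. The only cosmetic difference is that the paper packages the middle three components of the $5$-uplet into a single $\Upsilon$ via paragraph~\ref{paragr:comma_pu_trans}, while you spell them out.
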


\begin{proof}
  Démontrons l'égalité par le lemme de Yoneda. Soient donc $T$ une
  \oo-catégorie et $(x, \lambda, y) : T \to f \comma g$ un \oo-foncteur. En
  vertu des paragraphes~\ref{paragr:comma_pu_trans} et
  \ref{paragr:comma_act_trans}, la transformation oplax $(\gamma, \Gamma)
  \comma (\Delta, \delta)$ associe à $(x, \lambda, y)$ le \oo-foncteur
  $(\gamma \comp_0 x, \Upsilon, \delta \comp_0 y) : \Dn{1} \otimes T \to f'
  \comma g'$, pour $\Upsilon$ une certaine transformation oplax. En
  postcomposant par la projection $p : f' \comma g' \to X' \times Y'$, on
  obtient donc le \oo-foncteur $(\gamma \comp_0 x, \delta \comp_0 y) :
  \Dn{1} \otimes T \to X \times Y$ qui est bien le \oo-foncteur associé
  à $(x, \lambda, y)$ par la transformation oplax $(\gamma \times \delta)
  \comp p$, d'où le résultat.
\end{proof}

\section{Comparaison avec la preuve simpliciale}
\label{app:thmAI}

\begin{paragraph}
  Dans \cite{AraMaltsiThmAI}, nous donnons une preuve alternative, de nature
  simpliciale, de notre théorème~A \oo-catégorique. Le point central des
  deux démonstrations est le même : il s'agit de montrer que si $v : A \to
  C$ est un \oo-foncteur et $c$ est un $m$\nbd-simplexe de $N(C)$, alors le
  morphisme simplicial $r : \cotr{N(A)}{c} \to \cotr{N(A)}{c_m}$ du
  paragraphe~\ref{paragr:def_thmA} est une équivalence faible, c'est-à-dire,
  dans la terminologie du présent texte, que l'objet cosimplicial $\cO :
  \cDelta \to \ooCat$ donné par les orientaux permet un théorème~A. Pour ce
  faire, dans \cite{AraMaltsiThmAI}, nous définissons par des formules
  explicites une section $s : \cotr{N(A)}{c_m} \to \cotr{N(A)}{c}$ de~$r$
  (au paragraphe 6.2) et une homotopie simpliciale~$h$ de $sr$
  vers~$\id{\cotr{N(A)}{c}}$ (au paragraphe 6.6).

  Le but de cet appendice est de montrer que le rétracte par déformation
  simplicial~$s$, la rétraction $r$ et l'homotopie simpliciale $h$ sont les
  nerfs respectifs du rétracte par transformation oplax de la
  proposition~\ref{prop:permet_thmA} du présent texte, et de la rétraction
  et de la transformation oplax produite par cette même proposition.

  On a déjà observé, dans la preuve de la
  proposition~\ref{prop:def_equiv_permet_A}, que le morphisme $r$ est bien
  le nerf du \oo-foncteur $m^\ast : \cotr{A}{c} \to \cotr{A}{c_m}$ de la
  proposition~\ref{prop:permet_thmA}. Rappelons que par définition ce
  \oo-foncteur est égal à $m^\ast \times_C A$, où cette fois $m^\ast :
  \cotr{C}{c} \to \cotr{C}{c_m}$ désigne le \oo-foncteur associé en vertu du
  paragraphe~\ref{paragr:fonct_tr_sur_1} au triangle commutatif
  \[
    \xymatrix@C=1.5pc{
      \On{0} \ar[dr]_{c_m} \ar[rr]^{m} & & \On{m} \ar[dl]^c \\
                                     & C & \pbox{.}
    }
  \]
  Il nous reste donc à traiter les cas de $s$ et $h$.
\end{paragraph}

\begin{remark}
  Nous avons choisi de rendre cet appendice, dont le but n'est pas
  d'établir un résultat mais de justifier les formules
  \forlang{ad hoc} de \cite[section~6]{AraMaltsiThmAI}, raisonnablement
  concise. C'est pourquoi on ne rappellera pas les définitions de $s$ et $h$
  (le lecteur devra donc parfois se référer à \cite[section
  6]{AraMaltsiThmAI}) et on laissera plus de vérifications au lecteur que
  dans le reste du texte.
\end{remark}

\begin{paragraph}\label{paragr:r'_h'}
  La section du \oo-foncteur $m^\ast \times_C A : \cotr{A}{c} \to \cotr{A}{c_m}$
  produite par la proposition~\ref{prop:permet_thmA} est le \oo-foncteur
  $(r', h', c)^\ast \times_C A : \cotr{A}{c_m} \to \cotr{A}{c}$, où $(r',
  h', c)^\ast : \cotr{C}{c_m} \to \cotr{C}{c}$ est le \oo-foncteur associé
  par le paragraphe~\ref{paragr:fonct_tri} au triangle
  \[
    \shorthandoff{;}
    \xymatrix@C=1.5pc{
      \cn(\Deltan{m}) \ar[rr]^{r'} \ar@{=}[dr]_{}="f" & &
      \cn(\Deltan{0}) \ar[dl]^(0.42){m} \\
       & \cn(\Deltan{m})
      \ar@{}"f";[ur]_(.15){}="ff"
      \ar@{}"f";[ur]_(.55){}="oo"
      \ar@<-0.5ex>@2"ff";"oo"^{h'}
      & \pbox{,}
    }
  \]
  où $r'$ et $h'$ sont le morphisme $r$ et l'antihomotopie $h$ de la preuve de
  la proposition~\ref{prop:Deltan_contr} (qu'on a décorés d'un « $'$ » pour
  ne pas les confondre avec les morphismes simpliciaux en jeu dans cette
  appendice). Rappelons que cette antihomotopie $h'$ est définie sur la base
  de $\cn(\Deltan{m})$ (voir le paragraphe~\ref{paragr:base_cn}) par
  \[ h'(i_0, \dots, i_p) = (i_0, \dots, i_p, m), \]
  en convenant que cette expression est nulle lorsque $i_p = m$.
\end{paragraph}

Nous allons montrer qu'on a $N((r', h', c)^\ast \times_C A) = s$. Pour cela,
nous avons besoin de compléments sur la construction du
paragraphe~\ref{paragr:fonct_tri}.

\begin{paragraph}\label{paragr:desc_fonct_tri}
  Soit
  \[
    \shorthandoff{;}
    \xymatrix@C=1.5pc{
      K \ar[rr]^f \ar[dr]_{g}_{}="f" & & K' \ar[dl]^(0.42){g'} \\
      & L
      \ar@{}"f";[ur]_(.15){}="ff"
      \ar@{}"f";[ur]_(.55){}="oo"
      \ar@<-0.5ex>@2"ff";"oo"^{k}
      & \pbox{,}
    }
  \]
  un diagramme de complexes de Steiner forts, avec $g'$ une inclusion rigide
  ordonnée et $k$ une antihomotopie de $g$ vers $g'f$, et soit $C$ une
  \oo-catégorie munie d'un \oo-foncteur $b : \nu(L) \to C$. Posons $c =
  b\nu(g)$ et $c' = b\nu(g')$ et considérons le \oo-foncteur
  \[ (f, k, b)^\ast : \cotr{C}{c'} \to \cotr{C}{c} \]
  du paragraphe~\ref{paragr:fonct_tri}. Soit $T$ un complexe de Steiner
  fort. D'après \cite[remarque 11.2.3]{AraMaltsiJoint}, on peut décrire
  l'application
  \[
    \Hom_{\ooCat}(\nu(T), \cotr{C}{c'}) \to \Hom_{\ooCat}(\nu(T), \cotr{C}{c})
  \]
  induite par $(f, k, b)^\ast$ de la manière suivante. On a des bijections
  naturelles
  \[
     \begin{split}
     \Hom_{\ooCat}(\nu(T), \cotr{C}{c'})
     & \simeq \Hom_{\cotr{\ooCat}{\nu(L)}}((\nu(L \amalg_{K'} (K' \joint T)),
     \nu(j_1)), (C, b)) \\
     & \subset \Hom_{\ooCat}(\nu(L \amalg_{K'} (K' \joint T)), C), \\
     \Hom_{\ooCat}(\nu(T), \cotr{C}{c})
     & \simeq \Hom_{\cotr{\ooCat}{\nu(K)}}((\nu(K \joint T), \nu(\iota_1)),
     (C, c)) \\
     & \subset \Hom_{\ooCat}(\nu(K \joint T), C),
     \end{split}
   \]
  où $j_1$ désigne la première inclusion canonique,
  et l'application
  \[
    \Hom_{\ooCat}(\nu(T), \cotr{C}{c'}) \to \Hom_{\ooCat}(\nu(T), \cotr{C}{c})
  \]
  est induite par le morphisme
  \[
    \psi : K \joint T \to L \amalg_{K'} (K' \joint T)
  \]
  défini par
  \[
     \psi(x \joint y) =
       \begin{cases}
         g(x) & \text{si $y = \vide$,} \\
         f(x) \joint y + e(y)k(x) & \text{sinon,}
       \end{cases}
  \]
  où on convient que $f(\vide) = \vide$, $k(\vide) = 0$ et $e(y) = 0$ si $y$
  n'est pas de degré $0$.

  Notons que lorsque $T = \cn(\Deltan{n})$, de sorte qu'on a $\nu(T) =
  \On{n}$, l'application que l'on vient de décrire n'est autre que
  \[
    N((f, k, b)^\ast)_n : N(\cotr{C}{c'})_n \to N(\cotr{C}{c})_n.
  \]
\end{paragraph}

\begin{paragraph}
  Explicitons le morphisme $\psi$ du paragraphe précédent dans le cas qui
  nous intéresse, à savoir celui du triangle
  \[
    \shorthandoff{;}
    \xymatrix@C=1.5pc{
      \cn(\Deltan{m}) \ar[rr]^{r'} \ar@{=}[dr]_{}="f" & &
      \cn(\Deltan{0}) \ar[dl]^(0.42){m} \\
       & \cn(\Deltan{m})
      \ar@{}"f";[ur]_(.15){}="ff"
      \ar@{}"f";[ur]_(.55){}="oo"
      \ar@<-0.5ex>@2"ff";"oo"^{h'}
    }
  \]
  du paragraphe~\ref{paragr:r'_h'}
  et de $T = \cn(\Deltan{n})$.
  Le morphisme
  \[ \psi : \cn(\Deltan{m}) \joint \cn(\Deltan{n}) \to \cn(\Deltan{m})
  \amalg_{\cn(\Deltan{0})} (\cn(\Deltan{0}) \joint \cn(\Deltan{n}))  \]
  est donné sur la base de $\cn(\Deltan{m}) \joint \cn(\Deltan{n})$ (voir
  le paragraphe~\ref{paragr:base_cn} et la
  proposition~\ref{prop:joint_Steiner}) par
  \[
    \psi((i_0, \dots, i_p) \joint (j_0, \dots, j_q)) =
    \begin{cases}
      (i_0, \dots, i_p) & \text{si $q = -1$,} \\
      \vide \joint (j_0, \dots, j_q) & \text{si $p = -1$,} \\
      (m) \joint (j_0) + (i_0, m) & \text{si $p = 0$ et $q = 0$,} \\
      (m) \joint (j_0, \dots, j_q) & \text{si $p = 0$ et $q > 0$,} \\
      (i_0, \dots, i_p, m) & \text{si $p > 0$ et $q = 0$,} \\
      0 & \text{si $p > 0$ et $q > 0$,}
    \end{cases}
  \]
  où, d'une part, on a convenu que $(i_0, \dots, i_p)$ et $(j_0, \dots,
  j_q)$ sont égaux à $\vide$ pour $p = -1$ et $q = -1$ respectivement et,
  d'autre part, on a noté $(m)$ la base de $\cn(\Deltan{0})$ de sorte
  qu'on puisse considérer le morphisme $m : \cn(\Deltan{0}) \to
  \cn(\Deltan{n})$ comme une inclusion.

  Ainsi, pour $n \ge 0$, l'application
  \[
    N((r', h', c)^\ast)_n : \Hom_{\ooCat}(\On{n}, \cotr{C}{c_m}) \to
      \Hom_{\ooCat}(\On{n}, \cotr{C}{c})
  \]
  est induite par le morphisme $\psi$ décrit ci-dessus. Or, ce morphisme
  $\psi$ coïncide avec le morphisme $f_n$ défini au paragraphe 6.2 de
  \cite{AraMaltsiThmAI} pour construire le morphisme simplicial~$s$. On en
  déduit que $N((r', h', c)^\ast) = s$ dans le cas où $A = C$ et $v = \id{C}$
  et donc que $N((r', h', c)^\ast \times_C A) = s$ dans le cas général
  puisque le nerf commute aux produits fibrés et que $s$ est
  défini comme un produit fibré.
\end{paragraph}

\begin{paragraph}
  La transformation oplax de $((r', h', c)^\ast m^\ast) \times_C A$ vers
  $\id{\cotr{A}{c}}$ produite par la proposition~\ref{prop:permet_thmA} est
  le changement de base le long de $v : A \to C$ de la transformation oplax
  $(h', \id{h'}, c)^\ast : (r', h', c)^\ast m^\ast \to \id{\cotr{C}{c}}$ associée
  par le paragraphe~\ref{paragr:fonct_cone} au cône
  \[
     \shorthandoff{;:}
     \xymatrix@C=1pc@R=4pc{
       \cn(\Deltan{m}) \ar@/^2ex/[rr]^(.33){mr'}_{}="1" \ar@/_2ex/@{=}[rr]_{}="0"
       \ar@{=}[dr]_{}="f"
       \ar@2"0";"1"_{\,h'}
       & & \cn(\Deltan{m}) \ar@{=}[dl] \\
       & \cn(\Deltan{m})
       \ar@{}"f";[ur]_(.15){}="ff"
       \ar@{}"f";[ur]_(.55){}="oo"
       \ar@<-0.0ex>@/^1ex/@{:>}"ff";"oo"^(.18){h'\!\!}_(.30){}="h'"
       \ar@<-3.0ex>@/^-1ex/@{=}"ff";"oo"_(.80){}="h"
       \ar@3"h";"h'"_(.20){\id{h'}} & \pbox{,}
       }
  \]
  où $r'$ et $h'$ sont le morphisme et l'antihomotopie du
  paragraphe~\ref{paragr:r'_h'}.
\end{paragraph}

Nous allons montrer qu'on a $N((h', \id{h'}, c)^\ast \times_C A) = h$. Pour cela,
nous avons besoin de compléments sur la construction du
paragraphe~\ref{paragr:fonct_cone}.

\begin{paragraph}\label{paragr:desc_fonct_cone}
  Soit
   \[
      \shorthandoff{;:}
      \xymatrix@C=1.5pc@R=3pc{
        K \ar@/^2ex/[rr]^(.33){f'}_{}="1" \ar@/_2ex/[rr]^(.30)f_{}="0"
        \ar[dr]_{}="f"_{\phantom{g'}g}
        \ar@2"0";"1"_l
        & & K' \ar[dl]^{g'} \\
        & L
        \ar@{}"f";[ur]_(.15){}="ff"
        \ar@{}"f";[ur]_(.55){}="oo"
        \ar@<-0.5ex>@/^1ex/@{:>}"ff";"oo"^(.18){k'\!\!}_(.30){}="h'"
        \ar@<-2.0ex>@/^-1ex/@2"ff";"oo"_(.36){k}_(.80){}="h"
        \ar@3"h";"h'"_(.20){H_{}}
        }
  \]
  un diagramme de complexes de Steiner forts, avec $g'$ une inclusion rigide
  ordonnée, $k$, $k'$ et $l$ des antihomotopies de $g$ vers $g'f$, de
  $g$ vers $g'f'$ et de $f$ vers $f'$ respectivement et $H$ une
  $2$-antihomotopie de $g'l + k$ vers $k'$, et soit $C$ une \oo-catégorie
  munie d'un \oo-foncteur $b : \nu(L) \to C$. Posons $c = b\nu(g)$ et $c' =
  b\nu(g')$ et considérons la transformation oplax
  \[ (l, H, b)^\ast : (f', k', b)^\ast \to (f, k, b)^\ast \]
  du paragraphe~\ref{paragr:fonct_cone}. Par adjonction, cette
  transformation correspond à un \oo-foncteur
  \[
    \cotr{C}{c'} \to \HomLax(\Dn{1}, \cotr{C}{c}).
  \]
  Soit $T$ un complexe de Steiner fort. D'après \cite[remarque
  11.4.3]{AraMaltsiJoint}, on peut décrire l'application induite
  \[
    \Hom_{\ooCat}(\nu(T), \cotr{C}{c'}) \to \Hom_{\ooCat}(\nu(T),
    \HomLax(\Dn{1}, \cotr{C}{c}))
  \]
  de la manière suivante. On a des bijections naturelles
  \[
     \Hom_{\ooCat}(\nu(T), \cotr{C}{c'})
     \simeq \Hom_{\cotr{\ooCat}{\nu(L)}}((\nu(L \amalg_{K'} (K' \joint T)),
     \nu(j_1)), (C, b)),
  \]
  où $j_1$ désigne la première inclusion canonique, et
   \[
     \begin{split}
     \MoveEqLeft \Hom_{\ooCat}(\nu(T), \HomLax(\Dn{1}, \cotr{C}{c})) \\
     & \simeq \Hom_{\cotr{\ooCat}{\nu(K)}}((\nu(K \joint (\cn(\Deltan{1})
     \otimes T)), \nu(\iota_1)), (C, c)),
     \end{split}
   \]
  et l'application
  \[
    \Hom_{\ooCat}(\nu(T), \cotr{C}{c'}) \to \Hom_{\ooCat}(\nu(T),
    \HomLax(\Dn{1}, \cotr{C}{c}))
  \]
  est induite par le morphisme
  \[
    \chi : K \joint (\cn(\Deltan{1}) \otimes T) \to L \amalg_{K'} (K' \joint T)
  \]
  défini par
  \[
    \begin{split}
      \chi(x \joint \vide) & = g(x) \\
      \chi(x \joint ((0) \otimes y)) & = f'(x) \joint y + e(y)k'(x) \\
      \chi(x \joint ((1) \otimes y)) & = f(x) \joint y + e(y)k(x) \\
      \chi(x \joint ((01) \otimes y)) & = l(x) \joint y + e(y)H(x),
    \end{split}
  \]
  où on ajoute aux conventions utilisées dans la définition de $\psi$ au
  paragraphe~\ref{paragr:desc_fonct_tri} les conventions $l(\vide) = 0$ et
  $H(\vide) = 0$.
\end{paragraph}

On va voir que l'application $\chi$, pour $T = \cn(\Deltan{n})$, permet de
décrire $N((l, H, b)^\ast)$ en termes de complexes dirigés augmentés. Pour
ce faire, nous avons besoin d'une description alternative du nerf d'une
transformation oplax.

\begin{paragraph}
  Soit $\alpha$ une transformation oplax entre \oo-foncteurs de source
  $C$ et de but $D$. Par adjonction, la transformation oplax $\alpha$
  correspond à un \oo-foncteur $C \to \HomLax(\Dn{1}, D)$ qu'on notera
  $k_\alpha$. L'homotopie simpliciale $N(\alpha)$ peut se décrire à partir de
  $k_\alpha$ de la manière suivante. Soit $(\phi, x) :
  \Deltan{n} \to \Deltan{1} \times N(C)$ un $n$-simplexe de
  $\Deltan{1} \times N(C)$. Le $n$-simplexe $N(\alpha)(\phi, x)$ de $N(D)$ est
  l'image de $x : \On{n} \to C$ par le composé
  \[
    \xymatrix@C=1.5pc{
      \Hom_{\ooCat}(\On{n}, C) \ar[r] & \Hom_{\ooCat}(\On{n},
      \HomLax(\Dn{1}, D)) \ar[d]_*[@]{\sim}  \\
      & \Hom_{\ooCat}(\Dn{1} \otimes \On{n}, D) \ar[r]
      & \Hom_{\ooCat}(\On{n}, D) \pbox{,}
    }
  \]
  où les flèches horizontales du haut et du bas sont induites respectivement
  par $k_\alpha$ et le \oo-foncteur $\nu(g_\phi)$ du
  paragraphe~\ref{paragr:g_phi}.
\end{paragraph}

\begin{paragraph}
  Explicitons le paragraphe précédent dans le cas où $\alpha = (k, H,
  b)^\ast$ est la transformation du paragraphe~\ref{paragr:desc_fonct_cone}.
  On a des isomorphismes canoniques
  \[
     \begin{split}
       \Hom_{\ooCat}(\On{n}, \cotr{C}{c'})
     & \simeq \Hom_{\cotr{\ooCat}{\nu(L)}}((\nu(L \amalg_{K'} (K' \joint
       \cn(\Deltan{n}))), \nu(j_1)), (C, b)) \\
     \Hom_{\ooCat}(\On{n}, \cotr{C}{c})
     & \simeq \Hom_{\cotr{\ooCat}{\nu(K)}}((\nu(K \joint \cn(\Deltan{n})),
     \nu(\iota_1)), (C, c)),
     \end{split}
  \]
  et, si $\phi : \Deltan{n} \to \Deltan{1}$ est un $n$-simplexe de
  $\Deltan{1}$, l'application
  \[
    \Hom_{\ooCat}(\On{n}, \cotr{C}{c'}) \to \Hom_{\ooCat}(\On{n}, \cotr{C}{c})
  \]
  du paragraphe précédent est induite par le composé
  \[
    K \joint \cn(\Deltan{n}) \xto{K \joint g_\phi}
    K \joint (\cn(\Deltan{1}) \otimes \cn(\Deltan{n})) \xto{\quad\chi\quad}
    L \amalg_{K'} (K' \joint \cn(\Deltan{n})),
  \]
  où $\chi$ est le morphisme du paragraphe~\ref{paragr:desc_fonct_cone}
  pour $T = \cn(\Deltan{n})$. On notera $\chi_\phi$ ce composé.
\end{paragraph}

\begin{paragraph}
  Explicitons les morphismes $\chi$ et $\chi_\phi$ du
  paragraphe~\ref{paragr:desc_fonct_cone} et du paragraphe précédent dans le
  cas qui nous intéresse, à savoir celui du cône
  \[
     \shorthandoff{;:}
     \xymatrix@C=1pc@R=4pc{
       \cn(\Deltan{m}) \ar@/^2ex/[rr]^(.33){mr'}_{}="1" \ar@/_2ex/@{=}[rr]_{}="0"
       \ar@{=}[dr]_{}="f"
       \ar@2"0";"1"_{\,h'}
       & & \cn(\Deltan{m}) \ar@{=}[dl] \\
       & \cn(\Deltan{m})
       \ar@{}"f";[ur]_(.15){}="ff"
       \ar@{}"f";[ur]_(.55){}="oo"
       \ar@<-0.0ex>@/^1ex/@{:>}"ff";"oo"^(.18){h'\!\!}_(.30){}="h'"
       \ar@<-3.0ex>@/^-1ex/@{=}"ff";"oo"_(.80){}="h"
       \ar@3"h";"h'"_(.20){\id{h'}}
       }
  \]
  et de $T = \cn(\Deltan{n})$. Le but de $\chi$ est $\cn(\Deltan{m})
  \amalg_{\cn(\Deltan{m})} (\cn(\Deltan{m}) \joint c(\Deltan{n}))$ qui est
  canoniquement isomorphe à~$\cn(\Deltan{m}) \joint \cn(\Deltan{n})$ et on
  considérera $\chi$ comme un morphisme
  \[
    \chi : \cn(\Deltan{m}) \joint (\cn(\Deltan{1}) \otimes
    \cn(\Deltan{n})) \to \cn(\Deltan{m}) \joint \cn(\Deltan{n}).
  \]
  De même, on considérera $\chi_\phi$ comme un morphisme
  \[
    \chi_\phi : \cn(\Deltan{m}) \joint \cn(\Deltan{n}) \to 
    \cn(\Deltan{m}) \joint \cn(\Deltan{n}).
  \]
  En explicitant les formules du paragraphe~\ref{paragr:desc_fonct_cone}
  pour le cône ci-dessus, on obtient que $\chi$ est donné par
  \[
    \begin{split}
      (i_0, \dots, i_p) \joint \vide & \mapsto (i_0, \dots, i_p) \joint
      \vide \\
      (i_0, \dots, i_p) \joint ((0) \otimes (j_0, \dots, j_q))
      & \mapsto
        \begin{cases}
          \vide \joint (j_0, \dots, j_q) & \text{si $p = -1$,} \\
          (m) \joint (j_0) + (i_0, m) \joint \vide & \text{si $p = 0$ et $q = 0$,} \\
          (m) \joint (j_0, \dots, j_q) & \text{si $p = 0$ et $q > 0$,} \\
          (i_0, \dots, i_p, m) \joint \vide & \text{si $p > 0$ et $q = 0$,} \\
          0 & \text{si $p > 0$ et $q > 0$,} \\
        \end{cases} \\
      (i_0, \dots, i_p) \joint ((1) \otimes (j_0, \dots, j_q))
      & \mapsto (i_0, \dots, i_p) \joint (j_0, \dots, j_q) \\
      (i_0, \dots, i_p) \joint ((01) \otimes (j_0, \dots, j_q))
      & \mapsto
        \begin{cases}
           0 & \text{si $p = -1$,} \\
           (i_0, \dots, i_p, m) \joint (j_0, \dots, j_q) & \text{si $p \ge 0$.}
        \end{cases}
    \end{split}
  \]

  Décrivons maintenant l'endomorphisme
  \[
    \chi_\phi : \cn(\Deltan{m}) \joint \cn(\Deltan{n}) \to 
    \cn(\Deltan{m}) \joint \cn(\Deltan{n}).
  \]
  Soit $(i_0, \dots, i_p) \joint (j_0, \dots, j_q)$ un élément de la base de
  $\cn(\Deltan{m}) \joint \cn(\Deltan{n})$. Notons $r$ le nombre de $0$
  parmi $\phi(j_0), \dots, \phi(j_q)$. Alors l'endomorphisme $\chi_\phi$
  envoie l'élément~$(i_0, \dots, i_p) \joint (j_0, \dots, j_q)$ sur
  \[
  \begin{cases}
    (i_0, \dots, i_p) \joint (j_0, \dots, j_q) & \text{si $r = 0$,} \\
    \vide \joint (j_0, \dots, j_q) & \text{si $r = 1$ et $p = -1$,} \\
    (m) \joint (j_0) + (i_0, m) \joint \vide &
      \text{si $r = 1$, $p = 0$ et $q = 0$,} \\
    (m) \joint (j_0, \dots, j_q) + (i_0, m) \joint (j_1, \dots, j_q) &
      \text{si $r = 1$, $p = 0$ et $q > 0$,} \\
    (i_0, \dots, i_p, m) \joint \vide &
      \text{si $r = 1$, $p > 0$ et $q = 0$,} \\
    (i_0, \dots, i_p, m) \joint (j_1, \dots, j_q) &
      \text{si $r = 1$, $p > 0$ et $q > 0$,} \\
    \vide \joint (j_0, \dots, j_q) & \text{si $r \ge 2$ et $p = -1$,} \\
    (m) \joint (j_0) + (i_0, m) \joint \vide & \text{si $r \ge 2$, $p = 0$ et $q = 0$,} \\
    (m) \joint (j_0, \dots, j_q) &
      \text{si $r \ge 2$, $p = 0$ et $q > 0$,} \\
    (i_0, \dots, i_p, m) \joint \vide &
      \text{si $r = 2$, $p > 0$ et $q = 0$,} \\
    0 &
      \text{si $r \ge 2$, $p > 0$ et $q > 0$.} \\
  \end{cases}
  \]
  On vérifie que ce morphisme $\chi_\phi$ s'identifie à travers
  l'isomorphisme canonique $\cn(\Deltan{m}) \joint \cn(\Deltan{n}) \simeq
  \cn(\Deltan{m + 1 + n})$ au morphisme $f_\phi$ défini au paragraphe 6.6 de
  \cite{AraMaltsiThmAI} pour construire l'homotopie simpliciale $h$. On en
  déduit que $N(\alpha) = h$ dans le cas où $A = C$ et $v = \id{C}$ et donc
  que $N(\alpha \times_C A) = h$ dans le cas général puisque le nerf est
  compatible aux changements de base des transformations oplax (voir le
  corollaire~\ref{coro:nerf_trans_prod_fib}) et que $h$ est définie par un
  changement de base. Ceci achève de montrer que
  \[
   s = N((r', h', c)^\ast \times_C A),
   \quad
   r = N(m^\ast \times_C A)
   \quadet
    h = N((h', \id{h'}, c)^\ast \times_C A).
  \]
\end{paragraph}

\bibliography{biblio}
\bibliographystyle{mysmfplain}

\end{document}